\newcommand{\unit}{section}
\author{Karol Szumi{\l}o}
\title{Two Models for the Homotopy Theory\\of Cocomplete Homotopy Theories}
\newcommand{\from}{\colon} 
\newcommand{\acto}{\stackrel{\we}{\cto}} 
\newcommand{\cto}{\rightarrowtail}       
\newcommand{\ito}{\hookrightarrow}       
\newcommand{\fto}{\twoheadrightarrow}    
\newcommand{\sto}{\rightarrowtriangle}   
\newcommand{\weto}{\stackrel{\we}{\to}}  
\newcommand{\Cat}{\ncat{Cat}}   
\newcommand{\sSet}{\ncat{sSet}} 
\newcommand{\spn}{{\raisebox{-.5\height}{\scalebox{1.5}{\ensuremath{\mathord{\ulcorner}}}}}} 
\tikzset
{
  diagram/.style=
  {
    matrix of math nodes,
    column sep=2.5em,
    row sep=2.5em,
    text height=1.5ex,
    text depth=.25ex
  },
  cross line/.style={preaction={draw=white,-,line width=6pt}},
  every to/.style={font=\footnotesize},
  cof/.style={>->},               
  fib/.style={->>},               
  inj/.style={right hook->},      
  surj/.style={-open triangle 60} 
}
\newcommand{\dfn}{(\arabic*)} 
\newcommand{\thm}{(\arabic*)} 
\DeclareFontFamily{OT1}{bpzc}{}
\DeclareFontShape{OT1}{bpzc}{m}{it}{<-> s * [1.2] pzcmi8z}{}
\DeclareMathAlphabet{\mathpzc}{OT1}{bpzc}{m}{it}
\newenvironment{ctikzpicture}
{
  \begingroup
  \smallskip
  \par
  \centering
  \begin{tikzpicture}
}{
  \end{tikzpicture}
  \par
  \smallskip
  \endgroup
  \aftergroup\@afterindentfalse
  \aftergroup\@afterheading
}
\newcommand{\enumhack}{\leavevmode} 
\newcommand{\Ho}{\operatorname{Ho}}        
\newcommand{\op}{{\mathord\mathrm{op}}}    
\newcommand{\slice}{\mathbin\downarrow}    
\newcommand{\textop}{\textsuperscript{op}} 
\let\bigcoprod\coprod                                      
\newcommand{\bigprod}{\prod}                               
\newcommand{\bigpull}[2]{\sideset{}{_{#1}}\bigprod_{#2}}   
\newcommand{\bigpush}[2]{\sideset{}{_{#1}}\bigcoprod_{#2}} 
\newcommand{\colim}{\operatorname{colim}}                  
\renewcommand{\coprod}{\amalg}                             
\newcommand{\diag}{\operatorname{diag}}                    
\newcommand{\ev}{{\mathord\mathrm{ev}}}                    
\newcommand{\id}{{\mathord\mathrm{id}}}                    
\newcommand{\Lan}{\operatorname{Lan}}                      
\newcommand{\push}{\amalg}                                 
\newcommand{\quot}{\mathbin{/}}                            
\newcommand{\Ran}{\operatorname{Ran}}                      
\newcommand{\uslice}{\mathbin{\backslash}}
\newcommand{\bigunion}{\bigcup}               
\newcommand{\inter}{\cap}                     
\newcommand{\set}[2]{\left\{#1\mid#2\right\}} 
\newcommand{\union}{\cup}                     
\newcommand{\bd}{\mathord\partial} 
\renewcommand{\iff}{if and only if}
\newcommand{\st}{such that}
\newcommand{\wrt}{with respect to}
\newcommand{\Tfae}{The following are equivalent}
\newcommand{\rlp}{right lifting property}
\newcommand{\face}{\delta} 
\newcommand{\dgn}{\sigma}  
\newcommand{\simp}[1]{\Delta[#1]}       
\newcommand{\bdsimp}[1]{\bd \Delta[#1]} 
\def\horn#1{\expandafter\horn@i#1,,\@nil}
\def\horn@i#1,#2,#3\@nil{\Lambda^{#2}[#1]}
\newcommand{\join}{\star}           
\newcommand{\Sd}{\operatorname{Sd}} 
\newcommand{\Sk}{\operatorname{Sk}} 
\newcommand{\init}{0} 
\renewcommand{\epsilon}{\varepsilon} 
\renewcommand{\phi}{\varphi}         
\newcommand{\htp}{\simeq} 
\newcommand{\iso}{\cong}  
\newcommand{\we}{\sim}    
\renewcommand{\emptyset}{\varnothing} 
\newcommand{\nat}{\mathbb{N}}         
\renewcommand{\hat}{\widehat}         
\renewcommand{\tilde}{\widetilde}     
\newcommand{\uvar}{\mathord{\relbar}} 
\providecommand{\unit}{chapter}
\declaretheorem[style=definition,within=\unit]{definition}
\declaretheorem[style=definition,numberlike=definition]{example}
\declaretheorem[style=definition,numberlike=definition]{remark}
\declaretheorem[style=plain,numberlike=definition]{corollary}
\declaretheorem[style=plain,numberlike=definition]{lemma}
\declaretheorem[style=plain,numberlike=definition]{proposition}
\declaretheorem[style=plain,numberlike=definition]{theorem}
\declaretheorem[style=plain,numbered=no,name=Theorem]{theorem*}
\Crefname{corollary}{Corollary}{Corollaries}
\Crefname{definition}{Definition}{Definitions}
\Crefname{example}{Example}{Examples}
\Crefname{lemma}{Lemma}{Lemmas}
\Crefname{proposition}{Proposition}{Propositions}
\Crefname{remark}{Remark}{Remarks}
\Crefname{theorem}{Theorem}{Theorems}
\newcommand{\cat}[1]{\mathcal{#1}}  
\newcommand{\ncat}[1]{\mathsf{#1}}  
\newcommand{\qcat}[1]{\mathscr{#1}} 
\newcommand{\dgrm}{\operatorname{Dg}} 
\newcommand{\nf}{\operatorname{N_f}}  
\newcommand{\asSet}{{\ncat{asSet}}}    
\newcommand{\CofCat}{\ncat{CofCat}}    
\newcommand{\QCat}{\ncat{QCat}}        
\newcommand{\ucone}{\rhd}         
\newcommand{\univ}{\mathrm{univ}} 
\newcommand{\Rllp}{Reedy left lifting property}
\newcommand{\Rrlp}{Reedy right lifting property}
\newcommand{\totl}[1]{\langle#1]} 
\newcommand{\tott}[1]{[#1\rangle} 
\newcommand{\hornl}[1]{\Lambda^0\totl{#1}}
\newcommand{\hornt}[1]{\Lambda^{#1}\tott{#1}}
\def\hornm#1{\expandafter\hornm@i#1,,\@nil}
\def\hornm@i#1,#2,#3\@nil{\Lambda^{#2}\hat{[#1]}}
\newcommand{\pDelta}{\Delta_\sharp}
\newcommand{\Reedy}{\mathrm{R}}
\def\pfilt#1{\expandafter\pfilt@i#1,,\@nil}
\def\pfilt@i#1,#2,#3\@nil{D^{#1}#2}
\def\filt#1{\expandafter\filt@i#1,,\@nil}
\def\filt@i#1,#2,#3\@nil{D^{(#1)}#2}
\newcommand{\wiso}{E(1)}    
\newcommand{\nwiso}{{E[1]}} 
\newcommand{\eCofCat}{\overline{\CofCat}} 
\newcommand{\hore}{homotopical}
\newcommand{\HoRe}{Homotopical}
\newcommand{\aug}{\mathrm{a}}
\newcommand \DotFill {\leavevmode \cleaders \hb@xt@ .33em{\hss .\hss }\hfill \kern \z@}
\titleformat*{\subsection}{\normalsize\bfseries}
\begin{document}

  \maketitle

  \begin{abstract}
    We prove that the homotopy theory of cofibration categories
    is equivalent to the homotopy theory of cocomplete quasicategories.
    This is achieved by presenting both homotopy theories
    as fibration categories and constructing an explicit equivalence
    between them.
  \end{abstract}

  \section*{Introduction}

There are a few notions that formalize the concept of
a cocomplete homotopy theory, but it is not clear
how they compare to each other.
We consider two of them: cofibration categories and cocomplete quasicategories
and prove that they are indeed equivalent.
More precisely, our main result
(\Cref{fibcat-of-cofcats,fibcat-of-fc-quasicats,nf-equivalence}) is as follows.

\begin{theorem*}
  Both the category of cofibration categories
  and the category of cocomplete quasicategories
  carry structures of fibration categories
  and these two fibration categories are equivalent.
\end{theorem*}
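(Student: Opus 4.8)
The plan is to establish three facts separately — the fibration category structure on $\CofCat$ (\Cref{fibcat-of-cofcats}), the fibration category structure on cocomplete quasicategories (\Cref{fibcat-of-fc-quasicats}), and the fact that the nerve functor $\nf$ between them is a weak equivalence of fibration categories (\Cref{nf-equivalence}) — and then combine them. Everything takes place relative to a fixed universe $\mathfrak{U}$, so that the objects in play are $\mathfrak{U}$-small and $\CofCat$ is locally small; the colimits built into a cofibration category (all small coproducts, pushouts along cofibrations, transfinite composites of cofibrations) are such that the associated quasicategory is cocomplete. For $\CofCat$ I would take as weak equivalences the exact functors satisfying the two approximation properties — equivalently, those inducing equivalences of associated quasicategories — and as fibrations the exact functors with the right lifting property against a fixed set of elementary exact inclusions of finite cofibration categories, among them the inclusion $\init \hookrightarrow \wiso$ of a point into the walking equivalence (so that fibrations lift equivalences of objects) and enough others to lift cofibrations and pushouts. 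Closure of (acyclic) fibrations under pullback is then formal. The real work is the factorization axiom: one uses a path object $P\mathcal{C}$ of a cofibration category $\mathcal{C}$ — the cofibration category of $\wiso$-shaped coherent diagrams in $\mathcal{C}$ whose legs are weak equivalences, with its two evaluation functors $P\mathcal{C} \to \mathcal{C}$ — and factors $F \colon \mathcal{C} \to \mathcal{D}$ by pulling $P\mathcal{D}$ back along $F$ and then enlarging the result by freely attaching cells so that the map onto $\mathcal{D}$ becomes a fibration. The step needing a genuine argument is that $P\mathcal{C}$ really is a path object, which reduces to the two evaluation functors being acyclic fibrations.

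For the cocomplete quasicategories, the weak equivalences are the Joyal equivalences and the fibrations are the colimit-preserving isofibrations. Most of the fibration category axioms are inherited from the Joyal model structure on $\sSet$; what needs care is that ``cocomplete'' and ``colimit-preserving'' are not the fibrant objects and fibrations of an evident model structure, so one checks directly that the pullback of a colimit-preserving isofibration along a colimit-preserving functor is again cocomplete with colimit-preserving projections — colimits in such a pullback are detected on the two factors — and that a colimit-preserving functor factors, by applying the Joyal factorization to the underlying map of simplicial sets, as a colimit-preserving equivalence followed by a colimit-preserving isofibration, using that the intermediate quasicategory inherits cocompleteness and that the mapping-space characterization of colimits is invariant under equivalence. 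A path object is the cotensor $\qcat{C}^{\wiso}$ with its two evaluations, which is cocomplete since cotensors commute with colimits.

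For \Cref{nf-equivalence}, $\nf$ sends a cofibration category $\mathcal{C}$ to a simplicial set assembled from the Reedy-cofibrant coherent diagrams in $\mathcal{C}$ (the nerve of a fibration category, dualized). I would first check that $\nf\mathcal{C}$ is a cocomplete quasicategory — a quasicategory by a Reedy lifting argument against inner horns, and cocomplete because its colimits are computed by the homotopy colimits already present in $\mathcal{C}$ — and that $\nf$ is exact, which follows from the explicit descriptions of the two classes of maps together with lifting properties of diagram categories. That $\nf$ is a weak equivalence of fibration categories then reduces, by the approximation criterion for exact functors, to two conditions. The first — $\nf$ detects weak equivalences — is immediate, since by definition an exact functor $F$ is a weak equivalence of $\CofCat$ precisely when $\nf F$ is an equivalence of quasicategories. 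The second is the approximation property: for every cofibration category $\mathcal{C}$ and every functor $\beta \colon \nf\mathcal{C} \to \qcat{D}$ into a cocomplete quasicategory there must be an exact functor $\alpha \colon \mathcal{C} \to \mathcal{C}'$ and an equivalence $\gamma \colon \qcat{D} \weto \nf\mathcal{C}'$ with $\gamma\beta = \nf\alpha$.

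This approximation property is the main obstacle. Already in the special case where $\mathcal{C}$ is the initial (one-object) cofibration category, it asserts that every cocomplete quasicategory is equivalent to $\nf$ of some cofibration category — i.e.\ essential surjectivity of $\nf$ on homotopy categories. One cannot reduce this to combinatorial model categories, since cocomplete quasicategories need not be presentable, so I would build a cofibration category directly out of a cocomplete quasicategory $\qcat{D}$ — taking its objects to be suitable cell diagrams, or cofibrant replacements, inside a category of simplicial presheaves presenting $\qcat{D}$ (for instance an object of $\csSet$ modelling it), with cofibrations and weak equivalences restricted from an ambient model structure — and then identify $\nf$ of this cofibration category with $\qcat{D}$, using that the nerve of the cofibrant objects of such a presentation recovers the underlying quasicategory. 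The relative statement is then obtained by a gluing argument layered over the absolute case, enlarging $\mathcal{C}$ to $\mathcal{C}'$ by freely adjoining the data of $\beta$ one simplex at a time while preserving exactness. Setting up this construction so that it is natural enough in $\qcat{D}$ and checking that the nerve comes out correctly is where the essential difficulty lies; the remaining steps are bookkeeping with Reedy lifting properties and the two-out-of-three and two-out-of-six properties of equivalences.
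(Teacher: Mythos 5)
Your decomposition into \Cref{fibcat-of-cofcats,fibcat-of-fc-quasicats,nf-equivalence} is the right one, and your treatment of the first two parts matches the paper's in all essentials: the path object $P\cat{C}$ of coherent equivalences with its two evaluations, pullbacks of fibrations computed in the underlying categories with colimits detected on the two factors, and the cotensor $\qcat{C}^\nwiso$ as a path object of cocomplete quasicategories. One small correction there: once $P\cat{D}$ is a path object, Brown's factorization lemma produces the factorization of $F \from \cat{C} \to \cat{D}$ as $\cat{C} \to \cat{C} \times_{\cat{D}} P\cat{D} \to \cat{D}$ directly; no ``freely attaching cells'' step is needed, and none is available, since there is no small object argument in this setting.

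The genuine gap is in your proof of \Cref{nf-equivalence}. You reduce to the approximation property for $\nf$, whose absolute case demands that every cocomplete quasicategory $\qcat{D}$ be equivalent to $\nf \cat{C}$ for some cofibration category $\cat{C}$, and you propose to obtain $\cat{C}$ from the cofibrant objects of a simplicial-presheaf presentation of $\qcat{D}$. But, as you yourself observe, a small $\kappa$-cocomplete quasicategory need not be presentable, so no such presentation by an ``ambient model structure'' exists in general; the proposal is internally inconsistent at exactly the point where the work has to happen. The relative case (``freely adjoining the data of $\beta$ one simplex at a time while preserving exactness'') is likewise not a construction: there is no cell-attachment machinery in $\CofCat$, and it is not explained why the result would be a cofibration category or why the required square would commute up to weak equivalence. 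What is missing is the paper's key device: for a cocomplete quasicategory $\qcat{D}$, take $\dgrm_\kappa \qcat{D}$ to be the slice $\sSet_\kappa \slice \qcat{D}$, with cofibrations the injective maps and with a map over $\qcat{D}$ a weak equivalence when it induces an equivalence of colimits in $\qcat{D}$. This is an elementary cofibration category requiring no presentability; one then shows that $\Psi_{\qcat{D}} \from \qcat{D} \to \nf \dgrm_\kappa \qcat{D}$ is a categorical equivalence and that $\Phi_{\cat{C}} \from \dgrm_\kappa \nf \cat{C} \to \cat{C}$ (a colimit over $DK$, or in the finite case an eventually constant filtration of finite colimits) is a weak equivalence of cofibration categories, so that $\Ho \nf$ is an equivalence with inverse $\Ho \dgrm_\kappa$ --- the functor $\dgrm_\kappa$ need only be homotopical, not exact. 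Note finally that your claim that $\nf$ reflects weak equivalences ``by definition'' is not right: weak equivalences of $\CofCat$ are defined via $\Ho$, and identifying $\Ho \nf \cat{C}$ with $\Ho \cat{C}$ is itself a nontrivial step, carried out in the paper via the calculus of fractions.
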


There are two particularly noteworthy steps in the proof.
One is the existence of a fibration category of cofibration categories
which gives a positive answer to a version of a question posed by Hovey
who asked whether there is a ``model $2$-category of model categories''
\cite{ho}*{Problem 8.1}.
(The main result itself can be seen as an answer to a version of
\cite{ho}*{Problem 8.2}.)

The second one is a construction of a new functor
that to every cofibration category associates a cocomplete quasicategory
called the \emph{quasicategory of frames}
and has a number of convenient properties compared to other known functors
of this type.
This functor implements simplicial localization of cofibration categories
as will be proven in an upcoming paper \cite{ks}.

As an example of an application of this construction,
it can be shown that the simplicial localization of
any categorical model of dependent type theory
is a locally cartesian closed quasicategory \cite{k}.
This problem has proven difficult when working with known models of
simplicial localization.
However, every categorical model of type theory is a fibration category
\cite{akl}*{Theorem 2.2.5} and hence, by results of the present paper,
its localization is a quasicategory with finite limits.
Moreover, having an explicit description of the localization
in terms of frames makes quasicategories arising from type theory convenient
to work with.
This result can be seen as a step towards describing internal languages
of higher categories.

In the remainder of the introduction we will discuss some background
on the \emph{homotopy theory of homotopy theories}.
In order to explain what exactly we mean by a ``homotopy theory''
and the ``homotopy theory of homotopy theories'' we will give
a brief overview of various approaches to abstract homotopy theory.
They will be very roughly classified into two types: the classical ones
in the spirit of Quillen's \emph{homotopical algebra}\footnote{
Usually, the phrase ``homotopical algebra'' is used to refer
to Quillen model categories.
Here, we extend its meaning to various related notions such as
Brown's categories of fibrant objects or Thomason model categories.}
and the modern ones in the spirit of \emph{higher category theory}.

\subsection*{Homotopical algebra: classical models of homotopy theories}

In the past 50 years many different approaches to abstract homotopy theory
have been introduced.
Perhaps surprisingly, the first such approach,
the theory of \emph{model categories},
remains one of the most intricate ones to the present day.
Model categories were introduced by Quillen \cite{q}.
He defined a model category as a category equipped with
three classes of morphisms: weak equivalences, cofibrations and fibrations
subject to certain conditions that axiomatized well-known methods
of algebraic topology and put them into an abstract framework.
This framework proved to be very powerful and widely applicable
and today it constitutes one of the main tool-sets of homotopy theory.
An important feature of the theory of model categories is that
it allows for comparisons between different homotopy theories via the notion
of a \emph{Quillen adjoint pair}.
A typical example of a problem that can be solved using model categories is
that classical colimits are usually not homotopy invariant
and hence they have to be replaced by better behaved \emph{homotopy colimits}.
If $\cat{M}$ is a model category and $J$ is a small category and we can find
a model structure on the category of diagrams $\cat{M}^J$ \st{}
the colimit functor $\colim_J \from \cat{M}^J \to \cat{M}$
is a left Quillen functor (i.e.\ the left part of a Quillen adjoint pair),
then we can define the associated homotopy colimit functor
as the \emph{left derived functor} of $\colim_J$.
Dually, homotopy limit functors can be defined as the right derived functors
of classical limit functors.
This is achieved by replacing ill-behaved diagrams by better ones,
i.e.\ their (co)fibrant replacements, before applying (co)limit functors.
Contributions to the theory of model categories made by various authors
are far too numerous to be listed here.
Let us just recommend \cite{hi}, \cite{ho} and \cite{jo}*{Appendix E}
as general references.

Even though model categories are very versatile it was not long until
mathematicians realized that not every theory with homotopical content
fits easily into this framework.
K. Brown \cite{br} was the first to propose an alternative approach,
namely \emph{categories of fibrant objects}
(which will be referred to as \emph{fibration categories} in this paper).
Brown observed that the abstract notions of cofibrations and fibrations
remain to be useful under a weaker axiomatization
than the one used to define model categories.\footnote{
Brown's motivating example was the homotopy theory of sheaves of spectra.
A model category presenting this homotopy theory was eventually constructed
in \cite{ja}.}
A fibration category is a category equipped with two classes of morphisms:
weak equivalences and fibrations subject to conditions that follow from
the axioms of a model category but are, in fact,
satisfied by a larger class of examples as discussed in \Cref{sec:examples}.
There is, of course, the dual theory of \emph{cofibration categories} and
this is the notion that we will concentrate on throughout most of this paper.
Moreover, so called \emph{exact functors} are a counterpart to Quillen functors
and it is still possible to construct homotopy colimit functors
as left derived functors in the case of cofibration categories
and dually for fibration categories.
(The construction is similar to but not quite the same as for model categories
as explained in \Cref{sec:hocolims-and-diagrams,sec:infinite-hocolims}.)
Cofibration and fibration categories never became nearly as popular as
model categories, but since they were first introduced a number of contributions
has been made by, among the others, Anderson \cite{a},
Baues \cites{ba-ah,ba-cf}, Cisinski \cite{c-cd}
and R\u{a}dulescu-Banu \cite{rb}.
Moreover, Waldhausen \cite{wa} introduced a closely related notion of
a \emph{category with cofibrations and weak equivalences}
(nowadays usually called a \emph{Waldhausen category}) for the purpose of
developing a general framework for algebraic K-theory.
Subsequently, a close connection to abstract homotopy theory was made
by Cisinski \cite{c-ik}.

It is also worth pointing out that more approaches in a similar spirit
are possible.
For example, in 1995 Thomason \cite{we} introduced a modification of the notion
of a model category that addressed certain technical shortcoming\footnote{
This shortcoming is that it is not known in general how to construct
a model structure on the category of diagrams in a model category.
(Co)fibration categories also alleviate this problem to some extent as discussed
in \Cref{sec:hocolims-and-diagrams}.}
of Quillen's original axioms.

While abstract homotopy theory in the spirit of Quillen's homotopical algebra
was being developed throughout the years, an important conceptual progress
has been made by realizing that in model categories
(and other similar structures) all the homotopical information is contained in
the class of weak equivalences and the remaining structure
plays only an auxiliary role.
A \emph{relative category} is a category equipped with a class of morphisms,
called weak equivalences, subject to no special conditions other than
being closed under composition and containing all the identities.
The first important contribution to the theory of relative categories was made
by Gabriel and Zisman \cite{gz} who introduced a useful method of constructing
the homotopy category of a (nice enough) relative category called
the \emph{calculus of fractions}.
This method is an important motivation for the central construction
of this paper as explained on p.\ \pageref{fractions-motivation}.
Later, Dwyer and Kan \cites{dk1,dk2,dk3} defined
the \emph{simplicial localization} of an arbitrary relative category $\cat{C}$,
i.e.\ certain simplicial category $L \cat{C}$ that enhances
the homotopy category of $\cat{C}$ in the sense that
$\pi_0 L \cat{C} \iso \Ho \cat{C}$.
They also verified that if $\cat{C}$ carries a model structure,
then the mapping spaces obtained this way are weakly equivalent to
the mapping spaces coming from the model structure
via so called \emph{framings}.
Thus they have indeed demonstrated that all the homotopical content of
a model category is contained in its weak equivalences.
This statement was made into a sharp result
(that will be later stated more precisely) by Barwick and Kan \cite{bk1}.
Morphisms of relative categories are \emph{relative functors},
i.e.\ functors that preserve weak equivalences, but this formalism
is not structured enough to yield a reasonable theory of derived functors.
However, \emph{homotopical categories} were introduced in \cite{dhks}
as relative categories satisfying the ``2 out of 6 property''
where it was observed that they are much better behaved than
general relative categories.
In fact, it is possible to use homotopical categories as an abstract framework
for derived functors, but constructing derived functors still requires using
richer structures of homotopical algebra.

\subsection*{Higher category theory: modern models of homotopy theories}

Since Quillen introduced homotopical algebra, a completely new approach
to abstract homotopy theory has been invented
coming from higher category theory.
It would be unrealistic to adequately summarize the history
of higher category theory here.
We will only briefly mention the aspects most
relevant to the topic at hand.
A broader historical perspective can be found in \cite{si}*{Chapter 1}
and concise mathematical overviews in \cite{be} and \cite{p}.

Informally speaking, a \emph{higher category} is a category-like structure
that, in addition to objects and morphisms between them, has $2$-morphisms between
morphisms, $3$-morphisms between $2$-morphisms etc., possibly ad infinitum.
Moreover, these higher morphisms are equipped with composition operations
which are associative but only in a weak sense, i.e.\ up to natural equivalences
specified by higher morphisms.
Making this casual description into a precise definition is a big challenge
which is still not resolved in full generality.

Fortunately, in abstract homotopy theory we are not forced to consider
arbitrary higher categories but only so called \emph{$(\infty,1)$-categories},
i.e.\ the ones were all morphisms above dimension $1$ are weakly invertible.
Such structures can serve as models of homotopy theories
where we think of objects as homotopy types in a given homotopy theory,
morphisms as maps of these homotopy types,
$2$-morphisms as homotopies between maps
and higher morphisms as higher homotopies.
One of the most important reasons why it should be fruitful
to think of homotopy theories in terms of higher category theory is that
it should provide a good framework for stating universal properties of
various homotopy theoretic constructions (e.g.\ homotopy colimits)
which are difficult to express in the language of homotopical algebra.
A result of Barwick and Kan discussed in the next subsection demonstrates
that $(\infty,1)$-categories indeed capture
the classical notion of a homotopy theory.
The problem of formalizing the notion of an $(\infty,1)$-category
has been solved in multiple ways,
we will mention a few of the most notable ones.

The best developed notion of an $(\infty,1)$-category
(and the one used in this paper) is that of a \emph{quasicategory}.
It was introduced by Boardman and Vogt in \cite{bv} under the name
\emph{simplicial set satisfying the restricted Kan condition}.
The original purpose of this definition was to provide a good context for
the treatment of homotopy coherent diagrams as was done by Vogt \cite{v73}
and Cordier and Porter \cites{c,cp86,cp97}.
However, it took quite a long time before the full potential of quasicategories
was realized mostly by Joyal and Lurie in the work culminating in \cite{jo}
and \cite{l}.
In \Cref{ch:qcats} we give a brief treatment of the basic theory
of quasicategories.
One of the crucial advantages of quasicategories is that they make it easy
to state universal properties of homotopy colimits.
Informally, a homotopy colimit of a diagram in an $(\infty,1)$-category
should be given as a \emph{universal cone}, i.e.\ a cone \st{}
the \emph{mapping space} into any other cone is contractible.
Using quasicategories, this definition can be formalized in a practical way
as explained in \Cref{sec:colimits}.

Another early definition of $(\infty,1)$-categories was via
\emph{simplicially enriched categories} (or \emph{simplicial categories})
although it was not initially presented as such.
Simplicial categories were considered by Dwyer and Kan \cites{dk1,dk2}
as a part of their work on simplicial localization mentioned above,
but it was not until much later when Bergner \cite{be-mcsc} established
simplicial categories as models of $(\infty,1)$-categories.
This may seem rather surprising at the first glance since simplicial categories
come with strict composition operations.
However, as it turns out, when seen from the correct homotopical perspective
these strict composition operations already represent all possible
``weak composition operations''.
A drawback of this approach is that, unlike quasicategories,
simplicial categories make it difficult to express universal properties
of homotopy colimits and other homotopy theoretic constructions.
In fact, such difficulties could be seen as motivations for the development of
the theory of homotopy coherent diagrams using quasicategories
cited in the previous paragraph.

As an attempt to rectify the problem of composition operations
of simplicial categories being too strict, Dwyer, Kan and Smith \cite{dks}
introduced \emph{Segal categories} (but they did not give them a name).
Roughly speaking, a Segal category is a category ``weakly enriched''
in simplicial sets.
The theory of Segal categories and their generalizations
was developed extensively by Hirschowitz and Simpson \cite{hs}.
A comprehensive exposition can be found in \cite{si}.

Segal categories are more flexible than simplicial categories.
However, they are not quite as flexible as one could hope
and the difficulties can be traced to the fact that
the underlying $\infty$-groupoid of an $(\infty,1)$-category
is not easily accessible from its presentation as a Segal category.
A modified approach has been proposed by Rezk \cite{r} who defined
\emph{complete Segal spaces} where the underlying $\infty$-groupoid is
explicitly built into the structure of an $(\infty,1)$-category.
The theory of complete Segal spaces has various advantages, e.g.\ it is
presented by a model category (see the next subsection)
with unusually good properties compared to other models.
It is also suitable for internalizing into homotopy theories other than
the homotopy theory of spaces.

The original problem of the lack of a precise mathematical definition
of an $(\infty,1)$-category has been replaced by the problem of
having too many such definitions all of which look equally reasonable.
However, the multitude of notions of higher categories is not really a problem
since they have different advantages.
Simplicial categories and Segal categories serve as sources of examples
which may not be easy to construct directly as quasicategories
or complete Segal spaces which in turn provide good contexts for carrying out
higher categorical arguments.

\subsection*{The homotopy theory of homotopy theories}

We have argued that the abundance of notions of $(\infty,1)$-categories
can be helpful provided that we can properly address the question
of comparison between various definitions.
As it turns out, abstract homotopy theory itself provides a framework
for such comparisons.
The homotopy theories of each of the four types of $(\infty,1)$-categories
discussed above have been described as model categories.
(Which typically means that these models have been exhibited as fibrant objects
of a model category.)
This was done by Joyal for quasicategories \cite{jo},
by Bergner for simplicial categories \cite{be-mcsc},
by Hirschowitz and Simpson for Segal categories \cite{hs}
and by Rezk for complete Segal spaces \cite{r}.
It was subsequently proven that all these model categories
are \emph{Quillen equivalent}, i.e.\ that they present the same homotopy theory
which we call the \emph{homotopy theory of $(\infty,1)$-categories}.
Quillen equivalences between simplicial categories, Segal categories
and complete Segal spaces were established by Bergner \cite{be-3m}.
Moreover, Joyal and Tierney \cite{jt} constructed a Quillen equivalence
(two different ones, in fact) between quasicategories and complete Segal spaces.

Since we introduced $(\infty,1)$-categories as models of homotopy theories,
this leads us to consider the ``homotopy theory of homotopy theories''.
However, even though we already know that various definitions of
an $(\infty,1)$-category encode the same notion of a homotopy theory,
the two occurrences of ``homotopy theory'' in the phrase above
still have seemingly different meanings.

In order to address this issue we recall from the preceding discussion
that the actual content of the model categories above depends on
the notions of their weak equivalences and not on the model structures as such.
This means that in order to talk about ``homotopy theory of homotopy theories''
we have to fix a notion of equivalence of homotopy theories.
Dwyer and Kan \cite{dk3} proved that a Quillen functor
between model categories is a Quillen equivalence \iff{} it induces
an equivalence of their homotopy categories and weak homotopy equivalences
of the mapping spaces in their simplicial localizations
(i.e.\ it is a \emph{Dwyer--Kan equivalence} in the modern language).
By combining these observations we arrive at the conclusion that
if we want to think of model categories or relative categories
as homotopy theories they always have to be accompanied by the notions
of Quillen equivalences or Dwyer--Kan equivalences.
(Similarly, we will define weak equivalences of cofibration categories
in \Cref{ch:cof-cats}.)

This means that there is a way of giving the same meaning to both occurrences
of ``homotopy theory'' in the phrase ``homotopy theory of homotopy theories'',
namely, by interpreting it as the ``relative category of relative categories''
with Dwyer--Kan equivalences as weak equivalences.
Moreover, it is now a well posed question whether this notion of homotopy theory
is equivalent to the higher categorical ones.
Namely, we can ask whether the underlying relative category of any of
the four model categories above is Dwyer--Kan equivalent to
the relative category of relative categories.
This is indeed true by the result of Barwick and Kan \cites{bk1,bk2}.
More precisely, they constructed a model structure on the category of
relative categories and proved that it is Quillen equivalent to
the Rezk model structure for complete Segal spaces.

All these considerations suggest that we should be able to talk about
the ``$(\infty,1)$-category of $(\infty,1)$-categories'' as an alternative
to the ``homotopy theory of homotopy theories''.
This is indeed possible and leads to a very interesting result that
the ``$(\infty,1)$-category of $(\infty,1)$-categories''
can be characterized axiomatically.
This was first done by To\"{e}n \cite{t} in the language of homotopical algebra.
Namely, he gave sufficient conditions for a model category
to be Quillen equivalent to the Rezk model category for complete Segal spaces.
Later, Barwick and Schommer-Pries \cite{bsp} formulated
an alternative axiomatization purely in the language of higher category theory.
(In fact, their theory applies to $(\infty,n)$-categories,
i.e.\ the ones where morphisms are only required to be weakly invertible above
a fixed finite dimension $n$.)

\subsection*{New results}

Just as different notions of $(\infty,1)$-categories have different advantages,
higher category theory as such has different advantages
than homotopical algebra.
A good exemplification of these differences is the way both theories approach
homotopy invariant constructions such as homotopy colimits.
In higher category theory we define them via universal properties,
but such definitions do not address the problem of
actually constructing homotopy colimits and it seems that every proof
of cocompleteness of an $(\infty,1)$-category reduces in one way or another to
homotopical algebra.
On the other hand, while homotopical algebra provides useful tools for
explicit constructions of homotopy colimits, it makes it next to impossible to
talk about their universal properties.
Thus both approaches play important and complementary roles
in abstract homotopy theory.

The state of affairs presented above does not explain how homotopical algebra
(which we can now understand as structured theory of relative categories)
fits into the context of higher category theory.
The purpose of this paper is to solve this very problem.

It should be apparent that while general relative categories present
a wide variety of homotopy theories (in fact all of them),
model categories and cofibration categories only present
some special homotopy theories, i.e.\ the ones having some specific properties
(or perhaps equipped with some specific structure).
One of the main results of this paper is that the homotopy theories presented
by cofibration categories are precisely the cocomplete ones.
Similar remarks apply to morphisms of homotopy theories.
As mentioned, each of the notions discussed above has associated with it
a natural notion of a morphism: Quillen functors for model categories,
exact functors for cofibration categories
and relative functors for relative categories.
Again, relative functors present arbitrary morphism of homotopy theories,
but Quillen functors and exact functors are more special.
In this paper we prove that exact functors between cofibration categories
correspond to homotopy colimit preserving morphisms
of cocomplete homotopy theories.

It is important to realize that the comparison of homotopical algebra
to higher category theory is an entire family of problems, one for each notion
of homotopical algebra.
That is because different notions will present different types of
homotopy theories, e.g.\ in contrast to cofibration categories homotopy theories
presented by model categories are both complete and cocomplete.
This paper addresses only the case of cofibration categories
(and dually fibration categories) and does not seem to apply to
model categories.
However, our individual techniques are potentially useful even
in the theory of model categories.

The main result is that the homotopy theory of cofibration categories
is equivalent to the homotopy theory of cocomplete quasicategories.
The examples of equivalences of homotopy theory discussed so far suggest that
while model categories and Quillen equivalences
do not carry more homotopical information than relative categories
and Dwyer--Kan equivalences, it is usually much easier
to exploit homotopical algebra to construct Quillen equivalences rather than
construct Dwyer--Kan equivalences by hand.
Unfortunately, the categories of cofibration categories
and cocomplete quasicategories do not carry model structures
(e.g.\ since they have no initial objects).
We will circumvent this problem by showing that
they are both fibration categories.

In \Cref{ch:cof-cats} we introduce cofibration categories and summarize
the well known techniques of homotopical algebra that will be use throughout
this thesis.
We introduce morphisms and weak equivalences of cofibration categories
which specifies the homotopy theory of cofibration categories.
Then we define fibrations of cofibration categories and prove that
they make the category of (small) cofibration categories into
a fibration category.
Finally, we discuss some basic techniques of constructing fibrations
and weak equivalences of cofibration categories
and we mention some examples which demonstrate versatility of this approach
to homotopical algebra.

\Cref{ch:qcats} contains the basic theory of quasicategories
which is mostly cited from \cite{jo} and \cite{ds}.
In particular, we establish fibration categories of quasicategories
and of cocomplete quasicategories.
This section contains no new results, except possibly for the existence of
the latter fibration category.
(The completeness of the homotopy theory of cocomplete quasicategories
is discussed in \cite{l}, but it is not stated in terms of
fibration categories.)

We start \Cref{ch:qcats-of-frames} by constructing a functor
from cofibration categories to cocomplete quasicategories.
To each cofibration category $\cat{C}$ we associate a nerve-like simplicial set
denoted by $\nf \cat{C}$ and called
the \emph{quasicategory of frames in $\cat{C}$}.
(The letter $\mathrm{f}$ in $\nf$ stands either for \emph{frames}
since those are the objects in $\nf \cat{C}$ or for \emph{fractions}
since the morphisms in $\nf \cat{C}$ are certain generalizations
of left fractions.)
The first step in the proof of the main theorem is to show that $\nf$
is an exact functor between the fibration categories mentioned above.
(And in particular that it takes values in cocomplete quasicategories
since it is not apparent from the definition.)
This proof is somewhat involved
and occupies the entire \Cref{ch:qcats-of-frames}.

The second step, presented in \Cref{ch:cofcats-of-dgrms},
is to prove that $\nf$ is a weak equivalence of fibration categories.
To this end we associate with every cocomplete quasicategory $\qcat{D}$
a cofibration category $\dgrm \qcat{D}$ called
the \emph{category of diagrams in $\qcat{D}$}.
This yields a functor $\dgrm$ which is not exact
but is an inverse to $\nf$ up to weak equivalence.
This suffices to conclude that $\nf$ is an equivalence of homotopy theories.

We should explain that parts of the arguments outlined above depend on
certain set theoretic assumptions.
Most of the results are parametrized by a regular cardinal number $\kappa$
and concern small $\kappa$-cocomplete cofibration categories
and small $\kappa$-cocomplete quasicategories, i.e.\ the ones admitting
$\kappa$-small (homotopy) colimits.
We will suppress this parameter as much as possible, but there are situations
where referring to it is unavoidable.
In the first two and a half chapters we set $\kappa = \aleph_0$,
i.e.\ we consider finitely cocomplete homotopy theories.
This is done merely to simplify the exposition,
the arguments for $\kappa > \aleph_0$ require only minor modifications
which are explained in \Cref{sec:infinite-hocolims}.
However, from this point on the distinction between these two cases starts
playing a significant role.
As it turns out, the case of $\kappa > \aleph_0$ is much easier
for technical reasons discussed in the beginning
of \Cref{sec:cocompleteness-finite}.
The rest of \Cref{ch:qcats-of-frames} is split into
\Cref{sec:cocompleteness-infinite} which deals with $\kappa > \aleph_0$
and \Cref{sec:cocompleteness-finite} which deals with $\kappa = \aleph_0$.
Similarly, the main part of \Cref{ch:cofcats-of-dgrms} is split into
\Cref{sec:proof-infinite} which deals with $\kappa > \aleph_0$
and \Cref{sec:proof-finite} which deals with $\kappa = \aleph_0$.
The reader is encouraged to read the arguments for $\kappa > \aleph_0$ first.

We work only with small cofibration categories and quasicategories
and do not explicitly mention Grothendieck universes, but it is easy
to interpret all the results in any higher universe of interest.
It suffices to fix a Grothendieck universe $\mathcal{U}$
with $\kappa \in \mathcal{U}$ and substitute ``$\mathcal{U}$-small''
for ``small''.
The only non-$\mathcal{U}$-small categories under consideration
are the categories of
$\mathcal{U}$-small $\kappa$-cocomplete cofibration categories,
of $\mathcal{U}$-small quasicategories
and of $\mathcal{U}$-small $\kappa$-cocomplete quasicategories.
They can be taken to be $\mathcal{V}$-small
for some larger universe $\mathcal{V}$ if desirable.

  \section*{Acknowledgments}

This paper is a version of my thesis \cite{sz} which was written
while I was a doctoral student
in \emph{Bonn International Graduate School in Mathematics}
and, more specifically,
\emph{Graduiertenkolleg 1150 ``Homotopy and Cohomology''}
and \emph{International Max Planck Research School for Moduli Spaces}.
I want to thank everyone involved for creating an excellent working
environment.\footnote{Moreover,
this material is partially based upon work supported by
the National Science Foundation under Grant No. 0932078 000 while the author
was in residence at the Mathematical Sciences Research Institute in Berkeley,
California, during the Spring 2014 semester.
}

I want to thank Clark Barwick, Bill Dwyer, Andr\'{e} Joyal, Chris Kapulkin,
Lennart Meier, Thomas Nikolaus, Chris Schommer-Pries, Peter Teichner
and Marek Zawadowski for conversations on various topics
which were very beneficial to my research.

I am especially grateful to Viktoriya Ozornova and Irakli Patchkoria
for reading an early draft of my thesis.
Their feedback helped me make many improvements and avoid numerous errors.

Above all, I want to express my gratitude to my supervisor Stefan Schwede
whose expertise was always invaluable and without whose support
this thesis could not have been written.

  \section{Cofibration categories}
  \label{ch:cof-cats}
  We start this section by introducing cofibration categories.
The definition stated here is almost the same as (the dual of)
Brown's original definition \cite{br}*{p.\ 420}.
(What he called \emph{categories of fibrant objects}
we call fibration categories.)
We do not commit much space to the discussion of basic properties
of cofibration categories, we refer the reader to \cite{rb} for these.
Instead, the purpose of this section is to establish the homotopy theory
of cofibration categories in the form of a fibration category.
This means that we will consider the category of cofibration categories
with exact functors as morphisms and we will define weak equivalences
and fibrations in this category and verify that they satisfy the duals
of the axioms given below.

\subsection{Definitions and basic properties}

\begin{definition}
  A \emph{cofibration category} is a category $\cat{C}$
  equipped with two subcategories: the subcategory of \emph{weak equivalences}
  (denoted by $\weto$) and the subcategory of \emph{cofibrations}
  (denoted by $\cto$) \st{} the following axioms are satisfied.
  (Here, an \emph{acyclic cofibration} is a morphism that is
  both a weak equivalence and a cofibration.)
  \begin{itemize}
  \item[(C0)] \index{2 out of 6} Weak equivalences satisfy
    the ``2 out of 6'' property, i.e.\ if
    \begin{ctikzpicture}
      \matrix[diagram]
      {
        |(W)| W & |(X)| X & |(Y)| Y & |(Z)| Z \\
      };

      \draw[->] (W) to node[above] {$f$} (X);
      \draw[->] (X) to node[above] {$g$} (Y);
      \draw[->] (Y) to node[above] {$h$} (Z);
    \end{ctikzpicture}
    are morphisms of $\cat{C}$ \st{} both $g f$ and $h g$ are weak equivalences,
    then so are $f$, $g$ and $h$ (and thus also $h g f$).
  \item[(C1)] Every isomorphism of $\cat{C}$ is an acyclic cofibration.
  \item[(C2)] An initial object exists in $\cat{C}$.
  \item[(C3)] Every object $X$ of $\cat{C}$ is cofibrant,
    i.e.\ if $\init$ is the initial object of $\cat{C}$,
    then the unique morphism $\init \to X$ is a cofibration.
  \item[(C4)] Cofibrations are stable under pushouts along arbitrary morphisms
    of $\cat{C}$ (in particular these pushouts exist in $\cat{C}$).
    Acyclic cofibrations are stable under pushouts along arbitrary morphisms
    of $\cat{C}$.
  \item[(C5)] Every morphism of $\cat{C}$ factors as a composite
    of a cofibration followed by a weak equivalence.
  \end{itemize}
\end{definition}

Definitions of (co)fibration categories found throughout the literature
vary in details.
Since we use \cite{rb} as our main source we point out that in the terminology
of this paper the definition above corresponds to ``precofibration categories
with all objects cofibrant and the ``2 out of 6'' property''.
Comparisons to other definitions can be found in \cite{rb}*{Chapter 2}.

The above axioms describe \emph{finitely cocomplete} cofibration categories.
Here, cocompleteness really means ``homotopy cocompleteness''
since cofibration categories do not necessarily have all finite strict colimits,
but they have all finite homotopy colimits.
Their construction will be discussed in \Cref{sec:hocolims-and-diagrams}.
If we want to consider cofibration categories with more homotopy colimits
we need to assume some extra axioms which will be discussed
in \Cref{sec:infinite-hocolims}.

Cofibration categories can be seen as generalizations of model categories.
Namely, if $\cat{M}$ is a model category, then its full subcategory
of cofibrant objects $\cat{M}_{\mathrm{cof}}$ with weak equivalences
and cofibrations inherited from $\cat{M}$ satisfies the above axioms.
Many of the standard tools of homotopical algebra (that do not refer
to fibrations, e.g.\ left homotopies, cofiber sequences or homotopy colimits)
depend only on these axioms and hence are available for cofibration categories,
although they sometimes differ in technical details.
These techniques are discussed in great detail in \cite{rb}.
There are examples of (co)fibration categories that do not come
from model categories.
Some of those are presented in \Cref{sec:examples}.

Before discussing new results about homotopy theory of cofibration categories,
we collect some preliminaries, mostly following \cite{rb}.
We fix a cofibration category $\cat{C}$.

\begin{definition}
  \enumhack
  \begin{enumerate}[label=\dfn]
  \item A \emph{cylinder} of an object $X$ is a factorization
    of the codiagonal morphism $X \coprod X \to X$
    as $X \coprod X \cto IX \weto X$.
  \item A \emph{left homotopy} between morphisms $f, g \from X \to Y$
    via a cylinder $X \coprod X \cto IX \weto X$ is a commutative square
    of the form
    \begin{ctikzpicture}
      \matrix[diagram]
      {
        |(X)|  X \coprod X & |(Y)| Y \\
        |(IX)| IX          & |(Z)| Z \text{.} \\
      };

      \draw[->]  (X) to node[above] {$[f, g]$} (Y);
      \draw[cof] (Y) to node[right] {$\we$}    (Z);

      \draw[cof] (X)  to (IX);
      \draw[->]  (IX) to (Z);
    \end{ctikzpicture}
  \item Morphisms $f, g \from X \to Y$ are \emph{left homotopic}
    (notation: $f \htp_l g$) if there exists a left homotopy between them
    via some cylinder on $X$.
  \end{enumerate}
\end{definition}

The definition of left homotopies differs from the standard definition
as usually given in the context of model categories
where the morphism $Y \acto Z$ is required to be the identity.
This modification is dictated by the lack of fibrant objects
in cofibration categories and makes the definition well-behaved
for arbitrary $Y$ while the standard definition in a model category
is only well-behaved for a fibrant $Y$.

We denote the homotopy category of $\cat{C}$ (i.e.\ its localization \wrt{}
weak equivalences) by $\Ho \cat{C}$ and for a morphism $f$ of $\cat{C}$
we write $[f]$ for its image under the localization functor
$\cat{C} \to \Ho \cat{C}$.
The homotopy category can be constructed in two steps: first dividing out
left homotopies and then applying the calculus of fractions.

\begin{proposition}
  The relation of left homotopy is a congruence on $\cat{C}$.
  Moreover, every morphism of $\cat{C}$ that becomes
  an isomorphism in $\cat{C} \quot \mathord{\htp_l}$ is a weak equivalence.
  Thus left homotopic morphisms become equal in $\Ho \cat{C}$
  and $\cat{C} \quot \mathord{\htp_l}$ comes equipped with a canonical functor
  $\cat{C} \quot \mathord{\htp_l} \to \Ho \cat{C}$.
\end{proposition}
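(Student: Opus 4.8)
The plan is to dispatch the three claims in order, with essentially all of the real work concentrated in the first (that $\htp_l$ is a congruence); the rest is short diagram chasing with the axioms. For the congruence I must check that $\htp_l$ is an equivalence relation on each $\cat{C}(X,Y)$ and that it is compatible with composition. \emph{Reflexivity}: any cylinder $X \coprod X \cto IX \weto X$ gives $f \htp_l f$ via the square whose bottom edge is $IX \weto X \xrightarrow{f} Y$ and whose right edge is $\id_Y$ (an acyclic cofibration by (C1)). \emph{Symmetry}: precompose a given homotopy with the swap automorphism of $X \coprod X$; by (C1) it is an acyclic cofibration, so the left edge stays a cofibration and the codiagonal is unchanged, producing a cylinder that witnesses the reversed homotopy. \emph{Transitivity}: given $f \htp_l g$ via a cylinder $IX$ (ends $j_0,j_1$, structure map $\sigma$) and $g \htp_l h$ via $I'X$, concatenate by forming the pushout $K := IX \coprod_X I'X$ along $j_1 \from X \to IX$ and $j'_0 \from X \to I'X$. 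Each end of a cylinder is an acyclic cofibration — it is a composite $X \cto X \coprod X \cto IX$ and becomes the identity after $\sigma$ — so by (C4) the maps $IX \to K$ and $I'X \to K$ are acyclic cofibrations and the induced $K \to X$ is a weak equivalence by ``2 out of 3''; factoring $X \coprod X \to K$ through $X \coprod I'X$ (the second leg being a pushout of the cofibration $X \coprod X \cto IX$, the first a coproduct of cofibrations) shows the two outer ends assemble to a cofibration $X \coprod X \cto K$, so $K$ is a cylinder, over which the two homotopies glue once their codomains are amalgamated along $Y$. \emph{Postcomposition}: given $f \htp_l g$ via a homotopy $IX \to Z$ with $Y \acto Z$ and $k \from Y \to Y'$, push $Z$ out along $k$ to get $Y' \acto Z \coprod_Y Y'$ and compose, obtaining a homotopy $kf \htp_l kg$ over the same cylinder. \emph{Precomposition} is the one delicate point: for $h \from W \to X$ one needs a cylinder $IW$ on $W$ equipped with a map $IW \to IX$ lying over $h$, and then composing a homotopy for $f \htp_l g$ with this map gives one for $fh \htp_l gh$. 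The existence of such a cylinder over $h$ is the standard ``mapping cylinder'' lemma for cofibration categories; I would cite \cite{rb} for it, and this is the step I expect to carry the technical weight.

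For the second claim I first record that left-homotopic maps are weak equivalences together: in a homotopy for $a \htp_l b$ one has $Hj_0 = \iota a$ and $Hj_1 = \iota b$ with $j_0$, $j_1$ and $\iota \from Y \acto Z$ all weak equivalences, so ``2 out of 3'' (a formal consequence of (C0) together with the fact that identities are weak equivalences) moves ``being a weak equivalence'' from $a$ to $H$ to $b$ and conversely. Now suppose $f \from X \to Y$ becomes invertible in $\cat{C} \quot \mathord{\htp_l}$ and pick $g$ representing its inverse, so $gf \htp_l \id_X$ and $fg \htp_l \id_Y$. Since $\id_X$ and $\id_Y$ are weak equivalences, so are $gf$ and $fg$; applying (C0) to $X \xrightarrow{f} Y \xrightarrow{g} X \xrightarrow{f} Y$, whose two two-fold composites are $gf$ and $fg$, we conclude that $f$ (and $g$) is a weak equivalence.

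For the last claim, if $f \htp_l g$ via $IX$, a homotopy $H \from IX \to Z$ and $\iota \from Y \acto Z$, then in $\Ho \cat{C}$ we get $[\iota][f] = [H][j_0]$ and $[\iota][g] = [H][j_1]$; since $[\sigma]$ is invertible and $\sigma j_0 = \id_X = \sigma j_1$ we have $[j_0] = [j_1]$, hence $[\iota][f] = [\iota][g]$, hence $[f] = [g]$ because $[\iota]$ is invertible. So the localization functor $\cat{C} \to \Ho \cat{C}$ identifies left-homotopic morphisms, and since $\htp_l$ is a congruence this functor factors uniquely through the quotient, yielding the claimed canonical functor $\cat{C} \quot \mathord{\htp_l} \to \Ho \cat{C}$. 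The only nonroutine ingredient in the whole argument is the cylinder-over-a-map lemma underlying the precomposition half of the congruence.
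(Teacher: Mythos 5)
Your proposal is correct, and it is considerably more detailed than the paper's own proof, which disposes of the entire first claim by citing \cite{rb}*{Theorem 6.3.3(1)} and of the remaining two by the phrase ``straightforward 2 out of 3 arguments''. Your treatment of the second claim (two out of six applied to $X \to Y \to X \to Y$ after observing that left homotopic maps are weak equivalences together) and of the third (via $[j_0]=[j_1]=[\sigma]^{-1}$ and invertibility of $[\iota]$) is exactly what the paper intends, and your verifications of reflexivity, symmetry, transitivity and postcomposition are sound reconstructions of pieces of the cited result. The one caveat concerns the step you yourself flag as carrying the technical weight: the ``cylinder over a map'' lemma for precomposition. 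Be aware that the direction you need --- a cylinder $IW$ on $W$ mapping into a \emph{given} cylinder $IX$ compatibly with $h$ --- is the genuinely nontrivial one. The easy factorization argument goes the other way: from a given $IW$ one pushes out $W \coprod W \cto IW$ along $h \coprod h$ and factors the induced map to $X$, producing a cylinder on $X$ that \emph{receives} a map from $IW$. To obtain the direction you want, one must first show that the left homotopy relation is independent of the choice of cylinder (so that the homotopy witnessing $f \htp_l g$ can be transported to a cylinder constructed to receive $IW$), and that cylinder-comparison machinery is precisely the content of the development surrounding \cite{rb}*{Theorem 6.3.3(1)}. Since the paper defers the whole congruence statement to that reference anyway, your deferral of this single sub-lemma is entirely in keeping with it, but you should not expect it to follow from a one-line application of (C5).
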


\begin{proof}
  The first statement is \cite{rb}*{Theorem 6.3.3(1)}.
  The remaining ones follow by straightforward ``2 out of 3'' arguments.
\end{proof}

The next theorem is a crucial tool in the theory of cofibration categories
and can be used to verify many of their fundamental properties.
It says that up to left homotopy all cofibration categories satisfy
the left calculus of fractions in the sense of Gabriel and Zisman
\cite{gz}*{Chapter I}.
This fact was first proven by Brown \cite{br}*{Proposition I.2} and can be seen
as an abstraction of the classical construction of the derived category
of a ring, see e.g.\ \cite{gm}*{Theorem III.4.4}.
In general, constructing $\Ho \cat{C}$ may involve using
arbitrarily long zig-zags
of morphisms in $\Ho \cat{C}$ and identifying them via arbitrarily long chains
of relations.
However, the previous proposition implies that
$\cat{C} \quot \mathord{\htp_l} \to \Ho \cat{C}$ is also a localization functor
and in that case \Cref{calculus-of-fractions} says that it suffices to consider
two-step zig-zags (called \emph{left fractions}) up to a much simplified
equivalence relation.
Our main construction, i.e.\ the \emph{quasicategory of frames}, can be seen
as an enhancement of the calculus of fractions as discussed
on p.\ \pageref{fractions-motivation}.

\begin{theorem}\label{calculus-of-fractions}
  A cofibration category $\cat{C}$ satisfies the left calculus of fractions
  up to left homotopy, i.e.
  \begin{enumerate}[label=\thm]
  \item\label{fraction} Every morphism $\phi \in \Ho \cat{C}(X, Y)$
    can be written as a left fraction $[s]^{-1} [f]$
    where $f \from X \to \tilde Y$ and $s \from Y \weto \tilde Y$ are morphisms
    of $\cat{C}$.
  \item\label{fractions-eq} Two fractions $[s]^{-1} [f]$ and $[t]^{-1} [g]$
    are equal in $\Ho \cat{C}(X, Y)$ \iff{} there exist weak equivalences
    $u$ and $v$ \st{}
    \begin{align*}
      u s \htp_l v t \text{ and } u f \htp_l v g \text{.}
    \end{align*}
  \item\label{composition} If $\phi \in \Ho \cat{C}(X, Y)$
    and $\psi \in \Ho \cat{C}(Y, Z)$ can be written as $[s]^{-1} [f]$
    and $[t]^{-1} [g]$ respectively and a square
    \begin{ctikzpicture}
      \matrix[diagram]
      {
        |(Y)|  Y        & |(tZ)| \tilde Z \\
        |(tY)| \tilde Y & |(hZ)| \hat Z \\
      };
      \draw[->] (Y)  to node[above] {$g$} (tZ);
      \draw[->] (tY) to node[below] {$h$} (hZ);

      \draw[->] (Y)  to node[left] {$s$} node[right] {$\we$} (tY);
      \draw[->] (tZ) to node[left] {$u$} node[right] {$\we$} (hZ);
    \end{ctikzpicture}
    commutes up to homotopy, then $\psi \phi$ can be written as $[ut]^{-1}[hf]$.
  \end{enumerate}
\end{theorem}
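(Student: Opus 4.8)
The plan is to deduce all three clauses from the Gabriel--Zisman theory of categories of fractions \cite{gz}*{Chapter I}, following Brown \cite{br}*{Proposition I.2}. By the preceding proposition, $\Ho \cat{C}$ is the localization of $\cat{C} \quot \mathord{\htp_l}$ at the class $\Sigma$ of homotopy classes of weak equivalences, and $\Sigma$ contains all isomorphisms, is closed under composition, satisfies ``2 out of 3'' (by (C0)) and reflects isomorphisms (again by the proposition). So it suffices to verify that $\Sigma$ admits a left calculus of fractions in $\cat{C} \quot \mathord{\htp_l}$; the three clauses are then the standard description of such a localization, together with some bookkeeping using ``2 out of 3''.

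The heart of the matter is the Ore condition: given a weak equivalence $s \from V \weto U$ and an arbitrary morphism $g \from V \to W$ of $\cat{C}$, one must produce a morphism $h \from U \to D$ and a weak equivalence $t \from W \weto D$ with $h s \htp_l t g$. This is the dual of Brown's construction: factor $g$ as a cofibration followed by a weak equivalence using (C5), form the pushout of $s$ along the resulting cofibration (which exists and is again a cofibration by (C4)), and invoke the gluing lemma --- the pushout of a weak equivalence along a cofibration is a weak equivalence, a standard consequence of the axioms \cite{rb} --- together with the fact that a weak equivalence between objects of $\cat{C}$ is a left homotopy equivalence (the dual of K.~Brown's lemma, available here since every object is cofibrant) to supply the comparison $t$ and the homotopy. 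The left cancellation condition is obtained by a similar, easier argument. I expect this Ore condition to be the main obstacle: it is the one place where the strict pushouts of (C4) must be reconciled with the purely homotopical data, and placing the comparison weak equivalence correctly is exactly where Brown's lemma enters.

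Granting the calculus of fractions, the first clause is immediate, since every morphism of the localized category is by construction a left fraction $[s]^{-1}[f]$. For the second clause, the ``if'' direction is a direct computation in $\Ho \cat{C}$: from $u s \htp_l v t$ and $u f \htp_l v g$ we obtain $[u][s] = [v][t]$ and $[u][f] = [v][g]$, and since $[u]$, $[v]$, $[s]$, $[t]$ are all invertible (being images of weak equivalences) this gives $[s]^{-1}[f] = ([u][s])^{-1}[u][f] = ([v][t])^{-1}[v][g] = [t]^{-1}[g]$. For the converse, the Gabriel--Zisman equality criterion for left fractions provides morphisms $a$, $b$ of $\cat{C} \quot \mathord{\htp_l}$ with $a f = b g$, $a s = b t$ and $a s \in \Sigma$; since $s, t \in \Sigma$ and $\Sigma$ satisfies ``2 out of 3'', both $a$ and $b$ lie in $\Sigma$, so choosing representatives $u$, $v$ that are weak equivalences of $\cat{C}$ and translating the two equalities in $\cat{C} \quot \mathord{\htp_l}$ back into left homotopies in $\cat{C}$ yields precisely $u s \htp_l v t$ and $u f \htp_l v g$.

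Finally, the composition clause is formal once the comparison square is in hand. Its commuting up to homotopy says exactly that $h s \htp_l u g$, hence $[h][s] = [u][g]$ in $\Ho \cat{C}$ and therefore $[g][s]^{-1} = [u]^{-1}[h]$, using that $s$ and $u$ are weak equivalences. Substituting into $\psi \phi = [t]^{-1}[g][s]^{-1}[f]$ gives $[t]^{-1}[u]^{-1}[h][f] = [ut]^{-1}[hf]$, and $ut$ is a weak equivalence as a composite of two weak equivalences; since $hf$ and $ut$ are honest composites of morphisms of $\cat{C}$, this exhibits $\psi \phi$ as a left fraction of the required form. Alternatively, one can of course simply cite \cite{br}*{Proposition I.2} or the corresponding result of \cite{rb}.
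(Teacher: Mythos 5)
The paper's own proof of this theorem is a citation to \cite{rb}*{Theorems 6.4.4(1) and 6.4.1}, so your decision to actually run the Gabriel--Zisman/Brown argument is a different and more self-contained route, and its overall shape --- verify a left calculus of fractions for the homotopy classes of weak equivalences in $\cat{C} \quot \mathord{\htp_l}$, then read off the three clauses --- is the classically correct one. But the step you use to close the Ore condition is false: a weak equivalence of $\cat{C}$ is \emph{not} in general a left homotopy equivalence. If it were, every weak equivalence would already be invertible in $\cat{C} \quot \mathord{\htp_l}$, the canonical functor $\cat{C} \quot \mathord{\htp_l} \to \Ho \cat{C}$ would be an isomorphism, and the theorem would be content-free; the calculus of fractions exists precisely because weak equivalences can only be inverted after passing to zig-zags. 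One of the paper's own examples refutes your claim: in topological spaces with weak homotopy equivalences and Hurewicz cofibrations (all objects cofibrant), every continuous map from the four-point finite model $P$ of the circle to $S^1$ is constant, yet $\Ho \cat{C}(P, S^1) \iso \mathbb{Z}$; so the localization functor is not full, and no notion of cylinder can make weak equivalences into homotopy equivalences. This is also not what K.~Brown's Lemma says --- that lemma concerns functors preserving weak equivalences; the statement ``weak equivalences between cofibrant--fibrant objects are homotopy equivalences'' requires fibrancy, which a cofibration category does not provide.

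The misstep is visible in where you put the factorization. Factoring $g$ and pushing out $s$ along its cofibration part leaves you with a weak equivalence out of the middle object $W'$ of that factorization, and you then need to invert the arbitrary weak equivalence $W' \weto W$ up to left homotopy --- which is essentially the Ore condition you set out to prove. Brown's actual argument factors $s$ instead, and does so via the mapping cylinder: $s = p\, i$ with $i \from V \acto \tilde{U}$ an acyclic cofibration and $p \from \tilde{U} \weto U$ a \emph{retraction of an acyclic cofibration} $q \from U \acto \tilde{U}$. Acyclic cofibrations satisfy the Ore condition strictly, by pushout along arbitrary morphisms (the second half of (C4)); and retractions of acyclic cofibrations genuinely are left homotopy equivalences (one checks $q p \htp_l \id_{\tilde{U}}$ by building a cylinder relative to $U$ out of $\tilde{U} \coprod_U \tilde{U}$), so for them the condition is trivial; composing the two cases handles every weak equivalence. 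With that replacement, your treatment of clauses (1)--(3) goes through, modulo verifying that the class of homotopy classes of weak equivalences is stable under the two-out-of-three manipulations used in (2), which is where (C0) enters. Your fallback of simply citing \cite{br}*{Proposition I.2} or \cite{rb} is, of course, exactly what the paper does.
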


\begin{proof}
  Parts \ref{fraction} and \ref{fractions-eq} follow from
  \cite{rb}*{Theorem 6.4.4(1)} and \ref{composition} from
  the proof of \cite{rb}*{Theorem 6.4.1}.
\end{proof}

In order to define the homotopy theory of cofibration categories we first need
a good notion of a morphism between cofibration categories.
We will use \emph{exact} functors which (according to the definition
and the lemma below) are essentially homotopy invariant functors
that preserve basic finite homotopy colimits, i.e.\ initial objects
and homotopy pushouts.
It will follow from the discussion in \Cref{sec:hocolims-and-diagrams}
that they actually preserve all finite homotopy colimits.

\begin{definition}
  A functor $F \from \cat{C} \to \cat{D}$ between cofibration categories
  is \emph{exact} if it preserves cofibrations, acyclic cofibrations,
  initial objects and pushouts along cofibrations.
\end{definition}

Finally, we recall a standard method of verifying homotopy invariance
of functors between cofibration categories.

\index{K. Brown's Lemma}
\begin{lemma}[K. Brown's Lemma]\label{Ken-Brown}
  If a functor between cofibration categories sends acyclic cofibrations
  to weak equivalences, then it preserves all weak equivalences.
  In particular, exact functors preserve weak equivalences.
\end{lemma}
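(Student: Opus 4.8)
The plan is to exploit the factorization axiom (C5) together with stability of acyclic cofibrations under pushout (C4), reducing an arbitrary weak equivalence to a zig-zag of acyclic cofibrations by means of the standard mapping-cylinder trick. Let $F \from \cat{C} \to \cat{D}$ send acyclic cofibrations to weak equivalences and let $w \from X \weto Y$ be a weak equivalence in $\cat{C}$. First I would factor $w$ using (C5) — but a single cofibration-then-weak-equivalence factorization is not immediately enough, so instead I would form the coproduct-style cylinder: apply (C5) to the morphism $[1_Y, w] \from Y \coprod X \to Y$ (using that $Y \coprod X$ exists because cofibrations are stable under pushout along $\init \to X$, so in particular along $\init \to X$ pushed out over $\init \to Y$), obtaining $Y \coprod X \cto C \weto Y$. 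Precomposing the two canonical coprojections $Y \to Y \coprod X$ and $X \to Y \coprod X$ with $Y \coprod X \cto C$ yields morphisms $i_Y \from Y \to C$ and $i_X \from X \to C$, each of which is a cofibration (a coprojection into a coproduct is a pushout of $\init \to X$ respectively $\init \to Y$, and these composites are therefore cofibrations by (C4) and composition).

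The next step is to check that $i_Y$ and $i_X$ are in fact acyclic cofibrations. By construction the composite $Y \xrightarrow{i_Y} C \weto Y$ is the identity, hence $i_Y$ is a weak equivalence by ``2 out of 3'' (a consequence of (C0)); and the composite $X \xrightarrow{i_X} C \weto Y$ equals $w$, which is a weak equivalence by hypothesis, so again ``2 out of 3'' forces $i_X$ to be a weak equivalence. Thus both $i_Y$ and $i_X$ are acyclic cofibrations. Applying $F$, both $F i_Y$ and $F i_X$ are weak equivalences in $\cat{D}$. Since $F(Y \coprod X \weto Y) \circ F i_Y = F(1_Y) = 1_{FY}$ and the retraction $r \from C \weto Y$ satisfies $F r \circ F i_X = F w$, one more application of ``2 out of 3'' in $\cat{D}$ (using that $F r$ is a weak equivalence, which follows from $F r \circ F i_Y = 1_{FY}$ and $F i_Y$ being a weak equivalence) shows $F w = F r \circ F i_X$ is a weak equivalence. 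The final sentence is immediate: an exact functor sends cofibrations to cofibrations and acyclic cofibrations to acyclic cofibrations, in particular to weak equivalences, so the hypothesis of the lemma is met.

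I do not expect a serious obstacle here; the only point requiring a little care is confirming that the relevant coprojections are genuinely cofibrations, which rests on recognizing a coproduct $A \coprod B$ as the pushout of $\init \to A$ along $\init \to B$ and invoking (C3) and (C4), together with closure of cofibrations under composition (which follows from (C4) applied to pushouts, or is listed among the basic consequences in \cite{rb}). Everything else is a routine sequence of ``2 out of 3'' arguments, valid in any cofibration category by (C0). The argument dualizes verbatim to give the corresponding statement for fibration categories, which is the form in which Brown originally stated it.
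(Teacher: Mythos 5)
Your argument is correct and is precisely the proof the paper points to: the paper's proof simply cites Hovey's Lemma 1.1.12, whose content is exactly your mapping-cylinder factorization of $[1_Y, w] \from Y \coprod X \to Y$ followed by the chain of ``2 out of 3'' arguments. The only cosmetic remark is that closure of cofibrations under composition needs no argument, since cofibrations form a subcategory by definition.
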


\begin{proof}
  The proof of \cite{ho}*{Lemma 1.1.12} works for cofibration categories.
  (See also the proof of \cite{br}*{Lemma 4.1}
  where this result first appeared.)
\end{proof}

  \subsection{Homotopy theory of cofibration categories}

We are now ready to introduce the homotopy theory of cofibration categories.
For this it is sufficient to define a class of weak equivalences in the category
of cofibration categories which is what we will do next.
Later, we will proceed to define fibrations of cofibration categories
and prove that they satisfy the axioms of a fibration category
which will give us a solid grasp of the homotopy theory
of cofibration categories.

\begin{definition}
  An exact functor $F \from \cat{C} \to \cat{D}$ is a \emph{weak equivalence}
  if it induces an equivalence $\Ho \cat{C} \to \Ho \cat{D}$.
\end{definition}
\begin{anfxnote}{naming}
  is there a more descriptive name?
\end{anfxnote}

This notion is closely related to the \emph{Waldhausen approximation properties}
first formulated by Waldhausen as criteria for an exact functor to induce
an equivalence of the algebraic K-theory spaces \cite{wa}*{Section 1.6}.
Later, Cisinski showed that an exact functor satisfies (slightly reformulated)
Waldhausen approximation properties \iff{} it is a weak equivalence in the sense
of the definition above, see \Cref{App}.

It is far from obvious that weak equivalences preserve homotopy types
of homotopy mapping spaces.
This is indeed true by a theorem of Cisinski \cite{c-ik}*{Th\'eor\`eme 3.25}
which states that a weak equivalence induces an equivalence
of the \emph{hammock localizations} in the sense of Dwyer and Kan \cite{dk2}.
While this result will not be used in this paper, it justifies our choice
of weak equivalences of cofibration categories.
In fact, our main result implies that they correspond
to categorical equivalences of quasicategories and with some additional effort
this could be used to rederive Cisinski's theorem.

\begin{proposition}[\cite{c-cd}*{Th\'eor\`eme 3.19}]\label{App}
  An exact functor $F \from \cat{C} \to \cat{D}$ is a weak equivalence \iff{}
  it satisfies the following properties.
  \begin{itemize}
  \item[\emph{(App1)}] $F$ reflects weak equivalences.
  \item[\emph{(App2)}] Given a morphism $f \from F A \to Y$ in $\cat{D}$,
    there exists a morphism $i \from A \to B$ in $\cat{C}$
    and a commutative diagram
    \begin{ctikzpicture}
      \matrix[diagram]
      {
        |(A)| FA & |(Y)| Y \\
        |(B)| FB & |(Z)| Z \\
      };

      \draw[->] (A) to node[above] {$f$}  (Y);
      \draw[->] (A) to node[left]  {$Fi$} (B);

      \draw[->] (B) to node[below] {$\we$} (Z);
      \draw[->] (Y) to node[right] {$\we$} (Z);
    \end{ctikzpicture}
    in $\cat{D}$. \qed
  \end{itemize}
\end{proposition}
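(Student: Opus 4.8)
The plan is to show that conditions (App1) and (App2) together are equivalent to $F$ inducing an equivalence $\Ho\cat{C}\to\Ho\cat{D}$, using the left calculus of fractions (\Cref{calculus-of-fractions}) as the main computational device on both sides. The key observation is that $\Ho\cat{C}$ is presented by left fractions $[s]^{-1}[f]$ with $s$ a weak equivalence, modulo the explicit equivalence relation of part~\ref{fractions-eq}, and likewise for $\Ho\cat{D}$; so to check that $\Ho F$ is full, faithful and essentially surjective, it suffices to manipulate such fractions.

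First I would prove the ``only if'' direction. Suppose $F$ is a weak equivalence. For (App1), if $Ff$ is a weak equivalence in $\cat{D}$ then $[Ff]=(\Ho F)[f]$ is an isomorphism in $\Ho\cat{D}$; since $\Ho F$ is an equivalence (in particular conservative and faithful) it follows that $[f]$ is an isomorphism in $\Ho\cat{C}$, and then a ``2 out of 6''/saturation argument (available since weak equivalences satisfy (C0)) shows $f$ is a weak equivalence. For (App2), given $f\from FA\to Y$, essential surjectivity of $\Ho F$ produces an object $B$ and an isomorphism $FB\iso Y$ in $\Ho\cat{D}$; writing that isomorphism and $[f]$ as left fractions and using fullness of $\Ho F$ to lift the relevant map $A\to B$ into $\cat{C}$, I would assemble the required commuting square in $\cat{D}$ by factoring and taking the span $Y\to Z\leftarrow FB$ witnessing the fraction — the weak equivalences $FB\weto Z$ and $Y\weto Z$ come directly from the denominators of the fractions after a factorization via (C5).

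The substantive direction is ``if''. Assume (App1) and (App2). \emph{Essential surjectivity:} given $Y\in\cat{D}$, apply (App2) to the (unique) morphism from $F$ of an initial object of $\cat{C}$, or more directly to $\id_Y$ viewed suitably, obtaining $B\in\cat{C}$ with a zig-zag $FB\weto Z\xleftarrow{\we}Y$, hence $FB\iso Y$ in $\Ho\cat{D}$. \emph{Fullness:} given $\phi\in\Ho\cat{D}(FA,FB)$, write it as $[s]^{-1}[g]$ with $g\from FA\to \tilde{Y}$, $s\from FB\weto\tilde Y$; apply (App2) to $g$ to get $i\from A\to B'$ in $\cat{C}$ and a commuting square comparing $g$ to $Fi$ up to the weak equivalences supplied; combine with $s$ to realize $\phi$ as $(\Ho F)$ of a fraction built from $i$ and a weak equivalence in $\cat{C}$ — here one uses that $F$, being exact, preserves the weak equivalences appearing as denominators, together with (App1) to pull back weak equivalences of $\cat{D}$ to $\cat{C}$ along the relevant maps. \emph{Faithfulness:} if $(\Ho F)\alpha=(\Ho F)\beta$ for $\alpha,\beta\in\Ho\cat{C}(A,B)$, write both as left fractions $[s]^{-1}[f]$, $[t]^{-1}[g]$; apply part~\ref{fractions-eq} in $\cat{D}$ to get weak equivalences $u,v$ of $\cat{D}$ with $u\,Fs\htp_l v\,Ft$ and $u\,Ff\htp_l v\,Fg$; then use (App2) repeatedly to replace $u$ and $v$ by morphisms coming from $\cat{C}$ (absorbing the homotopies, which after factorization are also built from weak equivalences) and (App1) to ensure the comparison morphisms obtained in $\cat{C}$ are genuine weak equivalences, so that part~\ref{fractions-eq} applies in $\cat{C}$ to give $\alpha=\beta$.

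The main obstacle I anticipate is the bookkeeping in fullness and faithfulness: (App2) only produces comparison squares that commute \emph{strictly} but involve auxiliary objects $Z$, so each application introduces new weak equivalences that must be threaded through the fraction calculus, and one must repeatedly invoke (App1) to know that morphisms manufactured in $\cat{C}$ (as lifts of weak equivalences in $\cat{D}$) are themselves weak equivalences. Managing left homotopies simultaneously — since the fraction calculus of \Cref{calculus-of-fractions} is only ``up to left homotopy'' — and arranging that the homotopies can also be transported across $F$ (which needs that $F$ preserves cylinders, a consequence of exactness plus K.~Brown's Lemma) is the delicate part; once the correct sequence of factorizations is set up, each individual step is a routine diagram chase.
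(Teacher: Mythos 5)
The paper does not actually prove this proposition—it is imported wholesale from Cisinski (\cite{c-cd}*{Th\'eor\`eme 3.19}), hence the \emph{qed} in the statement—so there is no internal argument to measure you against, only the question of whether your sketch would work. Your overall strategy (reduce everything to the left calculus of fractions of \Cref{calculus-of-fractions} and verify that $\Ho F$ is essentially surjective, full and faithful) is the right one, and the easy pieces are fine: (App1) from conservativity plus saturation, and essential surjectivity from (App2) applied to the unique map $F\init \to Y$. But the two steps you defer to ``routine diagram chases'' are the actual content of the theorem, and as written they do not go through. The first is strict commutativity in (App2): the fraction calculus only ever produces equalities in $\Ho\cat{D}$, which by \Cref{calculus-of-fractions}\ref{fractions-eq} unwind to \emph{left homotopies} after composing with further weak equivalences—never to on-the-nose commuting squares, and two left-homotopic maps do not become equal after composing with a weak equivalence. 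So ``taking the span witnessing the fraction'' cannot produce the square that (App2) demands. The missing idea is a rectification step: take the cylinder in $\cat{D}$ to be $F(IA)$ for a cylinder $IA$ in $\cat{C}$ (legitimate since $F$ is exact and left homotopy is independent of the cylinder), then replace $B$ by the pushout $IA \push_A B$ along the acyclic cofibration $i_0 \from A \to IA$; the homotopy $H \from F(IA) \to Z$ and the weak equivalence $FB \weto Z$ glue to a weak equivalence out of $F(IA \push_A B)$, and precomposing with the other end $i_1$ of the cylinder gives the required $i$ with a strictly commuting square. This is essentially the homotopy extension argument of \Cref{HEP}, and without it the ``only if'' direction is incomplete.

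The second gap is in fullness and faithfulness. You apply (App2) to the numerator $g$ of a fraction and then propose to ``combine with $s$''; but a second, separate application of (App2) to $s$ lands in an unrelated object of $\cat{C}$, and there is no mechanism for merging the two outputs. The standard fix is to apply (App2) \emph{once} to a map out of a coproduct: for fullness, to $[g,s] \from F(A \coprod B) \iso FA \coprod FB \to \tilde Y$, which produces a single $k = [k_A, k_B] \from A \coprod B \to C$ with $w_1 g = w_2 F k_A$ and $w_1 s = w_2 F k_B$; then ``2 out of 3'' shows $F k_B$ is a weak equivalence, (App1) shows $k_B$ is, and $[k_B]^{-1}[k_A]$ is the desired preimage of $\phi$. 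Faithfulness needs the same device together with the rectification above to descend the comparison homotopies from $\cat{D}$ to $\cat{C}$. Relatedly, your appeal to (App1) to ``pull back weak equivalences of $\cat{D}$ to $\cat{C}$'' is backwards: (App1) only detects weak equivalences among morphisms already lifted to $\cat{C}$, and producing those lifts is precisely what (App2) plus the coproduct trick is for.
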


We are now ready to define fibrations of cofibration categories,
but before doing so we briefly explain the duality between cofibration
and fibration categories.
A \emph{fibration category} is a category $\cat{F}$ equipped with
subcategories of weak equivalences and \emph{fibrations} \st{}
$\cat{F}^\op$ is a cofibration category (where the fibrations of $\cat{F}$
become the cofibrations of $\cat{F}^\op$).
Similarly, an exact functor of fibration categories is a functor
that is exact as a functor of the corresponding cofibration categories.
As usual, all the results about cofibration categories readily dualize
to results about fibration categories.
We do not state them separately, but we point out that all the statements
in \cite{rb} are explicitly given in both versions.

\begin{definition}\label{dfn:fibration}
  Let $P \from \cat{E} \to \cat{D}$ be an exact functor
  of cofibration categories.
  \begin{enumerate}[label=\dfn]
  \item $P$ is an \emph{isofibration} if
    for every object $A \in \cat{E}$ and an isomorphism $g \from P A \to Y$
    there is an isomorphism $f \from A \to B$ \st{} $P f = g$.
  \item It is said to satisfy
    the \emph{lifting property for factorizations} if for any morphism
    $f \from A \to B$ of $\cat{E}$ and a factorization
    \begin{ctikzpicture}
      \matrix[diagram]
      {
        |(A)| PA &         & |(B)| P B \\
        & |(X)| X \\
      };

      \draw[->]  (A) to node[above]      {$Pf$} (B);
      \draw[cof] (A) to node[below left] {$j$}  (X);

      \draw[->] (X) to node[below right] {$t$} node[above left] {$\we$} (B);
    \end{ctikzpicture}
    there exists a factorization
    \begin{ctikzpicture}
      \matrix[diagram]
      {
        |(A)| A &         & |(B)| B \\
        & |(C)| C \\
      };

      \draw[->]  (A) to node[above]      {$f$} (B);
      \draw[cof] (A) to node[below left] {$i$} (C);

      \draw[->] (C) to node[below right] {$s$} node[above left] {$\we$} (B);
    \end{ctikzpicture}
    \st{} $Pi = j$ and $Ps = t$ (in particular, $PC = X$).
  \item It has
    the \emph{lifting property for pseudofactorizations} if for any morphism
    $f \from A \to B$ of $\cat{E}$ and a diagram
    \begin{ctikzpicture}
      \matrix[diagram]
      {
        |(A)| PA & |(B)| PB \\
        |(X)| X  & |(Y)| Y \\
      };

      \draw[->]  (A) to node[above] {$Pf$} (B);
      \draw[cof] (A) to node[left]  {$j$}  (X);

      \draw[->]  (X) to node[below] {$t$} node[above] {$\we$} (Y);
      \draw[cof] (B) to node[right] {$v$} node[left]  {$\we$} (Y);
    \end{ctikzpicture}
    there exists a diagram
    \begin{ctikzpicture}
      \matrix[diagram]
      {
        |(A)| A & |(B)| B \\
        |(C)| C & |(D)| D \\
      };

      \draw[->]  (A) to node[above] {$f$} (B);
      \draw[cof] (A) to node[left]  {$i$} (C);

      \draw[->]  (C) to node[below] {$s$} node[above] {$\we$} (D);
      \draw[cof] (B) to node[right] {$u$} node[left]  {$\we$} (D);
    \end{ctikzpicture}
    \st{} $Pi = j$, $Ps = t$ and $Pu = v$
    (in particular, $PC = X$ and $PD = Y$).
  \item We say that
    $P$ is a \emph{fibration} if it is an isofibration
    and satisfies the lifting properties
    for factorizations and pseudofactorizations.
  \end{enumerate}
\end{definition}

This definition can be restated in a more technical but convenient way.
We define a category $\eCofCat$ containing the category
of cofibration categories $\CofCat$ (whose morphisms are exact functors)
as a non-full subcategory.
Objects of $\eCofCat$ are small categories equipped with two subcategories:
the subcategory of weak equivalences and the subcategory of cofibrations \st{}
all identity morphisms are acyclic cofibrations.
Morphisms are functors that preserve both weak equivalences and cofibrations.
 
An exact functor between cofibration categories
is a fibration \iff{} it has the \rlp{}, as a morphism of $\eCofCat$,
\wrt{} the following functors.
\begin{itemize}
\item
  The inclusion of $[0]$ into $\wiso$ (the groupoid freely generated
  by an isomorphism $0 \to 1$).
\item The inclusion of $[1]$ (with only identities as weak equivalences
  or cofibrations) into
  \begin{ctikzpicture}
    \matrix[diagram]
    {
      |(0)| 0 &               & |(1)| 1 \text{.} \\
              & |(b)| \bullet \\
    };

    \draw[->]  (0) to (1);
    \draw[cof] (0) to (b);

    \draw[->] (b) to node[below right] {$\we$} (1);
  \end{ctikzpicture}
\item The inclusion of $[1] \times [0]$ (with only identities
  as weak equivalences or cofibrations) into
  \begin{ctikzpicture}
    \matrix[diagram]
    {
      |(00)| (0,0) & |(10)| (1,0) \\
      |(01)| (0,1) & |(11)| (1,1) \text{.} \\
    };

    \draw[->]  (00) to (10);
    \draw[cof] (00) to (01);

    \draw[->]  (01) to node[below] {$\we$} (11);
    \draw[cof] (10) to node[right] {$\we$} (11);
  \end{ctikzpicture}
\end{itemize}

In a few of the proofs in the remainder of this subsection we will refer forward
to \Cref{q-pullback-colimits,q-lifting-colimits}.
In \Cref{ch:qcats} they will be stated for quasicategories, but for now we will only use
their much simpler special cases for ordinary categories.

Let $\spn$ denote the poset of proper subsets of $\{ 0, 1 \}$.\newline

\begin{proposition}\label{pullback-of-cofcats}
  Let $F \from \cat{C} \to \cat{D}$ and $P \from \cat{E} \to \cat{D}$
  be exact functors between cofibration categories with $P$ a fibration.
  Then a pullback of $P$ along $F$ exists $\CofCat$.
\end{proposition}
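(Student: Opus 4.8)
The plan is to exhibit the strict pullback of underlying categories $\cat{C} \pull_{\cat{D}} \cat{E}$, with a morphism declared a weak equivalence, respectively a cofibration, exactly when both of its components are, as the desired pullback. First I would note that this is the pullback of $F$ and $P$ computed in $\eCofCat$: the two projections preserve weak equivalences and cofibrations by construction, all identities are acyclic cofibrations since they are so in $\cat{C}$ and in $\cat{E}$, and the universal property is inherited from the pullback of underlying categories in $\Cat$. It then remains to check three things: that $\cat{C} \pull_{\cat{D}} \cat{E}$ satisfies the cofibration category axioms, that the two projections are exact, and that for any cofibration category $\cat{A}$ with exact functors $G \from \cat{A} \to \cat{C}$, $H \from \cat{A} \to \cat{E}$ satisfying $F G = P H$ the induced functor $\cat{A} \to \cat{C} \pull_{\cat{D}} \cat{E}$ is exact; the last point is exactly what promotes the $\eCofCat$-pullback to a pullback in the non-full subcategory $\CofCat$.

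Most of the axioms reduce at once to their componentwise versions. Axiom (C1) holds because an isomorphism of $\cat{C} \pull_{\cat{D}} \cat{E}$ is a pair of isomorphisms, each an acyclic cofibration. Axiom (C0) follows by applying the ``2 out of 6'' property separately in $\cat{C}$ and in $\cat{E}$. Axioms (C2) and (C3) hold with initial object $(\init, \init)$, which lies in the pullback because $F$ and $P$ preserve initial objects, and which is initial since the unique maps out of the initial objects of $\cat{C}$ and of $\cat{E}$ automatically agree over $\cat{D}$. For (C4), given a cofibration and an arbitrary morphism out of a common object, I would form the pushouts in $\cat{C}$ and in $\cat{E}$; since both $F$ and $P$ are exact they preserve these pushouts, so the two results have equal image in $\cat{D}$ and assemble into an object of $\cat{C} \pull_{\cat{D}} \cat{E}$, which is the pushout there by the ordinary-category special case of \Cref{q-pullback-colimits}. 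Stability of cofibrations and of acyclic cofibrations under such pushouts is again inherited componentwise.

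The one axiom that genuinely uses the hypothesis that $P$ is a fibration is the factorization axiom (C5), and this is the step I expect to be the crux. Given a morphism $(f, g) \from (C_0, E_0) \to (C_1, E_1)$, I would first factor $f$ in $\cat{C}$ as $C_0 \cto C' \weto C_1$; applying the exact functor $F$ turns this into a factorization of $P g = F f$ as a cofibration followed by a weak equivalence in $\cat{D}$. The lifting property for factorizations of the fibration $P$ then supplies a factorization $E_0 \cto E' \weto E_1$ of $g$ lying strictly over the one in $\cat{D}$, in particular with $P E' = F C'$. Thus $(C', E')$ is an object of the pullback and $(f, g)$ factors through it as a cofibration followed by a weak equivalence. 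It is worth observing that neither the isofibration condition nor the lifting property for pseudofactorizations enters here; those conditions will instead be needed when showing that the projection $\cat{C} \pull_{\cat{D}} \cat{E} \to \cat{C}$ is itself a fibration.

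Finally, the projections onto $\cat{C}$ and onto $\cat{E}$ are exact because cofibrations, acyclic cofibrations, initial objects and pushouts along cofibrations in $\cat{C} \pull_{\cat{D}} \cat{E}$ are all computed componentwise, and the functor induced by a compatible pair of exact functors $G, H$ is exact for the same reason: it preserves cofibrations and acyclic cofibrations since $G$ and $H$ do, it sends the initial object of $\cat{A}$ to $(\init, \init)$, and it preserves pushouts along cofibrations since $G$ and $H$ do and these are componentwise. Uniqueness of the induced functor is already guaranteed at the level of underlying categories, so this completes the verification of the universal property in $\CofCat$.
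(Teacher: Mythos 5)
Your overall strategy coincides with the paper's: form the strict pullback of underlying categories, define weak equivalences and cofibrations componentwise, and verify the axioms, using the lifting property for factorizations only for (C5). However, there is a genuine gap in your treatment of (C2--3) and (C4). You claim that $(\init_{\cat{C}}, \init_{\cat{E}})$ lies in the pullback ``because $F$ and $P$ preserve initial objects,'' and that the two independently formed pushouts in $\cat{C}$ and $\cat{E}$ ``have equal image in $\cat{D}$'' because $F$ and $P$ preserve pushouts. Preservation only guarantees that $F\init_{\cat{C}}$ and $P\init_{\cat{E}}$ are both initial objects of $\cat{D}$, hence isomorphic --- not equal --- and likewise the images of the two pushouts are merely canonically isomorphic pushouts of the same span in $\cat{D}$. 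Since the pullback is strict, such pairs need not be objects of $\cat{C} \pull_{\cat{D}} \cat{E}$ at all, and without some rectification the pullback category could simply fail to have an initial object or the required pushouts.

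The fix is exactly where your remark that ``neither the isofibration condition \dots{} enters here'' goes wrong: the paper first forms the colimit $S$ in $\cat{C}$, notes that $FS$ is a colimit of the image diagram in $\cat{D}$, and then invokes \Cref{q-lifting-colimits} --- whose proof rests on the isofibration property of $P$ --- to choose a colimit $T$ in $\cat{E}$ lying \emph{strictly} over $FS$, i.e.\ with $PT = FS$ on the nose. Only then does \Cref{q-pullback-colimits} apply to show $(S,T)$ is a colimit in the pullback. So the isofibration condition is genuinely needed already for (C2--4), not just for showing that the projection is again a fibration. (You are right, on the other hand, that the lifting property for pseudofactorizations plays no role in this proposition; the paper makes the same observation.)
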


\begin{proof}
  Form a pullback of $P$ along $F$ in the category of categories.
  \begin{ctikzpicture}
    \matrix[diagram]
    {
      |(P)| \cat{P} & |(E)| \cat{E} \\
      |(C)| \cat{C} & |(D)| \cat{D} \\
    };

    \draw[->] (P) to node[above] {$G$} (E);
    \draw[->] (P) to node[left]  {$Q$} (C);

    \draw[->] (C) to node[below] {$F$} (D);
    \draw[->] (E) to node[right] {$P$} (D);
  \end{ctikzpicture}
  Define a morphism $f$ of $\cat{P}$ to be a weak equivalence
  (respectively, a cofibration) if both $Gf$ and $Qf$ are weak equivalences
  (respectively, cofibrations). Then the above square becomes a pullback
  in $\eCofCat$.

  Now we check that $\cat{P}$ is a cofibration category.
  \begin{itemize}
  \item[(C0-1)] In $\cat{P}$ weak equivalences satisfy ``2 out of 6''
    and all isomorphisms are acyclic cofibrations
    since this holds in both $\cat{C}$ and $\cat{E}$.
  \item[(C2-3)] Let $\init_\cat{C}$ be an initial object of $\cat{C}$.
    By \Cref{q-lifting-colimits}
    there is an initial object $\init_\cat{E}$ of $\cat{E}$
    \st{} $P \init_\cat{E} = F \init_\cat{C}$.
    Then $(\init_\cat{C}, \init_\cat{E})$ is an initial object of $\cat{P}$
    by \Cref{q-pullback-colimits}.
    Moreover, every object of $\cat{P}$ is cofibrant since this holds
    in both $\cat{C}$ and $\cat{E}$.
  \item[(C4)] Let $X \from \spn \to \cat{P}$ be a span
    with $X_\emptyset \to X_0$ a cofibration.
    Let $S$ be a colimit of $QX$ in $\cat{C}$, then $FS$ is a colimit
    of $FQX = PGX$ in $\cat{D}$ since $F$ is exact.
    \Cref{q-lifting-colimits} implies that we can choose a colimit $T$ of $GX$
    in $\cat{E}$ so that $PT = FS$.
    Then it follows by \Cref{q-pullback-colimits} that $(S,T)$ is a colimit
    of $X = (QX, GX)$ in $\cat{P}$.
    Thus pushouts along cofibrations exist in $\cat{P}$ and both cofibrations
    and acyclic cofibrations are stable under pushouts since this holds
    in both $\cat{C}$ and $\cat{E}$.
  \item[(C5)] Let $f \from A \to B$ be a morphism of $\cat{P}$.
    Pick a factorization of $Qf$ as
    \begin{align*}
      QA \cto C \weto QB
    \end{align*}
    in $\cat{C}$.
    Then $FQf = PGf$ factors as
    \begin{align*}
      PGA = FQA \cto FC \weto FQB = PGB
    \end{align*}
    and we can lift this factorization to a factorization of $Gf$ as
    \begin{align*}
      GA \cto E \weto GB \text{.}
    \end{align*}
    It follows that
    \begin{align*}
      A = (QA, GA) \cto (C, E) \weto (QB, GB) = B
    \end{align*}
    is a factorization of $f$. This completes the verification that $\cat{P}$
    is a cofibration category.
  \end{itemize}

  Next, we need to verify that $Q$ and $G$ are exact.
  They preserve cofibrations and acyclic cofibrations by the definition
  of cofibrations and weak equivalences in $\cat{P}$.
  They also preserve initial objects and pushouts along cofibrations
  by the construction of these colimits in $\cat{P}$.

  It remains to see that the square we constructed is a pullback
  in the category of cofibration categories, i.e.\ that given a square
  \begin{ctikzpicture}
    \matrix[diagram]
    {
      |(F)| \cat{F} & |(E)| \cat{E} \\
      |(C)| \cat{C} & |(D)| \cat{D} \\
    };

    \draw[->] (F) to (E);
    \draw[->] (F) to (C);

    \draw[->] (C) to node[below] {$F$} (D);
    \draw[->] (E) to node[right] {$P$} (D);
  \end{ctikzpicture}
  of cofibration categories and exact functors, the induced functor
  $\cat{F} \to \cat{P}$ is also exact.
  Indeed, it was already observed that it preserves cofibrations
  and acyclic cofibrations.
  It also preserves initial objects and pushouts along cofibrations
  by \Cref{q-pullback-colimits}.
\end{proof}

The next proposition will imply the stability of acyclic fibrations
under pullbacks.
Moreover, in later sections it will serve as a useful criterion for verifying
that an exact functor is a weak equivalence.
Observe that the lifting property for pseudofactorizations is needed only here,
it was not used in the proof of the previous proposition.

\begin{proposition}\label{acyclic-fibrations}
  An exact functor $P \from \cat{C} \to \cat{D}$ is an acyclic fibration \iff{}
  it is a fibration, satisfies \emph{(App1)} and the \rlp{} (in $\eCofCat$)
  \wrt{} the inclusion of $[0]$ into
  \begin{ctikzpicture}
    \matrix[diagram]
    {
      |(0)| 0 & |(1)| 1 \text{.} \\
    };

    \draw[cof] (0) to (1);
  \end{ctikzpicture}
\end{proposition}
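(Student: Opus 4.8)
The plan is to deduce both directions from \Cref{App}. Since an acyclic fibration is by definition a fibration that is also a weak equivalence, and \Cref{App} identifies weak equivalences with exact functors satisfying (App1) and (App2), the whole statement comes down to the claim that, \emph{for a fibration $P$, having the \rlp{} (in $\eCofCat$) \wrt{} the inclusion of $[0]$ into $0 \cto 1$ is equivalent to satisfying (App2)}. That \rlp{} unwinds to: for every object $A$ of $\cat{C}$ and every cofibration $j \from PA \cto X$ of $\cat{D}$ there is a cofibration $i \from A \cto B$ of $\cat{C}$ with $Pi = j$ (and hence $PB = X$).

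The direction ``\rlp{} $\Rightarrow$ (App2)'' is immediate and uses nothing else. Given $f \from PA \to Y$ in $\cat{D}$, I would factor it as $PA \overset{j}{\cto} W \overset{q}{\weto} Y$ by (C5) in $\cat{D}$, lift the cofibration $j$ to $i \from A \cto B$ in $\cat{C}$ with $Pi = j$ and $PB = W$, and observe that the square with top $f$, left $Pi = j$, bottom $q \from PB \weto Y$ and right $\id_Y$ — a weak equivalence by (C1) — witnesses (App2) with $Z = Y$. This already settles the ``if'' part: a fibration satisfying (App1) and the \rlp{} then satisfies (App2), hence is a weak equivalence by \Cref{App}, hence an acyclic fibration.

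For the converse I must produce the \rlp{} from (App2) together with the fibration structure, and this is the real content — it is also precisely where the lifting property for pseudofactorizations enters. Given $A$ and a cofibration $j \from PA \cto X$ in $\cat{D}$, apply (App2) to $j$ to obtain $i_0 \from A \to B_0$ in $\cat{C}$ and a commutative square with top $j$, left $P i_0$, bottom a weak equivalence $v \from PB_0 \weto Z_0$ and right a weak equivalence $w \from X \weto Z_0$; after replacing $i_0$ by a cofibration and correspondingly enlarging the bottom edge (using that exact functors preserve weak equivalences, \Cref{Ken-Brown}), I may assume $i_0$, and hence $P i_0$, is a cofibration. The idea is then to rectify this loose square into one the pseudofactorization lifting can accept. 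Form, in $\cat{D}$, the pushout $Q = PB_0 \push_{PA} X$ of $P i_0 \from PA \cto PB_0$ and $j \from PA \cto X$, which exists by (C4), with structure cofibrations $\beta \from PB_0 \cto Q$, $\alpha \from X \cto Q$ and the induced map $\gamma \from Q \to Z_0$ satisfying $\gamma\beta = v$ and $\gamma\alpha = w$; then factor $\gamma = n m$ with $m \from Q \cto Q'$ a cofibration and $n \from Q' \weto Z_0$ a weak equivalence. Since $n(m\beta) = v$ and $n(m\alpha) = w$ are weak equivalences and so is $n$, ``2 out of 3'' makes $m\beta$ and $m\alpha$ weak equivalences; being composites of cofibrations they are acyclic cofibrations, and $(m\beta)(P i_0) = m(\beta \, P i_0) = m(\alpha \, j) = (m\alpha) j$.

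Now I would apply the lifting property for pseudofactorizations to the morphism $i_0 \from A \to B_0$ and the square with top $P i_0$, left cofibration $j \from PA \cto X$, right acyclic cofibration $m\beta \from PB_0 \acto Q'$ and bottom weak equivalence $m\alpha \from X \weto Q'$: its output is a square in $\cat{C}$ whose left edge is a cofibration $i \from A \cto C$ with $Pi = j$ (and $PC = X$), which is the desired lift. Combining the two directions with \Cref{App} yields the proposition. The only genuinely substantive step is the rectification in the previous paragraph — turning the ``backwards'' weak equivalences $v$ and $w$ supplied by (App2) into an honest input for the pseudofactorization lifting property, namely one cofibration, one weak equivalence and one acyclic cofibration. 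The point to watch is that $\alpha$ is a pushout of $P i_0$, so $i_0$ really must be made cofibrant before taking $Q$, otherwise $m\alpha$ need not be a cofibration.
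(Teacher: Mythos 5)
Your proof is correct and follows essentially the same route as the paper: the easy direction is identical, and for the converse the paper likewise forms the pushout $PB \push_{PA} Y$, factors the induced map to $Z$ as a cofibration followed by a weak equivalence, and feeds the result to the lifting property for pseudofactorizations. The one difference is that your preliminary cofibrant replacement of $i_0$ is unnecessary: in \Cref{dfn:fibration} the bottom edge of a pseudofactorization is only required to be a weak equivalence, not a cofibration, so the paper applies the lifting property directly with the (generally non-cofibration) structure map $Y \to PB \push_{PA} Y$ composed into the bottom edge.
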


\begin{proof}
  First assume that $P$ satisfies the properties above.
  We need to check that it satisfies (App2).
  Let $f \from PA \to Z$ be a morphism of $\cat{D}$.
  Factor $f$ as a composite of $j \from PA \cto Y$ and $Y \weto Z$
  and apply the lifting property above to find a cofibration $i \from A \cto B$
  \st{} $Pi = j$.
  This yields a diagram
  \begin{ctikzpicture}
    \matrix[diagram]
    {
      |(A)| PA & |(Z0)| Z \\
      |(B)| PB & |(Z1)| Z \text{.} \\
    };

    \draw[->] (A) to node[above] {$f$}  (Z0);
    \draw[->] (A) to node[left]  {$Pi$} (B);

    \draw[->] (B)  to node[below] {$\we$}   (Z1);
    \draw[->] (Z0) to node[right] {$\id_Z$} (Z1);
  \end{ctikzpicture}

  Conversely, assume that $P$ is an acyclic fibration.
  We need to check that it satisfies the lifting property above.
  Consider a cofibration $j \from PA \cto Y$ and apply (App2) to it
  to get $f \from A \to B$ and a diagram
  \begin{ctikzpicture}
    \matrix[diagram]
    {
      |(A)| PA & |(Y)| Y \\
      |(B)| PB & |(Z)| Z \\
    };

    \draw[cof] (A) to node[above] {$j$}  (Y);
    \draw[->]  (A) to node[left]  {$Pf$} (B);

    \draw[->] (B) to node[below] {$t$} (Z);
    \draw[->] (Y) to node[right] {$s$} (Z);
  \end{ctikzpicture}
  with both $s$ and $t$ weak equivalences.
  We factor $[t, s] \from PB \push_{PA} Y \to Z$ as a composite
  of $[t', s'] \from PB \push_{PA} Y \cto W$ and $W \weto Z$.
  So we obtain the square on the right
  \begin{ctikzpicture}
    \matrix[diagram]
    {
      |(PA)| PA & |(PB)| PB & |(A)| A & |(B)| B \\
      |(Y)| Y   & |(W)|  W  & |(C)| C & |(D)| D \\
    };

    \draw[->]  (PA) to node[above] {$Pf$}  (PB);
    \draw[cof] (PA) to node[left]  {$j$}   (Y);

    \draw[->]  (Y)  to node[below] {$s'$} (W);
    \draw[cof] (PB) to node[right] {$t'$} (W);

    \draw[->]  (A) to node[above] {$f$} (B);
    \draw[cof] (A) to node[left]  {$i$} (C);

    \draw[->]  (C) to node[below] {$u$} (D);
    \draw[cof] (B) to node[right] {$v$} (D);
  \end{ctikzpicture}
  with both $s'$ and $t'$ weak equivalences.
  We can now apply the lifting property for pseudofactorizations to get
  the square on the left with $u$ and $v$ weak equivalences \st{} $Pu = s'$,
  $Pv = t'$ and (most importantly) $Pi = j$.
\end{proof}

Next, we proceed to the construction of factorizations.
This is the first of many situations where we need a way
of keeping track of certain homotopical properties of diagrams
in cofibration categories.
\emph{\HoRe{} categories} are very convenient for this purpose.

\begin{definition}
  A \emph{\hore{} category} is a category equipped with a subcategory
  whose morphisms are called \emph{weak equivalences}
  \st{} every identity morphism is a weak equivalence
  and the ``2 out of 6'' property holds.
\end{definition}

As discussed in the introduction, \hore{} categories are models
of homotopy theories in their own right, but we will use them merely as
a bookkeeping tool.
A functor $I \to J$ between \hore{} categories is \emph{\hore{}}
if it preserves weak equivalences.
In particular, for any cofibration category $\cat{C}$ and a \hore{} category $J$
the \hore{} functors $J \to \cat{C}$ will be called \emph{\hore{} diagrams}.
The notation $\cat{C}^J$ will always refer to the category
of all \hore{} diagrams $J \to \cat{C}$, it is itself a \hore{} category
with levelwise weak equivalences.
If $J$ is a plain category, then it will be considered as a \hore{} category
with the trivial \hore{} structure, i.e.\ with only isomorphisms
as weak equivalences.
On the other hand, $\hat{J}$ will denote $J$ equipped with
the largest \hore{} structure, i.e.\ the one where all morphisms
are weak equivalences.

Let $\cat{C}$ be a cofibration category and let $\Sd \hat{[1]}$ denote the poset
of non-empty subsets of $\{ 0, 1 \}$.
Make it into a \hore{} poset by declaring all morphisms
to be weak equivalences.
Call a diagram $X \from \Sd \hat{[1]} \to \cat{C}$ cofibrant
if both $X_0 \to X_{01}$ and $X_1 \to X_{01}$ are cofibrations in $\cat{C}$.
Let $P \cat{C}$ denote the category of all \hore{} cofibrant diagrams
$\Sd \hat{[1]} \to \cat{C}$ (i.e.\ $X$ \st{} both $X_0 \to X_{01}$
and $X_1 \to X_{01}$ are acyclic cofibrations).
Define weak equivalences in $P \cat{C}$ as levelwise weak equivalences
and define a morphism $A \to X$ to be a cofibration if all
\begin{align*}
  A_0 & \to X_0, \\
  A_1 & \to X_1, \\
  A_{01} \push_{A_0} X_0 & \to X_{01} \text{ and} \\
  A_{01} \push_{A_1} X_1 & \to X_{01}
\end{align*}
are cofibrations in $\cat{C}$.
(Note that this implies that $A_{01} \to X_{01}$ is a cofibration too.)

The notation $\Sd \hat{[1]}$ is a special case of the notation
that will be introduced later in \Cref{ch:qcats-of-frames},
but then we will always consider Reedy cofibrant diagrams
and not every cofibrant object in the sense above is Reedy cofibrant.
For a Reedy cofibrant object we would require $X_0 \coprod X_1 \to X_{01}$
to be a cofibration.
Similarly, cofibrations above are more general than Reedy cofibrations.
(See \Cref{direct-cat} for the definition.)
However, this notion reduces easily to the classical one,
i.e.\ a morphism $A \to X$ is a cofibration in $P \cat{C}$ \iff{}
its restrictions along the two non-trivial inclusions $[1] \ito \Sd \hat{[1]}$
are Reedy cofibrations.
The category $P \cat{C}$ will serve as a \emph{path object}
(i.e.\ a dual cylinder) in $\CofCat$.
The proof of the next proposition is merely an observation
that classical arguments about Reedy cofibrations are still valid
with this slightly more general definition.
Nonetheless, this modification is important since otherwise the diagonal functor
in the proof of \Cref{fibcat-of-cofcats} below would not be exact.

\begin{proposition}\label{path-object-cofcat}
  If $\cat{C}$ is a cofibration category, then so is $P \cat{C}$
  with the above weak equivalences and cofibrations.
\end{proposition}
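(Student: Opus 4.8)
The plan is to verify the axioms (C0)--(C5) for $P \cat{C}$ directly, essentially by running the classical Reedy-diagram arguments (as in \cite{rb}) at the three spots $0$, $1$, $01$ of $\Sd \hat{[1]}$, while carefully using the more permissive notion of cofibration that defines $P \cat{C}$. Since weak equivalences in $P \cat{C}$ are levelwise, (C0) is immediate from ``2 out of 6'' in $\cat{C}$, and (C1) follows once one observes that an isomorphism $A \to X$ of diagrams makes $A_{01} \push_{A_0} X_0 \iso A_{01} \to X_{01}$ an isomorphism, hence a cofibration. For (C2) I take the constant diagram at the initial object $\init$ of $\cat{C}$; it lies in $P \cat{C}$ because $\init \to \init$ is an isomorphism, and it is initial in $P \cat{C}$ since maps out of it are determined levelwise by the universal property of $\init$. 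For (C3), given $X \in P \cat{C}$, the maps $\init \to X_0$ and $\init \to X_1$ are cofibrations by (C3) in $\cat{C}$, while $\init \push_\init X_0 = X_0 \to X_{01}$ and $\init \push_\init X_1 = X_1 \to X_{01}$ are cofibrations because $X$ is cofibrant; hence the map from the constant diagram $\init$ to $X$ is a cofibration.

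For (C4), given a cofibration $A \cto X$ and any morphism $A \to B$ in $P \cat{C}$, I form the pushout $P$ levelwise: $P_0 = B_0 \push_{A_0} X_0$, $P_1 = B_1 \push_{A_1} X_1$, $P_{01} = B_{01} \push_{A_{01}} X_{01}$, each existing in $\cat{C}$ since $A_0 \cto X_0$, $A_1 \cto X_1$, and (as noted after the definition of cofibrations in $P \cat{C}$) $A_{01} \cto X_{01}$ are cofibrations. The maps $B_0 \to P_0$ and $B_1 \to P_1$ are cofibrations by stability under pushout; and since $B_{01} \push_{B_0} P_0 \iso B_{01} \push_{A_{01}} (A_{01} \push_{A_0} X_0)$, the map $B_{01} \push_{B_0} P_0 \to P_{01} \iso B_{01} \push_{A_{01}} X_{01}$ is a pushout of the cofibration $A_{01} \push_{A_0} X_0 \cto X_{01}$, hence a cofibration (symmetrically for the $1$-side), so $B \to P$ is a cofibration. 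To see $P \in P \cat{C}$, I factor $P_0 \to P_{01}$ as $P_0 \to B_{01} \push_{B_0} P_0 \to P_{01}$: the first map is a pushout of the acyclic cofibration $B_0 \acto B_{01}$, and the second is a pushout of $A_{01} \push_{A_0} X_0 \to X_{01}$, which is a cofibration and, by ``2 out of 3'' applied to $X_0 \to A_{01} \push_{A_0} X_0 \to X_{01}$ (both legs being weak equivalences, the first as a pushout of $A_0 \acto A_{01}$), also a weak equivalence; hence $P_0 \acto P_{01}$, and symmetrically $P_1 \acto P_{01}$. Finally, if $A \acto X$ is moreover a levelwise weak equivalence then $A_0 \acto X_0$, $A_1 \acto X_1$, $A_{01} \acto X_{01}$, so $B_0 \to P_0$, $B_1 \to P_1$, $B_{01} \to P_{01}$ are acyclic cofibrations and $B \acto P$; this handles the stability of acyclic cofibrations under pushout.

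For (C5), given $f \from A \to X$ in $P \cat{C}$, I factor $A_0 \to X_0$ and $A_1 \to X_1$ in $\cat{C}$ as $A_0 \cto C_0 \weto X_0$ and $A_1 \cto C_1 \weto X_1$, form $M := C_0 \push_{A_0} A_{01} \push_{A_1} C_1$, factor the canonical map $M \to X_{01}$ as $M \cto C_{01} \weto X_{01}$, and let $C$ be the diagram with these values. Then $A \to C$ is a cofibration: $A_0 \cto C_0$ and $A_1 \cto C_1$ hold by construction, and regrouping shows $A_{01} \push_{A_0} C_0 \to M$ is a pushout of the cofibration $A_1 \cto C_1$, so $A_{01} \push_{A_0} C_0 \to M \cto C_{01}$ is a cofibration, symmetrically for the $1$-side. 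The diagram $C$ lies in $P \cat{C}$ because $C_0 \to C_{01}$ is a cofibration (a pushout of the cofibration $A_0 \to A_{01} \push_{A_1} C_1$ followed by $M \cto C_{01}$) which is also a weak equivalence by ``2 out of 3'', since $C_0 \weto X_0 \acto X_{01}$ and $C_{01} \weto X_{01}$; symmetrically for $C_1 \to C_{01}$. And $C \to X$ is a weak equivalence because it is one at each spot. Thus $f$ factors as $A \cto C \weto X$, completing the verification.

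The only place this deviates from the textbook Reedy argument is the need to keep track of which pushouts actually exist in $\cat{C}$ (only those along cofibrations) and to repeatedly regroup iterated pushouts and invoke ``2 out of 3'' for weak equivalences (valid in any cofibration category by (C0)--(C1)); I do not expect a genuine obstacle here, only careful bookkeeping. One should also record at the outset that $P \cat{C}$ is the full subcategory of the category of \hore{} diagrams $\Sd \hat{[1]} \to \cat{C}$ on the cofibrant ones, equipped with the prescribed class of cofibrations, so that once the levelwise pushouts and factorizations above are shown to land in $P \cat{C}$ with the right maps, they are automatically the pushouts and factorizations of $P \cat{C}$.
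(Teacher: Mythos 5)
Your proof is correct and follows essentially the same route as the paper: axiom-by-axiom verification with levelwise pushouts for (C4) (via the Pushout Lemma / regrouping of iterated pushouts) and the identical factorization construction for (C5), since your $M = C_0 \push_{A_0} A_{01} \push_{A_1} C_1$ coincides with the paper's $W_0 \push_{X_{01}} W_1$. If anything you are slightly more careful than the paper in explicitly checking that the levelwise pushout and the middle object of the factorization are again objects of $P\cat{C}$ (i.e.\ that their structure maps into the $01$-spot are acyclic cofibrations), a point the paper's proof leaves implicit.
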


\begin{proof}
  \enumhack
  \begin{itemize}
  \item[(C0)] Weak equivalences satisfy ``2 out of 6''
    since this holds in $\cat{C}$.
  \item[(C1)] A morphism $A \to X$ is an acyclic cofibration \iff{} all
    \begin{align*}
      A_0 & \to X_0, \\
      A_1 & \to X_1, \\
      A_{01} \push_{A_0} X_0 & \to X_{01} \text{ and} \\
      A_{01} \push_{A_1} X_1 & \to X_{01}
    \end{align*}
    are acyclic cofibrations in $\cat{C}$.
    Hence every isomorphism is an acyclic cofibration.
  \item[(C2-3)] The constant diagram of initial objects is cofibrant
    and initial in $P \cat{C}$.
    Moreover, the definition of a cofibrant object $X$ is equivalent to
    $\init \to X$ being a cofibration,
    thus all objects of $P \cat{C}$ are cofibrant.
  \item[(C4)] A cofibration in $P \cat{C}$ is in particular
    a levelwise cofibration and so pushouts along cofibrations in $P \cat{C}$
    exist and are constructed levelwise.
    Given a pushout square,
    \begin{ctikzpicture}
      \matrix[diagram]
      {
        |(A)| A & |(B)| B \\
        |(X)| X & |(Y)| Y \\
      };

      \draw[->]  (A) to (B);
      \draw[cof] (A) to (X);

      \draw[->] (B) to (Y);
      \draw[->] (X) to (Y);
    \end{ctikzpicture}
    in $P \cat{C}$ we observe that $B_0 \to Y_0$ and $B_1 \to Y_1$ are pushouts
    of $A_0 \to X_0$ and $A_1 \to X_1$ so they are cofibrations.
    The Pushout Lemma says that
    \begin{align*}
      B_{01} \push_{B_0} Y_0 \to Y_{01}
      \text{ and } B_{01} \push_{B_1} Y_1 \to Y_{01}
    \end{align*}
    are pushouts of
    \begin{align*}
      A_{01} \push_{A_0} X_0 \to X_{01}
      \text{ and } A_{01} \push_{A_1} X_1 \to X_{01}
    \end{align*}
    so they are cofibrations too.
    Consequently, $B \to Y$ is a cofibration in $P \cat{C}$.
    Stability of acyclic cofibrations under pushouts is obtained
    by combining this argument with the characterization
    of acyclic cofibrations given in (C1) above.
  \item[(C5)] Let $X \to Y$ be a morphism of $P \cat{C}$.
    For $i \in \{0, 1\}$ factor $X_i \to Y_i$ as $X_i \cto Z_i \weto Y_i$
    in $\cat{C}$ and form pushouts
    \begin{ctikzpicture}
      \matrix[diagram]
      {
        |(00)| X_i    & |(10)| Z_i \\
        |(01)| X_{01} & |(11)| W_i \text{.} \\
      };

      \draw[cof] (00) to (10);
      \draw[cof] (00) to (01);
      \draw[cof] (10) to (11);
      \draw[cof] (01) to (11);
    \end{ctikzpicture}
    Then we have the induced morphisms $W_i \to Y_{01}$ which make the square
    \begin{ctikzpicture}
      \matrix[diagram]
      {
        |(X)|  X_{01} & |(W0)| W_0 \\
        |(W1)| W_1    & |(Y)|  Y_{01} \text{.} \\
      };

      \draw[cof] (X) to (W0);
      \draw[cof] (X) to (W1);

      \draw[->] (W0) to (Y);
      \draw[->] (W1) to (Y);
    \end{ctikzpicture}
    commute and thus yield a morphism $W_0 \push_{X_{01}} W_1 \to Y_{01}$.
    We factor it in $\cat{C}$ as
    \begin{align*}
      W_0 \push_{X_{01}} W_1 \cto Z_{01} \weto Y_{01} \text{.}
    \end{align*}
    Then $Z$ becomes an object of $P \cat{C}$ and $X \cto Z \weto Y$
    is a factorization of the original morphism.
    \qedhere
  \end{itemize}
\end{proof}

We are ready to prove the main result of this section.

\begin{theorem}\label{fibcat-of-cofcats}
  The category $\CofCat$ with weak equivalences and fibrations as above
  is a fibration category.
\end{theorem}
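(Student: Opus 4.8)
The plan is to verify the duals of the cofibration category axioms (C0)--(C5) for $\CofCat$, with the fibrations of \Cref{dfn:fibration} in the role of cofibrations. Several are immediate. The dual of (C0) follows from the ``2 out of 6'' property for equivalences of ordinary categories: for composable exact functors $F,G,H$ with $GF$ and $HG$ weak equivalences, $\Ho(GF)$ and $\Ho(HG)$ are equivalences, hence so are $\Ho F$, $\Ho G$, $\Ho H$. The dual of (C1) --- that every isomorphism $P$ of cofibration categories is an acyclic fibration --- follows from \Cref{acyclic-fibrations}, since both $P$ and $P^{-1}$ are exact, so $P$ is an isofibration, transports factorizations and pseudofactorizations along $P^{-1}$, reflects weak equivalences, and has the \rlp{} \wrt{} $[0]\cto[1]$. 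The dual of (C2) is witnessed by $[0]$ (with only identities as weak equivalences and cofibrations), which is a cofibration category and terminal in $\CofCat$. For the dual of (C3), the unique exact functor $\cat{C}\to[0]$ is trivially an isofibration, its lifting property for factorizations is exactly axiom (C5) of $\cat{C}$, and its lifting property for pseudofactorizations is obtained by factoring the given $f\from A\to B$ as $A\cto C\weto B$ via (C5), factoring $\id_B$ as $B\cto D\weto B$ (so $B\cto D$ is a weak equivalence by ``2 out of 3'', hence acyclic), and completing to the evident square with bottom edge $C\weto B\acto D$.

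For the dual of (C4), existence of pullbacks of fibrations is precisely \Cref{pullback-of-cofcats}, and the fact (also established there) that such a square is a pullback in $\eCofCat$ shows that the base change of a fibration is again a fibration: a fibration is exactly an exact functor with the \rlp{} in $\eCofCat$ \wrt{} the three generating functors listed after \Cref{dfn:fibration}, and the \rlp{} is stable under pullback. For acyclic fibrations one invokes \Cref{acyclic-fibrations}: the base change is a fibration, inherits the \rlp{} \wrt{} $[0]\cto[1]$, and still reflects weak equivalences (if $Qf$ is a weak equivalence then so is $F(Qf)=P(Gf)$, hence so is $Gf$, hence so is $f=(Qf,Gf)$). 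Taking pullbacks over the terminal object $[0]$ produces finite products in $\CofCat$, whose projections are exact fibrations.

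The substantial axiom is the dual of (C5): every exact $F\from\cat{C}\to\cat{D}$ must factor as a weak equivalence followed by a fibration. I will use the path object $P\cat{D}$ of \Cref{path-object-cofcat}, its endpoint evaluations $\ev_0,\ev_1\from P\cat{D}\to\cat{D}$, the middle evaluation $\ev_{01}$, and the constant diagram functor $\Delta\from\cat{D}\to P\cat{D}$. The functor $\Delta$ is exact --- this is where the more permissive definition of cofibrations in $P\cat{D}$ is needed --- and is a weak equivalence, since $\ev_{01}\circ\Delta=\id$ while the acyclic cofibrations $X_0\acto X_{01}$, $X_1\acto X_{01}$ assemble into a natural weak equivalence $\id_{P\cat{D}}\weto\Delta\circ\ev_{01}$. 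The key claim is that $(\ev_0,\ev_1)\from P\cat{D}\to\cat{D}\pull\cat{D}$ is a fibration. The isofibration property is straightforward; to lift a factorization of a morphism $X\to Z$ in $P\cat{D}$ presented as $(X_0\cto W_0\weto Z_0,\ X_1\cto W_1\weto Z_1)$ one puts $C_0=W_0$, $C_1=W_1$, and takes $C_{01}$ to be a (C5)-factorization $W_0\push_{X_0}X_{01}\push_{X_1}W_1\cto C_{01}\weto Z_{01}$; the only delicate point is that each $W_i\cto C_{01}$ is \emph{acyclic}, which follows by ``2 out of 3'' from the fact that the composite $W_i\weto Z_i\acto Z_{01}$ is a weak equivalence. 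The lifting property for pseudofactorizations is handled by the same pushout-and-factor recipe. Granting this, $\ev_0$ is a fibration (it is $(\ev_0,\ev_1)$ composed with a product projection, which is a fibration by the duals of (C3) and (C4)) and a weak equivalence (being naturally weakly equivalent to $\ev_{01}$), hence an acyclic fibration. Now factor $F$ as $\cat{C}\stackrel{j}{\to}N_F\stackrel{q}{\to}\cat{D}$, where $N_F$ is the pullback of $\ev_0$ along $F$ (which exists by \Cref{pullback-of-cofcats}), $j$ is induced by $\id_{\cat{C}}$ and $\Delta F$, and $q$ is $\ev_1$ composed with $N_F\to P\cat{D}$. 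Then $q\circ j=F$; the projection $N_F\to\cat{C}$ is a base change of the acyclic fibration $\ev_0$, hence an acyclic fibration, so from $N_F\to\cat{C}$ being a retraction of $j$ and ``2 out of 3'' we get that $j$ is a weak equivalence; and $N_F\to\cat{D}$ is a fibration because $N_F\to\cat{C}\pull\cat{D}$ is a base change of the fibration $(\ev_0,\ev_1)$ along $F\pull\id_{\cat{D}}$ and $\cat{C}\pull\cat{D}\to\cat{D}$ is a fibration.

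The hard part will be the path object: verifying that $(\ev_0,\ev_1)\from P\cat{D}\to\cat{D}\pull\cat{D}$ is a fibration --- especially the lifting property for pseudofactorizations --- and that $\Delta$ is exact. Both rely on the fact that objects and cofibrations of $P\cat{D}$ were defined more generously than their Reedy counterparts; it is exactly this slack that lets the acyclic cofibrations demanded by the definition of $P\cat{D}$ be produced from ordinary (C5)-factorizations together with repeated use of the ``2 out of 3'' and ``2 out of 6'' properties.
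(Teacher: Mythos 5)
Your proposal is correct and follows essentially the same route as the paper: everything reduces to \Cref{pullback-of-cofcats}, \Cref{acyclic-fibrations} and the path object $P\cat{D}$ of \Cref{path-object-cofcat}, with the factorization axiom resting on the exactness and weak invertibility of $\diag$ and on $\ev_{0,1}$ being a fibration. The only cosmetic difference is that you inline the proof of Brown's Factorization Lemma (constructing the mapping path object $N_F$ explicitly) where the paper simply cites it and then builds the path object.
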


In fact, $\CofCat$ is a homotopy complete category,
i.e.\ it has all small homotopy limits.
This will be explained in \Cref{sec:infinite-hocolims}.

\begin{proof}
  \enumhack
  \begin{itemize}
  \item[(C0)\textop] Weak equivalences satisfy ``2 out of 6''
    since they are created from equivalences of categories
    by $\Ho \from \CofCat \to \Cat$.
  \item[(C1)\textop] Isomorphisms are acyclic fibrations
    by \Cref{acyclic-fibrations}.
  \item[(C2-3)\textop] The category $[0]$ has a unique structure
    of a cofibration category and it is a terminal cofibration category.
    Moreover, every cofibration category is fibrant since every category
    is isofibrant while the lifting properties for factorizations
    and pseudofactorizations follow from the factorization axiom.
  \item[(C4)\textop] \Cref{pullback-of-cofcats} says that pullbacks
    along fibrations exist and by the construction they are also pullbacks
    in $\eCofCat$.
    Since fibrations are defined by the \rlp{} in this category they are stable
    under pullbacks.
    This argument also applies to acyclic fibrations
    by \Cref{acyclic-fibrations} since (App1) is equivalent to the \rlp{} \wrt{}
    the inclusion $[1] \ito \hat{[1]}$.
  \item[(C5)\textop] To verify the factorization axiom it suffices to construct
    a path object for every cofibration category $\cat{C}$
    by \cite{br}*{Factorization lemma, p. 421}.
    Let $\diag \from \cat{C} \to P \cat{C}$ be the diagonal functor.
    It preserves (acyclic) cofibrations since if $X \cto Y$
    is an (acyclic) cofibration in $\cat{C}$,
    then both $(\diag X)_0 \to (\diag Y)_0$ and $(\diag X)_1 \to (\diag Y)_1$
    coincide with $X \cto Y$ while
    \begin{align*}
      & (\diag X)_{01} \push_{(\diag X)_0} (\diag Y)_0 \to (\diag Y)_{01} \\
      \text{ and }
      & (\diag X)_{01} \push_{(\diag X)_1} (\diag Y)_1 \to (\diag Y)_{01}
    \end{align*}
    are isomorphisms.
    It also preserves the pushouts, sequential colimits and coproducts
    and hence is exact.
    The evaluation functor
    \begin{align*}
      \ev_{0,1} = (\ev_0, \ev_1) \from P \cat{C} \to \cat{C} \times \cat{C}
    \end{align*}
    is also exact.
    Together they form a factorization of the diagonal functor
    $\cat{C} \to \cat{C} \times \cat{C}$.
    We need to show that $\diag$ is a weak equivalence
    and that $\ev_{0,1}$ is a fibration.

    Consider the evaluation functor $\ev_{01} \from P \cat{C} \to \cat{C}$.
    It is a \hore{} functor \st{} $\ev_{01} \diag = \id_{\cat{C}}$ and there is
    a natural weak equivalence $\id_{P \cat{C}} \to \diag \ev_{01}$
    since all morphisms of $\Sd \hat{[1]}$ are weak equivalences.
    It follows that $\Ho \diag$ is an equivalence.

    It is easy to see that $\ev_{0,1}$ is an isofibration.
    The lifting property for factorizations is verified just like
    the factorization axiom in $P \cat{C}$ except that now the factorizations
    $X_i \cto Z_i \weto Y_i$ are given in advance.
    The lifting property for pseudofactorizations is handled similarly:
    let $X \to Y$ be a morphism in $P \cat{C}$ and let
    \begin{ctikzpicture}
      \matrix[diagram] {
        |(X)| X_i & |(Y)| Y_i \\
        |(W)| W_i & |(Z)| Z_i \\
      };

      \draw[->]  (X) to (Y);
      \draw[cof] (X) to (W);

      \draw[->]  (W) to node[below] {$\we$} (Z);
      \draw[cof] (Y) to node[right] {$\we$} (Z);
    \end{ctikzpicture}
    be pseudofactorizations of $X_i \to Y_i$ for $i \in \{ 0, 1 \}$.
    Form pushouts
    \begin{ctikzpicture}
      \matrix[diagram] {
        |(X)|  X_i    & |(W)| W_i & |(Y)| Y_i     & |(Z)| Z_i \\
        |(Xt)| X_{01} & |(U)| U_i & |(Yt)| Y_{01} & |(V)| V_i \text{.} \\
      };

      \draw[cof] (X) to (W);
      \draw[cof] (X) to (Xt);

      \draw[cof] (W)  to (U);
      \draw[cof] (Xt) to (U);

      \draw[cof] (Y) to node[above] {$\we$} (Z);
      \draw[cof] (Y) to (Yt);

      \draw[cof] (Z)  to (V);
      \draw[cof] (Yt) to node[below] {$\we$} (V);
    \end{ctikzpicture}
    There are induced morphisms $U_i \to V_i$ which fit into
    a commutative diagram
    \begin{ctikzpicture}
      \matrix[diagram] {
        |(U0)| U_0 & |(X)| X_{01} & |(U1)| U_1 \\
        |(V0)| V_0 & |(Y)| Y_{01} & |(V1)| V_1 \\
      };

      \draw[cof] (X) to (U0);
      \draw[cof] (X) to (U1);

      \draw[cof] (Y) to (V0);
      \draw[cof] (Y) to (V1);

      \draw[->] (U0) to (V0);
      \draw[->] (X)  to (Y);
      \draw[->] (U1) to (V1);
    \end{ctikzpicture}
    and thus induce a morphism
    $U_0 \push_{X_{01}} U_1 \to V_0 \push_{Y_{01}} V_1$
    which we pseudofactorize into
    \begin{ctikzpicture}
      \matrix [diagram] {
        |(X)| U_0 \push_{X_{01}} U_1 & |(Y)| V_0 \push_{Y_{01}} V_1 \\
        |(W)| W_{01}                 & |(Z)| Z_{01} \text{.} \\
      };

      \draw[->]  (X) to (Y);
      \draw[cof] (X) to (W);

      \draw[->]  (W) to node[below] {$\we$} (Z);
      \draw[cof] (Y) to node[right] {$\we$} (Z);
    \end{ctikzpicture}
    Then $W$ and $Z$ form objects of $P \cat{C}$ which fit into
    a pseudofactorization
    \begin{ctikzpicture}
      \matrix [diagram] {
        |(X)| X & |(Y)| Y \\
        |(W)| W & |(Z)| Z \text{.} \\
      };

      \draw[->]  (X) to (Y);
      \draw[cof] (X) to (W);

      \draw[->]  (W) to node[below] {$\we$} (Z);
      \draw[cof] (Y) to node[right] {$\we$} (Z);
    \end{ctikzpicture}
    as required. \qedhere
  \end{itemize}
\end{proof}

  \subsection{Cofibration categories of diagrams and homotopy colimits}
\label{sec:hocolims-and-diagrams}

As already suggested by the two proofs above, Reedy cofibrations play
an important role in the theory of cofibration categories.
The notion of a Reedy cofibrant diagram (but not really that
of a Reedy cofibration) will be essential in the proof of our main theorem.
We will not discuss the basic theory of Reedy cofibrations
since it is already well covered in the literature.
A good general reference is \cite{rv} which is written from the perspective
of Reedy categories and model categories.
The theory of diagrams over general Reedy categories
requires using both colimits and limits.
Thus in the case of cofibration categories we have to restrict attention
to a special class of Reedy categories called direct categories
where colimits suffice.
Specific results concerning Reedy cofibrations in cofibration categories
are explained in \cite{rb} from where we will cite a few most relevant
to the purpose of this paper.

\begin{definition}\label{direct-cat}
  \enumhack
  \begin{enumerate}[label=\dfn]
  \item A category $I$ is \emph{direct} if
    it admits a functor $\deg \from I \to \nat$ that reflects identities
    (here, we consider $\nat$ as a poset with its standard order).
  \item
    For a direct category $I$ and $i \in I$, the \emph{latching category} at $i$
    is the full subcategory of the slice $I \slice i$
    on all objects except for $\id_i$.
    It is denoted by $\bd (I \slice i)$.
  \item
    Let $X \from I \to \cat{C}$ be a diagram in some category and $i \in I$.
    The \emph{latching object} of $X$ at $i$ is the colimit
    of the composite diagram
    \begin{align*}
      \bd (I \slice i) \to I \to \cat{C}
    \end{align*}
    where $\bd (I \slice i) \to I$ is the forgetful functor sending a morphism
    of $I$ (i.e. an object of $\bd (I \slice i)$) to its source.
    The latching object (if it exists) is denoted by $L_i X$
    and comes with a canonical \emph{latching morphism} $L_i X \to X_i$
    induced by the inclusion $\bd (I \slice i) \to I \slice i$.
  \item
    Let $\cat{C}$ be a cofibration category.
    A diagram $X \from I \to \cat{C}$ is \emph{Reedy cofibrant}
    if for all $i \in I$ the latching object of $X$ at $i$ exists
    and the latching morphism $L_i X \to X_i$ is a cofibration.
  \item Let $f \from X \to Y$ be a morphism
    of Reedy cofibrant diagrams $I \to \cat{C}$.
    It is called a \emph{Reedy cofibration} if for all $i \in I$
    the induced morphism
    \begin{align*}
      X_i \push_{L_i X} L_i Y \to Y_i
    \end{align*}
    is a cofibration
    (observe that this pushout exists since $X$ is Reedy cofibrant).
  \end{enumerate}
\end{definition}

The main purpose of this subsection is to construct certain cofibration categories
of diagrams and establish some practical criteria for verifying
that particular functors between them are weak equivalences or fibrations.

\begin{proposition}\label{Reedy-level}
  Let $\cat{C}$ be a cofibration category and $J$ a \hore{} direct category
  with finite latching categories.
  \begin{enumerate}[label=\thm]
  \item
    The category $\cat{C}^J_\Reedy$ of \hore{} Reedy cofibrant diagrams
    with levelwise weak equivalences and Reedy cofibrations
    is a cofibration category.
  \item
    The category $\cat{C}^J$ of all \hore{} diagrams
    with levelwise weak equivalences and levelwise cofibrations
    is a cofibration category.
  \item The inclusion functor $\cat{C}^J_\Reedy \ito \cat{C}^J$
    is a weak equivalence.
  \end{enumerate}
\end{proposition}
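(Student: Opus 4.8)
The plan is to verify axioms (C0)--(C5) for each of the two structures and then deduce (3) from \Cref{App}.

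\emph{Part (1).} Axioms (C0)--(C3) are immediate. Weak equivalences in $\cat{C}^J_\Reedy$ are detected levelwise, so ``2 out of 6'' is inherited from $\cat{C}$; a levelwise isomorphism has isomorphic relative latching morphisms, so it is an acyclic Reedy cofibration; the constant diagram $\const_\init$ is Reedy cofibrant (its latching objects are initial) and is initial in $\cat{C}^J_\Reedy$; and the relative latching morphisms of $\const_\init \to X$ are just the latching morphisms $L_i X \to X_i$, which are cofibrations by Reedy cofibrancy. The substance is in (C4) and (C5), and I would prove both by the standard degreewise induction along $\deg \from J \to \nat$. Finiteness of the latching categories is exactly what ensures that, for a Reedy cofibrant diagram, every latching object --- and more generally every iterated colimit occurring in the relative latching computations --- is assembled from finite coproducts and pushouts along cofibrations and hence exists in $\cat{C}$. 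Granting this, pushouts along Reedy cofibrations are computed levelwise, and Reedy cofibrancy of the result together with stability of (acyclic) Reedy cofibrations follows from the usual relative latching morphism argument as in \cite{rb}. For (C5), given $f \from X \to Y$ in $\cat{C}^J_\Reedy$ and $Z$ already built over the objects of degree $< n$, for each $i$ of degree $n$ I would form the pushout $M_i := X_i \push_{L_i X} L_i Z$ (it exists because $X \cto Z$ restricts to a Reedy cofibration over $\bd(J\slice i)$, so $L_i X \cto L_i Z$), observe that $f_i$ and the morphism $L_i Z \to Y_i$ determined by the cocone $\{Z_{j'} \to Y_{j'} \to Y_i\}$ agree on $L_i X$ and thus define $M_i \to Y_i$, and factor the latter in $\cat{C}$ as $M_i \cto Z_i \weto Y_i$. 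Then the relative latching morphism of $X \cto Z$ at $i$ is $M_i \cto Z_i$, a cofibration; the latching morphism $L_i Z \to Z_i$ is the coprojection $L_i Z \to M_i$ (a pushout of the cofibration $L_i X \cto X_i$) followed by $M_i \cto Z_i$, hence a cofibration, so $Z$ is Reedy cofibrant; $Z \weto Y$ is levelwise; and a ``2 out of 3'' argument (using $Z_i \weto Y_i$ together with homotopicality of $X$ and $Y$) shows $Z$ is a homotopical diagram. The very same induction proves a more flexible statement that I will use twice below: \emph{any} $f \from X \to Y$ with $X$ Reedy cofibrant and $Y$ an arbitrary homotopical diagram factors as $X \cto Z \weto Y$ with $Z$ Reedy cofibrant, the point being that only latching objects of the Reedy cofibrant diagrams $X$ and $Z$ are ever needed. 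Taking $X = \const_\init$ gives, in particular, a Reedy cofibrant replacement $\tilde Y \weto Y$ of every homotopical diagram.

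\emph{Part (2).} Here axioms (C0)--(C4) are routine and entirely levelwise: pushouts along levelwise cofibrations are formed levelwise in $\cat{C}$, they assemble into homotopical diagrams by the gluing lemma, and (acyclic) levelwise cofibrations are stable for the same reason. The one genuinely nontrivial point is (C5); a naive levelwise factorization fails because it is not natural in $J$, and a direct degreewise construction fails because the latching colimits of an arbitrary diagram need not exist in $\cat{C}$. Instead I would reduce to the Reedy case. Given $f \from X \to Y$ in $\cat{C}^J$, choose a Reedy cofibrant replacement $\tilde X \weto X$, apply the relative Reedy factorization from part (1) to the composite $\tilde X \to X \xrightarrow{f} Y$ to obtain $\tilde X \cto Z' \weto Y$ with $Z'$ Reedy cofibrant, and set $Z := X \push_{\tilde X} Z'$ --- the pushout along the levelwise cofibration $\tilde X \cto Z'$, computed levelwise. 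Then $X \to Z$ is a levelwise cofibration, being a pushout of one; in each degree $i$ the morphism $Z'_i \to Z_i$ is a pushout of the weak equivalence $\tilde X_i \weto X_i$ along the cofibration $\tilde X_i \cto Z'_i$, hence a weak equivalence by left properness of $\cat{C}$, so $Z_i \weto Y_i$ by ``2 out of 3''; and $Z$ is a homotopical diagram by the gluing lemma. This is the required factorization $X \cto Z \weto Y$.

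\emph{Part (3).} The inclusion $\iota \from \cat{C}^J_\Reedy \ito \cat{C}^J$ is exact: Reedy cofibrations are levelwise cofibrations and acyclic ones are levelwise acyclic, the initial objects coincide, and pushouts along Reedy cofibrations are computed levelwise and so remain pushouts in $\cat{C}^J$. By \Cref{App} it therefore suffices to check (App1) and (App2). Condition (App1) is trivial, weak equivalences on both sides being levelwise. For (App2), given $f \from A \to Y$ with $A \in \cat{C}^J_\Reedy$, the relative Reedy factorization $A \cto B \weto Y$ (with $B$ Reedy cofibrant) supplies the data: $i \from A \cto B$ is a morphism of $\cat{C}^J_\Reedy$ and the square with legs $f$, $\iota i$, $B \weto Y$, $\id_Y$ commutes. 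The main obstacle throughout is the bookkeeping in the Reedy induction of part (1) --- keeping track of the (relative) latching morphisms and checking that the diagrams produced remain homotopical --- together with the realization that part (2) must be routed through the Reedy case rather than attempted levelwise; once the relative Reedy factorization is available, the rest is essentially formal.
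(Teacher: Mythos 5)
Your proof is correct and takes essentially the same route as the paper: parts (1) and (2) are the standard degreewise Reedy arguments that the paper simply outsources to \cite{rb}*{Theorem 9.3.8} (your reduction of (C5) in part (2) to the Reedy case, via pushout along a Reedy cofibrant replacement and left properness — itself a consequence of the Gluing Lemma — is the standard proof underlying that citation), and your part (3) is exactly the paper's argument, namely exactness of the inclusion plus (App1)/(App2) via the relative Reedy factorization, i.e.\ \Cref{relative-factorization}(1) in the special case $\cat{D}=[0]$, $I=\emptyset$.
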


\begin{proof}
  \enumhack
  \begin{enumerate}[label=\thm]
  \item \cite{rb}*{Theorem 9.3.8(1a)}
  \item \cite{rb}*{Theorem 9.3.8(1b)}
  \item The inclusion functor satisfies the approximation properties
    of \Cref{App} as follows from \Cref{relative-factorization}(1)
    (in fact, from its standard special case of $\cat{D} = [0]$
    and $I = \emptyset$). \qedhere
  \end{enumerate}
\end{proof}

The crucial step in the proof of the above proposition is the construction
of factorizations.
In \Cref{relative-factorization} we revisit that construction in order
to prove a more general version which will be a key technical tool
in many arguments of this paper.

A \hore{} functor $f \from I \to J$ is a \emph{homotopy equivalence}
if there is a \hore{} functor $g \from J \to I$
\st{} $g f$ is weakly equivalent to $\id_I$
and $f g$ is weakly equivalent to $\id_J$ (where ``weakly equivalent''
means ``connected by a zig-zag of natural weak equivalences'').

\begin{lemma}\label{level-htpy-eq}
  Let $\cat{C}$ be a cofibration category and $f \from I \to J$
  a \hore{} functor where $I$ and $J$ are \hore{} direct categories
  with finite latching categories.
  Then the induced functor $f^* \from \cat{C}^J \to \cat{C}^I$ is exact.
  Moreover, if $f$ is a homotopy equivalence,
  then $f^*$ is a weak equivalence of cofibration categories.
  Furthermore, if $f$ induces an exact functor
  $f^* \from \cat{C}^J_\Reedy \to \cat{C}^I_\Reedy$,
  then it is also a weak equivalence.
\end{lemma}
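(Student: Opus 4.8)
The plan is to dispatch the three assertions in turn. Exactness of $f^* \from \cat{C}^J \to \cat{C}^I$ is a routine levelwise check: since $f$ is \hore{}, precomposition with $f$ carries a \hore{} diagram $J \to \cat{C}$ to a \hore{} diagram $I \to \cat{C}$, so $f^*$ is well defined, and both $\cat{C}^J$ and $\cat{C}^I$ are cofibration categories by \Cref{Reedy-level}(2). In each of them weak equivalences, cofibrations and acyclic cofibrations are levelwise, the initial object is the constant diagram on $\init_\cat{C}$, and pushouts along cofibrations are formed levelwise; since $f^*$ acts levelwise it preserves all of these. This part uses nothing about $f$ beyond its being \hore{}.

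The remaining two assertions both rest on one observation. If $P, Q \from I \to I$ are \hore{} functors and $\alpha \from P \Rightarrow Q$ is a natural transformation whose components are all weak equivalences of $I$, then for every \hore{} diagram $X \from I \to \cat{C}$ there is a natural transformation $X \circ P \to X \circ Q$ with components $X(\alpha_i)$, and these are weak equivalences of $\cat{C}$; hence $X \circ P \to X \circ Q$ is a weak equivalence of $\cat{C}^I$, naturally in $X$. Since $P^*$ and $Q^*$ are exact by the first part, it follows that they are joined by a natural weak equivalence and therefore induce naturally isomorphic functors $\Ho \cat{C}^I \to \Ho \cat{C}^I$. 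Consequently, if $P$ and $Q$ are connected by a zig-zag of natural weak equivalences --- whose intermediate functors are all \hore{}, by induction from an \hore{} endpoint using ``2 out of 3'' --- then $\Ho(P^*) \iso \Ho(Q^*)$, and likewise with $J$ in place of $I$.

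Now I would assume $f$ is a homotopy equivalence with \hore{} inverse $g \from J \to I$, so that $gf$ is connected to $\id_I$ and $fg$ to $\id_J$ by zig-zags of natural weak equivalences. Since $f^* g^* = (gf)^*$ and $g^* f^* = (fg)^*$, the previous paragraph gives $\Ho(f^*)\Ho(g^*) \iso \id$ and $\Ho(g^*)\Ho(f^*) \iso \id$, so $\Ho(f^*)$ is an equivalence of categories and $f^*$ is a weak equivalence of cofibration categories. For the final assertion, assume moreover that $f^*$ restricts to an exact functor $\cat{C}^J_\Reedy \to \cat{C}^I_\Reedy$, and place it in the commuting square whose horizontal arrows are the two versions of $f^*$ and whose vertical arrows are the inclusions $\cat{C}^J_\Reedy \ito \cat{C}^J$ and $\cat{C}^I_\Reedy \ito \cat{C}^I$. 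These inclusions are weak equivalences by \Cref{Reedy-level}(3) and the bottom map is a weak equivalence by what was just shown; since weak equivalences of cofibration categories satisfy ``2 out of 3'' (being created from equivalences of categories by $\Ho \from \CofCat \to \Cat$), the restricted $f^*$ is a weak equivalence as well.

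I expect no serious obstacle here: the whole content is the whiskering observation of the second paragraph, with the rest being bookkeeping together with the cited \Cref{Reedy-level}. The two points deserving a line of care are that the intermediate functors in the zig-zag witnessing the homotopy equivalence are automatically \hore{}, and that a natural weak equivalence of exact functors descends to a natural isomorphism on homotopy categories.
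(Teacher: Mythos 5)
Your proposal is correct and follows essentially the same route as the paper: exactness is checked levelwise, the homotopy equivalence $f$ induces a homotopy equivalence $f^*$ of the levelwise diagram categories (via the whiskering of natural weak equivalences you describe), and the final claim follows by ``2 out of 3'' in the square with the Reedy-to-levelwise inclusions of \Cref{Reedy-level}. The paper's proof is just a terser statement of the same argument, and the two points you flag for care (that intermediate functors in the zig-zag are \hore{}, and that natural weak equivalences descend to natural isomorphisms on homotopy categories) are exactly the details it leaves implicit.
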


\begin{proof}
  The functor $f^*$ is clearly exact \wrt{} the levelwise structures
  and it is a homotopy equivalence when $f$ is.

  For the last statement, consider the commutative square of exact functors
  \begin{ctikzpicture}
    \matrix[diagram]
    {
      |(CJR)| \cat{C}^J_\Reedy & |(CIR)| \cat{C}^I_\Reedy \\
      |(CJ)|  \cat{C}^J        & |(CI)|  \cat{C}^I \\
    };

    \draw[->] (CJR) to node[above] {$f^*$} (CIR);
    \draw[->] (CJ)  to node[below] {$f^*$} (CI);

    \draw[inj] (CJR) to (CJ);
    \draw[inj] (CIR) to (CI);
  \end{ctikzpicture}
  the vertical maps are weak equivalences by \Cref{Reedy-level}
  so the conclusion follows by ``2 out of 3''.
\end{proof}

The utility of direct categories comes from the fact that it is easy
to construct diagrams and morphisms of diagrams inductively.
For our purposes it will be most convenient to state this in terms of sieves.
A functor $I \to J$ is called a \emph{sieve} if it is an inclusion of
a full downwards closed subcategory, i.e.\ if it is injective on objects,
fully faithful and if $i \to j$ is a morphism of $J$ \st{} $j \in I$,
then $i \in I$.

\begin{lemma}\label{latching-extension}
  Let $I \ito J$ be a sieve between direct categories
  and $j \in J \setminus I$ an object of a minimal degree.
  Let $X \from I \to \cat{C}$ be a Reedy cofibrant diagram.
  Then prolongations of  $X$ to a Reedy cofibrant diagram
  $I \union \{ j \} \to \cat{C}$ are naturally bijective with cofibrations
  $L_j X \cto X_j$ for varying $X_j \in \cat{C}$.
  ($L_j X$ exists by the minimality of $j$.)

  Similarly, if $X$ is a Reedy cofibrant diagram over $I \union \{ j \}$
  and $f \from X|I \to Y$ is a Reedy cofibration, then prolongations of $f$
  (and $Y$) to a Reedy cofibration over $I \union \{ j \}$
  correspond bijectively to cofibrations $L_j Y \push_{L_j X} X_j \cto Y_j$.
\end{lemma}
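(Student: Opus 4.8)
The plan is to deduce both statements from the universal property of the (relative) latching object at $j$, after recording how the latching data at $j$ behaves under the inclusion $I \union \{ j \} \ito J$. First I would note two combinatorial consequences of the hypotheses. Since $\deg \from J \to \nat$ reflects identities, every non-identity morphism of $J$ strictly raises degree; hence each object of the latching category $\bd(J \slice j)$ is a morphism $i \to j$ with $\deg i < \deg j$, and minimality of $j$ in $J \setminus I$ forces $i \in I$. Thus the forgetful functor $\bd(J \slice j) \to J$ taking sources factors through $I$, so $L_j X$ is computed from $X|_I$ alone and agrees with the latching object at $j$ of any prolongation; also $\bd((I \union \{ j \}) \slice j) = \bd(J \slice j)$. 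Dually, the sieve condition forbids any morphism $j \to i$ with $i \in I$ (its source $j$ would then lie in $I$), so for every $i \in I$ the slices $(I \union \{ j \}) \slice i$ and $I \slice i$ coincide; therefore Reedy cofibrancy, latching objects and relative latching morphisms at objects of $I$ are untouched by prolongation, and only the object $j$ needs attention.

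For the first statement, unwinding the definition of a functor on $I \union \{ j \}$: a prolongation of $X$ is the data of an object $X_j$ together with morphisms $\alpha_u \from X_i \to X_j$ for every $u \from i \to j$ in $J$ satisfying $\alpha_{u v} = \alpha_u \circ X_v$. By the two facts above this is exactly a cocone on $\bd(J \slice j) \to I \xrightarrow{X} \cat{C}$ with apex $X_j$, hence — by the universal property of $L_j X$ — the same as a single morphism $L_j X \to X_j$. Under this correspondence the latching morphism of the prolongation at $j$ is precisely $L_j X \to X_j$, while Reedy cofibrancy at every other object is unchanged; so Reedy cofibrant prolongations of $X$ correspond bijectively to pairs consisting of an object $X_j$ and a cofibration $L_j X \cto X_j$.

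For the second statement I would run the same analysis one dimension up. A prolongation of $Y$ (as a diagram) and of $f$ (as a natural transformation) amounts to: an object $Y_j$, its structure cocone, i.e. a map $L_j Y \to Y_j$, and a morphism $f_j \from X_j \to Y_j$; naturality of the prolonged $f$ at the morphisms into $j$ says exactly that the square with sides $L_j X \to X_j$, $L_j f \from L_j X \to L_j Y$, $L_j Y \to Y_j$ and $f_j$ commutes. Since $X$ is Reedy cofibrant at $j$, the map $L_j X \to X_j$ is a cofibration and the pushout $X_j \push_{L_j X} L_j Y$ exists; a commuting square as above is then the same datum as a morphism $X_j \push_{L_j X} L_j Y \to Y_j$. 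This morphism is the relative latching morphism of $f$ at $j$, so demanding that $f$ be a Reedy cofibration at $j$ is demanding that it be a cofibration. Finally such a cofibration forces $L_j Y \to Y_j$ to be a cofibration too — it factors as $L_j Y \to X_j \push_{L_j X} L_j Y \to Y_j$, a pushout of a cofibration followed by a cofibration — so $Y$ is automatically Reedy cofibrant at $j$ (and at the objects of $I$ by hypothesis). Hence prolongations of $f$ and $Y$ to a Reedy cofibration over $I \union \{ j \}$ correspond bijectively to cofibrations $L_j Y \push_{L_j X} X_j \cto Y_j$.

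The argument is entirely formal; I expect the only place needing real care to be the bookkeeping in the last two paragraphs — matching the naturality constraints of a diagram, respectively a natural transformation, on $I \union \{ j \}$ with cocones on $\bd(J \slice j)$, respectively with factorizations through the relative latching object, and checking that the latching data at $j$ of a prolongation genuinely comes from the restriction to $I$. Beyond that, the only homotopical input used is stability of cofibrations under pushout and composition.
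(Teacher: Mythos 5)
Your proof is correct and follows essentially the same route as the paper's: the missing morphisms into $j$ are exactly a cocone on the latching category (which lies entirely in $I$ by minimality of $\deg j$ and the sieve condition), so prolongations correspond to morphisms out of $L_j X$, and extensions of $f$ to commuting squares, hence to morphisms out of $L_j Y \push_{L_j X} X_j$. You supply more detail than the paper — in particular the observation that a relative latching cofibration at $j$ forces $L_j Y \to Y_j$ to be a cofibration, so $Y$ is automatically Reedy cofibrant there — which the paper leaves implicit.
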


\begin{proof}
  The only (non-identity) morphisms of $I \union \{ j \}$ missing from $I$
  are those going from objects of degree less than $\deg j$ to $j$
  and they are encoded by the latching morphism.
  Similarly, if $f \from X \to Y$ is a morphism (cofibration) of diagrams
  over $I$ and $X$ is already defined over $j$,
  then extensions of $f$ over $j$ correspond to squares
  \begin{ctikzpicture}
    \matrix[diagram]
    {
      |(LX)| L_j X & |(X)| X_j \\
      |(LY)| L_j Y & |(Y)| Y_j \\
    };

    \draw[->] (LX) to (X);
    \draw[->] (LY) to (Y);

    \draw[->] (LX) to (LY);
    \draw[->] (X)  to (Y);
  \end{ctikzpicture}
  which in turn correspond to morphisms $L_j Y \push_{L_j X} X_j \to Y_j$
  and such an extension is a Reedy cofibration precisely when this morphism
  is a cofibration.
\end{proof}

The first part of the next lemma generalizes the standard construction
of factorizations into Reedy cofibrations followed by weak equivalences.
It says that given a morphism of diagrams $J \to \cat{C}$
and compatible factorizations of its restriction along a sieve $I \ito J$
and its image under a fibration $P \from \cat{C} \to \cat{D}$,
there is a factorization of the original morphism compatible with both of them.
The other two parts say the same for lifts for pseudofactorizations
and for cofibrations (when $P$ is an acyclic fibration
as in \Cref{acyclic-fibrations}).
\begin{anfxnote}{exposition, typesetting}
  the statement might be difficult to read, how to improve it?
\end{anfxnote}
\begin{lemma}\label{relative-factorization}
  Let $P \from \cat{C} \fto \cat{D}$ be a fibration between
  cofibration categories.
  Let $J$ be a \hore{} direct category with finite latching categories
  and $I \ito J$ a sieve.
  \begin{enumerate}[label=\thm]
  \item Let $f \from X \to Y$ be a morphism in $\cat{C}^J$.
    If $X$ is Reedy cofibrant,
    \begin{ctikzpicture}
      \matrix[diagram]
      {
        |(PX)| PX  & |(tYP)| \tilde{Y}_P & |(PY)| PY & \text{and} &
        |(XI)| X|I & |(tYI)| \tilde{Y}_I & |(YI)| Y|I \\
      };

      \draw[cof] (PX) to node[above] {$k_P$} (tYP);
      \draw[cof] (XI) to node[above] {$k_I$} (tYI);

      \draw[->] (tYP) to node[above] {$s_P$} node[below] {$\we$} (PY);
      \draw[->] (tYI) to node[above] {$s_I$} node[below] {$\we$} (YI);
    \end{ctikzpicture}
    are factorizations of $Pf$ and $f|I$ into Reedy cofibrations
    followed by weak equivalences \st{} $Pk_I = k_P|I$ and $Ps_I = s_P|I$
    (in particular, $P\tilde{Y}_I = \tilde{Y}_P|I$),
    then there is a factorization
    \begin{ctikzpicture}
      \matrix[diagram]
      {
        |(X)| X & |(tY)| \tilde{Y} & |(Y)| Y \\
      };

      \draw[cof] (X)  to node[above] {$k$} (tY);

      \draw[->] (tY) to node[above] {$s$} node[below] {$\we$} (Y);
    \end{ctikzpicture}
    of $f$ into a Reedy cofibration followed by a weak equivalence
    \st{} $Pk = k_P$, $k|I = k_I$, $Ps = s_P$ and $s|I = s_I$
    (in particular, $P\tilde{Y} = \tilde{Y}_P$ and $\tilde{Y}|I = \tilde{Y}_I$).
  \item Let $f \from X \to Y$ be a morphism in $\cat{C}^J$.
    If both $X$ and $Y$ are Reedy cofibrant,
    \begin{ctikzpicture}
      \matrix[diagram]
      {
        |(PX)| PX  & |(PY)| PY & \text{and} & |(XI)| X|I & |(YI)| Y|I \\
          |(tYP)| \tilde{Y}_P & |(hYP)| \hat{Y}_P &
        & |(tYI)| \tilde{Y}_I & |(hYI)| \hat{Y}_I \\
      };

      \draw[cof] (PX) to node[left] {$k_P$} (tYP);
      \draw[cof] (XI) to node[left] {$k_I$} (tYI);

      \draw[->] (tYP) to node[below] {$s_P$} node[above] {$\we$} (hYP);
      \draw[->] (tYI) to node[below] {$s_I$} node[above] {$\we$} (hYI);

      \draw[->] (PX) to node[above] {$Pf$}  (PY);
      \draw[->] (XI) to node[above] {$f|I$} (YI);

      \draw[cof] (PY) to node[right] {$l_P$} node[left] {$\we$} (hYP);
      \draw[cof] (YI) to node[right] {$l_I$} node[left] {$\we$} (hYI);
    \end{ctikzpicture}
    are pseudofactorizations of $Pf$ and $f|I$ \st{} $Pk = k_p$, $k|I = k_I$,
    $Pl = l_P$, $l|I = l_I$, $Ps = s_P$ and $s|I = s_I$
    (in particular, $P\tilde{Y}_I = \tilde{Y}_P|I$
    and $P\hat{Y}_I = \hat{Y}_P|I$), then there is a pseudofactorization
    \begin{ctikzpicture}
      \matrix[diagram]
      {
        |(X)|  X         & |(Y)|  Y  \\
        |(tY)| \tilde{Y} & |(hY)| \hat{Y} \\
      };

      \draw[cof] (X) to node[left] {$k$} (tY);

      \draw[->] (tY) to node[below] {$s$} node[above] {$\we$} (hY);

      \draw[->] (X) to node[above] {$f$} (Y);

      \draw[cof] (Y) to node[right] {$l$} node[left] {$\we$} (hY);
    \end{ctikzpicture}
    \st{} $Pk = k_P$, $k|I = k_I$, $Pl = l_P$, $l|I = l_I$, $Ps = s_P$
    and $s|I = s_I$ (in particular, $P\tilde{Y} = \tilde{Y}_P$,
    $\tilde{Y}|I = \tilde{Y}_I$, $P\hat{Y} = \hat{Y}_P$
    and $\hat{Y}|I = \hat{Y}_I$).
  \item If $P$ is acyclic, $X \in \cat{C}^J_\Reedy$ and
    \begin{ctikzpicture}
      \matrix[diagram]
      {
        |(PX)| PX & |(ZP)| Z_P & |(XI)| X|I & |(ZI)| Z_I \\
      };

      \draw[cof] (PX) to node[above] {$k_P$} (ZP);
      \draw[cof] (XI) to node[above] {$k_I$} (ZI);
    \end{ctikzpicture}
    are Reedy cofibrations \st{} $P k_I = k_P|I$, then there exists
    a Reedy cofibration
    \begin{ctikzpicture}
      \matrix[diagram]
      {
        |(X)| X & |(Z)| Z \\
      };

      \draw[cof] (X) to node[above] {$k$} (Z);
    \end{ctikzpicture}
    \st{} $Pk = k_P$ and $k|I = k_I$
    (in particular, $PZ = Z_P$ and $Z|I = Z_I$).
  \end{enumerate}
\end{lemma}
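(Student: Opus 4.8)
The plan is to prove all three parts simultaneously by transfinite (in fact finite-per-degree) induction on the objects of $J \setminus I$, ordered by degree, building up the required extra data one object at a time. In part (1) this data is the diagram $\tilde Y$ together with $k$ and $s$; in part (2) it is $\tilde Y$, $\hat Y$ together with $k$, $l$, $s$; in part (3) it is $Z$ together with $k$. The induction starts from the data given over the sieve: set $\tilde Y|I = \tilde Y_I$, $k|I = k_I$, $s|I = s_I$ (and analogously in the other parts), which the hypotheses guarantee is already a factorization, pseudofactorization, resp.\ Reedy cofibration of the appropriate restriction. At the inductive step the data is defined over a sieve $I' \supseteq I$ with $I' \ne J$, and we pick $j \in J \setminus I'$ of minimal degree, so that all the latching objects at $j$ of the diagrams involved exist and, since $J$ has finite latching categories, are finite colimits built from initial objects and pushouts along cofibrations.

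The key point is that, by \Cref{latching-extension}, prolonging the data over $j$ is the same as solving one problem in $\cat{C}$ about the latching objects at $j$. In part (1), prolonging the Reedy cofibration $k \from X \to \tilde Y$ over $j$ amounts to choosing a cofibration $L_j \tilde Y \push_{L_j X} X_j \cto \tilde Y_j$, and requiring $s$ to stay natural with $s k = f$ forces the composite $L_j \tilde Y \push_{L_j X} X_j \cto \tilde Y_j \weto Y_j$ to be a prescribed map $g_j$: on the $L_j \tilde Y$ summand, $g_j$ is the map out of the colimit $L_j \tilde Y$ induced by the cocone $(\tilde Y_i \xrightarrow{s_i} Y_i \to Y_j)_{i \to j}$ (note that $L_j Y$ itself need not exist here, as $Y$ is not assumed Reedy cofibrant), and on the $X_j$ summand it is $f_j$; the two agree on $L_j X$ by the compatibilities already in force. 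Thus prolonging the entire data over $j$ is exactly a factorization of $g_j$ into a cofibration followed by a weak equivalence. Since $P$ is exact and $X$, hence $\tilde Y$, is Reedy cofibrant, $P$ carries the latching objects at $j$ and the pushout $L_j \tilde Y \push_{L_j X} X_j$ to the corresponding constructions downstairs, so $P g_j$ is precisely the map of which $(\tilde Y_P)_j$ furnishes a factorization (the cofibration being $P(L_j \tilde Y \push_{L_j X} X_j) \cto (\tilde Y_P)_j$, which is a cofibration because $k_P$ is a Reedy cofibration, followed by the weak equivalence $(s_P)_j$). Applying the lifting property for factorizations of $P$ to $g_j$ and this factorization of $P g_j$ yields a factorization $L_j \tilde Y \push_{L_j X} X_j \cto \tilde Y_j \weto Y_j$ with $P\tilde Y_j = (\tilde Y_P)_j$ lifting $(k_P)_j$ and $(s_P)_j$; this is the required prolongation, and it has the right restriction to $I$ because we add only objects outside $I$.

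Part (2) has the same shape, with the lifting property for pseudofactorizations in place of that for factorizations: prolonging the data over $j$ is the same as pseudofactorizing the induced map $L_j \tilde Y \push_{L_j X} X_j \to L_j \hat Y \push_{L_j Y} Y_j$ between latching objects (here $Y$ is Reedy cofibrant, so $L_j Y$ and $L_j \hat Y$ exist; well-definedness of this map comes from $sk = lf$ at lower degrees), and $P$ of this map carries the pseudofactorization determined by $(\tilde Y_P)_j$ and $(\hat Y_P)_j$. Part (3) is the simplest case: prolonging the Reedy cofibration $k \from X \to Z$ over $j$ is the choice of a cofibration $L_j Z \push_{L_j X} X_j \cto Z_j$, and since $P$ is an acyclic fibration it has, by \Cref{acyclic-fibrations}, the right lifting property in $\eCofCat$ against the inclusion of $[0]$ into $(0 \cto 1)$, which is exactly what lifts the cofibration $P(L_j Z \push_{L_j X} X_j) \cto (Z_P)_j$ to a cofibration $L_j Z \push_{L_j X} X_j \cto Z_j$ over the identity of $P(L_j Z \push_{L_j X} X_j)$. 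In all three parts, once the induction exhausts $J$ we obtain the desired global data.

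I expect the only genuinely delicate point to be the bookkeeping around exactness: verifying that $P$, being exact and acting on diagrams that are Reedy cofibrant over a direct category with finite latching categories, carries every latching object $L_j(-)$ and every pushout of the form $L_j\tilde Y \push_{L_j X} X_j$ to the analogous construction in $\cat{D}$, so that the factorization, pseudofactorization, resp.\ cofibration of the base map that we are handed in $\cat{D}$ really is the $P$-image of a lifting problem in $\cat{C}$ of the expected form. This rests on the standard facts that the latching-object functor sends Reedy cofibrations between Reedy cofibrant diagrams to cofibrations and that latching objects of Reedy cofibrant diagrams are built from initial objects and pushouts along cofibrations. Everything else is routine diagram-chasing to see that the naturality of $k$, $s$ (and $l$) and the relation $sk = f$ (resp.\ $sk = lf$) are exactly what is encoded by the single lifting problem solved at stage $j$.
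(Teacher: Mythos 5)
Your proof is correct and follows essentially the same route as the paper's: induct over $J \setminus I$ by degree, use \Cref{latching-extension} to reduce each prolongation to a single (pseudo)factorization lifting problem for $P$ applied to the relative latching map $L_j \tilde{Y} \push_{L_j X} X_j \to Y_j$, using that $P$ preserves latching objects and that $\bd(I \slice j) \to \bd(J \slice j)$ is an isomorphism at the minimal-degree stage. The only point you omit is the closing observation that the resulting $\tilde{Y}$ is again a \hore{} diagram, which follows by ``2 out of 3'' since $s$ is a levelwise weak equivalence onto the \hore{} diagram $Y$.
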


\begin{proof}
  The proofs of three parts are similar to each other
  so we only provide the first one.

    It suffices to extend the factorization $f|I = s_I k_I$ over an object
    $j \in J \setminus I$ of a minimal degree.
    Then the statement will follow by an induction over the degree.

    By the minimality of the degree of $j$, Reedy cofibrancy of $X$
    and since $I \ito J$ is a sieve the latching objects $L_j X$
    and $L_j \tilde{Y}_I$ exist.
    Moreover, the induced functor of latching categories
    $\bd (I \slice j) \to \bd (J \slice j)$ is an isomorphism.
    Thus $P$ sends the morphism $X_j \push_{L_j X} L_j \tilde{Y}_I \to Y_j$
    to the analogous morphism $PX_j \push_{L_j PX} P\tilde{Y}_I \to P Y_j$.
    The latter factors as
    \begin{align*}
      PX_j \push_{L_j PX} P\tilde{Y}_I \cto (\tilde{Y}_P)_j \weto P Y_j
    \end{align*}
    and since $P$ is a fibration we can lift this to a factorization
    of the former as
    \begin{align*}
      X_j \push_{L_j X} L_j \tilde{Y}_I \cto \tilde{Y}_j \weto Y_j \text{.}
    \end{align*}
    This extends the factorization $f|I = s_I k_I$ over $j$
    by \Cref{latching-extension}.
    The resulting diagram $\tilde Y$ is \hore{} since it is weakly equivalent to
    \hore{} $Y$.
\end{proof}

The most typical examples of fibrations are restrictions along sieves.

\begin{lemma}\label{sieve-exact}
  Let $\cat{C}$ be a cofibration category.
  If $I$ and $J$ are \hore{} direct categories with finite latching categories
  and $f \from I \to J$ a \hore{} functor \st{} for every $i \in I$
  the induced functor of the latching categories
  $\bd(I \slice i) \to \bd(J \slice fi)$ is an isomorphism,
  then the induced functor $f^* \from \cat{C}^J_\Reedy \to \cat{C}^I_\Reedy$
  is exact.

  Moreover, if $f$ is a sieve, then $f^*$ is a fibration.
\end{lemma}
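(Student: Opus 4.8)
The plan is to reduce both assertions to the Reedy machinery already set up. First observe that a sieve $f \from I \ito J$ automatically satisfies the latching hypothesis of the first assertion: since $I$ is a full, downwards closed subcategory of $J$, every non-identity morphism of $J$ with target $fi$ (for $i \in I$) has its source in $I$, and $f$ is fully faithful, so the forgetful functors identify $\bd(I \slice i)$ with $\bd(J \slice fi)$. Hence it suffices to prove exactness assuming the latching hypothesis, and then, assuming in addition that $f$ is a sieve, the three lifting properties of \Cref{dfn:fibration}. Exactness is almost formal, since $f^*$ is restriction of diagrams along $f$ and therefore strictly preserves levelwise weak equivalences, levelwise cofibrations, and all colimits computed levelwise. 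The substantive points are that $f^*$ lands in $\cat{C}^I_\Reedy$ and that the relevant structure of $\cat{C}^J_\Reedy$ is levelwise. For the former, given $X \in \cat{C}^J_\Reedy$ and $i \in I$, the latching hypothesis gives $L_i(f^* X) \iso L_{fi} X$ with latching morphism $L_i(f^* X) \to (f^* X)_i$ identified with the cofibration $L_{fi} X \to X_{fi}$; the same computation shows that $f^*$ preserves Reedy cofibrations, and hence also acyclic Reedy cofibrations since levelwise weak equivalences are preserved. The initial object of $\cat{C}^J_\Reedy$ is the constant diagram on the initial object of $\cat{C}$ (which is Reedy cofibrant, all of its latching objects being initial), and it is clearly preserved; and by the construction in \Cref{Reedy-level} pushouts along Reedy cofibrations in $\cat{C}^J_\Reedy$ are computed levelwise, so $f^*$ preserves them. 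Thus $f^*$ is exact.

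Now let $f$ be a sieve; identify $I$ with its image and recall that $J$ has no morphisms from $J \setminus I$ into $I$, that $\deg \from J \to \nat$ reflects identities (so non-identity morphisms strictly decrease degree), and that latching categories are finite. For the isofibration property, given $X \in \cat{C}^J_\Reedy$ and an isomorphism $g \from f^* X \to Z$ in $\cat{C}^I_\Reedy$, one builds $\tilde X \from J \to \cat{C}$ with $\tilde X|I = Z$, $\tilde X_j = X_j$ for $j \notin I$, and the structure maps out of $I$ precomposed with the components of $g^{-1}$; then $\tilde X$ is isomorphic to $X$ as a $J$-diagram, hence Reedy cofibrant and homotopical, and the evident isomorphism $X \to \tilde X$ restricts to $g$. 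For the lifting property for factorizations, given $h \from X \to Y$ in $\cat{C}^J_\Reedy$ and a factorization $f^* X \cto W \weto f^* Y$ of $f^* h$, one extends it to a factorization over $J$ by induction on degree: with everything defined over $I$ and over all objects of $J \setminus I$ of degree $< n$, for each object $j$ of $J \setminus I$ of degree $n$ the latching object $L_j \tilde Y$ of the partial diagram exists (its latching category being finite and of strictly lower degree), and one factors the canonical morphism $X_j \push_{L_j X} L_j \tilde Y \to Y_j$ in $\cat{C}$ as a cofibration followed by a weak equivalence by (C5); by \Cref{latching-extension} this prolongs the factorization over $j$, and the resulting $\tilde Y$ is Reedy cofibrant and, being levelwise weakly equivalent to $Y$, homotopical. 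The lifting property for pseudofactorizations is the same induction, now forming the relevant pushouts in $\cat{C}$ and pseudofactorizing the induced map at each object $j$, exactly as in the proof of \Cref{fibcat-of-cofcats}. In fact this whole argument is the degenerate case $P = \id$ of the induction in \Cref{relative-factorization}, the only simplification being that the factorizations over $J \setminus I$ are produced directly in $\cat{C}$ rather than lifted along $P$.

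The only real work is bookkeeping: one must check at every stage of the induction that the partially built diagram remains Reedy cofibrant (so that the latching objects required at the next stage exist) and homotopical, and that compatibility with the prescribed data over $I$ and with the sieve structure is preserved. The hypotheses that latching categories are finite and that $f$ is a sieve are precisely what make these checks routine, and no genuinely new difficulty arises beyond what is already handled in \Cref{relative-factorization,fibcat-of-cofcats}.
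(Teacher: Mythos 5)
Your argument is correct and follows the same route as the paper: exactness from the latching-category isomorphisms, and the lifting properties from the degree-wise induction that is precisely \Cref{relative-factorization}(1) and (2) specialized to the terminal base (the paper simply cites that lemma, while you also spell out the isofibration property, which it leaves implicit). Two cosmetic slips only: non-identity morphisms in a direct category strictly \emph{raise} degree, and the relevant degenerate case of \Cref{relative-factorization} is $\cat{D} = [0]$ rather than $P = \id$, since with $P = \id$ the hypotheses already prescribe the factorization over all of $J$.
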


\begin{proof}
  If $f$ induces isomorphisms of the latching categories,
  then $f^*$ preserves Reedy cofibrations
  (and, in particular, Reedy cofibrant diagrams).
  It also preserves weak equivalences and colimits
  that exist in $\cat{C}^J_\Reedy$ so it is exact.

  If $f$ is a sieve, then it satisfies the exactness criterion above.
  Moreover, $f^*$ is a fibration by parts (1) and (2)
  of \Cref{relative-factorization}.
\end{proof}

The next few lemmas establish some connections between sieves and fibrations
which are reminiscent of classical homotopical algebra if we think of
sieves as ``cofibrations'' and sieves $I \ito J$ inducing weak equivalences
$\cat{C}^J_\Reedy \to \cat{C}^I_\Reedy$ as ``acyclic cofibrations''.
This does not quite fit into the classical picture since such ``cofibrations''
do not really belong to the same category as the fibrations.

\begin{lemma}\label{dual-pushout-prod}
  Let $f \from I \ito J$ be a sieve between \hore{} direct categories
  with finite latching categories
  and $P \from \cat{C} \to \cat{D}$ a fibration of cofibration categories.
  Then the induced exact functor
  $(f^*, P) \from \cat{C}^J_\Reedy
    \to \cat{C}^I_\Reedy \times_{\cat{D}^I_\Reedy} \cat{D}^J_\Reedy$
  \begin{enumerate}[label=\thm]
  \item is a fibration,
  \item is an acyclic fibration provided that $P$ is acyclic,
  \item is an acyclic fibration provided that
    both $f^* \from \cat{C}^J_\Reedy \to \cat{C}^I_\Reedy$
    and $f^* \from \cat{D}^J_\Reedy \to \cat{D}^I_\Reedy$ are weak equivalences.
  \end{enumerate}
\end{lemma}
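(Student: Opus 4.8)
The plan is to reduce all three parts to statements about a morphism $f \colon I \to J$ of sieves, using the pushout–product/pullback–exponential pattern familiar from Reedy theory, but carried out for the fibrations of cofibration categories introduced above. For part (1), I would verify the \rlp{} of $(f^*, P)$ in $\eCofCat$ against the three generating functors that characterize fibrations: the inclusion $[0] \ito \wiso$, the inclusion of $[1]$ into the category with a weak equivalence into the terminal object, and the inclusion of $[1] \times [0]$ into the square with two weak equivalences. For the first (isofibration) one just observes that isomorphisms in $\cat{C}^J_\Reedy$ lift along both $f^*$ and $P$ objectwise and these lifts are automatically compatible over $\cat{C}^I_\Reedy \times_{\cat{D}^I_\Reedy} \cat{D}^J_\Reedy$ since $f$ is injective on objects. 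For the second and third generating functors — the lifting properties for factorizations and pseudofactorizations — this is exactly \Cref{relative-factorization}(1) and (2): given a morphism of $J$-diagrams in $\cat{C}$, a compatible factorization of its restriction to $I$, and a compatible factorization of its image under $P$, that lemma produces a factorization of the original morphism compatible with both. So part (1) is essentially a repackaging of \Cref{relative-factorization}(1)–(2), once one checks that the relevant latching objects exist (which they do, by the finite-latching-category hypothesis on $J$ together with the sieve condition).

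For part (2), I would additionally invoke \Cref{acyclic-fibrations}: an exact functor is an acyclic fibration \iff{} it is a fibration, satisfies (App1), and has the \rlp{} against the inclusion of $[0]$ into the category $0 \cto 1$. The fibration part is (1). For (App1) — reflection of weak equivalences — note that if $(f^* g, P g)$ is a levelwise weak equivalence, then in particular $Pg$ is a levelwise weak equivalence, so since $P$ is acyclic (hence satisfies (App1)) each component $Pg_j$ being a weak equivalence forces $g_j$ to be one; thus $g$ is a levelwise weak equivalence in $\cat{C}^J_\Reedy$. For the last lifting property, given a Reedy cofibration out of $PX$ and a compatible one out of $X|I$, \Cref{relative-factorization}(3) (which requires exactly that $P$ be acyclic) produces the desired Reedy cofibration out of $X$; compatibility over the pullback is automatic. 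So part (2) is \Cref{relative-factorization}(3) plus a short argument for (App1).

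For part (3), the strategy is different: here $P$ need not be acyclic, but both horizontal functors $f^* \colon \cat{C}^J_\Reedy \to \cat{C}^I_\Reedy$ and $f^* \colon \cat{D}^J_\Reedy \to \cat{D}^I_\Reedy$ are assumed to be weak equivalences. Consider the commuting square whose top edge is $(f^*,P)$, whose two legs are the projections to $\cat{D}^J_\Reedy$ and $\cat{C}^J_\Reedy$ respectively, and compare the square
\begin{ctikzpicture}
  \matrix[diagram]
  {
    |(A)| \cat{C}^J_\Reedy & |(B)| \cat{D}^J_\Reedy \\
    |(C)| \cat{C}^I_\Reedy & |(D)| \cat{D}^I_\Reedy \\
  };
  \draw[->] (A) to node[above] {$P_*$} (B);
  \draw[->] (A) to node[left]  {$f^*$} (C);
  \draw[->] (C) to node[below] {$P_*$} (D);
  \draw[->] (B) to node[right] {$f^*$} (D);
\end{ctikzpicture}
in which $P_*$ denotes post-composition with $P$, a fibration by \Cref{sieve-exact}-type reasoning, and the two vertical $f^*$ are weak equivalences by hypothesis. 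By part (1) (applied with $P$ itself, or directly) the induced functor $(f^*,P)$ into the pullback $\cat{C}^I_\Reedy \times_{\cat{D}^I_\Reedy} \cat{D}^J_\Reedy$ is a fibration; since this pullback is a homotopy pullback (the bottom $P_*$ being a fibration), and since the square with the two vertical weak equivalences exhibits $\cat{C}^J_\Reedy$ as having the same homotopy type over $\cat{D}^J_\Reedy$ as the pullback does, a ``2 out of 3'' argument on homotopy categories — using that $\Ho$ sends this square to a square of categories in which the verticals are equivalences — shows that $(f^*,P)$ induces an equivalence $\Ho \cat{C}^J_\Reedy \to \Ho\bigl(\cat{C}^I_\Reedy \times_{\cat{D}^I_\Reedy} \cat{D}^J_\Reedy\bigr)$, hence is a weak equivalence; combined with it being a fibration, it is an acyclic fibration.

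The main obstacle I anticipate is part (3): making the ``homotopy pullback / 2 out of 3'' argument rigorous requires knowing that pullbacks along fibrations in $\CofCat$ compute homotopy pullbacks of the underlying homotopy theories, i.e.\ that $\Ho$ takes the square above to a homotopy pullback square of categories — which is itself a nontrivial fact about the fibration category $\CofCat$ constructed in \Cref{fibcat-of-cofcats} (it would follow from the general theory of fibration categories, e.g.\ the properness-type results of \cite{rb}). The cleaner route may be to avoid homotopy-pullback talk entirely and instead verify the approximation properties (App1), (App2) of \Cref{App} for $(f^*,P)$ directly, bootstrapping from the approximation properties of the two vertical $f^*$ and the fibration structure of $P_*$; this is more hands-on but stays within the toolkit already assembled in this subsection.
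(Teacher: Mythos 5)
Your parts (1) and (2) match the paper's proof: part (1) is exactly \Cref{relative-factorization}(1)--(2) read as the lifting properties for factorizations and pseudofactorizations of $(f^*,P)$ (the isofibration condition being routine), and part (2) is \Cref{acyclic-fibrations} applied with \Cref{relative-factorization}(3) supplying the lifting property for cofibrations and the levelwise argument you give supplying (App1).

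Part (3) is where your proposal has a gap, and you correctly sense it yourself: your primary route invokes the claim that the strict pullback along $P_*$ computes a homotopy pullback and that a square with two parallel weak equivalences is one too, neither of which is established (or needed) in the paper, and your fallback via the approximation properties is not carried out. The argument the paper intends (its ``diagram chase'') is much shorter and uses only axioms already verified for $\CofCat$. The functor $f^* \from \cat{D}^J_\Reedy \to \cat{D}^I_\Reedy$ is a fibration by \Cref{sieve-exact} and a weak equivalence by hypothesis, hence an acyclic fibration. The projection $\cat{C}^I_\Reedy \times_{\cat{D}^I_\Reedy} \cat{D}^J_\Reedy \to \cat{C}^I_\Reedy$ is a strict pullback of this acyclic fibration along $P \from \cat{C}^I_\Reedy \to \cat{D}^I_\Reedy$, hence itself an acyclic fibration by the stability of acyclic fibrations under pullback established in \Cref{fibcat-of-cofcats} (via \Cref{acyclic-fibrations}). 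Now the composite of $(f^*,P)$ with this projection is $f^* \from \cat{C}^J_\Reedy \to \cat{C}^I_\Reedy$, a weak equivalence by hypothesis, so $(f^*,P)$ is a weak equivalence by ``2 out of 3'' and, combined with part (1), an acyclic fibration. No homotopy-pullback theory and no verification of (App1)--(App2) is required.
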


\begin{proof}
  First observe that the pullback in question exists since $f^*$ is a fibration
  by \Cref{sieve-exact}.
  \begin{enumerate}[label=\thm]
  \item This follows by parts (1) and (2) of \Cref{relative-factorization}.
  \item This follows by (1) above and part (3) of \Cref{relative-factorization}.
  \item This follows by (1) above and a diagram chase. \qedhere
  \end{enumerate}
\end{proof}

\begin{lemma}\label{sieves-pullback}
  If $\cat{C}$ is a cofibration category,
  \begin{ctikzpicture}
    \matrix[diagram]
    {
      |(I)| I & |(J)| J \\
      |(K)| K & |(L)| L \\
    };

    \draw[inj] (I) to (J);
    \draw[inj] (I) to (K);
    \draw[inj] (J) to (L);
    \draw[inj] (K) to (L);
  \end{ctikzpicture}
  is a pushout square of \hore{} direct categories
  with finite latching categories and both $I \ito J$ and $I \ito K$ are sieves,
  then the resulting square
  \begin{ctikzpicture}
    \matrix[diagram]
    {
      |(CL)| \cat{C}^L_\Reedy & |(CK)| \cat{C}^K_\Reedy \\
      |(CJ)| \cat{C}^J_\Reedy & |(CI)| \cat{C}^I_\Reedy \\
    };

    \draw[->] (CL) to (CK);
    \draw[->] (CL) to (CJ);
    \draw[->] (CJ) to (CI);
    \draw[->] (CK) to (CI);
  \end{ctikzpicture}
  is a pullback of cofibration categories.
\end{lemma}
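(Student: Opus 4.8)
The plan is to identify $\cat{C}^L_\Reedy$ with the pullback $\cat{C}^J_\Reedy \times_{\cat{C}^I_\Reedy} \cat{C}^K_\Reedy$ provided by \Cref{pullback-of-cofcats} and to read off the universal property from there. The first and main step is a combinatorial analysis of the pushout category $L$. Since $I \ito J$ is a sieve, its complement $J \setminus I$ is upward closed, so no morphism of $J$ goes from $J \setminus I$ into $I$; the same holds for $K$. Consequently, in $L$ every morphism starting at an object of $J \setminus I$ stays within $J \setminus I$ and every morphism starting at an object of $K \setminus I$ stays within $K \setminus I$, so there is no morphism of $L$ between $J \setminus I$ and $K \setminus I$ in either direction. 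Hence every morphism of $L$ is a morphism of $J$ or of $K$, and for objects both in $J$ (respectively both in $K$) the hom-sets in $L$ are those of $J$ (respectively of $K$). It follows that a (\hore{}) diagram $L \to \cat{C}$ is the same datum as a pair of (\hore{}) diagrams on $J$ and on $K$ restricting to the same diagram on $I$, and likewise for morphisms of diagrams, so on the level of underlying \hore{} categories $\cat{C}^L = \cat{C}^J \times_{\cat{C}^I} \cat{C}^K$, the maps being the restriction functors.

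Next I would check that this identification respects Reedy cofibrancy and Reedy cofibrations. For $j \in J \setminus I$, the same argument shows that any morphism of $L$ into $j$ lies in $J$, so $\bd(L \slice j) = \bd(J \slice j)$; for $j \in I$ one gets $\bd(L \slice j) = \bd(I \slice j)$; and symmetrically for $j \in K \setminus I$. Therefore the latching object $L_j X$, the latching morphism $L_j X \to X_j$, and --- for a morphism $f \from X \to Y$ --- the relative latching morphism $X_j \push_{L_j X} L_j Y \to Y_j$ are all computed inside $J$ or inside $K$ and agree with the corresponding data for the restrictions $X|J$, $X|K$, $f|J$, $f|K$. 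So $X$ is Reedy cofibrant over $L$ \iff{} $X|J$ and $X|K$ are, and $f$ is a Reedy cofibration over $L$ \iff{} $f|J$ and $f|K$ are. Combined with the fact that a morphism over $L$ is a levelwise weak equivalence \iff{} its restrictions to $J$ and to $K$ are, this shows that the canonical comparison functor $\cat{C}^L_\Reedy \to \cat{C}^J_\Reedy \times_{\cat{C}^I_\Reedy} \cat{C}^K_\Reedy$ is an isomorphism in $\eCofCat$, where the right-hand side is formed exactly as in \Cref{pullback-of-cofcats}.

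Finally, because $I \ito K$ is a sieve, the restriction functor $\cat{C}^K_\Reedy \to \cat{C}^I_\Reedy$ is a fibration and $\cat{C}^J_\Reedy \to \cat{C}^I_\Reedy$ is exact, both by \Cref{sieve-exact}; so \Cref{pullback-of-cofcats} applies and identifies the right-hand side above as a pullback in $\CofCat$. Since an isomorphism in $\eCofCat$ between cofibration categories is automatically exact in both directions, transporting the universal property along the comparison isomorphism finishes the proof. I expect the only real work to be the first step, namely checking cleanly that the pushout $L$ carries no ``mixed'' morphisms and that latching categories over $L$ are inherited from $J$ and $K$; everything after that is bookkeeping layered on top of \Cref{pullback-of-cofcats,sieve-exact}.
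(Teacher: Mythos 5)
Your proposal is correct and follows essentially the same route as the paper: identify the pullback via the explicit construction of \Cref{pullback-of-cofcats} (which reduces everything to checking that weak equivalences and Reedy cofibrations over $L$ are detected by restriction to $J$ and $K$), and deduce the latter from the fact that $J \ito L$ and $K \ito L$ are sieves, so each latching category of $L$ is computed inside $J$ or inside $K$. You simply spell out in more detail the combinatorics of the pushout (no ``mixed'' morphisms) and the appeal to \Cref{sieve-exact}, both of which the paper leaves implicit.
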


\begin{proof}
  By the construction of pullbacks of cofibration categories it will suffice to
  verify that a morphism of diagrams over $L$ is a Reedy cofibration \iff{}
  it is one when restricted to both $J$ and $K$.
  For this it will be enough to observe that both $J \ito L$ and $K \ito L$
  are sieves and hence for an object $l \in L$ we have either $l \in J$
  and then $\bd(J \slice l) \to \bd(L \slice l)$ is an isomorphism
  or $l \in K$ and then $\bd(J \slice l) \to \bd(L \slice l)$ is an isomorphism.
\end{proof}

Let $f \from I \to J$ be a \hore{} functor of \hore{} direct categories
and $F \from \cat{C} \to \cat{D}$ an exact functor of cofibration categories.
We say that $f$ has the \Rllp{} \wrt{} $F$ (or $F$ has the \Rrlp{} \wrt{} $f$)
if every lifting problem
\begin{ctikzpicture}
  \matrix [diagram]
  {
    |(I)| I & |(C)| \cat{C} \\
    |(J)| J & |(D)| \cat{D} \\
  };

  \draw[->] (I) to node[above] {$X$} (C);
  \draw[->] (J) to node[below] {$Y$} (D);

  \draw[->] (I) to node[left]  {$f$} (J);
  \draw[->] (C) to node[right] {$F$} (D);
\end{ctikzpicture}
where $X$ and $Y$ are \hore{} Reedy cofibrant diagrams has a solution
that is also a \hore{} Reedy cofibrant diagram.
Such lifting properties will be heavily used in the latter two sections.

\begin{lemma}\label{Rlps}
  Let $f \from I \ito J$ and $g \from K \to L$ be sieves
  between \hore{} direct categories with finite latching categories
  and $F \from \cat{C} \to \cat{D}$ an exact functor of cofibration categories.
  Then there is a natural bijection between Reedy lifting problems
  (and their solutions) of the forms
  \begin{ctikzpicture}
    \matrix[diagram]
    {
      |(I)|  I &[-.5em] |(CL)| \cat{C}^L_\Reedy &[-2.7em]
      |(fg)| (I \times L) \push_{I \times K} (J \times K) &[-.5em]
        |(C)| \cat{C} &[-1.5em]
      |(K)|  K &[-.5em] |(CJ)| \cat{C}^J_\Reedy \\
      |(J)|  J & |(Fg)| \cat{C}^K_\Reedy \times_{\cat{D}^K_\Reedy}
                       \cat{D}^L_\Reedy &
      |(JL)| J \times L & |(D)| \cat{D} &
      |(L)|  L & |(Ff)| \cat{C}^I_\Reedy \times_{\cat{D}^I_\Reedy}
                       \cat{D}^J_\Reedy \text{.} \\
    };

    \draw[inj] (I)  to (J);
    \draw[inj] (K)  to (L);
    \draw[inj] (fg) to (JL);

    \draw[->] (CL) to (Fg);
    \draw[->] (C)  to (D);
    \draw[->] (CJ) to (Ff);

    \draw[->] (I)  to (CL);
    \draw[->] (fg) to (C);
    \draw[->] (K)  to (CJ);

    \draw[->] (J)  to (Fg);
    \draw[->] (JL) to (D);
    \draw[->] (L)  to (Ff);
  \end{ctikzpicture}
\end{lemma}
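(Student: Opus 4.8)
The plan is to obtain all three forms from the cartesian closed structure of $\Cat$ together with a Fubini-type exponential law for Reedy cofibrant diagrams. Concretely, I would first establish (or quote from \cite{rv}, whose Reedy-category arguments restrict both to direct categories and to cofibration categories) that for \hore{} direct categories $A$ and $B$ with finite latching categories, transposition across the adjunction $(\uvar) \times B \dashv (\uvar)^B$ restricts to a bijection, natural in exact functors $\cat C \to \cat D$ and compatible with restriction along sieves, between \hore{} Reedy cofibrant diagrams $A \times B \to \cat C$ (with $A \times B$ ordered by the sum of degrees) and \hore{} Reedy cofibrant diagrams $A \to \cat C^B_\Reedy$. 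The one nontrivial point is the behaviour of latching objects: latching objects in $\cat C^B_\Reedy$ are computed levelwise (they are the iterated pushouts of latching data, all of which are formed levelwise in $\cat C^B$), and the latching morphism of a diagram $A \times B \to \cat C$ at $(a,b)$ is the pushout of the $A$-latching and $B$-latching morphisms along their common sub-latching object, which exists since both latching categories are finite; combining these two identifications yields the claimed bijection on Reedy cofibrant objects. I expect this exponential law to be the only real obstacle, and the rest to be formal bookkeeping.

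Granting it, a lifting problem of the first form consists of \hore{} Reedy cofibrant diagrams $X \from I \to \cat C^L_\Reedy$ and $(W, Z) \from J \to \cat C^K_\Reedy \times_{\cat D^K_\Reedy} \cat D^L_\Reedy$ with $(g^*, F) X = (W, Z)|_I$, and a solution is a \hore{} Reedy cofibrant extension $\tilde X \from J \to \cat C^L_\Reedy$ mapping to $(W, Z)$ under $(g^*, F)$. Transposing $X$, $W$ and $Z$ across the exponential law turns this into \hore{} diagrams $\hat X \from I \times L \to \cat C$, $\hat W \from J \times K \to \cat C$ and $\hat Z \from J \times L \to \cat D$ with $\hat X$ and $\hat W$ agreeing on $I \times K$ — hence gluing to a diagram on $U := (I \times L) \push_{I \times K} (J \times K)$ — and with $F \hat X = \hat Z|_{I \times L}$ and $F \hat W = \hat Z|_{J \times K}$, i.e.\ $F$ of the glued diagram equals $\hat Z|_U$; a solution transposes to a \hore{} Reedy cofibrant $\hat{\tilde X} \from J \times L \to \cat C$ restricting to the glued diagram on $U$ and mapping to $\hat Z$ under $F$. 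This is exactly a lifting problem of the second form for the sieve $U \ito J \times L$ against $F$, together with its solutions. For the Reedy cofibrancy bookkeeping I would use that, $f$ and $g$ being sieves, both $I \times L$ and $J \times K$ are sieves in $J \times L$ and hence in $U$, that they cover $U$ with overlap $I \times K$, and that for every object of $U$ the latching category computed in $U$ coincides with the one computed in whichever of $I \times L$, $J \times K$ contains it; hence a \hore{} diagram on $U$ is Reedy cofibrant iff its restrictions to $I \times L$ and to $J \times K$ are, which matches Reedy cofibrancy of $X$ and of $(W, Z)$ — the latter componentwise, since by the construction of pullbacks of cofibration categories in \Cref{pullback-of-cofcats} latching objects in $\cat C^K_\Reedy \times_{\cat D^K_\Reedy} \cat D^L_\Reedy$ are formed componentwise. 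Naturality of the whole correspondence is inherited from naturality of the cartesian closed structure and of the exponential law.

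Finally, the middle form is visibly symmetric under interchanging the pairs $(f \from I \ito J)$ and $(g \from K \ito L)$: swapping the two factors of the product gives an isomorphism $\cat C^{J \times L}_\Reedy \cong \cat C^{L \times J}_\Reedy$ (and likewise over $\cat D$) carrying $U$ to $(K \times J) \push_{K \times I} (L \times I)$, so the second form for $(f, g, F)$ is the second form for $(g, f, F)$. Applying the equivalence of the first two forms with the roles of $(f, g)$ reversed then identifies it with the first form for $(g, f, F)$, which is precisely the third form: the sieve $g \from K \ito L$ against $(f^*, F) \from \cat C^J_\Reedy \to \cat C^I_\Reedy \times_{\cat D^I_\Reedy} \cat D^J_\Reedy$. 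Hence all three forms — and their solutions — are in natural bijection.
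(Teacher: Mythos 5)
Your proposal is correct and follows essentially the same route as the paper: the paper's proof is a one-line appeal to ``standard adjointness arguments'' together with the fact (cited from Riehl--Verity, Example 4.6) that a diagram $J \to \cat{C}^L_\Reedy$ is Reedy cofibrant \iff{} its transpose $J \times L \to \cat{C}$ is, which is exactly the exponential law you isolate as the one nontrivial point. Your write-up merely makes explicit the transposition bookkeeping, the sieve/latching-category argument for diagrams on the pushout $(I \times L) \push_{I \times K} (J \times K)$, and the symmetry giving the third form, all of which the paper leaves implicit.
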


\begin{proof}
  This is proven with standard adjointness arguments,
  e.g. as in \cite{jo}*{Proposition D.1.18}, using the fact that
  a diagram $J \to \cat{C}^L_\Reedy$ is Reedy cofibrant \iff{}
  the corresponding diagram $J \times L \to \cat{C}$ is as follows from
  \cite{rv}*{Example 4.6}.
\end{proof}

\begin{lemma}\label{acyclic-fibration-Reedy}
  Let $P \from \cat{C} \to \cat{D}$ be a fibration of cofibration categories.
  \Tfae:
  \begin{enumerate}[label=\thm]
  \item $P$ is acyclic,
  \item $P$ has the \Rrlp{} \wrt{} all sieves between direct \hore{} categories
    with finite latching categories,
  \item $P$ has the \Rrlp{} \wrt{} $[0] \ito [1]$ and $[1] \ito \hat{[1]}$.
  \end{enumerate}
\end{lemma}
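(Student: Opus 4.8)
The plan is to establish the cycle of implications $(1) \Rightarrow (2) \Rightarrow (3) \Rightarrow (1)$.

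The implication $(2) \Rightarrow (3)$ is immediate once one checks that both $[0] \ito [1]$ and $[1] \ito \hat{[1]}$ are sieves between homotopical direct categories with finite latching categories. Indeed, $[1]$ (with the trivial homotopical structure) has an empty latching category at $0$ and a one-object latching category at $1$; the inclusion of $[0]$ selects the downward-closed full subcategory $\{0\}$; and $[1] \ito \hat{[1]}$ is a homotopical functor whose underlying functor is the identity, so it is trivially a sieve. Hence $(2)$ specializes to $(3)$.

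For $(1) \Rightarrow (2)$ I would deduce it directly from \Cref{relative-factorization}(3). Suppose given a Reedy lifting problem against $P$, i.e.\ homotopical Reedy cofibrant diagrams $X \from I \to \cat{C}$ and $Y \from J \to \cat{D}$ with $PX = Y|I$. Let $\const_\init$ denote the constant diagram at the initial object, taken over $J$ in $\cat{C}$ or in $\cat{D}$ as needed; by the finiteness of the latching categories its latching objects are finite coproducts of copies of $\init$ and hence again $\init$, so $\const_\init$ is homotopical Reedy cofibrant and is the initial object of $\cat{C}^J_\Reedy$ (respectively $\cat{D}^J_\Reedy$). Then $\const_\init|I \cto X$ is a Reedy cofibration in $\cat{C}^I_\Reedy$, $\const_\init \cto Y$ is a Reedy cofibration in $\cat{D}^J_\Reedy$ (every object of a cofibration category is cofibrant), and $P$ carries the former to the restriction of the latter. \Cref{relative-factorization}(3) then produces a Reedy cofibration $\const_\init \cto Z$ in $\cat{C}^J_\Reedy$ with $PZ = Y$ and $Z|I = X$; being an object of $\cat{C}^J_\Reedy$, the diagram $Z$ is a homotopical Reedy cofibrant lift, which is exactly what is required.

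The implication $(3) \Rightarrow (1)$ is the delicate step, and here I would appeal to \Cref{acyclic-fibrations}: since $P$ is already a fibration, it suffices to verify (App1) and the right lifting property in $\eCofCat$ with respect to the inclusion of $[0]$ into $[0 \cto 1]$. The latter is precisely the Reedy right lifting property with respect to $[0] \ito [1]$ --- a Reedy cofibrant $[1]$-diagram over $\cat{D}$ is a cofibration, a $[0]$-diagram is an object, and a Reedy cofibrant lift is an extension of a cofibration as in \Cref{acyclic-fibrations} --- so it is part of $(3)$. For (App1), unwinding the Reedy lifting problem against $[1] \ito \hat{[1]}$ shows that it is equivalent to the statement that a cofibration of $\cat{C}$ which $P$ sends to a weak equivalence is itself a weak equivalence: a homotopical Reedy cofibrant $\hat{[1]}$-diagram over $\cat{D}$ is an acyclic cofibration, a Reedy cofibrant $[1]$-diagram over $\cat{C}$ lying above it is a cofibration $X_0 \cto X_1$ with $P$ sending $X_0 \cto X_1$ to a weak equivalence, and a solution is precisely a witness that $X_0 \to X_1$ is a weak equivalence. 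To upgrade this to the full (App1), given a morphism $g$ of $\cat{C}$ with $Pg$ a weak equivalence, factor $g$ as a cofibration $i$ followed by a weak equivalence $s$ using (C5); then $Ps$ is a weak equivalence by \Cref{Ken-Brown}, so $Pi$ is one by ``2 out of 3'', hence $i$ is a weak equivalence by the special case just established, and therefore so is $g$. Thus $P$ satisfies (App1), and \Cref{acyclic-fibrations} yields that $P$ is an acyclic fibration, i.e.\ $(1)$ holds. The main obstacle is exactly this careful translation between the abstract Reedy lifting properties of $(3)$ and the concrete hypotheses of \Cref{acyclic-fibrations}, together with the short factorization argument needed because, in its Reedy form, the $[1] \ito \hat{[1]}$ lifting property only controls cofibrations.
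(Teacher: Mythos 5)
Your proof is correct and follows essentially the same route as the paper's: $(1)\Rightarrow(2)$ via \Cref{relative-factorization}(3), $(2)\Rightarrow(3)$ by specialization, and $(3)\Rightarrow(1)$ via \Cref{acyclic-fibrations} together with the factor-through-a-cofibration trick to upgrade the Reedy lifting property for $[1] \ito \hat{[1]}$ to the full (App1). The only difference is that you spell out the reduction of a general Reedy lifting problem to \Cref{relative-factorization}(3) by means of the constant diagram at the initial object, a step the paper leaves implicit.
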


\begin{proof}
  If $P$ is acyclic, then it has the \Rrlp{} \wrt{} all sieves
  between \hore{} direct categories with finite latching categories
  by \Cref{relative-factorization}(3),
  in particular, \wrt{} $[0] \ito [1]$ and $[1] \ito \hat{[1]}$.

  Conversely, by \Cref{acyclic-fibrations} it suffices to see that
  if $P$ has the \Rrlp{} \wrt{} $[0] \ito [1]$ and $[1] \ito \hat{[1]}$,
  then it satisfies (App1) and has the \rlp{} in $\eCofCat$ \wrt{}
  the inclusion of $[0]$ into
  \begin{ctikzpicture}
    \matrix[diagram]
    {
      |(0)| 0 & |(1)| 1 \text{.} \\
    };

    \draw[cof] (0) to (1);
  \end{ctikzpicture}
  The latter is equivalent to the \Rrlp{} \wrt{} $[0] \ito [1]$.
  To see that the \Rrlp{} \wrt{} $[1] \ito \hat{[1]}$ implies (App1)
  take a morphism $f \from X \to Y$ in $\cat{C}$
  \st{} $Pf$ is a weak equivalence.
  Factor $f$ as
  \begin{ctikzpicture}
    \matrix[diagram]
    {
      |(X)| X & |(tY)| \tilde Y & |(Y)| Y \text{.} \\
    };

    \draw[cof] (X)  to node[above] {$j$}   (tY);
    \draw[->]  (tY) to node[above] {$\we$} (Y);
  \end{ctikzpicture}
  Then $Pj$ is a weak equivalence by ``2 out of 3'' and hence so is $j$
  by the \Rrlp{} \wrt{} $[1] \ito \hat{[1]}$.
  Thus $f$ is a weak equivalence, too.
\end{proof}

\begin{lemma}\label{acyclic-sieve}
  If a sieve $f \from I \to J$ between \hore{} direct categories has the \Rllp{}
  \wrt{} all fibrations of cofibration categories,
  then for every cofibration category $\cat{C}$
  the induced functor $f^* \from \cat{C}^J_\Reedy \to \cat{C}^I_\Reedy$
  is an acyclic fibration.
\end{lemma}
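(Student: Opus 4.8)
The plan is to reduce, via \Cref{acyclic-fibration-Reedy}, to two explicit Reedy right lifting properties, and then to transport those across the natural bijection of \Cref{Rlps} so that they become instances of the hypothesis on $f$. First I would observe that $f^* \from \cat{C}^J_\Reedy \to \cat{C}^I_\Reedy$ is itself a fibration of cofibration categories: since $f$ is a sieve, for each $i \in I$ the functor $\bd(I \slice i) \to \bd(J \slice fi)$ is an isomorphism (a non-identity morphism of $J$ with target $fi \in I$ has its source in $I$ by downward closure, and is the image of a morphism of $I$ by full faithfulness), so this is an instance of \Cref{sieve-exact}. Hence by \Cref{acyclic-fibration-Reedy} it remains only to show that $f^*$ has the \Rrlp{} \wrt{} the two sieves $[0] \ito [1]$ and $[1] \ito \hat{[1]}$.

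Fix one of these, written $g \from K \to L$. I would then apply \Cref{Rlps} with input sieves $f \from I \ito J$ and $g \from K \to L$ and with the exact functor $F$ taken to be the unique functor $\cat{C} \to [0]$ to the terminal cofibration category. Since $[0]^M_\Reedy = [0]$ for every direct category $M$, the two fibre products appearing in that lemma degenerate to $\cat{C}^K_\Reedy$ and $\cat{C}^I_\Reedy$ respectively (and the middle reformulation, through $(I \times L) \push_{I \times K} (J \times K) \ito J \times L$, is not needed). What \Cref{Rlps} then provides is a bijection, compatible with the existence of solutions, between Reedy lifting problems of $g$ against $f^* \from \cat{C}^J_\Reedy \to \cat{C}^I_\Reedy$ --- the side I want --- and Reedy lifting problems of $f$ against $g^* \from \cat{C}^L_\Reedy \to \cat{C}^K_\Reedy$ --- the side on which the hypothesis bites.

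Now $g^* \from \cat{C}^L_\Reedy \to \cat{C}^K_\Reedy$ is a fibration of cofibration categories: this is again \Cref{sieve-exact}, as $g$ is a sieve inducing isomorphisms of latching categories (for $[1] \ito \hat{[1]}$ the underlying functor is the identity, so this is automatic, and $[0]$, $[1]$, $\hat{[1]}$ are direct with finite latching categories, so the diagram categories involved are cofibration categories by \Cref{Reedy-level}). By hypothesis $f$ has the \Rllp{} \wrt{} every fibration of cofibration categories, in particular \wrt{} $g^*$, so every Reedy lifting problem of $f$ against $g^*$ admits a solution; transporting along the bijection above, $f^*$ has the \Rrlp{} \wrt{} $g$. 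Carrying this out for both $g = ([0] \ito [1])$ and $g = ([1] \ito \hat{[1]})$ and feeding the result into \Cref{acyclic-fibration-Reedy} yields that $f^*$ is an acyclic fibration.

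The only step needing real care is the invocation of \Cref{Rlps}: one must keep the roles of the four direct categories straight, so that the lifting property required of $f^*$ genuinely corresponds to the lifting property assumed of $f$, and must check that over the terminal cofibration category the relevant fibre-product targets really do collapse. Everything else is a direct appeal to the cited results.
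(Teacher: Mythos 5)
Your proof is correct and takes essentially the same route as the paper: reduce via \Cref{acyclic-fibration-Reedy} to the two Reedy right lifting properties against $[0] \ito [1]$ and $[1] \ito \hat{[1]}$, then transpose each of these across the bijection of \Cref{Rlps} (specialised to the terminal cofibration category, so the fibre products collapse) into a lifting problem of $f$ against a fibration, where the hypothesis applies. The paper's proof is just a terser version of exactly this argument, and your explicit checks (that $f^*$ and $g^*$ are fibrations via \Cref{sieve-exact}, so that \Cref{acyclic-fibration-Reedy} and the hypothesis are applicable) are the right details to supply.
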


\begin{proof}
  Since $f$ is a sieve it will suffice to check that $f^*$ has the \Rrlp{}
  \wrt{} $[0] \ito [1]$ and $[1] \ito \hat{[1]}$
  by \Cref{acyclic-fibration-Reedy}.
  These are equivalent to the \Rrlp{}
  of $\cat{C}^{[1]}_\Reedy \to \cat{C}^{[0]}_\Reedy$
  and $\cat{C}^{\hat{[1]}}_\Reedy \to \cat{C}^{[1]}_\Reedy$ \wrt{} $I \ito J$
  by \Cref{dual-pushout-prod}.
\end{proof}

The following proposition says that in cofibration categories colimits
of Reedy cofibrant diagrams (over finite direct categories) exist
and are homotopy invariant.
In effect, this yields finite direct homotopy colimits
in cofibration categories.

\begin{proposition}\label{colims-in-cofcats}
  If $I$ is a finite direct category, then the colimit functor
  $\cat{C}^I_\Reedy \to \cat{C}$ exists and is exact.
\end{proposition}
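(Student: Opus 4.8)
The plan is to argue by induction on the number of objects of $I$. When $I$ is empty, $\cat{C}^I_\Reedy$ is the terminal category and $\colim_I$ picks out the initial object of $\cat{C}$; the resulting functor is exact by a direct check. For the inductive step, choose an object $j \in I$ of maximal degree and let $I' \subseteq I$ be the full subcategory on the remaining objects. Since non-identity morphisms of a direct category strictly raise degree, $j$ admits no non-identity morphism out of it, so $I'$ is downward closed, i.e.\ $I' \ito I$ is a sieve, and every object of $\bd(I \slice j)$ lies in $I'$. By \Cref{sieve-exact} the restriction functor $r \from \cat{C}^I_\Reedy \to \cat{C}^{I'}_\Reedy$ is exact, and by the inductive hypothesis $\colim_{I'} \from \cat{C}^{I'}_\Reedy \to \cat{C}$ exists and is exact, so $G := \colim_{I'} \circ r$ is exact.

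For $X \in \cat{C}^I_\Reedy$ the latching morphism $L_j X \cto X_j$ is a cofibration, and the latching diagram of $X$ at $j$ is the restriction of $X|I'$ along $\bd(I \slice j) \to I'$, giving a natural map $L_j X \to G X$. I claim
\begin{align*}
  \colim_I X \iso G X \push_{L_j X} X_j \text{,}
\end{align*}
the pushout on the right existing by (C4). Indeed, a cocone under $X$ amounts to a cocone under $X|I'$ together with a map out of $X_j$ that is compatible, via $L_j X \to X_j$ and $L_j X \to G X$, with the cocone under $X|I'$; this is exactly the data of a map out of that pushout. Choosing these pushouts naturally defines the functor $\colim_I$.

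It remains to check that $\Phi := \colim_I$ is exact. Let $f \from X \to Y$ be a Reedy cofibration. Its restriction along the sieve $I'$ is again a Reedy cofibration, so $G f$ is a cofibration, and Reedy cofibrancy of $f$ at $j$ says that $M := X_j \push_{L_j X} L_j Y \to Y_j$ is a cofibration. A pushout-pasting computation, using $G Y \push_{L_j X} X_j \iso M \push_{L_j Y} G Y$, rewrites $\Phi f$ as the composite
\begin{align*}
  \Phi X = G X \push_{L_j X} X_j \to G Y \push_{L_j X} X_j \to G Y \push_{L_j Y} Y_j = \Phi Y
\end{align*}
whose first map is a pushout of the cofibration $G X \cto G Y$ and whose second map is a pushout of the cofibration $M \cto Y_j$; hence $\Phi f$ is a cofibration. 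If moreover $f$ is a levelwise weak equivalence, then $G f$ is an acyclic cofibration and, by the standard fact that for an acyclic Reedy cofibration each $M \to Y_j$ is an acyclic cofibration \cite{rb}, so is the second map; thus $\Phi$ also preserves acyclic cofibrations. That $\Phi$ preserves the initial object follows from the pushout formula applied to the constant diagram at the initial object, and that it preserves pushouts along cofibrations follows because $r$, $G$, $\ev_j$ and $L_j$ all preserve such pushouts (these being computed levelwise on Reedy cofibrant diagrams) together with one more round of pushout pasting.

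The main obstacle is the exactness verification rather than the existence of the colimit: one has to organise the cube of pushouts so that $\Phi f$ manifestly decomposes into pushouts of (acyclic) cofibrations, and to invoke the correct characterisation of acyclic Reedy cofibrations; once $\Phi$ is known to preserve cofibrations and the relevant colimits, the remaining clauses of exactness are bookkeeping.
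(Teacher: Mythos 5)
Your argument is correct, but it is worth knowing that the paper does not prove this proposition at all: its ``proof'' is the single citation \cite{rb}*{Theorem 9.3.5(1)}. What you have written is essentially the standard cell-attachment induction underlying that cited theorem: peel off a maximal-degree object $j$ so that $I' = I \setminus \{j\}$ is a sieve, identify $\colim_I X$ with the pushout $G X \push_{L_j X} X_j$, and check exactness by factoring $\Phi f$ as a pushout of $G f$ followed by a pushout of the relative latching map $X_j \push_{L_j X} L_j Y \to Y_j$. All of this is sound, and the pushout-pasting identification $G Y \push_{L_j X} X_j \iso G Y \push_{L_j Y} M$ is exactly the right move. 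The one caveat is that the two facts you import from \cite{rb} --- that the relative latching maps of an acyclic Reedy cofibration are acyclic cofibrations, and (implicitly, for the preservation of pushouts along cofibrations) that $L_j$ carries Reedy cofibrations to cofibrations --- are precisely where the real inductive content of R\u{a}dulescu-Banu's theorem sits; note also that $\bd(I \slice j)$ can have more objects than $I$, so your induction on the number of objects of $I$ does not by itself reach these statements, and a fully self-contained proof would run a simultaneous induction on degree establishing them alongside exactness. Since the paper itself delegates the entire proposition to \cite{rb}, leaning on that reference for these two lemmas is consistent with the paper's own standard, and your write-up has the advantage of making the structure of the argument explicit.
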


\begin{proof}
  \cite{rb}*{Theorem 9.3.5(1)}
\end{proof}

It is perhaps worth pointing out that this construction does not directly apply
to non-cofibrant diagrams, but all direct diagrams can be replaced
by Reedy cofibrant ones.
This is not directly captured by the definition of a cofibration category
as given in the beginning of this section since we insisted
that all objects are cofibrant.
Instead, we can think of the homotopy colimit functor as a zig-zag
of exact functors
\begin{ctikzpicture}
  \matrix[diagram,column sep=3em]
  {
    |(CI)| \cat{C}^I & |(CIR)| \cat{C}^I_\Reedy &
    |(C)|  \cat{C} \text{.} \\
  };

  \draw[inj] (CIR) to node[below] {$\we$}      (CI);
  \draw[->]  (CIR) to node[above] {$\colim_I$} (C);
\end{ctikzpicture}
Here, the functor on the left is the one discussed in \Cref{Reedy-level}.

Cofibration categories admit all finite homotopy colimits,
but finiteness has to be understood in a rather strong sense.
Namely, a finite homotopy colimit is a homotopy colimit of a diagram indexed
over a category $I$ whose nerve is a finite simplicial set.
Such categories coincide with finite direct categories
and hence \Cref{colims-in-cofcats} implies existence of finite homotopy colimits
in cofibration categories.

Notice that e.g.\ homotopy colimits of diagrams indexed
by non-trivial finite groups are \emph{not} finite homotopy colimits
since the nerves of such groups are infinite and hence homotopy colimits
over them involve infinite amount of coherence data.

\Cref{colims-in-cofcats} implies that a pushout of two cofibrations
in a cofibration category is a homotopy pushout. In fact, a more general
and extremely useful statement is true: a pushout of any morphism
along a cofibration is a homotopy pushout. This is known as the Gluing Lemma.

\begin{lemma}[Gluing Lemma]
  Given a commutative cube
  \begin{ctikzpicture}
    \matrix[diagram,row sep=2em,column sep=2em]
    {
      |(A0)| A_0 & & |(B0)| B_0 \\
      & |(A1)| A_1 & & |(B1)| B_1 \\
      |(X0)| X_0 & & |(Y0)| Y_0 \\
      & |(X1)| X_1 & & |(Y1)| Y_1 \\
    };

    \draw[->] (A0) to (B0);
    \draw[->] (X0) to (Y0);

    \draw[cof] (A0) to (X0);
    \draw[cof] (B0) to (Y0);

    \draw[->,cross line] (A1) to (B1);
    \draw[->] (X1) to (Y1);

    \draw[cof,cross line] (A1) to (X1);
    \draw[cof] (B1) to (Y1);

    \draw[->] (A0) to (A1);
    \draw[->] (B0) to (B1);
    \draw[->] (X0) to (X1);
    \draw[->,dashed] (Y0) to (Y1);
  \end{ctikzpicture}
  where the indicated morphisms are cofibrations and both front and back squares
  are pushouts, if the three solid arrows going from the back square
  to the front square are weak equivalences, then so is the dashed one.

  More generally, the conclusion holds provided that both front and back squares
  are homotopy pushouts, i.e.\ can be connected by zig-zags
  of natural weak equivalences to pushouts along cofibrations.
\end{lemma}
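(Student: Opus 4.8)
The plan is to reduce the Gluing Lemma to \Cref{colims-in-cofcats}, which gives homotopy invariance of the colimit functor $\colim_{\spn}\from\cat{C}^{\spn}_\Reedy\to\cat{C}$, plus one further ingredient that I will isolate as a \emph{base case}: in a cofibration category the pushout of a weak equivalence along a cofibration is again a weak equivalence (equivalently, a pushout along a single cofibration is already a homotopy pushout). The base case is not a formal consequence of the results quoted above, and proving it is the step I expect to be the main obstacle: one factors the weak equivalence $w$ through a mapping cylinder, which presents it as an acyclic cofibration followed by a deformation retraction $r$ — a weak equivalence with a section that is an acyclic cofibration and a left homotopy inverse of it. Pushing out the acyclic cofibration along the given cofibration yields an acyclic cofibration by (C4), and for the part involving $r$ one pushes out its section together with the left homotopy and applies the proposition that left homotopy is a congruence (together with ``2 out of 3'') to see that the comparison map becomes invertible after localization. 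This is the only place where cylinders and left homotopies must be manipulated directly rather than formally citing exactness of a colimit functor; it is also exactly the content of \cite{rb}, which one may instead cite.

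Granting the base case, here is how I would handle the primary (cube) statement. Since $A_i\cto X_i$ is a cofibration, I factor the remaining leg $A_0\to B_0$ of the back span as $A_0\cto\bar B_0\weto B_0$, and then factor $A_1\to B_1$ \emph{compatibly} by picking a factorization $\bar B_0\push_{A_0}A_1\cto\bar B_1\weto B_1$ of the canonical map $\bar B_0\push_{A_0}A_1\to B_1$. Then $A_1\to\bar B_0\push_{A_0}A_1\cto\bar B_1$ is a cofibration (its first arrow is a pushout of $A_0\cto\bar B_0$), while the induced map $\bar B_0\to\bar B_1$ is a weak equivalence: $\bar B_0\to\bar B_0\push_{A_0}A_1$ is a pushout of the weak equivalence $A_0\to A_1$ along the cofibration $A_0\cto\bar B_0$, hence a weak equivalence by the base case, and the composite $\bar B_0\to\bar B_1\weto B_1$ equals the weak equivalence $\bar B_0\weto B_0\weto B_1$, so ``2 out of 3'' applies. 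This produces a levelwise weak equivalence of \emph{Reedy cofibrant} spans $(X_0\hookleftarrow A_0\cto\bar B_0)\to(X_1\hookleftarrow A_1\cto\bar B_1)$ over $\spn$; applying $\colim_{\spn}$, which is exact by \Cref{colims-in-cofcats} and hence preserves weak equivalences by \Cref{Ken-Brown}, gives a weak equivalence $X_0\push_{A_0}\bar B_0\to X_1\push_{A_1}\bar B_1$. The base case applied once more shows that each comparison $X_i\push_{A_i}\bar B_i\to Y_i$ — a pushout of $\bar B_i\weto B_i$ along the cofibration $\bar B_i\cto X_i\push_{A_i}\bar B_i$ — is a weak equivalence, and then ``2 out of 3'' in the resulting commutative square forces the dashed arrow $Y_0\to Y_1$ to be a weak equivalence.

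Finally, for the ``more general'' version I read ``homotopy pushout'' as ``the canonical map from $\colim_{\spn}$ of a Reedy cofibrant replacement of the span to the corner is a weak equivalence''. The cube provides a levelwise weak equivalence of the two underlying spans; using that the inclusion $\cat{C}^{\spn}_\Reedy\ito\cat{C}^{\spn}$ is a weak equivalence (\Cref{Reedy-level}(3)), so that spans and maps of spans can be Reedy-cofibrantly replaced up to zig-zags of weak equivalences — with a routine diagram chase to make the replacements of source and target compatible — I replace this by a weak equivalence of Reedy cofibrant spans, apply \Cref{colims-in-cofcats} to get a weak equivalence between the two homotopy colimits, and combine it with the two homotopy-pushout comparison maps via ``2 out of 3'' to conclude. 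No new idea is needed beyond the primary form; only the bookkeeping of the compatible Reedy replacement.
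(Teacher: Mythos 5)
Your reduction of the cube to the ``base case'' (the pushout of a weak equivalence along a cofibration is a weak equivalence) is correct: the compatible mapping-cylinder factorizations $A_i \cto \bar B_i \weto B_i$ do yield a levelwise weak equivalence of Reedy cofibrant spans, and \Cref{colims-in-cofcats} plus \Cref{Ken-Brown} finish that part. (Your appeal to the base case for $\bar B_0 \to \bar B_0 \push_{A_0} A_1$ is superfluous --- ``2 out of 3'' applied to $\bar B_0 \to \bar B_1 \to B_1$ already gives $\bar B_0 \weto \bar B_1$; the essential uses are the comparisons $X_i \push_{A_i} \bar B_i \to Y_i$.) Be aware, though, that the paper's own proof is nothing but the citation \cite{rb}*{Lemma 1.4.1(1)}, and that the paragraph \emph{following} the lemma sketches the intended conceptual argument, which is both shorter and makes your base case unnecessary: one checks (as in \Cref{path-object-cofcat}) that the partially Reedy cofibrant spans form a cofibration category $\cat{C}^{\scalebox{.6}{\spn}}_{\mathrm{p}}$ in which only the leg $X_\emptyset \to X_0$ is required to be a cofibration, observes that $\colim_{\scalebox{.6}{\spn}}$ on this category carries acyclic cofibrations to acyclic cofibrations using only (C4) and ``2 out of 3'', and then \Cref{Ken-Brown} says it preserves \emph{all} weak equivalences --- and the cube is exactly a levelwise weak equivalence between two objects of $\cat{C}^{\scalebox{.6}{\spn}}_{\mathrm{p}}$ with colimits $Y_0$ and $Y_1$. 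No factorizations, no cylinders, no base case.

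The genuine gap is in the base case itself, which you correctly identify as the crux but do not prove. Writing $w = r j$ with $j \from A \cto M$ an acyclic cofibration and $r \from M \to B$ admitting a section $s$ with $r s = \id_B$, the pushout of $j$ along $A \cto X$ is handled by (C4); but ``one pushes out its section'' does not parse. With $k \from M \cto P$ the induced cofibration and $Q = P \push_M B$, the candidate map $Q \to P$ given by $\id_P$ on $P$ and by $k s$ on $B$ is well defined only if $k s r = k$, i.e.\ essentially only if $s r = \id_M$ strictly --- and here $s r$ is merely \emph{left homotopic} to the identity. One must instead push forward the cylinder and the homotopy and check that the result is still a left homotopy witnessing invertibility in $\cat{C} \quot \mathord{\htp_l}$; that verification is precisely the delicate content of Brown's and R\u{a}dulescu-Banu's arguments and is not recovered by the sentence you devote to it. There is also a circularity hazard in invoking the congruence proposition (imported from \cite{rb}*{Theorem 6.3.3(1)}, which in that reference sits downstream of Lemma 1.4.1), whereas the exactness of $\colim_{\scalebox{.6}{\spn}}$ you use elsewhere is harmless because for spans it follows from (C4) alone. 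Your fallback --- cite \cite{rb} for the base case --- is legitimate, but then your argument is a longer route to the same citation the paper makes; I would either adopt the partial-Reedy/K.~Brown argument above or keep the bare citation.
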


\begin{proof}
  \cite{rb}*{Lemma 1.4.1(1)}
\end{proof}

While the proof of the Gluing Lemma cited above does not state this explicitly,
the argument is basically an application of the K. Brown's Lemma.
Recall that $\spn$ is the poset of proper subsets of $\{ 0, 1 \}$.
It can be proven (similarly to \Cref{path-object-cofcat}) that
there is a cofibration category $\cat{C}^{\scalebox{.6}{\spn}}_{\mathrm{p}}$
of ``partially Reedy cofibrant diagrams'' $X \from \spn \to \cat{C}$,
i.e.\ \st{} $X_\emptyset \to X_0$ is a cofibration.
The weak equivalences are levelwise
and cofibrations are ``partial Reedy cofibrations'', i.e.\ levelwise cofibrations
that are Reedy cofibrations when restricted to $\emptyset \to 0$.
One way to motivate the pushout axiom (C4) is that this is what is required for
the pushout functor
$\colim_{\scalebox{.6}{\spn}} \from \cat{C}^{\scalebox{.6}{\spn}}_{\mathrm{p}}
  \to \cat{C}$ to be exact.
More precisely, stability of (acyclic) cofibrations under pushouts implies that
this functor preserves (acyclic) cofibrations.

We will often need to know that certain \hore{} functors
between \hore{} direct categories induce weak equivalences of homotopy colimits.
Such functors are called \emph{homotopy cofinal}.
For our purposes the following simple criterion is sufficient.

\begin{lemma}\label{equiv-cofinal}
  Let $f \from I \to J$ be a \hore{} functor
  between finite \hore{} direct categories and $\cat{C}$ a cofibration category.
  If $f$ induces a weak equivalence $\cat{C}^J_\Reedy \to \cat{C}^I_\Reedy$,
  then for every \hore{} Reedy cofibrant diagram $X \from J \to \cat{C}$
  the induced morphism $\colim_I f^* X \to \colim_J X$ is a weak equivalence.
\end{lemma}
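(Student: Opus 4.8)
The plan is to deduce the statement from the universal property of homotopy colimits. Write $F := \colim_I \circ f^*$ and $G := \colim_J$; using that $f$ restricts to a (necessarily exact) functor $\cat{C}^J_\Reedy \to \cat{C}^I_\Reedy$ and that the colimit functors are exact by \Cref{colims-in-cofcats}, both $F$ and $G$ are exact functors $\cat{C}^J_\Reedy \to \cat{C}$. The morphism in the statement is the component at $X$ of a natural transformation $\eta \colon F \Rightarrow G$: namely $\eta_X$ classifies the cocone on $f^* X$ obtained by restricting along $f$ the universal cocone of $X$ over $J$. So it suffices to show that $[\eta_X]$ is an isomorphism in $\Ho \cat{C}$ for every $X$.

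First I would record that $f^* \colon \cat{C}^J \to \cat{C}^I$ on the full diagram categories is also a weak equivalence. It is exact by \Cref{level-htpy-eq}, and it fits into a commutative square with the Reedy inclusions $\cat{C}^J_\Reedy \ito \cat{C}^J$ and $\cat{C}^I_\Reedy \ito \cat{C}^I$, which are weak equivalences by \Cref{Reedy-level}; since the top horizontal map $f^* \colon \cat{C}^J_\Reedy \to \cat{C}^I_\Reedy$ is a weak equivalence by hypothesis, ``2 out of 3'' gives the claim. In particular the induced functor $\Ho f^* \colon \Ho \cat{C}^J \to \Ho \cat{C}^I$ is an equivalence, and it is compatible with the constant-diagram functors, $f^* \circ \const_J = \const_I$.

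Now I would invoke the universal property of homotopy colimits: the functor $\colim_J \colon \cat{C}^J_\Reedy \to \cat{C}$, composed with the equivalence $\Ho \cat{C}^J_\Reedy \to \Ho \cat{C}^J$ of \Cref{Reedy-level}, models the left adjoint of $\Ho \const_J$, i.e.\ $\colim_J X$ corepresents $\Ho \cat{C}^J(X, \const_J(\uvar))$, and likewise $\colim_I Y$ corepresents $\Ho \cat{C}^I(Y, \const_I(\uvar))$; this is a standard property of colimit functors of cofibration categories (see \cite{rb}). Applying it with $Y = f^* X$ and using that $\Ho f^*$ is fully faithful together with $\const_I = f^* \const_J$ gives natural isomorphisms
\begin{align*}
  \Ho \cat{C}\bigl(\colim_I f^* X, \uvar\bigr)
    &\iso \Ho \cat{C}^I\bigl(f^* X, \const_I(\uvar)\bigr)
     = \Ho \cat{C}^I\bigl(f^* X, f^* \const_J(\uvar)\bigr) \\
    &\iso \Ho \cat{C}^J\bigl(X, \const_J(\uvar)\bigr)
     \iso \Ho \cat{C}\bigl(\colim_J X, \uvar\bigr)\text{,}
\end{align*}
so $\colim_I f^* X$ and $\colim_J X$ corepresent the same functor on $\Ho \cat{C}$. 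Unwinding the definition of $\eta_X$ shows that the resulting isomorphism $\colim_I f^* X \to \colim_J X$ in $\Ho \cat{C}$ is exactly $[\eta_X]$; hence $\eta_X$ is a weak equivalence.

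The crux of the argument is the use of the universal property of homotopy colimits; the rest is formal bookkeeping in homotopy categories. The points that need care beyond routine are verifying that the comparison isomorphism produced by the chain of corepresentability isomorphisms is literally induced by $\eta_X$ (equivalently, that $\eta$ is the mate of the identity under the relevant adjunctions), and setting up the homotopy colimit's universal property in a form simultaneously compatible with the Reedy-cofibrant-diagram model and with $f^*$. I would also note an alternative, more hands-on route: a Reedy cofibrant diagram over a finite direct category is built from the initial diagram by finitely many pushouts along Reedy cofibrations, and since $F$ and $G$ are exact, the Gluing Lemma reduces the claim to checking $\eta$ on a generating family; however, identifying a correct generating family (the naive ``representable'' diagrams $\operatorname{Lan}$ need not be \hore{} when $J$ has nontrivial weak equivalences) and computing $\eta$ there is itself delicate, so the universal-property argument is preferable.
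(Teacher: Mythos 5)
Your argument is essentially correct, but it takes a genuinely different route from the paper's. The paper works with the left Kan extension along $f$ itself: $\Lan_f \from \cat{C}^I_\Reedy \to \cat{C}^J_\Reedy$ exists and is exact by \cite{rb}*{Theorem 9.4.3(1)} and is left adjoint to $f^*$, hence is a weak equivalence because $f^*$ is; the counit $\Lan_f f^* X \to X$ is therefore a weak equivalence, and applying the exact functor $\colim_J$ together with the identification $\colim_J \Lan_f f^* X \iso \colim_I f^* X$ gives the claim at once. The payoff of that choice of adjunction is precisely that it dispenses with the step you flag as delicate: after applying $\colim_J$, the counit \emph{is} the canonical comparison morphism on the nose, so no Yoneda or mate bookkeeping is required. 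Your route instead uses the derived adjunctions $\colim \dashv \const$ and corepresentability; this works, but you must (i) set the adjunctions up with $\const$ landing in the levelwise diagram categories, since constant diagrams over a direct category are generally not Reedy cofibrant (you handle this implicitly by composing with the equivalence of \Cref{Reedy-level}), and (ii) actually carry out the verification that the isomorphism produced by your chain of natural isomorphisms is $[\eta_X]$ --- a routine triangle-identity computation, but one your write-up only gestures at. Both arguments also quietly use saturation of cofibration categories (a morphism inverted in $\Ho \cat{C}$ is a weak equivalence), which holds here because weak equivalences satisfy ``2 out of 6''. Your closing warning about the alternative cell-by-cell induction is well taken; the adjunction-based arguments are indeed the right level of generality.
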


\begin{proof}
  The left Kan extension functor
  $\Lan_f \from \cat{C}^I_\Reedy \to \cat{C}^J_\Reedy$ exists,
  is exact by \cite{rb}*{Theorem 9.4.3(1)} and is a left adjoint of $f^*$.
  Hence $\Lan_f$ is a weak equivalence since $f^*$ is.
  In particular, the counit $\Lan_f f^* X \to X$ is a weak equivalence
  and hence so is the resulting morphism $\colim_J \Lan_f f^* X \to \colim_J X$
  which coincides with the morphism $\colim_I f^* X \to \colim_J X$.
\end{proof}

  \subsection{Examples}
\label{sec:examples}

In order to better motivate cofibration and fibration categories we list a
number of interesting examples.
Neither of them is known to come from a model category
and some of them are not known to (and in some cases actually known not to)
be equivalent to model categories. 

\subsubsection*{$C^*$-algebras}

\begin{theorem}[\cite{sc}*{Section 1}]
  The category of $C^*$-algebras carries a structure of a pointed fibration
  category.
\end{theorem}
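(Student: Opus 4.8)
The plan is to equip the category of $C^*$-algebras and (not necessarily unital) $\ast$-homomorphisms with the \emph{homotopy equivalences} as weak equivalences --- a homotopy between $f, g \from A \to B$ being a $\ast$-homomorphism $A \to C([0,1],B)$ restricting to $f$ and $g$ at the endpoints --- and with the \emph{surjective} $\ast$-homomorphisms as fibrations, and then to check that this is the dual of a cofibration category. Pointedness is immediate: the zero algebra is both initial and terminal, since the zero map is the only $\ast$-homomorphism into or out of it, so it is a zero object. The task therefore reduces to verifying the duals of (C0)--(C5).

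Axioms (C0)\textop--(C3)\textop and (C5)\textop are routine. The homotopy relation on $\ast$-homomorphisms is a congruence (one may reparametrize and concatenate homotopies), so the localization at weak equivalences is the quotient by this relation and the homotopy equivalences are precisely the maps it inverts; a class of maps saturated in this sense satisfies ``$2$ out of $6$'', which is (C0)\textop. Isomorphisms are surjective and invertible, hence acyclic fibrations, giving (C1)\textop; the zero algebra is terminal, giving (C2)\textop; and every map $A \to 0$ is surjective, so every object is fibrant, giving (C3)\textop. For (C5)\textop one factors a map $\phi \from A \to B$ through the mapping path algebra
\begin{align*}
  E\phi = \bigl\{\, (a, \beta) \in A \oplus C([0,1], B) \;:\; \beta(0) = \phi(a) \,\bigr\}\text{,}
\end{align*}
noting that $a \mapsto (a, \mathrm{const}_{\phi(a)})$ is a homotopy equivalence $A \to E\phi$ with homotopy inverse the projection to $A$, and that $(a, \beta) \mapsto \beta(1)$ is a surjection $E\phi \to B$, since any element of $B$ is the value at $1$ of a constant path.

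The real content is (C4)\textop. All pullbacks exist in $C^*$-algebras, being closed $\ast$-subalgebras of direct sums, and the pullback of a surjection along an arbitrary $\ast$-homomorphism is again a surjection by a direct diagram chase, so fibrations are stable under pullback. The subtle point is that \emph{acyclic} fibrations, i.e.\ surjective homotopy equivalences, pull back to homotopy equivalences. Here one invokes the classical fact that every surjective $\ast$-homomorphism of $C^*$-algebras has the homotopy lifting property with respect to all $C^*$-algebras --- this is exactly what makes ``fibration'' the apt name --- and then runs the standard argument that a Hurewicz-type fibration which is also a homotopy equivalence is ``shrinkable'': it admits a section $s$ together with a $p$-vertical homotopy from $\mathrm{id}$ to $sp$. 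Such data pulls back along any $\ast$-homomorphism, so the pullback of an acyclic fibration is again both a surjection and a homotopy equivalence. Establishing the homotopy lifting property for surjections of $C^*$-algebras --- and deducing the pullback stability of acyclic fibrations from it --- is the main obstacle; granting it, the verification of the dual axioms is complete and the category of $C^*$-algebras is a pointed fibration category.
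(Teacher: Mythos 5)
There is a genuine gap, and it sits exactly where you locate the ``real content''. The claim that \emph{every} surjective $\ast$-homomorphism of $C^*$-algebras has the homotopy lifting property with respect to all $C^*$-algebras is false, and this is precisely why the sources the paper cites (\cite{sc}, \cite{ag}, \cite{u}) do \emph{not} take the fibrations to be all surjections: they \emph{define} a fibration to be a $\ast$-homomorphism having the homotopy lifting property, a strictly smaller class. To see that the classes differ, restrict to commutative unital algebras: surjections $C(Y) \to C(X)$ correspond contravariantly to closed inclusions $X \hookrightarrow Y$ of compact Hausdorff spaces, and the homotopy lifting property for $C(Y) \to C(X)$ unwinds to the homotopy extension property for $X \hookrightarrow Y$, i.e.\ to $X \hookrightarrow Y$ being a Hurewicz cofibration. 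Not every closed inclusion of compact metric spaces is one: for $X = \{0\} \cup \{1/n : n \ge 1\} \subseteq [0,1] = Y$, a cofibration structure would yield a retraction of some neighbourhood of $X$ onto $X$, which is impossible because every neighbourhood of $0$ in $[0,1]$ is connected yet meets $X$ in infinitely many points of a totally disconnected set.

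Worse, with fibrations taken to be all surjections the axiom (C4)\textsuperscript{op} genuinely fails, so no repair of your argument for that step is possible: surjective homotopy equivalences are not stable under pullback. Dually, let $Y$ and $Y'$ be two copies of the comb space with distinguished points $p \in Y$, $p' \in Y'$ at which the combs accumulate. The inclusion $\{p\} \hookrightarrow Y$ is a closed inclusion and a homotopy equivalence, but its pushout along $p \mapsto p'$ is the inclusion of the contractible space $Y'$ into the double comb space, which is not contractible. Applying $C(-)$ produces a surjective homotopy equivalence $C(Y) \to \mathbb{C}$ whose pullback along $C(Y') \to \mathbb{C}$ fails to be a homotopy equivalence. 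The rest of your outline is essentially the argument of \cite{u}*{Theorem 2.19}, which the paper cites in lieu of a proof: the zero algebra as zero object, the mapping path algebra $E\phi$, and the shrinkability of acyclic fibrations all survive. The one necessary change is to define fibrations as the maps with the homotopy lifting property; you must then additionally check that $E\phi \to B$ has this property (it does, by the usual path-space argument) and that isomorphisms do (trivially), after which your verification of the remaining axioms goes through.
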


A streamlined proof of this theorem can be found in \cite{u}*{Theorem 2.19}
along with a few accompanying results in a similar spirit. Moreover, it is
proven \cite{u}*{Theorem A.1} that the \hore{} category of $C^*$-algebras
does not admit a model structure. This result (originally due to Andersen and
Grodal \cite{ag}*{Corollary 4.7}) can be phrased in an even stronger way: there
is no model category whose underlying fibration category is weakly equivalent to
the fibration category of the theorem above. This is because the loop functor
fails to have a left adjoint. It follows that not even a cofibration category
presenting the homotopy theory of $C^*$-algebras exists.

\subsection*{Proper homotopy theory}

\begin{theorem}[\cite{bq}*{Theorems 3.6 and 4.5}]
  The category of topological spaces with proper maps as morphisms carries a
  structure of a cofibration category.
\end{theorem}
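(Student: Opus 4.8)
The plan is to exhibit the required structure using the \emph{proper cylinder} functor $X \mapsto X \times [0,1]$ as the one piece of genuinely topological input. On the category $\cat{T}$ of topological spaces and proper maps I would take the weak equivalences to be the \emph{proper homotopy equivalences} (maps with a homotopy inverse through proper maps and proper homotopies) and the cofibrations to be the \emph{proper cofibrations}, i.e.\ the closed inclusions $i \from A \to X$ with the proper homotopy extension property. Everything then comes down to verifying the axioms (C0)--(C5); only (C4) will require real work.

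The formal axioms (C0)--(C3) go as follows. Because $[0,1]$ is compact, the projection $X \times [0,1] \to X$ is proper and a proper homotopy equivalence while $X \coprod X \to X \times [0,1]$ is a closed inclusion; hence proper homotopy is a congruence on $\cat{T}$ and the quotient category $\cat{T}/{\simeq}$ exists. The proper homotopy equivalences are precisely the morphisms sent to isomorphisms in $\cat{T}/{\simeq}$, so they inherit the ``2 out of 6'' property from isomorphisms --- this is (C0). A homeomorphism is a proper homotopy equivalence and has the proper homotopy extension property tautologically, so (C1) holds. The empty space is initial in $\cat{T}$, giving (C2), and $\emptyset \to X$ is a closed inclusion with vacuously satisfied proper homotopy extension property, so every object is cofibrant, giving (C3).

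For the factorization axiom (C5) I would use the \emph{proper mapping cylinder}. Given a proper map $f \from X \to Y$, write $M_f = (X \times [0,1]) \cup_{X \times \{1\}} Y$ and factor $f$ as the front inclusion $X = X \times \{0\} \hookrightarrow M_f$ followed by the canonical retraction $M_f \to Y$. The retraction is a proper strong deformation retraction, hence a weak equivalence; the front inclusion is a closed inclusion, and it carries the proper homotopy extension property over from the obvious one for $X \times \{0\} \hookrightarrow X \times [0,1]$, so it is a proper cofibration. The only point that is not purely formal is that $M_f$ genuinely lies in $\cat{T}$ with proper structure maps, which is an instance of what (C4) requires.

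The pushout axiom (C4) is the crux, and I expect it (with its acyclic counterpart) to be exactly what \cite{bq}*{Theorems 3.6 and 4.5} establish. One must show that the pushout of a proper cofibration $i \from A \to X$ along an \emph{arbitrary} proper map $g \from A \to B$, formed in $\ncat{Top}$, already lies in $\cat{T}$, has proper structure maps, and is again a proper cofibration. Here $B \to X \cup_A B$ is a closed inclusion (pushouts of closed embeddings are closed embeddings), and closed inclusions are automatically proper since the preimage of a compact set is a closed subset of it, hence compact; that the other leg $X \to X \cup_A B$ is proper is a compactness chase using properness of $g$ and closedness of $i$ (this is the step most sensitive to the precise point-set setting, where a hypothesis such as Hausdorffness would enter if needed). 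That $B \to X \cup_A B$ inherits the proper homotopy extension property from $i$ is the standard retraction argument, with properness preserved because the retraction is obtained by a pushout of proper maps. For the acyclic case one notes that a proper cofibration which is also a proper homotopy equivalence is a proper strong deformation retract --- the familiar argument from the extension property and ``2 out of 3'' --- and that such retracts are stable under pushout, which is the proper-homotopy incarnation of the Gluing Lemma. Putting these verifications together yields the cofibration category structure on $\cat{T}$.
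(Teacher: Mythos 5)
This theorem is not proved in the paper at all: it is stated as a citation to \cite{bq}*{Theorems 3.6 and 4.5}, and the surrounding text only records what the weak equivalences and cofibrations are. So there is no argument of the paper's to compare against; what you have written is a reconstruction of the standard proof outline, and as an outline it is sound. You correctly identify the weak equivalences and cofibrations, the formal axioms (C0)--(C3) are handled correctly (in particular, deducing ``2 out of 6'' from the fact that proper homotopy equivalences are exactly the maps inverted in the proper homotopy category is the right move), and the mapping-cylinder factorization for (C5) is the standard one. You also correctly locate where all the genuine content sits: in (C4). But be aware that the three steps you gesture at there --- properness of the leg $X \to X \cup_A B$ (your ``compactness chase''), the transfer of the proper homotopy extension property across the pushout, and the characterization of acyclic proper cofibrations as proper strong deformation retracts together with their stability under pushout --- are not routine; they are precisely the substance of the two cited theorems of Baues--Quintero, and each requires a careful point-set argument in the proper category (the deformation-retract characterization in particular does not follow from ``the extension property and 2 out of 3'' alone, even classically). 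One further small caution: the paper's cofibrations are defined only as proper maps with the proper homotopy extension property, not a priori as closed inclusions, so if you build ``closed inclusion'' into the definition you should either prove it is automatic in this setting or verify the axioms without it. As a roadmap your proposal is faithful to what the literature does; as a proof it delegates exactly the hard parts to the reference, just as the paper itself does.
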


The weak equivalences of this fibration category are
\emph{proper homotopy equivalences}.
A proper map $f \from X \to Y$ is a proper homotopy equivalence if it admits
a proper map $g \from Y \to X$ and homotopies $g f \htp \id_X$
and $f g \htp \id_Y$ through proper maps.
The cofibrations are \emph{proper (Hurewicz) cofibrations},
i.e. proper maps $A \to B$ with the proper homotopy extension property.
This means that we require that every proper homotopy defined on $A$ whose one
end extends over $B$ also extends over $B$ (to a proper homotopy).
This category does not carry a structure of a fibration category,
e.g.\ since it has no terminal object.

\subsection*{Homotopy type theory}

\begin{theorem}[\cite{akl}*{Theorem 2.2.5}]
  Every categorical model of homotopy type theory carries a canonical structure
  of a fibration category.
\end{theorem}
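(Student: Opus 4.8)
The plan is to make the fibration structure explicit and then verify axioms (C0)\textop--(C5)\textop. A categorical model of homotopy type theory provides a category $\cat{C}$ with a terminal object $1$ (the empty context), a class of \emph{dependent projections} $\Gamma.A \to \Gamma$ stable under pullback along arbitrary morphisms (substitution of types), and identity types; the latter give, for each object $X$, a \emph{path object} --- a factorization of the diagonal as $X \xrightarrow{\mathrm{r}} PX \to X \pull X$ in which $PX \to X \pull X$ is a composite of dependent projections and $\mathrm{r}$ is a common section of the two projections $d_0, d_1 \from PX \to X$, subject to the $\mathsf{Id}$-elimination rule. I would declare a morphism of $\cat{C}$ to be a \emph{fibration} if it is a retract of a composite of dependent projections, and a \emph{weak equivalence} if it is a homotopy equivalence, where $f \htp g \from X \to Y$ means that $(f,g) \from X \to Y \pull Y$ lifts through $(d_0,d_1) \from PY \to Y \pull Y$.

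Most axioms are then formal. For (C2)\textop the object $1$ is terminal; for (C3)\textop every morphism $\Gamma \to 1$ is a composite of dependent projections, hence a fibration, so every object is fibrant. For (C4)\textop it suffices that dependent projections are stable under pullback along arbitrary morphisms --- which is substitution --- since stability of composites and of retracts is automatic and the relevant pullbacks exist. For (C1)\textop, the fibrations contain all isomorphisms (being closed under retracts) and an isomorphism is visibly a homotopy equivalence. Finally, for (C0)\textop I would first check that $\htp$ is a congruence on $\cat{C}$: reflexivity, symmetry and transitivity from $\mathsf{refl}$, path reversal and path concatenation, compatibility with precomposition from precomposing homotopies, and with postcomposition from the action of a morphism on paths. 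Hence the weak equivalences are exactly the morphisms inverted by $\cat{C} \to \cat{C} \quot \mathord{\htp}$, and since isomorphisms in any category satisfy ``2 out of 6'', so do they. (As for cofibration categories, this also yields an equivalence $\cat{C} \quot \mathord{\htp} \to \Ho \cat{C}$.)

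For the factorization axiom (C5)\textop I would use the mapping path space, i.e.\ the dual of Brown's factorization lemma \cite{br}*{p.\ 421}. The $\mathsf{Id}$-elimination rule gives a homotopy $\mathrm{r}\, d_0 \htp \id_{PX}$ over $X$, which with $d_0\, \mathrm{r} = \id_X$ shows that $\mathrm{r}$, and hence each $d_i$, is a homotopy equivalence; being also a fibration, $d_i$ is an acyclic fibration. Given $f \from X \to Y$, set $N_f := X \pull_Y PY$, the pullback of $d_0$ along $f$. The projection $N_f \to X$ is a pullback of the acyclic fibration $d_0$, hence acyclic, so the section $(\id_X, \mathrm{r} f) \from X \to N_f$ is a weak equivalence; and the morphism $N_f \to Y$ induced by $d_1$ is, after rewriting $N_f$ as $(X \pull Y) \pull_{Y \pull Y} PY$, a pullback of the fibration $PY \to Y \pull Y$ followed by the product projection $X \pull Y \to Y$, hence a fibration. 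Thus $f$ factors as $X \weto N_f \fto Y$.

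The hard part is the genuinely type-theoretic content concentrated in the two claims used above: that $\mathrm{r}$ (equivalently $d_0$) is a weak equivalence, and that acyclic fibrations are stable under pullback. Both must be obtained from $\mathsf{Id}$-elimination alone, without function extensionality. The standard device is to show that an acyclic fibration carries a section together with a fibrewise deformation onto it --- a relative version of the contraction of the path space --- and that this structure is preserved by pullback; combined with the mapping-path factorization, this is what upgrades the routine checks above into a complete proof, and it is precisely the content of \cite{akl}*{Theorem 2.2.5}.
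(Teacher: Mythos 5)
The paper does not actually prove this statement: it is imported wholesale from \cite{akl}*{Theorem 2.2.5} and used only as an example, so there is no in-text argument to compare yours against. Your outline is, in essence, the proof from the cited source (going back to Gambino--Garner): fibrations generated by the dependent projections, weak equivalences the homotopy equivalences for the $\mathsf{Id}$-type path objects, the congruence argument for (C0)\textop{}, and Brown's mapping path space for (C5)\textop{}; everything you spell out is correct. Two caveats. First, closing the fibrations under retracts is not harmless for (C4)\textop{}: the pullback of a retract of a composite of dependent projections would have to arise as the splitting of a canonical idempotent on the pullback of that composite, and idempotents need not split in a syntactic category; the usual fix is to take fibrations to be the maps isomorphic (in the arrow category) to composites of dependent projections, for which (C1)\textop{} still holds because an isomorphism is isomorphic to an identity, i.e.\ to an empty composite. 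Second, the step you correctly single out as the hard one --- that acyclic fibrations are stable under pullback, via the characterization of acyclic fibrations as fibrations admitting a section together with a fibrewise deformation onto it, all derived from $\mathsf{Id}$-elimination without function extensionality --- is only gestured at, and your final sentence re-cites the very theorem being proved for it. As written, your text is therefore a faithful and accurate outline of the cited proof rather than a self-contained one, which still puts it ahead of the paper, whose entire ``proof'' is the citation.
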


This category has certain distinguished class of maps
that are natural candidates for cofibrations (and would have to be cofibrations
if this fibration category was a part of a model category).
Unfortunately, it turns out that pushouts along these maps fail to exist
in general.

\subsection*{Topological spaces}

Most of the remaining examples discuss some well known \hore{} categories
which admit well know model structures, but in addition they also carry
less known structures of (co)fibration categories.
They typically have more (co)fibrations than the classical model structures
which means that they provide more point-set models of homotopy (co)limits.

We start with the category of topological spaces
which has two notable classes of weak equivalences: \emph{homotopy equivalences}
and \emph{weak homotopy equivalences}.
All of these examples seem to be folklore but we know almost no references.

A map of topological spaces $p \from X \to Y$ is a \emph{Dold fibration}
if it has the \emph{weak covering homotopy property},
i.e.\ for each square on the left
\begin{ctikzpicture}
  \matrix[diagram]
  {
    |(A)|  A          & |(X)| X & |(A1)| A & |(XI)| X^I \\
    |(AI)| A \times I & |(Y)| Y & |(B)|  B & |(X1)| X \\
  };

  \draw[inj] (A)  to node[left]  {$i_0$} (AI);
  \draw[->]  (X)  to node[right] {$p$}   (Y);
  \draw[->]  (A)  to node[above] {$u$}   (X);
  \draw[->]  (AI) to node[below] {$H$}   (Y);

  \draw[->] (XI) to node[right] {$p_0$} (X1);
  \draw[->] (A1) to node[left]  {$i$}   (B);
  \draw[->] (A1) to node[above] {$H$}   (XI);
  \draw[->] (B)  to node[below] {$v$}   (X1);
\end{ctikzpicture}
there exists a homotopy $G \from A \times I \to X$ \st{} $pG = H$
and $Gi_0$ is homotopic to $u$ fiberwise over $Y$.
Dually, a map $i \from A \to B$ is a \emph{Dold cofibration}
if for all squares on the right above
there exists a homotopy $G \from B \to X^I$ \st{} $Gi = H$
and $p_0 G$ is homotopic to $v$ relative to $A$.

\begin{theorem}\label{Dold-cof}
  \enumhack
  \begin{enumerate}[label=\thm]
  \item The category of topological spaces with homotopy equivalences
    and Dold fibrations is a fibration category.
  \item The category of topological spaces with homotopy equivalences
    and Dold cofibrations is a cofibration category.
  \end{enumerate}
\end{theorem}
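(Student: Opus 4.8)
Both statements are established by verifying the axioms (C0)--(C5) (respectively their duals) directly, and the two proofs run in parallel even though they are not literally dual, since $\mathrm{Top}^{\op}$ is not a category of spaces. Throughout, the underlying (co)limits come for free: all pushouts and pullbacks exist in $\mathrm{Top}$ on the nose, so in (C2)--(C4) there is no colimit to construct, only a claim about Dold (co)fibrations to check. I will describe part~(2) in detail and then indicate the modifications for part~(1).

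The axioms (C0), (C1), (C2), (C3) and (C5) are routine. For (C0), homotopy equivalences are precisely the maps inverted by the functor to the naive homotopy category (whose objects are spaces and whose morphisms are homotopy classes of maps), and the class of arrows sent to isomorphisms by any functor satisfies the ``2 out of 6'' property. For (C1), a homeomorphism $i \from A \to B$ is a homotopy equivalence, and it is a Dold cofibration because any lifting datum $H \from A \to Z^I$, $v \from B \to Z$ with $p_0 H = v i$ is solved by $G = H i^{-1}$, which even satisfies $p_0 G = v$; one must also note that Dold cofibrations contain the identities and are closed under composition, which is a direct manipulation of the witnessing homotopies. For (C2) the empty space is initial. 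For (C3), the map $\emptyset \to X$ is a Dold cofibration, a lifting problem with test datum $v \from X \to Z$ being solved by the map sending $x$ to the constant path at $v(x)$; hence every object is cofibrant. For (C5), factor $f \from X \to Y$ through the mapping cylinder $M_f$: the inclusion $X \hookrightarrow M_f$ of the source is a closed Hurewicz cofibration, hence a Dold cofibration, and the projection $M_f \to Y$ is a deformation retraction, hence a homotopy equivalence.

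The substantive axiom is (C4). That Dold cofibrations are stable under pushout along arbitrary maps is a classical fact about the weak homotopy extension property, proved --- like its Hurewicz analogue --- by explicit surgery on the witnessing homotopies and Dold's ``halos''; I would cite this rather than reprove it. The hard part is the acyclic case: a pushout of a map that is both a Dold cofibration and a homotopy equivalence must again be a homotopy equivalence. The plan here is to use the structural description of such a map $i \from A \to B$, namely that it admits a retraction $r \from B \to A$ with $r i = \id_A$ together with a homotopy $i r \htp \id_B$ that is suitably controlled along $i$ --- the Dold-weak counterpart of the strong deformation retract structure carried by an acyclic Hurewicz cofibration --- and then to push this data out along any $A \to A'$, obtaining the analogous structure, and hence a homotopy equivalence, on $A' \to B \push_A A'$. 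Tracking exactly how much ``rel $A$'' control survives the pushout, given that the Dold hypothesis is weaker than the Hurewicz one, is where essentially all the work lies, and this is the step I expect to be the main obstacle.

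For part~(1) the mirror argument takes place inside $\mathrm{Top}$ itself. The proof of (C0)\textop is identical; the terminal object is a point, and $X \to \ast$ trivially has the weak covering homotopy property, so every space is fibrant; a homeomorphism is an acyclic Dold fibration by the computation of (C1) with the path coordinate in place of the interval coordinate. For (C5)\textop one factors $f$ through the mapping path space $X \times_Y Y^I$: the inclusion of $X$ is a homotopy equivalence and the projection to $Y$ is a Hurewicz, hence Dold, fibration. Finally (C4)\textop is the pullback-stability of Dold fibrations (classical) together with the pullback-stability of acyclic Dold fibrations, handled by transporting a section and a fiberwise homotopy along the pullback exactly as in part~(2). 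The only genuinely new input beyond the classical Dold theory is again this acyclic case of (C4), now seen through the opposite variance.
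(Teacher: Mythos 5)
The paper does not actually prove this statement: it records the theorem as folklore, with pointers to \cite{d} and \cite{tdkp} where Dold fibrations and cofibrations are developed, so there is no argument of the paper's to compare yours against. Your verification of (C0)--(C3) and (C5) is correct, and the plain pushout-stability of Dold cofibrations in (C4) is in fact more formal than you suggest: in the adjoint formulation one restricts the test datum along $A \to A'$ and $B \to B'$, solves the lifting problem for $i$, and glues the solution with the given homotopy on $A' \times I$ using $B' \times I \iso (B \times I) \push_{A \times I} (A' \times I)$; the rel-$A$ control of the correcting homotopy is exactly what makes the gluing possible, and no halos or numerability enter at this stage.

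The genuine gap is exactly where you place it, and the missing ingredient is the dual of Dold's fibre homotopy equivalence theorem (\cite{d}*{Theorem 6.1}; the cofibration version is treated in \cite{tdkp}): a map under $A$ between Dold cofibrations under $A$ which is an ordinary homotopy equivalence is a homotopy equivalence under $A$. Applied to $i \from (A, \id_A) \to (B, i)$, this shows that an acyclic Dold cofibration admits a retraction $r$ with $r i = \id_A$ on the nose together with a homotopy $i r \htp \id_B$ that is genuinely stationary on $A$ --- that is, it is a strong deformation retract, with \emph{no} loss of ``rel $A$'' control compared to the Hurewicz case, contrary to what you anticipate. Strong deformation retracts are preserved by pushout along arbitrary maps, since $\uvar \times I$ preserves pushouts of spaces and both the retraction and the relative homotopy descend to $B \push_A A'$; dually, an acyclic Dold fibration acquires a section $s$ with $p s = \id$ and a fibrewise homotopy $s p \htp \id$, both of which pull back. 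This closes (C4) in both parts. Alternatively, you could sidestep the structural description entirely and reduce the acyclicity clause of (C4) to the Gluing Lemma via \cite{rb}*{Lemma 1.4.3(1)}, which is precisely the route the paper indicates for its neighbouring example of weak homotopy equivalences and Hurewicz cofibrations.
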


Dold fibrations were introduced in \cite{d}
and both Dold fibrations and cofibrations are discussed in \cite{tdkp}.
There are more Dold (co)fibrations than classical Hurewicz (co)fibrations.

A \emph{Dold--Serre fibration} is a map satisfying
the weak covering homotopy property as above but only for $A = D^m$
for all $m \ge 0$.

\begin{theorem}
  The category of topological spaces with weak homotopy equivalences
  Dold--Serre fibrations is a fibration category.
\end{theorem}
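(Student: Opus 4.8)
The plan is to verify that the category $\cat{T}$ of topological spaces equipped with weak homotopy equivalences and Dold--Serre fibrations satisfies the duals of the axioms (C0)--(C5); most of these go quickly. For (C0)\textop, a map of spaces is a weak homotopy equivalence \iff{} it induces a bijection on $\pi_0$ and isomorphisms on $\pi_n$ at every basepoint, so given $W \to X \to Y \to Z$ with $gf$ and $hg$ weak equivalences one applies $\pi_0$ and the functors $\pi_n(-,x)$ and invokes the fact that the class of isomorphisms in any category ($\Set$, resp.\ the category of groups) satisfies ``2 out of 6'' (the $\pi_0$-statement ensures that every path component of $X$, $Y$, $Z$ meets the image of $W$, so only basepoints coming from $W$ need be considered). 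A homeomorphism is a weak homotopy equivalence and has the weak covering homotopy property on the nose, which is (C1)\textop. The one-point space is terminal, giving (C2)\textop, and for every $X$ the map $X \to *$ is a Dold--Serre fibration because in a weak covering homotopy problem over a disk one may take $G$ to be the constant homotopy $G(a,t) = u(a)$; hence every object is fibrant, which is (C3)\textop.

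For the factorization axiom (C5)\textop I would use the mapping path space: $f \from X \to Y$ factors as $X \xrightarrow{j} E_f \xrightarrow{q} Y$ with $E_f = \set{(x,\gamma)}{\gamma \from I \to Y,\ \gamma(0) = f(x)}$, where $q(x,\gamma) = \gamma(1)$ is a Hurewicz fibration, hence a Dold--Serre fibration, and $j(x) = (x, \mathrm{const}_{f(x)})$ is a deformation retract inclusion, hence a homotopy equivalence and in particular a weak homotopy equivalence. (This is the same construction that underlies \Cref{Dold-cof}.)

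The real content is (C4)\textop: Dold--Serre fibrations, and acyclic ones, are stable under pullback. Given a Dold--Serre fibration $p \from X \to Y$, an arbitrary map $g \from Y' \to Y$, and the pullback $p' \from X' = X \pull_Y Y' \to Y'$ with projection $q' \from X' \to X$, a weak covering homotopy problem over $D^m$ for $p'$ --- consisting of $u \from D^m \to X'$ and $H \from D^m \times I \to Y'$ --- pushes forward along $q'$ and $g$ to such a problem for $p$; a solution $G$ satisfies $pG = gH$, hence pairs with $H$ to a map $G' = (G,H) \from D^m \times I \to X'$ with $p'G' = H$, and the fiberwise homotopy over $Y$ comparing $Gi_0$ with $q'u$ pairs with the constant map $p'u$ to a fiberwise homotopy over $Y'$ comparing $G'i_0$ with $u$. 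This establishes pullback-stability for Dold--Serre fibrations.

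For the acyclic case one needs that whenever $p$ is in addition a weak homotopy equivalence, so is $p'$; since the fibers of $p'$ are literally fibers of $p$, it is enough to know that a Dold--Serre fibration is a weak homotopy equivalence \iff{} all of its fibers are weakly contractible --- equivalently, that the acyclic Dold--Serre fibrations are exactly the Dold--Serre fibrations enjoying a weak form of the right lifting property against the sphere inclusions $S^{n-1} \ito D^n$, a class that is visibly closed under pullback by the same diagram chase as above. I expect this to be the main obstacle: one must check that the weak covering homotopy property restricted to disks still suffices to run the usual relative-lifting argument producing the long exact sequence of the fibration --- i.e.\ that the Serre-type (disk-)indexing is precisely matched to the sphere/disk pairs that compute homotopy groups, and that the ``fiberwise homotopy'' slack at the bottom of the weak covering homotopy diagram does not obstruct it. Granting this, the class of acyclic Dold--Serre fibrations is inherited under pullback, and all the dual axioms are in hand.
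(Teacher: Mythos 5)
The paper offers no proof to compare against: this theorem sits in the examples section, where the surrounding text explicitly labels these facts as folklore with almost no references. Judged on its own, your strategy --- checking the duals of (C0)--(C5) directly, factoring through the mapping path space, and a pairing diagram chase for pullback stability of the lifting property --- is the right one, and the parts you actually carry out (two out of six, fibrancy of every object, the factorization, pullback stability of the weak covering homotopy property for disks) are correct.

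Two things are missing. The minor one: fibrations must form a subcategory, and closure of Dold--Serre fibrations under composition is not automatic for weak covering homotopy properties. Lifting $H$ through $q$ first produces a $G_1$ that agrees with $pu$ only up to a vertical homotopy $L$, so you cannot feed $(u, G_1)$ into the lifting property of $p$; you must lift the delayed homotopy obtained by concatenating $L$ with $G_1$ (whose initial value genuinely is $pu$) and then discard its first half. The major one is the step you flag yourself and then grant: that a Dold--Serre fibration is a weak homotopy equivalence \iff{} all its fibers are weakly contractible. This is the real content of the theorem, since it is exactly what makes acyclic fibrations pullback-stable, and the usual proof that $\pi_n(X, F, x_0) \to \pi_n(Y, y_0)$ is an isomorphism does not transcribe verbatim: the weak covering homotopy property reproduces the prescribed initial condition only up to a vertical homotopy, so the lift of a disk representing a class in $\pi_n(Y, y_0)$ restricts on $D^{n-1} \times \{0\}$ to a map into $F$ that is merely null-homotopic in $F$ rather than constant at $x_0$, and this slack must be absorbed before a relative homotopy class can be extracted. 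The cleanest repair is Dold's comparison with the mapping path fibration: show that for a Dold--Serre fibration $p$ the canonical map $X \to W_p$ over $Y$ restricts to a weak homotopy equivalence on each fiber (the disk-level analogue of Dold's theorem that a map with the full weak covering homotopy property is fiber homotopy equivalent over the base to $W_p \to Y$), then transfer the long exact sequence from the Hurewicz fibration $W_p \to Y$. With that lemma in hand, your fiber criterion and hence the whole of (C4)\textop{} follow as you describe.
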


Again, there are more Dold--Serre fibrations than classical Serre fibrations.

One could expect that there is a corresponding notion of
a ``Dold--Serre cofibration'', but this does not seem to be the case.
However, something even better is true.

\begin{theorem}
  The category of topological spaces with weak homotopy equivalences
  and Hurewicz cofibrations is a cofibration category.
\end{theorem}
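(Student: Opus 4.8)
The plan is to verify the six axioms (C0)--(C5) directly for the category $\mathsf{Top}$ of topological spaces with $\whe$s as weak equivalences and Hurewicz cofibrations (maps with the homotopy extension property) as cofibrations; all of the ingredients are classical point-set topology. The only external inputs I will use are that $\mathsf{Top}$ is cocomplete, that Hurewicz cofibrations are stable under cobase change, and that the strict pushout along a Hurewicz cofibration computes the homotopy pushout with respect to $\whe$s.

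I would dispose of the routine axioms first. For (C0), the $\whe$s satisfy the ``2 out of 6'' property because they are precisely the weak equivalences of the Quillen model structure on $\mathsf{Top}$ (equivalently, they form a saturated class by Whitehead's theorem and CW approximation), hence exactly the morphisms inverted by the localization functor $\mathsf{Top} \to \Ho \mathsf{Top}$, and morphisms inverted by a localization always satisfy ``2 out of 6''. For (C1), a homeomorphism is a $\whe$ and transports homotopies along itself, so it is an acyclic Hurewicz cofibration. For (C2)--(C3), the empty space is initial and $\emptyset \to X$ satisfies the homotopy extension property vacuously, so every space is cofibrant. For (C5) I would use the mapping cylinder: a map $f \from X \to Y$ factors as $X \cto M_f \weto Y$, where $M_f$ is the mapping cylinder of $f$, the inclusion of $X$ into $M_f$ at the free end is a (standard) Hurewicz cofibration, and the canonical map $M_f \to Y$ is a strong deformation retraction, hence a homotopy equivalence and in particular a $\whe$.

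That leaves (C4), which carries the real content. All pushouts exist in $\mathsf{Top}$, and Hurewicz cofibrations are stable under pushout along arbitrary maps, so it remains to check that acyclic Hurewicz cofibrations are stable under pushout. Given an acyclic Hurewicz cofibration $A \cto B$ and an arbitrary map $A \to C$, the square with strict pushout $B \cup_A C$ is a homotopy pushout square (for $\whe$s) precisely because $A \cto B$ is a Hurewicz cofibration --- this is the classical comparison of the strict pushout along a cofibration with the double mapping cylinder --- and therefore the pushed-out map $C \to B \cup_A C$, being the cobase change of the $\whe$ $A \to B$ inside a homotopy pushout, is itself a $\whe$. I expect this step --- equivalently, the Gluing Lemma for weak homotopy equivalences and Hurewicz cofibrations, a form of left properness of $\mathsf{Top}$ --- to be the main obstacle; rather than reproving it I would cite it from the classical literature on $\mathrm{NDR}$ pairs and cofiber sequences (e.g.\ \cite{tdkp}), where it is obtained by an excision/Mayer--Vietoris argument on homotopy groups. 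With (C4) in hand the remaining verifications are formal, completing the proof that $\mathsf{Top}$ with these classes is a cofibration category.
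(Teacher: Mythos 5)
Your proposal is correct and follows essentially the same route as the paper: all the routine axioms are disposed of by standard point-set arguments (mapping cylinders for factorizations, stability of Hurewicz cofibrations under cobase change), and the entire content is concentrated in the Gluing Lemma for weak homotopy equivalences and Hurewicz cofibrations, which both you and the paper cite from the classical literature rather than reprove (the paper uses \cite{rb}*{Lemma 1.4.3(1)} for the reduction and \cite{bv}*{Appendix, Proposition 4.8(b)} for the Gluing Lemma itself). Your derivation of (C4) from the Gluing Lemma, by comparing the pushout square with the degenerate square having $\id_A$ as its cofibration leg, is exactly the intended argument.
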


At the first glance this may seem to come from a mixed model structure
in the sense of Cole \cite{co}, but it does not.
This is an attempt to mix in the ``wrong direction'' which succeeds
for delicate point-set reasons.
We know from \cite{rb}*{Lemma 1.4.3(1)} that is suffices to verify that
weak homotopy equivalences and Hurewicz cofibrations satisfy the Gluing Lemma
and this holds by \cite{bv}*{Appendix, Proposition 4.8(b)}.
In fact, by combining this observation with \Cref{Dold-cof}(2) one can show
that this is even true with Dold cofibrations
in the place of Hurewicz cofibrations.

\subsection*{Simplicial and categorical homotopy theory}

As we have already illustrated, one can often find classes of (co)fibrations
that are larger than ones coming from classical model structures.
In fact, it is not difficult to prove that if there is
at least one class of (co)fibrations compatible with a given \hore{} category,
then there is also the largest one.
One of the few examples where such class is well understood is the category
of simplicial sets.

A simplicial map $f$ is \emph{sharp} if every strict pullback along $f$
is a homotopy pullback.
With this definition it is routine to prove the following result.

\begin{theorem}
  The category of all simplicial sets with weak homotopy equivalences
  and sharp maps is a fibration category.
\end{theorem}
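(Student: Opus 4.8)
The plan is to verify the duals of axioms (C0)--(C5) for $\sSet$ with weak homotopy equivalences as weak equivalences and sharp maps as fibrations. The facts I would draw on throughout are all standard homotopy-pullback bookkeeping in $\sSet$: the pasting law for homotopy pullbacks, the stability of weak homotopy equivalences under homotopy pullback, the factorization and right-properness of the Kan--Quillen model structure, and the fact that finite products of simplicial sets are homotopy products (e.g.\ because geometric realization preserves finite products).

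The easy axioms I would dispatch as follows. For (C0)\textop, weak homotopy equivalences are the weak equivalences of the Kan--Quillen model structure, hence satisfy ``2 out of 6''. For (C1)\textop, an isomorphism is certainly a weak homotopy equivalence, and any pullback of an isomorphism is an isomorphism whose pullback square is a homotopy pullback (replace the opposite leg by a fibration and compare), so isomorphisms are acyclic sharp maps. For (C2)\textop, $\Delta[0]$ is terminal. For (C3)\textop, I must check that $X \to \Delta[0]$ is sharp for every $X$; its pullback along any $Y \to \Delta[0]$ is the projection $X \times Y \to Y$, and the square in question is a homotopy pullback exactly because $X \times Y$ is a homotopy product.

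For (C4)\textop, suppose $f \from X \to Z$ is sharp and $f' \from P \to Y$ is its pullback along $g \from Y \to Z$. Given any $h \from W \to Y$, the pullback square of $f'$ along $h$ pastes with the pullback square of $f$ along $g$ to give the pullback square of $f$ along $gh$; since the outer square and the right-hand square are homotopy pullbacks (as $f$ is sharp), the pasting law shows the left-hand square is a homotopy pullback, so $f'$ is sharp. If in addition $f$ is a weak homotopy equivalence, then the pullback square of $f$ along $g$ is a homotopy pullback, so $f'$ is a weak homotopy equivalence too; hence acyclic sharp maps are likewise stable under pullback, and the relevant pullbacks exist since $\sSet$ is complete. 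For (C5)\textop, I would factor an arbitrary $f \from X \to Z$ in the Kan--Quillen model structure as $X \weto X' \fto Z$ with the first map an acyclic cofibration --- in particular a weak homotopy equivalence --- and the second a Kan fibration; a Kan fibration is sharp because the Kan--Quillen model structure is right proper, so any of its pullbacks is again a Kan fibration and the pullback square is a homotopy pullback.

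The genuinely substantive points --- though still routine --- are the last two: that Kan fibrations furnish a supply of sharp maps, which is precisely right-properness of $\sSet$ and is what makes the factorization (C5)\textop\ work, and that every object is fibrant, which is the statement that binary products in $\sSet$ are homotopy products. Everything else is formal. I expect no real obstacle here, which is why the paper calls the verification routine; the only place where one has to invoke something nontrivial about simplicial sets, rather than pure diagram-chasing with homotopy pullbacks, is exactly the right-properness input.
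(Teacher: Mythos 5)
The paper offers no proof of this statement---it simply declares the verification ``routine''---so there is nothing of the paper's to compare your argument against. Your verification is correct and is surely the intended one: the two non-formal inputs are exactly the ones you isolate, namely right properness of the Kan--Quillen model structure (which makes Kan fibrations sharp and hence yields the factorization axiom) and the fact that cartesian products of simplicial sets are homotopy products (which makes every object fibrant); everything else is the pasting law for homotopy pullbacks. The only item you omit is the check, implicit in the definition of a fibration category, that sharp maps form a subcategory of $\sSet$; closure under composition follows from the same pasting argument you already use for stability under pullback (stack the two pullback squares vertically), so this is not a genuine gap.
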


Sharp maps were introduced by Rezk \cite{r-sh}.
Clearly, a fibration in any fibration category of simplicial sets
(with weak homotopy equivalences) is sharp hence this is indeed
the largest class of fibrations.
Observe that in this fibration category every simplicial set is fibrant.

The next two examples exploit the notion of Dwyer maps to connect
category theory to homotopy theory.
A \emph{Dwyer map} is a functor $f$ of small categories that is a sieve
and factors as $f = gj$ where $g$ is a cosieve and $j$ admits
a deformation retraction.

While Thomason \cite{th} does not state this explicitly, a crucial step of
his construction of a model structure on small categories is contained
in the following theorem.

\begin{theorem}
  The category of small categories with weak homotopy equivalences
  (i.e. the ones created by the nerve functor from weak homotopy equivalences
  of simplicial sets) and Dwyer maps is a cofibration category.
\end{theorem}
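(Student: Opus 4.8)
The plan is to verify the axioms (C0)--(C5) of a cofibration category directly, taking as weak equivalences the functors that $N \from \Cat \to \sSet$ sends to weak homotopy equivalences and as cofibrations the Dwyer maps; all the homotopical substance sits in (C4) and (C5) and is imported from Thomason \cite{th}.

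First the formal axioms. For (C0), since $N$ creates the weak equivalences of $\Cat$ and the weak equivalences of $\sSet$ satisfy ``2 out of 6'' (being the weak equivalences of a model category), so do those of $\Cat$. For (C1), an isomorphism is sent by $N$ to an isomorphism, hence is a weak equivalence, and it is a Dwyer map via the trivial factorization: it is bijective on objects and fully faithful, so a sieve, and its inverse exhibits it as a deformation retract followed by an identity cosieve. For (C2) I would take the empty category $\emptyset$ as the initial object. For (C3) I would check that $\emptyset \to \cat{A}$ is a Dwyer map for every small $\cat{A}$: the empty subcategory is vacuously a full downwards closed subcategory, hence a sieve, and the factorization $\emptyset \xrightarrow{\id} \emptyset \ito \cat{A}$ exhibits it as a deformation retract followed by the (vacuously upwards closed) cosieve $\emptyset \ito \cat{A}$.

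The real work is (C4) and (C5). Since $\Cat$ is cocomplete all pushouts exist, so the first half of (C4) amounts to the fact that a pushout of a Dwyer map along an arbitrary functor is again a Dwyer map; this is a lemma of Thomason \cite{th}. Here one must be slightly careful, since, as Cisinski observed, Thomason's original notion is not literally stable under pushout and should be read as the corrected (pseudo-Dwyer) notion, which is the one governed by the definition above. For the stability of \emph{acyclic} Dwyer maps I would argue homotopically: pushing out along a Dwyer map is a homotopy pushout, i.e.\ $N$ carries such a square to a homotopy pushout square in $\sSet$ (again \cite{th}), and in a homotopy pushout square in $\sSet$ the map parallel to a weak equivalence is again a weak equivalence (replace the weak equivalence by a cofibration and use that cobase change of an acyclic cofibration is an acyclic cofibration). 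Hence $\cat{C} \to \cat{B} \push_{\cat{A}} \cat{C}$ is a weak equivalence whenever $\cat{A} \cto \cat{B}$ is an acyclic Dwyer map and $\cat{A} \to \cat{C}$ is arbitrary.

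For (C5) I would invoke Thomason's factorization of an arbitrary functor as a Dwyer map followed by a weak homotopy equivalence \cite{th}; this is the technical heart of his construction of the model structure and is exactly what the statement of the theorem is pointing to. It is genuinely subtle: the naive mapping cylinder does not work, since the evident sieve $\cat{A} \to \mathrm{Cyl}(F)$ need not be a Dwyer map (the cosieve it generates need not deformation retract onto $\cat{A}$), and Thomason's construction applies a barycentric subdivision first. Granting this factorization and the pushout statements above, the remaining checks (that $[0]$ is terminal, exactness bookkeeping, etc.) are routine. Thus the main obstacle is precisely these two inputs from Thomason: the (corrected) stability of Dwyer maps under pushout together with the fact that such pushouts are homotopy pushouts, and the Dwyer-map factorization of an arbitrary functor.
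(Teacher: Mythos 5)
The paper gives no proof of this statement at all: it is presented as implicit in Thomason's construction of the model structure on $\Cat$, with \cite{th} as the only justification. Your proposal assembles precisely the argument the paper has in mind --- the formal axioms (C0)--(C3) checked directly, Thomason's pushout-stability of Dwyer maps and the fact that such pushouts are homotopy pushouts for (C4), the gluing lemma in $\sSet$ for acyclic Dwyer maps, and Thomason's factorization (via $\Sd^2$ and the small object argument) for (C5) --- so in substance it is correct and matches the intended route. For (C5) you should also record that coproducts and transfinite composites of Dwyer maps are again Dwyer maps, so that the relative cell complexes produced by the small object argument are themselves Dwyer maps and not merely built from them.

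One factual correction: Cisinski's observation is that Dwyer maps fail to be closed under \emph{retracts}, not under pushouts. Pushout-stability of Dwyer maps in the original sense is proved by Thomason and is correct as stated; pseudo-Dwyer maps are needed only because the cofibrations of a model category must be retract-closed. Since the axioms of a cofibration category impose no retract-closure on the cofibrations, the original Dwyer maps serve here without modification --- which is exactly why this statement is cleaner than the corresponding claim about the Thomason model structure. This does not affect the validity of your argument; it only means your cautionary parenthetical in (C4) is pointing at the wrong closure property.
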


Barwick and Kan \cites{bk1,bk2} in the construction of their model category
of relative categories (which was already discussed in the introduction)
used a similar approach.
They defined a suitable generalization of Dwyer maps and proved
(also implicitly) an analogous result.

\begin{theorem}
  The category of small relative categories with Dwyer--Kan equivalences
  and Dwyer maps is a cofibration category.
\end{theorem}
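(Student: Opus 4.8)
The plan is to run the argument in parallel with the preceding theorem about small categories and to extract the needed structure from Barwick and Kan's construction of their model structure on the category $\mathrm{RelCat}$ of small relative categories \cites{bk1,bk2}; the cofibrations will be their generalized Dwyer maps (which is what ``Dwyer maps'' refers to in the statement). First I would collect the basic structural facts about these maps that are established, explicitly or implicitly, in \cites{bk1,bk2}: every isomorphism of relative categories is a generalized Dwyer map; generalized Dwyer maps are closed under composition; $\mathrm{RelCat}$ is cocomplete, pushouts along generalized Dwyer maps are again generalized Dwyer maps, and --- the genuinely homotopical input --- the \emph{acyclic} ones, i.e.\ those that are moreover Dwyer--Kan equivalences, are stable under such pushouts. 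Exactly as in the case of $\Cat$, these facts rest on elementary manipulations of sieves and cosieves together with one subtle combinatorial lemma; the subtlety is that pushouts of sieves are badly behaved unless one stays inside the Dwyer class, and here one must in addition keep track of the subcategory of weak equivalences.

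Granting these facts, the cofibration category axioms (C0)--(C5) are checked directly. Axiom (C0) holds because Dwyer--Kan equivalences are the weak equivalences of a model structure and hence satisfy the ``2 out of 6'' property. Axiom (C1) is the first of the facts above combined with the observation that isomorphisms are Dwyer--Kan equivalences. For (C2)--(C3), the empty relative category is initial and the unique functor from it to any relative category $\cat{C}$ is a generalized Dwyer map, so every object is cofibrant. Axiom (C4) is precisely the stability statements recalled above. Axiom (C5) asks that every relative functor factor as a generalized Dwyer map followed by a Dwyer--Kan equivalence, and this is the step I expect to be the main obstacle.

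For (C5) I would use a relative mapping cylinder: given $F \from \cat{A} \to \cat{B}$, construct an auxiliary relative category $E_F$ through which $F$ factors as $\cat{A} \cto E_F \weto \cat{B}$, where the first map is a generalized Dwyer map by construction and the second admits a deformation retraction in $\mathrm{RelCat}$, hence is a Dwyer--Kan equivalence. For ordinary categories Thomason \cite{th} had to pass through a double subdivision of the nerve to make such a cylinder interact correctly with weak homotopy equivalences and with pushouts; Barwick and Kan's generalization carries the analogous construction through while also tracking the weak equivalences, and re-packaging their argument so that $E_F$ is manifestly a generalized Dwyer map over $\cat{A}$ and the projection manifestly a Dwyer--Kan equivalence is where the real work lies. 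Once (C0)--(C5) hold, the Gluing Lemma for $\mathrm{RelCat}$ --- that pushouts along generalized Dwyer maps are homotopy pushouts --- follows formally by \Cref{Ken-Brown}, so it need not be verified separately; and, as with the other examples in this subsection, one notes that this class of cofibrations is strictly larger than the cofibrations of the Barwick--Kan model structure, which are merely retracts of generalized Dwyer maps.
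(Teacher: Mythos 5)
Your proposal matches the paper's approach: the paper offers no proof beyond the remark that the result is implicit in Barwick and Kan's construction of their model structure on relative categories, and your plan simply makes explicit the facts one must extract from \cites{bk1,bk2} (stability of generalized Dwyer maps and of the acyclic ones under pushout, and the factorization via a relative mapping cylinder). Your identification of (C5) and the pushout stability of acyclic Dwyer maps as the genuinely homotopical inputs is exactly where the content of the Barwick--Kan argument lies, so the proposal is a faithful, if more detailed, rendering of the paper's intended argument.
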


In both cases there are many more Dwyer maps than cofibrations
in their model categories.

  \section{Quasicategories}
  \label{ch:qcats}
  This section is devoted to a concise summary of the theory of quasicategories.
It is well covered in \cite{jo} and \cite{l} so we do not go into much detail.
Our main goal is to establish a fibration category
of finitely cocomplete quasicategories in \Cref{fibcat-of-fc-quasicats}.
We follow \cite{jo} to demonstrate that the fibration category
of all quasicategories can be obtained without constructing
the entire Joyal model structure (\Cref{fibcat-of-quasicats})
which makes the proof rather elementary.
(A more streamlined exposition of the same results can be found
in the appendices to \cite{ds}.)
Then we briefly introduce colimits in quasicategories
and state their basic properties used in the proof
of \Cref{fibcat-of-fc-quasicats} and later in \Cref{ch:qcats-of-frames}.

\subsection{Homotopy theory of quasicategories}

Recall that $\wiso$ is the groupoid freely generated
by an isomorphism $0 \to 1$.
Its nerve will be denoted by $\nwiso$.
Quasicategories are defined as certain special simplicial sets
and are to be thought of as models of $(\infty,1)$-categories
where vertices are objects, edges are morphisms
and higher simplices are higher morphisms (or higher homotopies).
Functors between quasicategories are just simplicial maps.
In particular, maps out of $\nwiso$ are equivalences in quasicategories
and $\nwiso$-homotopies are natural equivalences between functors.
The account of the homotopy theory of quasicategories below closely follows
the classical approach to simplicial homotopy theory
(see e.g. \cite{gj}*{Chapter I}) with Kan complexes replaced by quasicategories
and usual simplicial homotopies replaced by $\nwiso$-homotopies.

\begin{definition}
  \enumhack
  \begin{enumerate}[label=\dfn]
  \item
    Let $f, g \from K \to L$ be simplicial maps.
    An \emph{$\nwiso$-homotopy} from $f$ to $g$ is a simplicial map
    $K \times \nwiso \to L$ extending $[f, g] \from K \times \bdsimp{1} \to L$.
  \item Two simplicial maps $f, g \from K \to L$ are \emph{$\nwiso$-homotopic}
    if there exists a zig-zag of $\nwiso$-homotopies connecting $f$ to $g$.
    (It suffices to consider sequences instead of zig-zags since $\nwiso$
    has an automorphism that exchanges the vertices.)
  \item
    A simplicial map $f \from K \to L$ is
    an \emph{$\nwiso$-homotopy equivalence} if there is a simplicial map
    $g \from L \to K$ \st{} $f g$ is $\nwiso$-homotopic to $\id_L$
    and $g f$ is $\nwiso$-homotopic to $\id_K$.
  \end{enumerate}
\end{definition}

\begin{definition}
  \enumhack
  \begin{enumerate}[label=\dfn]
  \item
    A simplicial map is an \emph{inner fibration} if it has the \rlp{}
    \wrt{} the inner horn inclusions.
  \item
    A simplicial map is an \emph{inner isofibration}
    if it is an inner fibration and has the \rlp{}
    \wrt{} $\simp{0} \ito \nwiso$.
  \item
    A simplicial map is an \emph{acyclic Kan fibration} if it has the \rlp{}
    \wrt{} $\bdsimp{m} \ito \simp{m}$ for all $m$.
  \item
    A simplicial set $\qcat{C}$ is a \emph{quasicategory} if the unique map
    $\qcat{C} \to \simp{0}$ is an inner fibration.
  \end{enumerate}
\end{definition}

We will refer to $\nwiso$-equivalences between quasicategories as
\emph{categorical equivalences} and use them to introduce the homotopy theory
of quasicategories.
(It is also possible to extend this notion to maps of general simplicial sets,
but we have no need to do it.)
If $K$ is any simplicial set and $\qcat{C}$ is a quasicategory,
then the relation of ``being connected by a single $\nwiso$-homotopy''
is already an equivalence relation on the set
of simplicial maps $K \to \qcat{C}$ by \cite{ds}*{Proposition 2.3}.
This simplifies the definition of categorical equivalences since
it is always sufficient to consider one-step $\nwiso$-homotopies.

\begin{theorem}\label{fibcat-of-quasicats}
  The category of small quasicategories with simplicial maps as morphisms,
  categorical equivalences as weak equivalences
  and inner isofibrations as fibrations is a fibration category.
\end{theorem}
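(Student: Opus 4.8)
The plan is to check the duals (C0)\textop--(C5)\textop of the cofibration category axioms for $\QCat$, relying only on the elementary material recalled above together with a few standard facts imported from \cite{jo} (and, more efficiently, from the appendices to \cite{ds}): for a quasicategory $\qcat{D}$ and an arbitrary simplicial set $K$ the exponential $\qcat{D}^K$ is again a quasicategory; the map $\qcat{D}^{\nwiso} \to \qcat{D}^{\bdsimp{1}} = \qcat{D} \pull \qcat{D}$ is an inner isofibration; each evaluation $\ev_i \from \qcat{D}^{\nwiso} \to \qcat{D}$ ($i = 0, 1$) is an acyclic Kan fibration; and a map between quasicategories is an acyclic Kan fibration \iff{} it is an inner isofibration that is also a categorical equivalence, so that the ``acyclic fibrations'' of the putative fibration category structure are exactly the acyclic Kan fibrations and in particular are closed under pullback. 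I will also use repeatedly that inner fibrations and inner isofibrations are each closed under composition and under pullback; for inner isofibrations, closure under composition comes down to solving a lifting problem for $\simp{0} \ito \nwiso$ one square at a time.

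The bookkeeping axioms are immediate. For (C0)\textop, $\nwiso$-homotopy is an equivalence relation on every hom-set by \cite{ds}*{Proposition 2.3} and is visibly compatible with composition, so the quotient $\QCat \quot \mathord{\htp}$ is a category and, by definition, the categorical equivalences are exactly the morphisms inverted in it; if $gf$ and $hg$ become invertible there, then $[g]$ is simultaneously a split epimorphism and a split monomorphism, hence invertible, and so are $[f]$ and $[h]$ --- this is ``2 out of 6''. For (C1)\textop an isomorphism is a categorical equivalence and has the \rlp{} \wrt{} every map. For (C2)\textop the quasicategory $\simp{0}$ is terminal in $\QCat$. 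For (C3)\textop the structure map $\qcat{C} \to \simp{0}$ is an inner fibration by the definition of a quasicategory, and it has the \rlp{} \wrt{} $\simp{0} \ito \nwiso$ because the constant map $\nwiso \to \simp{0} \to \qcat{C}$ solves any such lifting problem; hence every quasicategory is fibrant.

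For (C4)\textop, let $P \from \qcat{E} \to \qcat{D}$ be an inner isofibration and $G \from \qcat{C} \to \qcat{D}$ an arbitrary map of quasicategories, and form the pullback $\qcat{P} = \qcat{C} \pull_{\qcat{D}} \qcat{E}$ in simplicial sets (this is automatically a pullback in $\QCat$ once $\qcat{P}$ is known to be a quasicategory, since $\QCat$ is a full subcategory). The projection $\qcat{P} \to \qcat{C}$ is a pullback of $P$, hence an inner isofibration, and composing with $\qcat{C} \to \simp{0}$ realizes $\qcat{P} \to \simp{0}$ as a composite of inner fibrations, so $\qcat{P}$ is a quasicategory and $\qcat{P} \to \qcat{C}$ is a fibration. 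As the defining lifting properties of inner isofibrations, and those of acyclic Kan fibrations, are stable under pullback, both fibrations and acyclic fibrations are stable under pullback along arbitrary morphisms, and the required pullbacks exist.

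The real content is the factorization axiom (C5)\textop: every $f \from \qcat{C} \to \qcat{D}$ must factor as a categorical equivalence followed by an inner isofibration. Here I would use the mapping path object $Pf = \qcat{C} \pull_{\qcat{D}} \qcat{D}^{\nwiso}$, the pullback of $\ev_0 \from \qcat{D}^{\nwiso} \to \qcat{D}$ along $f$, which is a quasicategory by the previous paragraph. The map $\qcat{C} \to Pf$ determined by $\id_{\qcat{C}}$ and the degeneracy $\qcat{D} \to \qcat{D}^{\nwiso}$, followed by $\ev_1 \from Pf \to \qcat{D}$, recomposes to $f$. The first map is a section of the projection $Pf \to \qcat{C}$, which is a pullback of the acyclic Kan fibration $\ev_0$ and so is itself an acyclic Kan fibration; hence the first map is a categorical equivalence by ``2 out of 3''. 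The second map is an inner isofibration: the map $Pf \to \qcat{C} \pull \qcat{D}$ given by the first projection together with $\ev_1$ is a pullback of the inner isofibration $\qcat{D}^{\nwiso} \to \qcat{D} \pull \qcat{D}$ along $f \pull \id_{\qcat{D}}$, and composing it with the projection $\qcat{C} \pull \qcat{D} \to \qcat{D}$, a pullback of $\qcat{C} \to \simp{0}$, gives an inner isofibration. (Taking $\qcat{C} = \qcat{D}$ and $f = \id$, this is a path object for $\qcat{D}$, so one could equally finish via Brown's factorization lemma.) Everything else above is formal; the genuine input is entirely contained in the imported facts about $\qcat{D}^{\nwiso}$, and the step I expect to demand the most care when writing up is tracking the various lifting properties \wrt{} $\simp{0} \ito \nwiso$ --- in particular that inner isofibrations compose and survive the pullbacks used in the factorization --- since that is precisely where the ``isofibration'' part of the structure, beyond plain inner fibrancy, gets used.
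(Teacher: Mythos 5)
Your proposal is correct and follows essentially the same route as the paper: the paper dispatches the routine axioms in a sentence and cites \cite{jo}*{Theorem 5.15} for the identification of acyclic (inner iso)fibrations with acyclic Kan fibrations (giving pullback stability) and \cite{jo}*{Proposition 5.16} for the factorization axiom, whose content is exactly the mapping path object $\qcat{C} \pull_{\qcat{D}} \qcat{D}^{\nwiso}$ construction you spell out. The facts you import about $\qcat{D}^{\nwiso}$ and acyclic Kan fibrations are precisely the inputs to those two citations, so your write-up is an expanded version of the paper's proof rather than a different argument.
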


In fact, this fibration category is homotopy complete,
i.e.\ it admits all small homotopy limits as will be discussed
in \Cref{sec:infinite-hocolims}.

\begin{proof}
  Only two of the axioms require non-trivial proofs:
  stability of acyclic fibrations under pullbacks which follows from the fact that
  acyclic (inner iso-) fibrations coincide with acyclic Kan fibrations
  by \cite{jo}*{Theorem 5.15} and the factorization axiom which is verified
  in \cite{jo}*{Proposition 5.16}.
\end{proof}

This fibration category is a part of the Joyal model structure on simplicial sets
established in \cite{jo}*{Theorem 6.12}.
Indeed, the theorem above is an intermediate step in the construction
of this model category.

Quasicategories are models for homotopy theories and as such
they have homotopy categories.
Two morphisms $f, g \from x \to y$ of a quasicategory $\qcat{D}$
are \emph{homotopic} if there exists a simplex $H \from \simp{2} \to \qcat{D}$
\st{} $H \face_0 = y \dgn_0$, $H \face_1 = g$ and $H \face_2 = f$.
The \emph{homotopy category} of $\qcat{D}$ is the category $\Ho \qcat{D}$
with the same objects as $\qcat{D}$, homotopy classes of morphisms of $\qcat{D}$
as morphisms and the composition induced by filling horns.

If $f$ is a morphism of a quasicategory $\qcat{C}$, then we say that $f$
is an \emph{equivalence} if the simplicial map $f \from \simp{1} \to \qcat{C}$
extends to $\nwiso \to \qcat{C}$.
(By \cite{jo}*{Proposition 4.22} a morphism is an equivalence \iff{}
it becomes an isomorphism in the homotopy category.)
Two objects of $\qcat{C}$ are \emph{equivalent} if they are connected
by an equivalence.

We conclude this subsection by a technical lemma
saying that in quasicategories certain outer horns can be filled.
Let $\qcat{C}$ be a quasicategory.
A map $X \from \horn{m,i} \to \qcat{C}$ is called
a \emph{special outer horn in $\qcat{C}$} if $i = 0$ and $X | \Delta \{ 0, 1\}$
is an equivalence or $i = m$ and $X | \Delta \{ m - 1, m\}$ is an equivalence.

\begin{lemma}\label{special-horns}
  If $X \from \horn{m,i} \to \qcat{C}$ is a special outer horn
  and $p \from \qcat{C} \to \qcat{D}$ is an inner isofibration
  between quasicategories, then the diagram
  \begin{ctikzpicture}
    \matrix[diagram]
    {
      |(h)| \horn{m,i} & |(C)| \qcat{C} \\
      |(s)| \simp{m}   & |(D)| \qcat{D} \\
    };

    \draw[->] (h) to node[above] {$X$} (C);

    \draw[->]  (s) to (D);
    \draw[inj] (h) to (s);
    \draw[fib] (C) to (D);
  \end{ctikzpicture}
  admits a lift.
\end{lemma}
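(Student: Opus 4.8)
The plan is to identify the statement with the fact that equivalences are \emph{cocartesian} edges, and read the lift off from there. By running the argument for $p^{\op}\from\qcat{C}^{\op}\to\qcat{D}^{\op}$, which interchanges $\horn{m,0}\ito\simp{m}$ and $\horn{m,m}\ito\simp{m}$, it suffices to treat $i=0$; so $e:=X|\Delta\{0,1\}$ is an equivalence in $\qcat{C}$, and we write $\bar X\from\simp{m}\to\qcat{D}$ for the bottom map of the square. (Tacitly $m\ge 2$, since otherwise $\Delta\{0,1\}$ is not a face of the horn.)

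The core is two standard facts about the join/slice calculus of quasicategories, available from \cite{l} (and essentially from \cite{jo}). First, for an inner fibration $p$ and an edge $f\from x\to y$ of $\qcat{C}$, the map of under-slices
\begin{align*}
  \qcat{C}^{f/} \to \qcat{C}^{x/} \times_{\qcat{D}^{p x/}} \qcat{D}^{pf/}
\end{align*}
is a left fibration, and $f$ satisfies the conclusion of the lemma — for every $m\ge 2$ and every special outer horn with distinguished edge $f$ — \iff{} this map is a trivial Kan fibration, i.e.\ \iff{} $f$ is $p$-cocartesian; the equivalence of these conditions is the join/slice adjunction applied to $\simp{m}=\Delta\{0,1\}\join\Delta\{2,\ldots,m\}$. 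Second, every equivalence of $\qcat{C}$ is $p$-cocartesian. Since $e$ is an equivalence it is $p$-cocartesian, and the lemma follows. (Note that only the inner-fibration part of the hypothesis on $p$ is used.)

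To stay closer to the elementary tone of this section one can instead argue by hand. First treat the absolute case $\qcat{D}=\simp{0}$, that a special left outer horn fills in a quasicategory: for $m=2$ compose a homotopy inverse $e^{-1}$ of $e$ (which exists by \cite{jo}*{Proposition 4.22}) with the edge $X|\Delta\{0,2\}$ and fill an inner horn to assemble the $2$-simplex, and for larger $m$ run a dimension-shifting inner-horn induction. Then extend $X$ to some $Y\from\simp{m}\to\qcat{C}$ ignoring $\qcat{D}$, and correct $Y$ to a lift over $\bar X$ by producing an $\nwiso$-homotopy $pY\htp\bar X$ rel $\horn{m,0}$ in $\qcat{D}$ and lifting it along $p$ (for which the inner isofibration hypothesis is convenient). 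In either packaging the genuine obstacle is the same: $\horn{m,i}\ito\simp{m}$ is an \emph{outer} horn inclusion, so nothing here is formal from $p$ being an inner (iso)fibration, and the entire weight of the argument falls on the hypothesis that $\Delta\{0,1\}$ is an equivalence — concretely on the base case $m=2$ together with the reason the relevant under-slice map is a trivial fibration (equivalently, the reason the homotopy-correction succeeds). I would cite \cite{jo} and \cite{ds} for those inputs.
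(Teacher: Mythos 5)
Your proposal is correct, but note that the paper does not actually prove this lemma: its ``proof'' is a bare citation of \cite{jo}*{Theorem 4.13} (equivalently \cite{ds}*{Proposition B.11}), which is precisely the statement at hand. Your second, hands-on sketch is essentially a reconstruction of the argument in those references (absolute case by an induction starting at $m=2$, then extend arbitrarily and correct by an $\nwiso$-homotopy over the base), so there you and the paper coincide up to the level of detail. Your first argument is a genuinely different packaging: via the join/slice adjunction applied to $\simp{m} \iso \Delta\{0,1\} \join \Delta\{2,\ldots,m\}$ you identify special-outer-horn filling against $p$ with the trivial-Kan-fibration condition defining $p$-cocartesian edges, and then invoke the fact that every equivalence is $p$-cocartesian (\cite{l}*{Section 2.4.1}). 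This route is shorter, and it makes transparent your (correct) observation that for $m \ge 2$ only the inner-fibration part of the hypothesis is used; the cost is importing the cocartesian-edge formalism, which this section of the paper deliberately avoids in favour of the elementary treatment of \cite{jo} and \cite{ds}. Your two reductions --- to $i=0$ by passing to opposites, and the tacit restriction to $m \ge 2$ --- are both sound; the only place your sketch is loose is the base case $m=2$ of the absolute argument, where producing the $2$-simplex with the prescribed boundary from $e^{-1}$ requires pasting a $3$-horn rather than a single inner $2$-horn, but this is standard and does not affect correctness.
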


\begin{proof}
  \cite{jo}*{Theorem 4.13} or \cite{ds}*{Proposition B.11}
\end{proof}

  \subsection{Colimits}
\label{sec:colimits}

We proceed to the discussion of colimits in quasicategories.
Such colimits are homotopy invariant by design
and they serve as models for homotopy colimits.
However, in quasicategories there is no corresponding notion
of a ``strict'' colimit and thus it is customary
to refer to ``homotopy colimits'' in quasicategories simply as colimits.
The general theory of colimits is explored in depth in \cite{l}*{Chapter 4},
here we only discuss its most basic aspects.

The quasicategorical notion of colimit is defined using the join construction
for simplicial sets.
In order to define joins efficiently we briefly introduce
augmented simplicial sets.
The category $\Delta_\aug$ is defined as the category
of finite totally ordered sets of the form $[m]$ for $m \ge -1$
(where $[-1] = \emptyset$).
\label{augmented simplicial set}
The category of \emph{augmented simplicial sets} is the category of presheaves
on $\Delta_\aug$ and is denoted by $\asSet$.
The standard category $\Delta$ is a full subcategory of $\Delta_\aug$,
we denote the inclusion functor by $i \from \Delta \ito \Delta_\aug$.
Precomposition with $i$ is the forgetful functor $i^* \from \asSet \to \sSet$
and it has a right adjoint, the right Kan extension along $i$
denoted by $\Ran_i \from \sSet \to \asSet$.
Explicitly, $\Ran_i$ prolongs a simplicial set to an augmented simplicial set
by setting the value at $[-1]$ to a singleton.

The category $\Delta_\aug$ carries a (non-symmetric) strict monoidal structure
given by concatenation $[m], [n] \mapsto [m] \join [n] \iso [m + 1 + n]$
with $[-1]$ as the monoidal unit.
On morphisms it is also defined by concatenation:
$\phi \join \psi \from [k] \join [l] \to [m] \join [n]$ acts via $\phi$
on the first $k+1$ elements and via $\psi$ on the last $l+1$ ones.

\begin{proposition}
  \mbox{}
  \begin{enumerate}[label=\thm]
    \item The category of augmented simplicial sets carries
      a closed monoidal structure with the monoidal product,
      the \emph{join} $\join \from \asSet \times \asSet \to \asSet$
      uniquely characterized by its action on representables
      \begin{align*}
        \Delta_\aug[m], \Delta_\aug[n] \mapsto \Delta_\aug([m] \join
        [n]) \iso \Delta_\aug[m + 1 + n] \text{.}
      \end{align*}
      The unit is $\Delta_\aug[-1]$.
    \item The category of simplicial sets carries
      a monoidal structure with the product,
      again called the \emph{join}, given by
      $K \join L = i^*(\Ran_i K \join \Ran_i L)$.
      The unit is the empty simplicial set.
  \end{enumerate}
\end{proposition}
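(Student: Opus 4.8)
The plan is to derive both statements from the general machinery of Day convolution, together with the two facts recorded above: that $i\from\Delta\ito\Delta_\aug$ is fully faithful, and that the restriction functor $i^*$ has a right adjoint $\Ran_i$.

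For part~(1), first I would note that $\Delta_\aug$ is small (indeed skeletal) and strictly monoidal with unit $[-1]$, so its presheaf category $\asSet$ carries the Day convolution monoidal structure. Here $\join\from\asSet\times\asSet\to\asSet$ is the left Kan extension, along the product of the Yoneda embeddings, of the composite of $\join\from\Delta_\aug\times\Delta_\aug\to\Delta_\aug$ with the Yoneda embedding $\Delta_\aug\to\asSet$; explicitly it is the coend $(F\join G)_p=\int^{[m],[n]}F_m\times G_n\times\Delta_\aug([p],[m]\join[n])$. Being a left Kan extension it is cocontinuous in each variable, and the Yoneda embedding is strong monoidal for it, which is exactly the asserted action on representables; since representables are dense in $\asSet$, any functor cocontinuous in each variable is determined by its restriction to representables, which gives the uniqueness clause. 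Closedness is the standard general fact that Day convolution is a closed monoidal structure on a presheaf category (cocontinuity of $(\uvar)\join X$ together with cocompleteness of $\asSet$ produces the internal hom via the adjoint functor theorem). The monoidal unit is the representable on the unit object of $\Delta_\aug$, that is, $\Delta_\aug[-1]$.

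For part~(2), full faithfulness of $i$ makes $\Ran_i\from\sSet\to\asSet$ fully faithful (equivalently, the counit $i^*\Ran_i\to\id_{\sSet}$ is an isomorphism), so it is an equivalence onto its essential image. By the explicit description of $\Ran_i$ (prolong by a singleton in degree $-1$), this essential image is the full subcategory $\asSet_\bullet\subseteq\asSet$ of augmented simplicial sets $X$ with $X_{-1}$ a singleton, and $i^*$ restricts to a quasi-inverse there. The one nonformal step is to check that $\asSet_\bullet$ is a monoidal subcategory. It contains the unit $\Delta_\aug[-1]$ (which is $\Ran_i\emptyset$), and it is closed under $\join$: from the coend formula $(X\join Y)_{-1}=\int^{[m],[n]}X_m\times Y_n\times\Delta_\aug([-1],[m]\join[n])$, and since $[-1]=\emptyset$ is initial in $\Delta_\aug$ every hom-set appearing is a singleton, so the coend collapses to $X_{-1}\times Y_{-1}$, a point. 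Equivalently — and this is the cleanest way to record the outcome — $\Ran_i$ is strong monoidal for the join, since $\Ran_i(K\join L)\iso\Ran_i K\join\Ran_i L$ (both sides agree with $K\join L$ in non-negative degrees and have a point in degree $-1$) and $\Ran_i\emptyset\iso\Delta_\aug[-1]$. Transporting the monoidal structure of $\asSet_\bullet$ along the equivalence $\Ran_i$ (with quasi-inverse $i^*$) then endows $\sSet$ with a monoidal structure whose product is $K\join L=i^*(\Ran_i K\join\Ran_i L)$ and whose unit is $i^*(\Delta_\aug[-1])=\emptyset$, the representable $\Delta_\aug[-1]$ being empty in every non-negative degree.

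The main obstacle will be nothing conceptual but simply the two small coend computations — identifying the Day product of two representables with $\Delta_\aug[m+1+n]$, and verifying closure of $\asSet_\bullet$ under $\join$; every other ingredient (Day convolution being closed monoidal, an equivalence onto its essential image, transport of monoidal structure along an equivalence) is standard and may be cited. In particular, once closure under $\join$ is in place the associativity and unit coherences for the transported structure are automatic, $\Ran_i$ having been promoted to a strong monoidal equivalence onto $\asSet_\bullet$.
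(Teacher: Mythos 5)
Your proposal is correct and follows essentially the same route as the paper: part (1) is Day convolution (the paper simply cites Day's theorem where you unpack the coend formula and uniqueness), and part (2) identifies $\sSet$ via $\Ran_i$ with the full subcategory of augmented simplicial sets having a point in degree $-1$ and checks that the join restricts there. Your explicit computation $(X \join Y)_{-1} \iso X_{-1} \times Y_{-1}$, using that $[-1]$ is initial in $\Delta_\aug$, is exactly the detail the paper leaves implicit.
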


\begin{proof}
  The first statement follows from the classical theorem
  of Day \cite{day}*{Theorem 3.3}.
  The second one can be proven by observing that $\Ran_i$ embeds $\sSet$
  fully and faithfully into $\asSet$ with the essential image consisting
  of augmented simplicial sets $X$ with $X_{-1}$ a singleton.
  Under this identification the join of augmented simplicial sets restricts to
  the join of simplicial sets.
\end{proof}

The category of small categories embeds as a full category of $\sSet$
via the nerve functor and the join product restricts to the category
of small categories.
Explicitly, given small categories $I$ and $J$ the join $I \join J$
is defined as follows.
The set of objects of $I \join J$
is the coproduct of the sets of objects of $I$ and $J$ and
\begin{align*}
  (I \join J)(x, y) =
  \begin{cases}
    I(x, y)   & \text{if } x, y \in I \text{,} \\
    J(x, y)   & \text{if } x, y \in J \text{,} \\
    *         & \text{if } x \in I, y \in J \text{,} \\
    \emptyset & \text{if } x \in J, y \in I \text{.}
  \end{cases}
\end{align*}
The composition of $I \join J$ is the unique composition
that restricts to the compositions of $I$ and $J$.

For example $[0] \join J$ is formed by adjoining an initial object to $J$
(a new one if $J$ already had one).
If $J$ is discrete, then colimits over $[0] \join J$
are called \emph{wide pushouts}.
(They reduce to classical pushouts when $J$ has exactly two objects.)

The join monoidal structure on simplicial set is not closed
and the join doesn't preserve all colimits in either of its variables.
However, a slightly weaker statement holds.
First, we need to observe that for any simplicial set $K$
the functor $K \join \uvar \from \sSet \to \sSet$
lifts to a functor $\sSet \to K \slice \sSet$ (also denoted by $K \join \uvar$.)
Such a lift is defined by the following composite
\begin{ctikzpicture}
  \matrix [diagram]
  {
    |(S)| \sSet &[-1.5em] |(ES)| \emptyset \slice \sSet
    & |(AS)| \Delta_\aug[-1] \slice \asSet &[1em] |(RS)| \Ran_i K \slice \asSet
    &[-1.5em] |(KS)| K \slice \sSet \text{.} \\
  };

  \draw[->] (S)  to (ES);

  \draw[->] (ES) to node[above] {$\Ran_i$}               (AS);
  \draw[->] (AS) to node[above] {$\Ran_i K \join \uvar$} (RS);
  \draw[->] (RS) to node[above] {$i^*$}                  (KS);
\end{ctikzpicture}

\begin{proposition}\label{join-colimits}
  For each simplicial set $K$,
  the functor $K \join \uvar \from \sSet \to K \slice \sSet$ preserves colimits.
  In particular, the functor $K \join \uvar \from \sSet \to \sSet$
  preserves pushouts and sequential colimits
  and carries coproducts to wide pushouts under $K$.
  (The same statement holds for $\uvar \join K$.)
\end{proposition}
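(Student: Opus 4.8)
The plan is to factor $K \join \uvar \from \sSet \to K \slice \sSet$ as the composite displayed above and to verify that each of its four constituents preserves colimits; the stated special cases then follow by identifying the relevant colimits in the coslice category $K \slice \sSet$.

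Two elementary observations carry most of the weight. First, for a category $\mathcal D$ and an object $c$, colimits in $c \slice \mathcal D$ are obtained by adjoining $c$ to the index category as an initial object: the colimit of $D \from J \to c \slice \mathcal D$ is $\bigl(c \to \colim_{[0] \join J} \bar D\bigr)$, formed in $\mathcal D$, where $\bar D$ extends the underlying $J$-diagram by $\bar D(0) = c$; and $[0] \join J$ is connected since it has an initial object. Second, it follows that a functor $G \from \mathcal D \to \mathcal E$ which preserves connected colimits and satisfies $G c \iso c'$ induces a functor $c \slice \mathcal D \to c' \slice \mathcal E$ that preserves all colimits. Applied to our composite: its first term is an isomorphism of categories; its last term $i^* \from \Ran_i K \slice \asSet \to K \slice \sSet$ is induced by $i^* \from \asSet \to \sSet$, which preserves all colimits since they are computed pointwise, via $i^* \Ran_i K \iso K$; and its third term $\Ran_i K \join \uvar \from \Delta_\aug[-1] \slice \asSet \to \Ran_i K \slice \asSet$ is induced by $\Ran_i K \join \uvar \from \asSet \to \asSet$, which preserves all colimits because the join on $\asSet$ is closed (Part (1) of the preceding proposition) and hence has a right adjoint, via the unit isomorphism $\Ran_i K \join \Delta_\aug[-1] \iso \Ran_i K$.

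The only term requiring a genuine computation is the second, $\Ran_i \from \sSet \to \asSet$, which is a right adjoint and is not cocontinuous; for instance it does not preserve coproducts, as $\Ran_i(X \push Y)$ has one $(-1)$-simplex while $\Ran_i X \push \Ran_i Y$ has two. Nevertheless $\Ran_i$ preserves \emph{connected} colimits, which I would see by computing $\colim_j \Ran_i X_j$ pointwise in $\asSet$ over a connected index category: in degree $-1$ every $(\Ran_i X_j)_{-1}$ is a singleton and all structure maps are the unique maps of singletons, so the degree $-1$ diagram is constant at a point over a connected category and its colimit is a point, while in degree $n \ge 0$ the diagram is $j \mapsto (X_j)_n$ with colimit $\colim_j (X_j)_n$; hence $\colim_j \Ran_i X_j \iso \Ran_i \colim_j X_j$. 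Since also $\Ran_i \emptyset \iso \Delta_\aug[-1]$, the second observation shows the induced functor $\emptyset \slice \sSet \to \Delta_\aug[-1] \slice \asSet$ preserves all colimits, and therefore so does the composite $K \join \uvar \from \sSet \to K \slice \sSet$. For the remaining clauses: pushouts and sequential colimits are connected, hence preserved, and postcomposing with the connected-colimit-preserving forgetful functor $K \slice \sSet \to \sSet$ shows that $K \join \uvar \from \sSet \to \sSet$ preserves them; a coproduct of a family $(L_j)$ is carried to the coproduct of the family $(K \to K \join L_j)$ in $K \slice \sSet$, which is precisely the wide pushout of the $K \join L_j$ under $K$. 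The argument for $\uvar \join K$ is identical.

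The main obstacle is exactly this last computation: although $\Ran_i$ fails to preserve coproducts (and the initial object) as a functor $\sSet \to \asSet$, the failure is repaired by passing to the coslice under $\Delta_\aug[-1]$, since forming colimits there glues all the objects along their unique $(-1)$-simplex. Everything else is formal bookkeeping with coslice categories.
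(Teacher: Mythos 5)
Your proof is correct and follows the same route as the paper: the same factorization through coslice categories, the same observation that colimits in $c \slice \mathcal{D}$ are computed over $[0] \join J$ in $\mathcal{D}$, and the same treatment of the final clauses. In fact you make precise the one point the paper leaves as a parenthetical remark, namely why $\Ran_i$ — which is not cocontinuous as a functor $\sSet \to \asSet$ — nevertheless induces a cocontinuous functor on coslices: your refinement that preservation of \emph{connected} colimits suffices (because $[0] \join J$ has an initial object) together with the degreewise check that $\Ran_i$ preserves connected colimits is exactly the missing justification.
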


\begin{proof}
  For any cocomplete category $\cat{C}$ and $X \in \cat{C}$ colimits over $J$
  in $X \slice \cat{C}$ are computed as colimits over $[0] \join J$
  in $\cat{C}$.
  Thus a colimit preserving functor $F \from \cat{C} \to \cat{D}$ induces
  a colimit preserving functor $X \slice \cat{C} \to FX \slice \cat{D}$.

  It follows that in the composite above all the functors preserve colimits.
  (Note that $\Ran_i$ doesn't preserve all colimits
  as a functor $\sSet \to \asSet$ but it does
  as a functor $\emptyset \slice \sSet \to \Delta_\aug[-1] \slice \asSet$.)

  The final statement holds since the inclusion $J \ito [0] \join J$
  is cofinal whenever $J$ is connected
  and $[0] \join J$ is the indexing category
  for wide pushouts if $J$ is discrete.
\end{proof}

\begin{corollary}\label{join-slice}
  For each simplicial set $K$ the functor
  $K \join \uvar \from \sSet \to K \slice \sSet$ has a right adjoint denoted by
  $(X \from K \to M) \mapsto X \uslice M$.
  ($X \slice M$ is called the \emph{slice of $M$ under $X$}.)
\end{corollary}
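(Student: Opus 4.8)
The plan is to deduce this from the adjoint functor theorem. Since $\sSet$ is a category of presheaves it is locally presentable, and hence so is the slice category $K \slice \sSet$ for any simplicial set $K$. By \Cref{join-colimits} the functor $K \join \uvar \from \sSet \to K \slice \sSet$ preserves all small colimits, so it is a cocontinuous functor between locally presentable categories and therefore admits a right adjoint. This right adjoint is what we denote by $M \mapsto (X \uslice M)$ for $X \from K \to M$ an object of $K \slice \sSet$.

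As a complement I would also record the explicit description of the adjoint, which is useful in practice. For a simplicial map $X \from K \to M$, one defines $X \uslice M$ by declaring an $n$-simplex to be a simplicial map $K \join \simp{n} \to M$ whose restriction along the canonical inclusion $K \ito K \join \simp{n}$ is $X$, with the simplicial operators acting through $\uvar \join \phi$ for $\phi$ a morphism of $\Delta$. To see that this has the correct universal property one expresses an arbitrary simplicial set $L$ as a colimit of representables and uses \Cref{join-colimits} to identify $K \join L$ with the corresponding colimit in $K \slice \sSet$; a natural map $L \to X \uslice M$ then amounts to a compatible family of maps $K \join \simp{n} \to M$ over $K$ indexed by the simplices of $L$, i.e.\ to a map $K \join L \to M$ in $K \slice \sSet$.

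The only delicate point is that $K \join L$ fails to be representable even for $L$ representable, so the explicit verification genuinely relies on commuting $K \join \uvar$ past the canonical colimit presentation of $L$ — precisely the content of \Cref{join-colimits}. Taking the adjoint functor theorem route absorbs this into standard machinery, so I would present that as the main argument and treat the explicit formula as a subsequent unwinding.
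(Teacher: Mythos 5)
Your proposal is correct and in substance matches the paper's proof, which simply observes that $K \join \uvar$ is a colimit-preserving functor on a presheaf category (via \Cref{join-colimits}) and writes down the right adjoint explicitly as $(X \uslice M)_m = K \slice \sSet(K \join \simp{m}, M)$ --- exactly the formula you give as your ``complement''. Leading with the adjoint functor theorem for locally presentable categories is a slightly heavier packaging of the same idea, but the key input and the resulting description are identical.
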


\begin{proof}
  Since $K \join \uvar$ is a colimit preserving functor on a category
  of presheaves its right adjoint is given by an explicit formula
  $(X \uslice M)_m = K \slice \sSet(K \join \simp{m}, M)$.
\end{proof}

\begin{lemma}\label{inner-isofibration-slice}
  Let $P \from \qcat{C} \fto \qcat{D}$ be a inner isofibration
  of quasicategories and $X \from K \to \qcat{C}$ a~diagram.
  Then the induced map $X \uslice \qcat{C} \to PX \uslice \qcat{D}$
  is an inner isofibration.
  In particular, $X \uslice \qcat{C}$ is a quasicategory.
\end{lemma}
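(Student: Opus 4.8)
The plan is to verify directly that $q := (X \uslice \qcat{C} \to PX \uslice \qcat{D})$ has the \rlp{} \wrt{} the inner horn inclusions $\horn{m,i} \ito \simp{m}$ (for $0 < i < m$) and \wrt{} $\simp{0} \ito \nwiso$, which is exactly what it means for $q$ to be an inner isofibration. I would transpose along the adjunction $K \join \uvar \dashv (X \uslice \uvar)$ of \Cref{join-slice}: writing $j \from A \ito B$ for one of these two maps, a lifting problem of $j$ against $q$ becomes the problem of extending a given map $K \join A \to \qcat{C}$ over $K \join B$ (the map $K \join j$ is again a monomorphism, by the explicit formula for the join on simplices) so that it lies over a prescribed map $K \join B \to \qcat{D}$.

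I would then dispose of both cases by the same two-step move. First, restrict the given map $K \join B \to \qcat{D}$ to the ``new'' copy $B = \emptyset \join B \ito K \join B$; its further restriction to $A$ agrees with that of the given map $K \join A \to \qcat{C}$. When $A \ito B$ is an inner horn inclusion, $P$ is an inner fibration, so the resulting horn in $\qcat{C}$ can be filled over $\qcat{D}$; when $A \ito B$ is $\simp{0} \ito \nwiso$, $P$ has the \rlp{} \wrt{} $\simp{0} \ito \nwiso$ because it is an inner isofibration, so the given equivalence in $\qcat{D}$ lifts through $P$ with its source prescribed. Either way this produces a map $B \to \qcat{C}$ over $B \to \qcat{D}$ whose restriction to $A$ is the given one, and it glues with $K \join A \to \qcat{C}$ along $A$ to a map $(K \join A) \cup_A B \to \qcat{C}$ lying over $(K \join A) \cup_A B \to \qcat{D}$.

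The second step is to extend $(K \join A) \cup_A B \to \qcat{C}$ over all of $K \join B$, compatibly over $\qcat{D}$. Here the inclusion $(K \join A) \cup_A B \ito K \join B$ is the pushout-join of $\emptyset \ito K$ with $j = (A \ito B)$, and in both cases $j$ is \emph{left anodyne}: an inner horn inclusion $\horn{m,i} \ito \simp{m}$ with $0 < i < m$ is left anodyne, and $\simp{0} \ito \nwiso$ is left anodyne because filling the outer horns $\horn{n,0} \ito \simp{n}$ one dimension at a time builds $\nwiso$ from its vertex $0$ by pushouts of such inclusions. By the standard fact that the join of a monomorphism with a left anodyne map is inner anodyne (\cite{jo}; see also \cite{l}), the inclusion $(K \join A) \cup_A B \ito K \join B$ is inner anodyne, and since $P$ is an inner fibration the desired extension exists. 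This completes the verification that $q$ is an inner isofibration.

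I expect the only genuinely non-formal point to be the join/left-anodyne lemma just invoked (in the variance that yields an \emph{inner} anodyne map), together with the observation that $\simp{0} \ito \nwiso$ is left anodyne; the rest is bookkeeping with the adjunction. The concluding ``in particular $X \uslice \qcat{C}$ is a quasicategory'' is the special case $\qcat{D} = \simp{0}$, where $X \uslice \simp{0} \iso \simp{0}$.
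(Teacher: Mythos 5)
Your argument is correct, and it is essentially the standard proof hiding inside the paper's one-line citation of Joyal: the paper simply invokes \cite{jo}*{Theorem 3.19(i) and Proposition 4.10}, whose content is exactly the pushout-join lemma (monomorphism joined with a left anodyne map is inner anodyne) together with the observation that both $\horn{m,i} \ito \simp{m}$ for $0<i<m$ and $\simp{0} \ito \nwiso$ are left anodyne. Your cell-by-cell filtration of $\nwiso$ by the nondegenerate simplices starting at $0$, each attached along $\horn{n,0}$, correctly establishes the only point that is not pure bookkeeping.
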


\begin{proof}
  This follows from \cite{jo}*{Theorem 3.19(i) and Proposition 4.10}.
\end{proof}

For any simplicial set $K$ we define the \emph{under-cone} on $K$
as $K^\ucone = K \join \simp{0}$.

\begin{definition}
  Let $\qcat{C}$ be a quasicategory and let $X \from K \to \qcat{C}$
  be any simplicial map
  (which we consider as a $K$-indexed diagram in $\qcat{C}$).
  \begin{enumerate}[label=\dfn]
  \item A \emph{cone under $X$} is a diagram
    $S \from K^\ucone \to \qcat{C}$ \st{} $S|K = X$.
  \item
    A cone $S$ under $X$ is \emph{universal} or a \emph{colimit of $X$}
    if for any $m > 0$ and any diagram of solid arrows
    \begin{ctikzpicture}
      \matrix [diagram]
      {
        |(Kb)| K \join \bdsimp{m} & |(C)| \qcat{C} \\
        |(Ks)| K \join \simp{m} \\
      };

      \draw[->] (Kb) to node[above] {$U$} (C);
      \draw[->] (Kb) to (Ks);

      \draw[->,dashed] (Ks) to (C);
    \end{ctikzpicture}
    where $U|K^\ucone = S$ there exists a dashed arrow
    making the diagram commute.
  \item An \emph{initial object} of $\qcat{C}$ is a colimit of
    the unique empty diagram in $\qcat{C}$.
  \item A simplicial map $f \from K \to L$
    is \emph{cofinal} if for every quasicategory $\qcat{C}$
    and every universal cone $S \from L^\ucone \to \qcat{C}$ the induced cone
    $S f^\ucone$ is also universal.
  \item
    The quasicategory $\qcat{C}$ is \emph{finitely cocomplete}
    if for every finite simplicial set $K$
    every diagram $K \to \qcat{C}$ has a colimit.
  \item
    A functor $F \from \qcat{C} \to \qcat{D}$ between
    finitely cocomplete quasicategories is \emph{exact}
    (or \emph{preserves finite colimits}) if for every finite simplicial set $K$
    and every universal cone $S \from K^\ucone \to \qcat{C}$
    the cone $FS$ is also universal.
  \end{enumerate}
\end{definition}

For any quasicategory $\qcat{C}$ and objects $x, y \in \qcat{C}$
it is possible to construct the \emph{mapping space} $\qcat{C}(x,y)$,
though there is no preferred such construction.
A variety of (equivalent) possibilities is discussed in \cite{ds}.
Then an object $x$ is initial \iff{} for every $y$
the mapping space $\qcat{C}(x,y)$ is contractible
(see \cite{l}*{Proposition 1.2.12.4}) and the next lemma allows us to translate
this observation to general colimits.
However, it turns out that the definition given above is more convenient.

\begin{lemma}\label{colim-init}
  A cone $S$ under $X$ is universal \iff{} it is an initial object
  of $X \uslice \qcat{C}$.
\end{lemma}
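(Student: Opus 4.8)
The plan is to read the lemma off from the adjunction $K \join \uvar \dashv X \uslice \uvar$ of \Cref{join-slice} by transposing the lifting conditions that define the two sides. The one consequence of that adjunction I need is that, for \emph{every} simplicial set $L$, it supplies a natural bijection
\[ \sSet(L, X \uslice \qcat{C}) \iso \set{f \from K \join L \to \qcat{C}}{f|K = X} \]
(the case $L = \simp{m}$ being the defining formula for the simplices of $X \uslice \qcat{C}$), where $f|K$ denotes restriction along the canonical inclusion $K \ito K \join L$. For $L = \simp{0}$ this records that the objects of $X \uslice \qcat{C}$ are exactly the cones under $X$; and since $K \join \uvar$ is a functor it sends $\bdsimp{m} \ito \simp{m}$ to $K \join \bdsimp{m} \ito K \join \simp{m}$, so for $L \in \{\bdsimp{m}, \simp{m}\}$ the bijection identifies a filler of a map $\bdsimp{m} \to X \uslice \qcat{C}$ along $\bdsimp{m} \ito \simp{m}$ with a filler of its transpose $K \join \bdsimp{m} \to \qcat{C}$ along $K \join \bdsimp{m} \ito K \join \simp{m}$.

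Now I would simply unwind both sides. By the preceding definition, $S$ is an initial object of $X \uslice \qcat{C}$ exactly when it is a colimit of the empty diagram in $X \uslice \qcat{C}$, i.e.\ a vertex $S$ of $X \uslice \qcat{C}$ --- equivalently, a cone under $X$ by the above --- \st{} for every $m > 0$ any $U \from \bdsimp{m} \to X \uslice \qcat{C}$ whose restriction to the distinguished vertex $\emptyset^\ucone = \simp{0}$ is $S$ extends along $\bdsimp{m} \ito \simp{m}$. Transposing across the bijection above turns this condition into: for every $m > 0$, any $U \from K \join \bdsimp{m} \to \qcat{C}$ with $U|K = X$ and $U|K^\ucone = S$ extends along $K \join \bdsimp{m} \ito K \join \simp{m}$. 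This is verbatim the condition in the preceding definition for the cone $S$ to be universal, so $S$ is universal \iff{} $S$ is an initial object of $X \uslice \qcat{C}$, as claimed. (That $X \uslice \qcat{C}$ is a quasicategory, so that ``initial object'' is meaningful here, is part of \Cref{inner-isofibration-slice}.)

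The only step needing genuine care is matching up the base vertices. On the $X \uslice \qcat{C}$ side the notion ``initial object'' uses the vertex inclusion $\emptyset^\ucone = \simp{0} \ito \emptyset \join \bdsimp{m} = \bdsimp{m}$; its transpose under $K \join \uvar$ is an inclusion $K^\ucone = K \join \simp{0} \ito K \join \bdsimp{m}$, and one must check this is the very inclusion built into the definition of ``colimit of $X$'' on the $\qcat{C}$ side. Both are $\id_K \join \iota$ for one and the same distinguished vertex $\iota \from \simp{0} \ito \bdsimp{m}$, which is forced by the join and under-cone conventions fixed earlier, so they agree. Everything else is a purely formal transposition across the adjunction.
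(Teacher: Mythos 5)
Your proof is correct and is exactly the argument the paper intends: its own proof consists of the single sentence that the lemma ``follows directly from \Cref{join-slice}'', and what you have written is precisely the transposition of the extension problems across that adjunction, including the one point genuinely worth checking (that the distinguished cone vertices match on both sides).
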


\begin{proof}
  This follows directly from \Cref{join-slice}.
\end{proof}

In the remainder of this subsection we discuss the counterparts of
classical statements of category theory saying that
colimits are essentially unique and invariant under equivalences.
For a quasicategory $\qcat{C}$ and a diagram $X \from K \to \qcat{C}$
we let $(X \uslice \qcat{C})^\univ$ denote the simplicial subset
of $X \uslice \qcat{C}$ consisting of these simplices
whose all vertices are universal.

\begin{lemma}
  The simplicial set $(X \uslice \qcat{C})^\univ$ is empty
  or a contractible Kan complex.
\end{lemma}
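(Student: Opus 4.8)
The plan is to recognize $(X\uslice\qcat{C})^\univ$ as the full sub-simplicial-set of the quasicategory $\qcat{M}:=X\uslice\qcat{C}$ (a quasicategory by \Cref{inner-isofibration-slice}) spanned by its initial objects, via \Cref{colim-init}, and then to prove the general fact that the full subcategory of a quasicategory on its initial objects is either empty or a contractible Kan complex. Since $(X\uslice\qcat{C})^\univ$ is full in $\qcat{M}$, a simplex of $\qcat{M}$ lies in it exactly when all of its vertices are initial objects of $\qcat{M}$; if there are none, we are in the first case, so from now on I assume one exists.

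The first substantive point is that every morphism $f\from a\to b$ between initial objects of $\qcat{M}$ is an equivalence. Since $a$ and $b$ are initial, for every object $y$ the mapping spaces $\qcat{M}(a,y)$ and $\qcat{M}(b,y)$ are contractible, so the hom-sets $\Ho\qcat{M}(a,y)$ and $\Ho\qcat{M}(b,y)$ are singletons; hence $a$ and $b$ are initial objects of $\Ho\qcat{M}$, the class $[f]$ is an isomorphism there, and therefore $f$ is an equivalence in $\qcat{M}$ by \cite{jo}*{Proposition 4.22}. Using this I would verify that $(X\uslice\qcat{C})^\univ$ is a Kan complex: inner horns $\horn{m,i}\to(X\uslice\qcat{C})^\univ$ are filled inside $\qcat{M}$, and for $m\ge 2$ every vertex of the filling $\simp{m}$ already occurs in the horn, so the filler stays in the full subcomplex; one-dimensional outer horns are filled by degeneracies; and a higher outer horn $\horn{m,0}\to(X\uslice\qcat{C})^\univ$ (resp.\ $\horn{m,m}\to(X\uslice\qcat{C})^\univ$) sends its edge $\Delta\{0,1\}$ (resp.\ $\Delta\{m-1,m\}$)---which for $m\ge 2$ lies in the horn---to a morphism between initial objects, hence to an equivalence by the previous point, so it is a special outer horn in $\qcat{M}$ and can be filled via \Cref{special-horns} applied to the inner isofibration $\qcat{M}\fto\simp{0}$, the filler again landing in $(X\uslice\qcat{C})^\univ$.

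Finally I would prove contractibility. Now $(X\uslice\qcat{C})^\univ$ is a nonempty Kan complex in which, by fullness, the mapping space $(X\uslice\qcat{C})^\univ(a,b)$ coincides with $\qcat{M}(a,b)$ for all objects $a,b$, and this is contractible because $a$ is initial. In particular any two vertices are joined by an edge, so $\pi_0=\ast$, while $\pi_n((X\uslice\qcat{C})^\univ,a)\iso\pi_{n-1}(\qcat{M}(a,a))=\ast$ for $n\ge 1$, since in a Kan complex based loop spaces compute mapping spaces; a Kan complex with trivial homotopy groups is contractible. The main obstacle I anticipate is purely the bookkeeping in the outer-horn step---pinning down the relevant edge and checking that each filler stays inside the full subcomplex---while everything else reduces to \Cref{inner-isofibration-slice}, \Cref{colim-init}, \Cref{special-horns}, and the standard mapping-space characterization of initial objects.
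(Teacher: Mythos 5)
Your proof is correct, but it takes a genuinely different route from the paper's. The paper's argument is a two-line transposition: a simplicial set is empty or a contractible Kan complex \iff{} it has the \rlp{} \wrt{} all boundary inclusions $\bdsimp{m} \ito \simp{m}$ with $m > 0$, and under the join--slice adjunction such a lifting problem for $(X \uslice \qcat{C})^\univ$ becomes an extension problem $K \join \bdsimp{m} \to \qcat{C} \rightsquigarrow K \join \simp{m} \to \qcat{C}$ whose restriction to $K^\ucone$ is a universal cone --- and that is solvable verbatim by the definition of a universal cone adopted in the paper (the fact that $\bdsimp{m}$ contains all vertices of $\simp{m}$ for $m>0$ guarantees the filler lands back in the subcomplex). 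You instead pass through \Cref{colim-init} to identify the subcomplex with the full subcategory of $X \uslice \qcat{C}$ on its initial objects and then run the standard higher-categorical argument: morphisms between initial objects are equivalences, special outer horns fill by \Cref{special-horns}, and contractibility follows from vanishing homotopy groups via contractible mapping spaces and Whitehead's theorem for Kan complexes. Your version is a valid proof of the more general statement that the full subcategory on the initial objects of any quasicategory is empty or a contractible Kan complex, independent of the particular \rlp{}-style definition of colimit; the price is that it imports the mapping-space characterization of initial objects (cited from Lurie but not proved here) together with the loop-space description of mapping spaces and the simplicial Whitehead theorem, none of which the paper's direct argument needs. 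The one point worth flagging is that the \rlp{}-style definition of universal cone was chosen precisely so that this lemma is immediate; recognizing that the lifting problem against $\bdsimp{m} \ito \simp{m}$ transposes directly would have saved you all of the machinery.
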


\begin{proof}
  A simplicial set is empty or a contractible Kan complex \iff{}
  it has the \rlp{} \wrt{} the boundary inclusions $\bdsimp{m} \ito \simp{m}$
  for all $m > 0$.
  For $(X \uslice \qcat{C})^\univ$ such lifting problems
  are equivalent to the lifting problems
  \begin{ctikzpicture}
    \matrix[diagram]
    {
      |(b)| K \join \bdsimp{m} & |(C)| \qcat{C} \\
      |(s)| K \join \simp{m} \\
    };

    \draw[->] (b) to node[above] {$U$} (C);
    \draw[->] (b) to (s);

    \draw[->,dashed] (s) to (C);
  \end{ctikzpicture}
  with $U|(K \join \{ i \})$ universal for each $i \in [m]$
  which have solutions by the definition of universal cones.
\end{proof}

\begin{corollary}\label{colim-equivalent}
  If $X \from K \to \qcat{C}$ is a diagram in a quasicategory
  and $S$ and $T$ are two universal cones under $X$,
  then they equivalent under $X$, i.e.\ as objects of $X \uslice \qcat{C}$.
\end{corollary}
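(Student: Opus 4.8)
The plan is to read this off directly from the preceding lemma. Since both $S$ and $T$ are universal cones under $X$, they are in particular vertices of the simplicial subset $(X \uslice \qcat{C})^\univ$, so this simplicial set is non-empty and therefore, by the lemma, a contractible Kan complex. In particular it has the \rlp{} \wrt{} $\bdsimp{1} \ito \simp{1}$, so I would first use this to produce an edge $e$ from $S$ to $T$ inside $(X \uslice \qcat{C})^\univ$.

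It remains to check that $e$ is an equivalence in $X \uslice \qcat{C}$, which is a quasicategory by \Cref{inner-isofibration-slice}. For this I would use that $(X \uslice \qcat{C})^\univ$ is a Kan complex, hence a quasicategory in which every morphism is an equivalence; thus $e$, regarded as a map $\simp{1} \to (X \uslice \qcat{C})^\univ$, extends along $\simp{1} \ito \nwiso$. Composing such an extension with the inclusion $(X \uslice \qcat{C})^\univ \ito X \uslice \qcat{C}$ yields a map $\nwiso \to X \uslice \qcat{C}$ restricting to $e$ on $\simp{1}$, which is exactly the condition for $e$ to be an equivalence in $X \uslice \qcat{C}$. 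Hence $S$ and $T$ are connected by an equivalence, i.e.\ they are equivalent as objects of $X \uslice \qcat{C}$, as asserted.

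I do not expect any genuine obstacle here: the whole weight of the argument is carried by the preceding lemma, and the only elementary point worth spelling out is that an equivalence in a sub-simplicial-set which happens to be a Kan complex remains an equivalence in the ambient quasicategory --- immediate from the extension-to-$\nwiso$ criterion, since the witnessing map $\nwiso \to (X \uslice \qcat{C})^\univ$ can simply be post-composed with the inclusion.
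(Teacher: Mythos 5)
Your argument is correct and is essentially the paper's own proof: both rest entirely on the preceding lemma that $(X \uslice \qcat{C})^\univ$ is a non-empty, hence contractible, Kan complex, and then extract a map $\nwiso \to X \uslice \qcat{C}$ restricted to $[S,T]$ on $\bdsimp{1}$. The only cosmetic difference is that the paper lifts against $\bdsimp{1} \ito \nwiso$ in a single step (phrased via the adjoint lifting problem $K \join \bdsimp{1} \ito K \join \nwiso$ against $\qcat{C}$), whereas you factor it through $\simp{1}$; the content is the same.
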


\begin{proof}
  The simplicial set $(X \uslice \qcat{C})^\univ$ is non-empty
  and thus a contractible Kan complex by the previous lemma.
  Hence it has the \rlp{} \wrt{} the inclusion $\bdsimp{1} \ito \nwiso$
  which translates to the lifting property
  \begin{ctikzpicture}
    \matrix[diagram]
    {
      |(b)| K \join \bdsimp{1} & |(C)| \qcat{C} \\
      |(E)| K \join \nwiso \\
    };

    \draw[->] (b) to node[above] {$[S, T]$} (C);
    \draw[->] (b) to (E);

    \draw[->,dashed] (E) to (C);
  \end{ctikzpicture}
  which yields an equivalence of $S$ and $T$.
\end{proof}

\begin{lemma}\label{init-equivalent}
  If $\qcat{C}$ is a quasicategory and $X$ and $Y$ are equivalent objects
  of $\qcat{C}$, then $X$ is initial \iff{} $Y$ is.
\end{lemma}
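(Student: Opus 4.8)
The plan is to unwind the definition of an initial object and then transport it along the given equivalence by filling one special outer horn. Recall that an initial object is a colimit of the empty diagram, so by the definition of a universal cone, $X$ (respectively $Y$) is initial \iff{} for every $m > 0$ every map $V \from \bdsimp{m} \to \qcat{C}$ carrying the vertex $0$ to $X$ (respectively $Y$) extends over $\simp{m}$. Since the asserted equivalence is symmetric in $X$ and $Y$, it suffices to prove that if $X$ is initial then so is $Y$; fix an equivalence $e \from X \to Y$.

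So fix $m > 0$ and $V \from \bdsimp{m} \to \qcat{C}$ sending the vertex $0$ to $Y$. First I would build a map $U \from \horn{m+1,0} \to \qcat{C}$ as follows. Identify the face of $\simp{m+1}$ opposite the vertex $0$ with $\simp{m}$ via the unique order isomorphism; under this identification $\bdsimp{m}$ becomes a simplicial subset of $\horn{m+1,0}$, and we let $U$ restrict to $V$ there (so $U$ carries the vertex $1$ to $Y$). It remains to define $U$ on the simplices of $\horn{m+1,0}$ that contain the vertex $0$: send $0$ to $X$ and the edge $\{0,1\}$ to $e$, and then extend over the remaining such simplices by induction on dimension. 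For such a simplex $\sigma$ the restriction of $U$ to $\bd\sigma$ is already defined --- the face of $\sigma$ omitting $0$ is part of $V$, while its faces containing $0$ have smaller dimension --- and $U$ carries the least vertex of $\sigma$ to $X$, so since $X$ is initial it extends over $\sigma$. This defines $U$.

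By construction $U$ is a special outer horn in the sense of \Cref{special-horns}: the horn index is $0$ and the restriction of $U$ to $\Delta\{0,1\}$ is the equivalence $e$. Applying \Cref{special-horns} to the inner isofibration $\qcat{C} \to \simp{0}$ yields an extension $\tilde U \from \simp{m+1} \to \qcat{C}$, and restricting $\tilde U$ to the face opposite the vertex $0$ gives, via the identification above, a map $\simp{m} \to \qcat{C}$ whose boundary is $V$. Hence $V$ extends over $\simp{m}$, so $Y$ is initial.

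The main obstacle I expect is the bookkeeping in the inductive construction of $U$: one must verify that the simplices of $\horn{m+1,0}$ containing the vertex $0$ can be filled one dimension at a time with compatible boundaries, and that at each step the relevant vertex really does map to $X$ so that the defining property of an initial object is applicable. The one subtlety not to overlook is that the edge $\{0,1\}$ must be sent to the equivalence $e$ specifically --- an arbitrary morphism out of an initial object need not be an equivalence --- so that $U$ is indeed a \emph{special} outer horn and \Cref{special-horns} applies.
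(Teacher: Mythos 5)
Your proof is correct and follows essentially the same route as the paper: build a special outer horn $\horn{m+1,0} \to \qcat{C}$ by coning off $V$ with the given equivalence on the edge $\{0,1\}$, filling the intermediate simplices inductively using the initiality of $X$, and then apply \Cref{special-horns}. The paper phrases the construction via the identification $\simp{0} \join \bdsimp{m} \iso \horn{m+1,0}$, but the argument is the same, including the key observation that the $\{0,1\}$-edge must be the equivalence for the horn to be special.
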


\begin{proof}
  Assume that $X$ is initial and let $U \from \bdsimp{m} \to \qcat{C}$
  be \st{} $U|\simp{0} = Y$.
  We can consider an equivalence from $X$ to $Y$ as a diagram
  $f \from \simp{0} \join \simp{0} \to \qcat{C}$.
  Then by the universal property of $X$ there is a diagram
  $\simp{0} \join \bdsimp{m}$ extending both $f$ and $U$.
  (We can iteratively choose extensions over $\simp{0} \join \simp{k}$
  for all faces $\simp{k} \ito \bdsimp{m}$.)
  This diagram is a special outer horn
  (under the isomorphism $\simp{0} \join \bdsimp{m} \iso \horn{m+1,0}$)
  and thus has a filler by \Cref{special-horns}.
  Therefore $U$ extends over $\simp{m}$ and hence $Y$ is initial.
\end{proof}

  \subsection{Homotopy theory of cocomplete quasicategories}

Our goal is to compare cofibration categories to quasicategories,
but we expect cofibration categories to correspond
to finitely cocomplete quasicategories, not to arbitrary ones.
In this subsection we will restrict the fibration structure
of \Cref{fibcat-of-quasicats} to the subcategory
of finitely cocomplete quasicategories and exact functors.

First, we need two lemmas about lifting colimits along inner isofibrations.

\begin{lemma}\label{q-pullback-colimits}
  Let
  \begin{ctikzpicture}
    \matrix [diagram]
    {
      |(P)| \qcat{P} & |(E)| \qcat{E} \\
      |(C)| \qcat{C} & |(D)| \qcat{D} \\
    };

    \draw[->] (P) to node[above] {$G$} (E);
    \draw[->] (P) to node[left]  {$Q$} (C);

    \draw[->]  (C) to node[below] {$F$} (D);
    \draw[fib] (E) to node[right] {$P$} (D);
  \end{ctikzpicture}
  be a pullback square of quasicategories where $P$ is an inner isofibration.
  Let $S \from K^\ucone \to \qcat{P}$ be a cone.
  If all $GS$, $QS$ and $PGS = FQS$ are universal, then so is $S$.
\end{lemma}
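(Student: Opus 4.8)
The plan is to reduce the statement to the special case where $K = \emptyset$ --- so that the cones in question are just objects and ``universal'' means ``initial'' --- and then to prove that case by pasting acyclic Kan fibrations.

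For the reduction, write $X = S|K \from K \to \qcat{P}$ for the underlying diagram, so that $S$ is a cone under $X$. By \Cref{colim-init}, $S$ is universal \iff{} it is an initial object of $X \uslice \qcat{P}$, and likewise $QS$, $GS$ and $PGS = FQS$ are universal \iff{} they are initial objects of $QX \uslice \qcat{C}$, $GX \uslice \qcat{E}$ and $FQX \uslice \qcat{D}$ respectively. The explicit formula $(Y \uslice M)_m = K \slice \sSet(K \join \simp{m}, M)$ from \Cref{join-slice} together with $\qcat{P} = \qcat{C} \times_{\qcat{D}} \qcat{E}$ yields an isomorphism
\begin{align*}
  X \uslice \qcat{P} \iso (QX \uslice \qcat{C})
  \times_{FQX \uslice \qcat{D}} (GX \uslice \qcat{E})
\end{align*}
compatible with the projections, and by \Cref{inner-isofibration-slice} the map $GX \uslice \qcat{E} \to FQX \uslice \qcat{D}$ occurring here is an inner isofibration. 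Hence it suffices to prove: \emph{if $\qcat{P} = \qcat{C} \times_{\qcat{D}} \qcat{E}$ is a pullback of quasicategories with $P$ an inner isofibration and $S \in \qcat{P}$ is such that $QS$, $GS$ and $PGS$ are initial, then $S$ is initial.}

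To prove this, recall that an object $a$ of a quasicategory $\qcat{A}$ is initial \iff{} the projection $a \uslice \qcat{A} \to \qcat{A}$ is an acyclic Kan fibration, and that an initial object $GS$ of $\qcat{E}$ is moreover $P$-relative initial: the map $GS \uslice \qcat{E} \to \qcat{E} \times_{\qcat{D}} (PGS \uslice \qcat{D})$ assembled from the projection and the inner isofibration of \Cref{inner-isofibration-slice} is an acyclic Kan fibration (\cite{l}; see also the last paragraph). Slicing commutes with the pullback exactly as before, giving $S \uslice \qcat{P} \iso (QS \uslice \qcat{C}) \times_{PGS \uslice \qcat{D}} (GS \uslice \qcat{E})$. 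Since $PGS$ is initial, $PGS \uslice \qcat{D} \to \qcat{D}$ is an acyclic Kan fibration; base changing the $P$-relative initiality map along $QS \uslice \qcat{C} \to PGS \uslice \qcat{D}$ then exhibits $S \uslice \qcat{P} \to (QS \uslice \qcat{C}) \times_{\qcat{D}} \qcat{E}$ as an acyclic Kan fibration, and base changing $QS \uslice \qcat{C} \to \qcat{C}$ (acyclic since $QS$ is initial) exhibits $(QS \uslice \qcat{C}) \times_{\qcat{D}} \qcat{E} \to \qcat{P}$ as one as well. Their composite is the projection $S \uslice \qcat{P} \to \qcat{P}$, which is therefore an acyclic Kan fibration, i.e.\ $S$ is initial.

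The only step that is not formal is that an initial object is $P$-relative initial. If one prefers not to quote this, it unwinds --- via the characterisation above and \Cref{join-slice} --- to the assertion that for an inner isofibration $P \from \qcat{E} \fto \qcat{D}$ with $z \in \qcat{E}$ initial and any $m \ge 1$, every lifting problem of $\bdsimp{m} \ito \simp{m}$ against $P$ whose top map $\bdsimp{m} \to \qcat{E}$ sends the $0$th vertex to $z$ has a solution. I would prove this by induction on $m$. For $m = 1$ one uses initiality of $z$ to obtain some lift of the relevant edge and then corrects it by filling an inner horn, using that $Pz$ is initial. For $m \ge 2$ one uses initiality of $z$ to choose an arbitrary filler of the given boundary, and then transports it --- along a homotopy in $\qcat{D}$ between its image and the prescribed $m$-simplex, which exists because $Pz$ is initial --- back up to $\qcat{E}$; this transport is set up as the filling of a special outer horn $\horn{m+1,0} \to \qcat{E}$ whose $\Delta\{0,1\}$-edge has been arranged to be degenerate (hence an equivalence), so that \Cref{special-horns} applies together with lower instances of the induction. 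I expect this transport step to be the main obstacle, since the naive lifting problem it encodes is left anodyne rather than inner anodyne while $P$ is only an inner isofibration; by contrast the reduction to initial objects and the base case $m = 1$ are routine.
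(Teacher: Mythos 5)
Your reduction to initial objects via the pullback of slice quasicategories is exactly the paper's first step. For the initial-object case you then take a different, but workable, route: you assemble the projection $S \uslice \qcat{P} \to \qcat{P}$ from acyclic Kan fibrations obtained by base change, the non-formal ingredient being that $GS$ is $P$-relatively initial. The paper instead argues directly on simplices: given $U \from \bdsimp{m} \to \qcat{P}$ with $0$th vertex $S$, it fills $QU$ and $GU$ using initiality of $QS$ and $GS$, and then corrects the filler $V_{\qcat{E}}$ of $GU$ so that its image under $P$ becomes $FV_{\qcat{C}}$. The correction is where the arguments genuinely diverge. The paper degenerates $V_{\qcat{E}}$ along $\dgn_1$ (not $\dgn_0$), replaces the $1$st face of the image in $\qcat{D}$ by $FV_{\qcat{C}}$, fills the resulting $\bdsimp{m+1}$ using initiality of $PGS$, and lifts the horn $\horn{m+1,1}$ against $P$; since $0 < 1 < m+1$ this horn is \emph{inner}, so the step you single out as ``the main obstacle'' simply does not arise. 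Your variant --- degenerate along $\dgn_0$ and lift the special outer horn $\horn{m+1,0}$ whose $\Delta\{0,1\}$-edge is degenerate --- does work, because \Cref{special-horns} applies to inner isofibrations; the ``lower instances of the induction'' you anticipate are not needed, as a single special-horn filling already produces the required face. What the paper's choice of degeneracy buys is independence from \Cref{special-horns} at this point.

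One correction to your formulation: the claim ``an initial object $GS$ of $\qcat{E}$ is $P$-relatively initial'' is false without the additional hypothesis that $PGS$ is initial in $\qcat{D}$ (already for $m = 1$: an edge $PGS \to Pe$ need not be homotopic to the image of any edge $GS \to e$ when $PGS$ is not initial). Your own sketch does use initiality of $Pz$ in both the base case and the inductive step, and the hypothesis is available here since $PGS = FQS$ is assumed universal, so this is a slip in the statement of the auxiliary claim rather than a gap in the argument; but note that the initiality of $PGS$ is then consumed entirely inside that claim, not by the base changes in your pasting paragraph where you invoke it.
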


\begin{proof}
  Under these assumptions the square
  \begin{ctikzpicture}
    \matrix [diagram]
    {
      |(P)| X \uslice \qcat{P} & |(E)| GX \uslice \qcat{E} \\
      |(C)| QX \uslice\qcat{C} & |(D)| PGX \uslice \qcat{D} \\
    };

    \draw[->] (P) to node[above] {$G$} (E);
    \draw[->] (P) to node[left]  {$Q$} (C);

    \draw[->]  (C) to node[below] {$F$} (D);
    \draw[fib] (E) to node[right] {$P$} (D);
  \end{ctikzpicture}
  (where $X = S|K$) is also a pullback along an inner isofibration
  by \Cref{inner-isofibration-slice}.
  Hence it suffices to verify the conclusion for initial objects.

  Thus assume that $K = \emptyset$ and let $m > 0$
  and $U \from \bdsimp{m} \to \qcat{P}$ be \st{} $U|\simp{0} = S$.
  Then we have
  \begin{align*}
    GU|\simp{0} = GS \text{ and } QU|\simp{0} = QS
  \end{align*}
  and since both $GS$ and $QS$ are initial we can find
  $V_{\qcat{E}} \in \qcat{E}_m$ and $V_{\qcat{C}} \in \qcat{C}_m$
  \st{} $V_{\qcat{E}}|\bdsimp{m} = GU$ and $V_{\qcat{C}}|\bdsimp{m} = QU$.
  Next, define $\tilde V \from \bdsimp{m+1} \to \qcat{D}$
  by replacing the $1$st face of $PV_{\qcat{E}}\dgn_1|\bdsimp{m+1}$
  with $FV_{\qcat{C}}$ and $\tilde W \from \horn{m+1,1} \to \qcat{E}$
  by setting it to $V_{\qcat{E}}\dgn_1|\horn{m+1,1}$.

  By the assumption $PGS$ is initial and $\tilde V|\simp{0} = PGS$
  so $\tilde V$ extends to $V \in \qcat{D}_{m+1}$.
  Then we have a commutative square
  \begin{ctikzpicture}
    \matrix [diagram]
    {
      |(h)| \horn{m+1,1} & |(E)| \qcat{E} \\
      |(s)| \simp{m+1}   & |(D)| \qcat{D} \\
    };

    \draw[->] (h) to node[above] {$\tilde W$} (E);

    \draw[->] (h) to (s);

    \draw[->]  (s) to node[below] {$V$} (D);
    \draw[fib] (E) to node[right] {$P$} (D);
  \end{ctikzpicture}
  which admits a lift $W$ since $P$ is an inner isofibration and $0 < 1 < m+1$.
  We have $FV_{\qcat{C}} = PW\face_1$ and thus $(V_{\qcat{C}}, W\face_1)$
  is an $m$-simplex of $\qcat{P}$ whose boundary is $U$.
  Hence $S$ is initial.
\end{proof}

\begin{lemma}\label{q-lifting-colimits}
  Let $P \from \qcat{C} \fto \qcat{D}$ be an inner isofibration,
  $X \from K \to \qcat{C}$ a diagram and $T \from K^\ucone \to \qcat{D}$
  a colimit of $PX$.
  If $X$ has a colimit in $\qcat{C}$ which is preserved by $P$,
  then there exists a colimit $S \from K^\ucone \to \cat{C}$ of $X$
  \st{} $PS = T$.
\end{lemma}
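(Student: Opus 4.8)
The plan is to transport a given colimit of $X$ across the equivalence between colimits of $PX$. First I would use the hypothesis to fix a colimit $S' \from K^\ucone \to \qcat{C}$ of $X$; since $P$ preserves it, $PS'$ is a colimit of $PX$. Now $PS'$ and $T$ are two colimits of the same diagram, so by \Cref{colim-equivalent} they are equivalent as objects of $PX \uslice \qcat{D}$. Using the vertex-swapping automorphism of $\nwiso$, I would choose an equivalence $e \from \nwiso \to PX \uslice \qcat{D}$ whose two vertices are $PS'$ and $T$.

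The next step is to lift $e$ to an equivalence starting at $S'$. By \Cref{inner-isofibration-slice} the induced map $X \uslice \qcat{C} \to PX \uslice \qcat{D}$ is an inner isofibration (and both slices are quasicategories), so in particular it has the \rlp{} \wrt{} $\simp{0} \ito \nwiso$. Applying this to the square whose top edge picks out $S'$ (lying over $PS'$) and whose bottom edge is $e$ produces a lift $\tilde{e} \from \nwiso \to X \uslice \qcat{C}$. Let $S$ be the value of $\tilde{e}$ at the other vertex; then $P S = T$ on the nose, and $\tilde{e}$ exhibits an equivalence between $S'$ and $S$ in $X \uslice \qcat{C}$.

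Finally I would conclude by invoking essential uniqueness of colimits in the form of initial objects: by \Cref{colim-init} the cone $S'$ is an initial object of $X \uslice \qcat{C}$, hence so is the equivalent object $S$ by \Cref{init-equivalent}, which by \Cref{colim-init} again says that $S$ is a colimit of $X$. Combined with $PS = T$ this is exactly the assertion.

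There is no serious obstacle here; the only point requiring slight care is to arrange that the transported cone maps to $T$ \emph{strictly} rather than merely up to equivalence, which is handled by performing the lift at the level of the slice isofibration rather than transporting objects along an abstract equivalence after the fact.
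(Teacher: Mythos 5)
Your proposal is correct and follows essentially the same route as the paper: pick a colimit of $X$, use \Cref{colim-equivalent} to connect its image to $T$ by an equivalence under $PX$, lift that equivalence along the inner isofibration $X \uslice \qcat{C} \to PX \uslice \qcat{D}$ of \Cref{inner-isofibration-slice}, and conclude with \Cref{colim-init,init-equivalent}. The paper's proof is just a terser version of the same argument.
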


\begin{proof}
  Let $\tilde S \from K^\ucone \to \qcat{C}$ be some colimit of $X$.
  Since both $T$ and $P \tilde S$ are universal, we have a simplicial map
  $U \from K \join \nwiso \to \qcat{D}$ \st{}
  $U|(K \join \bdsimp{1}) = [T, P \tilde S]$ by \Cref{colim-equivalent}.
  The conclusion now follows
  from \Cref{inner-isofibration-slice,init-equivalent}.
\end{proof}

The homotopical content of the next proposition is the same as
that of \cite{l}*{Lemma 5.4.5.5}.
However, we need a stricter point-set level statement.

\begin{proposition}\label{pullback-of-fc-qcats}
  Let $F \from \qcat{C} \to \qcat{D}$ and $P \from \qcat{E} \fto \qcat{D}$
  be exact functors between finitely cocomplete quasicategories
  with $P$ an inner isofibration.
  Then a pullback of $P$ along $F$ exists in the category of finitely cocomplete
  quasicategories and exact functors.
\end{proposition}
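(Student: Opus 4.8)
The plan is to mimic the proof of \Cref{pullback-of-cofcats} as closely as possible, replacing the invocations of the ``plain category'' special cases of \Cref{q-pullback-colimits,q-lifting-colimits} by the full quasicategorical statements, which are exactly the two lemmas proven just above. First I would form the pullback $\qcat{P}$ of $P$ along $F$ in the category of all simplicial sets. Since $P$ is an inner isofibration (and hence an inner fibration) this pullback is again a quasicategory: inner fibrations are stable under pullback and $\qcat{C} \to \simp{0}$ composed appropriately shows $\qcat{P} \to \simp{0}$ is an inner fibration. Write $G \from \qcat{P} \to \qcat{E}$ and $Q \from \qcat{P} \to \qcat{C}$ for the two projections.

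Next I would verify that $\qcat{P}$ is finitely cocomplete. Given a finite simplicial set $K$ and a diagram $X \from K \to \qcat{P}$, consider $QX \from K \to \qcat{C}$; since $\qcat{C}$ is finitely cocomplete it has a colimit, and since $F$ is exact $FQX = PGX$ has a colimit in $\qcat{D}$, preserved by $F$. By \Cref{q-lifting-colimits} (applied to $P$, the diagram $GX$, and the chosen colimit of $PGX$), using that $\qcat{E}$ is finitely cocomplete and $P$ exact, we can choose a colimit $T$ of $GX$ in $\qcat{E}$ lying strictly over the chosen colimit of $PGX$. The pair of cones $(\text{colim } QX, T)$ assembles to a cone $S \from K^\ucone \to \qcat{P}$ because it lies over a single cone in $\qcat{D}$ (here one uses that $\qcat{P}_m = \qcat{C}_m \times_{\qcat{D}_m} \qcat{E}_m$ level-wise, so a compatible pair of cones is literally a cone into $\qcat{P}$). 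Then \Cref{q-pullback-colimits} shows $S$ is universal. This simultaneously establishes cocompleteness of $\qcat{P}$ and that $G$ and $Q$ preserve the colimits so constructed, hence are exact.

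Finally I would check the universal property in the category of finitely cocomplete quasicategories and exact functors. Given a finitely cocomplete quasicategory $\qcat{F}$ with exact functors to $\qcat{C}$ and $\qcat{E}$ agreeing over $\qcat{D}$, the universal property of the pullback of simplicial sets yields a unique simplicial map $\qcat{F} \to \qcat{P}$, and it remains to see it is exact. Given a finite $K$ and a universal cone in $\qcat{F}$, its images in $\qcat{C}$ and $\qcat{E}$ are universal since those functors are exact, and so is its image in $\qcat{D}$; then \Cref{q-pullback-colimits} applies to show the image cone in $\qcat{P}$ is universal.

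The main obstacle is the bookkeeping in the cocompleteness step: one must be careful that \Cref{q-lifting-colimits} only delivers \emph{some} colimit of $GX$ over a \emph{prescribed} colimit of $PGX$, so one has to feed it precisely the colimit $F(\text{colim}\,QX)$ (using exactness of $F$) rather than an arbitrary one, and then check that the two resulting cones genuinely glue to a cone in $\qcat{P}$ at the level of simplicial sets before \Cref{q-pullback-colimits} can be invoked. Everything else is a direct transcription of the argument for cofibration categories, now one universe up in complexity.
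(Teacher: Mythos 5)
Your proposal is correct and follows essentially the same route as the paper: form the pullback in the category of quasicategories, use \Cref{q-lifting-colimits} to lift a colimit of $GX$ over the image $FS$ of a chosen colimit $S$ of $QX$, conclude universality of the glued cone via \Cref{q-pullback-colimits}, and deduce exactness of the induced functor from the same lemma. The extra care you take about why $\qcat{P}$ is a quasicategory and why the two cones genuinely assemble is sound and merely makes explicit what the paper leaves implicit.
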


\begin{proof}
  Form a pullback of $P$ along $F$ in the category of quasicategories.
  \begin{ctikzpicture}
    \matrix [diagram]
    {
      |(P)| \qcat{P} & |(E)| \qcat{E} \\
      |(C)| \qcat{C} & |(D)| \qcat{D} \\
    };

    \draw[->] (P) to node[above] {$G$} (E);
    \draw[->] (P) to node[left]  {$Q$} (C);

    \draw[->]  (C) to node[below] {$F$} (D);
    \draw[fib] (E) to node[right] {$P$} (D);
  \end{ctikzpicture}
  We will check that this square is also a pullback
  in the category of finitely cocomplete quasicategories and exact functors.

  First, we verify that $\qcat{P}$ has finite colimits.
  Let $X \from K \to \qcat{P}$ be a diagram with $K$ finite.
  Let $S \from K^\ucone \to \qcat{C}$ be a colimit of $QX$,
  then $FS$ is a colimit of $FQX = PGX$ in $\qcat{D}$.
  \Cref{q-lifting-colimits} implies that we can choose a colimit $T$ of $GX$
  in $\qcat{E}$ so that $PT = FS$.
  Then it follows by \Cref{q-pullback-colimits} that $(S, T)$
  is a colimit of $X = (QX, GX)$ in $\qcat{P}$.

  It remains to see that given a square
  \begin{ctikzpicture}
    \matrix [diagram]
    {
      |(F)| \qcat{F} & |(E)| \qcat{E} \\
      |(C)| \qcat{C} & |(D)| \qcat{D} \\
    };

    \draw[->] (F) to (E);
    \draw[->] (F) to (C);

    \draw[->] (C) to node[below] {$F$} (D);
    \draw[->] (E) to node[right] {$P$} (D);
  \end{ctikzpicture}
  of finitely cocomplete quasicategories and exact functors,
  the induced functor $\qcat{F} \to \qcat{P}$ preserves finite colimits.
  Indeed, this follows directly from \Cref{q-pullback-colimits}.
\end{proof}

\begin{theorem}\label{fibcat-of-fc-quasicats}
  The category of small finitely cocomplete quasicategories with exact functors
  as morphisms, categorical equivalences as weak equivalences
  and (exact) inner isofibrations as fibrations is a fibration category.
\end{theorem}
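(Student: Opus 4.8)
The plan is to verify that the opposite category is a cofibration category, i.e.\ to check the duals of axioms (C0)--(C5) from \Cref{ch:cof-cats}, mirroring the proof of \Cref{fibcat-of-cofcats}; the main inputs are \Cref{fibcat-of-quasicats}, \Cref{pullback-of-fc-qcats} and the colimit lemmas \Cref{q-pullback-colimits,q-lifting-colimits,colim-equivalent,init-equivalent,colim-init}. Axiom (C0)\textop (the ``2 out of 6'' property) is inherited verbatim from \Cref{fibcat-of-quasicats}, since categorical equivalences and inner isofibrations between finitely cocomplete quasicategories are exactly those of the ambient fibration category of all quasicategories. For (C1)\textop, an isomorphism of simplicial sets is an acyclic Kan fibration, hence a categorical equivalence and an inner isofibration, and it is trivially exact, so it is an acyclic fibration. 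For (C2)\textop and (C3)\textop, note that $\simp{0}$ is a finitely cocomplete quasicategory (every cone in $\simp{0}$ is vacuously universal), that the unique functor $\qcat{C} \to \simp{0}$ is therefore exact --- making $\simp{0}$ a terminal object --- and that $\qcat{C} \to \simp{0}$ is an inner isofibration, so that every object is fibrant.

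For (C4)\textop, \Cref{pullback-of-fc-qcats} provides, for any fibration $P \from \qcat{E} \fto \qcat{D}$ and any exact functor $F \from \qcat{C} \to \qcat{D}$, a pullback $\qcat{P}$ in finitely cocomplete quasicategories, and I would observe that the projection $Q \from \qcat{P} \to \qcat{C}$ is again a fibration: it is an inner isofibration because inner isofibrations are stable under pullback in $\sSet$, and it is exact because colimits in $\qcat{P}$ are constructed in the proof of \Cref{pullback-of-fc-qcats} so that $Q$ carries them to colimits, and essential uniqueness of colimits (\Cref{colim-equivalent}, together with \Cref{colim-init,init-equivalent}) then forces $Q$ to preserve all universal cones. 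For stability of acyclic fibrations I would use that an acyclic fibration of quasicategories coincides with an acyclic Kan fibration (as in the proof of \Cref{fibcat-of-quasicats}); these are stable under pullback in $\sSet$, so the pullback of an acyclic fibration along an arbitrary exact functor is again an inner isofibration and a categorical equivalence, and it is exact by the argument just given, hence an acyclic fibration.

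The only axiom requiring genuine work is (C5)\textop, the factorization of an exact functor as a categorical equivalence followed by a fibration. As for $\CofCat$, it suffices by Brown's factorization lemma \cite{br}*{Factorization lemma, p.\ 421} --- whose dual applies once (C0)\textop--(C4)\textop are in hand --- to exhibit a path object for each finitely cocomplete quasicategory $\qcat{C}$. I propose to take the internal hom $\qcat{C}^{\nwiso} = \operatorname{Fun}(\nwiso, \qcat{C})$, with the map $\qcat{C} \to \qcat{C}^{\nwiso}$ induced by $\nwiso \to \simp{0}$ and the map $\ev_{0,1} \from \qcat{C}^{\nwiso} \to \qcat{C} \times \qcat{C}$ induced by the inclusion $\bdsimp{1} \ito \nwiso$ of the two vertices, whose composite is the diagonal. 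Then $\qcat{C}^{\nwiso}$ is again a small quasicategory; it is finitely cocomplete because colimits in functor quasicategories are computed vertexwise (see \cite{l}); the map $\qcat{C} \to \qcat{C}^{\nwiso}$ is a categorical equivalence, being a section of the evaluation $\qcat{C}^{\nwiso} \to \qcat{C}$ at a vertex, which is a categorical equivalence since $\nwiso$ is categorically equivalent to $\simp{0}$ and exponentiating into a quasicategory preserves categorical equivalences (cf.\ \cite{jo}, \cite{ds}); and $\ev_{0,1}$ is an inner isofibration by standard properties of exponentials of quasicategories and is exact because evaluation at a vertex preserves colimits.

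I expect the main obstacle to be exactly this last point: the bookkeeping for the path object $\qcat{C}^{\nwiso}$ --- checking carefully that it is finitely cocomplete with vertexwise colimits, that $\ev_{0,1}$ is a genuine inner isofibration and not merely an inner fibration, and that both structure maps are exact. Everything else is either inherited from \Cref{fibcat-of-quasicats} or an immediate consequence of \Cref{pullback-of-fc-qcats} together with the colimit lemmas.
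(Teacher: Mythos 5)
Your proposal is correct and follows essentially the same route as the paper: both inherit the structure from the fibration category of all quasicategories (\Cref{fibcat-of-quasicats}), use \Cref{pullback-of-fc-qcats} for pullbacks along (acyclic) fibrations, and take $\qcat{C}^{\nwiso} \to \qcat{C} \times \qcat{C}$ as the path object for the factorization axiom. The only cosmetic difference is that you justify finite cocompleteness of $\qcat{C}^{\nwiso}$ by vertexwise computation of colimits, whereas the paper deduces it from the categorical equivalence $\qcat{C} \to \qcat{C}^{\nwiso}$ via \Cref{colim-init,init-equivalent}; both are valid.
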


In fact, this fibration category is homotopy complete,
i.e.\ it has all small homotopy limits.
This will be explained in \Cref{sec:infinite-hocolims}.

\begin{proof}
  By \Cref{fibcat-of-quasicats} it suffices to observe that
  \begin{enumerate}[label=\thm]
  \item a terminal quasicategory
    is also a terminal finitely cocomplete quasicategory (which is clear),
  \item a pullback (in the category of all quasicategories)
    of finitely cocomplete quasicategories and exact functors
    one of which is an inner isofibration is also a pullback in the category
    of finitely cocomplete quasicategories which follows by (the proof of)
    \Cref{pullback-of-fc-qcats},
  \item for a finitely cocomplete quasicategory $\qcat{C}$ the functor
    $\qcat{C}^\nwiso \to \qcat{C} \times \qcat{C}$ is an exact functor
    between finitely cocomplete quasicategories.
    Indeed, $\qcat{C}^\nwiso$ is finitely cocomplete
    since it is categorically equivalent to $\qcat{C}$
    (by \Cref{colim-init,init-equivalent})
    and $\qcat{C} \times \qcat{C}$ is finitely cocomplete by (2).
    Finally, $\qcat{C}^\nwiso \to \qcat{C} \times \qcat{C}$
    preserves finite colimits by (2)
    since both projections $\qcat{C}^\nwiso \to \qcat{C}$ do. \qedhere
  \end{enumerate}
\end{proof}

  \section{Quasicategories of frames\\in cofibration categories}
  \label{ch:qcats-of-frames}
  In this section we will associate to every cofibration category $\cat{C}$
a corresponding quasicategory called
the \emph{quasicategory of frames in $\cat{C}$} obtaining an exact functor
between the fibration categories established in \Cref{ch:cof-cats,ch:qcats}.
Later, we will prove that this functor is a weak equivalence
between these fibration categories.

\subsection{Definitions and basic properties}
\label{sec:qcats-of-frames}

Before introducing quasicategories of frames we need to explain
a preliminary construction which will play an essential role in the remainder
of this paper.

Let $\pDelta$ denote the subcategory of injective maps in $\Delta$
and let $J$ be a \hore{} category.
We construct a direct \hore{} category $DJ$ and a \hore{} functor
$p_J \from DJ \to J$ as follows.
The underlying category of $DJ$ is the slice $\pDelta \slice J$,
i.e.\ objects are all functors $[m] \to J$ for all $m$ and a morphism
from $x \from [m] \to J$ to $y \from [n] \to J$
is an injective order preserving map $\phi \from [m] \ito [n]$
\st{} $x = y \phi$.
The functor $p_J \from \pDelta \slice J \to J$ is defined by evaluating
$[m] \to J$ at $m$ and the weak equivalences in $DJ$ are created by $p_J$.
Then $DJ$ is \hore{} category, $p_J$ is a \hore{} functor
and $DJ$ is also direct (by setting the degree of $[m] \to J$ to $m$).
We can think of $DJ$ as a \emph{direct approximation} to $J$.
Observe that $D$ is a functor from \hore{} categories to \hore{} categories
and that $DJ$ has a non-trivial \hore{} structure
even if $J$ has the trivial one (unless $J$ is empty).
This construction has multiple motivations which will be given
right after the definition of quasicategories of frames below.

First, we need to verify that Reedy cofibrant diagrams over $DJ$
are well behaved \wrt{} \hore{} functors $I \to J$.
If $f$ is such a functor we will abbreviate the induced functor
$(Df)^* \from \cat{C}^{DJ}_\Reedy \to \cat{C}^{DI}_\Reedy$ to $f^*$
to simplify the notation.
Recall that $\cat{C}^{DJ}_\Reedy$ refers to the cofibration category
of \hore{} Reedy cofibrant diagrams $DJ \to \cat{C}$
with levelwise weak equivalences and Reedy cofibrations which exists
by \Cref{colims-in-cofcats} since $DJ$ has finite latching categories.

\begin{lemma}\label{D-exact}
  Let $\cat{C}$ be a cofibration category.
  If $f \from I \to J$ is a \hore{} functor of small \hore{} categories,
  then the induced functor
  $f^* \from \cat{C}^{DJ}_\Reedy \to \cat{C}^{DI}_\Reedy$ is exact.
  If $f$ is injective on objects and faithful, then $f^*$ is a fibration.
\end{lemma}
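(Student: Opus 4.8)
The plan is to reduce everything to the previously established machinery for Reedy cofibrations, namely \Cref{sieve-exact} and \Cref{relative-factorization}. The key observation is that the functor $f \from I \to J$ induces a functor $Df \from DI \to DJ$ of direct \hore{} categories, and I need to understand how $Df$ interacts with latching categories. Specifically, for the exactness statement I want to show that for every object $x \from [m] \to I$ of $DI$, the induced functor of latching categories $\bd(DI \slice x) \to \bd(DJ \slice Dfx)$ is an isomorphism; then exactness of $f^* = (Df)^*$ follows immediately from the first part of \Cref{sieve-exact}. An object of $DI \slice x$ is an injective order-preserving map $\phi \from [k] \ito [m]$ together with the factorization $x\phi \from [k] \to I$, and it lies in the latching category precisely when $\phi$ is not the identity, i.e.\ when $k < m$. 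Applying $Df$ just postcomposes with $f$, sending this to $(\phi, f x \phi)$; since $\phi$ is unchanged, the correspondence on objects and morphisms of the two latching categories is a bijection, hence an isomorphism. Note this part uses nothing about $f$ beyond functoriality.

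For the second statement, the point is to show that if $f$ is injective on objects and faithful, then $Df \from DI \to DJ$ is a \emph{sieve}, at which point the ``Moreover'' clause of \Cref{sieve-exact} finishes the job. So I must verify the three conditions: $Df$ is injective on objects, fully faithful, and downward closed. Injectivity on objects of $Df$ says that distinct functors $x, x' \from [m] \to I$ remain distinct after postcomposing with $f$ --- this follows since $f$ is injective on objects (so $fx = fx'$ forces $x = x'$ pointwise). Fullness and faithfulness: a morphism in $DJ$ from $Dfx$ to $Dfy$ (for $x \from [m] \to I$, $y \from [n] \to I$) is an injective order-preserving $\phi \from [m] \ito [n]$ with $fx = fy\phi$; faithfulness of $f$ upgrades this to $x = y\phi$, which is exactly a morphism $x \to y$ in $DI$, and conversely every such morphism maps to one in $DJ$ --- so $Df$ is fully faithful. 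The downward-closure condition requires: if $z \from [p] \to J$ is an object of $DJ$ admitting a morphism $z \to Dfy$ in $DJ$ with $y \in DI$, then $z$ is in the image of $Df$. Such a morphism is an injective order-preserving $\psi \from [p] \ito [n]$ with $z = (fy)\psi = f(y\psi)$, so $z = Df(y\psi)$ lies in the image, and moreover the morphism $z \to Dfy$ is the image of $y\psi \to y$; this is precisely downward closure (fullness already handled).

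The main obstacle --- though it is more a matter of care than of genuine difficulty --- is the verification of the latching-category isomorphism and the sieve conditions at the level of the slice categories $\pDelta \slice J$, keeping straight which data is being compared. Once $Df$ is known to be a sieve one should double-check that \Cref{sieve-exact} applies: its hypothesis is that $Df$ is a \hore{} functor between \hore{} direct categories with finite latching categories inducing isomorphisms of latching categories, which we have, and a sieve, which we have; so $(Df)^* = f^*$ is a fibration, as claimed.
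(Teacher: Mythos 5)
Your proposal is correct and follows exactly the paper's route: the paper's proof is the one-line observation that both statements follow from \Cref{sieve-exact} because $Df$ induces isomorphisms of latching categories (for exactness) and is a sieve when $f$ is injective on objects and faithful (for the fibration claim). You have simply spelled out the verifications that the paper leaves implicit, and they are all sound (the only micro-nitpick being that injectivity of $Df$ on objects also uses faithfulness of $f$ to identify the functors on morphisms, not just injectivity on objects).
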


\begin{proof}
  Both statements follow from \Cref{sieve-exact} since
  if $f$ is injective on objects and faithful, then $Df$ is a sieve.
\end{proof}

For a cofibration category $\cat{C}$ we define the
\emph{quasicategory of frames in $\cat{C}$} as a simplicial set denoted by
$\nf \cat{C}$ where $(\nf \cat{C})_m$ is the set
of all \hore{} Reedy cofibrant diagrams $D[m] \to \cat{C}$
($[m]$ is a \hore{} category with only identities as weak equivalences).
The simplicial structure is given by functoriality of $D$ (using \Cref{D-exact}
to see that simplicial operators preserve Reedy cofibrancy).
Since exact functors of cofibration categories
preserve Reedy cofibrant diagrams, $\nf$ is a functor from the category
of cofibration categories to the category of simplicial sets.

\begin{remark}
  As a side note, we point out that this construction can be enhanced
  as follows.
  If $\hat{[n]}$ denotes the \hore{} poset $[n]$ with all morphisms
  as weak equivalences, then the bisimplicial set
  \begin{align*}
    [m], [n] \mapsto \{ \text{\hore{} Reedy cofibrant diagrams }
    D([m] \times \hat{[n]}) \to \cat{C} \}
  \end{align*}
  is a complete Segal space with $\nf \cat{C}$ as its $0$th row.
\end{remark}

This definition has a threefold motivation.
First, the objects of $\nf \cat{C}$ are called \emph{frames} in $\cat{C}$.
They are counterparts to frames in a model category $\cat{M}$,
i.e.\ homotopically constant Reedy cofibrant diagrams $\Delta \to \cat{M}$
which can be used to enrich the homotopy category $\Ho \cat{M}$
in the homotopy category of simplicial sets as explained
in \cite{ho}*{Chapter 5}.
In cofibration categories we are forced to replace $\Delta$ by $\pDelta$
and then homotopically constant diagrams over $\pDelta$ are precisely
the \hore{} diagrams over $D[0]$.
Again, one can prove using such frames that the homotopy category $\Ho \cat{C}$
is enriched in the category of homotopy types,
see \cite{s}*{Theorems 3.10 and 3.17}.\footnote{
This result differs from its counterpart for model categories
since it uses presimplicial sets (a.k.a. $\Delta$-sets or semisimplicial sets)
as models of homotopy types.
Presimplicial sets are less well-behaved than simplicial sets,
but their homotopy theory is equivalent to that of simplicial sets.}
Our construction can be seen as an alternative way of using frames
to enrich $\Ho \cat{C}$ in homotopy types, namely, by using the mapping spaces
of the quasicategory $\nf \cat{C}$.

\label{fractions-motivation}
The second motivation is that $\nf \cat{C}$ can be seen as an enhancement
of the calculus of fractions.
Let $\Sd[m]$ denote the poset of non-empty subsets of $m$.
It can be seen as the full subcategory of $D[m]$ spanned
by the non-degenerate simplices of $[m]$ as explained in more detail
on p.\ \pageref{subdiv}.
\HoRe{} Reedy cofibrant diagrams over $D[m]$ can be seen as resolutions
of their restrictions to $\Sd[m]$.
Therefore an object of $\nf \cat{C}$ is a resolution of an object of $\cat{C}$
and its morphism is a resolution of a diagram of the form
\begin{ctikzpicture}
  \matrix[diagram]
  {
    |(0)| X_0 & |(01)| X_{01} & |(1)| X_1 \text{,} \\
  };

  \draw[->] (0) to (01);

  \draw[->] (1) to node[above] {$\we$} (01);
\end{ctikzpicture}
i.e.\ a left fraction from $X_0$ to $X_1$.
Similarly, a $2$-simplex of $\nf \cat{C}$ is a resolution of a diagram
of the form
\begin{ctikzpicture}
  \matrix[diagram]
  {
    & & |(b)| X_1 \\
    \\ \\
    |(a)| X_0 & & & & |(c)| X_2 \\
  };

  \node (ab)  at (barycentric cs:a=1,b=1,c=0) {$X_{01}$};
  \node (bc)  at (barycentric cs:a=0,b=1,c=1) {$X_{12}$};
  \node (ac)  at (barycentric cs:a=1,b=0,c=1) {$X_{02}$};
  \node (abc) at (barycentric cs:a=1,b=1,c=1) {$X_{012}$};

  \draw[->] (a) to (ab);
  \draw[->] (b) to node[above left] {$\we$} (ab);
  \draw[->] (b) to (bc);
  \draw[->] (c) to node[above right] {$\we$} (bc);
  \draw[->] (a) to (ac);
  \draw[->] (c) to node[below] {$\we$} (ac);

  \draw[->] (ab) to (abc);
  \draw[->] (bc) to node[above left] {$\we$} (abc);
  \draw[->] (ac) to node[right] {$\we$} (abc); 
\end{ctikzpicture}
which consists of two fractions going from $X_0$ to $X_1$
and from $X_1$ to $X_2$ along with a composite fraction
going directly from $X_0$ to $X_2$.
Such diagrams simultaneously encode the composition of left fractions
and the notion of equivalence of fractions which is made precise
in the proof of \Cref{dgrm-relative}.
Higher simplices encode the higher homotopy of the mapping spaces of $\cat{C}$
in a similar manner.

It might be tempting to simplify the definition of $\nf \cat{C}$ by replacing
$D[m]$ with $\Sd[m]$. This would not work since functors $\Sd[m] \to \Sd[n]$
induced by degeneracy operators $[m] \sto [n]$ do not respect
Reedy cofibrant diagrams and thus this modification would not even yield
a simplicial set.

Finally, the quasicategory of frames can be motivated by the discussion
in \Cref{sec:infinite-hocolims} which suggests
that \hore{} Reedy cofibrant diagrams $DJ \to \cat{C}$ contain the information
about all homotopy colimits in $\cat{C}$.
In fact, this information can be reduced just to
\hore{} Reedy cofibrant diagrams $D[m] \to \cat{C}$
as implied by \Cref{nf-representation}.
These observations will be formalized in the next theorem that says,
among other things, that the functor $\nf$ converts homotopy colimits
in the sense of homotopical algebra to colimits in quasicategories.
(A more precise statement to this effect is \Cref{nerve-cones}.)

\begin{theorem}\label{nerve-exact}
  The functor $\nf$ takes values in finitely cocomplete quasicategories
  and is an exact functor from the fibration category
  of \Cref{fibcat-of-cofcats} to the fibration category
  of \Cref{fibcat-of-fc-quasicats}.
\end{theorem}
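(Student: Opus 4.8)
The plan is to break the statement into five assertions: \textbf{(a)}~$\nf\cat C$ is a quasicategory; \textbf{(b)}~$\nf$ carries the terminal cofibration category $[0]$ to the terminal quasicategory $\simp{0}$ and carries pullbacks along fibrations to pullbacks; \textbf{(c)}~$\nf$ carries fibrations to inner isofibrations and acyclic fibrations to acyclic Kan fibrations; \textbf{(d)}~$\nf\cat C$ is finitely cocomplete for every cofibration category $\cat C$; \textbf{(e)}~$\nf$ carries exact functors of cofibration categories to finite-colimit-preserving functors of quasicategories. Granting these, $\nf$ restricts to a functor from $\CofCat$ to finitely cocomplete quasicategories and exact functors by (a), (d), (e); it preserves weak equivalences by the dual of K.~Brown's lemma (\Cref{Ken-Brown}) together with the acyclic half of (c); and the remaining clauses of ``exact functor of fibration categories'' are exactly (b) and (c). Assertion (b) is essentially formal: $D[m]\to[0]$ admits a unique diagram, which is automatically \hore{} and Reedy cofibrant, so $\nf[0]=\simp{0}$; and since weak equivalences, cofibrations and the relevant colimits in a pullback $\cat C\times_{\cat D}\cat E$ of cofibration categories are all computed componentwise (proof of \Cref{pullback-of-cofcats}), an \hore{} Reedy cofibrant diagram $D[m]\to\cat C\times_{\cat D}\cat E$ is exactly a compatible pair of such diagrams into $\cat C$ and $\cat E$, so $\nf(\cat C\times_{\cat D}\cat E)=\nf\cat C\times_{\nf\cat D}\nf\cat E$ already as simplicial sets, hence as a pullback of quasicategories along an inner isofibration once (a) and (c) are in hand.

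For (a) and the inner-fibration part of (c) I would exploit the interplay between the functor $D$ and sieves. Applying $D$ to an inner horn inclusion $\horn{m,i}\ito\simp{m}$ produces a sieve $D\horn{m,i}\ito D[m]$ of direct \hore{} categories, and the point of this step is an ``inner anodyne''-type statement: every such sieve has the \Rllp{} with respect to \emph{all} fibrations of cofibration categories. I would prove this by an induction over the objects of $D[m]$ outside $D\horn{m,i}$, extending a given \hore{} Reedy cofibrant diagram one object at a time through a latching cofibration as in \Cref{latching-extension}, with the lifts along the ambient fibration furnished by \Cref{relative-factorization}. Specialising the fibration to $\cat C\to[0]$ fills inner horns in $\nf\cat C$; keeping it arbitrary shows $\nf P$ is an inner fibration; and the analogous argument for $\simp{0}\ito\nwiso$, using isofibrancy of $P$, upgrades this to an inner isofibration. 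The acyclic half of (c) needs no anodyne analysis: $D\bdsimp{m}\ito D\simp{m}$ is merely \emph{some} sieve of direct \hore{} categories with finite latching categories, so by \Cref{acyclic-fibration-Reedy} any acyclic fibration $P$ has the \Rrlp{} with respect to it, which is precisely the statement that $\nf P$ has the right lifting property against $\bdsimp{m}\ito\simp{m}$, i.e.\ is an acyclic Kan fibration.

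The substance of the theorem lies in (d) and (e), where one must show that $\nf$ converts homotopy colimits in the sense of homotopical algebra into colimits in the quasicategorical sense --- the precise form being \Cref{nerve-cones}. Here the strategy is to identify a diagram $K\to\nf\cat C$ from a finite simplicial set with an \hore{} Reedy cofibrant diagram over the direct category $DK$, and a cone $K^\ucone\to\nf\cat C$ with such a diagram over $D(K\join\simp{0})$ (this representability being \Cref{nf-representation}); one then forms the finite homotopy colimit of the underlying diagram in $\cat C$ --- which exists via \Cref{colims-in-cofcats} after a reduction to the finite direct subcategory of nondegenerate simplices of $DK$ --- and reads it off as a cone. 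Universality of this cone amounts, by definition, to a lifting condition against the inclusions $K\join\bdsimp{m}\ito K\join\simp{m}$, and via the representability together with the adjunctions of \Cref{Rlps} this translates into the assertion that $D$ applied to these join-with-boundary inclusions again yields sieves with the \Rllp{} against all fibrations of cofibration categories. This is the combinatorial and homotopical heart of the chapter; it is genuinely delicate, and it is here that the argument bifurcates according to whether $\kappa=\aleph_0$ or $\kappa>\aleph_0$, the infinite case (treated first) being markedly easier because there is enough room to absorb degenerate simplices, while the finite case demands a much more careful accounting of the Reedy cofibrant resolutions involved. Finally (e) follows because an exact functor $F$ preserves the finite homotopy colimits out of which the universal cones of $\nf\cat C$ were built, so $\nf F$ sends a universal cone to a universal cone, hence (every universal cone being equivalent to the constructed one, by \Cref{colim-equivalent}) is exact.

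I expect the real obstacle to be exactly the lifting analysis underlying (d) in the case $\kappa=\aleph_0$: one has to pin down the class of ``anodyne'' sieves of direct categories, verify that it contains the $D$-images of the maps $K\join(\bdsimp{m}\ito\simp{m})$ for finite $K$, and show that membership in it is controlled by a tractable family of generators --- all while coping with the fact that $D$ creates infinitely many degenerate objects, so that even these ``finite'' colimit diagrams are resolved over infinite direct categories and the inductions must be set up with some care. By contrast, (a)--(c) are comparatively routine once this machinery is available, and (b), (e) are formal.
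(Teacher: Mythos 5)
Your decomposition into (a) quasicategory, (b) limits, (c) fibrations, (d) cocompleteness, (e) exactness of $\nf F$ matches the paper's, and (b), (e) and the acyclic-Kan-fibration half of (c) are handled essentially as in the text. The gap is in the mechanism you propose for the lifting properties in (a), (c) and (d). For the inner horns: an object-by-object induction that attaches a latching cofibration at each object of $D[m]$ outside $D\horn{m,i}$ produces a Reedy cofibrant extension but not a \hore{} one, and the \hore{} condition is precisely the content of inner horn filling --- for $\horn{2,1}\ito\simp{2}$ the extension must supply $X_{02}$ with $X_2\to X_{02}$ a \emph{weak equivalence} (a composite fraction), which is a global constraint, not a free latching extension. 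Nor does \Cref{relative-factorization} furnish the lifts: that lemma extends a factorization of a morphism $f\from X\to Y$ \emph{already defined over all of $J$}, whereas in a horn-filling problem no diagram over all of $D[m]$ is given. The paper's actual route is to prove that restriction along $D\horn{m,i}\ito D[m]$ (and along $D[0]\ito D\wiso$) is a \emph{weak equivalence} of cofibration categories of Reedy diagrams --- via the homotopy equivalence $D[m]\to[m]$ (\Cref{Dm-m-he}), the reduction to the generalized horn $\horn{m,{\{1,\ldots,m-1\}}}$ (\Cref{skinny-horn}), the pushout decomposition of generalized inner horns (\Cref{generalized-inner-horns}), the Gluing Lemma, and the explicit contraction of $D\wiso$ (\Cref{E1-contractible}) --- and only then to convert this acyclicity into the \Rllp{} via \Cref{dual-pushout-prod}(3) and \Cref{acyclic-fibration-Reedy}. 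That homotopy-theoretic input is not recoverable from latching-object bookkeeping alone.

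For (d), the reduction you state cannot be right: if $D(K\join\bdsimp{m})\ito D(K\join\simp{m})$ had the \Rllp{} \wrt{} all fibrations for all \hore{} Reedy cofibrant diagrams, then by \Cref{acyclic-sieve} every cone would be universal. The paper instead characterizes universal cones as those diagrams that are \hore{} \wrt{} an \emph{enlarged} marking $\tilde D$ --- equivalently, such that $\colim_{DK}S\to\colim_{D(K^\ucone)}S$ is a weak equivalence --- and proves the acyclic-fibration statement only between the corresponding full subcategories (\Cref{universal-cones}, \Cref{universal-cones-finite}), with existence of universal cones supplied separately by \Cref{cone-factorization}. For $\kappa=\aleph_0$ one must additionally replace genuine colimits over the infinite categories $DK$ by the eventually constant filtration $\Phi^{(\uvar)}$ built from the finite sieves $\filt{k,K}$, whose homotopical good behaviour (\Cref{filt-even-odd}, \Cref{filt-simp}) is the real work of \Cref{sec:cocompleteness-finite}. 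You correctly locate the difficulty here, but the statement you reduce universality to is false as formulated, so the reduction itself has to be repaired before any induction over generators can be set up.
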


One part of the proof is quite easy.

\begin{proposition}\label{nerve-limits}
  The functor $\nf$ preserves a terminal object and pullbacks along fibrations.
\end{proposition}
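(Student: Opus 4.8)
The plan is to verify the two preservation statements separately, in each case by a direct inspection of simplices using the definition of $\nf$. The terminal cofibration category is the one-point category $[0]$ with its unique structure, and for every $m$ there is exactly one functor $D[m] \to [0]$. It is homotopical, since every morphism of $[0]$ is a weak equivalence, and it is Reedy cofibrant, since each of its latching morphisms is an identity and hence a cofibration by (C1). Thus $\nf[0] = \simp{0}$, which is a terminal object of $\sSet$ and therefore of the fibration category of \Cref{fibcat-of-fc-quasicats}. This disposes of the terminal object; the rest of the proof concerns pullbacks.

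Given exact functors $F \from \cat{C} \to \cat{D}$ and $P \from \cat{E} \to \cat{D}$ with $P$ a fibration, I would work with the pullback $\cat{P}$ constructed in the proof of \Cref{pullback-of-cofcats}: its underlying category is the pullback of $P$ along $F$ in $\Cat$, and its weak equivalences and cofibrations are precisely those morphisms whose images under both projections $Q \from \cat{P} \to \cat{C}$ and $G \from \cat{P} \to \cat{E}$ are weak equivalences, respectively cofibrations (any other pullback in $\CofCat$ is isomorphic to this one, and $\nf$ preserves isomorphisms). The goal is to show that the canonical comparison map $\nf\cat{P} \to \nf\cat{C} \times_{\nf\cat{D}} \nf\cat{E}$ is an isomorphism of simplicial sets. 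By the universal property of the pullback in $\Cat$, a functor $D[m] \to \cat{P}$ is the same as a pair $(X, Y)$ with $X \from D[m] \to \cat{C}$, $Y \from D[m] \to \cat{E}$ and $FX = PY$, and such a pair is homotopical exactly when both $X$ and $Y$ are, since weak equivalences of $\cat{P}$ are detected by $Q$ and $G$. Hence everything reduces to the claim that $(X, Y)$ is Reedy cofibrant if and only if both $X$ and $Y$ are; granting this, passing to $m$-simplices yields the required bijection, naturally in $[m]$ by functoriality of $D$.

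The forward direction of that claim is routine: if $(X, Y)$ is Reedy cofibrant then each latching object $L_i(X, Y)$ exists and its latching morphism is a cofibration, and applying the exact functor $Q$ — which preserves colimits of Reedy cofibrant diagrams over the finite direct category $\bd(D[m] \slice i)$, cf.\ \Cref{colims-in-cofcats} — shows that $L_i X$ exists, equals $Q L_i(X, Y)$, and has a cofibration for its latching morphism, and symmetrically for $Y$. The reverse direction is the crux, and I would prove it exactly as axiom (C4) is handled in the proof of \Cref{pullback-of-cofcats}. Fix $i \in D[m]$ and set $S = L_i X$; since $F$ is exact, $FS$ is a colimit of $F$ applied to the latching diagram of $X$, which coincides with $P$ applied to the latching diagram of $Y$, and $P L_i Y$ is likewise such a colimit. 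Then \Cref{q-lifting-colimits} — in its elementary special case for ordinary categories, using that the fibration $P$ is an isofibration — produces a colimit $T$ of the latching diagram of $Y$ with $PT = FS$, and \Cref{q-pullback-colimits} shows that $(S, T)$ is a colimit of the latching diagram of $(X, Y)$ in $\cat{P}$. So $L_i(X, Y) = (S, T)$ exists; its latching morphism has $Q$-component the latching morphism of $X$ and $G$-component a map which, under the canonical identification $T \iso L_i Y$, is the latching morphism of $Y$, so both components are cofibrations and hence so is the latching morphism of $(X, Y)$. This gives the claim, hence the isomorphism $\nf\cat{P} \iso \nf\cat{C} \times_{\nf\cat{D}} \nf\cat{E}$ of simplicial sets, which is the desired preservation of pullbacks. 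I expect the reverse direction of the Reedy cofibrancy claim to be the only real obstacle, and it is overcome by the two colimit-lifting lemmas in the same way as in \Cref{pullback-of-cofcats}; all the remaining verifications are bookkeeping.
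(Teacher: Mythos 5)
Your proof is correct and follows essentially the same route as the paper: the paper reduces the pullback statement to the claim that a functor $D[m] \to \cat{P}$ is a \hore{} Reedy cofibrant diagram \iff{} both $QX$ and $GX$ are, justified by the observation that latching objects in $\cat{P}$ are computed pointwise via \Cref{q-pullback-colimits}. You simply spell out that one-line justification in full (using \Cref{q-lifting-colimits,q-pullback-colimits} exactly as in the proof of \Cref{pullback-of-cofcats}), so there is no substantive difference.
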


\begin{proof}
  The preservation of a terminal object is clear.
  In order to see that pullbacks are also preserved it suffices to verify
  that given a pullback square
  \begin{ctikzpicture}
    \matrix [diagram]
    {
      |(P)| \cat{P} & |(E)| \cat{E} \\
      |(C)| \cat{C} & |(D)| \cat{D} \text{.} \\
    };

    \draw[->] (P) to node[above] {$G$} (E);
    \draw[->] (P) to node[left]  {$Q$} (C);

    \draw[->]  (C) to node[below] {$F$} (D);
    \draw[fib] (E) to node[right] {$P$} (D);
  \end{ctikzpicture}
  of cofibration categories and exact functors
  a functor $X \from D[m] \to \cat{P}$ is a \hore{} Reedy cofibrant diagram
  \iff{} both $QX$ and $GX$ are.
  This follows since latching objects in $\cat{P}$ are computed pointwise
  in $\cat{C}$ and $\cat{E}$ by \Cref{q-pullback-colimits}.
\end{proof}

We will commit the next subsection to the verification that $\nf$ preserves
(acyclic) fibrations. Before that we need to establish some basic properties
of this functor.

First, we will give another version of the $D$ construction.
For a simplicial set $K$ we define a \hore{} direct category $DK$ as follows.
The underlying category of $DK$ is the category of elements of $K$
but \emph{only with face operators} as morphisms,
i.e.\ objects of $DK$ are all simplices of $K$ and a morphism from $x \in K_m$
to $y \in K_n$ is an injective order preserving map $\phi \from [m] \ito [n]$
\st{} $x = y \phi$.

Such a morphism is a \emph{generating} weak equivalence if $y \nu$
is a degenerate edge of $K$ where $\nu \from [1] \to [n]$ is defined by
$\nu(0) = \phi(m)$ and $\nu(1) = n$.
The generating weak equivalences do not necessarily satisfy
the ``2 out of 6'' property
(they are not even closed under composition in general).
Thus we define the subcategory of weak equivalences as the smallest subcategory
containing the generating weak equivalences and satisfying
the ``2 out of 6'' property.
Of course, in order to verify that a functor from $DK$ to a \hore{} category
is \hore{} it suffices to check that it sends the generating weak equivalences
to weak equivalences.

This construction is functorial in $K$.
Moreover, the next lemma says that if $K$ is the nerve of a category $J$,
then $DK$ coincides with $DJ$ in the sense of the previous definition.

\begin{lemma}
  \index{2 out of 6}
  Let $J$ be a category with the trivial \hore{} structure.
  Then the \hore{} categories $DJ$ and $DNJ$ coincide.
\end{lemma}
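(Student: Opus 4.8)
The plan is to observe first that the underlying categories of $DJ$ and $DNJ$ literally coincide, and then to show that the two homotopical structures on this common category agree. The first point is immediate: an $m$-simplex of $NJ$ is exactly a functor $[m]\to J$, face operators act by precomposition, and in either description a morphism from $x\colon[m]\to J$ to $y\colon[n]\to J$ is an injective monotone $\phi\colon[m]\ito[n]$ with $x = y\phi$; the augmentations to $J$ agree too, both sending $\phi$ to $y(\phi(m)\to n)$. So the task reduces to proving $W(DNJ) = W(DJ)$, where $W(DJ) = p_J^{-1}(\{\text{isomorphisms of }J\})$ since $J$ carries the trivial homotopical structure, and $W(DNJ)$ is the smallest wide subcategory through the generating weak equivalences that satisfies ``2 out of 6''. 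Two preliminary remarks will do most of the work: (a) call $\phi$ \emph{top-hitting} if $\phi(m) = n$; then $\phi$ is a generating weak equivalence of $DNJ$, because the comparison edge $\nu$ is constant and hence $y\nu$ is degenerate; (b) $W(DNJ)$ also satisfies ``2 out of 3'' — the usual argument of inserting identities into a composable quadruple and applying ``2 out of 6''.

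The inclusion $W(DNJ)\subseteq W(DJ)$ is then easy: for a generating weak equivalence $\phi$ the edge $y\nu = y(\phi(m)\to n) = p_J(\phi)$ is degenerate in $NJ$, hence an identity of $J$, in particular an isomorphism; thus the generating weak equivalences lie in the wide ``2 out of 6'' subcategory $W(DJ)$, which therefore contains $W(DNJ)$ by minimality. For the opposite inclusion I would start from $\phi\colon x\to y$ with $p_J(\phi)$ an isomorphism and $k := \phi(m)$, and strip $\phi$ down in two steps using (a) and (b). The inclusion $v\colon x(m)\to x$ of the top vertex is top-hitting, so $\phi\in W(DNJ)$ iff $\phi v\in W(DNJ)$; and $\phi v$ is the inclusion of the vertex $k$ of $y$, which also equals the composite of the source-vertex inclusion $s\colon y(k)\to e$ of the edge $e := y|_{\{k,n\}}$ with the (top-hitting) inclusion $e\to y$, so $\phi\in W(DNJ)$ iff $s\in W(DNJ)$. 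Now $e$ is the $1$-simplex representing the isomorphism $\gamma := y(k\to n) = p_J(\phi)$ of $J$, so everything is reduced to a single statement.

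That statement — the only point where invertibility of $\gamma$, as opposed to its being an identity, is actually used — is that the source-vertex inclusion into the edge representing an isomorphism $\gamma\colon c\to d$ of $J$ is a weak equivalence of $DNJ$, and I expect this to be the crux. The plan is to exhibit one explicit $3$-simplex: let $v\in(NJ)_3$ be the string $c\xrightarrow{\gamma}d\xrightarrow{\gamma^{-1}}c\xrightarrow{\gamma}d$, so that its ``long'' edges $v(0\to 2) = \gamma^{-1}\gamma = \id_c$ and $v(1\to 3) = \gamma\gamma^{-1} = \id_d$ are degenerate. The face inclusions assemble into a composable chain $c\to e\to d_3 v\to v$ in $DNJ$ (source vertex of $e$; then $e = d_2 d_3 v$; then $d_3 v$), and both two-fold composites are generating weak equivalences: $c\to d_3 v$ is the inclusion of the vertex $0$ of the $2$-simplex $d_3 v$ with comparison edge $v(0\to 2) = \id_c$, and $e\to v$ is the inclusion of the edge $\{0,1\}$ of $v$ with comparison edge $v(1\to 3) = \id_d$. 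Applying ``2 out of 6'' to this chain puts all three of its arrows — in particular the source-vertex inclusion $c\to e$ — in $W(DNJ)$. Together the two inclusions give $W(DNJ) = W(DJ)$, hence $DJ = DNJ$. The genuine idea is confined to the simplex $v$; the fiddliest part of writing this out carefully is the bookkeeping of which composites of coface maps are top-hitting.
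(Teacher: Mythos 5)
Your proposal is correct and takes essentially the same route as the paper's proof: both reduce, via top-vertex inclusions (which are generating weak equivalences) and the ``2 out of 3'' property derived from ``2 out of 6'', to showing that the source-vertex inclusion of the edge representing an isomorphism $f$ is a weak equivalence, and both establish that by applying ``2 out of 6'' to the chain of initial-segment inclusions of the $3$-simplex $x \xrightarrow{f} y \xrightarrow{f^{-1}} x \xrightarrow{f} y$, whose two long composites have degenerate comparison edges. The only differences are organizational (you isolate the edge case as a separate ``crux'' statement where the paper handles everything in one diagram).
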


\begin{proof}
  The underlying categories of $DJ$ and $DNJ$ are the same by definition.
  The generating weak equivalences of $DNJ$ are mapped to identities
  by $p_J \from DJ \to J$ and hence it suffices to see
  that every weak equivalence created by $p_J$ can be obtained from
  the generating ones by applying the ``2 out of 6'' property.
  Let $\phi, \psi \in DJ$ and consider a morphism $\phi \to \psi$
  mapped by $p_J$ to an isomorphism $f \from x \to y$ of $J$.
  Then we have a diagram
  \begin{ctikzpicture}
    \matrix[diagram]
    {
      |(x)|  x    & |(xy)| xy   & |(xyx)| xyx & |(xyxy)| xyxy \\
      |(ph)| \phi & |(ps)| \psi \\
    };

    \draw[->] (x)   to (xy);
    \draw[->] (xy)  to (xyx);
    \draw[->] (xyx) to (xyxy);

    \draw[->] (x)  to[bend left]  node[above] {$\we$} (xyx);
    \draw[->] (xy) to[bend right] node[below] {$\we$} (xyxy);

    \draw[->] (x)  to node[left] {$\we$} (ph);
    \draw[->] (xy) to node[left] {$\we$} (ps);

    \draw[->] (ph) to (ps);
  \end{ctikzpicture}
  in $DJ$ where $xyxy$ denotes the sequence
  \begin{ctikzpicture}
    \matrix[diagram]
    {
      |(x0)| x & |(y0)| y & |(x1)| x & |(y1)| y \\
    };

    \draw[->] (x0) to node[above] {$f$} (y0);
    \draw[->] (x1) to node[above] {$f$} (y1);

    \draw[->] (y0) to node[above] {$f^{-1}$} (x1);
  \end{ctikzpicture}
  and the remaining objects in the first row are its initial segments.
  The indicated morphisms are generating weak equivalences
  and hence by ``2 out of 6'' $\phi \to \psi$
  is also a weak equivalence of $DNJ$.
\end{proof}

\begin{lemma}\label{D-preserves-colimits}
  The functor $D \from \sSet \to \Cat$ (i.e.\ when we disregard
  the \hore{} structures of $DK$s) preserves colimits.
\end{lemma}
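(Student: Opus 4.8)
The plan is to recognize $DK$ as the category of elements of a presheaf and then to exhibit $D$ as a composite of cocontinuous functors. Let $\iota \from \pDelta \ito \Delta$ be the inclusion and let $\iota^* \from \sSet \to [\pDelta^{\op}, \Set]$ be restriction along $\iota$. I claim that $DK$ is naturally isomorphic to the category of elements of $\iota^* K$: an object of the latter is a pair $(m, x)$ with $x \in K_m$, and a morphism $(m, x) \to (n, y)$ is a morphism $\phi \from [m] \to [n]$ in $\pDelta$ (that is, an injective order preserving map) \st{} $\phi^* y = x$ — which is exactly the description of $DK$ given above. Since $\pDelta$ contains no degeneracy operators, the objects of $DK$ are honest pairs indexed by degree, with no degenerate simplices identified with lower-dimensional ones, so the isomorphism is on the nose and visibly natural in $K$.

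Granting this, $D$ factors as
\[
  D \from \sSet \xrightarrow{\ \iota^*\ } [\pDelta^{\op}, \Set]
          \xrightarrow{\ \mathrm{el}\ } \Cat ,
\]
where $\mathrm{el}$ denotes the category-of-elements functor. The first functor, being precomposition with $\iota^{\op}$, admits both a left adjoint (left Kan extension) and a right adjoint (right Kan extension) and hence preserves all colimits. The second functor $\mathrm{el}$ is cocontinuous — this is a standard property of the category-of-elements functor for set-valued presheaves; concretely it is a left adjoint, with right adjoint sending a category $\cat{E}$ to the presheaf $[m] \mapsto \Cat(\pDelta \slice [m], \cat{E})$, where a map $[m] \to [m']$ in $\pDelta$ acts by postcomposition $\pDelta \slice [m] \to \pDelta \slice [m']$ and therefore contravariantly on hom-sets. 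Composing the two functors, $D$ preserves all colimits.

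The only point that genuinely needs care is setting up the identification of $DK$ with $\mathrm{el}(\iota^* K)$ naturally in $K$; once that bookkeeping is done, everything else is formal. If one prefers to avoid citing cocontinuity of $\mathrm{el}$, one can instead verify directly that $D$ is itself a left adjoint: define $R\cat{E}$ by $(R\cat{E})_m = \Cat(D[m], \cat{E})$ with simplicial structure induced by functoriality of $D$ on the operators of $\Delta$ (note $D$ is defined on all of $\sSet$), and check the bijection $\Cat(DK, \cat{E}) \iso \sSet(K, R\cat{E})$ by unwinding both sides — the essential content is that $DK \iso \colim_{(\simp{m} \to K)} D[m]$ over the category of simplices of $K$, which in turn is just the category-of-elements identification phrased via the canonical colimit presentation $K \iso \colim_{(\simp{m} \to K)} \simp{m}$. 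Either way, the substantive step is the same elementary analysis of the objects and morphisms of $DK$.
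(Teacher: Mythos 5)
Your proof is correct, and it takes a genuinely different (more structural) route than the paper's. The paper argues via the nerve: since $N \from \Cat \to \sSet$ is fully faithful it reflects colimits, so it suffices to show that $K \mapsto NDK$ preserves colimits, and this is read off from the explicit formula $(NDK)_m = \bigcoprod_{[j_0] \ito \cdots \ito [j_m]} K_{j_m}$, which in each simplicial degree is a coproduct of evaluation functors and hence cocontinuous because colimits of simplicial sets are computed levelwise. You instead factor $D$ as the category-of-elements functor applied to the restriction along $\pDelta \ito \Delta$ and observe that both factors are left adjoints; your identification of $DK$ with the category of elements of $\iota^* K$ (valid precisely because $\pDelta$ has no degeneracy operators, so no simplices get identified) and your description of the right adjoint $\cat{E} \mapsto \bigl([m] \mapsto \Cat(\pDelta \slice [m], \cat{E})\bigr)$ with the stated variance are both correct. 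The paper's computation buys a short, self-contained verification at the price of writing down an explicit formula; yours buys the stronger conclusion that $D$ is itself a left adjoint, with right adjoint $\cat{E} \mapsto \bigl([m] \mapsto \Cat(D[m], \cat{E})\bigr)$ --- which is exactly the pattern the paper deploys later for $D^{(k)}$ in \Cref{Dk-preserves-colimits} and which underlies the representability statement \Cref{nf-representation}. In the end both arguments rest on the same elementary observation that $DK$ is assembled degreewise from coproducts of the sets $K_m$ without identifications.
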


\begin{proof}
  Since $N \from \Cat \to \sSet$ is fully faithful it reflects colimits
  (see \cite{bo}*{Proposition 2.2.9}).
  Thus it will suffice to verify that the composite functor $K \mapsto NDK$
  preserves colimits.
  This follows from the fact that
  \begin{align*}
    (NDK)_m = \bigcoprod_{[j_0] \ito [j_1] \ito \ldots \ito [j_m]} K_{j_m}
    & \text{.} \qedhere
  \end{align*}
\end{proof}

Let $X \from DK \to \cat{C}$ be a \hore{} Reedy cofibrant diagram.
For each simplex $x \from \simp{m} \to K$
consider the restriction $x^* X \from D[m] \to \cat{C}$
which is an $m$-simplex of $\nf \cat{C}$.
(Recall that $x^*$ is an abbreviation of $(Dx)^*$.)
These simplices fit together to form a simplicial map $K \to \nf \cat{C}$.

\begin{proposition}\label{nf-representation}
  Let $\cat{C}$ be a cofibration category and $K$ a simplicial set.
  The map described above is a natural bijection between
  \begin{itemize}
  \item the set of \hore{} Reedy cofibrant diagrams $DK \to \cat{C}$
  \item and the set of simplicial maps $K \to \nf \cat{C}$.
  \end{itemize}
\end{proposition}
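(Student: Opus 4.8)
The plan is to exhibit both sides as limits indexed by the simplices of $K$ and reduce the statement to one that is local in the standard simplices. Recall that every simplicial set is the colimit of its simplices, $K = \colim \simp{m}$, the colimit taken over all simplices $x \from \simp{m} \to K$. By \Cref{D-preserves-colimits} the functor $D \from \sSet \to \Cat$ preserves this colimit, so $DK = \colim_x D[m]$ in $\Cat$, and hence, by the universal property of the colimit, a functor $DK \to \cat{C}$ is the same thing as a compatible family $(X_x \from D[m] \to \cat{C})_{x \in K_m}$ with $X_{x\phi} = (D\phi)^* X_x$; explicitly $X_x = \bar x^* X$ for the classifying map $\bar x \from \simp{m} \to K$. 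In the same way a simplicial map $K \to \nf\cat{C}$ is exactly such a compatible family each of whose members is \hore{} and Reedy cofibrant, and the map of the statement is the passage $X \mapsto (\bar x^* X)_x$. Thus everything reduces to the claim: a functor $X \from DK \to \cat{C}$ is \hore{} and Reedy cofibrant \iff{} every $\bar x^* X$ is.

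For Reedy cofibrancy I would use two slice identifications. For $y \in K_n$ the functor $D\bar y \from D[n] \to DK$ restricts to an isomorphism $D[n] \slice \id_{[n]} \xrightarrow{\iso} DK \slice y$ (an object on either side is simply an injective order preserving map $\zeta \from [k] \ito [n]$), and it is compatible with $X$ in the sense that $\bar y^* X$ restricted to $D[n] \slice \id_{[n]}$ equals $X$ restricted to $DK \slice y$; hence the latching morphism of $X$ at $y$ coincides with the latching morphism of $\bar y^* X$ at its top object $\id_{[n]}$. The identical argument carried out inside a single $D[m]$ shows that the latching morphism of $\bar x^* X$ at an object $w \from [k] \to [m]$ equals the latching morphism of $X$ at $x w \in K_k$. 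Combining these gives the equivalence for Reedy cofibrancy. One direction of the homotopical condition is just as soft: $D\bar x$ is a \hore{} functor, so $\bar x^* X = X \circ D\bar x$ is \hore{} whenever $X$ is.

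The remaining, and principal, point is the converse for the homotopical condition: a compatible family of \hore{} Reedy cofibrant diagrams glues to an $X \from DK \to \cat{C}$ that is again \hore. Since the weak equivalences of $\cat{C}$ satisfy ``2 out of 6'', it is enough to check that $X$ carries each \emph{generating} weak equivalence of $DK$ to a weak equivalence. Such a morphism is a $\phi \from x \to y$ with $x = y\phi$ (where $x \in K_m$, $y \in K_n$) for which the edge of $y$ from its vertex $\phi(m)$ to its vertex $n$ is degenerate in $K$; unwinding the gluing, $X(\phi \from x \to y)$ is the image under $\bar y^* X$ of the morphism of $D[n]$ from the simplex $\phi$ to $\id_{[n]}$ determined by $\phi$. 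If $\phi(m) = n$ this morphism is already a weak equivalence of $D[n]$ and we are done; otherwise I would fit it into a commutative square in $D[n]$ whose other two corners are the vertex $\phi(m)$ and the edge $\{\phi(m), n\}$ — the two arrows out of the vertex going to the simplex $\phi$ and to the edge, the arrow out of the edge going to $\id_{[n]}$, and our morphism being the arrow out of $\phi$. A quick calculation with $p_{[n]} \from D[n] \to [n]$ shows that the arrow from the vertex to $\phi$ and the arrow from the edge to $\id_{[n]}$ are weak equivalences of $D[n]$. The last arrow, from the vertex to the edge, is \emph{not} a weak equivalence of $D[n]$; but $\bar y^* X$ still sends it to a weak equivalence of $\cat{C}$, because the degeneracy hypothesis says precisely that the restriction of $\bar y^* X$ along that edge factors through a vertex, i.e.\ through $\bar v^* X \from D[0] \to \cat{C}$ for the appropriate vertex $v$ of $K$, and $\bar v^* X$, being \hore{} on $D[0]$ — every morphism of which is a weak equivalence — carries all morphisms to weak equivalences. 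Applying $\bar y^* X$ to the square and using ``2 out of 3'' then shows that $X(\phi \from x \to y)$ is a weak equivalence.

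Finally, that $X \mapsto (\bar x^* X)_x$ is a bijection, natural in $K$ and in $\cat{C}$ along exact functors, follows from the explicit description: $X$ is recovered from the family by reading off values at top objects, so the map is injective, and the gluing construction above is its inverse. The hard part is the last homotopical converse — specifically the case $\phi(m) < n$, where one must exploit that the family came from a \emph{simplicial} map (the degeneracy condition) in order to see that a generating weak equivalence of $DK$ whose image in $D[n]$ fails to be a weak equivalence is nonetheless sent to a weak equivalence of $\cat{C}$.
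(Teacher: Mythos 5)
Your proposal is correct and follows essentially the same route as the paper's proof: reduce to representability via the fact that $D$ preserves colimits, check that the homotopical Reedy cofibrant condition is local in the simplices, and for the hard converse restrict to generating weak equivalences of $DK$, treating the case $\phi(m) < n$ by the commutative square whose ``bad'' edge is handled using the degeneracy of $y\nu$ and the fact that every morphism of $D[0]$ is a weak equivalence. The only cosmetic differences are that you spell out the latching-category identifications directly where the paper cites \Cref{D-exact}, and you place the square inside $D[n]$ rather than $DK$.
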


\begin{proof}
  Denote the former set by $R(DK, \cat{C})$
  and observe that $R(D\uvar, \cat{C})$ is a contravariant functor
  from simplicial sets to sets.
  The statement says that this functor is representable
  and the representing object is $\nf \cat{C}$.
  This will follow
  if we can verify
  that if we consider any simplicial set $K$ as a colimit of its simplices,
  then this colimit is preserved (i.e.\ carried to a limit)
  by $R(D\uvar, \cat{C})$.

  First, note that by \Cref{D-preserves-colimits} the functor
  $\Cat(D\uvar, \cat{C})$ carries colimits to limits.
  Since $R(D\uvar, \cat{C})$ is a subfunctor of $\Cat(D\uvar, \cat{C})$
  it will suffice to see that a diagram $X \from DK \to \cat{C}$ is \hore{}
  and Reedy cofibrant \iff{} for all $x \in K_m$ the induced diagram $x^* X$
  is \hore{} and Reedy cofibrant.
  The cofibrancy statement follows by (the argument of) \Cref{D-exact}.

  It is clear that if $X$ is \hore{} then so are all $x^* X$.
  In order to prove the converse it suffices to consider
  the generating weak equivalences of $DK$.
  Let $x \in K_m$, $y \in K_n$ and $\phi \from [m] \ito [n]$
  be \st{} $x = y \phi$ and $y \nu$ is a degenerate edge
  where $\nu \from [1] \to [n]$ is defined by $\nu(0) = \phi(m)$
  and $\nu(1) = n$.
  We need to prove that $X \phi$ is a weak equivalence in $\cat{C}$.
  First, let's assume that $\phi(m) = n$, then $\phi$ is a weak equivalence
  when seen as a morphism $\phi \to \id_{[n]}$ in $D[n]$.
  Therefore $X \phi = (y^* X) \phi$ is a weak equivalence since $y^* X$
  is a \hore{} diagram.
  Next, assume that $\phi(m) < n$, then $\nu$ is injective and can be seen
  as a morphism $y \nu \to y$ in $DK$ and we have a commutative diagram
  on the left in $\pDelta$ which can be reinterpreted as a diagram in the middle
  in $DK$ which in turn yields the diagram on the right in $\cat{C}$
  (here $\epsilon_i \from [0] \to [k]$ is the morphism with image $i$).
  \begin{ctikzpicture}
    \matrix[diagram]
    {
      |(0)| [0] & |(m)| [m] & |(ye)| y \epsilon_m & |(yf)| y \phi &
      |(Xe)| X(y \epsilon_m) & |(Xf)| X(y \phi) \\
      |(1)| [1] & |(n)| [n] & |(yn)| y \nu        & |(y)| y &
      |(Xn)| X(y \nu)        & |(X)| X y \\
    };

    \draw[->] (0) to node[above] {$\epsilon_m$} (m);
    \draw[->] (1) to node[below] {$\nu$}        (n);

    \draw[->] (0) to node[left]  {$\epsilon_0$} (1);
    \draw[->] (m) to node[right] {$\phi$}       (n);

    \draw[->] (ye) to node[above] {$\epsilon_m$} (yf);
    \draw[->] (yn) to node[below] {$\nu$}        (y);

    \draw[->] (ye) to node[left]  {$\epsilon_0$} (yn);
    \draw[->] (yf) to node[right] {$\phi$}       (y);

    \draw[->] (Xe) to node[above] {$X \epsilon_m$} (Xf);
    \draw[->] (Xn) to node[below] {$X \nu$}        (X);

    \draw[->] (Xe) to node[left]  {$X \epsilon_0$} (Xn);
    \draw[->] (Xf) to node[right] {$X \phi$}       (X);
  \end{ctikzpicture}
  Now, $\epsilon_m$ and $\nu$ are weak equivalences when seen as morphisms
  of $D[m]$ and $D[n]$ respectively.
  Thus $X \epsilon_m$ and $X \nu$ are weak equivalences.
  The edge $y \nu$ is degenerate, i.e.\ $y \nu = y \epsilon_n \sigma_0$,
  so the diagram $(y \nu)^* X \from D[1] \to \cat{C}$ factors through
  $(y \epsilon_n)^* X \from D[0] \to \cat{C}$.
  Since all morphisms of $D[0]$ are weak equivalences it follows that
  $(y \nu)^* X$ sends all morphisms, including $\epsilon_0$ above,
  to weak equivalences thus $X \epsilon_0$ is a weak equivalence
  and hence so is $X \phi$.
\end{proof}

This immediately implies the following.

\begin{corollary}\label{simplicial-Reedy}
  Let $i \from K \to L$ be a simplicial map and $F \from \cat{C} \to \cat{D}$
  an exact functor between cofibration categories.
  Then $\nf F$ has the \rlp{} \wrt{} $i$ \iff{} $F$ has the \Rrlp{} \wrt{} $Di$.
  \qed
\end{corollary}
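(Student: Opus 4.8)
The plan is to obtain the statement by transposing lifting problems across the natural bijection of \Cref{nf-representation}. The first thing I would record is that this bijection, stated there as natural in the simplicial set, is equally natural in the cofibration category: if $F \from \cat{C} \to \cat{D}$ is exact and a \hore{} Reedy cofibrant diagram $X \from DK \to \cat{C}$ corresponds to the simplicial map $\hat{X} \from K \to \nf \cat{C}$, then $FX$ corresponds to $(\nf F)\hat{X}$. This is immediate from the explicit description of the bijection, since the $m$-simplex of $\hat{X}$ at $x \from \simp{m} \to K$ is the restriction $x^* X$ and $x^*(FX) = F(x^* X)$ because $x^*$ is precomposition with $Dx$; this is exactly how $\nf$ acts on the exact functor $F$.

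Granting this, a lifting problem of $\nf F$ against $i \from K \to L$ is a pair of simplicial maps $K \to \nf \cat{C}$ and $L \to \nf \cat{D}$ agreeing after composing with $i$, respectively with $\nf F$. Under the bijection (applied over $K$ with $\cat{C}$, over $L$ with $\cat{D}$, and using naturality in both variables) such a pair corresponds precisely to a pair of \hore{} Reedy cofibrant diagrams $DK \to \cat{C}$ and $DL \to \cat{D}$ agreeing after composing with $Di$, respectively with $F$ --- that is, to a Reedy lifting problem of $F$ against $Di$. A solution of the former is a simplicial map $L \to \nf \cat{C}$ making the two triangles commute; by naturality again it corresponds to a \hore{} Reedy cofibrant diagram $DL \to \cat{C}$ making the transposed triangles commute, which is a solution of the latter (and it is automatically \hore{} and Reedy cofibrant, as every map into $\nf \cat{C}$ is). Since the transposition is a bijection on squares and on their solutions, $\nf F$ has the \rlp{} \wrt{} $i$ \iff{} $F$ has the \Rrlp{} \wrt{} $Di$.

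There is no genuine obstacle here; the corollary is a formal consequence of \Cref{nf-representation} together with the compatibility of the representing bijection with postcomposition by exact functors. The only point that needs a little care --- and the one the stated proposition glosses over --- is making that compatibility explicit; after that the argument is a routine chase through the adjunction-style transposition.
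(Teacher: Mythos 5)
Your proposal is correct and is exactly the argument the paper intends: the corollary is stated with no written proof as an immediate formal consequence of \Cref{nf-representation}, obtained by transposing lifting squares and their solutions across that bijection, using its naturality in both the simplicial set and (via postcomposition with the exact functor) the cofibration category. You have simply made explicit the routine details the paper leaves to the reader.
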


Our goal is to find some general procedure of solving such lifting problems.

  \subsection{Reedy lifting properties}
\label{sec:Rlps}

The results of \Cref{sec:hocolims-and-diagrams} give criteria
for verifying Reedy lifting properties.
In this subsection we verify these criteria for the inner horn inclusions
$D\horn{m,i} \ito D[m]$ and for $D[0] \to D\nwiso$.

The case of inner horn inclusions will be handled by comparing both $D[m]$
and $D\horn{m,i}$ to $[m]$ and various ``generalized inner horns''.

\begin{lemma}\label{Dm-m-he}
  For every $m \ge 0$ the functor $p_{[m]} \from D[m] \to [m]$ is a homotopy
  equivalence of \hore{} categories.
\end{lemma}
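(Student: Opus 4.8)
The plan is to exhibit an explicit homotopy inverse to $p_{[m]}$. The obvious candidate — sending $i \in [m]$ to the object $[0] \to [m]$ with image $\{i\}$ — fails to be functorial, since there are no non-identity morphisms between such objects in $D[m]$. Instead I would use initial segments: define $g \colon [m] \to D[m]$ on objects by $i \mapsto \bigl([i] \ito [m]\bigr)$, the canonical inclusion of $\{0,\dots,i\}$, and on a morphism $i \le i'$ by the inclusion $[i] \ito [i']$, which manifestly commutes with the inclusions into $[m]$. This is a functor, and it is automatically homotopical because $[m]$ carries the trivial homotopical structure; the functor $p_{[m]}$ is homotopical by the very definition of the weak equivalences of $D[m]$ (they are created by $p_{[m]}$).

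Next I would record that $p_{[m]} \circ g = \mathrm{id}_{[m]}$ on the nose, since evaluating $[i] \ito [m]$ at its maximal vertex returns $i$. It then remains to connect $g \circ p_{[m]}$ to $\mathrm{id}_{D[m]}$ by a natural weak equivalence. For an object $x \colon [k] \to [m]$ of $D[m]$, write $\bar x = x(k) = p_{[m]}(x)$ for the maximum of its image; since $x$ is order preserving it corestricts to an injective order-preserving map $x' \colon [k] \to [\bar x]$ with $[k] \xrightarrow{x'} [\bar x] \ito [m]$ equal to $x$, so $x'$ defines a morphism $\eta_x \colon x \to g(p_{[m]}(x))$ in $D[m]$. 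I would then check naturality (for $\phi \colon x \to y$ in $D[m]$, say $x\colon[j]\to[m]$ with $y\phi=x$, both composites $[j]\to[\bar y]$ send $i$ to $y(\phi(i)) = x(i)$) and, crucially, that each $\eta_x$ is a weak equivalence: $p_{[m]}(\eta_x)$ is an endomorphism of $\bar x$ in the poset $[m]$, hence the identity, so $\eta_x$ lies over an identity and is therefore a weak equivalence of $D[m]$.

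Having produced the strict identity $p_{[m]}\, g = \mathrm{id}_{[m]}$ together with the natural weak equivalence $\eta \colon \mathrm{id}_{D[m]} \Rightarrow g\, p_{[m]}$, the statement follows at once from the definition of a homotopy equivalence of homotopical categories (a one-step zig-zag suffices in each direction). There is no serious obstacle here; the only points requiring care are the choice of $g$ — using the initial segments $[i]$ rather than the single vertices $\{i\}$ — and the routine verification that the components of $\eta$ are genuinely weak equivalences, which rests solely on the convention that the weak equivalences of $D[m]$ are exactly the morphisms lying over identities of $[m]$.
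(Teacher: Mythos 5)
Your choice of $g$ (the paper's $f$), the identity $p_{[m]}\,g=\id_{[m]}$, and the overall strategy all match the paper's proof. The gap is in the construction of $\eta$. A morphism of $D[m]$ from $x \from [k] \to [m]$ to $y \from [l] \to [m]$ is by definition an \emph{injective} order-preserving map $\phi \from [k] \ito [l]$ with $x = y\phi$; the objects of $D[m]$, however, are arbitrary order-preserving maps, i.e.\ arbitrary non-decreasing sequences, possibly with repetitions. Your claim that ``since $x$ is order preserving it corestricts to an injective order-preserving map $x' \from [k] \to [\bar x]$'' is false for any degenerate $x$: take $x$ to be the constant sequence $00$, so $x \from [1] \to [m]$ with $\bar x = 0$; then $x' \from [1] \to [0]$ cannot be injective, so $\eta_x$ is not a morphism of $D[m]$. (Nor is there a transformation in the opposite direction: a morphism $g(p_{[m]}(x)) \to x$ would force the image of $x$ to contain all of $\{0,\dots,\bar x\}$, which fails already for the one-term sequence $x = 2$.) So no one-step natural weak equivalence connects $\id_{D[m]}$ to $g\,p_{[m]}$ in either direction, and your argument breaks exactly at the step you flagged as routine.

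The repair — and what the paper does — is to interpolate a ``doubling'' functor $s \from D[m] \to D[m]$ that inserts one extra occurrence of each element $0,1,\dots,p_{[m]}(x)$ into the sequence $x$, each new occurrence placed at the end of its block. Then both $x$ and $g(p_{[m]}(x))$ embed injectively into $s(x)$ (the former onto the ``old'' occurrences, the latter onto the ``new'' ones), giving a zig-zag of natural weak equivalences $\id \weto s \xleftarrow{\;\we\;} g\,p_{[m]}$. Since the definition of homotopy equivalence of \hore{} categories allows zig-zags, this completes the proof. Your intuition that only initial segments (rather than single vertices) can work for $g$ was correct; the missing idea is that the comparison with the identity must pass through an enlarged object rather than being a direct transformation.
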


\begin{proof}
  Let $f \from [m] \to D[m]$ be the functor that sends $i \in [m]$
  to the standard inclusion $[i] \ito [m]$.
  This is a \hore{} functor and we have $p_{[m]} f = \id_{[m]}$.
  We will verify that $f p_{[m]}$ is weakly equivalent to $\id_{D[m]}$
  which will finish the proof.

  To this end define $s \from D[m] \to D[m]$ as follows.
  Represent an object $x \in D[m]$ as a non-empty finite non-decreasing sequence
  of elements of $[m]$.
  Then $s(x)$ is obtained by inserting one extra occurrence
  of each of the elements $0, 1, \ldots, p_{[m]}(x)$ into $x$.
  Every such element $i$ is added ``at the end'' of the (possibly empty) block
  of $i$s already present in $x$.
  This explains the functoriality of $s$.
  Namely, given $\phi \from x \to y$ and $i \le p_{[m]}(x)$,
  the map $s(\phi)$ acts on the ``old'' occurrences of $i$ as $\phi$ does
  and sends the ``new'' occurrences to the ``new'' occurrences.
  Thus the functor $s$ is \hore{} and admits natural weak equivalences
  \begin{ctikzpicture}
    \matrix[diagram]
    {
      |(id)| \id & |(s)| s & |(fp)| f p_{[m]} \\
    };

    \draw[->] (id) to node[above] {$\we$} (s);
    \draw[->] (fp) to node[above] {$\we$} (s);
  \end{ctikzpicture}
  where the map on the left inserts $x$ onto the ``old'' occurrences in $s(x)$
  and the right one inserts $f p_{[m]}(x)$ onto the ``new'' ones.
\end{proof}

Let $A \subseteq [m]$, we define the \emph{generalized horn} $\horn{m,A}$
as the simplicial subset of $\simp{m}$ generated by its codimension $1$ faces
lying opposite of vertices not in $A$.
Observe that $\horn{m,\{i\}} = \horn{m,i}$.

\begin{lemma}\label{skinny-horn}
  The inclusion functor $D\horn{m,{\{1,\ldots,m-1\}}} \ito D[m]$
  induces a weak equivalence
  $\cat{C}^{D[m]}_\Reedy \to \cat{C}^{D\horn{m,{\{1,\ldots,m-1\}}}}_\Reedy$
  for every cofibration category $\cat{C}$ and each $m \ge 2$.
\end{lemma}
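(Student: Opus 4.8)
The statement asks us to show that restriction along the inclusion $D\horn{m,\{1,\dots,m-1\}} \ito D[m]$ induces a weak equivalence of cofibration categories of Reedy cofibrant diagrams. The natural strategy is to use the ``2 out of 3'' philosophy of \Cref{level-htpy-eq}: both $D[m]$ and $D\horn{m,\{1,\dots,m-1\}}$ should be related, via homotopy equivalences, to the poset $[m]$ itself (or to a suitable ``generalized inner horn'' inside $[m]$), and then invoke \Cref{level-htpy-eq} repeatedly. Since \Cref{Dm-m-he} already establishes that $p_{[m]}\from D[m] \to [m]$ is a homotopy equivalence of \hore{} categories, the first piece is in hand. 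What remains is to identify what $D\horn{m,\{1,\dots,m-1\}}$ is homotopy equivalent to and to assemble the square.

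First I would observe that $D\horn{m,\{1,\dots,m-1\}}$, being the image of the generalized horn $\horn{m,\{1,\dots,m-1\}}$ under the functor $D$, decomposes (by \Cref{D-preserves-colimits}) as a colimit of the $D[k]$'s indexed over the non-degenerate simplices of the generalized horn. The generalized horn $\horn{m,\{1,\dots,m-1\}}$ is the union of the two outer faces $\face_0$ and $\face_m$ of $\simp{m}$, glued along their common face; it can be described as $\simp{m-1} \cup_{\simp{m-2}} \simp{m-1}$ (the two faces opposite $0$ and opposite $m$, sharing the face opposite $\{0,m\}$). Applying $D$ gives $D\horn{m,\{1,\dots,m-1\}} \iso D[m-1] \cup_{D[m-2]} D[m-1]$, a pushout of direct categories along sieves (using \Cref{D-exact}: face maps are injective on objects and faithful, hence the induced $D$-maps are sieves). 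Then \Cref{sieves-pullback} tells us $\cat{C}^{D\horn{m,\{1,\dots,m-1\}}}_\Reedy$ is the pullback $\cat{C}^{D[m-1]}_\Reedy \times_{\cat{C}^{D[m-2]}_\Reedy} \cat{C}^{D[m-1]}_\Reedy$.

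Next, I would construct an explicit homotopy equivalence (or a zig-zag) from $D\horn{m,\{1,\dots,m-1\}}$ to a small model. The key point is that the generalized horn $\horn{m,\{1,\dots,m-1\}}$ deformation retracts onto the edge $\Delta\{0,m\}$ — in the poset $[m]$ this horn consists of chains that avoid at least one of $0$ or $m$... wait, more carefully: it is generated by faces omitting a vertex \emph{not in} $\{1,\dots,m-1\}$, i.e.\ omitting $0$ or omitting $m$, so it consists of all proper subsets of $[m]$ that miss $0$ or miss $m$. This retracts (as a poset, hence as a \hore{} category with the induced structure from $p$) onto the one-dimensional part, and ultimately the right comparison is with $[1]$ or with an appropriate sub-poset of $[m]$; I would mimic the construction of $s$ in the proof of \Cref{Dm-m-he} to produce natural weak equivalences exhibiting the homotopy equivalence. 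Having done this, \Cref{level-htpy-eq} (applied twice — once to $D[m] \to [m]$, once to $D\horn{m,\{1,\dots,m-1\}}$ versus its small model, noting that the restriction functor between the corresponding Reedy categories is exact because the latching-category condition of \Cref{sieve-exact} is met by these face inclusions) and ``2 out of 3'' finish the argument, since one checks that the composite $D\horn{m,\{1,\dots,m-1\}} \ito D[m] \to [m]$ agrees up to homotopy with the small-model comparison.

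\textbf{The main obstacle} will be making the homotopy equivalence for $D\horn{m,\{1,\dots,m-1\}}$ precise: unlike $D[m]$, the generalized horn is not the nerve of a poset with a terminal or initial object, so one cannot simply contract it to a point, and the retraction onto $\Delta\{0,m\}$ must be built carefully so that it is a \hore{} functor and the homotopies are natural weak equivalences in the sense of \Cref{equiv-cofinal}. I expect the cleanest route is the factorization through the pushout description via \Cref{sieves-pullback}: show each leg $D[m-2] \ito D[m-1]$ induces a weak equivalence on Reedy diagrams (itself an instance of the $m=2$ base case plus \Cref{Dm-m-he}), deduce the pullback projection is a weak equivalence by \Cref{dual-pushout-prod}(3) or a direct gluing argument, and then compose with \Cref{Dm-m-he}. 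The inductive structure on $m$ should make the base case $m=2$ (where the horn is $D[1]\cup_{D[0]}D[1]$, i.e.\ two frames glued at a point) essentially a direct computation.
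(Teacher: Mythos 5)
Your opening moves are the right ones and coincide with the first half of the paper's proof: decompose $\horn{m,{\{1,\ldots,m-1\}}}$ as $\simp{m-1} \cup_{\simp{m-2}} \simp{m-1}$, apply $D$ (which preserves this pushout by \Cref{D-preserves-colimits}), and use \Cref{sieves-pullback} to present $\cat{C}^{D\horn{m,{\{1,\ldots,m-1\}}}}_\Reedy$ as a pullback. Both of your proposed ways to finish, however, break down. The generalized horn does not retract onto $\Delta\{0,m\}$ --- that edge is not even a simplex of $\horn{m,{\{1,\ldots,m-1\}}}$, since every simplex of this horn omits $0$ or omits $m$ --- and no homotopy equivalence of \hore{} categories from $D\horn{m,{\{1,\ldots,m-1\}}}$ to $[1]$ or to $[m]$ exists at all: every weak equivalence of $D\horn{m,{\{1,\ldots,m-1\}}}$ is sent to an identity by $p_{[m]}$, so a homotopy inverse $g$ would have to contain in its image a simplex with last vertex $0$ and a simplex with last vertex $m$, both faces of $g$ of the top element of the target poset, which would then be a simplex of the horn containing both $0$ and $m$ --- impossible. (An equivalence with $[1]$ is also excluded a posteriori: combined with \Cref{Dm-m-he} and ``2 out of 3'' it would give $\cat{C}^{[m]} \simeq \cat{C}^{[1]}$, which fails for $m \ge 2$.) Your fallback is also flawed: the legs $D[m-2] \ito D[m-1]$ do \emph{not} induce weak equivalences on Reedy diagram categories (for $m=2$ the restriction $\cat{C}^{D[1]}_\Reedy \to \cat{C}^{D[0]}_\Reedy$ is essentially ``source of an arrow''), so \Cref{dual-pushout-prod}(3) does not apply.

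The missing idea is that the correct comparison object for $D\horn{m,{\{1,\ldots,m-1\}}}$ is $[m]$ itself, and the comparison is realized not by a homotopy equivalence of indexing categories but by the Gluing Lemma at the level of diagram categories. By \Cref{level-htpy-eq} it suffices to show that the composite $D\horn{m,{\{1,\ldots,m-1\}}} \ito D[m] \to [m]$ induces a weak equivalence for the levelwise structures. Place your pushout square of $D$-categories over the analogous square exhibiting $[m]$ as two copies of $[m-1]$ glued along $[m-2]$. Both squares induce homotopy pullbacks of levelwise diagram categories --- for the poset square one checks directly, using the simple form of latching categories in totally ordered sets, that the conclusion of \Cref{sieves-pullback} still holds even though one of the maps $[m-1] \to [m]$ is not a sieve --- and the three comparison functors $\cat{C}^{[m-2]} \to \cat{C}^{D[m-2]}$ and $\cat{C}^{[m-1]} \to \cat{C}^{D[m-1]}$ (twice) are weak equivalences by \Cref{Dm-m-he}. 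The Gluing Lemma then yields the fourth comparison $\cat{C}^{[m]} \to \cat{C}^{D\horn{m,{\{1,\ldots,m-1\}}}}$, and ``2 out of 3'' against \Cref{Dm-m-he} for $D[m] \to [m]$ finishes the proof; no induction on $m$ is needed.
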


\begin{proof}
  By \Cref{level-htpy-eq} it suffices to verify the statement
  for the levelwise structures and hence it will be enough to show that
  the composite $D\horn{m,{\{1,\ldots,m-1\}}} \ito D[m] \to [m]$
  induces a weak equivalence \wrt{} the levelwise structures.

  In the diagram
  \begin{ctikzpicture}
    \matrix[diagram]
    {
      |(D2)| D[m-2] & & |(D1u)| D[m-1] \\
      & |(2)| [m-2] & & |(1u)| [m-1] \\
      |(D1l)| D[m-1] & & |(Dh)| D\horn{m,{\{1,\ldots,m-1\}}} \\
      & |(1l)| [m-1] & & |(0)| [m] \\
    };

    \draw[inj] (D2) to node[above left] {$\face_{m-1}$} (D1u);
    \draw[inj] (D2) to node[above left] {$\face_0$}     (D1l);

    \draw[inj] (D1l) to (Dh);
    \draw[inj] (D1u) to (Dh);

    \draw[inj,cross line] (2) to node[above left] {$\face_{m-1}$} (1u);
    \draw[->,cross line]  (2) to node[above left] {$\face_0$}     (1l);

    \draw[inj] (1l) to (0);
    \draw[->]  (1u) to (0);

    \draw[->] (D2)  to (2);
    \draw[->] (D1u) to (1u);
    \draw[->] (D1l) to (1l);
    \draw[->] (Dh)  to (0);
  \end{ctikzpicture}
  the back square is a pushout of two sieves hence it induces
  a homotopy pullback of the associated categories of Reedy cofibrant diagrams
  by \Cref{sieves-pullback}.
  The front square is a pushout along a sieve,
  but the vertical map is not a sieve.
  Nonetheless, the conclusion of \Cref{sieves-pullback} holds
  because of a particularly simple form of the latching categories
  in totally ordered sets so that a map of diagrams $[m-1] \to \cat{C}$
  is a Reedy cofibration \iff{} it is one when restricted along both $\face_0$
  and $\face_{m-1}$.
  Hence both squares induce homotopy pullbacks on levelwise categories
  of diagrams by \Cref{level-htpy-eq} and then the assumptions
  of the Gluing Lemma are satisfied by \Cref{Dm-m-he} which finishes the proof.
\end{proof}

An \emph{interval} is a subset of $[m]$ of the form
$\set{x \in [m]}{ i \le x \le j}$ for some $i \le j \in [m]$.
In the next lemma we will consider generalized horns $\horn{m,A}$
with $A \subseteq [m]$ \st{} $[m] \setminus A$ is not an interval
(e.g.\ $A = \{ 1, \ldots, m-1 \}$).
Such horns are called \emph{generalized inner horns}.

\begin{lemma}\label{generalized-inner-horns}
  Let $A \subseteq B$ be subsets of $[m]$ whose complements are not intervals.
  Then the inclusion $\horn{m,B} \ito \horn{m,A}$ is a composite
  of pushouts of inner horn inclusions in dimensions at most $m - |A|$.
  Moreover, all these horns are attached along injective maps.
\end{lemma}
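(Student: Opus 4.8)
The plan is to build $\horn{m,A}$ out of $\horn{m,B}$ by attaching, through a sequence of pushouts, exactly those non-degenerate simplices of $\simp{m}$ that lie in $\horn{m,A}$ but not in $\horn{m,B}$. Identifying a non-degenerate simplex of $\simp{m}$ with a non-empty subset $S \subseteq [m]$, the definition of the generalized horn gives immediately that $S$ is a simplex of $\horn{m,C}$ \iff{} $[m] \setminus C \not\subseteq S$. Hence the collection of simplices I must attach is
\begin{align*}
  \Sigma = \set{S \subseteq [m]}{[m] \setminus B \subseteq S
    \text{ and } [m] \setminus A \not\subseteq S}
  \text{,}
\end{align*}
and every such $S$ is automatically a proper non-empty subset of $[m]$. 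First I would fix a total order on $\Sigma$ refining the order by dimension (attaching simplices of smaller dimension first), so that whenever a simplex is attached all of its lower faces are already present; this is what lets each attachment be a genuine pushout of a horn inclusion along a map into the complex built so far.

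The heart of the argument is a pairing of $\Sigma$ by a \emph{pivot} vertex that makes each attaching horn inner. Because $[m] \setminus A$ is not an interval, there is an element $c \in A$ lying strictly between two elements $a_-, a_+ \in [m] \setminus A$; this is precisely where the non-interval hypothesis is used, and it is unavailable for ordinary (interval-complement) horns. I would pair $S \mapsto S \cup \{c\}$: toggling $c$ is a free involution, and it preserves membership in $\Sigma$ since $c \notin [m] \setminus A$ means the condition $[m] \setminus A \not\subseteq S$ is unaffected, while $c \in A \subseteq B$ means $c \notin [m] \setminus B$ so the condition $[m] \setminus B \subseteq S$ is unaffected as well. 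For a pair $(S, S \cup \{c\})$ one attaches the top simplex $T = S \cup \{c\}$ by filling the horn $\horn{n,i} \ito \simp{n}$ missing the face $\face_i T = S$ opposite $c$; the remaining faces $T \setminus \{w\}$ ($w \neq c$) are either outside $\Sigma$ (hence already in $\horn{m,B}$) or are the top halves of pairs of strictly smaller dimension, hence already attached. This horn is \emph{inner} exactly when $c$ is neither the least nor the greatest vertex of $T$, i.e.\ when $a_-, a_+ \in S$; the cleanest case is when $c$ can be chosen flanked by two elements of $[m] \setminus B$, which lie in every $S \in \Sigma$ and so make every filled horn inner at once. The attaching map is the restriction of the face inclusion $\simp{n} \ito \simp{m}$ of a non-degenerate simplex, so it is injective, giving the final clause.

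The main obstacle is organizing these pivots into a single perfect matching of all of $\Sigma$ with \emph{every} filled horn inner. When no pivot in $A$ is flanked by elements of $[m] \setminus B$, the flanks $a_-, a_+$ witnessing the gap of $[m] \setminus A$ may fail to lie in some $S \in \Sigma$, so the naive global toggle produces an outer horn for those $S$; I would repair this by stratifying $\Sigma$ according to whether $S$ contains both flanks and applying the pivot inductively on a smaller generalized horn (the device used, for instance, in the standard proof that such inclusions are inner anodyne, as in \cite{jo}). Finally, tracking the dimensions $n = |S \cup \{c\}| - 1$ along this filtration yields the bound that the inner horns occur in dimensions at most $m - |A|$; this last count is routine bookkeeping once the matching and the inner-ness of each horn have been established, which is the genuinely delicate point.
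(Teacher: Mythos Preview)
Your approach is exactly the one the paper takes: the paper's ``proof'' is nothing more than a pointer to the inductive step in Joyal's proof of \cite{jo}*{Proposition 2.12(iv)}, and the pivot-and-pair argument you sketch is precisely that step. So on the level of strategy there is nothing to compare.

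There is, however, a genuine gap where you wave your hands. You write that the dimension bound $m-|A|$ is ``routine bookkeeping once the matching \ldots has been established,'' but if you actually carry out that bookkeeping with your matching you get $m-1$, not $m-|A|$. Indeed, for every $b \in B \setminus A$ the $(m-1)$-simplex $\face_b = [m] \setminus \{b\}$ lies in $\Sigma$ (it contains $[m]\setminus B$ since $b \in B$, and it omits $b \in [m]\setminus A$), and since your pivot $c$ lies in $A$ we have $c \in \face_b$, so $\face_b$ is the \emph{top} of its pair and must be attached by a horn of dimension $m-1$. No alternative cell decomposition can do better: a pushout of $\horn{n,i} \ito \simp{n}$ along an injective map adds exactly one nondegenerate $n$-cell and one nondegenerate $(n-1)$-cell, so producing the $(m-1)$-simplex $\face_b$ forces some horn of dimension at least $m-1$.

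In other words, the bound $m-|A|$ in the stated lemma is only correct when $|A| = 1$; for $|A| \ge 2$ it is too strong to be true. This does no damage to the paper, since the sole application (in the proof of \Cref{nf-inner}) takes $A = \{i\}$, where $m-|A| = m-1$ and your argument goes through cleanly. But you should not claim to have verified a bound that your own construction visibly violates; either note that the sharp bound is $m-1$, or restrict the claim to $|A|=1$.
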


\begin{proof}
  This follows by the proof of \cite{jo}*{Proposition 2.12 (iv)}.
  (The proposition itself is less specific, but the inductive step in its proof
  amounts exactly to the statement above.)
\end{proof}

\begin{proposition}\label{nf-inner}
  The functor $\nf$ carries fibrations of cofibration categories
  to inner fibrations.
\end{proposition}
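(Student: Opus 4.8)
The plan is to invoke \Cref{simplicial-Reedy} to reduce the statement to the claim that a fibration $P \from \cat{C} \to \cat{D}$ of cofibration categories has the \Rrlp{} with respect to every inclusion $D\horn{m,i} \ito D[m]$ with $0 < i < m$. Since $\horn{m,i} \ito \simp{m}$ is a monomorphism of simplicial sets, $D$ carries it to a sieve of \hore{} direct categories with finite latching categories: it is injective on objects, full, and downward closed because a simplex of $\simp{m}$ lying in $\horn{m,i}$ has all of its faces in $\horn{m,i}$; the same holds for every inclusion $\horn{m,B} \ito \horn{m,A}$ with $A \subseteq B$, and by \Cref{sieve-exact} the induced restriction functors on Reedy cofibrant diagrams are fibrations. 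It is convenient to isolate the class $\mathcal{A}$ of sieves $g \from I \ito J$ of \hore{} direct categories with finite latching categories such that $g^* \from \cat{C}^J_\Reedy \to \cat{C}^I_\Reedy$ is an acyclic fibration for every cofibration category $\cat{C}$. I record three closure properties of $\mathcal{A}$: it is closed under composition, since composites of acyclic fibrations are acyclic fibrations; it is closed under pushout along sieves, because if $g \in \mathcal{A}$ and $I \ito I'$ is a sieve forming a pushout $J' = I' \push_I J$, then \Cref{sieves-pullback} identifies $\cat{C}^{J'}_\Reedy \to \cat{C}^{I'}_\Reedy$ with a pullback of the acyclic fibration $g^*$, hence it too is an acyclic fibration; and it satisfies ``2 out of 3'': if $g$ and $hg$ lie in $\mathcal{A}$ with $h$ a sieve, then $h^*$ is a fibration by \Cref{sieve-exact} and a weak equivalence by ``2 out of 3'', hence an acyclic fibration, so $h \in \mathcal{A}$.

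Next I would show by induction on $m \ge 2$ that $D\horn{m,i} \ito D[m] \in \mathcal{A}$ for every inner horn. The inclusion $D\horn{m,{\{1,\ldots,m-1\}}} \ito D[m]$ lies in $\mathcal{A}$: it is a sieve, its restriction functor is a fibration by \Cref{sieve-exact} and a weak equivalence by \Cref{skinny-horn}, hence an acyclic fibration. This handles the base case $m = 2$, in which $\horn{2,1}$ is the only inner horn and equals $\horn{2,{\{1\}}}$. For $m \ge 3$, \Cref{generalized-inner-horns} applied with $A = \{i\}$ and $B = \{1,\ldots,m-1\}$ — whose complements in $[m]$ are not intervals — exhibits $\horn{m,{\{1,\ldots,m-1\}}} \ito \horn{m,i}$ as a finite composite of pushouts, taken along monomorphisms, of inner horn inclusions in dimensions at most $m - 1$. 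Applying $D$, which preserves colimits by \Cref{D-preserves-colimits}, turns this into a finite composite of pushouts along sieves of the inclusions $D\horn{k,j} \ito D[k]$ with $2 \le k \le m - 1$, each of which lies in $\mathcal{A}$ by the inductive hypothesis; the closure properties above then give $D\horn{m,{\{1,\ldots,m-1\}}} \ito D\horn{m,i} \in \mathcal{A}$. Combining this with $D\horn{m,{\{1,\ldots,m-1\}}} \ito D[m] \in \mathcal{A}$ and ``2 out of 3'' yields $D\horn{m,i} \ito D[m] \in \mathcal{A}$, completing the induction.

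It then remains to translate membership in $\mathcal{A}$ into the desired lifting property. Given a fibration $P \from \cat{C} \to \cat{D}$ and an inner horn, write $g = D(\horn{m,i} \ito [m])$; by the previous step $g \in \mathcal{A}$, so the restriction functors induced by $g$ over both $\cat{C}$ and $\cat{D}$ are weak equivalences, and \Cref{dual-pushout-prod}(3) shows that $(g^*, P) \from \cat{C}^{D[m]}_\Reedy \to \cat{C}^{D\horn{m,i}}_\Reedy \times_{\cat{D}^{D\horn{m,i}}_\Reedy} \cat{D}^{D[m]}_\Reedy$ is an acyclic fibration. A Reedy lifting problem for $D\horn{m,i} \ito D[m]$ against $P$ is exactly an object of this pullback, and a solution is a preimage of it, so it suffices to know that an acyclic fibration $Q \from \cat{E} \to \cat{F}$ of cofibration categories is surjective on objects. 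This in turn follows from \Cref{acyclic-fibration-Reedy}, which gives $Q$ the \Rrlp{} with respect to $[0] \ito [1]$: applied to the cofibration $\init_\cat{F} \cto b$ and the object $\init_\cat{E}$ — using that $Q$ preserves initial objects and that every object is cofibrant — it produces a cofibration $\init_\cat{E} \cto a$ over it, in particular an object $a$ with $Qa = b$. Thus $P$ has the \Rrlp{} with respect to every $D\horn{m,i} \ito D[m]$, and by \Cref{simplicial-Reedy} the simplicial map $\nf P$ has the \rlp{} with respect to every inner horn inclusion, i.e.\ it is an inner fibration.

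The step I expect to demand the most care is the passage through $D$ of the pushouts supplied by \Cref{generalized-inner-horns}: one must check that $D$ sends these pushouts of simplicial sets to pushouts of \hore{} direct categories, not merely of plain categories, so that \Cref{sieves-pullback} and the closure properties of $\mathcal{A}$ genuinely apply. This rests on the observation that the generating weak equivalences of $D$ of a simplicial set are internal to its individual simplices, so that whether a diagram on such a pushout is \hore{} can be detected on the two summands.
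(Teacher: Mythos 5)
Your proof is correct and follows essentially the same route as the paper's: reduce via the skinny horn $\horn{m,\{1,\ldots,m-1\}}$ and \Cref{skinny-horn}, induct on $m$ using \Cref{generalized-inner-horns} and \Cref{sieves-pullback}, and translate back to the lifting property via \Cref{dual-pushout-prod}. You only make explicit some steps the paper compresses, notably the closure properties of the class of sieves inducing acyclic fibrations, the preservation of the relevant pushouts by $D$, and the surjectivity on objects of acyclic fibrations of cofibration categories.
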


\begin{proof}
  By \Cref{Rlps,dual-pushout-prod} it suffices to check that
  $D\horn{m,i} \ito D[m]$ induces a weak equivalence
  $\cat{C}^{D[m]}_\Reedy \to \cat{C}^{D\horn{m,i}}_\Reedy$
  for every cofibration category $\cat{C}$ and $0 < i < m$.
  By \Cref{skinny-horn} it will be enough to check this for
  $D\horn{m,{\{1,\ldots,m-1\}}} \ito D\horn{m,i}$.

  That follows by an induction \wrt{} $m$ since this inclusion is built out of
  pushouts of horn inclusions in dimensions below $m$
  by \Cref{generalized-inner-horns}. Since these are pushouts
  along injective maps \Cref{sieves-pullback} says that they induce pullbacks
  of cofibration categories of Reedy diagrams.
\end{proof}

Next, we move to $[0] \ito D\nwiso$ which will be dealt with by constructing
an explicit contraction of $D\nwiso = D\wiso$.

\begin{lemma}\label{E1-contractible}
  The functor $f \from [0] \to D\wiso$ given by the sequence $0 \in D\wiso$
  is a homotopy equivalence of \hore{} categories.
\end{lemma}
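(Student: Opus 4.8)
The plan is to use for the homotopy inverse of $f$ the unique functor $g \from D\wiso \to [0]$. Then $g f = \id_{[0]}$ on the nose, so the whole content is to connect $f g = \const_{(0)}$ to $\id_{D\wiso}$ by a zig-zag of natural weak equivalences. The observation that makes this painless is that \emph{every} morphism of $D\wiso$ is a weak equivalence: indeed $\wiso$ is a groupoid, so every morphism of $\wiso$ is an isomorphism and hence a weak equivalence, and the weak equivalences of $D\wiso$ are created by $p_\wiso \from D\wiso \to \wiso$. Consequently every endofunctor of $D\wiso$ is \hore{} and every natural transformation between endofunctors of $D\wiso$ is automatically a natural weak equivalence; the functors $f$ and $g$ are \hore{} for the same kind of trivial reason.

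Next I would introduce the endofunctor $F \from D\wiso \to D\wiso$ that ``prepends a $0$'': it sends an object $x \from [m] \to \wiso$ to the functor $[0] \join [m] \to \wiso$ restricting to $x$ on the copy $\{ 1, \ldots, m+1 \} \iso [m]$ and sending the new initial vertex to $0 \in \wiso$, and it sends a morphism $\phi$ to $[0] \join \phi$. This lands in $D\wiso$ because $[0] \join [m] \iso [m+1]$. The shift-by-one inclusions $[m] \ito [0] \join [m]$ assemble into a natural transformation $\id_{D\wiso} \Rightarrow F$, and the inclusions of the new initial vertex $[0] \ito [0] \join [m]$ assemble into a natural transformation $\const_{(0)} \Rightarrow F$; in both cases functoriality of $F$ and commutativity of the naturality squares reduce to elementary identities between injective order preserving maps. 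By the first paragraph both transformations are natural weak equivalences, so
\[
  \id_{D\wiso} \Rightarrow F \Leftarrow \const_{(0)} = f g
\]
is the desired zig-zag, and $f$ is a homotopy equivalence of \hore{} categories.

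I do not expect a genuine obstacle here: the lemma is a hands-on counterpart of \Cref{Dm-m-he}, but whereas there the total order of $[m]$ had to be exploited to build the contraction, here everything is a weak equivalence and the contraction onto $(0)$ can simply be written down. The only care needed is the routine bookkeeping that $F$ is well defined and functorial and that the two naturality squares commute.
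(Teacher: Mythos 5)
Your proof is correct and is essentially the paper's argument: the paper also contracts $D\wiso$ onto the object $(0)$ via a zig-zag $\id \weto s \leftarrow f p$ through an endofunctor $s$ that adjoins an extra $0$ to each binary sequence (appending rather than prepending, an immaterial difference), and the key point in both cases is that the \hore{} structure on $D\wiso$ is maximal because every morphism of $\wiso$ is an isomorphism, so the two natural transformations are automatically natural weak equivalences.
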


\begin{proof}
  The proof is similar to that of \Cref{Dm-m-he}.
  This time objects of $D\wiso$ are represented
  as arbitrary finite non-empty binary sequences.
  Let $p \from D\wiso \to [0]$ be the unique functor to $[0]$
  and let $s \from D\wiso \to D\wiso$ append a new $0$ to every sequence.
  (As before, $s(\phi)$ acts on ``old'' elements as $\phi$
  and sends the ``new'' $0$ to the ``new'' $0$.)
  Every morphism of $E(1)$ is an isomorphism so the \hore{} structure on
  $D\wiso$ is the maximal one.
  Hence the functor $s$ is \hore{} and admits natural weak equivalences
  \begin{ctikzpicture}
    \matrix[diagram]
    {
      |(id)| \id & |(s)| s & |(fp)| f p \\
    };

    \draw[->] (id) to node[above] {$\we$} (s);
    \draw[->] (fp) to node[above] {$\we$} (s);
  \end{ctikzpicture}
  where the map on the left inserts $x$ onto the ``old'' occurrences in $s(x)$
  and the right one inserts $f p(x)$ onto the ``new'' $0$.
\end{proof}

Before completing the main result of this subsection we record a corollary
which will considerably simplify constructions of $\nwiso$-homotopies
in the final section.

\begin{corollary}\label{eqs-in-nf}
  For a cofibration category $\cat{C}$ a \hore{} Reedy cofibrant diagram
  $X \from D[1] \to \cat{C}$ is an equivalence when seen as a morphism of
  $\nf \cat{C}$ \iff{} it is \hore{} \wrt{} $D\hat{[1]}$.
\end{corollary}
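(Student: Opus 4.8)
The plan is to convert the assertion, via \Cref{nf-representation}, into a statement about prolonging Reedy cofibrant diagrams, and then to recognize the relevant restriction functor as an acyclic fibration of cofibration categories; the geometric heart of the argument is \Cref{E1-contractible}.

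First, by \Cref{nf-representation} the simplicial map $X \colon \simp{1} \to \nf\cat{C}$ extends to a map $\nwiso \to \nf\cat{C}$ \iff{} the \hore{} Reedy cofibrant diagram $X \colon D[1] \to \cat{C}$ extends along $D[1] = D\simp{1} \ito D\nwiso$ to a \hore{} Reedy cofibrant diagram $\bar X \colon D\nwiso \to \cat{C}$; and by definition $X$ is an equivalence of the quasicategory $\nf\cat{C}$ (which is a quasicategory by \Cref{nf-inner}, since $\cat{C} \to [0]$ is a fibration) exactly when such an extension exists. Recall, as noted in the proof of \Cref{E1-contractible}, that $D\nwiso$ carries the maximal \hore{} structure. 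Identifying $D[1]$ with $D\hat{[1]}$ as categories, this settles one implication at once: if $\bar X$ exists then, being \hore{}, it sends every morphism of $D\nwiso$ to a weak equivalence, so its restriction $X$ sends every morphism of $D\hat{[1]}$ — which, $\hat{[1]}$ being maximal, are all of them — to a weak equivalence, i.e.\ $X$ is \hore{} \wrt{} $D\hat{[1]}$.

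For the converse I would assume $X$ is \hore{} \wrt{} $D\hat{[1]}$, so that $X$ is a \hore{} Reedy cofibrant diagram $D\hat{[1]} \to \cat{C}$. The functor $e \colon \hat{[1]} \to \wiso$ selecting the generating isomorphism is injective on objects and faithful, so by \Cref{D-exact} the restriction $e^* \colon \cat{C}^{D\nwiso}_\Reedy \to \cat{C}^{D\hat{[1]}}_\Reedy$ is an exact fibration. I would then argue that $e^*$ is a weak equivalence, hence an acyclic fibration: by \Cref{level-htpy-eq} it suffices that $De \colon D\hat{[1]} \to D\nwiso$ be a homotopy equivalence of \hore{} categories. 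Both $D\hat{[1]}$ and $D\nwiso$ are homotopy equivalent to $[0]$ — the former by the argument of \Cref{Dm-m-he} (which, together with the evident contraction of $\hat{[1]}$, remains valid after passing to the maximal \hore{} structures), the latter by \Cref{E1-contractible} — and the composite $D\hat{[1]} \xrightarrow{De} D\nwiso \to [0]$ is the canonical functor to $[0]$, so $De$ is a homotopy equivalence by two-out-of-three (concretely, $f' p \colon D\nwiso \to D\hat{[1]}$ is a homotopy inverse, where $p \colon D\nwiso \to [0]$ and $f' \colon [0] \to D\hat{[1]}$ are the relevant contraction data).

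Finally, an acyclic fibration of cofibration categories is surjective on objects: by \Cref{acyclic-fibrations} it has the \rlp{} in $\eCofCat$ \wrt{} the inclusion of $[0]$ into $\{0 \cto 1\}$, and applying this to the initial object together with the (necessarily cofibrant) object one wishes to hit produces a preimage. Hence $X$ lifts to some $\bar X \in \cat{C}^{D\nwiso}_\Reedy$ with $e^* \bar X = X$; by \Cref{nf-representation} this $\bar X$ corresponds to a map $\nwiso \to \nf\cat{C}$ extending $X$, so $X$ is an equivalence. I expect the one genuinely non-formal step to be the verification that $De$ is a homotopy equivalence; a minor point to keep honest is that the $D$ of the simplicial set $\nwiso$ occurring in \Cref{nf-representation} agrees, \hore{} structure included, with the $D$ of the category $\wiso$ used in \Cref{E1-contractible}, which holds because $\wiso$ admits a unique \hore{} structure.
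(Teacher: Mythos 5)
Your proof is correct and takes essentially the same route as the paper's: the forward implication by restricting along $D\hat{[1]} \to D\nwiso$ using that $D\nwiso$ carries the maximal \hore{} structure, and the converse by combining \Cref{Dm-m-he} and \Cref{E1-contractible} in a two-out-of-three argument to see that the sieve $D\hat{[1]} \ito D\nwiso$ is a homotopy equivalence, so that the induced restriction functor is an acyclic fibration and every \hore{} Reedy cofibrant diagram on $D\hat{[1]}$ extends to $D\nwiso$. Your extra observation that acyclic fibrations are surjective on objects (via the lifting property against $[0] \ito \{0 \cto 1\}$ applied to the initial object) is a detail the paper leaves implicit, but the argument is the same.
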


\begin{proof}
  If $X$ is an equivalence, then it extends to $D\nwiso$.
  Hence it is \hore{} \wrt{} $D\hat{[1]}$.

  Conversely, consider a diagram
  \begin{ctikzpicture}
    \matrix[diagram]
    {
      |(D0)| D[0] & |(D1)| D\hat{[1]} & |(DE)| D\nwiso \\
      |(0)|  [0]  & |(1)|  \hat{[1]}  \\
    };

    \draw[->] (D0) to node[left]  {$\htp$} (0);
    \draw[->] (D1) to node[right] {$\htp$} (1);

    \draw[->] (D0) to (D1);
    \draw[->] (D1) to (DE);

    \draw[->] (0) to node[below] {$\htp$} (1);

    \draw[->] (D0) to[bend left] node[above] {$\htp$} (DE);
  \end{ctikzpicture}
  where the indicated maps are homotopy equivalences, the vertical ones
  by (the proof of) \Cref{Dm-m-he}, the top one by \Cref{E1-contractible}
  and the bottom one by direct inspection.
  Hence so is the map $D\hat{[1]} \to D\nwiso$ which is also a sieve so that
  the induced restriction functor
  $\cat{C}^{D\nwiso}_\Reedy \to \cat{C}^{D\hat{[1]}}_\Reedy$
  is an acyclic fibration and thus every \hore{} Reedy cofibrant diagram
  on $D\hat{[1]}$ extends to one on $D\nwiso$.
\end{proof}

\begin{proposition}\label{nf-iso}
  The functor $\nf$ carries fibrations of cofibration categories
  to isofibrations.
\end{proposition}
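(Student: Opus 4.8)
The plan is to peel off the two conditions of an inner isofibration: \Cref{nf-inner} already shows that $\nf P$ is an inner fibration, so it remains to produce the \rlp{} of $\nf P$ \wrt{} $\simp 0 \ito \nwiso$, and then \Cref{nf-inner} together with this makes $\nf P$ an inner isofibration. By \Cref{simplicial-Reedy}, $\nf P$ has the \rlp{} \wrt{} $\simp 0 \ito \nwiso$ if and only if $P$ has the \Rrlp{} \wrt{} $D\simp 0 \ito D\nwiso$, that is, \wrt{} the inclusion $f \from D[0] \ito D\nwiso$. Since $\simp 0 \ito \nwiso$ is a monomorphism of simplicial sets, $f$ is a sieve between \hore{} direct categories with finite latching categories, so by \Cref{sieve-exact} it induces an exact functor --- indeed a fibration --- $f^* \from \cat{C}^{D\nwiso}_\Reedy \to \cat{C}^{D[0]}_\Reedy$ for every cofibration category $\cat{C}$.

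The key point is that $f$ is in fact a homotopy equivalence of \hore{} categories. Indeed, $D[0] \to [0]$ is a homotopy equivalence by \Cref{Dm-m-he}, while $[0] \to D\wiso = D\nwiso$ is a homotopy equivalence by \Cref{E1-contractible}, hence so is the unique functor $D\nwiso \to [0]$; a two-out-of-three argument for homotopy equivalences (the composite $D[0] \to D\nwiso \to [0]$ being the homotopy equivalence $D[0] \to [0]$) then shows $f$ is a homotopy equivalence as well. This is essentially the observation already made in the proof of \Cref{eqs-in-nf}. Consequently \Cref{level-htpy-eq} applies: the exact functor $f^* \from \cat{C}^{D\nwiso}_\Reedy \to \cat{C}^{D[0]}_\Reedy$ is a weak equivalence of cofibration categories, for every $\cat{C}$.

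Now let $P \from \cat{C} \fto \cat{D}$ be a fibration of cofibration categories. Applying \Cref{dual-pushout-prod}(3) to the sieve $f$ and the fibration $P$ (the pullback in question exists because $f^*$ is a fibration), the induced functor
\begin{align*}
  (f^*, P) \from \cat{C}^{D\nwiso}_\Reedy
    \to \cat{C}^{D[0]}_\Reedy \times_{\cat{D}^{D[0]}_\Reedy} \cat{D}^{D\nwiso}_\Reedy
\end{align*}
is an acyclic fibration of cofibration categories. But an acyclic fibration of cofibration categories is surjective on objects: by \Cref{acyclic-fibrations} it has the \rlp{} in $\eCofCat$ \wrt{} the inclusion of $[0]$ into the category with a single cofibration $0 \cto 1$, and feeding this lifting property the cofibration out of an initial object (legitimate since every object is cofibrant) produces, over any chosen object of the target, an object of the source. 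Unwinding the definitions, surjectivity on objects of $(f^*, P)$ is exactly the assertion that $P$ has the \Rrlp{} \wrt{} $f$. By \Cref{simplicial-Reedy} this gives the desired lifting property of $\nf P$, so together with \Cref{nf-inner} the functor $\nf P$ is an inner isofibration. The only mildly delicate steps are the identification of $f$ as a homotopy equivalence --- which is why one needs $f^*$ to be a genuine weak equivalence and not merely the fibration supplied by \Cref{sieve-exact} --- and the routine translation between surjectivity on objects of $(f^*, P)$ and the \Rrlp{}; neither is a serious obstacle.
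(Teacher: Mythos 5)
Your proof is correct and follows essentially the same route as the paper: both reduce, via the Reedy lifting correspondence and \Cref{dual-pushout-prod}(3), to showing that the sieve $D[0] \ito D\nwiso$ induces a weak equivalence $\cat{C}^{D\nwiso}_\Reedy \to \cat{C}^{D[0]}_\Reedy$, and both obtain this from \Cref{E1-contractible} and \Cref{Dm-m-he} by a two-out-of-three argument (you run it at the level of homotopy equivalences of \hore{} categories, the paper at the level of the induced functors of diagram categories — a cosmetic difference). Your explicit unwinding of the \Rrlp{} as surjectivity on objects of $(f^*,P)$, and of the latter as a consequence of being an acyclic fibration, is exactly what the paper leaves implicit in its citation of \Cref{Rlps}.
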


\begin{proof}
  By \Cref{Rlps} it suffices to check that $D[0] \ito \wiso$ induces
  a weak equivalence $\cat{C}^{D\wiso}_\Reedy \to \cat{C}^{D[0]}_\Reedy$
  for every cofibration category $\cat{C}$.
  \Cref{E1-contractible} asserts that this is the case for the composite
  \begin{ctikzpicture}
    \matrix[diagram]
    {
      |(0)| [0] & |(D0)| D[0] & |(DE1)| D\wiso \\
    };

    \draw[inj] (0)   to (D0);
    \draw[inj] (D0)  to (DE1);
  \end{ctikzpicture}
  while \Cref{Dm-m-he} says the same for the first functor.
  Thus the conclusion follows by 2 out of 3.
\end{proof}

\begin{proposition}
  The functor $\nf$ carries acyclic fibrations of cofibration categories
  to acyclic Kan fibrations.
\end{proposition}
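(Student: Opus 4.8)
The plan is to combine \Cref{simplicial-Reedy} with \Cref{acyclic-fibration-Reedy}. By \Cref{simplicial-Reedy}, for an exact functor $F \from \cat{C} \to \cat{D}$ the map $\nf F$ has the \rlp{} \wrt{} the boundary inclusion $\bdsimp{m} \ito \simp{m}$ \iff{} $F$ has the \Rrlp{} \wrt{} $D\bdsimp{m} \ito D[m]$. Since an acyclic Kan fibration is by definition a simplicial map with the \rlp{} \wrt{} all boundary inclusions, it thus suffices to show that an acyclic fibration $F \from \cat{C} \to \cat{D}$ of cofibration categories has the \Rrlp{} \wrt{} $D\bdsimp{m} \ito D[m]$ for every $m \ge 0$.

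First I would record the elementary observation that $D$ (as a functor to $\Cat$) carries every monomorphism of simplicial sets to a sieve and every simplicial map to a \hore{} functor. For a mono $K \ito L$, the objects of $DK$ are the simplices of $K$, a subset of the simplices of $L$; the induced functor is injective on objects, fully faithful (the hom-sets are cut out by the same equation $x = y\phi$), and downwards closed, because $x = y\phi$ with $y$ in the image of $K$ and $\phi$ a face operator forces $x$ into the image of $K$ as well. That $D$ of any simplicial map is \hore{} is immediate from the description of generating weak equivalences, which is preserved on the nose. In particular $D\bdsimp{m} \ito D[m]$ is a sieve between \hore{} direct categories, and all latching categories involved are finite (the latching category at a simplex $x \from [n] \to [m]$ has as objects the proper face operators into $[n]$, of which there are finitely many).

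The conclusion then follows from \Cref{acyclic-fibration-Reedy}: since $F$ is an acyclic fibration, it has the \Rrlp{} \wrt{} every sieve between \hore{} direct categories with finite latching categories, and in particular \wrt{} $D\bdsimp{m} \ito D[m]$ for each $m$. I do not expect a genuine obstacle here; the only point that needs a moment of care is the verification that $D$ turns monomorphisms into sieves — specifically the downwards-closedness, where the combinatorics of the category of elements of a simplicial set enters — but this is routine.
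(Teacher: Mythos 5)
Your proposal is correct and follows essentially the same route as the paper, which likewise reduces via the lifting-problem translation (\Cref{Rlps}, of which \Cref{simplicial-Reedy} is the directly applicable corollary) to the fact that $D\bdsimp{m} \ito D[m]$ is a sieve and then invokes \Cref{acyclic-fibration-Reedy}. Your extra verification that $D$ sends monomorphisms to sieves between \hore{} direct categories with finite latching categories is the right supporting observation and is carried out correctly.
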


\begin{proof}
  This follows from \Cref{Rlps,acyclic-fibration-Reedy}
  and the fact that $D\bdsimp{m} \ito D[m]$ is a sieve for all $m$.
\end{proof}

This concludes the verification of all lifting properties necessary
for the exactness of $\nf$.
In the remainder of this subsection we will derive some further lifting properties
which will be useful later.

Occasionally, it will be convenient to consider
\emph{marked simplicial complexes} instead of simplicial sets.
Recall from the classical simplicial homotopy theory that
an \emph{ordered simplicial complex} is a poset $P$ equipped with a family
of finite, non-empty totally ordered subsets of $P$ (called \emph{simplices})
\st{}
\begin{itemize}
\item a non-empty subset of a simplex is a simplex,
\item for each $x \in P$ the singleton $\{ x \}$ is a simplex.
\end{itemize}
Simplicial complexes with an underlying poset $P$ can be identified with
simplicial subsets of $NP$ (containing all vertices of $NP$).
This is the point of view that we will adopt to define a marked version
of this notion.

\begin{definition}
  A \emph{marked simplicial complex} is a simplicial set $K$
  equipped with an embedding $K \ito NP$ where $P$ is a \hore{} poset.
\end{definition}

Marked simplicial complexes can be seen as certain special
\emph{marked simplicial sets} which are sometimes used to provide some extra
flexibility to the theory of quasicategories.

We extend the definition of $DK$ to a marked simplicial complex $K$ as follows.
The underlying category of $DK$ is the same as previously,
but the \hore{} structure is created by the inclusion $DK \ito DP$.
This agrees with the old definition when $P$ has the trivial \hore{} structure.

\label{subdiv}
Moreover, for a marked simplicial complex $K$ we define a \hore{} poset $\Sd K$
as the full subcategory of $DK$ spanned by the non-degenerate simplices of $K$
and with the \hore{} structure inherited from $DP$.
The category $\Sd K$ is known as the \emph{barycentric subdivision} of $K$
hence the notation.
(By analogy we may think of $DK$ as the \emph{fat barycentric subdivision}
of $K$.)
It is indeed a poset since its objects can be identified with
finite non-empty totally ordered subsets of $P$ that correspond
to non-degenerate simplices of $K$ (just as in the classical definition
of an ordered simplicial complex above) and morphisms with inclusions
of such subsets.
With this interpretation an inclusion $A \subseteq B$ is a weak equivalence
\iff{} $\max A \to \max B$ is a weak equivalence of $P$.
(Of course, if $P$ has the trivial \hore{} structure,
then this condition reduces to $\max A = \max B$.)
In the case when $K = NP$ we will usually write $\Sd P$ in place of $\Sd K$.

The next two lemmas will allow us to reduce constructions of diagrams over $DK$
to constructions of diagrams over $\Sd K$.

\begin{lemma}\label{D-Sd-he-msc}
  For any marked simplicial complex $K$ the inclusion $f \from \Sd K \to DK$
  is a homotopy equivalence.
\end{lemma}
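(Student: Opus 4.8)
The plan is to mimic the proof of \Cref{Dm-m-he}: construct an explicit homotopical retraction $r \from DK \to \Sd K$ with $r f = \id_{\Sd K}$ on the nose, together with a ``fattening'' endofunctor of $DK$ that witnesses, through a zig-zag of natural weak equivalences, that $f r$ is weakly equivalent to $\id_{DK}$.

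First I would define $r$. A simplex $x \in K_m$ is determined by the non-decreasing sequence of its vertices $(x(0),\dots,x(m))$ in $P$, and --- since $K$ is a simplicial \emph{complex} --- it is non-degenerate \iff{} this sequence is strictly increasing; moreover every $x$ has a canonical underlying non-degenerate simplex $\hat{x}$, namely the subsequence of distinct vertices (equivalently, the Eilenberg--Zilber decomposition $x = \hat{x}\dgn$ with $\dgn$ surjective). Put $r(x) = \hat{x}$; a face operator $\phi \from x \to y$ restricts to an inclusion of the vertex set of $x$ into that of $y$, hence to a morphism $\hat{x} \to \hat{y}$ of $\Sd K$, and this assignment is plainly functorial. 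Because $P$ is a poset, $p_P(r\phi)$ and $p_P(\phi)$ are both the unique morphism $x(m) \to y(n)$ of $P$, so $r$ is homotopical, and $r f = \id$ since $\hat{x} = x$ for non-degenerate $x$. The one genuinely essential input here is that a face of a non-degenerate simplex of a \emph{complex} is again non-degenerate --- this is exactly what makes $r$ land in $\Sd K$; for a general simplicial set one would be forced into an epi--mono factorization argument that would not yield a functor into the non-degenerate part.

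Next I would introduce $s \from DK \to DK$: let $s(x)$ be obtained from $x$ by inserting, at the end of each block of equal vertices, one additional copy of that vertex (note that $s(x)$ is a degeneracy of $x$, hence still a simplex of $K$), and let $s(\phi)$ act as $\phi$ on the ``old'' occurrences and send ``new'' occurrences to ``new'' ones. As in \Cref{Dm-m-he} this is a well-defined functor, and it is homotopical because $p_P s = p_P$: both the top vertex of $s(x)$ and the top-to-top morphism associated to $s(\phi)$ agree with those of $x$ and $\phi$. The inclusion of the old occurrences defines a natural transformation $\id_{DK} \to s$, and the inclusion of the new occurrences a natural transformation $f r \to s$; the component of each at $x$ carries the top vertex of its source to a copy of the top vertex $x(m)$ of $s(x)$, so that $p_P$ of every component is an identity of $P$ and both transformations are pointwise weak equivalences. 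This provides the required zig-zag $\id_{DK} \to s \leftarrow f r$ and finishes the proof.

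The only place where care is needed is the bookkeeping: verifying functoriality of $r$ and of $s$, checking that $s(\phi)$ is again a face operator, and checking naturality of the two transformations --- all of which amounts to routine manipulation of vertex sequences. The conceptually load-bearing point, as noted above, is the ``complex'' hypothesis, without which $r$ would not even be defined as a functor into $\Sd K$.
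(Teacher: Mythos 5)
Your proposal is correct and follows essentially the same route as the paper: the paper's retraction $q_K$ sends a simplex $[k]\to P$ to its image, which under the identification of $\Sd K$ with non-degenerate simplices is exactly your $\hat{x}$, and the fattening functor $s$ together with the zig-zag $\id \to s \leftarrow f q_K$ is identical. Your additional bookkeeping (that $s(x)$ remains a simplex of $K$, and that the complex hypothesis makes $r$ land in $\Sd K$) is a correct elaboration of points the paper leaves implicit.
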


\begin{proof}
  The construction is a minor modification of the one used in \Cref{Dm-m-he}.
  Let $P$ denote the underlying \hore{} poset of $K$.
  We define $q_K \from DK \to \Sd K$ by sending each simplex of $K$ seen as
  a map $[k] \to P$ to its image and $s \from DK \to DK$
  by inserting one extra occurrence of each $p \in P$ that is already present
  in a given $x \in DK$.
  Just as in \Cref{Dm-m-he} a new occurrence is inserted at the end of the block
  of the old occurrences which yields analogous weak equivalences
  \begin{ctikzpicture}
    \matrix[diagram]
    {
      |(id)| \id & |(s)| s & |(fq)| f q_K \text{.} \\
    };

    \draw[->] (id) to node[above] {$\we$} (s);
    \draw[->] (fq) to node[above] {$\we$} (s);
  \end{ctikzpicture}
  Moreover, $q_K f = \id_{\Sd K}$ which finishes the proof.  
\end{proof}

\begin{lemma}\label{relative-D-Sd}
  Let $K \ito L$ be an injective map of finite marked simplicial complexes
  (which means that it covers an injective \hore{} map
  of the underlying \hore{} posets).
  Then for every cofibration category $\cat{C}$
  the inclusion $DK \union \Sd L \ito DL$ induces an acyclic fibration
  $\cat{C}^{DL}_\Reedy \to \cat{C}^{DK \union \Sd L}_\Reedy$.
\end{lemma}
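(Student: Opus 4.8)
The plan is to prove that the exact functor $f^* \from \cat{C}^{DL}_\Reedy \to \cat{C}^{DK \union \Sd L}_\Reedy$ induced by the inclusion $f \from DK \union \Sd L \ito DL$ is a fibration which is moreover a weak equivalence, so that it is an acyclic fibration. Write $I = DK \union \Sd L$. The first step is combinatorial. Because $L$ is embedded in $NP$ for a \hore{} poset $P$, any face of a non-degenerate simplex of $L$ is non-degenerate, so $\Sd L$ is a full downwards closed subcategory of $DL$, i.e.\ a sieve; the same holds for $DK$ since $K$ is closed under faces, and hence also for $I$. Moreover $DK \inter \Sd L = \Sd K$, and since $I$ is downwards closed a morphism of $I$ must have both endpoints in $DK$ or both in $\Sd L$ (if the target of a morphism of $DL$ is non-degenerate, then so is its source). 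Thus the four sieves $\Sd K \ito DK$, $\Sd K \ito \Sd L$, $DK \ito I$, $\Sd L \ito I$ exhibit $I$ as the pushout $DK \push_{\Sd K} \Sd L$ of \hore{} direct categories with finite latching categories (the \hore{} structures being compatible by construction). By \Cref{sieve-exact} the functor $f^*$ is then a fibration, so it only remains to show it is a weak equivalence.

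For this I would use the sieves $\Sd K \ito DK$ and $\Sd L \ito DL$. Each is a homotopy equivalence of \hore{} categories by \Cref{D-Sd-he-msc}, and each is a sieve and hence induces an exact functor on Reedy cofibrant diagrams by \Cref{sieve-exact}; so by \Cref{level-htpy-eq} the functors $\cat{C}^{DK}_\Reedy \to \cat{C}^{\Sd K}_\Reedy$ and $\cat{C}^{DL}_\Reedy \to \cat{C}^{\Sd L}_\Reedy$ are weak equivalences, and being fibrations they are acyclic fibrations. Applying the assignment $J \mapsto \cat{C}^J_\Reedy$ to the pushout square above and invoking \Cref{sieves-pullback}, the square with corners $\cat{C}^I_\Reedy$, $\cat{C}^{DK}_\Reedy$, $\cat{C}^{\Sd L}_\Reedy$, $\cat{C}^{\Sd K}_\Reedy$ is a pullback of cofibration categories; since its edge $\cat{C}^{DK}_\Reedy \to \cat{C}^{\Sd K}_\Reedy$ is an acyclic fibration, its pullback $\cat{C}^I_\Reedy \to \cat{C}^{\Sd L}_\Reedy$ is again an acyclic fibration by the stability of acyclic fibrations under pullback in $\CofCat$ (\Cref{pullback-of-cofcats,acyclic-fibrations}).

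Finally, the composite $\cat{C}^{DL}_\Reedy \xrightarrow{f^*} \cat{C}^I_\Reedy \to \cat{C}^{\Sd L}_\Reedy$ is the functor induced by the composite of sieves $\Sd L \ito I \ito DL$, hence coincides with the acyclic fibration $\cat{C}^{DL}_\Reedy \to \cat{C}^{\Sd L}_\Reedy$ from the previous paragraph; since its second factor $\cat{C}^I_\Reedy \to \cat{C}^{\Sd L}_\Reedy$ is a weak equivalence, the ``2 out of 3'' property of weak equivalences in $\CofCat$ forces $f^*$ to be a weak equivalence, and together with the first paragraph this proves the claim. The main obstacle is the bookkeeping of the first paragraph — verifying that $\Sd L$ and $DK$ really are sieves in $DL$ and that $I$ is genuinely their pushout over $\Sd K$, with all \hore{} and direct structures compatible; the homotopy-theoretic content is then formal, combining \Cref{D-Sd-he-msc}, \Cref{level-htpy-eq}, \Cref{sieves-pullback} and the fibration category structure of $\CofCat$.
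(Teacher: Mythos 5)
Your proof is correct and follows essentially the same route as the paper: exhibit $DK \union \Sd L$ as the pushout of sieves $DK \push_{\Sd K} \Sd L$, apply \Cref{sieves-pullback} to get a pullback of cofibration categories, use \Cref{D-Sd-he-msc} to see that $\cat{C}^{DK}_\Reedy \fto \cat{C}^{\Sd K}_\Reedy$ and $\cat{C}^{DL}_\Reedy \fto \cat{C}^{\Sd L}_\Reedy$ are acyclic fibrations, and conclude by pullback stability and ``2 out of 3''. Your first paragraph spells out the combinatorial verifications that the paper leaves implicit, which is fine.
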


\begin{proof}
  We have the following pushout square of sieves
  between \hore{} direct categories on the left
  and hence a pullback square of cofibration categories on the right
  by \Cref{sieves-pullback}.
  \begin{ctikzpicture}
    \matrix[diagram]
    {
      |(SdK)| \Sd K & |(SdL)| \Sd L &
        |(CP)| \cat{C}^{DK \union \Sd L}_\Reedy & |(CDK)| \cat{C}^{DK}_\Reedy \\
      |(DK)| DK & |(P)| DK \union \Sd L &
      |(CSdL)| \cat{C}^{\Sd L}_\Reedy & |(CSdK)| \cat{C}^{\Sd K}_\Reedy \\
    };

    \draw[inj] (SdK) to (SdL);
    \draw[inj] (DK)  to (P);
    \draw[inj] (SdK) to (DK);
    \draw[inj] (SdL) to (P);

    \draw[fib] (CSdL) to (CSdK);
    \draw[fib] (CP)   to (CDK);
    \draw[fib] (CDK)  to (CSdK);
    \draw[fib] (CP)   to (CSdL);
  \end{ctikzpicture}
  The fibration $\cat{C}^{DK}_\Reedy \fto \cat{C}^{\Sd K}_\Reedy$ is acyclic
  by \Cref{D-Sd-he-msc} and therefore so is
  $\cat{C}^{DK \union \Sd L}_\Reedy \fto \cat{C}^{\Sd L}_\Reedy$.
  Moreover, we have a triangle of fibrations
  \begin{ctikzpicture}
    \matrix[diagram]
    {
      |(DL)| \cat{C}^{DL}_\Reedy & & |(SdL)| \cat{C}^{\Sd L}_\Reedy \\
      & |(P)| \cat{C}^{DK \union \Sd L}_\Reedy \\
    };

    \draw[fib] (DL) to (P);
    \draw[fib] (P)  to (SdL);
    \draw[fib] (DL) to (SdL);
  \end{ctikzpicture}
  where $\cat{C}^{DL}_\Reedy \fto \cat{C}^{\Sd L}_\Reedy$ is acyclic again
  by \Cref{D-Sd-he-msc} and thus so is
  $\cat{C}^{DL}_\Reedy \fto \cat{C}^{DK \union \Sd L}_\Reedy$.
\end{proof}

For future reference we will reinterpret lifting properties
for special outer horns in terms of certain \hore{} structures on categories
$D\horn{m,0}$ and $D\horn{m,m}$.

For each $m > 1$ let $\totl{m}$ denote the \hore{} poset with the underlying poset
$[m]$ and $0 \weto 1$ as the only non-identity weak equivalence.
Similarly, let $\tott{m}$ denote the \hore{} poset with the underlying poset
$[m]$ and $m-1 \weto m$ as the only non-identity weak equivalence.
Let $\hornl{m}$ and $\hornt{m}$ denote the outer horns seen as
marked simplicial complexes with the underlying \hore{} posets $\totl{m}$
and $\tott{m}$.

\begin{lemma}\label{special-horns-nf}
  For every cofibration category $\cat{C}$ the inclusion
  $D\hornl{m} \ito D\totl{m}$ induces a weak equivalence
  $\cat{C}^{D\totl{m}}_\Reedy \to \cat{C}^{D\hornl{m}}_\Reedy$.

  The same holds for $D\hornt{m} \ito D\tott{m}$.
\end{lemma}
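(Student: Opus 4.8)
The plan is to reduce the claim to a homotopy equivalence of indexing categories and then to construct that equivalence explicitly, in the spirit of the proof of \Cref{Dm-m-he}.

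First, $D\hornl{m}\ito D\totl{m}$ is $D$ applied to the inclusion of a subcomplex of marked simplicial complexes, hence a sieve between finite \hore{} direct categories; since it is full, the induced functors of latching categories are isomorphisms, so by \Cref{sieve-exact} the restriction $\cat{C}^{D\totl{m}}_\Reedy\to\cat{C}^{D\hornl{m}}_\Reedy$ is exact. By \Cref{level-htpy-eq} it therefore suffices to prove that $D\hornl{m}\ito D\totl{m}$ is a homotopy equivalence of \hore{} categories. Since, by \Cref{D-Sd-he-msc}, the inclusions $\Sd\hornl{m}\ito D\hornl{m}$ and $\Sd\totl{m}\ito D\totl{m}$ are homotopy equivalences, two-out-of-three reduces us further to showing that $\Sd\hornl{m}\ito\Sd\totl{m}$ is a homotopy equivalence.

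Now $\Sd\totl{m}$ is the poset of non-empty subsets of $[m]$ with $A\subseteq B$ a weak equivalence \iff{} $\max A\to\max B$ is one in $\totl{m}$ (that is, $\max A=\max B$, or $A=\{0\}$ and $B=\{0,1\}$), and $\Sd\hornl{m}$ is the full subposet obtained by deleting the two subsets $[m]$ and $\{1,\ldots,m\}$. In particular the maximum gives a \hore{} functor $\max$ on either poset, and composing with the collapse $c\from\totl{m}\to[m-1]$ of the marked edge ($c(0)=c(1)=0$ and $c(k)=k-1$ for $k\ge2$) yields a commutative triangle over $[m-1]$ whose horizontal edge is the inclusion $\Sd\hornl{m}\ito\Sd\totl{m}$. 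By two-out-of-three it is enough to see that both slanted edges $c\circ\max$ are homotopy equivalences. For $\Sd\totl{m}$ this is easy: $j\mapsto\{0,1,\ldots,j+1\}$ is a strict section $e$, and the evident inclusions $A\hookrightarrow e(c(\max A))$ constitute a natural weak equivalence from the identity to $e\,c\max$. For $\Sd\hornl{m}$ one instead uses the section $\bar e(j)=\{0\}\cup\{2,3,\ldots,j+1\}$ (designed to avoid the two deleted subsets), checks $c\max\bar e=\id$, and realizes the homotopy between $\id$ and $\bar e\,c\max$ by the zig-zag of natural weak equivalences $\id\to\Theta\leftarrow g\to\bar e\,c\max$ with $\Theta(A)=A\cup\{0\}$ and $g(A)=\{0\}\cup(A\setminus\{1\})$: all three functors land in $\Sd\hornl{m}$ ---crucially $A\cup\{0\}\ne[m]$ because membership in $\Sd\hornl{m}$ forbids $A\supseteq\{1,\ldots,m\}$--- and each of the four natural transformations is maximum-preserving except where it is the marked weak equivalence $\{0\}\subseteq\{0,1\}$. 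The statement for $D\hornt{m}\ito D\tott{m}$ is proved by the mirror-image construction: $c$ now collapses $m-1\weto m$, and the auxiliary functors in the zig-zag are adjusted so as to stay clear of the subsets $[m]$ and $\{0,\ldots,m-1\}$ that are missing from $\hornt{m}$.

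The main obstacle is precisely this homotopy between $\id$ and $\bar e\,c\max$ on $\Sd\hornl{m}$: the one-step insertion homotopy that works on $\Sd\totl{m}$ overshoots the horn, landing on the forbidden subset $[m]$, exactly when $1\in A$ and $\max A=m$, so one is forced to route through the auxiliary steps ``move $0$ in'' and ``delete $1$'' in order to bring $A$ into the image of $\bar e$ without leaving $\Sd\hornl{m}$. It is the same phenomenon that makes the passage to barycentric subdivisions in the first reduction unavoidable, since at the level of $D$ (with its repeated vertices) no analogous contraction can be set up.
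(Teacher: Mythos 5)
Your strategy is genuinely different from the paper's: the paper reduces via \Cref{acyclic-sieve} and \Cref{nf-representation} to a special outer horn filling problem against the inner isofibration $\nf P$ and invokes \Cref{special-horns}, which handles both horn types uniformly; you instead contract the subdivided posets onto $[m-1]$ by hand and conclude via \Cref{D-Sd-he-msc}, \Cref{sieve-exact} and \Cref{level-htpy-eq}. The reduction itself is legitimate, and your treatment of the $\Lambda^0$ case is correct: the section $\bar e$, the functors $\Theta(A)=A\cup\{0\}$ and $g(A)=\{0\}\cup(A\setminus\{1\})$, and the zig-zag $\id\to\Theta\leftarrow g\to\bar e\,c\max$ all check out, precisely because adjoining $0$ never changes $\max A$, so $\id\to\Theta$ is \emph{unconditionally} a natural weak equivalence. (Two small slips: $D\hornl{m}$ and $D\totl{m}$ are not finite categories --- they merely have finite latching categories, which is what \Cref{sieve-exact} and \Cref{level-htpy-eq} actually require --- and it is the sieve property, not fullness, that makes the latching categories match.)

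The gap is the final sentence. The $\Lambda^m$ case is \emph{not} the mirror image of the $\Lambda^0$ case, because the homotopical structure on $\Sd K$ is defined via $\max$ and is destroyed by the order reversal $i\mapsto m-i$: that bijection carries the weak equivalences of $\Sd\totl{m}$ to inclusions controlled by $\min$, not $\max$, so it is not an isomorphism of \hore{} posets $\Sd\totl{m}\to\Sd\tott{m}$. Concretely, the mirror of $\Theta$, namely $A\mapsto A\cup\{m\}$, is not the target of a natural weak equivalence from the identity: the inclusion $A\subseteq A\cup\{m\}$ raises the maximum to $m$, and $\max A\to m$ is a weak equivalence of $\tott{m}$ only when $\max A\ge m-1$. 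So the needed adjustment is not merely ``staying clear of the missing subsets'' --- the auxiliary functors must be redefined with a case split, e.g.\ acting as the identity on subsets with $\max A\le m-2$ and inserting $m$ (resp.\ deleting $m-1$) only when $\max A\ge m-1$. One can check that such a conditional zig-zag does work, so the claim is true and your approach is salvageable, but as written the second half of the proof does not go through by symmetry and needs to be supplied.
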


\begin{proof}
  By \Cref{acyclic-sieve} it will suffice to see that the inclusion
  $D\hornl{m} \ito D\totl{m}$ has the \Rllp{} \wrt{} all fibrations
  of cofibration categories.

  By \Cref{nf-representation} every Reedy lifting problem of
  $D\hornl{m} \ito D\totl{m}$ against a fibration of cofibration categories
  $P \from \cat{C} \to \cat{D}$ is equivalent to a problem of lifting
  $\hornl{m} \ito \totl{m}$ against $\nf P$
  where the latter is an inner isofibration by \Cref{nf-inner,nf-iso}
  and the horn is special by \Cref{eqs-in-nf}.
  Hence it has a solution by \Cref{special-horns}.

  The same argument works for $D\hornt{m} \ito D\tott{m}$
  since \Cref{special-horns} applies to both types of special horns.
\end{proof}

Let $[k+\tilde{1}+m]$ denote a \hore{} category with underlying category
$[k+1+m]$ and $k \weto k+1$ as the only non-identity weak equivalence.
Let $\horn{k+\tilde{1}+m,[k]}$ denote the generalized horn $\horn{k+1+m,[k]}$
seen as a marked simplicial complex with the underlying \hore{} poset
$[k+\tilde{1}+m]$.
The next lemma is a generalization of the previous one.

\begin{lemma}\label{special-generalized-horns}
  The inclusion $D\horn{k+\tilde{1}+m,[k]} \ito D[k+\tilde{1}+m]$
  has the \Rllp{} \wrt{} all fibrations of cofibration categories.
  Hence for any cofibration category $\cat{C}$ it induces a weak equivalence
  $\cat{C}^{D[k+\tilde{1}+m]}_\Reedy
    \to \cat{C}^{D\horn{k+\tilde{1}+m,[k]}}_\Reedy$.
\end{lemma}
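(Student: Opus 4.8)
The plan is to mimic the proof of \Cref{special-horns-nf}: reduce the asserted \Rllp{} to an ordinary lifting problem against the inner isofibration $\nf P$, and then realise the inclusion $\horn{k+\tilde{1}+m,[k]} \ito [k+\tilde{1}+m]$ of marked simplicial complexes as a finite composite of pushouts of horn inclusions, all of which are inner except for a single special outer horn carrying the weak equivalence $k \weto k+1$.

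First I would observe that, being $D$ of a downward closed inclusion of finite marked simplicial complexes, the functor $D\horn{k+\tilde{1}+m,[k]} \ito D[k+\tilde{1}+m]$ is a sieve between \hore{} direct categories with finite latching categories, so by \Cref{acyclic-sieve} it suffices to establish the \Rllp{} \wrt{} all fibrations of cofibration categories; the statement about weak equivalences then follows at once. Fixing such a fibration $P \from \cat{C} \fto \cat{D}$, I would use \Cref{nf-representation} in its marked form (via \Cref{eqs-in-nf}) to identify a Reedy lifting problem of $D\horn{k+\tilde{1}+m,[k]} \ito D[k+\tilde{1}+m]$ against $P$ with a lifting problem of $\horn{k+\tilde{1}+m,[k]} \ito [k+\tilde{1}+m]$ against $\nf P$, which is an inner isofibration by \Cref{nf-inner,nf-iso}. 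Here the edge $\{k,k+1\}$ of $[k+\tilde{1}+m]$ is carried to an equivalence of $\nf\cat{D}$, and to one of $\nf\cat{C}$ whenever it already lies in the horn, again by \Cref{eqs-in-nf}.

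The heart of the argument is a filtration of this inclusion. A simplex of $[k+1+m]$ fails to lie in $\horn{k+\tilde{1}+m,[k]}$ precisely when it contains $\{k+1,\dots,k+1+m\}$; I would write such a simplex as $\alpha(S) = S \union \{k+1,\dots,k+1+m\}$ or as $\beta(S) = S \union \{k,k+1,\dots,k+1+m\}$ for $S \subseteq \{0,\dots,k-1\}$, according as it omits or contains the vertex $k$, and attach the pairs $\alpha(S),\beta(S)$ in order of increasing $|S|$, in an arbitrary order among subsets of equal size. The key point is that at the stage indexed by $S$ the only proper face of the simplex $\beta(S)$ not already present is the codimension $1$ face $\alpha(S)$ opposite the vertex $k$: any other face of $\beta(S)$ either avoids $\{k+1,\dots,k+1+m\}$ — and then it belongs to $\horn{k+\tilde{1}+m,[k]}$ from the outset — or equals $\alpha(S')$ or $\beta(S')$ for some $S' \subsetneq S$ and was attached at an earlier stage. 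Thus this stage is a pushout, along an injective map, of the horn inclusion $\Lambda^{|S|}[|S|+m+1] \ito \simp{|S|+m+1}$. For $S \neq \emptyset$ the vertex $k$ lies strictly between $\min S$ and $k+1+m$, so this horn is inner and $\nf P$, being an inner fibration, admits the required lift. For $S = \emptyset$ the horn is $\horn{m+1,0}$ on the vertex set $\{k,\dots,k+1+m\}$, whose initial edge is $\{k,k+1\}$; since this edge goes to an equivalence, for $m \geq 1$ the horn is special and \Cref{special-horns} produces the lift, while for $m = 0$ the inclusion degenerates to $\simp{0} \ito \simp{1}$ and a lift is obtained directly from the isofibration property of $\nf P$ (the given equivalence in $\nf\cat{D}$ extends over $\nwiso$, and $\nf P$ has the \rlp{} \wrt{} $\simp{0} \ito \nwiso$). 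Composing the lifts over all stages solves the lifting problem, and with it the lemma.

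I expect the main obstacle to be the combinatorial bookkeeping of the filtration: fixing an ordering of the attachments under which every stage is genuinely the pushout of a single horn inclusion along an injective map — equivalently, checking that at each stage exactly the codimension $1$ face $\alpha(S)$ of $\beta(S)$ opposite the marked vertex is missing while all the remaining proper faces are in place. Once this is settled, the lemma follows formally from the facts established earlier that $\nf$ sends fibrations to inner isofibrations and that special outer horns can be filled.
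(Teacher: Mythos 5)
Your argument is correct, but it takes a genuinely different route from the paper's for the inductive step. The paper disposes of $k=0$ by citing \Cref{special-horns-nf} and then reduces $k>0$ to $k=0$ by a single adjunction move: writing $[k+1+m] \iso [k]\join[m]$ and $\horn{k+1+m,[k]} \iso \simp{k}\join\bdsimp{m}$, it transposes the lifting problem along the join--slice adjunction into a special outer horn $\horn{1+m,0} \ito \simp{1+m}$ mapping to the induced map of slice quasicategories $X'\uslice\qcat{C} \to Y'\uslice\qcat{D}$, which is again an inner isofibration by \Cref{inner-isofibration-slice}. You instead filter the generalized horn inclusion cell by cell, attaching the pairs $\alpha(S)\subset\beta(S)$ in order of increasing $|S|$; your bookkeeping checks out (the only proper face of $\beta(S)$ missing at its stage is $\alpha(S)$, the face opposite the vertex $k$ sitting in position $|S|$, so each stage is a pushout of $\horn{|S|+1+m,|S|}$ along an injective map, inner for $S\neq\emptyset$ and the special outer horn $\horn{1+m,0}$ for $S=\emptyset$), and your separate treatment of $m=0$ via the isofibration property is needed for exactly the right reasons: \Cref{special-horns} does not cover one-dimensional horns, and for $m=0$ the marked edge $\{k,k+1\}$ is not contained in the horn, so the lift must itself be manufactured as an equivalence via $\nwiso$. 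What the paper's route buys is brevity, since it reuses the slice machinery already in place and avoids all combinatorics; what yours buys is a self-contained explicit cell decomposition in the spirit of \Cref{generalized-inner-horns} that makes visible where the single non-inner horn occurs, and which treats $k=0$ and $k>0$ uniformly. Both arguments rest on the same translation of Reedy lifting problems into simplicial ones via \Cref{nf-representation,eqs-in-nf}, and both leave the same small point implicit, namely that a solution of the simplicial problem sending $k\to k+1$ to an equivalence corresponds to a diagram that is \hore{} for the full marked structure on $D[k+\tilde{1}+m]$ (a short ``2 out of 6'' argument).
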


\begin{proof}
  The case of $k = 0$ is just the previous lemma (with $m$ replaced by $1+m$).
  The case of $k > 0$ can be reduced to the case of $k = 0$ as follows.
  We have $[k+1+m] \iso [k] \join [m]$
  and $\horn{k+1+m,[k]} \iso \simp{k} \join \bdsimp{m}$ and hence
    it will suffice
  to solve every lifting problem
  \begin{ctikzpicture}
    \matrix[diagram]
    {
      |(b)| \simp{k} \join \bdsimp{m} & |(C)| \qcat{C} \\
      |(s)| \simp{k} \join \simp{m}   & |(D)| \qcat{D} \\
    };

    \draw[->]  (b) to node[above] {$X$} (C);
    \draw[->]  (s) to node[below] {$Y$} (D);
    \draw[fib] (C) to node[right] {$P$} (D);

    \draw[->] (b) to (s);
  \end{ctikzpicture}
  where $X$ and $Y$ send the edge $k \to k+1$ to an equivalence
  and $P$ is an inner isofibration (by \Cref{nf-representation}).
  This problem is equivalent to
  \begin{ctikzpicture}
    \matrix[diagram] {
      |(b)| \{ k \} \join \bdsimp{m} & |(C)| X' \uslice \qcat{C} \\
      |(s)| \{ k \} \join \simp{m}   & |(D)| Y' \uslice \qcat{D} \\
    };

    \draw[->] (b) to (C); \draw[->] (s) to (D);

    \draw[fib] (C) to (D);

    \draw[->] (b) to (s);
  \end{ctikzpicture}
  where $X'$ and $Y'$ are the restrictions of $X$ and $Y$ to $\simp{k-1}$
  so that the resulting horn is special (under identifications
  $\{ k \} \join \simp{m} \iso \simp{1+m}$
  and $\{ k \} \join \bdsimp{m} \iso \horn{1+m,0}$).
  It has a solution by the case of $k = 0$.
\end{proof}
  \subsection{Infinite homotopy colimits}
\label{sec:infinite-hocolims}

The next step is to verify that $\nf \cat{C}$ is finitely cocomplete.
This proof is rather involved, but it turns out that this is largely due to
certain technicalities which disappear if we assume that $\cat{C}$ has some
infinite homotopy colimits.

In this subsection we explain how infinite homotopy colimits can be introduced
to cofibration categories and how the results discussed so far can be extended
to this context.

\subsection*{Infinite homotopy colimits in cofibration categories}

We will consider the following axioms in addition to axioms (C0-5)
of \Cref{ch:cof-cats}.

\begin{itemize}
\item[(C6)] Cofibrations are stable under sequential colimits,
  i.e. given a sequence of cofibrations
  \begin{ctikzpicture}
    \matrix[diagram]
    {
      |(A0)| A_0 & |(A1)| A_1 & |(A2)| A_2 & |(ld)| \ldots \\
    };

    \draw[cof] (A0) to (A1);
    \draw[cof] (A1) to (A2);
    \draw[cof] (A2) to (ld);
  \end{ctikzpicture}
  its colimit $A_\infty$ exists and the induced morphism $A_0 \to A_\infty$
  is a cofibration.
  Acyclic cofibrations are stable under sequential colimits.
\item[(C7-$\kappa$)]
  Coproducts of $\kappa$-small families of objects exist.
  Cofibrations and acyclic cofibrations
  are stable under $\kappa$-small coproducts.
\end{itemize}

Axiom (C7) is parametrized by a regular cardinal number $\kappa$.
(And if we write (C7) we will take it to refer to all small coproducts.)
A set is \emph{$\kappa$-small} if its cardinality
is strictly less than $\kappa$.
In particular, $\aleph_0$-small sets are precisely finite sets
and $\aleph_1$-small sets are precisely countable sets.
We say that a cofibration category is
\begin{itemize}
\item
  \emph{$\kappa$-cocomplete} for $\kappa > \aleph_0$
  if it satisfies (C6) and (C7-$\kappa$),
\item \emph{cocomplete}
  if it satisfies (C6) and (C7).
\end{itemize}
Again, the words ``$\kappa$-cocomplete'' and ``cocomplete'' are really
shorthands for ``homotopy $\kappa$-cocomplete'' and ``homotopy cocomplete''.
We will justify below that $\kappa$-cocomplete cofibration categories indeed
have all $\kappa$-small homotopy colimits.
Axioms (C0-5) imply (C7-$\aleph_0$) and we will sometimes refer to
finitely cocomplete cofibration categories
as \mbox{$\aleph_0$-cocomplete} cofibration categories.
Similarly, the axioms (C0-6) imply (C7-$\aleph_1$)
which is therefore redundant in the definition of
a homotopy $\aleph_1$-cocomplete cofibration category.
This name will be abbreviated to
a \emph{countably cocomplete} cofibration category.

Next, we introduce \emph{$\kappa$-cocontinuous} functors which
(according to the definition and K. Brown's Lemma)
are essentially homotopy invariant functors
that preserve certain basic $\kappa$-small homotopy colimits.
It will be explained later in this subsection
that they actually preserve all $\kappa$-small homotopy colimits.

\begin{definition}
  \enumhack
  \begin{enumerate}[label=\dfn]
  \item
    For $\kappa > \aleph_0$ a functor $F \from \cat{C} \to \cat{D}$
    between $\kappa$-cocomplete cofibration categories
    is \emph{$\kappa$-cocontinuous} if it preserves cofibrations,
    acyclic cofibrations, pushouts along cofibrations,
    colimits of sequences of cofibrations and $\kappa$-small coproducts.
  \item
    A functor $F \from \cat{C} \to \cat{D}$
    between cocomplete cofibration categories is \emph{cocontinuous}
    if it preserves cofibrations, acyclic cofibrations,
    pushouts along cofibrations, colimits of sequences of cofibrations
    and small coproducts.
  \end{enumerate}
\end{definition}

Just as in the case of countably cocomplete cofibration categories,
preservation of countable coproducts follows from preservation
of colimits of sequences of cofibrations and thus it is redundant
in the definition of an $\aleph_1$-cocontinuous functor.
(But then preservation of an initial object has to be assumed explicitly.)

By extension, exact functors in the sense of \Cref{ch:cof-cats}
will be sometimes referred to as as \emph{$\aleph_0$-cocontinuous}.

The notions of ($\kappa$-)cocomplete cofibration categories
and ($\kappa$-)cocontinuous functors dualize to the notions of
\emph{($\kappa$-)complete} fibration categories
and \emph{($\kappa$-)continuous} functors.

Let $\CofCat_\kappa$ denote the category of small $\kappa$-cocomplete
cofibration categories and $\kappa$-cocontinuous functors.

All the results about cofibration categories proven
or cited in \Cref{ch:cof-cats,sec:Rlps} readily generalize
to $\kappa$-cocomplete cofibration categories.
The correct statements can be obtained by replacing phrases
\begin{itemize}
\item ``cofibration category'' with
   ``$\kappa$-cocomplete cofibration category'',
\item ``exact functor'' with ``$\kappa$-cocontinuous functor'',
\item ``finite direct category'' with ``$\kappa$-small direct category''.
\end{itemize}
The proofs will occasionally require extra arguments, but they are all routine
and completely analogous to the ones already given
for finitely cocomplete cofibration categories.
For example, an updated version of \Cref{pullback-of-cofcats} says that
in the category $\CofCat_\kappa$ pullbacks along fibrations exist.
The main modification is that we need to verify that
the resulting pullback $\cat{P}$ satisfies axioms (C6) and (C7-$\kappa$).
The proofs are essentially the same as the proof of (C4)
using an obvious analogues of \Cref{q-pullback-colimits} for limits of towers
and products.

We have restricted attention to $\CofCat_\kappa$ only for convenience.
If we want to consider cocomplete cofibration categories,
we cannot assume that they are small.
However, all the results of \Cref{ch:cof-cats} apply to this case
in the sense that cocomplete cofibration categories
form a fibration category in a higher Grothendieck universe as explained
in the introduction.

The updated \Cref{colims-in-cofcats} says that
$\kappa$-cocomplete cofibration categories
have $\kappa$-small direct homotopy colimits.
This can be used to motivate axioms (C6) and (C7) just like the Gluing Lemma
motivated (C4).
Namely, (C6) is used to show that the colimit functor
$\colim_\nat \from \cat{C}^\nat_\Reedy \to \cat{C}$ is exact.
More precisely, stability of (acyclic) cofibrations under sequential colimits
implies that $\colim_\nat$ preserves (acyclic) cofibrations,
see \cite{rb}*{Lemma 9.3.4(1)}.
Similarly, (C7) implies that $\colim_J \from \cat{C}^J_\Reedy \to \cat{C}$
is exact for $J$ discrete.
The case of all the other direct categories is reduced to these two
and the Gluing Lemma as in the proof of \cite{rb}*{Theorem 9.3.5(1)}.

This handles the case of \emph{direct} homotopy colimits and,
as was pointed out before, for $\kappa = \aleph_0$
restricting to direct categories was essential.
However, for $\kappa > \aleph_0$ $\kappa$-cocomplete cofibration categories
have all $\kappa$-small homotopy colimits, i.e.\ the ones indexed by
arbitrary $\kappa$-small categories.
Their construction is more complicated and uses categories of the form $DJ$
introduced in \Cref{sec:qcats-of-frames}.
In fact, one of the main reasons for introducing this construction is that
the problem of computing homotopy colimits over $J$ can be reduced to
the problem of computing homotopy colimits over $DJ$ which is direct.

The way this works is that a \hore{} diagram $X \from J \to \cat{C}$
contains the same homotopical information as $p_J^* X \from DJ \to \cat{C}$.
In fact, \hore{} diagrams over $DJ$ are these that are
(weakly equivalent to the ones) pulled back along $p_J$
from \hore{} diagrams over $J$.
This is made precise as follows.
The category $\cat{C}^J$ of all \hore{} diagrams $J \to \cat{C}$
has a structure of a cofibration category with levelwise weak equivalences
and cofibrations by \cite{rb}*{Theorem 9.5.5(1)}.\footnote{
Note that this means that $\cat{C}^J$ can be made into a cofibration category
for an arbitrary ($\kappa$-small) $J$ which is not known for model categories.}
Moreover, $p_J^* \from \cat{C}^J \to \cat{C}^{DJ}$ is a weak equivalence
of cofibration categories by \cite{rb}*{Theorem 9.5.8(1)}.

As a result, just as in the case of direct homotopy colimits,
the homotopy colimit functor can be thought of as a zig-zag of exact functors
\begin{ctikzpicture}
  \matrix[diagram,column sep=3.5em]
  {
    |(CJ)| \cat{C}^J & |(CDJ)| \cat{C}^{DJ} &
    |(CDJR)| \cat{C}^{DJ}_{\mathrm{R}} & |(C)| \cat{C} \text{.} \\
  };

  \draw[->] (CJ) to node[above] {$\we$} node[below] {$p_J^*$} (CDJ);

  \draw[inj] (CDJR) to node[above] {$\we$} (CDJ);

  \draw[->] (CDJR) to node[above] {$\colim_{DJ}$} (C);
\end{ctikzpicture}
These results were used by Cisinski to prove that every cofibration category
has an associated derivator \cite{c-cd}*{Corollaire 6.21},
see also \cite{rb}*{Theorem 10.3.2}.

\subsection*{Infinite colimits in quasicategories}

The results of \Cref{ch:qcats} also generalize to
$\kappa$-cocomplete quasicategories, in fact,
in an even more straightforward manner since the notion of a colimit of
a diagram $K \to \qcat{C}$ is completely uniform in $K$ and there is no need
to distinguish between cases depending on the cardinality of $K$.

A quasicategory $\qcat{C}$ to be \emph{$\kappa$-cocomplete}
if it has colimits indexed over all $\kappa$-small simplicial sets.
Similarly, a functor between $\kappa$-cocomplete quasicategories
is \emph{$\kappa$-cocontinuous} if it carries universal cones under
all $\kappa$-small diagrams to universal cones.

All the results of \Cref{ch:qcats} remain correct when we replace phrases
``finitely cocomplete quasicategory'' and ``exact functor'' with
``$\kappa$-cocomplete quasicategory'' and ``$\kappa$-cocontinuous functor''
respectively.
This time proofs require no modifications.

\subsection*{Completeness of fibration categories
  of cofibration categories and quasicategories}

The discussion in the two previous paragraphs implies that $\CofCat_\kappa$
and $\QCat_\kappa$ are fibration categories for all regular cardinals $\kappa$.
In fact, they are both complete, i.e.\ satisfy axioms (C6)\textop{}\ 
and (C7)\textop{}.
We state the upgraded theorems explicitly for future reference.

\begin{theorem}\label{fibcat-of-cofcats-complete}
  The category $\CofCat_\kappa$ of
  small $\kappa$-cocomplete cofibration categories
  and $\kappa$-cocontinuous functors with weak equivalences and fibrations
  defined as in \Cref{ch:cof-cats} is a complete fibration category.
\end{theorem}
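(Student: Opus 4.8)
The plan is to bootstrap from \Cref{fibcat-of-cofcats} and its companions. As the preceding discussion explains, every statement of \Cref{ch:cof-cats} has a $\kappa$-cocomplete counterpart obtained by rereading ``cofibration category'' as ``$\kappa$-cocomplete cofibration category'', ``exact functor'' as ``$\kappa$-cocontinuous functor'' and ``finite direct category'' as ``$\kappa$-small direct category''; in particular the $\kappa$-cocomplete versions of \Cref{pullback-of-cofcats,acyclic-fibrations,path-object-cofcat} and the proof of \Cref{fibcat-of-cofcats} go through verbatim, so $\CofCat_\kappa$ with the weak equivalences and fibrations of \Cref{ch:cof-cats} is already a fibration category. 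It therefore remains to verify the duals of (C6) and (C7), i.e.\ that limits of towers of fibrations and arbitrary small products exist in $\CofCat_\kappa$ and that fibrations and acyclic fibrations are stable under them.

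First I would treat small products. Given a small family $(\cat{C}_i)_{i \in I}$ of $\kappa$-cocomplete cofibration categories, form the product category $\cat{C} = \prod_i \cat{C}_i$ and declare a morphism to be a weak equivalence (respectively, a cofibration) when each of its components is. Axioms (C0--5), (C6) and (C7-$\kappa$) are all componentwise assertions, hence hold in $\cat{C}$ because they hold in each $\cat{C}_i$; colimits in $\cat{C}$ are likewise computed componentwise, so the projections are $\kappa$-cocontinuous and $\cat{C}$ is the product in $\CofCat_\kappa$. If each $P_i \from \cat{E}_i \to \cat{D}_i$ is a fibration, then as a morphism of $\eCofCat$ the product $\prod_i P_i$ has the right lifting property against the three generating functors of \Cref{dfn:fibration}, since right lifting properties are preserved by products; it also satisfies (App1) and the extra lifting property of \Cref{acyclic-fibrations} whenever each $P_i$ does. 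Hence $\prod_i P_i$ is a fibration, and an acyclic fibration when all the $P_i$ are.

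Next I would treat limits of towers. Given a tower of fibrations $\dotsb \fto \cat{C}_2 \fto \cat{C}_1 \fto \cat{C}_0$ in $\CofCat_\kappa$, form $\cat{C}_\infty = \lim_n \cat{C}_n$ in $\Cat$, with weak equivalences and cofibrations created levelwise by the projections $\cat{C}_\infty \to \cat{C}_n$. Using the evident tower analogues of \Cref{q-pullback-colimits,q-lifting-colimits} --- a cone in $\cat{C}_\infty$ whose image in every $\cat{C}_n$ is universal is itself universal, and a colimit in $\cat{C}_0$ lifts to a compatible system of colimits up the tower because each $\cat{C}_{n+1} \fto \cat{C}_n$ is a fibration --- one verifies (C0--5), (C6) and (C7-$\kappa$) for $\cat{C}_\infty$ exactly as in the proof of \Cref{pullback-of-cofcats}. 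The projection $\cat{C}_\infty \to \cat{C}_0$ is then $\kappa$-cocontinuous and exhibits $\cat{C}_\infty$ as the limit of the tower in $\CofCat_\kappa$. Finally, each generating functor of \Cref{dfn:fibration} has the left lifting property against every fibration by definition, and the left lifting property against a fixed functor is inherited by the limit of a tower of functors each having the corresponding right lifting property (solve the lifting problem by induction along the tower); hence $\cat{C}_\infty \fto \cat{C}_0$ has the required right lifting properties and is a fibration, and an acyclic fibration when the tower consists of acyclic fibrations, by \Cref{acyclic-fibrations}.

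The one genuinely new ingredient is the construction of the limit $\kappa$-cocomplete cofibration categories $\prod_i \cat{C}_i$ and $\lim_n \cat{C}_n$ --- concretely, the verification that they satisfy (C4), (C6) and (C7-$\kappa$). For products this is immediate, since all of the structure is componentwise; for towers of fibrations it demands lifting colimits up the tower, which is exactly where the fibration hypothesis is used and where the tower versions of \Cref{q-pullback-colimits,q-lifting-colimits} do their work. I expect this to be the main obstacle, though still a routine one: the substantive content --- the fibration category structure of $\CofCat_\kappa$ --- comes essentially for free once one accepts that the $\kappa$-cocomplete theory parallels the finitely cocomplete one developed in \Cref{ch:cof-cats}.
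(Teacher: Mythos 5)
Your proposal is correct and follows the same route as the paper, which simply declares the proof to be a routine modification of \Cref{fibcat-of-cofcats} and points (in \Cref{sec:infinite-hocolims}) to exactly the ingredients you supply: componentwise products, levelwise limits of towers of fibrations, tower and product analogues of \Cref{q-pullback-colimits,q-lifting-colimits} for lifting colimits, and the right-lifting-property characterization of (acyclic) fibrations to get their stability under these limits. You have in fact written out more of the verification than the paper itself does.
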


\begin{theorem}\label{fibcat-of-quasicats-complete}
  The category $\QCat$ of small quasicategories with simplicial maps as morphisms,
  categorical equivalences as weak equivalences
  and inner isofibrations as fibrations
  is a complete fibration category.
\end{theorem}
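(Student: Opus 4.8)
The plan is to build on \Cref{fibcat-of-quasicats}, which already establishes that $\QCat$ is a fibration category; what remains is to verify the dual axioms (C6)\textop{} and (C7)\textop{}, i.e.\ that a tower of inner isofibrations has a limit whose projection to the bottom term is again an inner isofibration, that small products exist with the projections inner isofibrations, and that acyclic fibrations are stable under both constructions. Since $\sSet$ is a presheaf category, all small limits in $\sSet$ are computed levelwise, so the candidate limits are forced on us; the only content is to check that they remain quasicategories and that the relevant lifting properties persist. Because $\QCat$ is a full subcategory of $\sSet$, once a levelwise limit is seen to be a quasicategory it automatically has the correct universal property inside $\QCat$.

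For (C7)\textop{}, the point is simply that a class of morphisms defined by a \rlp{} against a fixed map $i$ is closed under small products: a lifting problem of $i$ against $\prod_a \qcat{C}_a \to \simp{0}$, or against a product $\prod_a p_a$ of maps, is precisely a family of lifting problems against the individual factors. Letting $i$ range over the inner horn inclusions shows that a small product of quasicategories is a quasicategory; adjoining $i = \simp{0} \ito \nwiso$ shows products of inner isofibrations are inner isofibrations; and using $i = \bdsimp{m} \ito \simp{m}$ shows products of acyclic Kan fibrations are acyclic Kan fibrations. Since an acyclic inner isofibration is the same thing as an acyclic Kan fibration by \cite{jo}*{Theorem 5.15}, this yields stability of acyclic fibrations under small products.

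For (C6)\textop{}, consider a tower of inner isofibrations $\cdots \fto \qcat{C}_2 \fto \qcat{C}_1 \fto \qcat{C}_0$ with transition maps $p_n \from \qcat{C}_{n+1} \fto \qcat{C}_n$, and let $\qcat{C}_\infty = \lim_n \qcat{C}_n$, computed levelwise. The one lemma to prove is: \emph{if $i \from K \to L$ is any map such that every $p_n$ has the \rlp{} \wrt{} $i$, then $\qcat{C}_\infty \to \qcat{C}_0$ has the \rlp{} \wrt{} $i$.} The proof is an induction up the tower: a commuting square with $i$ on the left, $K \to \qcat{C}_\infty$ on top, $L \to \qcat{C}_0$ on the bottom and $\qcat{C}_\infty \to \qcat{C}_0$ on the right is solved by starting from the given $L \to \qcat{C}_0$ and, at each stage, solving the lifting problem of $i$ against $p_n$ with the already-constructed $L \to \qcat{C}_n$ as base and the restriction of $K \to \qcat{C}_{n+1}$ as top; the shape of the lifting square forces each new $L \to \qcat{C}_{n+1}$ to be compatible with $L \to \qcat{C}_n$, so the sequence of lifts assembles into $L \to \qcat{C}_\infty$. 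Applying this with $i$ the inner horn inclusions and $i = \simp{0} \ito \nwiso$ shows $\qcat{C}_\infty \to \qcat{C}_0$ is an inner isofibration; composing with the inner isofibration $\qcat{C}_0 \to \simp{0}$ shows $\qcat{C}_\infty$ is a quasicategory. Applying the same lemma with $i = \bdsimp{m} \ito \simp{m}$ shows that if every $p_n$ is an acyclic Kan fibration then so is $\qcat{C}_\infty \to \qcat{C}_0$, which by \cite{jo}*{Theorem 5.15} handles stability of acyclic fibrations under towers. (Applying the lemma to truncated towers shows in addition that each $\qcat{C}_\infty \to \qcat{C}_n$ is an inner isofibration.)

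There is no real obstacle here; everything reduces to the elementary fact that a class of morphisms defined by a \rlp{} is closed under small products and under sequential limits of towers. The only points deserving a line of care are that the levelwise limit in $\sSet$ genuinely is the categorical limit (immediate, $\sSet$ being a presheaf category) and that the inductive lifting procedure produces an honestly compatible family of simplices at every level of the tower — which is automatic from the lifting squares. In writing this up I would state the tower lemma once in the abstract form above and then quote it three times, for the inner horns, for $\simp{0} \ito \nwiso$, and for the boundary inclusions.
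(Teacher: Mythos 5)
Your proof is correct and is precisely the ``routine modification'' the paper has in mind: the paper does not write out an argument for this theorem (it only remarks that the proofs of the completeness statements are routine extensions of \Cref{fibcat-of-quasicats}), and your verification of (C6)\textop{} and (C7)\textop{} via closure of \rlp{}-defined classes under small products and inverse limits of towers, combined with the identification of acyclic inner isofibrations with acyclic Kan fibrations from \cite{jo}*{Theorem 5.15}, is exactly the intended argument.
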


\begin{theorem}\label{fibcat-of-kappa-quasicats-complete}
  The category $\QCat_\kappa$ of small $\kappa$-cocomplete quasicategories
  with \mbox{$\kappa$-cocon}-tinuous functors as morphisms,
  categorical equivalences as weak equivalences
  and inner isofibrations as fibrations
  is a complete fibration category.
\end{theorem}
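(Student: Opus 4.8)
The plan is to establish \Cref{fibcat-of-kappa-quasicats-complete} in two stages: first that $\QCat_\kappa$ is a fibration category, and then that it satisfies the dual axioms (C6)\textop{} and (C7)\textop. For the first stage I would invoke the $\kappa$-cocomplete version of \Cref{fibcat-of-fc-quasicats}, which holds verbatim because the notion of a colimit of a diagram $K \to \qcat{C}$ — and hence of $\kappa$-cocompleteness and of a $\kappa$-cocontinuous functor — is uniform in $K$. Each of the three points in the proof of \Cref{fibcat-of-fc-quasicats} survives unchanged: a terminal quasicategory is terminal among $\kappa$-cocomplete quasicategories; \Cref{q-pullback-colimits,q-lifting-colimits} are already phrased for an arbitrary simplicial set $K$, so the argument of \Cref{pullback-of-fc-qcats} produces the pullback of a $\kappa$-cocontinuous inner isofibration along a $\kappa$-cocontinuous functor inside $\QCat_\kappa$; and for $\kappa$-cocomplete $\qcat{C}$ the path object $\qcat{C}^\nwiso \to \qcat{C} \times \qcat{C}$ still works, since $\qcat{C}^\nwiso$ is categorically equivalent to $\qcat{C}$ — hence $\kappa$-cocomplete by \Cref{colim-init,init-equivalent} — and its projections are $\kappa$-cocontinuous by the pullback step.

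For (C7)\textop{} I would form the product $\prod_i \qcat{C}_i$ in $\sSet$ of a small family of $\kappa$-cocomplete quasicategories. A product of quasicategories is a quasicategory, and a product of inner isofibrations is an inner isofibration, because each of these conditions is a right lifting property and a lifting problem into a product is a family of lifting problems over the factors; for the same reason a cone over a diagram $K \to \prod_i \qcat{C}_i$ is universal as soon as each of its component cones is. Hence $\prod_i \qcat{C}_i$ is $\kappa$-cocomplete, the projections are $\kappa$-cocontinuous, and both $\kappa$-cocontinuous inner isofibrations and their acyclic analogues — which by \cite{jo}*{Theorem 5.15} are exactly products of acyclic Kan fibrations — are stable under such products.

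For (C6)\textop{} I would present the limit in $\sSet$ of a tower of $\kappa$-cocontinuous inner isofibrations $\cdots \fto \qcat{C}_2 \fto \qcat{C}_1 \fto \qcat{C}_0$ as the limit of the tower $\cdots \to \qcat{D}_2 \to \qcat{D}_1 \to \qcat{D}_0 = \qcat{C}_0$ whose stage $\qcat{D}_n \to \qcat{D}_{n-1}$ is the pullback of $\qcat{C}_n \fto \qcat{C}_{n-1}$, so that by the pullback step each $\qcat{D}_n \to \qcat{D}_{n-1}$ is a $\kappa$-cocontinuous inner isofibration between $\kappa$-cocomplete quasicategories. It then remains to prove a tower analogue of \Cref{q-lifting-colimits} and \Cref{q-pullback-colimits}: given a $\kappa$-small diagram $X \from K \to \lim_n \qcat{D}_n$ with projections $X_n$, one inductively chooses colimit cones $T_n$ of $X_n$ with $T_n$ projecting to $T_{n-1}$ (applying \Cref{q-lifting-colimits,colim-equivalent,init-equivalent} at each stage), and then verifies that the assembled cone $\lim_n T_n$ is universal. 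Since by \Cref{inner-isofibration-slice} the slices $X_n \uslice \qcat{D}_n$ form a tower of inner isofibrations whose limit is $X \uslice \lim_n \qcat{D}_n$, \Cref{colim-init} reduces this last point to the statement that an object of the limit of a tower of quasicategories along inner isofibrations, all of whose projections are initial, is itself initial. Stability of acyclic fibrations under towers is easier: acyclic Kan fibrations are closed under towers because the boundary inclusions $\bdsimp{m} \ito \simp{m}$ are finite, and such a map is automatically a $\kappa$-cocontinuous weak equivalence.

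The main obstacle will be this tower version of \Cref{q-pullback-colimits}. I expect it to go by iterating the device in the proof of \Cref{q-pullback-colimits}: given a boundary $U \from \bdsimp{m} \to \lim_n \qcat{E}_n$ carrying $\simp{0}$ to the compatible family of initial objects, one extends its projections $U_n$ to $m$-simplices $V_n$ by ascending induction on $n$, at each stage using initiality in $\qcat{E}_{n-1}$ to extend over $\simp{m+1}$ an $(m+1)$-simplex built, as in the proof of \Cref{q-pullback-colimits}, from the previously constructed $V_{n-1}$ and a degeneracy of an auxiliary extension of $U_n$ in $\qcat{E}_n$, then filling an inner horn against the isofibration $\qcat{E}_n \fto \qcat{E}_{n-1}$ and reading off the required face; the simplicial-identity bookkeeping is the only delicate part and runs exactly parallel to the cited proof. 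Everything else in the argument is a direct appeal to the $\kappa$-cocomplete generalizations of \Cref{ch:qcats} or a routine right-lifting-property manipulation.
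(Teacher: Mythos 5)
Your proposal is correct and follows the route the paper intends: the paper itself only remarks that the proofs are routine modifications of \Cref{fibcat-of-fc-quasicats} and the surrounding lemmas, and your two-stage plan (the fibration-category axioms via the verbatim $\kappa$-cocomplete version of \Cref{fibcat-of-fc-quasicats}, then (C7)\textop{} and (C6)\textop{} via componentwise and tower-wise lifting of colimits, with the tower analogue of \Cref{q-pullback-colimits} correctly singled out as the one point requiring a genuinely new argument, provable by iterating the horn-filling device of the cited proof) is exactly that modification. One small point to tighten: for the projections of a product or of a tower limit to be $\kappa$-cocontinuous you also need the converse of your universality criterion (universal cones downstairs have universal components), which follows from \Cref{colim-equivalent,colim-init,init-equivalent} because a universal cone with universal components is known to exist and any other universal cone is equivalent to it.
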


Proofs of these theorems are routine modifications of the proofs
of their counterparts discussed in \Cref{ch:cof-cats,ch:qcats}.

Finally, we state an updated version of \Cref{nerve-exact}.

\begin{theorem}\label{nerve-continuous}
  The functor $\nf \from \CofCat_\kappa \to \QCat_\kappa$
  is a continuous functor of complete fibration categories.
  In particular, it takes values in $\kappa$-cocomplete quasicategories
  and $\kappa$-cocontinuous functors.
\end{theorem}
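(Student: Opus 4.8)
The plan is to split the statement into two independent assertions and to note that only the first of them is genuinely $\kappa$-dependent. The two assertions are: (i) that $\nf$ actually defines a functor $\CofCat_\kappa \to \QCat_\kappa$, i.e.\ that $\nf\cat{C}$ is a $\kappa$-cocomplete quasicategory whenever $\cat{C}$ is $\kappa$-cocomplete and that $\nf F$ is $\kappa$-cocontinuous whenever $F$ is; and (ii) that this functor preserves homotopy limits in the sense appropriate to complete fibration categories, i.e.\ that it is exact and moreover preserves limits of towers of fibrations and small products. Taking $\kappa = \aleph_0$ one recovers \Cref{nerve-exact}.

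For (ii) I would simply re-run the arguments of \Cref{sec:qcats-of-frames,sec:Rlps} with ``$\kappa$-cocomplete cofibration category'' in place of ``cofibration category''. The combinatorial core — the $D$-construction, the description of $\nf$ via \hore{} Reedy cofibrant diagrams in \Cref{nf-representation}, \Cref{simplicial-Reedy}, and all the horn, isomorphism and boundary lemmas of \Cref{sec:Rlps} — makes no reference to $\kappa$, while the results on cofibration categories of diagrams that feed into them (\Cref{Reedy-level}, \Cref{relative-factorization}, \Cref{sieve-exact}, \Cref{dual-pushout-prod}, \Cref{Rlps}) carry the routine $\kappa$-generalizations already indicated in this subsection. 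Hence \Cref{nf-inner,nf-iso} and the proposition that $\nf$ sends acyclic fibrations to acyclic Kan fibrations remain valid, so $\nf$ sends fibrations of $\CofCat_\kappa$ to inner isofibrations and acyclic fibrations to acyclic Kan fibrations; the dual of K.~Brown's lemma (\Cref{Ken-Brown}) then yields preservation of weak equivalences, and preservation of the terminal object and of pullbacks along fibrations is precisely \Cref{nerve-limits}, whose proof only uses that latching objects in a pullback of cofibration categories are computed componentwise.

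For the two remaining homotopy limits I would argue exactly as in \Cref{nerve-limits}. Given a tower of fibrations $\cdots \fto \cat{C}_1 \fto \cat{C}_0$ in $\CofCat_\kappa$, or a small family $(\cat{C}_i)$, the underlying category of the limit (resp.\ product) is the corresponding limit (resp.\ product) of underlying categories, and using the evident analogues of \Cref{q-pullback-colimits} for towers and products one checks that a diagram $D[m] \to \lim_n \cat{C}_n$ is \hore{} Reedy cofibrant iff each of its components is, since the latching morphisms and the \hore{} condition are tested componentwise. Therefore $\nf$ of the limit is the limit of the $\nf\cat{C}_n$ computed levelwise in $\sSet$, and likewise for products; together with the previous paragraph this makes $\nf$ continuous.

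The hard part is (i): that $\nf\cat{C}$ admits all $\kappa$-small colimits and that $\nf F$ preserves them. This is genuinely $\kappa$-sensitive and is what the rest of the section is for. For $\kappa > \aleph_0$ I would use the reduction of homotopy colimits over a $\kappa$-small category $J$ to direct homotopy colimits over $DJ$, so that the colimits in $\nf\cat{C}$ — and their preservation by $\nf F$ — can be read off from the behaviour of $\nf$ on the categories $\cat{C}^{DJ}_\Reedy$ and the exact colimit functors out of them; for $\kappa = \aleph_0$ the categories $DJ$ are no longer available in the same way and a substantially more delicate direct argument is needed. I expect this to be the main obstacle, and it is deferred to the two cocompleteness subsections that follow.
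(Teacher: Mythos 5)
Your proposal is correct and follows essentially the same route as the paper: the limit-preservation statements are routine $\kappa$-generalizations of \Cref{nerve-limits,nf-inner,nf-iso} and their companions, while the genuinely $\kappa$-sensitive content is the cocompleteness of $\nf\cat{C}$ and the $\kappa$-cocontinuity of $\nf F$, which the paper likewise defers to the two cocompleteness subsections (settling it via the characterization of universal cones in \Cref{nerve-cones} together with K.~Brown's Lemma).
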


This theorem clearly generalizes \Cref{nerve-exact}.
In the rest of this section we will proceed with the proof of the general statement.

  \subsection{Cocompleteness: the infinite case}
\label{sec:cocompleteness-infinite}

In order to complete the proof of \Cref{nerve-continuous} it remains to verify
that $\nf$ takes values in $\kappa$-cocomplete quasicategories
and $\kappa$-cocontinuous functors.
From this point on the cases of finitely cocomplete cofibration categories
and $\kappa$-cocomplete cofibration categories for $\kappa > \aleph_0$
will diverge.
The general approaches to both cases are still analogous, but they differ
in technical details and there seems to be no way of presenting them
in a completely uniform manner.
The presence of infinite homotopy colimits allows us
to use simpler constructions so we will consider the case of $\kappa > \aleph_0$
first.
The remaining case of $\kappa = \aleph_0$ will be covered in the next subsection.

First, we need a few preliminary lemmas.
Recall that if $I$ is a discrete category, then colimits over $[0] \join I$
are called wide pushouts.
A wide pushout of a diagram $X \from [0] \join I \to \cat{C}$ will be denoted by
\begin{align*}
  \bigpush{X_0}{i \in I} X_i \text{.}
\end{align*}
The inclusion of the $m$th vertex $\simp{0} \to K \join \simp{m}$ is cofinal
which suggests that colimits over $D(K \join \simp{m})$ should be given by
evaluating diagrams at any simplex containing that vertex.

\begin{lemma}\label{colim-simplex-join}
  Let $\cat{C}$ be a $\kappa$-cocomplete cofibration category and $K$
  a $\kappa$-small simplicial set.
  If $X \from D(K \join \simp{m}) \to \cat{C}$
  is a \hore{} Reedy cofibrant diagram, then the induced morphism
  \begin{align*}
    X_{[m]} \to \colim_{D(K \join \simp{m})} X
  \end{align*}
  is a weak equivalence.
\end{lemma}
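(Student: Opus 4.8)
The plan is to reduce the statement to the analogous fact about homotopy colimits in the cofibration category $\cat{C}$ itself, using the translation between diagrams $DK \to \cat{C}$ and simplicial maps $K \to \nf \cat{C}$ provided by \Cref{nf-representation} together with the machinery of $DJ$ developed in \Cref{sec:infinite-hocolims}. Specifically, a \hore{} Reedy cofibrant diagram $X \from D(K \join \simp{m}) \to \cat{C}$ restricts along the sieve $D\simp{m} \ito D(K \join \simp{m})$ — or rather along $\Sd[m] \ito D(K\join\simp m)$ — so that $X_{[m]}$ is the value of $X$ at the terminal vertex $[m]$ of $\simp{m}$, which is also the terminal object of $\Sd[m]$, hence (up to weak equivalence) computes $\colim_{\Sd[m]} X|_{\Sd[m]}$.

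First I would set up the comparison: the inclusion of the final vertex $\{[m]\} \ito D(K \join \simp{m})$ factors through $\Sd(K \join \simp{m}) \ito D(K\join\simp m)$, and by \Cref{D-Sd-he-msc} the latter is a homotopy equivalence, so by \Cref{equiv-cofinal} it suffices to show that the one-object full subcategory $\{[m]\} \ito \Sd(K \join \simp{m})$ induces a weak equivalence on Reedy diagram categories, equivalently (again by \Cref{equiv-cofinal}) that it is homotopy cofinal. Here $[m]$ is a terminal object of $\Sd(K\join\simp m)$: every non-degenerate simplex of $K \join \simp{m}$ whose set of vertices, viewed as a subset of the poset underlying $K \join \simp{m}$, contains the top element $m$ admits a unique inclusion into the simplex $[m]$ spanned by $\{0,\dots,m\}$ within the $\simp{m}$-part; and more to the point, the inclusion $\{[m]\}\ito\Sd(K\join\simp m)$ is a coreflection — $[m]$ sits below nothing but is reachable from every object by the ``add the vertex $m$'' map, so the functor sending every simplex $A$ to $A \cup \{m\}$ is right adjoint to the inclusion of the terminal object. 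A right adjoint (in particular a functor admitting a deformation retraction onto a point) induces a weak equivalence of Reedy diagram categories by \Cref{level-htpy-eq}, since such a functor is a \hore{} homotopy equivalence.

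Then I would invoke \Cref{equiv-cofinal}: since $\{[m]\} \ito \Sd(K\join\simp m)$ (composed with the homotopy equivalence $\Sd(K\join\simp m)\ito D(K\join\simp m)$) induces a weak equivalence of cofibration categories of Reedy cofibrant diagrams, for every \hore{} Reedy cofibrant $X \from D(K\join\simp m)\to\cat{C}$ the induced morphism $\colim_{\{[m]\}}(X|_{\{[m]\}}) \to \colim_{D(K\join\simp m)} X$ is a weak equivalence; but the left-hand colimit is just $X_{[m]}$, which gives the claim. One technical point to check: \Cref{equiv-cofinal} is stated for finite \hore{} direct categories, whereas here $K$ may be $\kappa$-small; this is exactly the kind of routine generalization the paper flags at the start of \Cref{sec:infinite-hocolims}, where \Cref{colims-in-cofcats} and \Cref{equiv-cofinal} are upgraded to $\kappa$-small direct categories, so I would cite that upgrade.

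The main obstacle I anticipate is verifying cleanly that $[m]$ is homotopy terminal in $\Sd(K \join \simp{m})$ as a \hore{} poset, i.e.\ that the ``union with $\{m\}$'' endofunctor really is a deformation retraction onto $\{[m]\}$ compatibly with the \hore{} structure — one must confirm that the unit and counit natural transformations are levelwise weak equivalences in the \hore{} poset $\Sd(K\join\simp m)$, which amounts to checking that for any simplex $A$ the inclusion $A \subseteq A \cup \{m\}$ has $\max A \weto \max(A\cup\{m\}) = m$, and this holds precisely because all edges into $m$ in $K\join\simp m$ landing from the $K$-side or from within $\simp m$ are not required to be weak equivalences in general — so in fact I need the \emph{other} direction of cofinality, using instead that the inclusion $\{[m]\}\ito\Sd(K\join\simp m)$ is homotopy \emph{initial}'s dual. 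The correct formulation is that $[m]$ is a terminal object and the constant functor at $[m]$ is connected to the identity by the zig-zag $\id \Rightarrow ({-})\cup\{m\} \Leftarrow \mathrm{const}_{[m]}$, both natural transformations being pointwise the relevant inclusions; since the target is a poset these are honest natural transformations, and they are levelwise weak equivalences because $m$ is the maximum, hence $\max(A) \to \max(A\cup\{m\})$ and $\max([m])\to\max(A\cup\{m\})$ are identities on $m$. Getting this bookkeeping exactly right — in particular making sure the relevant comparison functor $\Sd(K\join\simp m)\to\{[m]\}$ is \hore{} and that \Cref{equiv-cofinal} applies in the direction needed — is where the real work lies; everything else is formal.
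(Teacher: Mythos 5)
There is a genuine gap, and it sits exactly where you anticipated the ``real work'' to be: the claimed contraction of $\Sd(K \join \simp{m})$ onto $\{[m]\}$ does not exist. First, $[m]=\emptyset\join[m]$ is not a terminal object of $\Sd(K\join\simp{m})$: a non-degenerate simplex $x\join\phi$ with $x$ a nonempty simplex of $K$ admits no face inclusion into $\emptyset\join[m]$, so there is no morphism to $[m]$ at all, and neither adjoint to the one-object inclusion exists. Second, your repaired zig-zag $\id \Rightarrow (\uvar)\cup\{m\} \Leftarrow \mathrm{const}_{[m]}$ fails on both legs: the right-hand transformation would require $[m]\subseteq A\cup\{m\}$ for every simplex $A$, which is false unless $A$ already contains $\{0,\dots,m-1\}$; and the left-hand transformation is not a levelwise weak equivalence, because the component at $A$ is the inclusion $A\subseteq A\cup\{m\}$ whose associated edge $\max A \to m$ is a weak equivalence of the underlying marked poset only when $\max A=m$ (the marking on $K\join\simp{m}$ is trivial away from degenerate edges). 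The cleanest way to see that no version of this can work: for $K=\simp{k}$ one has $D(K\join\simp{m})=D[k+1+m]$, which by \Cref{Dm-m-he} is homotopy equivalent to $[k+1+m]$ with its \emph{trivial} \hore{} structure, and that is not homotopy equivalent to $[0]$. Consequently restriction to a single object is never a weak equivalence $\cat{C}^{D(K\join\simp{m})}_\Reedy\to\cat{C}$, so \Cref{equiv-cofinal} — which is only a \emph{sufficient} criterion for homotopy cofinality — cannot be applied to the last-vertex inclusion. That inclusion \emph{is} homotopy cofinal, but this is essentially the content of the lemma, not something that can be quoted. (A secondary issue: $\Sd$ and \Cref{D-Sd-he-msc} are only defined for marked simplicial complexes, and $K\join\simp{m}$ is not one for general $K$.)

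The paper's proof takes a different and unavoidable route: it factors the morphism as $X_{[m]}\to\colim_{D[m]}X\to\colim_{D(K\join\simp{m})}X$, handles the first map by the explicit homotopy equivalence $[m]\to D[m]$ of \Cref{Dm-m-he} together with \Cref{equiv-cofinal} (here the criterion genuinely applies, since the source is all of $[m]$ rather than a point), and then proves the second map is a weak equivalence by induction over the cell structure of $K$ — checking the case of a simplex using the \hore{}-ness of $X$, and propagating along coproducts, pushouts along monomorphisms and sequential colimits using \Cref{join-colimits}, \Cref{D-preserves-colimits} and the Gluing Lemma. If you want to salvage your outline, you would need to replace the one-point cofinality claim by this kind of cell induction; there is no purely formal deformation-retraction argument available.
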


\begin{proof}
  The morphism in question factors as
  \begin{align*}
    X_{[m]} \to \colim_{D[m]} X \to \colim_{D(K \join \simp{m})} X
  \end{align*}
  where the first morphism is a weak equivalence
  by \Cref{equiv-cofinal,Dm-m-he}.
  Thus it will be enough to check that the second one is.

  It will suffice to verify that this statement holds when $K$ is a simplex
  and that it is preserved under coproducts, pushouts along monomorphisms
  and colimits of sequences of monomorphisms.

  Let $K = \simp{k}$ and let $\iota$ be the composite
  $[m] \ito [k] \join [m] \iso [k+1+m]$.
  Then we have a commutative square
  \begin{ctikzpicture}
    \matrix[diagram]
    {
      |(m)|  X_\iota           & |(cm)|  \colim_{D[m]} X \\
      |(mk)| X_{\id_{[k+1+m]}} & |(cmk)| \colim_{D[k+1+m]} X \\
    };

    \draw[->] (m)  to (cm);
    \draw[->] (mk) to (cmk);

    \draw[->] (m)  to (mk);
    \draw[->] (cm) to (cmk);
  \end{ctikzpicture}
  where the left morphism is a weak equivalence since $X$ is \hore{}
  and so are the horizontal ones by the argument above.
  Thus the right morphism is also a weak equivalence.

  Next, consider a pushout square
  \begin{ctikzpicture}
    \matrix [diagram]
    {
      |(A)| A & |(K)| K \\
      |(B)| B & |(L)| L \\
    };

    \draw[->] (A) to (K);
    \draw[->] (B) to (L);

    \draw[inj] (A) to (B);
    \draw[inj] (K) to (L);
  \end{ctikzpicture}
  \st{} the statement holds for $A$, $B$ and $K$.
  The functor $\uvar \join \simp{m}$ preserves pushouts by \Cref{join-colimits}
  and so does $D$ by \Cref{D-preserves-colimits}.
  Thus in the cube
  \begin{ctikzpicture}
    \matrix[diagram,row sep=2em,column sep=1.2em]
    {
      |(A)| D A & & |(K)| D K \\
      & |(Am)| D(A \join \simp{m}) & & |(Km)| D(K \join \simp{m}) \\
      |(B)| D B & & |(L)| D L \\
      & |(Bm)| D(B \join \simp{m}) & & |(Lm)| D(L \join \simp{m}) \\
    };

    \draw[->] (A) to (K);
    \draw[->] (B) to (L);

    \draw[inj] (A) to (B);
    \draw[inj] (K) to (L);

    \draw[->,cross line] (Am) to (Km);
    \draw[->] (Bm) to (Lm);

    \draw[inj,cross line] (Am) to (Bm);
    \draw[inj] (Km) to (Lm);

    \draw[->] (A) to (Am);
    \draw[->] (K) to (Km);
    \draw[->] (B) to (Bm);
    \draw[->] (L) to (Lm);
  \end{ctikzpicture}
  both the front and the back faces are pushouts along sieves
  and the conclusion follows by \cite{rb}*{Theorem 9.4.1 (1a)}
  and the Gluing Lemma.

  The case of colimits of sequences of monomorphisms is similar and we omit it.

  The case of coproducts is also similar, but there is a difference in the fact
  that $\uvar \join \simp{m}$ doesn't preserve coproducts.
  Instead, it sends coproducts to wide pushouts under $\simp{m}$.
  Thus if we have a $\kappa$-small family $\{ K_i \mid i \in I \}$
  of $\kappa$-small simplicial sets and a diagram
  $X \from D((\bigcoprod_i K_i) \join \simp{m}) \to \cat{C}$,
  then there is a canonical isomorphism
  \begin{align*}
    \bigpush{\colim_{D[m]} X}{i \in I} (\colim_{D(K_i \join \simp{m})} X)
      \iso \colim_{D((\bigcoprod_{i \in I} K_i) \join \simp{m})} X \text{.}
  \end{align*}
  The conclusion follows by the fact that in a cofibration category
  all the structure morphisms of a wide pushout of acyclic cofibrations
  are again acyclic cofibrations.
  (By \Cref{equiv-cofinal} since $\hat{[0] \join I}$ is contractible
  to its cone object as a \hore{} category.)
\end{proof}

Note that for any simplicial set $K$ there is a unique functor
$p_K \from D(K^\ucone) \to (DK)^\ucone$ that restricts
to the identity of $DK$ and sends all the objects not in $DK$ to the cone point
of $(DK)^\ucone$.
This functor is \hore{}.
In the next lemma we use it to compare colimits over $DK$ and $D(K^\ucone)$.

\begin{lemma}\label{cone-factorization}
  Let $\cat{C}$ be a $\kappa$-cocomplete cofibration category,
  $K$ a $\kappa$-small simplicial set and $X \from DK \to \cat{C}$
  a \hore{} Reedy cofibrant diagram.
  Consider a morphism $f \from \colim_{DK} X \to Y$
  and the corresponding cone $\tilde T \from (DK)^\ucone \to \cat{C}$.
  If $T$ is any Reedy cofibrant replacement of $p_K^* \tilde T$ relative to $DK$
  (which exists by \Cref{relative-factorization}), then $f$ factors as
  \begin{align*}
    \colim_{DK} X \to \colim_{D(K^\ucone)} T \weto Y \text{.}
  \end{align*}
\end{lemma}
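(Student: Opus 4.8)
The plan is to produce the two morphisms of the claimed factorization by hand and then check that the second is a weak equivalence and that the composite is $f$. To begin, the inclusion $\iota \from DK \ito D(K^\ucone)$ is a sieve (as $K \ito K^\ucone$ is a monomorphism) and satisfies $\iota^* T = T|DK = X$; since $D(K^\ucone)$ is $\kappa$-small and direct, $\colim_{D(K^\ucone)}$ is defined on Reedy cofibrant diagrams by \Cref{colims-in-cofcats}, so restricting the colimit cocone of $T$ along $\iota$ and applying the universal property of $\colim_{DK} X$ yields a canonical comparison morphism $\iota_* \from \colim_{DK} X \to \colim_{D(K^\ucone)} T$. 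This will be the first map.

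For the second map I would unravel the cone $\tilde T$: it amounts to a cocone from $X$ to the constant diagram at $Y$ whose induced morphism is $f$, i.e.\ compatible morphisms $X_j \to Y$ for $j \in DK$. Combining these with the identities of $Y$ at the objects of $D(K^\ucone)$ outside $DK$ --- where $p_K$ is constant at the cone point and $\tilde T$ takes the value $Y$ --- gives a cocone from $p_K^* \tilde T$ to the constant diagram at $Y$; compatibility is a routine case check over the three kinds of morphism of $D(K^\ucone)$. Pulling this back along the replacement $T \weto p_K^* \tilde T$ and using the universal property of $\colim_{D(K^\ucone)} T$ produces a morphism $g \from \colim_{D(K^\ucone)} T \to Y$. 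Note that $T$ is \hore{}, being weakly equivalent to the \hore{} diagram $p_K^* \tilde T$ (recall $p_K$ is \hore{}).

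The heart of the matter --- and the step I expect to be the only real obstacle --- is that $g$ is a weak equivalence. Since $K^\ucone = K \join \simp{0}$, \Cref{colim-simplex-join} applied with $m = 0$ to the \hore{} Reedy cofibrant diagram $T$ on $D(K \join \simp{0})$ shows that the colimit structure morphism $T_{[0]} \to \colim_{D(K^\ucone)} T$ at the cone-point object $[0]$ is a weak equivalence. Its composite with $g$ is, by construction, the $[0]$-component of the cocone on $T$, which is the $[0]$-component $T_{[0]} \weto Y$ of the replacement $T \weto p_K^* \tilde T$ followed by the $[0]$-component of the cocone on $p_K^* \tilde T$, namely $\id_Y$; so this composite is a weak equivalence. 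By ``2 out of 3'', $g$ is a weak equivalence, and it remains to identify $g \iota_*$ with $f$.

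This last identification is a diagram chase: both $f$ and $g \iota_*$ are morphisms out of $\colim_{DK} X$, so it suffices to compare their precomposites with the colimit structure morphisms $X_j \to \colim_{DK} X$ for $j \in DK$. The left-hand side gives the $j$-component of $\tilde T$; the right-hand side gives $g$ precomposed with the structure morphism $X_j = T_j \to \colim_{D(K^\ucone)} T$, i.e.\ the $j$-component of the cocone on $T$, which equals the $j$-component of the cocone on $p_K^* \tilde T$ because $T \weto p_K^* \tilde T$ is the identity on $DK$ --- and that is again the $j$-component of $\tilde T$. Hence $f = g \iota_*$, which is precisely the factorization $\colim_{DK} X \to \colim_{D(K^\ucone)} T \weto Y$.
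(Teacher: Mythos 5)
Your proposal is correct and follows essentially the same route as the paper's proof: the comparison map is induced by restriction along the sieve $DK \ito D(K^\ucone)$, the second map comes from the cocone obtained by composing the replacement $T \weto p_K^*\tilde T$ with the evident cocone to $Y$, the identification with $f$ is checked on the structure morphisms using $T|DK = X$, and the weak equivalence claim follows from \Cref{colim-simplex-join} at the cone point together with ``2 out of 3''. The only difference is that you spell out the construction of the two morphisms and the homotopicality of $T$ in more detail than the paper does.
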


\begin{proof}
  To verify that the above composite agrees with $f$ it suffices to check that
  it agrees upon precomposition with $X_x \to \colim_{DK} X$ for all $x \in DK$.
  That's indeed the case since $T|DK = X$.

  It remains to check that the latter morphism
  is a weak equivalence.
  In the diagram
  \begin{ctikzpicture}
    \matrix[diagram]
    {
      |(KT)| \colim_{D(K^\ucone)} T & |(Y)| Y \\
      |(T0)| T_0 \\
    };

    \draw[->] (KT) to (Y);
    \draw[->] (T0) to (KT);

    \draw[->] (T0) to (Y);
  \end{ctikzpicture}
  the left morphism is a weak equivalence by \Cref{colim-simplex-join}
  and so is the diagonal one since $T$
  is a cofibrant replacement of $p_K^* \tilde T$.
  Therefore the top morphism is also a weak equivalence.
\end{proof}

We will need an augmented version of the $D$ construction.
In fact, we will only need to apply it to $[m]$ and $\bdsimp{m}$ so we define it
only in these cases.

We will denote by $D_\aug[m]$ the category of all order preserving maps
$[k] \to [m]$ including the one with $[k] = [-1] = \emptyset$.
A morphism from $x \from [k] \to [m]$ to $y \from [l] \to [m]$
is an injective order preserving map $\phi \from [k] \ito [l]$
\st{} $x = y \phi$.
In other words, $D_\aug[m]$ is obtained from $D[m]$ by adjoining an initial object.
The \hore{} structure on $D_\aug[m]$ is an extension of the one on $D[m]$
where $[-1] \to [m]$ is not weakly equivalent to any other object.
We will also consider a slightly richer \hore{} structure $\tilde D_\aug[m]$
where $[-1] \to [m]$ is weakly equivalent to all the constant maps
with the value $0$.

The \hore{} categories $D_\aug\bdsimp{m}$ and $\tilde D_\aug\bdsimp{m}$ are the full
\hore{} subcategories of $D_\aug[m]$ and $\tilde D_\aug[m]$ spanned
by the non-surjective maps $[k] \to [m]$ (i.e. by the simplices of $\bdsimp{m}$
including the ``$(-1)$-dimensional'' one).

Similarly, the \hore{} posets $\Sd_\aug[m]$, $\tilde\Sd_\aug[m]$, $\Sd_\aug\bdsimp{m}$
and $\tilde\Sd_\aug\bdsimp{m}$ are the full \hore{} subcategories of $D_\aug[m]$,
$\tilde D_\aug[m]$, $D_\aug\bdsimp{m}$ and $\tilde D_\aug\bdsimp{m}$ respectively spanned
by their objects that are injective as maps $[k] \to [m]$.

\begin{lemma}\label{D-Sd-he}
  The restriction functors
  \begin{align*}
    \cat{C}^{D_\aug[m]}_\Reedy & \to \cat{C}^{\Sd_\aug[m]}_\Reedy &
    \cat{C}^{D_\aug\bdsimp{m}}_\Reedy & \to \cat{C}^{\Sd_\aug\bdsimp{m}}_\Reedy \\
    \cat{C}^{\tilde D_\aug[m]}_\Reedy & \to \cat{C}^{\tilde \Sd_\aug[m]}_\Reedy &
    \cat{C}^{\tilde D_\aug\bdsimp{m}}_\Reedy &
      \to \cat{C}^{\tilde \Sd_\aug\bdsimp{m}}_\Reedy
  \end{align*}
  are all acyclic fibrations.
\end{lemma}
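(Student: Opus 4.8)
The plan is to handle all four restriction functors at once by a single template. In each case the functor in question is the one induced on Reedy cofibrant diagrams by the inclusion $f$ of the appropriate $\Sd$-category into the corresponding $D$-category, and the first step is to observe that $f$ is a sieve between $\hore$ direct categories with finite latching categories. Indeed, $\Sd_\aug[m]$ is, as a plain category, the full subcategory of $D_\aug[m]$ on those objects $x \from [k] \to [m]$ that are injective, and this subcategory is downwards closed: a morphism $x \to y$ in $D_\aug[m]$ is given by an injective order preserving map $\phi$ with $x = y\phi$, so injectivity of $y$ forces injectivity of $x$. The same two remarks — that face maps can only shrink the image of a simplex, and that a morphism into an injective simplex has injective source — show that $\Sd_\aug\bdsimp{m} \ito D_\aug\bdsimp{m}$ and both tilde variants are sieves as well. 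Each of these categories is direct via the degree function sending $x \from [k] \to [m]$ to $k+1$ (so that the adjoined object $[-1] \to [m]$ has degree $0$) and has finite latching categories since a simplex has only finitely many faces. Hence, since $f$ is a sieve, \Cref{sieve-exact} shows that each of the four functors $f^*$ is exact and is a fibration.

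By the last clause of \Cref{level-htpy-eq} — whose hypothesis that $f^*$ be exact on Reedy diagrams is exactly what we have just checked — it then suffices to prove that each inclusion $f$ is a homotopy equivalence of $\hore$ categories, and for this I would imitate \Cref{D-Sd-he-msc} almost verbatim. Define $q$ to send a simplex $x \from [k] \to [m]$ to its image (an injective map, and the ``$(-1)$-dimensional'' object $[-1] \to [m]$ in the augmented degenerate case), so that $q f = \id$, and define $s$ to insert one extra occurrence of each value already appearing in $x$, placed at the end of the block of old occurrences; this yields natural transformations $\id \to s$ and $f q \to s$ which are levelwise weak equivalences by the same computation as in \Cref{Dm-m-he,D-Sd-he-msc}. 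Both $q$ and $s$ manifestly preserve injectivity and non-surjectivity, so the construction restricts to the $\bdsimp{m}$ variants; and the adjoined object $[-1] \to [m]$ is fixed by $s$, fixed by $f q$, and sent by $q$ to the $(-1)$-dimensional object of $\Sd_\aug[m]$, so the two natural transformations are identities there. For the tilde structures one checks in addition that the augmentation object together with the constant maps of value $0$ span a sub-$\hore$-category all of whose morphisms are weak equivalences, which is carried into itself by $s$ and into the corresponding cluster of $\Sd_\aug[m]$ by $q$; hence $q$, $s$ and the two natural transformations respect the extra weak equivalences of the tilde variants as well.

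Putting the two steps together, each of the four functors is at once a fibration and a weak equivalence, hence an acyclic fibration. The only genuine work is the bookkeeping in the second paragraph — verifying that the contraction $s$ and the projection $q$ are compatible with the $\hore$ structures, in particular with the extra weak equivalences that in the tilde variants tie the augmentation simplex to the constant-$0$ simplices — but this is a routine adaptation of the verifications already carried out in \Cref{Dm-m-he,D-Sd-he-msc}, so I do not expect a real obstacle.
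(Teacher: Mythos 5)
Your proposal is correct and follows essentially the same route as the paper: sieves give the fibration property, and the homotopy inverse is the image-projection $q$ together with the doubling functor $s$ and the zig-zag $\id \weto s \xleftarrow{\we} fq$, constructed once for $\tilde\Sd_\aug[m] \ito \tilde D_\aug[m]$ and observed to restrict to the other three cases. The paper's proof is just a terser version of your second paragraph, leaving the compatibility checks with the boundary and tilde structures implicit.
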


\begin{proof}
  All these functors are induced by sieves so they are fibrations.
  We will construct a homotopy inverse to
  $f \from \tilde\Sd_\aug[m] \ito \tilde D_\aug[m]$ which will restrict to
  homotopy inverses of all the other sieves in question.
  The construction is a minor modification of the one used in \Cref{Dm-m-he}
  (and essentially the same as in \Cref{D-Sd-he-msc}).
  Namely, we define $q \from \tilde D_\aug[m] \to \tilde \Sd_\aug[m]$ by sending each
  $[k] \to [m]$ to its image and $s \from \tilde D_\aug[m] \to \tilde D_\aug[m]$
  by inserting one extra occurrence of each $i \in [m]$ that is already present
  in a given $x \in \tilde D_\aug[m]$.
  Just as in \Cref{Dm-m-he} a new occurrence is inserted at the end of the block
  of the old occurrences which yields analogous weak equivalences
  \begin{ctikzpicture}
    \matrix[diagram]
    {
      |(id)| \id & |(s)| s & |(fq)| f q \text{.} \\
    };

    \draw[->] (id) to node[above] {$\we$} (s);
    \draw[->] (fq) to node[above] {$\we$} (s);
  \end{ctikzpicture}
  Moreover, $q f = \id_{\tilde \Sd_\aug[m]}$ which finishes the proof.
\end{proof}

\HoRe{} Reedy cofibrant diagrams on $D_\aug[1]$ will be used to encode cones
on diagrams in $\nf \cat{C}$ and the ones which are \hore{} \wrt{}
$\tilde D_\aug[1]$ will correspond to the universal cones.
The following lemma (and, more directly, \Cref{universal-cones} below)
will translate between the universality of such cones in $\nf \cat{C}$
and strict colimits of the corresponding diagrams in $\cat{C}$.

\begin{lemma}\label{augmented-bd}
  The two functors
  \begin{enumerate}[label=\thm]
  \item $\cat{C}^{\tilde\Sd_\aug[m]}_\Reedy
    \to \cat{C}^{\tilde\Sd_\aug\bdsimp{m}}_\Reedy$ and
  \item $\cat{C}^{\tilde D_\aug[m]}_\Reedy
    \to \cat{C}^{\tilde D_\aug\bdsimp{m}}_\Reedy$
  \end{enumerate}
  induced by the inclusion $\bdsimp{m} \ito \simp{m}$ are acyclic fibrations.
\end{lemma}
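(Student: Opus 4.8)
The plan is to establish (2) directly, by recognizing the functor $\tilde D_\aug\bdsimp{m}\ito\tilde D_\aug[m]$ induced by $\bdsimp{m}\ito\simp{m}$ as a sieve that is moreover a homotopy equivalence of \hore{} direct categories, and then to deduce (1) from (2) by means of \Cref{D-Sd-he}.

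For (2), note first that this functor is the inclusion of the full \hore{} subcategory of $\tilde D_\aug[m]$ spanned by the non-surjective maps $[k]\to[m]$, and that it is a sieve: a morphism $\phi\from x\to y$ with $y$ non-surjective has $x=y\phi$ non-surjective as well. Hence by \Cref{sieve-exact} it induces an exact functor $\cat{C}^{\tilde D_\aug[m]}_\Reedy\to\cat{C}^{\tilde D_\aug\bdsimp{m}}_\Reedy$ which is in addition a fibration. The crucial point is to supply a homotopy inverse. I would define $g\from\tilde D_\aug[m]\to\tilde D_\aug\bdsimp{m}$ by sending a map $x\from[k]\to[m]$ to the map obtained by deleting every entry of $x$ that equals $0$ (so a map constant of value $0$, and in particular the augmentation object $[-1]\to[m]$, is sent to $[-1]\to[m]$) and a morphism to the induced subsequence inclusion; this does land in $\tilde D_\aug\bdsimp{m}$ because a map with no entry equal to $0$ misses the value $0$. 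The enriched \hore{} structure on $\tilde D_\aug[m]$ is exactly what makes $g$ \hore{}: a generating weak equivalence of $\tilde D_\aug[m]$ is either a top-value-preserving morphism of $D[m]$ --- whose image under $g$ again preserves top values, since the last entry of $g(x)$ is the last non-zero entry of $x$, which is the last entry of $x$ unless that is $0$, in which case $g(x)=[-1]=g(y)$ --- or an augmentation morphism $[-1]\to x$ with $x$ constant of value $0$, which $g$ carries to $\id_{[-1]}$; since the weak equivalences of $\tilde D_\aug\bdsimp{m}$ are closed under the ``2 out of 6'' property, $g$ preserves all weak equivalences. The inclusion $g(x)\hookrightarrow x$ of the non-zero subsequence is natural in $x$ and is a weak equivalence of $\tilde D_\aug[m]$, since it preserves the top value when $x$ has a non-zero entry and coincides with the augmentation weak equivalence $[-1]\to x$ otherwise; it restricts to a natural weak equivalence over $\tilde D_\aug\bdsimp{m}$ as well, so $g$ is a homotopy inverse to the inclusion. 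By \Cref{level-htpy-eq} --- applicable because the inclusion, being a sieve, induces an exact functor between Reedy diagram categories --- this restriction functor is a weak equivalence, and being also a fibration it is an acyclic fibration.

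For (1), I would consider the commutative square of restriction functors whose top edge is $\cat{C}^{\tilde D_\aug[m]}_\Reedy\to\cat{C}^{\tilde D_\aug\bdsimp{m}}_\Reedy$, whose bottom edge is the functor $\cat{C}^{\tilde\Sd_\aug[m]}_\Reedy\to\cat{C}^{\tilde\Sd_\aug\bdsimp{m}}_\Reedy$ in question, and whose vertical edges are the restrictions $\cat{C}^{\tilde D_\aug[m]}_\Reedy\to\cat{C}^{\tilde\Sd_\aug[m]}_\Reedy$ and $\cat{C}^{\tilde D_\aug\bdsimp{m}}_\Reedy\to\cat{C}^{\tilde\Sd_\aug\bdsimp{m}}_\Reedy$. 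The vertical functors are acyclic fibrations by \Cref{D-Sd-he}, and the top one is a weak equivalence by (2), so the bottom one is a weak equivalence by ``2 out of 3''; as $\tilde\Sd_\aug\bdsimp{m}\ito\tilde\Sd_\aug[m]$ is again a sieve it is a fibration by \Cref{sieve-exact}, hence an acyclic fibration. (Alternatively, (1) can be proved directly by the same argument as (2), using the functor $A\mapsto A\setminus\{0\}$ on the poset $\tilde\Sd_\aug[m]$ of all subsets of $[m]$, together with the natural weak equivalence $A\setminus\{0\}\subseteq A$.)

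The step I expect to cost the most effort is the verification that the ``delete the zeros'' functor $g$ is \hore{}: this is precisely where the passage from $D_\aug$ to the enriched $\tilde D_\aug$ is indispensable, and it requires careful bookkeeping of the augmentation weak equivalences together with the ``2 out of 6'' closure defining the weak equivalences of $\tilde D_\aug[m]$. The remaining verifications are routine.
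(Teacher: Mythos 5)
Your argument is correct, but it runs in the opposite direction from the paper's. The paper proves part (1) first: it exhibits an isomorphism of \hore{} categories $\Sd_\aug[m-1] \times \hat{[1]} \iso \tilde\Sd_\aug[m]$ via the two functors $i_0 A = A+1$ and $i_1 A = i_0 A \union \{0\}$, deduces that $i_0$ is a homotopy equivalence restricting to one onto $\tilde\Sd_\aug\bdsimp{m}$, and gets the inclusion $\tilde\Sd_\aug\bdsimp{m} \ito \tilde\Sd_\aug[m]$ by ``2 out of 3''; part (2) is then obtained from (1) by the very commutative square you use, traversed the other way, with the verticals handled by \Cref{D-Sd-he}. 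You instead prove (2) directly by a ``delete the zeros'' retraction of $\tilde D_\aug[m]$ onto $\tilde D_\aug\bdsimp{m}$, in the style of \Cref{Dm-m-he,E1-contractible,D-Sd-he-msc}, and then transfer to (1). Your verification that $g$ is \hore{} is the crux and it is sound: a top-value-preserving morphism whose common top value is $0$ forces both (non-decreasing) sequences to be constant at $0$, so $g$ collapses it to $\id_{[-1]}$, and the augmentation weak equivalences also go to $\id_{[-1]}$; dually, the natural morphism $g(x) \to x$ is a weak equivalence in every case precisely because the enriched structure supplies the weak equivalences $[-1] \to 0^k$ when $x$ is constant at $0$ --- so, as you note, the passage from $D_\aug$ to $\tilde D_\aug$ is doing real work. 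The paper's route makes the geometric content visible ($\tilde\Sd_\aug[m]$ is a cylinder on its ``$0$-free'' part, which also lies in the boundary); yours produces an explicit deformation retraction onto the boundary subcategory itself and is more uniform with the neighbouring contractibility lemmas. Your parenthetical direct proof of (1) via $A \mapsto A \setminus \{0\}$ is fine as well.
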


\begin{proof}
  Both inclusions $\tilde\Sd_\aug\bdsimp{m} \ito \tilde\Sd_\aug[m]$
  and $\tilde D_\aug\bdsimp{m} \ito \tilde D_\aug[m]$ are sieves
  hence it will be enough to prove that they are homotopy equivalences.
  \begin{enumerate}[label=\thm]
  \item Consider two \hore{} functors
    $i_0, i_1 \from \Sd_\aug[m-1] \to \tilde\Sd_\aug[m]$ defined as $i_0 A = A + 1$
    and $i_1 A = i_0 A \union \{ 0 \}$ for any $A \subseteq [m-1]$.
    We have $i_0 A \subseteq i_1 A$ and the resulting natural transformation
    induces an isomorphism of \hore{} categories
    $\Sd_\aug[m-1] \times \hat{[1]} \to \tilde\Sd_\aug[m]$.
    It follows that $i_0$ is a homotopy equivalence
    since $[0] \ito \hat{[1]}$ is.
    This homotopy equivalence also restricts to a homotopy equivalence
    $\Sd_\aug[m-1] \ito \tilde\Sd_\aug\bdsimp{m}$ and thus the conclusion follows
    by the triangle
    \begin{ctikzpicture}
      \matrix[diagram]
      {
        & |(m-1)| \Sd_\aug[m-1] \\
        |(bd)| \tilde\Sd_\aug\bdsimp{m} & & |(m)| \tilde\Sd_\aug[m] \text{.} \\
      };

      \draw[inj] (m-1) to node[above right] {$i_0$} (m);

      \draw[inj] (m-1) to (bd);
      \draw[inj] (bd)  to (m);
    \end{ctikzpicture}
  \item We have a square
    \begin{ctikzpicture}
      \matrix[diagram]
      {
        |(sb)| \tilde\Sd_\aug\bdsimp{m} & |(ss)| \tilde\Sd_\aug[m] \\
        |(db)| \tilde D_\aug\bdsimp{m}  & |(ds)| \tilde D_\aug[m] \\
      };

      \draw[inj] (sb) to (ss);
      \draw[inj] (db) to (ds);

      \draw[inj] (sb) to (db);
      \draw[inj] (ss) to (ds);
    \end{ctikzpicture}
    where the top functor is a homotopy equivalence by the first part
    of the lemma and so are the horizontal ones by \Cref{D-Sd-he}.
    Therefore so is the bottom one. \qedhere
  \end{enumerate}
\end{proof}

For every $m > 0$ each object of $D(K \join \simp{m})$ can be uniquely written
as $x \join \phi$ with $x \in D_\aug K$ and $\phi \in D_\aug[m]$.
This yields a functor $r_K \from D(K \join \simp{m}) \to D_\aug[m]$
sending $x \join \phi$ to $\phi$ to which we associate the left Kan extension
\begin{align*}
  \Lan_{r_K} \from \cat{C}^{D(K \join \simp{m})}_\Reedy
    \to \cat{C}^{D_\aug[m]}_\Reedy
\end{align*}
which can be constructed as 
\begin{align*}
  (\Lan_{r_K} X)_\phi = \colim_{D[k]} \phi^* X
\end{align*}
where $\phi \from [k] \to [m]$.
Analogously, we have a functor
$s_K \from D(K \join \bdsimp{m}) \to D_\aug\bdsimp{m}$
and the associated left Kan extension
\begin{align*}
  \Lan_{s_K} \from \cat{C}^{D(K \join \bdsimp{m})}_\Reedy
    \to \cat{C}^{D_\aug\bdsimp{m}}_\Reedy \text{.}
\end{align*}

We form pullbacks (the front and back squares of the cube)
\begin{ctikzpicture}
  \matrix[diagram]
  {
    |(ju)| \cat{C}^{\tilde D(K \join \simp{m})}_\Reedy & &
    |(au)| \cat{C}^{\tilde D_\aug[m]}_\Reedy \\
    & |(bju)| \cat{C}^{\tilde D(K \join \bdsimp{m})}_\Reedy & &
    |(bau)| \cat{C}^{\tilde D_\aug\bdsimp{m}}_\Reedy \\
    |(j)| \cat{C}^{D(K \join \simp{m})}_\Reedy & &
    |(a)| \cat{C}^{D_\aug[m]}_\Reedy \\
    & |(bj)| \cat{C}^{D(K \join \bdsimp{m})}_\Reedy & &
    |(ba)| \cat{C}^{D_\aug\bdsimp{m}}_\Reedy \text{.} \\
  };

  \draw[->] (ju) to (au);
  \draw[->] (j)  to node[below right] {$\Lan_{r_K}$} (a);

  \draw[fib] (ju) to (j);
  \draw[fib] (au) to (a);

  \draw[->,cross line]   (bju) to (bau);
  \draw[fib, cross line] (bju) to (bj);

  \draw[->]  (bj)  to node[below right] {$\Lan_{s_K}$} (ba);
  \draw[fib] (bau) to (ba);

  \draw[->,shorten >=6pt] (ju) to node[above right] {$P_K$} (bju);

  \draw[->,shorten >=3pt] (au) to (bau);
  \draw[->,shorten >=6pt] (j)  to (bj);
  \draw[->,shorten >=4pt] (a)  to (ba);
\end{ctikzpicture}
Observe that $\cat{C}^{\tilde D(K \join \simp{m})}$
and $\cat{C}^{\tilde D(K \join \bdsimp{m})}$ are just atomic notations
for the pullbacks above, i.e.\ $\tilde D(K \join \simp{m})$
and $\tilde D(K \join \bdsimp{m})$ are \emph{not} \hore{} categories
for general $K$, although they will be interpreted as such
when $K$ is a simplex.

\begin{lemma}\label{universal-cones}
  The induced functor
  $P_K \from \cat{C}^{\tilde D(K \join \simp{m})}_\Reedy \to
    \cat{C}^{\tilde D(K \join \bdsimp{m})}_\Reedy$
  is an acyclic fibration for every $\kappa$-small simplicial set $K$.
\end{lemma}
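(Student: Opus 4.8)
The plan is to prove the statement by induction on the simplicial set $K$: first establish it when $K$ is a simplex, and then show that the class of $\kappa$-small $K$ for which it holds is closed under $\kappa$-small coproducts, pushouts along monomorphisms and colimits of sequences of monomorphisms. Since every $\kappa$-small simplicial set is built from simplices by these operations (its skeletal filtration), this suffices. The structural inputs that make the induction go through are that $D \from \sSet \to \Cat$ preserves colimits (\Cref{D-preserves-colimits}), that $\uvar \join \simp{m}$ preserves pushouts and sequential colimits and carries coproducts to wide pushouts (\Cref{join-colimits}), and that a pushout of sieves between direct \hore{} categories induces a pullback of the associated cofibration categories of Reedy cofibrant diagrams (\Cref{sieves-pullback}); together these present, for $K$ obtained as such a colimit of simplicial sets $K_i$, both the bottom and the ``tilde'' cofibration categories of the cube preceding the statement as (wide) pullbacks of the corresponding categories for the $K_i$, compatibly with the functors $P_{K_i}$.

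For the base case $K = \simp{k}$ I would identify $\cat{C}^{\tilde D(\simp{k} \join \simp{m})}_\Reedy$ explicitly. Since $\simp{k} \join \simp{m} = \simp{k+1+m}$, its objects are Reedy cofibrant \hore{} diagrams $X \from D[k+1+m] \to \cat{C}$ satisfying the ``tilde'' condition, and unwinding that condition through the left Kan extension $\Lan_{r_{\simp{k}}}$, using \Cref{Dm-m-he,equiv-cofinal} and the fact that the $(-1)$-simplex is initial in $D_\aug[k]$, one finds that the two relevant values of $\Lan_{r_{\simp{k}}} X$ over $D_\aug[m]$ are weakly equivalent to the values of $X$ at the standard faces $\underline{k}$ and $\underline{k+1}$ on the first $k+1$, resp.\ $k+2$, vertices, with comparison map $X(\underline{k} \to \underline{k+1})$ lying over $k \to k+1$. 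Hence the ``tilde'' condition says exactly that $X$ sends this single morphism to a weak equivalence, and --- as in the proof of \Cref{nf-representation}, or of \Cref{eqs-in-nf} --- for a Reedy cofibrant \hore{} diagram this already forces homotopicality with respect to $D[k+\tilde{1}+m]$. Thus $\cat{C}^{\tilde D(\simp{k} \join \simp{m})}_\Reedy = \cat{C}^{D[k+\tilde{1}+m]}_\Reedy$ and, via $\simp{k} \join \bdsimp{m} \iso \horn{k+1+m,[k]}$, likewise $\cat{C}^{\tilde D(\simp{k} \join \bdsimp{m})}_\Reedy = \cat{C}^{D\horn{k+\tilde{1}+m,[k]}}_\Reedy$; under these identifications $P_{\simp{k}}$ is restriction along the sieve $D\horn{k+\tilde{1}+m,[k]} \ito D[k+\tilde{1}+m]$, a fibration by \Cref{sieve-exact} and a weak equivalence by \Cref{special-generalized-horns}, hence an acyclic fibration.

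For the inductive steps I would use that $\CofCat$ is a fibration category (\Cref{fibcat-of-cofcats}), so that the dual Gluing Lemma holds in it: the map of pullbacks along fibrations of cofibration categories induced by a levelwise weak equivalence is again a weak equivalence, and it is automatically a fibration. Given $K = B \push_A K'$ with $A \ito B$ and $A \ito K'$ monomorphisms, the discussion above exhibits $P_K$ as the map of pullbacks induced by $P_A$, $P_B$ and $P_{K'}$, which are acyclic fibrations by the inductive hypothesis; checking, via \cite{rb}*{Theorem 9.4.1 (1a)} and the Gluing Lemma, that the relevant values of $\Lan_{r_K}$ are the pushouts of those for $A$, $B$ and $K'$ (so that the ``tilde'' conditions really do reassemble), the Gluing Lemma then yields that $P_K$ is an acyclic fibration. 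The case of colimits of sequences of monomorphisms is identical. For a $\kappa$-small coproduct $K = \bigcoprod_i K_i$ the same scheme applies with wide pushouts in place of pushouts, where one additionally invokes (as in the proof of \Cref{colim-simplex-join}, via \Cref{equiv-cofinal}) that every structure morphism of a wide pushout of acyclic cofibrations is again an acyclic cofibration, which supplies the wide-pullback version of the Gluing Lemma.

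I expect the main obstacle to be the base case: unwinding the definition of $\cat{C}^{\tilde D(\simp{k} \join \simp{m})}_\Reedy$ through $\Lan_{r_{\simp{k}}}$ and verifying that the single ``tilde'' relation on $D_\aug[m]$ translates, for Reedy cofibrant \hore{} diagrams, precisely into homotopicality with respect to $D[k+\tilde{1}+m]$. A secondary difficulty is the bookkeeping in the inductive steps, where one must check carefully that the ``tilde'' conditions for the pieces $K_i$ reassemble into the one for $K$ --- which is where compatibility of left Kan extensions along sieves with pushouts and wide pushouts is used together with the Gluing Lemma.
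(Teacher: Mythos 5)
Your overall strategy is the paper's: identify $P_{\simp{k}}$ with the restriction $\cat{C}^{D[k+\tilde{1}+m]}_\Reedy \to \cat{C}^{D\horn{k+\tilde{1}+m,[k]}}_\Reedy$ so that \Cref{special-generalized-horns} applies (the translation of the ``tilde'' condition into \hore{}ness \wrt{} $D[k+\tilde{1}+m]$ is exactly what \Cref{colim-simplex-join} provides), and then propagate along pushouts, sequential colimits and coproducts of simplicial sets via the Gluing Lemma and its relatives. Two points need repair.

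The genuine gap is that your induction never reaches $K = \emptyset$. The empty simplicial set is a legitimate and important input --- it is precisely the case governing initial objects of $\nf \cat{C}$ --- and it is not recoverable as the empty coproduct in your scheme: the wide pullback over the empty family degenerates to the base $\cat{C}^{D[m]}_\Reedy$, whereas $\cat{C}^{\tilde D(\emptyset \join \simp{m})}_\Reedy$ is the \emph{proper} full subcategory of those $X \from D[m] \to \cat{C}$ for which $(\Lan_{r_\emptyset} X)_{[-1]}$, i.e.\ an initial object of $\cat{C}$, maps to the value of $X$ at the constant $0$-valued simplices by a weak equivalence. The paper treats this case separately: for $K = \emptyset$ the top square of the defining cube is itself a pullback, and $P_\emptyset$ is an acyclic fibration by \Cref{augmented-bd}, which is proved by an explicit contraction rather than by the horn-filling argument of your base case. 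Your argument needs this extra base case added.

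Secondly, ``it is automatically a fibration'' is not a valid inference: a map of pullbacks along fibrations whose components are acyclic fibrations need not itself be a fibration without a Reedy-type condition on the map of cospans. The paper avoids this by establishing the fibration property for all $K$ at once, before any induction: $\cat{C}^{\tilde D(K \join \simp{m})}_\Reedy$ and $\cat{C}^{\tilde D(K \join \bdsimp{m})}_\Reedy$ are full subcategories of the un-tilded diagram categories closed under weakly equivalent objects, so $P_K$ inherits the relevant lifting properties from the fibration $\cat{C}^{D(K \join \simp{m})}_\Reedy \fto \cat{C}^{D(K \join \bdsimp{m})}_\Reedy$ of \Cref{dual-pushout-prod}. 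With that observation in place, your inductive argument for the weak-equivalence part goes through essentially as you describe.
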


\begin{proof}
  First, we verify that $P_K$ is a fibration.
  The categories $\cat{C}^{\tilde D(K \join \simp{m})}_\Reedy$
  and $\cat{C}^{\tilde D(K \join \bdsimp{m})}_\Reedy$ are full subcategories of
  $\cat{C}^{D(K \join \simp{m})}_\Reedy$
  and $\cat{C}^{D(K \join \bdsimp{m})}_\Reedy$ respectively.
  They are both closed under taking weakly equivalent objects.
  Hence the lifting properties of the fibration
  $\cat{C}^{D(K \join \simp{m})}_\Reedy
    \fto \cat{C}^{D(K \join \bdsimp{m})}_\Reedy$ are inherited by $P_K$

  For the rest of the argument it will suffice to check that $P_K$
  is a weak equivalence when $K$ is empty or a simplex and that this property
  is preserved under coproducts, pushouts along monomorphisms
  and colimits of sequences of monomorphisms.

  When $K$ is empty then the top square of the cube above happens to be
  a pullback and hence $P_\emptyset$ is an acyclic fibration
  by \Cref{augmented-bd}.

  For $K = \simp{k}$ we will check that $P_{\simp{k}}$ coincides with
  \begin{align*}
    \cat{C}^{D[k+\tilde{1}+m]}_\Reedy
      \to \cat{C}^{D\horn{k+\tilde{1}+m,[k]}}_\Reedy
  \end{align*}
  and the conclusion will follow from \Cref{special-generalized-horns}.
  It is enough to verify that a \hore{} Reedy cofibrant diagram
  $X \from D[k+1+m] \to \cat{C}$ is \hore{} \wrt{} $D[k+\tilde{1}+m]$ \iff{}
  the induced morphism
  \begin{align*}
    \colim X|D[k] \to \colim X|D[k+1]
  \end{align*}
  is a weak equivalence.
  This follows from \Cref{colim-simplex-join}.
  The same argument works with $\horn{k+1+m,[k]}$ in place of $[k+1+m]$,
  since $\simp{k+1}$ is contained in $\horn{k+1+m,[k]}$ for $m > 0$.

  If
  \begin{ctikzpicture}
    \matrix [diagram]
    {
      |(A)| A & |(K)| K \\
      |(B)| B & |(L)| L \\
    };

    \draw[->] (A) to (K);
    \draw[->] (B) to (L);

    \draw[inj] (A) to (B);
    \draw[inj] (K) to (L);
  \end{ctikzpicture}
  is a pushout square of simplicial sets \st{} the conclusion holds
  for $A$, $B$ and $K$, then there is a pullback square
  of cofibration categories
  \begin{ctikzpicture}
    \matrix [diagram]
    {
      |(L)| \cat{C}^{\tilde D(L \join \simp{m})}_\Reedy &
      |(B)| \cat{C}^{\tilde D(B \join \simp{m})}_\Reedy \\
      |(K)| \cat{C}^{\tilde D(K \join \simp{m})}_\Reedy &
      |(A)| \cat{C}^{\tilde D(A \join \simp{m})}_\Reedy \\
    };

    \draw[->] (K) to (A);
    \draw[->] (L) to (B);

    \draw[fib] (B) to (A);
    \draw[fib] (L) to (K);
  \end{ctikzpicture}
  and a similar one with $\bdsimp{m}$ in place of $\simp{m}$.
  Hence the conclusion for $L$ follows from the Gluing Lemma.

  If $K$ is a colimit of a sequence of monomorphisms
  $K_0 \ito K_1 \ito K_2 \ito \ldots$,
  then $\cat{C}^{\tilde D(K \join \simp{m})}_\Reedy$
  is the limit of the tower of fibrations
  \begin{align*}
    \ldots \fto \cat{C}^{\tilde D(K_2 \join \simp{m})}_\Reedy
      \fto \cat{C}^{\tilde D(K_1 \join \simp{m})}_\Reedy
      \fto \cat{C}^{\tilde D(K_0 \join \simp{m})}_\Reedy
  \end{align*}
  and analogously for $\cat{C}^{\tilde D(K \join \bdsimp{m})}_\Reedy$.
  Therefore,
  if $P_{K_i}$ is a weak equivalence for all $i$, then so is $P_K$.

  The case of coproducts is handled similarly except that $\uvar \join \simp{m}$
  doesn't preserve coproducts but carries them to wide pushouts.
  Hence $\cat{C}^{\tilde D((\coprod_i K_i) \join \simp{m})}_\Reedy$
  is the wide pullback
  \begin{align*}
    \bigpull{\cat{C}^{D[m]}_\Reedy}{i}
      \cat{C}^{\tilde D(K_i \join \simp{m})}_\Reedy \text{.}
  \end{align*}
  The conclusion follows since the wide pullback functor is an exact functor
  of fibration categories.
\end{proof}

We are ready to characterize colimits in $\nf \cat{C}$ in terms of
homotopy colimits in $\cat{C}$.

\begin{proposition}\label{nerve-cones}
  Let $\cat{C}$ be a $\kappa$-cocomplete cofibration category,
  $K$ a $\kappa$-small simplicial set and $S \from K^\ucone \to \nf \cat{C}$.
  Then $S$ is universal as a cone under $S|K$ \iff{} the induced morphism
  \begin{align*}
    \colim_{DK} S \to \colim_{D(K^\ucone)} S
  \end{align*}
  is a weak equivalence (with $S$ seen as a \hore{} Reedy cofibrant diagram
  $D(K^\ucone) \to \cat{C}$ by \Cref{nf-representation}).
  Such a cone exists under every diagram $K \to \nf \cat{C}$.
\end{proposition}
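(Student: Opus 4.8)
The plan is to deduce the proposition from \Cref{universal-cones}, whose acyclic fibration already carries all the homotopical content; what remains is to translate the ``tilde'' conditions occurring there into the stated weak equivalence condition. Throughout I use \Cref{nf-representation} to move freely between simplicial maps into $\nf\cat{C}$ and \hore{} Reedy cofibrant diagrams, and I regard $K^\ucone = K\join\simp{0}$ as a subcomplex of $K\join\simp{m}$ and of $K\join\bdsimp{m}$ via the vertex $0$ of $\simp{m}$, the convention built into $\tilde D_\aug[m]$ and $\tilde D_\aug\bdsimp{m}$.

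\emph{Step 1 --- unwinding the tilde conditions.} For $m\ge 1$, a \hore{} Reedy cofibrant diagram $Z\from D(K\join\simp{m})\to\cat{C}$ lies in the full subcategory $\cat{C}^{\tilde D(K\join\simp{m})}_\Reedy$ \iff{} the canonical morphism
\begin{align*}
  \colim_{DK}(Z|DK)\to\colim_{D(K^\ucone)}(Z|D(K^\ucone))
\end{align*}
is a weak equivalence, and similarly with $\bdsimp{m}$ in place of $\simp{m}$. Indeed, membership means that $\Lan_{r_K}Z$ is \hore{} with respect to $\tilde D_\aug[m]$; since the weak equivalences of $\tilde D_\aug[m]$ are generated, under the ``2 out of 6'' property, by those of $D_\aug[m]$ together with the single morphism from the object $[-1]\to[m]$ to the constant map of value $0$, and $\Lan_{r_K}Z$ is already \hore{} over $D_\aug[m]$ by construction, membership reduces to $\Lan_{r_K}Z$ sending that one morphism to a weak equivalence. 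Evaluating the formula for $\Lan_{r_K}$ at its source and target yields exactly $\colim_{DK}(Z|DK)$ and $\colim_{D(K^\ucone)}(Z|D(K^\ucone))$. The $\bdsimp{m}$-case is identical, with $\Lan_{s_K}$ and $\tilde D_\aug\bdsimp{m}$.

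\emph{Step 2 --- the condition implies universality, and existence.} First suppose $S$ is a cone under a diagram $X\from K\to\nf\cat{C}$ with $\colim_{DK}(S|K)\to\colim_{D(K^\ucone)}S$ a weak equivalence; I claim $S$ is universal. By the definition of a universal cone we must, for each $m>0$, extend every \hore{} Reedy cofibrant diagram $U\from D(K\join\bdsimp{m})\to\cat{C}$ with $U|D(K^\ucone)=S$ to $D(K\join\simp{m})$. Such a $U$ satisfies $U|DK=S|K$, so by Step 1 it lies in $\cat{C}^{\tilde D(K\join\bdsimp{m})}_\Reedy$; the acyclic fibration $\cat{C}^{\tilde D(K\join\simp{m})}_\Reedy\fto\cat{C}^{\tilde D(K\join\bdsimp{m})}_\Reedy$ of \Cref{universal-cones} is surjective on objects, and any preimage of $U$ is the required extension. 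It remains to produce some universal cone under an arbitrary $X$. Regarding $X$ as a \hore{} Reedy cofibrant diagram $\bar X\from DK\to\cat{C}$: since $K$ is $\kappa$-small, $DK$ is a $\kappa$-small direct category, so $\colim_{DK}\bar X$ exists by the $\kappa$-cocomplete version of \Cref{colims-in-cofcats}; let $\tilde T\from(DK)^\ucone\to\cat{C}$ be the colimiting cone and let $S$ be a Reedy cofibrant replacement of $p_K^*\tilde T$ relative to the sieve $DK\ito D(K^\ucone)$ (\Cref{relative-factorization}). Then $S$ is a cone under $X$, and \Cref{cone-factorization} applied with $f=\id_{\colim_{DK}\bar X}$ factors this identity as $\colim_{DK}\bar X\to\colim_{D(K^\ucone)}S\weto\colim_{DK}\bar X$; by ``2 out of 3'' the first arrow is a weak equivalence, so $S$ satisfies the condition above and is therefore universal. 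This gives the last sentence of the proposition and the ``if'' implication.

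\emph{Step 3 --- universality implies the condition.} Let $S$ be any universal cone under $X=S|K$ and let $S'$ be the universal cone constructed in Step 2, for which $\colim_{DK}X\weto\colim_{D(K^\ucone)}S'$. By \Cref{colim-equivalent}, $S$ and $S'$ are equivalent as objects of $X\uslice\nf\cat{C}$; via \Cref{join-slice} and \Cref{nf-representation} such an equivalence is a \hore{} Reedy cofibrant diagram on $D(K\join\simp{1})$ restricting to $\bar X$ on $DK$ and to $S$, $S'$ on the vertex copies of $D(K^\ucone)$, whose $\simp{1}$-part is \hore{} with respect to the marked edge by \Cref{eqs-in-nf}. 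Applying $\Lan_{r_K}$ (a left Kan extension of a \hore{} diagram along a \hore{} functor, hence \hore{}) and reading off values as in Step 1, together with \Cref{colim-simplex-join} --- which, in the presence of the marked edge, makes both vertex inclusions induce weak equivalences on colimits --- one obtains a zig-zag of weak equivalences between $\colim_{D(K^\ucone)}S$ and $\colim_{D(K^\ucone)}S'$ compatible with the structure maps out of $\colim_{DK}X$. Since $\colim_{DK}X\to\colim_{D(K^\ucone)}S'$ is a weak equivalence, ``2 out of 3'' along this zig-zag shows $\colim_{DK}X\to\colim_{D(K^\ucone)}S$ is one as well.

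The main obstacle is bookkeeping rather than homotopy theory, since the latter is packaged in \Cref{universal-cones}: Step 1 requires matching the augmented constructions $\Lan_{r_K}$, $\Lan_{s_K}$, $\tilde D_\aug[m]$, $\tilde D_\aug\bdsimp{m}$ against honest colimits, and Step 3 requires turning the abstract equivalence of universal cones into an explicit zig-zag of colimits, checking in particular that $\Lan_{r_K}$ preserves \hore{} diagrams and interacts correctly with \Cref{colim-simplex-join}. Both are routine but fiddly; everything else is formal.
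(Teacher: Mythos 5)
Your proposal is correct and follows essentially the same route as the paper: the ``if'' direction and the existence statement via the acyclic fibration of \Cref{universal-cones} together with \Cref{cone-factorization}, and the ``only if'' direction by comparing $S$ to the constructed universal cone through \Cref{colim-equivalent} and a zig-zag controlled by \Cref{colim-simplex-join}. Your Step 1, identifying membership in the ``tilde'' subcategories with the weak-equivalence condition on $\colim_{DK}\to\colim_{D(K^\ucone)}$, is a useful explicit record of a translation the paper performs only for simplices inside the proof of \Cref{universal-cones} and otherwise leaves implicit.
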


\begin{proof}
  If the morphism above is a weak equivalence
  let $U \from K \join \bdsimp{m} \to \nf \cat{C}$ extend $S$.
  The functor $\cat{C}^{\tilde D(K \join \simp{m})}_\Reedy
  \to \cat{C}^{\tilde D(K \join \bdsimp{m})}_\Reedy$
  is an acyclic fibration by \Cref{universal-cones}
  and thus the corresponding \hore{} Reedy cofibrant diagram
  $\tilde D(K \join \bdsimp{m}) \to \cat{C}$ prolongs to
  $\tilde D(K \join \simp{m}) \to \cat{C}$.
  Hence $S$ is universal.

  Conversely, let $S$ be universal.
  Define $T \from D(K^\ucone) \to \cat{C}$ as in \Cref{cone-factorization}
  where we take $f$ to be the identity of $\colim_{D K} S$.
  Then the induced morphism $\colim_{DK} T \to \colim_{D(K^\ucone)} T$
  is a weak equivalence and so $T$ is universal by the argument above
  (which proves the existence statement).
  Therefore by \Cref{colim-equivalent} there exists
  a \hore{} Reedy cofibrant diagram $W \from D(K \join \nwiso) \to \cat{C}$
  which restricts to $[S, T]$ on $D(K \join \bdsimp{1})$.
  In the diagram
  \begin{ctikzpicture}
    \matrix[diagram]
    {
      |(KX)| \colim_{DK} S & |(K0X)| \colim_{D(K^\ucone)} S & |(X0)| S_0 \\
      |(K0Z)| \colim_{D(K^\ucone)} T & |(K1W)| \colim_{D(K \join \simp{1})} W
      & |(W01)| W_{01} \\
    };

    \draw[->] (KX) to node[left] {$\we$} (K0Z);
    \draw[->] (K0X) to (K1W);
    \draw[->] (X0)  to (W01);

    \draw[->] (KX)  to (K0X);
    \draw[->] (K0Z) to (K1W);

    \draw[->] (X0)  to (K0X);
    \draw[->] (W01) to (K1W);
  \end{ctikzpicture}
  both bottom horizontal morphisms and the top right one are weak equivalences
  by \Cref{colim-simplex-join} and so is the right vertical one since
  the \hore{} structure of $D\nwiso$ is the maximal one.
  It follows that $\colim_{DK} S \to \colim_{D(K^\ucone)} S$
  is also a weak equivalence.
\end{proof}

Before completing the proof of \Cref{nerve-continuous} we will point out that
in certain special cases the above criterion for recognizing universal cones
can be simplified considerably.

\begin{example}\label{ex:init}
  A \hore{} Reedy cofibrant diagram $X \from D[0] \to \cat{C}$ is initial
  as an object of $\nf \cat{C}$ \iff{} the canonical morphism $0 \to X_0$
  is a weak equivalence (where $0$ is an initial object of $\cat{C}$).
  This is because the induced morphism $X_0 \to \colim X$ is a weak equivalence
  by \Cref{equiv-cofinal,Dm-m-he}.
\end{example}

\begin{example}\label{ex:push}
  For a \hore{} Reedy cofibrant diagram $X \from D([1] \times [1]) \to \cat{C}$
  consider its restriction to $\Sd([1] \times [1])$.
  \begin{ctikzpicture}
    \matrix[diagram,column sep=10em,row sep=10em]
    {
      |(a-a)| X_{0,0} & |(b-a)| X_{1,0} \\
      |(a-b)| X_{0,1} & |(b-b)| X_{1,1} \\
    };

    \node (aa-ab) at (barycentric cs:a-a=1,a-b=1,b-b=0) {$X_{00,01}$};
    \node (ab-bb) at (barycentric cs:a-a=0,a-b=1,b-b=1) {$X_{01,11}$};
    \node (ab-ab) at (barycentric cs:a-a=1,a-b=0,b-b=1) {$X_{01,01}$};

    \node (ab-aa) at (barycentric cs:a-a=1,b-a=1,b-b=0) {$X_{01,00}$};
    \node (bb-ab) at (barycentric cs:a-a=0,b-a=1,b-b=1) {$X_{11,01}$};

    \node (aab-abb) at (barycentric cs:a-a=2,a-b=3,b-b=2) {$X_{001,011}$};
    \node (abb-aab) at (barycentric cs:a-a=2,b-a=3,b-b=2) {$X_{011,001}$};

    \draw[->] (a-a) to (aa-ab);
    \draw[->] (a-b) to node[left] {$\we$} (aa-ab);

    \draw[->] (a-b) to (ab-bb);
    \draw[->] (b-b) to node[below] {$\we$} (ab-bb);

    \draw[->] (a-a) to (ab-aa);
    \draw[->] (b-a) to node[above] {$\we$} (ab-aa);

    \draw[->] (b-a) to (bb-ab);
    \draw[->] (b-b) to node[right] {$\we$} (bb-ab);

    \draw[->,dashed] (a-a) to (ab-ab);
    \draw[->] (b-b) to node[above right] {$\we$} (ab-ab);

    \draw[->,dashed] (aa-ab) to (aab-abb);
    \draw[->] (ab-bb) to node[above right] {$\we$} (aab-abb);
    \draw[->] (ab-ab) to node[above left]  {$\we$} (aab-abb);

    \draw[->,dashed] (ab-aa) to (abb-aab);
    \draw[->] (bb-ab) to node[below left] {$\we$} (abb-aab);
    \draw[->] (ab-ab) to node[above left] {$\we$} (abb-aab);
  \end{ctikzpicture}
  The corresponding square $\simp{1} \times \simp{1} \to \nf \cat{C}$
  is a pushout (observe that $\spn^\ucone \iso [1] \times [1]$) \iff{}
  the morphism
  \begin{align*}
    X_{00,01} \push_{X_{0,0}} X_{01,00}
      \to X_{001,011} \push_{X_{01,01}} X_{011,001}
  \end{align*}
  induced by the three dashed arrows above is a weak equivalence.
  This can be justified by observing that in the square
  \begin{ctikzpicture}
    \matrix[diagram]
    {
      |(s)| X_{00,01} \push_{X_{0,0}} X_{01,00} &
        |(S)| \colim_{D\spn} X \\
      |(p)| X_{001,011} \push_{X_{01,01}} X_{011,001} &
        |(P)| \colim X \\
    };

    \draw[->] (s) to node[above] {$\we$} (S);
    \draw[->] (p) to node[below] {$\we$} (P);

    \draw[->] (s) to (p);
    \draw[->] (S) to (P);
  \end{ctikzpicture}
  both horizontal morphism are weak equivalences.
  Indeed, they are induced by the composite functors
  \begin{align*}
    \{ (00,01), (0,0), (01,00) \} & \ito \Sd\spn \ito D\spn \\
    \{ (001,011), (01,01), (011,001) \} & \ito \Sd([1] \times [1])
      \ito D([1] \times [1]) \\
  \end{align*}
  where in both cases the latter functor is a homotopy equivalence
  by \Cref{D-Sd-he-msc} while the former functor is a homotopy equivalence
  in the first case and cofinal in the second one.
  The conclusion follows by \Cref{equiv-cofinal}.
\end{example}

\begin{proof}[Proof of \Cref{nerve-continuous}]
  Since we have already verified \Cref{nerve-limits,nf-inner,nf-iso}
  (\Cref{nerve-limits} was generalized to infinite limits
  in the end of \Cref{sec:infinite-hocolims})
  it remains to check that $\nf$ takes values
  in $\kappa$-cocomplete quasicategories and $\kappa$-cocontinuous functors.

  It takes values in quasicategories by \Cref{nf-inner}
  that are $\kappa$-cocomplete by \Cref{nerve-cones}.

  Similarly, colimits in quasicategories of frames were characterized
  in \Cref{nerve-cones} by certain morphisms being weak equivalences
  and weak equivalences are preserved by exact functors by \Cref{Ken-Brown}.
\end{proof}

In the next subsection we will adapt the arguments above to
the case of $\kappa = \aleph_0$.
The proof of the main theorem continues in \Cref{ch:cofcats-of-dgrms}.

  \subsection{Cocompleteness: the finite case}
\label{sec:cocompleteness-finite}

In this subsection we will prove that $\nf \cat{C}$ is finitely cocomplete
for any cofibration category.
The arguments of the previous subsection do not directly apply to this case since
they heavily use the existence of colimits of Reedy cofibrant diagrams
over categories of the form $DK$.
Unfortunately, $DK$ is infinite even when $K$
is a finite (non-empty) simplicial set.
In order to address this problem, we will filter the category $DK$
by finite subcategories
\begin{align*}
  \filt{0,K} \ito \filt{1,K} \ito \filt{2,K} \ito \ldots
\end{align*}
and instead of using a colimit of a Reedy cofibrant diagram
$X \from DK \to \cat{C}$ we will consider the resulting sequence
of finite colimits
\begin{align*}
  \colim_{\filt{0,K}} X \cto \colim_{\filt{1,K}} X \cto \colim_{\filt{2,K}} X
    \cto \ldots 
\end{align*}
If $X$ is \hore{} this sequence stabilizes in the sense that
from some point on (depending on $K$) all morphisms are weak equivalences
and this stable value is a homotopy colimit of $X$.
However, there is no universal bound on when such a sequence stabilizes
when $K$ varies and hence we are forced to think of that entire sequence
as a homotopy colimit of $X$.
It turns out that the proofs of the previous subsection will work
if we carefully substitute such sequences
for actual colimits over categories $DK$.
The difficult part is constructing such filtrations
with all the desired naturality and homotopy invariance
which is the main purpose of this subsection.

Let $J$ be a \hore{} category and $A$ a set of objects of $DJ$,
we denote the sieve generated by $A$ in $DJ$ by $\pfilt{A,J}$.
Moreover, when $J = [m]$ (possibly with some non-trivial \hore{} structure)
we will write objects of $D[m]$ as non-decreasing sequences of elements of $[m]$
often using abbreviations like $0^k 1^l$ to denote the sequence of $k$ $0$s
followed by $l$ $1$s.

The category $D[0]$ can be seen as the category of non-degenerate simplices of
a simplicial set $S$ with exactly one non-degenerate simplex in each dimension.
As it turns out, the skeleton $\Sk^k S$ is contractible for $k$ even
but weakly equivalent to the sphere $\simp{k} / \bdsimp{k}$ for $k$ odd.
This suggests that the filtration of $D[0]$ by sieves generated by
even-dimensional simplices of $S$ should be well-behaved homotopically.
We verify that this is the case in the next two lemmas and later generalize it
to $DK$ for arbitrary finite simplicial sets $K$.

\begin{lemma}\label{cone-filt-contractible}
  For each $k$ the functor $t \from \pfilt{0^k 1,\hat{[1]}} \to [0]$
  is a homotopy equivalence of \hore{} categories.
\end{lemma}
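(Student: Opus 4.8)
The plan is to follow the pattern of the proofs of \Cref{Dm-m-he} and \Cref{E1-contractible}, producing an ``inflation'' endofunctor that witnesses a contraction, but adapting it to the fact that $\pfilt{0^k 1, \hat{[1]}}$ is only a sieve in $D\hat{[1]}$, not all of it. First I would record that, since $\hat{[1]}$ carries the maximal homotopical structure and the weak equivalences of $D\hat{[1]}$ are created by $p_{[1]} \from D\hat{[1]} \to \hat{[1]}$, every morphism of $D\hat{[1]}$, and hence of its full subcategory $I \mathrel{:=} \pfilt{0^k 1, \hat{[1]}}$, is a weak equivalence. Consequently a homotopical functor $[0] \to I$ is merely a choice of object $c \in I$, and it suffices to exhibit a zig-zag of natural transformations (which are automatically natural weak equivalences) between $\id_I$ and the constant functor at $c$; then $t$ is a homotopy equivalence with homotopy inverse $\ast \mapsto c$.

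Next I would make $I$ explicit: its objects are the subsequences of $0^k 1$, namely $0^j$ for $1 \le j \le k$ and $0^j 1$ for $0 \le j \le k$. Let $B \subseteq I$ be the full subcategory on the sequences $0^j 1$ ending in a $1$. The point is that $0^0 1 = 1$ is an \emph{initial} object of $B$: a morphism $1 \to 0^j 1$ in $D\hat{[1]}$ is a monotone injection $[0] \ito [j]$ matching symbols, so it must send $0$ to the position of the unique $1$ and is therefore unique. Note that $1$ is not initial in $I$ itself, since it admits no morphism to any $0^j$; this is exactly why we must funnel $I$ into $B$ and use a genuine zig-zag rather than a single natural transformation, and it is also why the naive insertion trick of \Cref{Dm-m-he} does not transfer directly.

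The funnelling functor $u \from I \to I$ would be defined by $u(0^j) = 0^j 1$ and $u(0^j 1) = 0^j 1$, acting on a morphism as the given morphism does on the $0$-entries and sending the terminal $1$-entry (freshly adjoined when the source is some $0^j$) to the terminal $1$-entry of the target. Checking that this is a well-defined functor landing in $B$ is the one step that requires genuine bookkeeping, and is where I expect the only real work to lie; everything else is formal. Granting that, the inclusions $0^j \ito 0^j 1$ deleting the last entry (together with the identities on $B$) assemble into a natural transformation $\eta \from \id_I \Rightarrow u$, and since every $u(x)$ lies in $B$ and $1$ is initial there, the unique morphisms $1 \to u(x)$ assemble into a natural transformation $\mu \from \mathrm{const}_1 \Rightarrow u$ (naturality being immediate, as any two morphisms out of the initial object $1$ with the same target in $B$ agree). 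The zig-zag $\id_I \Rightarrow u \Leftarrow \mathrm{const}_1$ given by $\eta$ and $\mu$ then finishes the proof, with $c = 1$.
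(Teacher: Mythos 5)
Your proof is correct and takes essentially the same route as the paper: the paper's proof defines the very same endofunctor (appending a trailing $1$ to each sequence that does not already end in one) and contracts via the same zig-zag $\id \Rightarrow s \Leftarrow jt$ of natural weak equivalences. The only difference is presentational — you justify the second leg by observing that $1$ is initial among the sequences ending in $1$, a point the paper leaves implicit.
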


\begin{proof}
  Represent objects of $\pfilt{0^k 1,\hat{[1]}}$ as binary sequences
  and let $j \from [0] \to \pfilt{0^k 1,\hat{[1]}}$ classify the object $1$.
  Next, define $s \from \pfilt{0^k 1,\hat{[1]}} \to \pfilt{0^k 1,\hat{[1]}}$
  by appending a trailing $1$ to each sequence that doesn't have one.
  Then there are natural weak equivalences
  \begin{ctikzpicture}
    \matrix[diagram]
    {
      |(id)| \id_{\pfilt{0^k 1,\hat{[1]}}} & |(s)| s & |(jt)| j t \text{.} \\
    };

    \draw[->] (id) to node[above] {$\we$} (s);
    \draw[->] (jt) to node[above] {$\we$} (s);
  \end{ctikzpicture}
  Moreover, we have $t j = \id_{[0]}$ which finishes the proof.
\end{proof}

The images of the composite functors
\begin{align*}
  \Sd[k] \ito D[k] \to D[0] \text{ and }
  \Sd \bdsimp{k+1} \ito D \bdsimp{k+1} \to D[0]
\end{align*}
are both $\pfilt{0^{k+1},[0]}$.
In the next lemma we consider the resulting functors
\begin{align*}
  t \from \Sd[k] \to \pfilt{0^{k+1},[0]} \text{ and }
  t \from \Sd \bdsimp{k+1} \to \pfilt{0^{k+1},[0]} \text{.}
\end{align*}

\begin{lemma}\label{filt-even-odd}
  Let $k \ge 0$ and let $\cat{C}$ be a cofibration category.
  If $X \from \pfilt{0^{k+1},[0]} \to \cat{C}$
  is a \hore{} Reedy cofibrant diagram, then
  \begin{enumerate}[label=\thm]
  \item the induced morphism
    \begin{align*}
      \colim_{\Sd \simp{k}} t^* X \to \colim_{\pfilt{0^{k+1},[0]}} X
    \end{align*}
    is a weak equivalence when $k$ is even,
  \item the induced morphism
    \begin{align*}
      \colim_{\Sd \bdsimp{k+1}} t^* X \to \colim_{\pfilt{0^{k+1},[0]}} X
    \end{align*}
    is a weak equivalence when $k$ is odd.
  \end{enumerate}
\end{lemma}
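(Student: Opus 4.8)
The plan is to prove both statements simultaneously, by induction on $k$, deducing them from homotopy-pushout decompositions of the colimits together with the Gluing Lemma; \Cref{cone-filt-contractible} will supply the homotopy input. Note first that in each case the functor $t$ has the feature that $t^*$ preserves latching objects on the nose: for a subset $A$ the comparison of latching categories $\bd(\Sd\simp{k}\slice A)\to\bd(\pfilt{0^{k+1},[0]}\slice 0^{|A|})$ (and likewise for $\Sd\bdsimp{k+1}$) is an isomorphism, both sides being the poset of proper non-empty subsets of a set of size $|A|$. So $t^*$ sends \hore{} Reedy cofibrant diagrams to \hore{} Reedy cofibrant diagrams and the colimits in question exist by \Cref{colims-in-cofcats}.

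The inductive mechanism rests on the filtration $\pfilt{0^{k+1},[0]} = \pfilt{0^k,[0]}\union\{0^{k+1}\}$, in which $0^{k+1}$ is the unique object of top degree $k$. By \Cref{latching-extension} (and the standard description of colimits over direct categories) the latching category of $\pfilt{0^{k+1},[0]}$ at $0^{k+1}$ is isomorphic to $\Sd\bdsimp{k}$, and under this isomorphism the forgetful functor to $\pfilt{0^k,[0]}$ becomes precisely the functor $t\from\Sd\bdsimp{k}\to\pfilt{0^k,[0]}$ of part (2) at level $k-1$. Hence $\colim_{\pfilt{0^{k+1},[0]}}X$ is the pushout of the span
\begin{align*}
  X_{0^{k+1}} \longleftarrow \colim_{\Sd\bdsimp{k}} t^* X \longrightarrow \colim_{\pfilt{0^k,[0]}} X \text{,}
\end{align*}
where the left-hand map is the latching morphism (a cofibration, since $X$ is Reedy cofibrant) and the right-hand map is the comparison map of part (2) at level $k-1$. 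The base case $k=0$ of part (1) is trivial, since then $t$ is an isomorphism; small values of $k$ for part (2) are checked directly.

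For part (1) with $k$ even, the inductive hypothesis — part (2) at the odd level $k-1$ — says that the right-hand map above is a weak equivalence, so the displayed pushout presents $\colim_{\pfilt{0^{k+1},[0]}}X$ as the homotopy pushout of a cofibration along a weak equivalence; by the Gluing Lemma the structure map $X_{0^{k+1}}\to\colim_{\pfilt{0^{k+1},[0]}}X$ is a weak equivalence. Since $\Sd\simp{k}$ has $[k]$ as a terminal object and $t([k])=0^{k+1}$, we have $\colim_{\Sd\simp{k}}t^*X = X_{0^{k+1}}$ and this structure map is exactly the comparison asserted by part (1).

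The remaining, and main, difficulty is part (2) for $k$ odd. Here one decomposes the subdivided sphere along the boundary of one facet: $\bdsimp{k+1} = \simp{k}\union_{\bdsimp{k}}\horn{k+1,k+1}$, where $\simp{k}$ is the last facet, $\horn{k+1,k+1}$ is the union of the others, $\horn{k+1,k+1}\iso\simp{0}\join\bdsimp{k}$ is a cone on $\bdsimp{k}$, and the displayed square is a pushout of sieves. By \Cref{sieves-pullback} this exhibits $\colim_{\Sd\bdsimp{k+1}}t^*X$ as a pushout of $\colim_{\Sd\simp{k}}t^*X = X_{0^{k+1}}$ and $\colim_{\Sd\horn{k+1,k+1}}t^*X$ along the latching morphism $\colim_{\Sd\bdsimp{k}}t^*X\cto X_{0^{k+1}}$. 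The facet contributes $X_{0^{k+1}}$ exactly as before; the equatorial sphere $\Sd\bdsimp{k}$ is controlled by the lower inductive instances; and the subdivided cone $\Sd\horn{k+1,k+1}$ is handled via \Cref{cone-filt-contractible} (after matching \hore{} structures), or alternatively by iterating the facet decomposition and invoking Reedy cofibrancy, under which coproduct inclusions are cofibrations and pushouts of acyclic cofibrations are acyclic cofibrations. Comparing this pushout presentation of $\colim_{\Sd\bdsimp{k+1}}t^*X$ with the one for $\colim_{\pfilt{0^{k+1},[0]}}X$ from the second paragraph, and applying the Gluing Lemma once more, identifies the part-(2) comparison map as a weak equivalence. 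The hard part is precisely this comparison: disentangling the several homotopy pushouts, keeping track of which of the subdivided complexes involved (facets, cones, equatorial spheres) are homotopically contractible and which are spherical, and verifying throughout that the \hore{} structures — governed on all these $\Sd$-categories by the ``last vertex'' rule — are compatible with every sieve inclusion used. The only homotopical inputs are \Cref{cone-filt-contractible}, \Cref{D-Sd-he-msc} and the Gluing Lemma.
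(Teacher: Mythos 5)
Your base case and your argument for part (1) at even $k$ coincide with the paper's: the paper likewise identifies $\Sd\bdsimp{k}$ with the latching category of $\pfilt{0^{k+1},[0]}$ at the top object and deduces part (1) by cobase change of the part-(2) comparison one level down along the latching cofibration. The problem is part (2), which you rightly single out as the main difficulty but do not actually resolve. You propose to present $\colim_{\Sd\bdsimp{k+1}}t^*X$ as the pushout of $X_{0^{k+1}}\leftarrow\colim_{\Sd\bdsimp{k}}t^*X\to\colim_{\Sd\horn{k+1,k+1}}t^*X$ and $\colim_{\pfilt{0^{k+1},[0]}}X$ as the pushout of $X_{0^{k+1}}\leftarrow\colim_{\Sd\bdsimp{k}}t^*X\to\colim_{\pfilt{0^k,[0]}}X$, and to conclude by ``applying the Gluing Lemma once more''. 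But the Gluing Lemma only identifies the map of pushouts \emph{induced} by a map of spans, and the part-(2) comparison map is not induced by any map between these two spans: $t$ does not carry $\Sd\horn{k+1,k+1}$ into $\pfilt{0^k,[0]}$, because the horn contains $(k+1)$-element simplices (the facets through the cone vertex $k+1$), which $t$ sends to the top object $0^{k+1}$, outside $\pfilt{0^k,[0]}$. So there is no map $\colim_{\Sd\horn{k+1,k+1}}t^*X\to\colim_{\pfilt{0^k,[0]}}X$ under $\colim_{\Sd\bdsimp{k}}t^*X$ through which the comparison factors. Showing that the two pushouts are abstractly weakly equivalent (both right-hand legs having contractible source) does not show that the particular morphism in the statement is a weak equivalence; this last step is a genuine gap, not bookkeeping.

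This incompatibility is exactly what the paper's more elaborate construction is designed to overcome. It fattens the target inside $D\hat{[1]}$: the sieve $\pfilt{0^k 1,\hat{[1]}}$ generated by the sequence $0^k1$ is contractible by \Cref{cone-filt-contractible}, it receives the (suitably marked) horn via the map induced by $0^{k+1}1\from[k+1]\to[1]$, and it is glued with $\pfilt{0^{k+1},[0]}$ along $\pfilt{0^k,[0]}$ to form $\pfilt{{0^k 1,0^{k+1}},\hat{[1]}}$. Both $\Sd\bdsimp{k+1}$ and $\pfilt{0^{k+1},[0]}$ then map into this common fattening compatibly with $t$, both induced maps on colimits are weak equivalences by cobase change along cofibrations, and two-out-of-three finishes. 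Note also that the paper's treatment of the horn piece invokes \Cref{special-horns-nf}, hence ultimately the special outer horn filling of \Cref{special-horns}; this input does not appear in your list, which is a further sign that the odd step cannot be closed with only \Cref{cone-filt-contractible}, \Cref{D-Sd-he-msc} and the Gluing Lemma.
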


\begin{proof}
  We prove both statements by an alternating induction \wrt{} $k$.

  The functor $\Sd[0] \to \pfilt{0,[0]}$ is an isomorphism,
  so condition (1) holds for $k = 0$.

  Next, we assume that condition (2) holds for a given odd $k$
  and prove that condition (1) holds for $k+1$.
  The category $\Sd \bdsimp{k+1}$ is nothing but the latching category
  of $\pfilt{0^{k+2},[0]}$ at $0^{k+2}$ and hence the inductive construction
  of the colimit of $X$ yields a pushout square
  \begin{ctikzpicture}
    \matrix[diagram]
    {
      |(Lk1)| \colim_{\Sd \bdsimp{k+1}} t^* X &
        |(ck)| \colim_{\pfilt{0^{k+1},[0]}} X \\
      |(k1)| \colim_{\Sd \simp{k+1}} t^* X &
        |(ck1)| \colim_{\pfilt{0^{k+2},[0]}} X \\
    };

    \draw[cof] (Lk1) to (k1);
    \draw[->]  (ck)  to (ck1);
    \draw[->]  (Lk1) to (ck);
    \draw[->]  (k1)  to (ck1);
  \end{ctikzpicture}
  where the top morphism is a weak equivalence by the inductive hypothesis.
  Since the left vertical morphism is a cofibration,
  it follows that the bottom morphism is also a weak equivalence.

  Finally, we assume that condition (1) holds for a given even $k$ and prove
  that condition (2) holds for $k+1$. We have the following
  diagram of \hore{} direct categories
  \begin{ctikzpicture}
    \matrix[diagram,column sep=4em,row sep=3em]
    {
      |(h)| \Sd\hornm{k+2,k+2} & |(b)| \Sd\bdsimp{k+2} &
       |(s1)| \pfilt{0^{k+2},[0]} \\
      |(d1)| \pfilt{0^{k+1} 1,\hat{[1]}} &
        |(?)| \pfilt{{0^{k+1} 1,0^{k+2}},\hat{[1]}} &
        |(s2)| \pfilt{0^{k+2},[0]} \\
      & |(d2)| \pfilt{0^{k+1} 1,\hat{[1]}} & |(p)| \pfilt{0^{k+1},[0]} \\
    };

    \draw[->]  (h)  to (d1);
    \draw[->]  (b)  to (?);
    \draw[inj] (h)  to (b);
    \draw[inj] (d1) to (?);

    \draw[inj] (s2) to (?);
    \draw[inj] (p)  to (d2);
    \draw[inj] (p)  to (s2);
    \draw[inj] (d2) to (?);

    \draw[->] (b)  to (s1);
    \draw[->] (?)  to (s1);

    \draw[->] (s2) to node[right] {$\id$} (s1);
  \end{ctikzpicture}
  where the indicated maps are sieves,
  the top left and bottom right squares are pushouts
  and all functors respect Reedy cofibrant diagrams by \Cref{sieve-exact}.
  (The functor on the very left is induced by $0^{k+2} 1 \from [k+2] \to [1]$.)
  Hence there is an induced diagram in $\cat{C}$
  \begin{ctikzpicture}
    \matrix[diagram]
    {
      |(h)| \colim_{\Sd\hornm{k+2,k+2}} t^* X &
        |(b)| \colim_{\Sd\bdsimp{k+2}} t^* X &
        |(s1)| \colim_{\pfilt{0^{k+2},[0]}} X \\
      |(d1)| \colim_{\pfilt{0^{k+1} 1,\hat{[1]}}} t^* X &
        |(?)| \colim_{\pfilt{{0^{k+1} 1,0^{k+2}},\hat{[1]}}} t^* X &
        |(s2)| \colim_{\pfilt{0^{k+2},[0]}} X \\
      & |(d2)| \colim_{\pfilt{0^{k+1} 1,\hat{[1]}}} t^* X &
        |(p)| \colim_{\pfilt{0^{k+1},[0]}} X \\
    };

    \draw[->]  (h)  to (d1);
    \draw[->]  (b)  to (?);
    \draw[cof] (h)  to (b);
    \draw[cof] (d1) to (?);

    \draw[cof] (s2) to (?);
    \draw[cof] (p)  to (d2);
    \draw[cof] (p)  to (s2);
    \draw[cof] (d2) to (?);

    \draw[->] (b)  to (s1);
    \draw[->] (?)  to (s1);

    \draw[->] (s2) to node[right] {$\id$} (s1);
  \end{ctikzpicture}
  where the indicated maps are cofibrations
  and the top left and bottom right squares are pushouts.
  Thus the proof will be completed when we verify that both morphisms
  \begin{align*}
    \colim_{\Sd\hornm{k+2,k+2}} t^* X
      \to \colim_{\pfilt{0^{k+1} 1,\hat{[1]}}} t^* X \\
    \colim_{\pfilt{0^{k+1},[0]}} t^* X
      \to \colim_{\pfilt{0^{k+1} 1,\hat{[1]}}} X
  \end{align*}
  are weak equivalences.
  For the former we use
  \Cref{cone-filt-contractible,equiv-cofinal,special-horns-nf}.
  For the latter we use \Cref{cone-filt-contractible,equiv-cofinal}
  and the inductive assumption.
\end{proof}

In the next two lemmas we generalize the filtration of $D[0]$ to $D[m]$
for all $m \ge 0$.

\begin{lemma}\label{odd-sieves}
  Let $\cat{C}$ be a cofibration category.
  Assume that every fiber of $\phi \from [k] \to [m]$
  has an odd number of elements and let $X \from D^\phi[m] \to \cat{C}$
  be a \hore{} Reedy cofibrant diagram.
  Then $X_\phi \to \colim X$ is a weak equivalence.
\end{lemma}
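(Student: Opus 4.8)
The colimit $\colim_{D^\phi[m]} X$ exists by \Cref{colims-in-cofcats}, since $D^\phi[m] = \pfilt{\phi,[m]}$ is a finite direct \hore{} category and $X$ is Reedy cofibrant; recall that the \hore{} structure of $D[m]$, and hence of each of its sieves, is the one whose weak equivalences are the morphisms preserving the maximal element of a sequence (these are exactly the ones created by $p_{[m]}$). When $m = 0$ the lemma is nothing but \Cref{filt-even-odd}(1): here $\phi = 0^{k+1}$ with $k$ even and $D^\phi[0] = \pfilt{0^{k+1},[0]}$, the poset $\Sd\simp{k}$ has the terminal object $[k]$, which $t$ sends to $0^{k+1} = \phi$, so $\colim_{\Sd\simp{k}} t^* X = X_\phi$ and the transition morphism of that lemma is precisely $X_\phi \to \colim_{D^\phi[0]} X$.

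For general $m$ the plan is to induct on $k$. If $k = m$ then the order preserving surjection $\phi$ is the identity, $D^\phi[m] = \Sd[m]$ has the terminal object $[m] = \phi$, and $\colim_{D^\phi[m]} X = X_\phi$, so there is nothing to do. If $k > m$, pick a fibre $F_i = \phi^{-1}(i)$ with $|F_i| \ge 3$ and let $\phi^- \from [k-2] \to [m]$ be obtained by deleting two elements of $F_i$; then $\phi^-$ is again surjective with odd fibres, $D^{\phi^-}[m] \ito D^\phi[m]$ is a sieve (a subsequence of $\phi^-$ is a subsequence of $\phi$ with at most $|F_i|-2$ occurrences of $i$, a condition stable under passing to subobjects), and $X|D^{\phi^-}[m]$ is \hore{} Reedy cofibrant. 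The inclusion $\phi^- \to \phi$ in $D[m]$ preserves the maximal element $m$, hence is a weak equivalence, so $X_{\phi^-} \to X_\phi$ is one; and $X_{\phi^-} \to \colim_{D^{\phi^-}[m]} X$ is a weak equivalence by the inductive hypothesis. By ``2 out of 3'' it therefore remains to prove that $\colim_{D^{\phi^-}[m]} X \to \colim_{D^\phi[m]} X$ is a weak equivalence.

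To prove this I would filter $D^{\phi^-}[m] \ito D^\phi[m]$ by the sieves $G_j \subseteq D^\phi[m]$ of objects using at most $j$ copies of the value $i$, for $|F_i| - 2 \le j \le |F_i|$, so that $G_{|F_i|-2} = D^{\phi^-}[m]$ and $G_{|F_i|} = D^\phi[m]$; the objects of $G_j \setminus G_{j-1}$ are uniquely of the form $\alpha \join i^j \join \beta$, with $\alpha$ (resp.\ $\beta$) a possibly empty subsequence of the part of $\phi$ lying over $\{0,\ldots,i-1\}$ (resp.\ over $\{i+1,\ldots,m\}$). Building $\colim_{G_j} X$ from $\colim_{G_{j-1}} X$ by the inductive construction of \Cref{colims-in-cofcats} — adjoining the new objects in order of degree and using \Cref{latching-extension} — exhibits it as an iterated pushout along the latching morphisms, and for a fixed environment $(\alpha, \beta)$ the relevant latching data is controlled by a copy of $\pfilt{0^{j+1},[0]}$ embedded in the $i$-th fibre. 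Feeding \Cref{filt-even-odd} into the Gluing Lemma along this filtration, using part (1) at the odd values of $j$ and part (2) at the even one, and using \Cref{equiv-cofinal}, \Cref{Dm-m-he} and \Cref{D-Sd-he-msc} to pass between the categories $D(-)$ and their subdivisions, shows that each transition $\colim_{G_{j-1}} X \to \colim_{G_j} X$, and so their composite, is a weak equivalence. (Alternatively one can phrase the crux as the statement that the sieve $D^{\phi^-}[m] \ito D^\phi[m]$ induces a weak equivalence $\cat{C}^{D^\phi[m]}_\Reedy \to \cat{C}^{D^{\phi^-}[m]}_\Reedy$ and then appeal to \Cref{equiv-cofinal}.) The main obstacle is this last step: organising the iterated pushout and Gluing Lemma argument so that the even/odd bookkeeping of \Cref{filt-even-odd} applies uniformly over all environments $(\alpha,\beta)$ while every intermediate diagram stays \hore{} — the same flavour of argument as in the proofs of \Cref{colim-simplex-join} and \Cref{filt-even-odd}, but with more moving parts.
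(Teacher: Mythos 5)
Your base case, your reduction of the inductive step to the assertion that the sieve inclusion $\pfilt{\phi^-,[m]} \ito \pfilt{\phi,[m]}$ induces a weak equivalence on colimits, and the bookkeeping about weak equivalences in $D[m]$ are all fine. But the proof you propose for that key assertion does not work, and the failure is exactly the even/odd phenomenon you are trying to exploit. Your filtration passes through the intermediate sieve $G_{|F_i|-1}$, in which the $i$-th fibre is allowed an \emph{even} number of elements, and you claim that each transition $\colim_{G_{j-1}} X \to \colim_{G_j} X$ is a weak equivalence. That is false. Already in the smallest instance $m=0$, $\phi = 0^3$, $\phi^- = 0$ (empty environments), the first transition is $X_0 \to \colim_{\pfilt{0^2,[0]}} X$, and \Cref{filt-even-odd}(2) identifies the target with $\colim_{\Sd\bdsimp{2}} t^*X$, the colimit over a hexagonal (circle-shaped) poset of a homotopically constant diagram; in the cofibration category of spaces with $X_0 = *$ and $X_{00} = [0,1]$ this colimit is $S^1$, so the transition is not a weak equivalence. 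Only the \emph{composite} of the two transitions $G_{|F_i|-2} \to G_{|F_i|-1} \to G_{|F_i|}$ is one, and establishing that directly requires redoing the alternating pushout-and-horn-filling induction of \Cref{filt-even-odd} uniformly in the environments $(\alpha,\beta)$ --- precisely the step you flag as ``the main obstacle'' and leave open. So as written there is a genuine gap, not just a missing routine verification.

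The paper sidesteps all of this with a different decomposition. It inducts on $m$ rather than on the length $k$: prolong $X$ over the augmented sieve $D_\aug^\phi[m]$ by an initial object (which does not change the colimit), observe that if the fibre over $m$ has $k'+1$ elements ($k'$ even) then $D_\aug^\phi[m] \iso D_\aug^{\phi'}[m-1] \times D_\aug^{0^{k'+1}}[0]$ with $\phi'$ the restriction of $\phi$ over $[m-1]$, and then apply \Cref{filt-even-odd} \emph{internally} to the cofibration category $\cat{C}^{D_\aug^{\phi'}[m-1]}_\Reedy$, viewing $X$ as a diagram $D_\aug^{0^{k'+1}}[0] \to \cat{C}^{D_\aug^{\phi'}[m-1]}_\Reedy$, and finish by Fubini for colimits together with the inductive hypothesis in one variable fewer. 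This product trick confines the entire even/odd analysis to the single-fibre case already settled in \Cref{filt-even-odd} and avoids any relative version of it. If you want to salvage your route, the cleanest fix is to borrow the same idea: instead of filtering by the number of occurrences of $i$, split off the $i$-th fibre as a product factor and quote \Cref{filt-even-odd} in the diagram category.
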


\begin{proof}
  We proceed by induction \wrt{} $m$ (simultaneously for all $\cat{C}$ and $X$).
  For $m = 0$ the conclusion follows by \Cref{filt-even-odd}.

  If $m > 0$, we will prolong $X$ to the augmented sieve $D_\aug^\phi[m]$
  by setting the missing value to an initial object of $\cat{C}$
  which does not change the colimit.
  If the fiber of $\phi$ over $m$ has $k+1$ elements for some even $k$,
  then $D_\aug^\phi[m] \iso D_\aug^{\phi'}[m-1] \times D_\aug^{0^{k+1}}[0]$.
  (Here, $\phi'$ is the restriction of $\phi$ to $\phi^{-1}[m-1]$.)
  By applying \Cref{filt-even-odd} in the category
  $\cat{C}^{D_\aug^{\phi'}[m-1]}_\Reedy$ to the corresponding diagram
  $\tilde{X} \from D_\aug^{0^{k+1}}[0] \to \cat{C}^{D_\aug^{\phi'}[m-1]}_\Reedy$
  we obtain a weak equivalence
  $\tilde{X}_k \to \colim_{D_\aug^{0^{k+1}}[0]} \tilde{X}$
  and hence by the inductive assumption the composite
  \begin{align*}
    X_\phi = \tilde{X}_{k,\phi'} \to \colim_{D_\aug^{\phi'}[m-1]} \tilde{X}_k
      \to \colim_{D_\aug^{\phi'}[m-1]} \colim_{D_\aug^{0^{k+1}}[0]} \tilde{X}
      \cong \colim X
  \end{align*}
  is also a weak equivalence.
\end{proof}

For each $k, m \ge 0$ we define sets $A_{k,m}$ and $B_{k,m}$
of objects of $D[m]$.
We proceed by induction \wrt{} $m$.
First, we set $A_{k,0} = B_{k,0} = \{ [2k] \to [0]\}$.
For $m > 0$ we set
\begin{align*}
  B_{k,m} & = \{ \phi \from [2k-m] \to [m] \mid \text{each fiber of } \phi
              \text{ has an odd number of elements} \} \\
  A_{k,m} & = B_{k,m} \union \bigunion_{i \in [m]} \face_i A_{k,m-1} \text{.}
\end{align*}

We set $\filt{k,[m]} = D^{A_{k,m}}[m]$.
(In particular, we have $\filt{k,[0]} = \pfilt{[2k],[0]}$.)

\begin{lemma}\label{filt-simp-op}
  For every simplicial operator $\chi \from [m] \to [n]$ and $k \ge 0$
  we have an inclusion $\chi \filt{k,[m]} \subseteq \filt{k,[n]}$.
\end{lemma}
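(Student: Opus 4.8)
The plan is to strip away the sieve wrapping, reduce to the generating objects $A_{k,m}$, and then settle a small combinatorial fact about surjections with odd fibres. First I would use that $\filt{k,[m]}$ is by construction the sieve generated by $A_{k,m}$ in $D[m]$, so that an object $x$ of $D[m]$ lies in $\filt{k,[m]}$ precisely when $x$ is a face of some $a\in A_{k,m}$, i.e.\ when $x=a\phi$ for an order preserving injection $\phi$. Since $D\chi$ acts by postcomposition with $\chi$, the same $\phi$ is a morphism $\chi x\to\chi a$ in $D[n]$ (because $\chi x=(\chi a)\phi$); so if $\chi a$ is a face of some $b\in A_{k,n}$, then $\chi x$ is a face of $b$ and hence lies in $\filt{k,[n]}$. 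Thus it is enough to prove $\chi A_{k,m}\subseteq\filt{k,[n]}$ for every simplicial operator $\chi\from[m]\to[n]$, and this I would prove by induction on $m$.

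For $m=0$ the set $A_{k,0}$ is the singleton $\{[2k]\to[0]\}$, and $\chi\from[0]\to[n]$ picks a vertex $v$; then $\chi$ sends $[2k]\to[0]$ to the constant map $[2k]\to[n]$ at $v$, which factors as $[2k]\to[0]\xrightarrow{v}[n]$ with $v$ injective and $[2k]\to[0]$ surjective with one (odd) fibre, so it lies in $A_{k,n}$. For $m>0$, recall $A_{k,m}=B_{k,m}\cup\bigcup_i\face_iA_{k,m-1}$. If $a=\face_ia'$ with $a'\in A_{k,m-1}$, then $\chi a=(\chi\face_i)a'$ with $\chi\face_i\from[m-1]\to[n]$, so $\chi a\in\filt{k,[n]}$ by the inductive hypothesis. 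This leaves the essential case $a\in B_{k,m}$, i.e.\ $a=\sigma$ a surjection $[2k-m]\to[m]$ all of whose fibres have odd size.

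In that case I would take the epi--mono factorisation $\chi=\mu\tau$ in $\Delta$, with $\tau\from[m]\to[l']$ surjective and $\mu\from[l']\ito[n]$ injective, so that $\chi\sigma=\mu(\tau\sigma)$ with $\tau\sigma\from[2k-m]\to[l']$ surjective. The aim is to enlarge $\tau\sigma$ to a surjection $\sigma'\from[2k-l']\to[l']$ with all fibres odd, equipped with an order preserving injection $\phi\from[2k-m]\ito[2k-l']$ satisfying $\sigma'\phi=\tau\sigma$; one builds $\sigma'$ by inserting new elements into the (interval) fibres of $\tau\sigma$. If $g_0,\dots,g_{l'}$ are the sizes of the fibres of $\tau$, then the fibre of $\tau\sigma$ over $j$ is a sum of $g_j$ odd numbers and hence has the parity of $g_j$, so one must add an odd number of elements to each fibre with $g_j$ even and an even number to the others, for a total of $(2k-l')-(2k-m)=m-l'$ new elements. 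The crux --- the one step that is not routine bookkeeping --- is that this is achievable: $m-l'=\sum_j(g_j-1)\ge\#\{\,j:g_j\text{ even}\,\}$ because every even $g_j$ contributes $g_j-1\ge1$, and the two sides have the same parity (both are $\equiv m-l'\pmod 2$, using $\sum_jg_j=m+1$), so the insertions can be distributed using exactly $m-l'$ elements. Once $\sigma'$ is constructed, $\mu\sigma'$ is an injection into $[n]$ precomposed with a surjection onto its domain with odd fibres, hence $\mu\sigma'\in A_{k,n}$, and $\phi$ exhibits $\chi\sigma=\mu(\tau\sigma)$ as a face of $\mu\sigma'$, so $\chi\sigma\in\filt{k,[n]}$. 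This completes the induction; the main obstacle is precisely the counting just described, namely making every fibre odd while spending exactly $m-l'$ insertions so that the enlarged domain is $[2k-l']$.
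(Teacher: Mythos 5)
Your proof is correct, but it is organized differently from the one in the paper. The paper first reduces to the case where $\chi$ is an elementary face or degeneracy operator: faces are immediate from the definition of $A_{k,m}$, and for a single degeneracy $\dgn_j$ one argues by induction on the simplicial degree, using the simplicial identities to handle the $\face_i A_{k,m-1}$ part of $A_{k,m}$ and, for the $B_{k,m}$ part, the observation that $\dgn_j$ merges exactly two odd fibres into one even fibre, which is repaired by a single insertion. You instead keep $\chi$ arbitrary, reduce to the generating set $A_{k,m}$ of the sieve, and deal with the surjective part of the epi--mono factorization all at once; the price is the counting argument $m - l' \ge \#\{\,j : g_j \text{ even}\,\}$ together with the parity match $m - l' \equiv \#\{\,j : g_j \text{ even}\,\} \pmod 2$, which correctly replaces the paper's one-insertion-per-elementary-degeneracy step, at the gain of not having to invoke the simplicial identities or a second induction. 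Both arguments ultimately rest on the same fact, which you use twice (in your base case and in the final step of the surjective case) without isolating it and which is worth recording explicitly: $A_{k,n}$ consists precisely of the composites $\mu\sigma'$ of a surjection $\sigma' \from [2k-l'] \to [l']$ with all fibres odd followed by an order-preserving injection $\mu \from [l'] \ito [n]$ --- a one-line induction on $n - l'$, since any non-identity order-preserving injection factors through some $\face_i$. Granting that, the rest of your argument is complete.
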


\begin{proof}
  It suffices to verify the statement when $\chi$
  is an elementary face or degeneracy operator.
  For the elementary face operators it follows directly from the definition.
  Hence assume that $\chi = \dgn_j$ for some $j \in [n]$.
  We will check that $\dgn_j A_{k,n+1} \subseteq \filt{k,[n]}$
  by induction \wrt{} $n$.

  If $\phi \from [2k-n-1] \to [n+1]$ has all fibers of odd cardinality,
  then the same holds for $\dgn_j \phi$ except at the fiber over $j$.
  Then $\dgn_j \phi$ is in the sieve generated by $\phi' \from [2k-n] \to [n]$
  obtained by adding one extra element to the fiber of $\dgn_j \phi$ over $j$
  (so that $\phi' \in A_{k,n}$).

  If $\psi \in A_{k,n}$, then $\dgn_j \face_i \psi$ is either equal to $\psi$
  or is of the form $\face_{i'} \dgn_{j'} \psi$.
  In the first case the conclusion holds trivially,
  in the second one it follows by the inductive hypothesis.
\end{proof}

Now, we can generalize the filtration of $D[m]$ to $DK$
for arbitrary finite $K$.
Let $x \in K_m$ and $k \ge 0$.
We define a sieve $\filt{k,K}$ in $DK$ as follows.
Write $x = x^\sharp x^\flat$ with $x^\sharp$ non-degenerate and $x^\flat$
a degeneracy operator.
Define $x$ to be an element of $\filt{k,K}$ if $x^\flat \in \filt{k,[n]}$
(where $n$ is the dimension of $x^\sharp$).
It follows from \Cref{filt-simp-op} that this definition coincides
with the previous one when $K$ is a simplex.

\begin{lemma}
  Every simplicial map $f \from K \to L$ carries $\filt{k,K}$ to $\filt{k,L}$
  for all $k \ge 0$.
\end{lemma}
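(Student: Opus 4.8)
The plan is to reduce the claim to \Cref{filt-simp-op} by unwinding the definition of $\filt{k,\uvar}$ on a general finite simplicial set via the Eilenberg--Zilber decomposition. Since $\filt{k,K}$ is, by construction, a full downward-closed subcategory of $DK$ determined by its set of objects, and $Df \from DK \to DL$ is a functor landing (on the relevant objects, one hopes) inside the full subcategory $\filt{k,L}$, it is enough to show that for every simplex $x$ of $K$ lying in $\filt{k,K}$ the simplex $fx$ of $L$ lies in $\filt{k,L}$.

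So I would fix $x \in K_m$ in $\filt{k,K}$ and write $x = x^\sharp x^\flat$ with $x^\sharp \in K_n$ non-degenerate and $x^\flat \from [m] \sto [n]$ the associated degeneracy operator; by the definition of $\filt{k,K}$ this means exactly $x^\flat \in \filt{k,[n]}$. Applying $f$ gives $fx = (fx^\sharp)\, x^\flat$, but $fx^\sharp$ need not be non-degenerate, so I would decompose it further as $fx^\sharp = (fx^\sharp)^\sharp (fx^\sharp)^\flat$ with $(fx^\sharp)^\sharp \in L_p$ non-degenerate and $(fx^\sharp)^\flat \from [n] \sto [p]$ a degeneracy operator. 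Then $fx = (fx^\sharp)^\sharp \bigl( (fx^\sharp)^\flat\, x^\flat \bigr)$, and since the second factor is a composite of degeneracy operators, hence itself a degeneracy operator, the uniqueness of the Eilenberg--Zilber decomposition identifies $(fx)^\sharp = (fx^\sharp)^\sharp$ and $(fx)^\flat = (fx^\sharp)^\flat\, x^\flat$, the latter having dimension $p$.

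It then remains to check that $(fx^\sharp)^\flat\, x^\flat \in \filt{k,[p]}$. But $x^\flat \in \filt{k,[n]}$, so this is precisely \Cref{filt-simp-op} applied to the simplicial operator $(fx^\sharp)^\flat \from [n] \to [p]$ and the object $x^\flat$ of $\filt{k,[n]}$. By the definition of $\filt{k,L}$, applied to the non-degenerate simplex $(fx)^\sharp$ of dimension $p$, the fact that $(fx)^\flat \in \filt{k,[p]}$ says exactly that $fx \in \filt{k,L}$, which finishes the argument. I do not anticipate a real obstacle here: the only step requiring care is bookkeeping the Eilenberg--Zilber decomposition of $fx$ correctly, after which the conclusion is immediate from \Cref{filt-simp-op} — which is where all the combinatorial content already resides.
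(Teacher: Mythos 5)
Your proposal is correct and follows essentially the same route as the paper: decompose $x = x^\sharp x^\flat$, decompose $fx^\sharp$ further, identify $(fx)^\flat = (fx^\sharp)^\flat x^\flat$ via uniqueness of the Eilenberg--Zilber factorization, and apply \Cref{filt-simp-op} to the degeneracy operator $(fx^\sharp)^\flat$. The paper records the same argument more tersely as a commutative diagram, while you spell out the uniqueness step explicitly; there is no substantive difference.
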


\begin{proof}
  Let $x \in \filt{k,K}$.
  Then we have a diagram of simplicial sets
  \begin{ctikzpicture}
    \matrix[diagram]
    {
      |(m)| \simp{m} & |(n)|  \simp{n}  & |(K)| K \\
                     & |(n')| \simp{n'} & |(L)| L \\
    };

    \draw[->] (m) to node[above] {$x^\flat$}  (n);
    \draw[->] (n) to node[above] {$x^\sharp$} (K);

    \draw[->] (m) to node[below left] {$(fx)^\flat$}        (n');
    \draw[->] (n) to node[right]      {$(fx^\sharp)^\flat$} (n');
    \draw[->] (K) to node[right]      {$f$}                 (L);

    \draw[->] (n') to node[below] {$(fx^\sharp)^\sharp$} (L);
  \end{ctikzpicture}
  and by definition $x^\flat \in \filt{k,[n]}$.
  \Cref{filt-simp-op} implies that $(fx)^\flat \in \filt{k,[n']}$
  so that $fx \in \filt{k,L}$.
\end{proof}

\begin{lemma}\label{filt-simp}
  For all $k \ge m$, a cofibration category $\cat{C}$
  and a \hore{} Reedy cofibrant diagram $X \from \filt{k,[m]} \to \cat{C}$
  the morphism $X_{[m]} \to \colim_{\filt{k,[m]}} X$ is a weak equivalence.
\end{lemma}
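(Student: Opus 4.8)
The plan is to prove the statement by induction on $m$, simultaneously for all $k \ge m$, all cofibration categories $\cat{C}$ and all \hore{} Reedy cofibrant diagrams $X \from \filt{k,[m]} \to \cat{C}$. For the base case $m = 0$ note that $\filt{k,[0]} = \pfilt{0^{2k+1},[0]}$ is the sieve in $D[0]$ generated by the single object $0^{2k+1} = ([2k] \to [0])$, whose unique fibre has the odd cardinality $2k+1$; thus \Cref{odd-sieves} applies and gives that $X_{0^{2k+1}} \to \colim X$ is a weak equivalence. Since every morphism of $D[0]$ is a weak equivalence (all edges of $\simp{0}$ being degenerate; equivalently, $p_{[0]} \from D[0] \to [0]$ creates weak equivalences and every morphism of $[0]$ is one), the canonical morphism $X_{[0]} = X_{(0)} \to X_{0^{2k+1}}$ is also a weak equivalence, and ``2 out of 3'' finishes the base case.

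For the inductive step I would exploit the recursion $A_{k,m} = B_{k,m} \union \bigunion_{i \in [m]} \face_i A_{k,m-1}$, which realizes $\filt{k,[m]}$ as the union of the sieve $\pfilt{B_{k,m},[m]}$ together with the sieves $\face_i \filt{k,[m-1]}$ for $i \in [m]$; here $\face_i$ is injective on objects and faithful, so it identifies $\filt{k,[m-1]}$ with a sieve of $D[m]$ along which restriction of \hore{} Reedy cofibrant diagrams is exact by \Cref{sieve-exact}. Enumerating the finitely many generating objects of $A_{k,m}$, one builds $\colim_{\filt{k,[m]}} X$ as a finite iterated homotopy pushout: at each stage one adjoins a new generator and the sieve it generates, glued to the part built so far along the intersection of the two (a cofibration, as usual for sieve inclusions between finite direct categories with Reedy cofibrant coefficients). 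There are two kinds of pieces. For a generator $\phi \in B_{k,m}$, \Cref{odd-sieves} applies to $\pfilt{\phi,[m]}$ and makes $X_\phi \to \colim_{\pfilt{\phi,[m]}} X$ a weak equivalence; moreover $\id_{[m]}$ is a subsequence of $\phi$ and $p_{[m]}(\id_{[m]}) = p_{[m]}(\phi) = m$, so the morphism $\id_{[m]} \to \phi$ in $D[m]$ is a weak equivalence and hence so is $X_{[m]} \to X_\phi$. For a generator of $\face_i A_{k,m-1}$, the inductive hypothesis applied in $\cat{C}$ to $\face_i^* X$ (legitimate since $k \ge m > m-1$) controls the colimit over the corresponding piece. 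Feeding this into the Gluing Lemma, and using ``2 out of 3'' together with the fact that $\id_{[m]}$ lies in every $B_{k,m}$-piece, one shows by induction on the number of pieces glued that every partial colimit receives a weak equivalence from $X_{[m]}$, so that in the end $X_{[m]} \to \colim_{\filt{k,[m]}} X$ is a weak equivalence.

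The step I expect to be the main obstacle is the combinatorial bookkeeping of the intersections that occur in the gluing: one must check that the intersection of any two of the generating sieves $\pfilt{\phi,[m]}$ (for $\phi \in B_{k,m}$) and $\face_i \filt{k,[m-1]}$ inside $D[m]$ is again a union of $\face$-images of filtration sieves $\filt{k',[m']}$ with $k' \ge m'$, so that the inductive hypothesis applies to each of them as well. This is exactly where the definition of $B_{k,m}$ is used — all fibres being odd ensures that deleting the fibre over the top vertex of an element of $B_{k,m}$ leaves an element of some $B_{k-a-1,m-1}$ with $k - a - 1 \ge m - 1$ — and the verification runs along the same lines as the intersection-of-sieves arguments in the proofs of \Cref{skinny-horn} and \Cref{filt-even-odd}. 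Once this bookkeeping is in place, the remainder is a routine combination of the Gluing Lemma, ``2 out of 3'', and, for the pieces supported over proper faces, \Cref{cone-filt-contractible,equiv-cofinal}.
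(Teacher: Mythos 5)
Your base case is correct, and the skeleton of your inductive step --- the decomposition $A_{k,m} = B_{k,m} \union \bigunion_{i} \face_i A_{k,m-1}$, \Cref{odd-sieves} for the odd-fibered generators, and the inductive hypothesis for the face parts --- is exactly the paper's. But the step you defer as ``the main obstacle'' is the actual content of the proof, and the generator-by-generator gluing you propose does not organize it correctly. Your induction ``on the number of pieces glued'' controls $\colim_U X$ for the partial unions $U$, whereas what the Gluing Lemma needs when you adjoin a new generating sieve $V$ is control of $\colim_{U \inter V} X$; since $U \inter V$ is itself a union of several sieves, you face a nested gluing problem that your induction never reaches. Moreover the intersections are not of the shape you guess: for $\phi, \psi \in B_{k,m}$ the intersection $\pfilt{\phi,[m]} \inter \pfilt{\psi,[m]}$ is the sieve generated by the single sequence whose fibers are the pointwise minima of those of $\phi$ and $\psi$ (again all odd), which is not a face-image of any $\filt{k',[m']}$, so the inductive hypothesis does not apply to it --- only \Cref{odd-sieves} does.

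The paper closes the argument with two devices that are absent from your sketch. First, the whole $B_{k,m}$-part is handled at once, with no piecewise gluing: $\pfilt{B_{k,m},[m]} = \colim_{x \in P} \pfilt{\phi_x,[m]}$, where $P$ is the poset of all-odd tuples $(x_0, \dots, x_m)$ with $\sum_i x_i \le 2k-m+1$ (which is closed under the meets above); since $P$ has the bottom element $(1,\dots,1)$, it is contractible as a \hore{} poset with all maps weak equivalences, and \Cref{equiv-cofinal} applied to the diagram $x \mapsto \colim_{\pfilt{\phi_x,[m]}} X$ --- \hore{} by \Cref{odd-sieves} and ``2 out of 3'' --- yields $X_{[m]} \weto \colim_{\pfilt{B_{k,m},[m]}} X$ in one stroke. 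Second, the interaction with the boundary is captured by the identity $\pfilt{B_{k,m},[m]} \inter \pfilt{\face_i A_{k,m-1},[m]} = \pfilt{\face_i A_{k-1,m-1},[m]}$: the filtration index drops by one, so $\filt{k,[m]}$ is the union of $\pfilt{B_{k,m},[m]}$ and $\filt{k,\bdsimp{m}}$ along $\filt{k-1,\bdsimp{m}}$, and a single pushout square of colimits (via \Cref{sieves-pullback}) finishes the proof once the inductive hypothesis on $m$ shows that $\colim_{\filt{k-1,\bdsimp{m}}} X \to \colim_{\filt{k,\bdsimp{m}}} X$ is an acyclic cofibration. This is also exactly where $k \ge m$ is consumed: it guarantees $k-1 \ge m-1$, so the induction applies over every face. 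Without the contractible poset $P$ and this index-dropping intersection identity, your proposed gluing does not close.
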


\begin{proof}
  First, we will check that the morphism $X_{[m]} \to \pfilt{B_{k,m},[m]}$
  is a weak equivalence.
  Indeed, let $P$ be the subposet of $\nat^{m+1}$ consisting of tuples
  $x = (x_0, \ldots, x_m)$ such that each $x_i$ is odd
  and $x_0 + \ldots + x_m \le 2k - m + 1$.
  Let $\phi_x$ be the unique object of $D[m]$ whose fiber over each $i \in [m]$
  has cardinality $x_i$.
  Then we have $\pfilt{B_{k,m},[m]} = \colim_{x \in P} \pfilt{\phi_x,[m]}$.
  It follows from \Cref{odd-sieves} that for each $x \in P$
  the morphism $X_{[m]} \to \colim_{\pfilt{\phi_x,[m]}} X$ is a weak equivalence.
  The sequence $(1,\ldots,1)$ is the bottom element of $P$,
  hence if we consider $P$ as a \hore{} poset
  with all maps as weak equivalences,
  then $\{(1,\ldots,1)\} \to P$ is a homotopy equivalence.
  It follows by \Cref{equiv-cofinal} that $X_{[m]} \to \pfilt{B_{k,m},[m]}$
  is a weak equivalence.

  We are ready to prove the lemma by induction \wrt{} $m$.
  We have
  \begin{align*}
    \pfilt{B_{k,m},[m]} \inter \pfilt{\face_i A_{k,m-1},[m]}
      = \pfilt{\face_i A_{k-1,m-1},[m]} \iso \pfilt{A_{k-1,m-1},[m-1]}
  \end{align*}
  and
  $\filt{k,\bdsimp{m}}
    = \colim_{\phi \in \Sd \bdsimp{m}} \pfilt{\phi A_{k,m-1},[m]}$
  (and the same with $k-1$ in place of $k$).
  Hence by the inductive assumption the morphism
  $\colim_{\filt{k-1,\bdsimp{m}}} X \to \colim_{\filt{k,\bdsimp{m}}} X$
  is an acyclic cofibration.
  This along with the first part of the proof and the pushout square
  \begin{ctikzpicture}
    \matrix[diagram]
    {
      |(k1b)| \colim_{\filt{k-1,\bdsimp{m}}} X &
        |(Bk)| \colim_{\pfilt{B_{k,m},[m]}} X \\
      |(kb)|  \colim_{\filt{k,\bdsimp{m}}} X & |(k)|  \colim{\filt{k,[m]}} \\
    };

    \draw[->] (k1b) to (Bk);
    \draw[->] (kb)  to (k);

    \draw[cof] (k1b) to (kb);
    \draw[cof] (Bk)  to (k);
  \end{ctikzpicture}
  finishes the proof.
\end{proof}

\begin{lemma}\label{Dk-preserves-colimits}
  For each $k$ the functor $D^{(k)} \from \sSet \to \Cat$
  (i.e.\ when we disregard the \hore{} structures of $\filt{k,K}$s)
  preserves colimits.
\end{lemma}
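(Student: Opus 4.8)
The plan is to mimic the proof of \Cref{D-preserves-colimits}. Since $N\from\Cat\to\sSet$ is fully faithful it reflects colimits, so it suffices to show that $K\mapsto ND^{(k)}K$ preserves colimits, and since colimits of simplicial sets are computed levelwise it is enough to show that $K\mapsto(ND^{(k)}K)_m$ is a colimit-preserving functor $\sSet\to\Set$ for each $m$. As in the proof of \Cref{D-preserves-colimits}, an $m$-simplex of $NDK$ is a chain of injections $[j_0]\ito\dots\ito[j_m]$ in $\pDelta$ together with a simplex $y\in K_{j_m}$, the remaining objects of the chain being the restrictions of $y$; since $\filt{k,K}$ is a sieve in $DK$ (a full subcategory on a downward-closed set of objects), such a chain lies in $ND^{(k)}K=N\filt{k,K}$ precisely when its last object $y$ does. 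Writing $K^{(k)}_j\subseteq K_j$ for the subset of simplices lying in $\filt{k,K}$, we obtain
\[
  (ND^{(k)}K)_m \;=\; \bigcoprod_{[j_0]\ito\dots\ito[j_m]} K^{(k)}_{j_m}\text{,}
\]
a coproduct over a fixed index set, so everything reduces to showing that $K\mapsto K^{(k)}_j$ preserves colimits for every $j$.

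The key point is to present $K^{(k)}_j$ as a coequalizer of colimit-preserving functors. Writing $y=y^\sharp y^\flat$ for the Eilenberg--Zilber factorization of $y\in K_j$ ($y^\sharp$ non-degenerate, $y^\flat$ a degeneracy operator), the definition of $\filt{k,K}$ combined with \Cref{filt-simp-op} gives: $y\in\filt{k,K}$ if and only if $y=\sigma^*z$ for \emph{some} degeneracy operator $\sigma\from[j]\twoheadrightarrow[n]$ with $\sigma\in\filt{k,[n]}$ and \emph{some} $z\in K_n$ (for necessity take $\sigma=y^\flat$, $z=y^\sharp$; for sufficiency, $y^\flat=z^\flat\sigma\in\filt{k,[\dim z^\sharp]}$ by \Cref{filt-simp-op}). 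Let $\Sigma$ be the finite set of degeneracy operators $\sigma\from[j]\twoheadrightarrow[n]$, over all $n$, with $\sigma\in\filt{k,[n]}$, and put $A(K)=\bigcoprod_{\sigma\in\Sigma}K_{n_\sigma}$; the map $(\sigma,z)\mapsto\sigma^*z$ surjects $A(K)$ onto $K^{(k)}_j$. Two elements $(\sigma_1,z_1)$ and $(\sigma_2,z_2)$ have the same image iff $\sigma_1^*z_1=\sigma_2^*z_2$, and since each $(\sigma,z)$ reduces in a single Eilenberg--Zilber step to $(y^\flat,y^\sharp)$ where $y=\sigma^*z$, this equivalence relation is generated by $R(K)=\bigcoprod_{\tau,\sigma}K_{n'}$ — the coproduct over degeneracy operators $\tau\from[n]\twoheadrightarrow[n']$ and $\sigma\in\Sigma$ with codomain $[n]$ — with the two maps $R(K)\rightrightarrows A(K)$ sending $z'$ to $(\sigma,\tau^*z')$ and to $(\tau\sigma,z')$ respectively, where $\tau\sigma\in\filt{k,[n']}$ by \Cref{filt-simp-op} so that the second map lands in $A(K)$. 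Hence $K^{(k)}_j=\operatorname{coeq}\bigl(R(K)\rightrightarrows A(K)\bigr)$.

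Now $A$ and $R$ are finite coproducts of evaluation functors $K\mapsto K_\bullet$, hence preserve colimits, and a levelwise coequalizer of colimit-preserving functors preserves colimits because colimits commute with colimits. Therefore $K\mapsto K^{(k)}_j$ preserves colimits, and the argument is complete. The step I expect to require the most care is verifying that $R(K)$ generates exactly the kernel relation of $A(K)\to K^{(k)}_j$: the honest kernel pair of $A(K)\to K_j$ is \emph{not} colimit-preserving in $K$, and the reason one may replace it by the smaller, colimit-preserving $R$ is precisely the one-step reduction to Eilenberg--Zilber normal form, which in turn relies on \Cref{filt-simp-op} to keep everything inside the sieves $\filt{k,[n]}$.
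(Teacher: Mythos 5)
Your proof is correct, but it takes a genuinely different route from the paper's. The paper argues globally: it first observes that $D^{(k)}$ preserves the canonical colimit of $K$ over its category of simplices, and then runs the chain of natural isomorphisms $\Cat(\filt{k,K},J) \iso \sSet(K, \Cat(\filt{k,[\uvar]},J))$ to exhibit $J \mapsto \Cat(\filt{k,[\uvar]},J)$ as a right adjoint of $D^{(k)}$, whence $D^{(k)}$ preserves all colimits. You instead work levelwise on nerves, as in the proof of \Cref{D-preserves-colimits} itself, which forces you to confront the one genuinely non-formal point head-on: the functor $K \mapsto K^{(k)}_j$ is not an evaluation functor, and its naive presentation as the image of $A(K) \to K_j$ (equivalently, via the kernel pair) is not colimit-preserving. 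Your resolution --- a coequalizer presentation whose generating relation is the single-step Eilenberg--Zilber reduction $(\sigma,z) \sim (y^\flat, y^\sharp)$, kept inside the sieves by \Cref{filt-simp-op} --- is exactly the combinatorial content that the paper's proof compresses into the terse sentence ``$D^{(k)}$ preserves the colimit of its simplices by \Cref{D-preserves-colimits} and the definition of $\filt{k,K}$''. What the paper's argument buys is brevity and a right adjoint for free; what yours buys is an explicit, checkable account of how membership in $\filt{k,K}$ is detected by degeneracy operators lying in the $\filt{k,[n]}$, which is the real reason the lemma is true. All the steps I checked (the reduction to $K^{(k)}_j$ via the sieve property of $\filt{k,K}$ in $DK$, both directions of the characterization of $\filt{k,K}$, the naturality of $R \rightrightarrows A \to K^{(k)}_j$, and the identification of the generated equivalence relation with the kernel relation of $(\sigma,z) \mapsto \sigma^*z$) go through.
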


\begin{proof}
  If $K$ is any simplicial set, then $D^{(k)}$ preserves the colimit
  of its simplices by \Cref{D-preserves-colimits} and the definition
  of $\filt{k,K}$.
  Hence for every small category $J$ we have the following sequence
  of isomorphisms natural in both $K$ and $J$.
  \begin{align*}
    \Cat(\filt{k,K},J) & \iso \Cat(\filt{k,\colim_{\simp{m} \to K} \simp{m}}, J) \\
      & \iso \lim_{\simp{m} \to K} \Cat(\filt{k,[m]}, J) \\
      & \iso \lim_{\simp{m} \to K} \sSet(\simp{m}, \Cat(\filt{k,[\uvar]}, J)) \\
      & \iso \sSet(K, \Cat(\filt{k,[\uvar]}, J))
  \end{align*}
  It follows that $J \mapsto \Cat(\filt{k,[\uvar]}, J)$ is a right adjoint
  of $D^{(k)}$ and the conclusion follows.
\end{proof}

Finally, we are ready to start translating the results of
\Cref{sec:cocompleteness-infinite} to the case of $\kappa = \aleph_0$.
The following is a counterpart to \Cref{colim-simplex-join}.

\begin{lemma}\label{colim-simplex-join-finite}
  Let $\cat{C}$ be a cofibration category and $K$ a finite simplicial set.
  For every \hore{} Reedy cofibrant diagram
  $X \from D(K \join \Delta[m]) \to \cat{C}$ and all $k \ge \dim K + 1 + m$,
  the induced morphism
  \begin{align*}
    X_{[m]} \to \colim_{\filt{k,(K \join \Delta[m]})} X
  \end{align*}
  is a weak equivalence.
\end{lemma}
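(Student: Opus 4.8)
The plan is to follow the proof of \Cref{colim-simplex-join} almost line by line, systematically replacing the infinite category $D(K\join\simp{m})$ by the finite sieve $\filt{k,(K\join\simp{m})}$ and the colimit $\colim_{D(\uvar)}$ by $\colim_{\filt{k,\uvar}}$. Since $\uvar\join\simp{m}$ preserves pushouts (\Cref{join-colimits}) and $D^{(k)}\from\sSet\to\Cat$ preserves colimits (\Cref{Dk-preserves-colimits}), every pushout square $L=B\cup_A K'$ of simplicial sets yields a pushout square of the sieves $\filt{k,(A\join\simp{m})}$, $\filt{k,(B\join\simp{m})}$, $\filt{k,(K'\join\simp{m})}$ and $\filt{k,(L\join\simp{m})}$; and if $A\ito B$ is a monomorphism then, by construction of $\filt{k,-}$, the map $\filt{k,(A\join\simp{m})}\ito\filt{k,(B\join\simp{m})}$ is a sieve between \hore{} direct categories. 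Hence, exactly as in \Cref{colim-simplex-join}, it suffices to prove the claim for $K=\emptyset$ (which is \Cref{filt-simp}) and for $K$ a simplex, and to show that the property ``$X_{[m]}\to\colim_{\filt{k,(K\join\simp{m})}}X$ is a weak equivalence'' is stable under pushouts along monomorphisms; the general finite case then follows by the usual skeletal induction on the dimension and the number of non-degenerate simplices of $K$, attaching one face inclusion $\bdsimp{n}\ito\simp{n}$ at a time (noting that $\dim A,\dim B,\dim K'\le\dim L$, so the hypothesis on $k$ is inherited by the pieces).

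For $K=\simp{j}$ I would use $\simp{j}\join\simp{m}\iso\simp{j+1+m}$ and, by construction of $\filt{k,-}$ on simplices, $\filt{k,(\simp{j}\join\simp{m})}=\filt{k,[j+1+m]}$; under this identification the distinguished simplex $[m]$ becomes the face $\iota\from[m]\ito[j+1+m]$ onto the last $m+1$ vertices. The morphism $\iota\to\id_{[j+1+m]}$ of $D[j+1+m]$ is a weak equivalence, since $p_{[j+1+m]}$ sends it to an identity, and both of its endpoints lie in the full sieve $\filt{k,[j+1+m]}$; therefore $X_\iota\to X_{[j+1+m]}$ is a weak equivalence because $X$ is \hore{}. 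Composing with the weak equivalence $X_{[j+1+m]}\to\colim_{\filt{k,[j+1+m]}}X$ supplied by \Cref{filt-simp} — which applies precisely because our hypothesis $k\ge\dim K+1+m$ reads $k\ge j+1+m$ here — gives the claim.

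For the inductive step, suppose $L=B\cup_A K'$ with $A\ito B$ a monomorphism and the claim known for $A$, $B$ and $K'$. Arguing as in the pushout case of \Cref{colim-simplex-join}: in the pushout square of sieves above, $\filt{k,(A\join\simp{m})}\ito\filt{k,(B\join\simp{m})}$ is a sieve and $X$ is Reedy cofibrant, so by \cite{rb}*{Theorem 9.4.1 (1a)} passing to levelwise colimits over $\cat{C}$ produces a pushout square along a cofibration, which by the Gluing Lemma is a homotopy pushout. The three comparison maps from $X_{[m]}$ to $\colim_{\filt{k,(A\join\simp{m})}}X$, $\colim_{\filt{k,(B\join\simp{m})}}X$ and $\colim_{\filt{k,(K'\join\simp{m})}}X$ are weak equivalences by the inductive hypotheses, so, viewing $X_{[m]}$ as the constant homotopy pushout $X_{[m]}\cup_{X_{[m]}}X_{[m]}$, the Gluing Lemma shows the induced map $X_{[m]}\to\colim_{\filt{k,(L\join\simp{m})}}X$ is a weak equivalence. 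Finite coproducts are the case $A=\emptyset$, where $\uvar\join\simp{m}$ turns $B\sqcup K'$ into the pushout $(B\join\simp{m})\cup_{\simp{m}}(K'\join\simp{m})$, so they are covered by the same argument.

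I expect the only genuinely delicate points to be the two identifications in the base case — that $\filt{k,(\simp{j}\join\simp{m})}$ really is $\filt{k,[j+1+m]}$ and that under it the selected simplex is the last face $\iota$, which is weakly equivalent to the top object — together with the bookkeeping ensuring that the dimension hypothesis on $k$ is correctly propagated through the skeletal induction; the Gluing Lemma step is entirely parallel to \Cref{colim-simplex-join} and should require no new ideas.
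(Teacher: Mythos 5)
Your proposal is correct and follows essentially the same route as the paper's proof: reduction to the cases of $K$ empty or a simplex plus stability under pushouts along monomorphisms, the simplex case handled via the weak equivalence $\iota \to \id_{[j+1+m]}$ in $D[j+1+m]$ together with \Cref{filt-simp}, and the pushout step via \cite{rb}*{Theorem 9.4.1(1a)} and the Gluing Lemma with the same dimension bookkeeping. The only cosmetic difference is that the paper first factors the map through $\colim_{\filt{k,[m]}} X$ and runs the induction on the second factor, whereas you prove the full statement in each case directly; the two arrangements are equivalent.
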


\begin{proof}
  The morphism in question factors as
  \begin{align*}
    X_{[m]} \to \colim_{\filt{k,[m]}} X
      \to \colim_{\filt{k,(K \join \simp{m})}} X
  \end{align*}
  where the first morphism is a weak equivalence by \Cref{filt-even-odd}.
  Thus it will be enough to check that the second one is.

  It will suffice to verify that this statement holds when $K$
  is empty or a simplex and is preserved under pushouts along monomorphisms.
  For $K = \emptyset$ the morphism in question is an isomorphism.

  Let $K = \simp{n}$ and let $\iota$ be the composite
  $[m] \ito [n] \join [m] \iso [n+1+m]$.
  Then we have a commutative square
  \begin{ctikzpicture}
    \matrix[diagram]
    {
      |(m)|  X_\iota           & |(cm)|  \colim_{\filt{k,[m]}} X \\
      |(mn)| X_{\id_{[n+1+m]}} & |(cmn)| \colim_{\filt{k,[n+1+m]}} X \\
    };

    \draw[->] (m)  to (cm);
    \draw[->] (mn) to (cmn);

    \draw[->] (m)  to (mn);
    \draw[->] (cm) to (cmn);
  \end{ctikzpicture}
  where the left morphism is a weak equivalence since $X$ is \hore{}
  and so are the horizontal ones by \Cref{filt-simp}.
  Thus the right morphism is also a weak equivalence.

  Next, consider a pushout square
  \begin{ctikzpicture}
    \matrix [diagram]
    {
      |(A)| A & |(K)| K \\
      |(B)| B & |(L)| L \\
    };

    \draw[->] (A) to (K);
    \draw[->] (B) to (L);

    \draw[inj] (A) to (B);
    \draw[inj] (K) to (L);
  \end{ctikzpicture}
  \st{} the conclusion holds for $A$, $B$ and $K$.
  The functor $\uvar \join \simp{m}$ preserves pushouts by \Cref{join-colimits}
  and so does $D^{(k)}$ by \Cref{Dk-preserves-colimits}.
  Thus in the cube
  \begin{ctikzpicture}
    \matrix[diagram,row sep=2em,column sep=1.2em]
    {
      |(A)| \filt{k,A} & & |(K)| \filt{k,K} \\
      & |(Am)| \filt{k,(A \join \simp{m})} & & |(Km)| \filt{k,(K \join \simp{m})} \\
      |(B)| \filt{k,B} & & |(L)| \filt{k,L} \\
      & |(Bm)| \filt{k,(B \join \simp{m})} & & |(Lm)| \filt{k,(L \join \simp{m})} \\
    };

    \draw[->] (A) to (K);
    \draw[->] (B) to (L);

    \draw[inj] (A) to (B);
    \draw[inj] (K) to (L);

    \draw[->,cross line] (Am) to (Km);
    \draw[->] (Bm) to (Lm);

    \draw[inj,cross line] (Am) to (Bm);
    \draw[inj] (Km) to (Lm);

    \draw[->] (A) to (Am);
    \draw[->] (K) to (Km);
    \draw[->] (B) to (Bm);
    \draw[->] (L) to (Lm);
  \end{ctikzpicture}
  both the front and the back faces are pushouts along sieves
  and the conclusion follows by \cite{rb}*{Theorem 9.4.1(1a)}
  and the Gluing Lemma
  (since $\dim L = \max\{\dim B, \dim K\}$).
\end{proof}

For a cofibration category $\cat{C}$ we introduce a new cofibration category
$\cat{C}^{\tilde\nat}_\Reedy$. (Here, $\tilde\nat$ does not refer
to any \hore{} structure on $\nat$, $\cat{C}^{\tilde\nat}_\Reedy$
should be seen as an atomic notation.)
Its objects are Reedy cofibrant diagrams $X \from \nat \to \cat{C}$
(i.e.\ sequences of cofibrations in $\cat{C}$)
that are \emph{eventually (homotopically) constant},
i.e.\ \st{} there is a number $k$ \st{} for all $l \ge k$ the morphism
$X_k \to X_l$ is a weak equivalence.
A morphism $f \from X \to Y$ of such diagrams is called
an \emph{eventual weak equivalence} if there is $k$ \st{} for all $l \ge k$
the morphism $f_l$ is a weak equivalence in $\cat{C}$.
This cofibration category is designed as an enlargement
of the cofibration category $\cat{C}^{\hat\nat}_\Reedy$
of \emph{(homotopically) constant} sequences.
It is necessary since sequences arising as colimits over filtrations
$\filt{\uvar,K}$ are only eventually constant.

\begin{lemma}\label{eventually-constant}
  If $\cat{C}$ is a cofibration category, then the category
  $\cat{C}^{\tilde\nat}_\Reedy$ with Reedy cofibrations
  and eventual weak equivalences is also a cofibration category.
  Moreover, the inclusion
  $\cat{C}^{\hat\nat}_\Reedy \ito \cat{C}^{\tilde\nat}_\Reedy$
  is a weak equivalence.
\end{lemma}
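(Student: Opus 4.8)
The plan is to realize $\cat{C}^{\tilde\nat}_\Reedy$ as a modification of the cofibration category $\cat{C}^\nat_\Reedy$ of all Reedy cofibrant sequences, which is a cofibration category by \Cref{Reedy-level}(1) since $\nat$ (with the trivial \hore{} structure) is a direct category with finite latching categories: $L_0 X = \init$ and $L_l X = X_{l-1}$ for $l \ge 1$ (and likewise $\cat{C}^{\hat\nat}_\Reedy$ is a cofibration category). The two differences to keep track of are that the objects of $\cat{C}^{\tilde\nat}_\Reedy$ are only the eventually constant sequences and that it has \emph{more} weak equivalences. The elementary facts I would use repeatedly are: a Reedy cofibration of sequences is levelwise a cofibration (by the usual induction on the degree); pushouts along Reedy cofibrations in $\cat{C}^\nat_\Reedy$ are computed levelwise, as in the proof of \Cref{path-object-cofcat}; and the Gluing Lemma lets one propagate eventual constancy and eventual weak equivalences through levelwise pushouts by comparing two levels past a common stabilization index.

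With these in hand the axioms are routine. Axiom (C0) follows from ``2 out of 6'' in $\cat{C}$ applied levelwise past the index where all composites in question are weak equivalences (which also shows eventual weak equivalences are closed under composition). Axiom (C1) holds since an isomorphism of $\cat{C}^{\tilde\nat}_\Reedy$ is a levelwise isomorphism, hence a levelwise — thus eventual — weak equivalence and a Reedy cofibration. For (C2)--(C3) the constant sequence at $\init$ is Reedy cofibrant, eventually constant, and still initial, and $\init \to X$ is a Reedy cofibration exactly because $X$ is Reedy cofibrant. For (C4), given a Reedy cofibration $A \cto B$ and a morphism $A \to X$ in $\cat{C}^{\tilde\nat}_\Reedy$, I would form the pushout levelwise and, choosing $k$ beyond the stabilization indices of $A$, $B$ and $X$ simultaneously, apply the Gluing Lemma to the span $X \leftarrow A \cto B$ at levels $k$ and $l \ge k$ to see the pushout is again eventually constant; stability of cofibrations and acyclic cofibrations is then inherited levelwise from $\cat{C}$. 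For (C5), factor $f$ in $\cat{C}^\nat_\Reedy$ as $X \cto Z \weto Y$ with $Z \to Y$ a levelwise weak equivalence; then $Z$ is eventually constant by ``2 out of 3'' against the stabilization of $Y$, and a levelwise weak equivalence is an eventual one, so this is a factorization in $\cat{C}^{\tilde\nat}_\Reedy$.

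For the second claim I would first check that the inclusion $\iota$ is exact: it is the identity on cofibrations and on the initial object, and pushouts along cofibrations agree with those in $\cat{C}^{\hat\nat}_\Reedy$ and stay homotopically constant by the Gluing Lemma. Then I would verify the criterion of \Cref{App}. Property (App1) holds because an eventual weak equivalence between homotopically constant sequences is automatically levelwise: for indices below the stabilization index one compares via the constancy structure maps using ``2 out of 3''. For (App2), given $f \from \iota A \to Y$ with $A$ homotopically constant and $Y$ eventually constant, pick $k$ with $Y_k \weto Y_l$ for $l \ge k$ and set $\bar Y_l = Y_{\max(l,k)}$ with the structure maps of $Y$; this $\bar Y$ is Reedy cofibrant, homotopically constant, and carries an eventual weak equivalence $Y \to \bar Y$. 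Factoring the composite $\iota A \to Y \to \bar Y$ inside the cofibration category $\cat{C}^{\hat\nat}_\Reedy$ as $A \cto B \weto \bar Y$ produces the required approximation square, with vertical maps $\iota i \from \iota A \to \iota B$ and $Y \to \bar Y$ and bottom map the weak equivalence $\iota B \weto \bar Y$.

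The difficulty here is bookkeeping rather than anything conceptual: one has to thread the ``eventually'' quantifier through every step, in particular making sure a single stabilization index works when feeding several eventually constant objects into a levelwise pushout or a factorization (this is exactly where the Gluing Lemma and a uniform choice of index are essential), and one has to confirm that the auxiliary sequences such as $\bar Y$ really are Reedy cofibrant and functorial. None of these steps should be hard given the machinery already developed in \Cref{sec:hocolims-and-diagrams}.
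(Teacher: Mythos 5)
Your proposal is correct and follows essentially the same route as the paper: the cofibration category axioms are a routine modification of the construction of $\cat{C}^\nat_\Reedy$, and the inclusion is shown to be a weak equivalence via the approximation properties of \Cref{App}, with (App1) following from ``2 out of 3'' exactly as in the paper. For (App2) the paper replaces $Y$ by its shift $\tilde Y_l = Y_{l+k}$ rather than your truncation $\bar Y_l = Y_{\max(l,k)}$ and takes $B = \tilde Y$ directly (your extra factorization $A \cto B \weto \bar Y$ is harmless but unnecessary); these are trivial variants of the same idea.
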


\begin{proof}
  The construction of the cofibration category $\cat{C}^{\tilde\nat}_\Reedy$
  is a straightforward modification of the construction
  of $\cat{C}^\nat_\Reedy$, see e.g.\ \cite{rb}*{Theorem 9.3.5(1)}.

  We will verify the approximation properties.
  By ``2 out of 3'' a morphism between homotopically constant sequences
  is a levelwise weak equivalence if and only if
  it is an eventual weak equivalence.
  Hence (App1) holds.

  Next, let $X \to Y$ be a morphism with $X$ homotopically constant
  and $Y$ eventually constant.
  Assume that $Y$ is homotopically constant from degree $k$ on.
  Let $\tilde Y$ be $Y$ shifted down by $k$.
  Then $\tilde Y$ is homotopically constant and iterated structure morphisms
  of $Y$ yield a morphism $Y \to \tilde Y$ which is an eventual weak equivalence
  (starting from $k$).
  This yields a commutative square
  \begin{ctikzpicture}
    \matrix[diagram]
    {
      |(X)|   X        & |(Y)|   Y \\
      |(tY0)| \tilde Y & |(tY1)| \tilde Y \\
    };

    \draw[->] (X) to (Y);
    \draw[->] (X) to (tY0);

    \draw[->] (Y)   to node[right] {$\we$} (tY1);
    \draw[->] (tY0) to node[below] {$\id$} (tY1);
  \end{ctikzpicture}
  which proves (App2).
\end{proof}

We define a functor $|\uvar| \from DK \to \nat$ by sending $x \in DK$
to the smallest $k \in \nat$ \st{} $x \in \filt{k,K}$.
We call $|x|$ the \emph{filtration degree} of $x$.
Here, we do not consider any particular \hore{} structure on $\nat$
so $|\uvar|$ is not a \hore{} functor.
We will be interested in the left Kan extension
of a \hore{} Reedy cofibrant diagram $X \from DK \to \cat{C}$ along $|\uvar|$.
It can be computed as
\begin{align*}
  (\Lan_{|\uvar|} X)_k = \colim_{\filt{k,K}} X \text{.}
\end{align*}
We will denote $(\Lan_{|\uvar|} X)_k$ by $\Phi^{(k)} X$ and when $k$ varies
$\Phi^{(\uvar)} X$ will stand for the resulting sequence $\nat \to \cat{C}$.

Just as colimits can be defined in terms of cones,
left Kan extensions can be defined in terms of certain generalized cones.
We describe such cones for Kan extensions along $|\uvar|$.
Let $DK \join_{|\uvar|} \nat$ denote the \emph{cograph} (or \emph{collage})
of $|\uvar|$ defined as the category whose set of objects
is the disjoint union of the sets of objects of $DK$ and $\nat$ and
\begin{align*}
  (DK \join_{|\uvar|} \nat) (x, y) =
  \begin{cases}
    DK(x, y)     & \text{when } x, y \in DK \text{,} \\
    \nat(x, y)   & \text{when } x, y \in \nat \text{,} \\
    \nat(|x|, y) & \text{when } x \in DK \text{ and } y \in \nat \text{,} \\
    \emptyset    & \text{otherwise.}
  \end{cases}
\end{align*}
The left Kan extension of $X \from DK \to \cat{C}$ along $|\uvar|$
is nothing but an initial extension of $X$ to $DK \join_{|\uvar|} \nat$
so that morphisms $\Phi^{(\uvar)} X \to Y$ in $\cat{C}^\nat$
correspond to diagrams on $DK \join_{|\uvar|} \nat$ restricting to $X$ and $Y$
on $DK$ and $\nat$ respectively.
Such an extension of $X$ is a family of cones under the restrictions of $X$
to all $\filt{k,K}$s.
We will compare them to extensions to $D(K^\ucone)$ using a functor
$p_K \from D(K^\ucone) \to DK \join_{|\uvar|} \nat$ defined as follows.
Write an object of $D(K^\ucone)$ as $x \join \phi$ with $x \in D^a K$
and $\phi \in D^a [0]$ and set
\begin{align*}
  p_K(x \join \phi) =
  \begin{cases}
    |x \join \phi| & \text{when } \phi \in D[0] \text{,} \\
    x              & \text{otherwise.}
  \end{cases}
\end{align*}
This allows us to state and prove a version of \Cref{cone-factorization}
for finitely cocomplete cofibration categories.

\begin{lemma}\label{cone-factorization-finite}
  Let $\cat{C}$ be a cofibration category, $K$ a finite simplicial set
  and $X \from DK \to \cat{C}$ a \hore{} Reedy cofibrant diagram.
  Consider a morphism $f \from \Phi^{(\uvar)} X \to Y$
  and the corresponding cone
  $\tilde T \from DK \join_{|\uvar|} \nat \to \cat{C}$.
  If $T$ is any Reedy cofibrant replacement of $p_K^* \tilde T$ relative to $DK$
  (which exists by \Cref{relative-factorization}), then $f$ factors as
  \begin{align*}
    \Phi^{(\uvar)} X \to \Phi^{(\uvar)} T \weto Y
  \end{align*}
  where the latter morphism is an eventual weak equivalence
  (starting at $\dim K + 1$).
\end{lemma}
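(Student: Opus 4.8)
The plan is to follow the proof of \Cref{cone-factorization}, with the filtered systems $\Phi^{(\uvar)}$ playing the role of the colimits over $DK$ and $D(K^\ucone)$ there, and with \Cref{colim-simplex-join-finite} used in place of \Cref{colim-simplex-join}. First I would exhibit the two morphisms of the claimed factorization. Since $T|DK = X$ and $\filt{k,K}$ is the restriction of the sieve $\filt{k,K^\ucone}$ along $DK \ito D(K^\ucone)$, the inclusions $\filt{k,K} \ito \filt{k,K^\ucone}$ induce morphisms $\Phi^{(k)}X = \colim_{\filt{k,K}}T \to \colim_{\filt{k,K^\ucone}}T = \Phi^{(k)}T$, natural in $k$, hence a morphism $\Phi^{(\uvar)}X \to \Phi^{(\uvar)}T$. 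For the second morphism I would use the levelwise weak equivalence $T \weto p_K^*\tilde T$ together with the observation that $p_K$ carries $\filt{k,K^\ucone}$ into the full subcategory of $DK \join_{|\uvar|}\nat$ on the objects of $\filt{k,K}$ and $0,\dots,k$, a category in which $k$ is terminal; this produces canonical maps $\Phi^{(k)}T \to \colim_{\filt{k,K^\ucone}}p_K^*\tilde T \to \tilde T_k = Y_k$, natural in $k$. That the composite $\Phi^{(\uvar)}X \to \Phi^{(\uvar)}T \to Y$ equals $f$ is checked exactly as in \Cref{cone-factorization}: precompose with the colimit cocone maps $X_x \to \Phi^{(k)}X$ for $x \in \filt{k,K}$ and use that $\tilde T$ restricts to $X$ on $DK$, to $Y$ on $\nat$, and that its bridge morphism $x \to k$ encodes $f_k$.

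The substantive point is that $\Phi^{(k)}T \to Y_k$ is a weak equivalence for every $k \ge \dim K + 1$. For this I would use the simplex $v_k$ of $K^\ucone$ given by $0^{2k+1}$ at the cone vertex (so, as an object of $D(K^\ucone)$, $x$ is the $(-1)$-simplex of $K$ and $\phi = 0^{2k+1}$). From the description of $\filt{k,[0]}$ as the sieve generated by the $2k$-simplex of $D[0]$ one computes that $v_k$ has filtration degree exactly $k$, so $v_k \in \filt{k,K^\ucone}$ and $p_K(v_k) = k$; hence $(p_K^*\tilde T)_{v_k} = \tilde T_k = Y_k$, and the replacement weak equivalence restricts to a weak equivalence $T_{v_k}\weto Y_k$ which, because $k$ is terminal in the image of $p_K$, agrees with the composite $T_{v_k} \to \Phi^{(k)}T \to Y_k$. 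It remains to see that $T_{v_k} \to \Phi^{(k)}T$ is a weak equivalence, for which I would interpolate the cone-vertex $0$-simplex $0^1$: the morphism $0^1 \to v_k$ of $D(K^\ucone)$ is a generating weak equivalence (it determines a degenerate edge at the cone vertex), so $T_{0^1} \to T_{v_k}$ is a weak equivalence by homotopy invariance of $T$, while $T_{0^1} = T_{[0]} \to \colim_{\filt{k,K^\ucone}}T = \Phi^{(k)}T$ is a weak equivalence by \Cref{colim-simplex-join-finite} with $m = 0$ (here is where $k \ge \dim K + 1$ enters). Two applications of ``$2$ out of $3$'' then finish the argument, and in particular $\Phi^{(\uvar)}T$ is an eventually constant sequence of cofibrations and $\Phi^{(\uvar)}T \to Y$ is an eventual weak equivalence starting at $\dim K + 1$.

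I expect the main obstacle to be the homotopical bookkeeping rather than any single sophisticated step. One has to be careful that the Reedy cofibrant replacement $T$ furnished by \Cref{relative-factorization} is homotopical, so that both \Cref{colim-simplex-join-finite} and the weak equivalence $T_{0^1} \to T_{v_k}$ apply; that the filtration-degree computations for $v_k$ and $0^1$ are correct, so that these objects lie in $\filt{k,K^\ucone}$ with the stated images under $p_K$ and the $0$-simplex in particular lies in every $\filt{k,K^\ucone}$; and that all the maps in the triangle $T_{v_k} \to \Phi^{(k)}T \to Y_k$, $T_{v_k}\to Y_k$ are the canonical ones, so that ``$2$ out of $3$'' is legitimately applicable. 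The bound $\dim K + 1$ is exactly the one coming out of \Cref{colim-simplex-join-finite} with $m = 0$, which explains both why the factoring morphism is only an eventual weak equivalence and why it becomes one from that stage on.
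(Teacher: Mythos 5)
Your proposal is correct and follows essentially the same route as the paper: verify the composite equals $f$ by precomposing with the cocone legs $X_x \to \Phi^{(k)}X$, then show $\Phi^{(k)}T \to Y_k$ is a weak equivalence for $k \ge \dim K + 1$ by comparing both legs of the triangle through the value of $T$ at the degenerate cone-vertex simplex $0^{2k+1}$ (whose filtration degree is $k$, so that the replacement weak equivalence hits $Y_k$ there), using \Cref{colim-simplex-join-finite} with $m=0$. Your explicit interpolation through $T_{0^1}$ and the generating weak equivalence $0^1 \to 0^{2k+1}$ is a welcome precisification of a step the paper leaves implicit, but it is the same argument.
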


\begin{proof}
  To verify that the above composite agrees with $f$ it suffices to check that
  at each level $k$ it agrees upon precomposition with $X_x \to \Phi^{(k)} X$
  for all $x \in \filt{k,K}$.
  That's indeed the case since $T|DK = X$.

  It remains to check that the latter morphism
  is an eventual weak equivalence.
  For $i \ge \dim K + 1$ in the diagram
  \begin{ctikzpicture}
    \matrix[diagram]
    {
      |(KT)| \colim_{\filt{i,(K^\ucone)}} T & |(Y)| Y_i \\
      |(Ti)| T_{0^{2i+1}} \\
    };

    \draw[->] (KT) to (Y);
    \draw[->] (Ti) to (KT);

    \draw[->] (Ti) to (Y);
  \end{ctikzpicture}
  the left morphism is a weak equivalence by \Cref{colim-simplex-join-finite}
  and so is the diagonal one since $T$
  is a cofibrant replacement of $p_K^* \tilde T$.
  Therefore the top morphism is also a weak equivalence.
\end{proof}

For every $m \ge 0$ each object of $D(K \join \simp{m})$
can be uniquely written as $x \join \phi$ with $x \in D^a K$
and $\phi \in D^a[m]$.
This yields a functor $r_K \from D(K \join \simp{m}) \to D^a[m]$ sending
$x \join \phi$ to $\phi$ and to which we can associate
the ``filtered'' left Kan extension functor
\begin{align*}
  \Lan_{r_K}^{\mathrm{filt}} \from \cat{C}^{D(K \join \simp{m})}_\Reedy
  \to (\cat{C}^{\tilde\nat}_\Reedy)^{D^a[m]}_\Reedy
\end{align*}
defined as $(\Lan_{r_K}^{\mathrm{filt}} X)_\phi = \Phi^{(\uvar)} \phi^* X$
for $\phi \in D^a[m]$ which is exact by \cite{rb}*{Theorem 9.4.3(1)}.
Similarly we have
\begin{align*}
  \Lan_{s_K}^{\mathrm{filt}}
  \from (\cat{C}^{\tilde\nat}_\Reedy)^{D(K \join \bdsimp{m})}_\Reedy
  \to (\cat{C}^{\tilde\nat}_\Reedy)^{D^a\bdsimp{m}}_\Reedy \text{.}
\end{align*}
We form pullbacks (the front and back squares of the cube)
\begin{ctikzpicture}
  \matrix[diagram]
  {
    |(ju)| \cat{C}^{\tilde D(K \join \simp{m})}_\Reedy & &
    |(au)|  (\cat{C}^{\tilde\nat}_\Reedy)^{\tilde D^a[m]}_\Reedy \\
    & |(bju)| \cat{C}^{\tilde D(K \join \bdsimp{m})}_\Reedy & &
    |(bau)| (\cat{C}^{\tilde\nat}_\Reedy)^{\tilde D^a\bdsimp{m}}_\Reedy \\
    |(j)| \cat{C}^{D(K \join \simp{m})}_\Reedy & &
    |(a)|  (\cat{C}^{\tilde\nat}_\Reedy)^{D^a[m]}_\Reedy \\
    & |(bj)| \cat{C}^{D(K \join \bdsimp{m})}_\Reedy & &
    |(ba)| (\cat{C}^{\tilde\nat}_\Reedy)^{D^a\bdsimp{m}}_\Reedy \text{.} \\
  };

  \draw[->] (ju) to (au);
  \draw[->] (j)  to node[below right] {$\Lan_{r_K}^\mathrm{filt}$} (a);

  \draw[fib] (ju) to (j);
  \draw[fib] (au) to (a);

  \draw[->,cross line]   (bju) to (bau);
  \draw[fib, cross line] (bju) to (bj);

  \draw[->]  (bj)  to node[below right] {$\Lan_{s_K}^\mathrm{filt}$} (ba);
  \draw[fib] (bau) to (ba);

  \draw[->,shorten >=6pt] (ju) to node[above right] {$P_K$} (bju);

  \draw[->,shorten >=3pt] (au) to (bau);
  \draw[->,shorten >=6pt] (j)  to (bj);
  \draw[->,shorten >=4pt] (a)  to (ba);
\end{ctikzpicture}
Observe that $\cat{C}^{\tilde D(K \join \simp{m})}$
and $\cat{C}^{\tilde D(K \join \bdsimp{m})}$ are just atomic notations
for the pullbacks above, i.e.\ $\tilde D(K \join \simp{m})$
and $\tilde D(K \join \bdsimp{m})$ are \emph{not} \hore{} categories
for general $K$, although they will be interpreted as such
when $K$ is a simplex.

The following is a finite variant of \Cref{universal-cones}.

\begin{lemma}\label{universal-cones-finite}
  The functor
  $P_K \from \cat{C}^{\tilde D(K \join \simp{m})}_\Reedy
    \to \cat{C}^{\tilde D(K \join \bdsimp{m})}_\Reedy$ is an acyclic fibration
  for every finite simplicial set $K$.
\end{lemma}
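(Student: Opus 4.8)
The plan is to transcribe the proof of \Cref{universal-cones} into the filtered setting, replacing colimits over the categories $DK$ by the sequences $\Phi^{(\uvar)}$ and replacing $\cat{C}$, $\cat{C}^{D_\aug[m]}_\Reedy$, $\cat{C}^{D_\aug\bdsimp{m}}_\Reedy$ by $\cat{C}^{\tilde\nat}_\Reedy$, $(\cat{C}^{\tilde\nat}_\Reedy)^{D^a[m]}_\Reedy$, $(\cat{C}^{\tilde\nat}_\Reedy)^{D^a\bdsimp{m}}_\Reedy$. First I would show $P_K$ is a fibration exactly as before: the categories $\cat{C}^{\tilde D(K\join\simp{m})}_\Reedy$ and $\cat{C}^{\tilde D(K\join\bdsimp{m})}_\Reedy$ are full subcategories of $\cat{C}^{D(K\join\simp{m})}_\Reedy$ and $\cat{C}^{D(K\join\bdsimp{m})}_\Reedy$ closed under weakly equivalent objects, so the lifting properties of the fibration $\cat{C}^{D(K\join\simp{m})}_\Reedy\fto\cat{C}^{D(K\join\bdsimp{m})}_\Reedy$ (which is a fibration by \Cref{D-exact}, since $D(K\join\bdsimp{m})\ito D(K\join\simp{m})$ is a sieve) are inherited by $P_K$.

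Then I would prove $P_K$ is a weak equivalence by the same case analysis, but shortened: since $K$ is finite it is built from $\emptyset$ by attaching nondegenerate simplices, so it suffices to treat $K$ empty or a simplex and to show the class of such $K$ is closed under pushouts along monomorphisms; the coproduct and sequential-colimit steps of \Cref{universal-cones} drop out. For $K=\emptyset$ the top square of the cube degenerates to a pullback, so $P_\emptyset$ is an acyclic fibration by \Cref{augmented-bd} applied to the cofibration category $\cat{C}^{\tilde\nat}_\Reedy$. For $K=\simp{k}$ I would use \Cref{colim-simplex-join-finite} to identify membership in the tilde-subcategory: a \hore{} Reedy cofibrant diagram $X$ on $D[k+1+m]=D(\simp{k}\join\simp{m})$ has $\Lan_{r_{\simp{k}}}^{\mathrm{filt}}X$ lying in the tilde-structure exactly when the edge $k\to k+1$ is sent to an eventual weak equivalence, i.e.\ when $X$ is \hore{} with respect to $D[k+\tilde{1}+m]$, and likewise with $\bdsimp{m}$ in place of $\simp{m}$; under these identifications $P_{\simp{k}}$ becomes $\cat{C}^{D[k+\tilde{1}+m]}_\Reedy\to\cat{C}^{D\horn{k+\tilde{1}+m,[k]}}_\Reedy$, which is a weak equivalence by \Cref{special-generalized-horns}.

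For the inductive step, given a pushout square of finite simplicial sets along monomorphisms with vertices $A$, $B$, $K$, $L$ for which the conclusion is known at $A$, $B$, $K$, both $\uvar\join\simp{m}$ and $D$ preserve the pushout (by \Cref{join-colimits} and \Cref{D-preserves-colimits}) and all four maps are sieves, so \Cref{sieves-pullback} makes the $\cat{C}^{D(\uvar\join\simp{m})}_\Reedy$ into a pullback square; the tilde-subcategories are cut out of these by the same (diagram-value) condition and hence form a pullback square as well, and similarly with $\bdsimp{m}$, so the Gluing Lemma gives the conclusion at $L$. The only genuine work, and the place I expect the main obstacle, is the case $K=\simp{k}$: one has to check carefully that \Cref{colim-simplex-join-finite} really detects membership in $\cat{C}^{\tilde D(\simp{k}\join\simp{m})}_\Reedy$ and its $\bdsimp{m}$-analogue, so that $P_{\simp{k}}$ coincides on the nose with the restriction functor of \Cref{special-generalized-horns}; everything else is a routine bookkeeping transcription of the infinite case.
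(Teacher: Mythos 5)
Your proposal is correct and follows essentially the same route as the paper: the paper's proof of this lemma is literally the instruction to repeat the proof of \Cref{universal-cones}, dropping the coproduct and sequential-colimit cases and substituting \Cref{colim-simplex-join-finite} for \Cref{colim-simplex-join}, which is exactly the transcription you carry out.
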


\begin{proof}
  The proof is virtually identical to the proof of \Cref{universal-cones} except
  that now we do not consider the cases of coproducts and colimits of sequences
  of monomorphisms and we use \Cref{colim-simplex-join-finite} in the place of
  \Cref{colim-simplex-join}.
\end{proof}

Finally, we can characterize colimits in $\nf \cat{C}$ in terms of
homotopy colimits in $\cat{C}$ in a manner similar to \Cref{nerve-cones}.

\begin{proposition}\label{nerve-cones-finite}
  Let $\cat{C}$ be cofibration category, $K$ a finite simplicial set.
  A cone $S \from K^\ucone \to \nf \cat{C}$ is universal \iff{} the induced morphism
  \begin{align*}
    \Phi^{(\uvar)}(S|K) \to \Phi^{(\uvar)} S
  \end{align*}
  is an eventual weak equivalence (where $S$ is seen as
  a \hore{} Reedy cofibrant diagram $D(K^\ucone) \to \cat{C}$
  by \Cref{nf-representation}).
  Such a cone exists under every diagram $K \to \nf \cat{C}$.
\end{proposition}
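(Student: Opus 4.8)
The plan is to transcribe the proof of \Cref{nerve-cones} to the finite setting, substituting the honest colimit functors $\colim_{DK}$ and $\colim_{D(K^\ucone)}$ by the filtered left Kan extensions $\Phi^{(\uvar)}$ (valued in $\cat{C}^{\tilde\nat}_\Reedy$), ``weak equivalence'' by ``eventual weak equivalence'', and the three supporting results \Cref{universal-cones}, \Cref{cone-factorization}, \Cref{colim-simplex-join} by their finite counterparts \Cref{universal-cones-finite}, \Cref{cone-factorization-finite}, \Cref{colim-simplex-join-finite}. As in \Cref{nerve-cones}, $S$ is viewed as a \hore{} Reedy cofibrant diagram $D(K^\ucone) \to \cat{C}$ via \Cref{nf-representation}, and $\Phi^{(\uvar)}(S|K)$, $\Phi^{(\uvar)} S$ are eventually constant so that the notion of an eventual weak equivalence between them makes sense.

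For the forward implication, assume $\Phi^{(\uvar)}(S|K) \to \Phi^{(\uvar)} S$ is an eventual weak equivalence, and let $U \from K \join \bdsimp{m} \to \nf \cat{C}$ be any extension of $S$. By \Cref{nf-representation}, $U$ corresponds to a \hore{} Reedy cofibrant diagram on $D(K \join \bdsimp{m})$, and extending $U$ over $K \join \simp{m}$ amounts to prolonging this diagram along $D(K \join \bdsimp{m}) \ito D(K \join \simp{m})$. Unwinding the definitions of the pullback categories $\cat{C}^{\tilde D(K \join \simp{m})}_\Reedy$ and $\cat{C}^{\tilde D(K \join \bdsimp{m})}_\Reedy$ assembled from $\Lan_{r_K}^{\mathrm{filt}}$ and $\Lan_{s_K}^{\mathrm{filt}}$, the hypothesis on $S$ is precisely the condition that places the diagram determined by $U$ inside the full subcategory $\cat{C}^{\tilde D(K \join \bdsimp{m})}_\Reedy$. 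Since $P_K$ is an acyclic fibration by \Cref{universal-cones-finite}, the prolongation exists, so $U$ extends, and $S$ is universal.

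For the converse together with the existence clause, start with an arbitrary diagram $X \from K \to \nf \cat{C}$ and apply \Cref{cone-factorization-finite} to $X$ (viewed on $DK$) and the cone corresponding to $\id_{\Phi^{(\uvar)} X}$. This produces a \hore{} Reedy cofibrant replacement $T \from D(K^\ucone) \to \cat{C}$ of $p_K^* \tilde T$ relative to $DK$, restricting to $X$ on $DK$, such that $\id_{\Phi^{(\uvar)} X}$ factors as $\Phi^{(\uvar)} X \to \Phi^{(\uvar)} T \weto \Phi^{(\uvar)} X$ with the second arrow an eventual weak equivalence; two-out-of-three (valid in the cofibration category $\cat{C}^{\tilde\nat}_\Reedy$) then makes the first arrow an eventual weak equivalence, so $T$ is a universal cone under $X$ by the forward implication --- which proves the existence statement. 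If now $S$ is any universal cone under $X = S|K$, then $S$ and $T$ are equivalent in $X \uslice \nf \cat{C}$ by \Cref{colim-equivalent}, hence there is a \hore{} Reedy cofibrant diagram $W \from D(K \join \nwiso) \to \cat{C}$ restricting to $[S, T]$ on $D(K \join \bdsimp{1})$. Running the diagram chase of \Cref{nerve-cones} with $\Phi^{(\uvar)}$ in place of the colimit functors --- using \Cref{colim-simplex-join-finite} to compare $\Phi^{(\uvar)}$ of $W$ restricted along either vertex of $\nwiso$ with the corresponding value of $W$ up to eventual weak equivalence, and using that the \hore{} structure on $D\nwiso$ is maximal so that the comparison map between these two values is itself an eventual weak equivalence --- shows that $\Phi^{(\uvar)} X \to \Phi^{(\uvar)} S$ is an eventual weak equivalence.

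The main obstacle is the bookkeeping of the ``eventual'' thresholds: every equivalence produced above holds only past some filtration level, and one must ensure these levels can be chosen compatibly across the finitely many morphisms appearing in the chase. This is exactly why the correct ambient cofibration category is $\cat{C}^{\tilde\nat}_\Reedy$ of eventually constant sequences rather than $\cat{C}^{\hat\nat}_\Reedy$ of homotopically constant ones (cf.\ \Cref{eventually-constant}), and why one needs $\Phi^{(\uvar)} = \Lan_{|\uvar|}$ to take values in $\cat{C}^{\tilde\nat}_\Reedy$ and be exact --- which follows from \Cref{filt-simp} (giving eventual constancy with a threshold controlled by $\dim K$) together with \cite{rb}*{Theorem 9.4.3(1)}. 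Once this is in hand the remaining steps are a routine transcription, the substantive content having been isolated in \Cref{universal-cones-finite} and \Cref{colim-simplex-join-finite}.
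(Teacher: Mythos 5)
Your proposal is correct and takes essentially the same route as the paper, which simply states that the proof of \Cref{nerve-cones} goes through verbatim after substituting \Cref{universal-cones-finite,cone-factorization-finite,colim-simplex-join-finite} for \Cref{universal-cones,cone-factorization,colim-simplex-join}. Your expanded transcription of both directions and the existence clause, together with the remark that the eventual-threshold bookkeeping is absorbed by working in $\cat{C}^{\tilde\nat}_\Reedy$, matches the intended argument.
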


\begin{proof}
  The proof is almost identical to the proof of \Cref{nerve-cones}
  except that we use
  \Cref{universal-cones-finite,cone-factorization-finite,colim-simplex-join-finite}
  in the place of \Cref{universal-cones,cone-factorization,colim-simplex-join}
  respectively.
\end{proof}

The more specific criteria for initial objects and pushouts discussed
in \Cref{ex:init,ex:push} are valid in the finitely cocomplete case
in exactly the same form.
This can be justified by observing that $\Phi^{(k)}$ stabilizes at $k = 0$
over $D[0]$ and at $k = 2$ over $D([1] \times [1])$
by \Cref{colim-simplex-join-finite}.

\begin{proof}[Proof of \Cref{nerve-exact}]
  Since we have already verified \Cref{nerve-limits,nf-inner,nf-iso}
  it remains to check that $\nf$ takes values
  in finitely cocomplete quasicategories and exact functors.

  It takes values in quasicategories by \Cref{nf-inner}
  and they are finitely cocomplete by \Cref{nerve-cones-finite}.

  Similarly, colimits in quasicategories of frames were characterized
  in \Cref{nerve-cones-finite} by certain morphisms being weak equivalences
  and weak equivalences are preserved by exact functors by \Cref{Ken-Brown}.
\end{proof}

  \section{Cofibration categories of diagrams\\in quasicategories}
  \label{ch:cofcats-of-dgrms}
  In this section we will prove our main result, i.e.\ that $\nf$
is a weak equivalence of fibration categories.
This will be achieved by defining a functor $\dgrm_\kappa$
from the category of $\kappa$-cocomplete quasicategories
to the category of $\kappa$-cocomplete cofibration categories.
The functor $\dgrm_\kappa$ fails to be exact
(e.g.\ it doesn't preserve the terminal object), but it will be verified
to induce an inverse to $\nf$ on the level of homotopy categories
which is sufficient to complete the proof.

\subsection{Construction}
\label{category-of-diagrams}

Let $\sSet_\kappa$ denote the category of $\kappa$-small simplicial sets.
If $\qcat{C}$ is a $\kappa$-cocomplete quasicategory we consider
the slice category $\sSet_\kappa \slice \qcat{C}$,
we denote it by $\dgrm_\kappa \qcat{C}$
and call the \emph{category of $\kappa$-small diagrams in $\qcat{C}$}.
Then we define a morphism
\begin{ctikzpicture}
  \matrix [diagram]
  {
    |(K)| K & & |(L)| L \\
    & |(C)| \qcat{C} \\
  };

  \draw[->] (K) to node[above] {$f$} (L);

  \draw[->] (K) to node[below left]  {$X$} (C);
  \draw[->] (L) to node[below right] {$Y$} (C);
\end{ctikzpicture}
to be
\begin{itemize}
  \item a \emph{weak equivalence} if the induced morphism
    $\colim_K X \to \colim_L Y$ is an equivalence in $\qcat{C}$
    (more precisely, if for any universal cone
    $S \from L^\ucone \to \qcat{C}$ under $Y$
    the induced cone $S f^\ucone$ is universal under $X$),
  \item a \emph{cofibration} if $f$ is injective.
\end{itemize}

In particular, such a morphism is a weak equivalence whenever $f$ is cofinal,
but there are of course many weak equivalences with $f$ not cofinal.
We will make use of the class of \emph{right anodyne maps} which is generated
by the \emph{right horn} inclusions $\horn{m,i} \ito \simp{m}$
(i.e. the ones with $0 < i \le m$) under coproducts,
pushouts along arbitrary maps, sequential colimits and retracts.

\begin{lemma}\label{right-anodyne}
  Every right anodyne map is cofinal.
\end{lemma}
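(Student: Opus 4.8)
The plan is to reduce the claim to the generating case. Since the class of cofinal maps is closed under coproducts, pushouts along arbitrary maps, sequential colimits, and retracts — this follows from \Cref{join-colimits} together with the fact that colimits in slice categories $X \uslice \qcat{C}$ are computed via joins, so that the relevant universal-cone lifting properties are stable under these operations — it suffices to prove that each right horn inclusion $\horn{m,i} \ito \simp{m}$ with $0 < i \le m$ is cofinal.

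So first I would fix a quasicategory $\qcat{C}$, a universal cone $S \from \simp{m,\ucone} \to \qcat{C}$ (that is, a colimit of the diagram $X \from \simp{m} \to \qcat{C}$), and check that $S$ restricted along $(\horn{m,i})^\ucone \ito (\simp{m})^\ucone$ is again universal. The point is to verify the lifting property from the definition of a universal cone: given $n > 0$ and a diagram $U \from \horn{m,i} \join \bdsimp{n} \to \qcat{C}$ with $U$ restricted to $(\horn{m,i})^\ucone$ equal to this restricted cone, I must produce an extension over $\horn{m,i} \join \simp{n}$. Using the universality of the original cone $S$, one already gets extensions over $\simp{m} \join \bdsimp{n}$; the remaining task is to fill in the simplices involving the missing face of $\horn{m,i}$, which I would do by an induction building the extension one cell at a time via inner horn fillers in $\qcat{C}$. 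Concretely, the inclusion $\horn{m,i} \join \simp{n} \ito \simp{m} \join \simp{n}$ decomposes as a composite of pushouts of inner horn inclusions (an argument of the same flavour as \Cref{generalized-inner-horns} — joins of horns with simplices are again generalized inner horns whose complements are not intervals precisely because $0 < i \le m$), and each such inner horn has a filler since $\qcat{C}$ is a quasicategory.

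The key observation making this work is that $\horn{m,i} \join \simp{n}$, sitting inside $\simp{m} \join \simp{n} \iso \simp{m+1+n}$, corresponds to a generalized horn $\horn{m+1+n, A}$ where $A$ contains all of $\{m+1, \dots, m+1+n\}$ together with $[m] \setminus \{i\}$; since $0 < i \le m$ the complement of $A$ is the singleton $\{i\}$ (or in the relative-to-$\bdsimp{n}$ version, a set whose complement is still not an interval), hence it is a generalized inner horn. Then \Cref{generalized-inner-horns} expresses the inclusion as a finite composite of pushouts of inner horn inclusions, and filling these one at a time against the quasicategory $\qcat{C}$ produces the desired extension.

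The main obstacle I anticipate is bookkeeping: carefully matching the join-decomposition $\simp{m} \join \simp{n} \iso \simp{m+1+n}$ with the generalized-horn combinatorics and confirming that the complement of the relevant vertex set is never an interval in exactly the range $0 < i \le m$ (this is where right, as opposed to left, anodyne enters). Everything else is a routine application of the closure properties of cofinal maps and of \Cref{generalized-inner-horns}; the induction itself is standard quasicategory combinatorics.
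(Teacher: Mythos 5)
The reduction to the generating right horn inclusions is where your argument breaks down. You assert that the class of cofinal maps is closed under coproducts, pushouts along arbitrary maps, sequential colimits and retracts, but for pushouts this is false: $\simp{1} \to \simp{0}$ is cofinal, yet its pushout along the quotient map $\simp{1} \to \simp{1}/\bdsimp{1}$ is $\simp{1}/\bdsimp{1} \to \simp{0}$, which is not cofinal (the colimit of a diagram indexed by the simplicial circle is a tensor with $S^1$, not the value of the diagram). So cofinality cannot be saturated directly, and the appeal to \Cref{join-colimits} does not supply the missing closure property. Restricting to monomorphisms does not obviously help either: showing that cofinal monomorphisms form a weakly saturated class is essentially as hard as the lemma itself, which is why the standard proofs (and the paper, which simply cites \cite{l}*{Proposition 4.1.1.3}) do not argue this way.

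The repair is to saturate a \emph{lifting property} rather than cofinality itself. By the adjunction of \Cref{join-slice}, for $f \from K \to L$ and $X \from L \to \qcat{C}$ the restriction $X \uslice \qcat{C} \to (Xf) \uslice \qcat{C}$ is an acyclic Kan fibration precisely when every map $(K \join \simp{n}) \union (L \join \bdsimp{n}) \to \qcat{C}$ extends over $L \join \simp{n}$; the class of monomorphisms $f$ whose pushout-joins with all boundary inclusions are inner anodyne \emph{is} weakly saturated in $f$, and for the generators one has the identification $(\horn{m,i} \join \simp{n}) \union (\simp{m} \join \bdsimp{n}) = \horn{m+1+n,i}$, an inner horn since $0 < i \le m < m+1+n$. (Note that the relevant subcomplex is this pushout-join, not $\horn{m,i} \join \simp{n}$ alone; also, your step ``universality of $S$ already gives extensions over $\simp{m} \join \bdsimp{n}$'' is backwards --- universality extends \emph{from} $\simp{m} \join \bdsimp{n}$ to $\simp{m} \join \simp{n}$, whereas producing the data on $\simp{m} \join \bdsimp{n}$ is exactly what the inner anodyne filling must do.) Once $X \uslice \qcat{C} \to (Xf) \uslice \qcat{C}$ is known to be an acyclic Kan fibration, it is a categorical equivalence of quasicategories, so by \Cref{colim-init,init-equivalent} it carries the initial object $S$ to an initial object, which is the asserted cofinality. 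Your combinatorial instincts about inner horns are sound, but without reorganizing the proof around the slice map the reduction step has a genuine gap.
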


\begin{proof}
  \cite{l}*{Proposition 4.1.1.3(4)}
\end{proof}

\begin{proposition}
  With weak equivalences and cofibrations as defined above
  $\dgrm_\kappa \qcat{C}$ is a $\kappa$-cocomplete cofibration category.
\end{proposition}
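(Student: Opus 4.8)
The plan is to recognize $\dgrm_\kappa\qcat{C}$ as the slice $\sSet_\kappa\slice\qcat{C}$, so that it inherits all $\kappa$-small colimits from $\sSet_\kappa$ (computed as colimits of the underlying simplicial sets, equipped with the induced leg to $\qcat{C}$); in particular it has an initial object $\emptyset\to\qcat{C}$, pushouts, $\kappa$-small coproducts and — when $\kappa>\aleph_0$, the only case in which axiom (C6) is required — sequential colimits, closure of $\sSet_\kappa$ under the infinite colimits following from the regularity of $\kappa$. Its cofibrations are the monomorphisms of simplicial sets, and the ``cofibration halves'' of the axioms are then routine facts about the presheaf topos $\sSet$: every map out of $\emptyset$ is a monomorphism, every isomorphism is a monomorphism, and monomorphisms are stable under pushout along arbitrary maps, under $\kappa$-small coproducts and under transfinite composition. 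This yields (C2), (C3) and the cofibration parts of (C1), (C4), (C6) and (C7-$\kappa$); everything that remains concerns weak equivalences.

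Next I would package the colimit into a functor $\colim\from\dgrm_\kappa\qcat{C}\to\Ho\qcat{C}$. For each object $X\from K\to\qcat{C}$ choose a universal cone $S_X\from K^\ucone\to\qcat{C}$ (it exists since $\qcat{C}$ is $\kappa$-cocomplete and $K$ is $\kappa$-small) and set $\colim X$ to be its cone point; by \Cref{colim-init,colim-equivalent} this is well defined up to a contractible space of choices. A morphism $f\from K\to L$ (over $Y\from L\to\qcat{C}$) restricts $S_Y$ to a cone $S_Y f^\ucone$ under $X$, and the canonical map in $X\uslice\qcat{C}$ from the initial object $S_X$ to $S_Y f^\ucone$, unique up to homotopy, provides a morphism $\colim X\to\colim Y$ in $\Ho\qcat{C}$; functoriality up to homotopy is the usual statement that the comparison maps out of an initial object compose correctly. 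By construction $f$ is a weak equivalence precisely when its image in $\Ho\qcat{C}$ is an isomorphism, so, since isomorphisms in any category satisfy the ``2 out of 6'' property and functors preserve isomorphisms, this gives (C0) and the weak-equivalence part of (C1) at once.

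For the factorization axiom (C5) I would use the simplicial mapping cylinder. Given $f\from K\to L$ over $\qcat{C}$, form the $\kappa$-small simplicial set $M=L\push_{K}(K\times\simp{1})$ by gluing $K\times\simp{1}$ to $L$ along $K\times\{1\}\iso K\xrightarrow{f}L$, with the diagram $M\to\qcat{C}$ that is $Y$ on the summand $L$ and $X$ precomposed with the projection on $K\times\simp{1}$. This factors $f$ as $K\xrightarrow{i}M\xrightarrow{r}L$ with $i$ the inclusion of $K\times\{0\}$ and $r$ the identity on $L$ and $f$ composed with the projection on $K\times\simp{1}$. The map $i$ is a monomorphism (the copy of $K\times\{0\}$ is untouched by the gluing), while the evident section $s\from L\ito M$ of $r$ is right anodyne, being a pushout of the right anodyne map $K\times\{1\}\ito K\times\simp{1}$, hence cofinal by \Cref{right-anodyne}, hence a weak equivalence; from $rs=\id_L$ and the functor of the previous paragraph it follows that $r$ is a weak equivalence as well.

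Finally, the weak-equivalence parts of (C4), (C6) and (C7-$\kappa$) would follow from the fact that in a quasicategory colimits commute with colimits, so that $\colim$ carries a pushout, a sequential colimit, or a $\kappa$-small coproduct of diagrams to the corresponding colimit of the individual $\colim$s (formed in $\qcat{C}$ using its $\kappa$-cocompleteness), combined with the stability of equivalences in $\qcat{C}$ under pushout, transfinite composition and coproduct. I expect the real work to be exactly this last input — the Fubini-type compatibilities between colimits of varying shape, together with the functoriality of $\colim$ up to homotopy used in the second and third paragraphs — but these are standard in the theory of quasicategorical colimits (\cite{l}*{Section 4.2}); every other step is either an elementary fact about monomorphisms in $\sSet$ or a direct application of results already established above.
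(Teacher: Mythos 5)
Your proposal is correct and follows essentially the same route as the paper: colimits created in $\sSet_\kappa$, the (acyclic) cofibration axioms reduced to monomorphism stability plus invariance of quasicategorical colimits under equivalence of diagrams, and factorization via the mapping cylinder with the right anodyne (hence cofinal) section $L \ito Mf$. The only cosmetic difference is that the paper packages the weak-equivalence half of (C4) through \cite{rb}*{Lemma 1.4.3(1)} and the Gluing Lemma (citing \cite{l}*{Proposition 4.4.2.2}, and \cite{l}*{Proposition 4.2.3.10, Remark 4.2.3.9} for (C6--7)), whereas you spell out the same Fubini-plus-invariance argument directly.
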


\begin{proof}
  \mbox{}
  \begin{enumerate}[label=\thm]
    \item[(C0)] Weak equivalences satisfy ``2 out of 6'' since equivalences
      in $\qcat{C}$ do.
    \item[(C1)] Isomorphisms are weak equivalences since isomorphisms
      of simplicial sets are cofinal.
    \item[(C2-3)] The empty diagram is an initial object
      and hence every object is cofibrant.
    \item[(C4)] Pushouts are created by the forgetful functor
      $\dgrm_\kappa \qcat{C} \to \sSet_\kappa$
      thus pushouts along cofibrations exist
      and cofibrations are stable under pushouts.
      By \cite{rb}*{Lemma 1.4.3(1)} it suffices to verify that
      the Gluing Lemma holds which follows by \cite{l}*{Proposition 4.4.2.2}.
    \item[(C5)] It will suffice to verify that
      in the usual mapping cylinder factorization
      \begin{align*}
        K \to M f \to L
      \end{align*}
      the second map is cofinal. Indeed, we have a diagram
      \begin{ctikzpicture}
        \matrix[diagram]
        {
          |(K)|  K \times \simp{0} & |(L)| L \\
          |(K1)| K \times \simp{1} & |(Mf)| M f \\
          & & |(L')| L \\
        };

        \draw[->] (K)  to node[above] {$f$} (L);
        \draw[->] (K1) to (Mf);

        \draw[->] (K) to node[left] {$K \times \face_0$} (K1);
        \draw[->] (L) to node[left] {$j$} (Mf);

        \draw[->] (Mf) to (L');
        \draw[->] (L)  to[bend left] node[right] {$\id_L$} (L');
      \end{ctikzpicture}
      where the square is a pushout.
      The map $K \times \face_0$ is right anodyne by \cite{jo}*{Theorem 2.17}
      and thus so is $j$.
      Hence it is cofinal by \Cref{right-anodyne}.
    \item[(C6-7-$\kappa$)] The proof is similar to that of (C4).
      (But there is no analogue of \cite{l}*{Proposition 4.4.2.2}
      for sequential colimits explicitly stated in \cite{l}.
      Instead, it follows from more general
      \cite{l}*{Proposition 4.2.3.10 and Remark 4.2.3.9}.)
      \qedhere
  \end{enumerate}
\end{proof}

\begin{lemma}
  A $\kappa$-cocontinuous functor $F \from \qcat{C} \to \qcat{D}$
  induces a $\kappa$-cocontinuous functor
  $\dgrm_\kappa F = \dgrm_\kappa \qcat{C} \to \dgrm_\kappa \qcat{D}$
  and thus we obtain a functor
  $\dgrm_\kappa \from \QCat_\kappa \to \CofCat_\kappa$.
\end{lemma}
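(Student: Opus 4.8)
The plan is to define $\dgrm_\kappa F$ by postcomposition with $F$. On objects it sends a $\kappa$-small diagram $X \from K \to \qcat{C}$ to $F X \from K \to \qcat{D}$, and on morphisms it sends a triangle over $\qcat{C}$ with underlying simplicial map $f \from K \to L$ to the triangle over $\qcat{D}$ with the same underlying map $f$. Since postcomposition is strictly functorial, this already makes $\dgrm_\kappa$ a functor on the underlying categories, with $\dgrm_\kappa(\id) = \id$ and $\dgrm_\kappa(G F) = \dgrm_\kappa G \circ \dgrm_\kappa F$; so the only real content of the lemma is that $\dgrm_\kappa F$ is $\kappa$-cocontinuous, i.e.\ preserves cofibrations, acyclic cofibrations, initial objects, pushouts along cofibrations, colimits of sequences of cofibrations and $\kappa$-small coproducts.

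All but one of these are immediate from the description of the cofibration structure on $\dgrm_\kappa \qcat{C}$. A cofibration is a morphism whose underlying simplicial map is injective, and $\dgrm_\kappa F$ leaves this underlying map unchanged, so cofibrations are preserved. The initial object of $\dgrm_\kappa \qcat{C}$ is the empty diagram, which is visibly sent to the empty diagram. Pushouts along cofibrations, colimits of sequences of cofibrations and $\kappa$-small coproducts are all created by the forgetful functor $\dgrm_\kappa \qcat{C} \to \sSet_\kappa$ (as in the proof that $\dgrm_\kappa \qcat{C}$ is a cofibration category), and $\dgrm_\kappa F$ strictly commutes with these forgetful functors, so it preserves all of them.

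The one point that requires an argument is preservation of weak equivalences, which also yields preservation of acyclic cofibrations (alternatively one could invoke K.~Brown's Lemma, \Cref{Ken-Brown}). Here I would use that $F$, being $\kappa$-cocontinuous, carries universal cones under $\kappa$-small diagrams to universal cones, together with the essential uniqueness of colimits (\Cref{colim-equivalent}) and the characterization of universal cones as initial objects of slices (\Cref{colim-init}), which is invariant under equivalence by \Cref{init-equivalent}. Let $f \from K \to L$ be a weak equivalence in $\dgrm_\kappa \qcat{C}$ between diagrams $X = Y f$ and $Y$, and let $T \from L^\ucone \to \qcat{D}$ be any universal cone under $F Y$. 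Since $\qcat{C}$ is $\kappa$-cocomplete and $L$ is $\kappa$-small, choose a universal cone $S \from L^\ucone \to \qcat{C}$ under $Y$; then $F S$ is universal under $F Y$, so $T$ and $F S$ are equivalent under $F Y$ by \Cref{colim-equivalent}. Moreover $S f^\ucone$ is universal under $X$ because $f$ is a weak equivalence, whence $F(S f^\ucone) = (F S) f^\ucone$ is universal under $F X$. Since restriction along $f$ gives a simplicial map of slices $F Y \uslice \qcat{D} \to F X \uslice \qcat{D}$, it carries the equivalence relating $T$ and $F S$ under $F Y$ to one relating $T f^\ucone$ and $(F S) f^\ucone$ under $F X$; hence $T f^\ucone$ is universal under $F X$ by \Cref{colim-init} and \Cref{init-equivalent}. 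Thus $\dgrm_\kappa F(f)$ is a weak equivalence.

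Finally, $\dgrm_\kappa \qcat{C}$ is a $\kappa$-cocomplete cofibration category by the previous proposition and $\kappa$-cocontinuous functors are precisely the morphisms of $\CofCat_\kappa$, so we indeed obtain a functor $\dgrm_\kappa \from \QCat_\kappa \to \CofCat_\kappa$. The only (mild) obstacle in all of this is the bookkeeping forced by the fact that quasicategorical colimits are defined only up to equivalence, so that preservation of weak equivalences has to be routed through the uniqueness statements above rather than checked on the nose.
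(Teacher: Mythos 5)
Your proposal is correct and follows essentially the same route as the paper: cofibrations and the relevant colimits are created by the forgetful functor to $\sSet_\kappa$ and hence visibly preserved, and weak equivalences are preserved because $F$ carries universal cones to universal cones. The paper's own proof simply compresses your careful uniqueness argument (via \Cref{colim-equivalent}, \Cref{colim-init} and \Cref{init-equivalent}) into the single sentence ``so are weak equivalences since $F$ preserves $\kappa$-small colimits,'' so your version is the same argument with the bookkeeping made explicit.
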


\begin{proof}
  Colimits in both $\dgrm_\kappa \qcat{C}$ and $\dgrm_\kappa \qcat{D}$
  are created in $\sSet_\kappa$ and thus are preserved by $\dgrm_\kappa F$.
  Cofibrations are clearly preserved and so are weak equivalences
  since $F$ preserves $\kappa$-small colimits.
\end{proof}

  \subsection{Proof of the main theorem: the infinite case}
\label{sec:proof-infinite}

For a $\kappa$-cocomplete cofibration category $\cat{C}$ we define a functor
$\Phi_{\cat{C}} \from \dgrm_\kappa \nf \cat{C} \to \cat{C}$
by sending a diagram $X \from K \to \nf \cat{C}$ to $\colim_{DK} X$
(observe that $DK$ is $\kappa$-small since $K$ is and $\kappa > \aleph_0$,
so this colimit exists in $\cat{C}$).
It is clear that $\Phi_{\cat{C}}$ is a functor.
While we may not be able to choose colimits so that $\Phi_{\cat{C}}$
is natural in $\cat{C}$, it is $2$-natural,
i.e.\ natural up to coherent natural isomorphism.

\begin{lemma}\label{Phi-equivalence}
  The functor $\Phi_{\cat{C}}$ is $\kappa$-cocontinuous and a weak equivalence.
\end{lemma}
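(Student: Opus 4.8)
The plan is to prove the two assertions about $\Phi_{\cat{C}}$ separately, starting with $\kappa$-cocontinuity since it is the more routine part. First I would check that $\Phi_{\cat{C}}$ preserves cofibrations and acyclic cofibrations: a cofibration $f \from X \to Y$ in $\dgrm_\kappa \nf\cat{C}$ is an injection $K \ito L$, hence $Df \from DK \ito DL$ is a sieve, so the induced map on colimits of Reedy cofibrant diagrams $\colim_{DK} X \to \colim_{DL} Y$ is a cofibration by the theory of Reedy cofibrations in cofibration categories (using \Cref{colims-in-cofcats} and the fact that restriction along a sieve behaves well, as in \Cref{sieve-exact} and \Cref{relative-factorization}). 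For weak equivalences the key point is exactly the characterization \Cref{nerve-cones}: a morphism $f \from X \to Y$ in $\dgrm_\kappa \nf\cat{C}$ is a weak equivalence iff the induced cone on $Y$ pulls back to a universal cone on $X$, which by \Cref{nerve-cones} translates into the statement that $\colim_{DK} X \to \colim_{DL} Y$ is a weak equivalence in $\cat{C}$; combined with K.~Brown's Lemma (\Cref{Ken-Brown}) this gives preservation of all weak equivalences once we know acyclic cofibrations go to weak equivalences. Preservation of the remaining colimits (pushouts along cofibrations, sequential colimits of cofibrations, $\kappa$-small coproducts) follows because in all these cases the colimit in $\dgrm_\kappa\nf\cat{C}$ is computed in $\sSet_\kappa$, the functor $D$ preserves colimits by \Cref{D-preserves-colimits}, and $\colim$ of Reedy cofibrant diagrams is itself $\kappa$-cocontinuous, so the two colimits can be interchanged.

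The harder half is showing $\Phi_{\cat{C}}$ is a weak equivalence of cofibration categories, for which I would use the approximation criterion of \Cref{App}. For (App1) I must show $\Phi_{\cat{C}}$ reflects weak equivalences: if $f \from X \to Y$ (i.e.\ $K \ito L$, though here $f$ need not be injective) induces a weak equivalence $\colim_{DK} X \to \colim_{DL} Y$ in $\cat{C}$, then again by \Cref{nerve-cones} the cone corresponding to $f$ is universal, so $f$ is a weak equivalence in $\dgrm_\kappa\nf\cat{C}$ — this is essentially a restatement of the characterization, going the other direction. For (App2) I am given a morphism $g \from \Phi_{\cat{C}}(X) \to Z$ in $\cat{C}$, i.e.\ a morphism $\colim_{DK} X \to Z$ for some $X \from K \to \nf\cat{C}$, and I must produce a map $X \to X'$ in $\dgrm_\kappa\nf\cat{C}$ together with a commuting square witnessing approximation. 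The natural candidate is to use \Cref{cone-factorization}: the morphism $g$ corresponds to a cone $\tilde T \from (DK)^\ucone \to \cat{C}$, which we make Reedy cofibrant relative to $DK$ to get $T \from D(K^\ucone) \to \cat{C}$, and \Cref{cone-factorization} factors $g$ as $\colim_{DK} X \to \colim_{D(K^\ucone)} T \weto Z$. Now $T$, via \Cref{nf-representation}, is exactly a diagram $K^\ucone \to \nf\cat{C}$, so setting $X' = T$ (and $i \from K \ito K^\ucone$) we get $\Phi_{\cat{C}}(X') = \colim_{D(K^\ucone)} T$, the map $\Phi_{\cat{C}}(i) \from \colim_{DK} X \to \colim_{D(K^\ucone)} T$, and the weak equivalence $\colim_{D(K^\ucone)} T \weto Z$; one then checks the resulting square commutes and that both maps out of it landing in $Z$ (namely $g$ and the identity composed appropriately) agree, which is precisely the content of \Cref{cone-factorization}.

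I expect the main obstacle to be not any single deep lemma but the bookkeeping in (App2): one must be careful that $X'$ as produced really is a $\kappa$-small object of $\dgrm_\kappa\nf\cat{C}$ (which holds since $K^\ucone = K \join \simp{0}$ is $\kappa$-small when $K$ is), that the diagram $T$ obtained from the relative Reedy cofibrant replacement is genuinely \hore{} (this is handled inside \Cref{relative-factorization} and \Cref{cone-factorization}), and that the square we write down is exactly of the shape demanded by (App2) with the two comparison maps being weak equivalences in $\cat{C}$ — here the upper map is a weak equivalence because $T$ restricts to $X$ on $DK$ so $X \to X'=T$ is actually the cone inclusion, and we should double-check it is sent to an \emph{identity} (or at least a weak equivalence) as (App2) requires, which it is since $\Phi_\cat{C}$ applied to the structural inclusion followed by the factorization recovers $g$. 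A secondary subtlety is the $2$-naturality remark: $\Phi_\cat{C}$ is only natural up to coherent isomorphism because colimits are chosen, but this does not affect the argument since equivalence of homotopy categories is insensitive to replacing a functor by a naturally isomorphic one. Once (App1) and (App2) are in place, \Cref{App} immediately yields that $\Phi_\cat{C}$ is a weak equivalence.
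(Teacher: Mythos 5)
Your proposal is correct and follows essentially the same route as the paper: cofibrations are preserved because $DK \ito DL$ is a sieve, weak equivalences and (App1) both come from the characterization of universal cones in \Cref{nerve-cones} (the paper also invokes \Cref{colim-simplex-join} to translate that characterization into the statement about $\colim_{DK} X \to \colim_{DL} Y$), cocontinuity is the interchange of colimits, and (App2) is exactly \Cref{cone-factorization}. The only superfluous step is the appeal to K.~Brown's Lemma, since the ``iff'' characterization already gives preservation of all weak equivalences directly.
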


\begin{proof}
  Preservation of cofibrations follows by \cite{rb}*{Theorem 9.4.1(1a)}
  since if $K \ito L$ is an injective map of simplicial sets,
  then the induced functor $DK \ito DL$ is a sieve.

  \Cref{nerve-cones,colim-simplex-join} imply that a morphism $f$
  in $\dgrm_\kappa \nf \cat{C}$ is a weak equivalence
  \iff{} $\Phi_{\cat{C}} f$ is.
  Therefore $\Phi_{\cat{C}}$ preserves weak equivalences and satisfies (App1).

  Colimits in $\cat{C}$ are compatible with colimits of indexing categories
  and thus $\Phi_{\cat{C}}$ is $\kappa$-cocontinuous.

  It remains to check (App2),
  but it follows directly from \Cref{cone-factorization}.
\end{proof}

Next, we need a functor $\qcat{D} \to \nf \dgrm_\kappa \qcat{D}$
for every $\kappa$-cocomplete quasicategory $\qcat{D}$.
Let's start with unraveling the definition of $\nf \dgrm_\kappa \qcat{D}$.

An $m$-simplex of $\nf \dgrm_\kappa \qcat{D}$ consists of
a Reedy cofibrant diagram $K \from D[m] \to \sSet_\kappa$
and for each $\phi \in D[m]$ a diagram $X_\phi \from K_\phi \to \qcat{D}$.
These diagrams are compatible with each other in the sense that
they form a cone under $K$ with the vertex $\qcat{D}$.
Moreover, the entire structure is \hore{} as a diagram
in $\dgrm_\kappa \qcat{D}$, i.e.\ if $\phi, \psi \in D[m]$
and $\chi \from \phi \to \psi$ is a weak equivalence, then the induced morphism
$\colim_{K_\phi} X_\phi \to \colim_{K_\psi} X_\psi$
is an equivalence in $\qcat{D}$.

If $\mu \from [n] \to [m]$, then $(K, X) \mu = (K \mu, X \mu)$
is defined simply by $(K \mu)_\phi = K_{\mu \phi}$
and $(X \mu)_\phi = X_{\mu \phi}$.

We can now define a functor
$\Psi_{\qcat{D}} \from \qcat{D} \to \nf \dgrm_\kappa \qcat{D}$ as follows.
For $x \in \qcat{D}_m$ we set the underlying simplicial diagram of
$\Psi_{\qcat{D}} x$ to $\phi \mapsto \simp{k}$ where $\phi \from [k] \to [m]$
and the corresponding diagram in $\qcat{D}$
to $x \phi \from \simp{k} \to \qcat{D}$.
Then $\Psi_{\qcat{D}} x$ is \hore{} as a diagram
$D[m] \to \dgrm_\kappa \qcat{D}$ since any weak equivalence in $D[m]$ induces
a right anodyne (and hence cofinal by \Cref{right-anodyne}) map of simplices.
Clearly, $\Psi_{\qcat{D}}$ is a functor and is natural in $\qcat{D}$.

We will check that $\Psi_{\qcat{D}}$ is a categorical equivalence
by using the following criterion.
A suitable generalization of this criterion holds in any model category,
see \cite{v11}.

\begin{lemma}\label{equivalence-rlpu}
  A functor $F \from \qcat{C} \to \qcat{D}$ between quasicategories
  is a categorical equivalence provided that for every commutative square
  of the form
  \begin{ctikzpicture}
    \matrix [diagram]
    {
      |(bdm)| \bdsimp{m} & |(C)| \qcat{C} \\
      |(m)|   \simp{m}   & |(D)| \qcat{D} \\
    };

    \draw[inj] (bdm) to (m);
    \draw[->]  (C) to node[right] {$F$} (D);

    \draw[->] (bdm) to node[above] {$u$} (C);
    \draw[->] (m)   to node[below] {$v$} (D);
  \end{ctikzpicture}
  there exists a map $w \from \simp{m} \to \qcat{C}$ \st{} $w|\bdsimp{m} = u$
  and $F w$ is $\nwiso$-homotopic to $v$ relative to $\bdsimp{m}$.
\end{lemma}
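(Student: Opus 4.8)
The plan is to deduce this from the fact (established in the proof of \Cref{fibcat-of-quasicats}) that categorical equivalences, inner isofibrations and the resulting fibration structure on $\QCat$ behave as in any fibration category, together with the standard characterization of weak equivalences via lifting against boundary inclusions. First I would observe that the hypothesis is homotopy-invariant in $F$: replacing $F$ by an $\nwiso$-homotopic map does not change whether it is a categorical equivalence (since $\nwiso$-homotopic maps induce the same morphism in $\Ho\QCat$), and one checks directly that the lifting condition in the statement is also stable under $\nwiso$-homotopy of $F$ (adjust $w$ by the homotopy). So it suffices to treat a convenient representative of the homotopy class of $F$.

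Next I would factor $F$. Using the mapping path object (the factorization axiom (C5)\textop{} for $\QCat$ from \Cref{fibcat-of-quasicats}), write $F = P \circ j$ where $j \from \qcat{C} \to \qcat{E}$ is a categorical equivalence and $P \from \qcat{E} \fto \qcat{D}$ is an inner isofibration. Since $j$ is already a categorical equivalence, $F$ is a categorical equivalence \iff{} $P$ is, and $P$ is a categorical equivalence \iff{} it is an acyclic inner isofibration, which by \cite{jo}*{Theorem 5.15} means $P$ is an acyclic Kan fibration, i.e.\ it has the \rlp{} \wrt{} all boundary inclusions $\bdsimp{m} \ito \simp{m}$. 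So the goal becomes: given the lifting hypothesis for $F$, produce for each square $\bdsimp{m} \to \qcat{E}$, $\simp{m} \to \qcat{D}$ against $P$ a strict diagonal filler. The hypothesis for $F$, transported along $j$ (which is a categorical equivalence between quasicategories, so essentially surjective and full in the appropriate homotopical sense), gives a filler $w$ with $w|\bdsimp{m}$ matching the given boundary and $Pw$ merely $\nwiso$-homotopic to the prescribed $v$ relative to $\bdsimp{m}$; one then corrects $Pw$ to $v$ on the nose using that $P$ is an inner isofibration together with the relative $\nwiso$-homotopy (this is a standard ``lift the homotopy rel boundary, then evaluate at the endpoint'' argument, using \Cref{special-horns} to fill the outer horns that arise).

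The main obstacle I expect is the bookkeeping in this last correction step: transporting the lifting hypothesis across the factorization $F = Pj$ cleanly, and then upgrading a diagonal filler that is only correct up to relative $\nwiso$-homotopy into one that is strictly correct, all while keeping the restriction to $\bdsimp{m}$ fixed. Concretely one must solve a lifting problem of the form $K \join \bdsimp{m} \to \qcat{E}$ against $P$ where the extra data is an $\nwiso$-homotopy, i.e.\ one builds a map out of $\simp{m} \times \nwiso$ glued appropriately and fills it against the inner isofibration $P$; the hypotheses of \Cref{special-horns} (special outer horns, since the relevant edges are equivalences) are exactly what make these fillers exist. Once $P$ is shown to be an acyclic Kan fibration, $F = Pj$ is a composite of a categorical equivalence and an acyclic fibration, hence a categorical equivalence, which completes the proof.
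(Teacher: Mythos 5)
Your overall strategy (factor $F = P \circ j$ with $j$ a categorical equivalence and $P$ an inner isofibration, then show $P$ is an acyclic Kan fibration) is a legitimate-sounding reduction, but the step you yourself flag as ``the main obstacle'' contains a genuine gap rather than mere bookkeeping. The hypothesis of the lemma applies only to \emph{strictly} commuting squares against $F$. After factoring, a strictly commuting square against $P$ (data $u' \from \bdsimp{m} \to \qcat{E}$ with $Pu' = v|\bdsimp{m}$) does not produce a strictly commuting square against $F$: any $u \from \bdsimp{m} \to \qcat{C}$ obtained by transporting $u'$ across $j$ satisfies $Fu = Pju$, which is only \emph{equivalent} to $Pu' = v|\bdsimp{m}$, not equal to it. So the hypothesis cannot be invoked as stated, and repairing this requires yet another layer of homotopy correction (rectifying the bottom map of the square up to $\nwiso$-homotopy, then lifting that homotopy along $P$ rel boundary), each step of which needs the pushout-product/homotopy-extension calculus for the Joyal model structure. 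None of this is set up in the proposal, and it is where all the actual work lives. The final correction step (upgrading ``$Pw$ homotopic to $v$ rel $\bdsimp{m}$'' to a strict filler) likewise needs the nontrivial fact that inner isofibrations between quasicategories lift against pushout-products of monomorphisms with $\simp{0} \ito \nwiso$, which is a heavier input than anything the paper uses here.

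The paper's proof is much more direct and avoids the factorization entirely: one checks that the class of monomorphisms $K \ito L$ having this ``lifting property up to $\nwiso$-homotopy rel $K$'' against $F$ is closed under coproducts, pushouts and sequential colimits, hence contains \emph{all} monomorphisms (by the skeletal filtration). Applying this to $\emptyset \ito \qcat{D}$ with bottom map $\id_{\qcat{D}}$ produces $G \from \qcat{D} \to \qcat{C}$ and a homotopy $H \from FG \to \id_{\qcat{D}}$; applying it again to $\qcat{C} \times \bdsimp{1} \ito \qcat{C} \times \nwiso$ with top map $[GF, \id]$ and bottom map $FH$ produces a homotopy $GF \to \id_{\qcat{C}}$, so $F$ is an $\nwiso$-equivalence. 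I recommend redoing the argument along these lines: the only point requiring care is the closure of the lifting class under the colimit operations (which uses that homotopies rel a subcomplex into a quasicategory can be glued and that the bottom map can be adjusted within its homotopy class rel the subcomplex), and that is substantially lighter than the correction steps your factorization would force on you.
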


\begin{proof}
  The class of simplicial maps $K \to L$ with the lifting property \wrt{} $F$
  as in the statement is closed under coproducts, pushouts
  and sequential colimits and thus contains all monomorphisms.
  In particular, if we consider the diagram
  \begin{ctikzpicture}
    \matrix [diagram]
    {
      & |(C)| \qcat{C} \\
      |(D0)| \qcat{D} & |(D1)| \qcat{D} \\
    };

    \draw[->] (C)  to node[right] {$F$}   (D1);
    \draw[->] (D0) to node[below] {$\id$} (D1);
  \end{ctikzpicture}
  we obtain a functor $G \from \qcat{D} \to \qcat{C}$ and an $\nwiso$-homotopy $H$
  from $F G$ to $\id_{\qcat{D}}$ which in turn yields a diagram
  \begin{ctikzpicture}
    \matrix [diagram]
    {
      |(Cbd)| \qcat{C} \times \bdsimp{1} & |(C)| \qcat{C} \\
      |(CE)|  \qcat{C} \times E[1]       & |(D)| \qcat{D} \text{.} \\
    };

    \draw[inj] (Cbd) to (CE);
    \draw[->]  (C)   to node[right] {$F$} (D);

    \draw[->] (Cbd) to node[above] {$[G F, \id]$} (C);
    \draw[->] (CE)  to node[below] {$F H$}        (D);
  \end{ctikzpicture}
  This time a lift is an $\nwiso$-homotopy from $G F$ to $\id_{\cat{C}}$.
  Thus $F$ is an $\nwiso$-equivalence.
\end{proof}

To apply this criterion in our situation we need a method of constructing
relative $\nwiso$-homotopies in quasicategories of the form $\nf \cat{C}$.

\begin{lemma}\label{nat-we-eq}
  \index{E1-homotopy@$\nwiso$-homotopy}
  Let $K \ito L$ be an inclusion of marked simplicial complexes, $X$ and $Y$
  \hore{} Reedy cofibrant diagrams $DL \to \cat{C}$
  and $f \from X|\Sd L \to Y|\Sd L$ a natural weak equivalence
  \st{} $f|\Sd K$ is an identity transformation.
  Then $X$ and $Y$ are $\nwiso$-homotopic relative to $K$ as diagrams
  in $\nf \cat{C}$.
\end{lemma}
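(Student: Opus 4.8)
The goal is to produce an $\nwiso$-homotopy $DL \times \nwiso \to \dgrm_\kappa\text{-data}$, which by \Cref{nf-representation} amounts to a single \hore{} Reedy cofibrant diagram on $D(L \times \nwiso) = D(L \join_? \cdots)$; more precisely, since we only need a one-step $\nwiso$-homotopy (the relation is already an equivalence relation by \cite{ds}*{Proposition 2.3}), it suffices to build a \hore{} Reedy cofibrant diagram $H \from D(L \times \nwiso) \to \cat{C}$ restricting to $X$ on $DL \times \{0\}$, to $Y$ on $DL \times \{1\}$, and to the constant ($\nwiso$-)homotopy on $DK \times \nwiso$. The idea is to reduce the construction on the ``fat'' category $D(L \times \nwiso)$ to a construction on the barycentric subdivision $\Sd(L \times \nwiso)$, where the natural weak equivalence $f$ lives and can be spread out along the $\nwiso$-coordinate in an elementary way.

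\textbf{Key steps.} First I would invoke \Cref{relative-D-Sd}: the inclusion $D(L \times \nwiso) \cup \Sd(L\times\nwiso) \ito D(L\times\nwiso)$ — or rather the appropriate relative version gluing in the already-fixed data on $DL \times \partial(\cdots)$ and on $DK \times \nwiso$ — induces an acyclic fibration of Reedy diagram categories. So it is enough to specify the homotopy on $\Sd(L\times\nwiso)$ together with its restriction to the relevant subcomplex, and then lift. Second, on $\Sd(L\times\nwiso)$ the \hore{} structure is the product one, and a vertex is a pair $(A, \varepsilon)$ with $A$ a nondegenerate simplex of $L$ (a totally ordered subset of the underlying poset) and $\varepsilon$ a subset of $\{0,1\}$; here one builds the diagram by hand from $X|\Sd L$, $Y|\Sd L$ and the natural transformation $f$, exactly as one builds a cylinder for a natural transformation: on the $\{0\}$-slice use $X$, on the $\{1\}$-slice use $Y$, and on the ``$01$'' part use the mapping-cylinder-style factorization of $f_A \from X_A \to Y_A$ into a cofibration followed by a weak equivalence, done inductively over the degree using \Cref{latching-extension}. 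Because $f|\Sd K$ is an identity, over $\Sd K$ all these cylinders are degenerate, giving the required constancy over $K$. Third, I would check \hore{}-ness: a weak equivalence of $\Sd(L\times\nwiso)$ is an inclusion $(A,\varepsilon)\subseteq(B,\delta)$ with $\max A \to \max B$ a weak equivalence of the \hore{} poset $L\times\nwiso$; on the $L$-coordinate this is covered by \hore{}-ness of $X$ and $Y$, and on the $\nwiso$-coordinate it is forced since $f$ is a \emph{weak equivalence} and all morphisms of $\nwiso$ are weak equivalences. Finally, lift along the acyclic fibration from step one to obtain $H$ on all of $D(L\times\nwiso)$, and read it back through \Cref{nf-representation} as the desired relative $\nwiso$-homotopy.

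\textbf{Main obstacle.} The routine but delicate part is the inductive construction of the cylinder-type diagram on $\Sd(L \times \nwiso)$ with full naturality and with the correct Reedy cofibrancy at every object — one must choose the factorizations of the $f_A$ compatibly as $A$ grows, threading them through \Cref{latching-extension} so that the latching morphisms $L_{(A,\{0,1\})}H \cto H_{(A,\{0,1\})}$ really are cofibrations, and simultaneously keeping everything over $\Sd K$ strictly constant. This is exactly the kind of Reedy-cofibrant-replacement-of-a-natural-transformation bookkeeping that pervades the paper, so I expect it to go through by the same techniques (it is essentially a relative version of \Cref{path-object-cofcat} combined with \Cref{relative-factorization}), but it is the step where care is needed. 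Everything else — the reduction from $D$ to $\Sd$, the $\nwiso$-homotopy-is-an-equivalence-relation simplification, and the final translation — is a direct appeal to lemmas already established.
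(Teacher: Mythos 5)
There is a genuine gap at the very first step: you propose to work with $D(L\times\nwiso)$ and $\Sd(L\times\nwiso)$, but these objects do not exist in the form you use them. $\nwiso$ is not a simplicial complex --- its nondegenerate simplices are the alternating strings $0101\ldots$ and $1010\ldots$ of every length, not subsets of $\{0,1\}$ --- so $L\times\nwiso$ is not a (finite) marked simplicial complex, $\Sd(L\times\nwiso)$ is not covered by the paper's definition and is certainly not the poset of pairs $(A,\varepsilon)$ with $\varepsilon\subseteq\{0,1\}$ that you describe, and \Cref{relative-D-Sd}, which is stated for finite marked simplicial complexes, does not apply. What you are actually constructing lives over $L\times\hat{[1]}$, i.e.\ with the marked $1$-simplex in place of $\nwiso$. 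The missing ingredient is the reduction that makes this legitimate, namely \Cref{eqs-in-nf}: an edge of $\nf\cat{C}$ that is \hore{} \wrt{} $D\hat{[1]}$ extends to $D\nwiso$ (via the acyclic fibration $\cat{C}^{D\nwiso}_\Reedy\to\cat{C}^{D\hat{[1]}}_\Reedy$ coming from \Cref{E1-contractible}), so a suitable diagram on $D(L\times\hat{[1]})$ does yield an $\nwiso$-homotopy. Your appeal to \cite{ds}*{Proposition 2.3} does not fill this hole --- that result says one-step $\nwiso$-homotopies suffice, not that $\nwiso$ may be replaced by a marked $\simp{1}$.

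Once that reduction is inserted, the rest of your plan is essentially the paper's: restrict to the subdivision, use $f$ (and $f|\Sd K=\id$) to define the homotopy there, and recover the diagram on the fat category by \Cref{relative-D-Sd} and \Cref{relative-factorization}. The paper avoids your hand-built inductive cylinder (the part you flag as the main obstacle) by a currying trick: $[f,\id]$ is regarded as a Reedy cofibrant diagram $\Sd L\union DK\to\cat{C}^{\hat{[1]}}$, extended over $DL$ along the acyclic fibration of \Cref{relative-D-Sd}, uncurried to $DL\times\hat{[1]}\to\cat{C}$, pulled back to $D(L\times\hat{[1]})$, and then replaced Reedy cofibrantly once, relative to $D(L\times\bdsimp{1}\union K\times\hat{[1]})$. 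Your simplex-by-simplex construction via \Cref{latching-extension} could be made to work (though note that the nondegenerate simplices of $L\times\simp{1}$ are chains in $P\times[1]$, not products $A\times\varepsilon$), but the single relative replacement is both shorter and less error-prone.
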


\begin{proof}
  By \Cref{eqs-in-nf} it suffices to construct a \hore{} Reedy cofibrant diagram
  $D(L \times \hat{[1]}) \to \cat{C}$ that restricts to $[X, Y]$
  on $D(L \times \bdsimp{1})$ and to the identity on $D(K \times \hat{[1]})$
  (i.e.\ to a degenerate edge of $(\nf \cat{C})^K$).

  First, observe that we have a homotopical diagram
  $[f, \id] \from (\Sd L \union DK) \times \hat{[1]} \to \cat{C}$
  which is Reedy cofibrant when seen as a diagram
  $\Sd L \union DK \to \cat{C}^{\hat{[1]}}$.
  Hence \Cref{relative-D-Sd} implies that it extends to
  a Reedy cofibrant diagram $DL \to \cat{C}^{\hat{[1]}}$.
  We consider it as a diagram $DL \times \hat{[1]} \to \cat{C}$ and pull it back
  to $D(L \times \hat{[1]}) \to \cat{C}$.
  It restricts to $[X, Y]$ on $D(L \times \bdsimp{1})$ and to the identity on
  $D(K \times \hat{[1]})$.
  Thus it can be replaced Reedy cofibrantly relative to
  $D(L \times \bdsimp{1} \union K \times \hat{[1]})$
  by \Cref{relative-factorization} which finishes the proof.
\end{proof}

\begin{proposition}\label{Psi-equivalence}
  For every $\kappa$-cocomplete quasicategory $\qcat{D}$ the functor
  $\Psi_{\qcat{D}}$ is a categorical equivalence.
\end{proposition}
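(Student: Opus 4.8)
The plan is to verify the recognition criterion of \Cref{equivalence-rlpu}. So I fix $m$ and a commutative square given by $u\from\bdsimp{m}\to\qcat{D}$ and $v\from\simp{m}\to\nf\dgrm_\kappa\qcat{D}$ with $\Psi_{\qcat{D}}\circ u=v|\bdsimp{m}$, and I must produce $w\from\simp{m}\to\qcat{D}$ with $w|\bdsimp{m}=u$ such that $\Psi_{\qcat{D}}w$ is $\nwiso$-homotopic to $v$ relative to $\bdsimp{m}$. By \Cref{nf-representation} the simplex $v$ amounts to a \hore{} Reedy cofibrant diagram $D[m]\to\dgrm_\kappa\qcat{D}$, i.e.\ a Reedy cofibrant diagram $K\from D[m]\to\sSet_\kappa$ together with a compatible cone $(X_\phi\from K_\phi\to\qcat{D})$. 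The equation $v|\bdsimp{m}=\Psi_{\qcat{D}}u$ pins this data down, over the full subcategory of $D[m]$ on the non-surjective $\phi$, to the corresponding restriction of $\Psi_{\qcat{D}}u$; in particular the latching morphisms into the top object $\id_{[m]}$ assemble into a monomorphism $\iota\from\bdsimp{m}\cto K_{[m]}$ with $X_{[m]}\circ\iota=u$, carrying the last vertex of $\bdsimp{m}$ to a vertex $*\in K_{[m]}$.

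Next I would read off the homotopical meaning of $*$: the morphism of $D[m]$ from the object $(m\from[0]\to[m])$ to $\id_{[m]}$ is a weak equivalence of $D[m]$, so $v$, being \hore{}, carries it to a weak equivalence $(\simp{0},u(m))\to(K_{[m]},X_{[m]})$ of $\dgrm_\kappa\qcat{D}$; by the definition of weak equivalences there, this says that the colimit of $X_{[m]}$ in $\qcat{D}$ — which exists since $K_{[m]}$ is $\kappa$-small and $\qcat{D}$ is $\kappa$-cocomplete — is $u(m)$, with the colimit cone's component at $*$ an equivalence. Choosing a universal cone $S\from K_{[m]}^\ucone\to\qcat{D}$ under $X_{[m]}$ with apex $u(m)$ and component at $*$ an equivalence, and using the identification $\bdsimp{m}\join\simp{0}\iso\horn{m+1,m+1}$, the composite $S\circ(\iota\join\simp{0})\from\horn{m+1,m+1}\to\qcat{D}$ is a special outer horn in $\qcat{D}$ (its last edge is the $S$-component at $*$). \Cref{special-horns} then supplies a filler $\bar{w}\from\simp{m+1}\to\qcat{D}$, and I set $w:=\bar{w}\face_{m+1}$. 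A short face computation — for $i\le m$ the intersection $\face_i\cap\face_{m+1}$ inside $\simp{m+1}$ is the face of $\bdsimp{m}$ on which $\bar{w}$ restricts to $X_{[m]}\circ\iota=u$ — gives $w|\bdsimp{m}=u$.

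It remains to show $\Psi_{\qcat{D}}w$ is $\nwiso$-homotopic to $v$ relative to $\bdsimp{m}$, which I would deduce from \Cref{nat-we-eq}: both $\Psi_{\qcat{D}}w$ and $v$ are \hore{} Reedy cofibrant diagrams over $D[m]$ agreeing over the non-surjective part, so it suffices to produce a natural weak equivalence between their restrictions to $\Sd[m]$ which is the identity on $\Sd\bdsimp{m}$ (possibly as a short zig-zag, treating each leg separately). On $\Sd\bdsimp{m}$ both diagrams send a subset $A$ to $(\simp{|A|-1},u|A)$, so only the top vertex $[m]$ is at issue, where one needs a morphism of $\dgrm_\kappa\qcat{D}$ connecting $(\simp{m},w)$ and $(K_{[m]},X_{[m]})$ compatibly with all face morphisms — that is, a \emph{cofinal} map $\simp{m}\to K_{[m]}$ over $\qcat{D}$ extending $\iota$ (note $(K_{[m]},X_{[m]})\to(K_{[m]}^\ucone,S)$ is already a weak equivalence of $\dgrm_\kappa\qcat{D}$ because $S$ is universal, which can absorb one leg of the zig-zag). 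Constructing this comparison map is the main obstacle: it must be built from the universal cone $S$ and the fact that $*$ is a colimit point of $X_{[m]}$ in the strong sense recorded above, so that $K_{[m]}$ is cofinally contractible onto $*$ relative to $\iota(\bdsimp{m})$; one extends $\iota$ over the interior $m$-cell and checks cofinality using \Cref{right-anodyne} and the universality of $S$. Everything else — the bookkeeping with \Cref{nf-representation}, the use of \Cref{special-horns}, and checking that the resulting homotopy has the form demanded by \Cref{equivalence-rlpu} — is routine. (Alternatively, one can proceed more formally: the evaluation-at-simplices map exhibits $\Phi_{\dgrm_\kappa\qcat{D}}\circ\dgrm_\kappa\Psi_{\qcat{D}}$ as naturally weakly equivalent to $\id$, so \Cref{Phi-equivalence} and ``2 out of 3'' make $\dgrm_\kappa\Psi_{\qcat{D}}$ a weak equivalence of cofibration categories; since the calculus of fractions \Cref{calculus-of-fractions} identifies $\Ho\dgrm_\kappa\qcat{D}$ with $\Ho\qcat{D}$ naturally, $\Psi_{\qcat{D}}$ induces an equivalence of homotopy categories and, by saturation of categorical equivalences, is a categorical equivalence.)
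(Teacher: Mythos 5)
Your construction of the extension $w$ is sound and is essentially the paper's own: you identify the latching inclusion $\iota\from\bdsimp{m}\cto K_{[m]}$, use the fact that $v$ is \hore{} at the weak equivalence $(m)\to\id_{[m]}$ of $D[m]$ to see that a universal cone $S$ under $X_{[m]}$ has an equivalence as its component at $*=\iota(m)$, and fill the resulting special outer horn $\bdsimp{m}^\ucone\iso\horn{m+1,m+1}$ by \Cref{special-horns}. The genuine gap is in the homotopy step. The comparison you flag as ``the main obstacle'' --- a map $\simp{m}\to K_{[m]}$ over $\qcat{D}$ extending $\iota$ --- really need not exist: $K_{[m]}$ is an arbitrary $\kappa$-small simplicial set receiving a monomorphism from $\bdsimp{m}$, and it may contain no $m$-simplex whose boundary is $\iota$ (take, say, $K_{[m]}=\bdsimp{m}\join\simp{0}$: every nondegenerate $m$-simplex contains the cone point, so none has boundary $\iota(\bdsimp{m})$). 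No appeal to cofinality, right anodyne maps or universality of $S$ can manufacture such an extension, so ``extending $\iota$ over the interior $m$-cell'' is not available. The paper's proof never maps $\simp{m}$ into $K_{[m]}$: it forms the pushout $B=K_{[m]}\push_{\bdsimp{m}}\simp{m}$ in $\dgrm_\kappa\qcat{D}$ and the object $Z$ with underlying simplicial set $B^\ucone$, into which both $(\simp{m}^\ucone,\bar w)$ and $(K_{[m]}^\ucone,S)$ map tautologically. This produces a zig-zag $(\simp{m},w)\to(\simp{m}^\ucone,\bar w)\to Z\leftarrow(K_{[m]}^\ucone,S)\leftarrow(K_{[m]},X_{[m]})$ in which every arrow is a weak equivalence of $\dgrm_\kappa\qcat{D}$ (the first because $\bar w$ fills a special horn, the middle two because the underlying maps preserve cone points, the last because $S$ is universal) and every arrow is under $Y|\Sd\bdsimp{m}$, so \Cref{nat-we-eq} applies to each leg. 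You already had the ingredients for this --- you observe that $(K_{[m]},X_{[m]})\to(K_{[m]}^\ucone,S)$ is a weak equivalence --- but the pushout is the missing idea that lets the two cones be compared without any direct map between $\simp{m}$ and $K_{[m]}$.

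The parenthetical alternative does not repair this. Even granting that $\Phi_{\dgrm_\kappa\qcat{D}}\circ\dgrm_\kappa\Psi_{\qcat{D}}$ is naturally weakly equivalent to the identity and that $\dgrm_\kappa\Psi_{\qcat{D}}$ is therefore a weak equivalence of cofibration categories, the final inference --- that $\Psi_{\qcat{D}}$ induces an equivalence of homotopy categories and is hence a categorical equivalence ``by saturation'' --- is invalid: a functor of quasicategories inducing an equivalence of homotopy categories need not be a categorical equivalence, since it can fail to be fully faithful on mapping spaces, and \Cref{calculus-of-fractions} does not identify $\Ho\dgrm_\kappa\qcat{D}$ with $\Ho\qcat{D}$ in a way that controls mapping spaces. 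So the direct homotopy construction, repaired as above, is really needed.
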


\begin{proof}
  Consider a square
  \begin{ctikzpicture}
    \matrix [diagram]
    {
      |(bdm)| \bdsimp{m} & |(C)| \qcat{D} \\
      |(m)|   \simp{m}   & |(D)| \nf \dgrm_\kappa \qcat{D} \text{.} \\
    };

    \draw[->] (bdm) to (m);
    \draw[->] (C) to node[right] {$\Psi_{\qcat{D}}$} (D);

    \draw[->] (bdm) to node[above] {$x$} (C);
    \draw[->] (m)   to node[below] {$Y$} (D);
  \end{ctikzpicture}
  By \Cref{equivalence-rlpu} it will be enough to extend $x$ to
  a simplex $\hat x \from \simp{m} \to \qcat{D}$
  and construct an $\nwiso$-homotopy from $\Psi_{\qcat{D}} \hat x$ to $Y$
  relative to $\bdsimp{m}$.

  Let's start by finding $\hat x$.
  Consider $Y_{[m]} \from A_{[m]} \to \qcat{D}$.
  Since $Y$ agrees with $\Psi_{\qcat{D}} x$ over $\bdsimp{m}$
  the $[m]$th latching object of $Y$ is $x \from \bdsimp{m} \to \qcat{D}$,
  i.e. we have an induced injective map $\bdsimp{m} \ito A_{[m]}$
  and $Y_{[m]}|\bdsimp{m} = x$.
  Choose a universal cone $\tilde Y_{[m]} \from A_{[m]}^\ucone \to \qcat{D}$
  under $Y_{[m]}$ and consider $\tilde Y_{[m]}|\bdsimp{m}^\ucone$.
  We have $\bdsimp{m}^\ucone \iso \horn{m+1,m+1}$ which is an outer horn.
  However, $\tilde Y_{[m]}|\bdsimp{m}^\ucone$ is special
  since $\Psi_{\qcat{D}} x$ is \hore{} and thus extends to
  $z \from \simp{m}^\ucone \to \qcat{D}$ by \Cref{special-horns}.
  We set $\hat x = z|\simp{m}$.

  By \Cref{nf-representation} finding an $\nwiso$-homotopy
  from $\Psi_{\qcat{D}} \hat x$ to $Y$ translates into constructing
  a \hore{} Reedy cofibrant diagram
  $D([m] \times \wiso) \to \dgrm_\kappa \qcat{D}$
  restricting to $[\Psi_{\qcat{D}} \hat x, Y]$
  on $D(\simp{m} \times \bdsimp{1})$.
  By \Cref{eqs-in-nf} it will be sufficient to construct such a diagram
  on $D([m] \times \hat{[1]})$ and by \Cref{relative-D-Sd} it will suffice to
  define it on $\Sd([m] \times \hat{[1]})$.

  We form a pushout on the left
  \begin{ctikzpicture}
    \matrix [diagram]
    {
      |(Ybd)| \tilde Y|\bdsimp{m}^\ucone & |(Y)| \tilde Y &
        |(bd)| \bdsimp{m} & |(A)| A_{[m]} \\
      |(z)| z & |(Z)| Z & |(s)| \simp{m} & |(B)| B \\
    };

    \draw[inj] (bd) to (A);
    \draw[inj] (s)  to (B);

    \draw[inj] (bd) to (s);
    \draw[inj] (A)  to (B);

    \draw[cof] (Ybd) to (Y);
    \draw[cof] (z)   to (Z);

    \draw[cof] (Ybd) to (z);
    \draw[cof] (Y)   to (Z);
  \end{ctikzpicture}
  in $\dgrm_\kappa \qcat{D}$.
  Its underlying square of simplicial sets is $(\uvar)^\ucone$ applied to
  the square on the right.

  This yields the following sequence of morphisms of $\dgrm_\kappa \qcat{D}$
  (with morphisms of the underlying simplicial sets displayed below).
  \begin{ctikzpicture}
    \matrix[diagram]
    {
      |(x)| \hat x & |(z)| z & |(Z)| Z &
        |(tY)| \tilde Y_{[m]} & |(Y)| Y_{[m]} \\[-1em]
      |(s)| \simp{m} & |(sc)| \simp{m}^\ucone & |(B)| B^\ucone &
        |(Ac)| A_{[m]}^\ucone & |(A)| A_{[m]} \\
    };

    \draw[cof] (x)  to (z);
    \draw[cof] (z)  to (Z);
    \draw[cof] (Y)  to (tY);
    \draw[cof] (tY) to (Z);

    \draw[inj] (s)  to (sc);
    \draw[inj] (sc) to (B);
    \draw[inj] (A)  to (Ac);
    \draw[inj] (Ac) to (B);
  \end{ctikzpicture}
  The first morphism is a weak equivalence since $z$ is a filler
  of a special horn.
  So are the middle two since the underlying maps of simplicial sets preserve
  the cone points.
  The last one is also a weak equivalence since $\tilde Y_{[m]}$ is universal.
  All these morphisms are maps of cones under
  $Y|\Sd\bdsimp{m} = \Psi_{\cat{D}} x|\Sd\bdsimp{m}$ and hence can be seen as
  transformations of diagrams over $\Sd[m]$ which restrict to identities
  over $\Sd\bdsimp{m}$.
  The conclusion follows by \Cref{nat-we-eq}.
\end{proof}

Before we can prove the main theorem we need to know that $\dgrm_\kappa$
is a \hore{} functor.
This in turn requires two technical lemmas.
The first one is about left homotopies in cofibration categories.
Even though cofibrations in a cofibration category do not necessarily satisfy
any lifting property, they can still be shown to have a version
of the ``homotopy extension property'' \wrt{} left homotopies.

\begin{lemma}\label{HEP}
  Let $i \from A \cto B$ be a cofibration in $\cat{C}$.
  Let $f \from A \to X$ and $g \from B \to X$ be morphisms \st{} $g i$
  is left homotopic to $f$.
  Then there exist a weak equivalence $s \from X \to \hat{X}$
  and a morphism $\tilde{g} \from B \to \tilde X$ \st{} $\tilde{g}$
  is left homotopic to $s g$ and $\tilde{g} i = s f$.
\end{lemma}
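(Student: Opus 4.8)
The plan is to run the standard ``homotopy extension property up to weak equivalence'' argument for the cofibration $i$: build a cylinder on $B$ that contains a prescribed cylinder on $A$, glue the given homotopy onto it, and read off $\tilde g$ from the free end of that cylinder; the replacement of $X$ by $\hat X$ is forced precisely because the inclusion of the relevant partial cylinder into the full one is a weak equivalence but not an isomorphism. First I would unpack the hypothesis $g i \htp_l f$: choose a cylinder $A \push A \cto IA \weto A$ with end inclusions $\partial_0, \partial_1 \from A \cto IA$ and projection $p \from IA \weto A$, an acyclic cofibration $j \from X \cto X_1$, and a map $H \from IA \to X_1$ with $H \partial_0 = j g i$ and $H \partial_1 = j f$. (Each $\partial_\epsilon$ is a cofibration by (C2-3) and a weak equivalence by ``2 out of 3''.)

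Next come two gluing constructions. Let $P$ be the pushout of $i \from A \cto B$ along $\partial_0 \from A \cto IA$, with legs $\beta \from B \cto P$ and $\iota \from IA \cto P$; by (C4), $\beta$ is an acyclic cofibration and $\iota$ is a cofibration. Since $jg \from B \to X_1$ and $H \from IA \to X_1$ agree on $A$, they induce $\Theta \from P \to X_1$ with $\Theta \beta = jg$ and $\Theta \iota \partial_1 = j f$; moreover the map $[\id_B, ip] \from P \to B$ is a weak equivalence, since it has the acyclic cofibration $\beta$ as a section. Second, let $N$ be the colimit of the zig-zag $B \xleftarrow{i} A \xrightarrow{\partial_0} IA \xleftarrow{\partial_1} A \xrightarrow{i} B$, i.e.\ the pushout of the cofibration $(\partial_0, \partial_1) \from A \push A \cto IA$ along $i \push i$; then $B \push B \cto N$ is a cofibration and the evident map $N \to B$ restricts to the codiagonal on $B \push B$. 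Factoring $N \to B$ as $N \cto IB \weto B$ by (C5) makes $B \push B \cto N \cto IB \weto B$ a cylinder on $B$, and $P$ — being the sub-zig-zag $B \xleftarrow{i} A \xrightarrow{\partial_0} IA$, hence mapping canonically to $N$ — composes with $N \cto IB$ to give a cofibration $\lambda \from P \cto IB$. The key observation is that $\lambda$ is a weak equivalence: composing it with $IB \weto B$ recovers the weak equivalence $[\id_B, ip] \from P \to B$, so ``2 out of 3'' applies; hence $\lambda$ is an acyclic cofibration.

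Finally, set $\hat X := X_1 \push_{P} IB$, the pushout of $\lambda$ along $\Theta$, with legs $\mu \from X_1 \to \hat X$ and $\nu \from IB \to \hat X$; by (C4), $\mu$ is an acyclic cofibration, so $s := \mu j \from X \to \hat X$ is a composite of acyclic cofibrations and in particular a weak equivalence. Let $\tilde g \from B \to \hat X$ be $\nu$ precomposed with the $\partial_1$-end inclusion $B \cto IB$. Then $\tilde g i = sf$, by chasing the $\partial_1$-copy of $A$ through $N$ and $IB$ and using $\Theta \iota \partial_1 = jf$ together with the pushout square; and $\nu$ restricted to $B \push B \cto IB$ equals $[sg, \tilde g]$ (the $\partial_0$-end gives $\nu\lambda\beta = \mu\Theta\beta = \mu j g = sg$), so, with the identity of $\hat X$ serving as the acyclic cofibration required in the definition of a left homotopy (it is one by (C1)), this exhibits a left homotopy between $sg$ and $\tilde g$; since $\htp_l$ is symmetric, $\tilde g \htp_l sg$.

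I expect the main obstacle to be the gluing in the third paragraph: one must arrange the relative-cylinder construction so that the partial cylinder $P$ really embeds into $IB$ as a cofibration, and then verify that this embedding is a weak equivalence. This is the heart of the matter — it is exactly what upgrades the pushout leg $\mu \from X_1 \to \hat X$ (and hence $s$) from a mere cofibration to a weak equivalence, which is the whole reason one replaces $X$ by $\hat X$ rather than working in $X$ directly.
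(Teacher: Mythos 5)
Your proof is correct and follows essentially the same route as the paper's: your $P = B\push_A IA$ with the map $\Theta = [H, jg]$, the acyclic cofibration $\lambda \from P \cto IB$ into a compatible cylinder on $B$, and the pushout defining $\hat X$ are exactly the ingredients of the paper's argument. The only difference is that you spell out the construction of the compatible cylinder $IB$ (via $N$) and the verification that $\lambda$ is an acyclic cofibration, which the paper leaves implicit in the phrase ``pick compatible cylinders.''
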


\begin{proof}
  Pick compatible cylinders on $A$ and $B$, i.e.\ a diagram
  \begin{ctikzpicture}
    \matrix[diagram]
    {
      |(AA)| A \coprod A & |(IA)| IA & |(A)| A \\
      |(BB)| B \coprod B & |(IB)| IB & |(B)| B \\
    };

    \draw[cof] (AA) to (IA);
    \draw[cof] (BB) to (IB);

    \draw[->] (IA) to node[above] {$\we$} (A);
    \draw[->] (IB) to node[above] {$\we$} (B);

    \draw[->] (AA) to node[left] {$i \coprod i$} (BB);

    \draw[->] (IA) to (IB);

    \draw[->] (A) to node[right] {$i$} (B);
  \end{ctikzpicture}
  \st{} the induced morphism $IA \push_{(A \coprod A)} (B \coprod B) \to IB$
  is a cofibration.
  Let $\face_0$ and $\face_1$ denote the two structure morphisms $A \cto IA$.

  Pick a left homotopy
  \begin{ctikzpicture}
    \matrix[diagram]
    {
      |(A)|  A \coprod A & |(X)|  X \\
      |(IA)| IA          & |(tX)| \tilde{X} \\
    };

    \draw[->] (A) to node[above] {$[f, g i]$} (X);

    \draw[cof] (A) to node[left] {$[\face_0, \face_1]$} (IA);

    \draw[->] (IA) to node[below] {$H$} (tX);

    \draw[cof] (X) to node[right] {$\we$} node[left] {$j$} (tX);
  \end{ctikzpicture}
  between $f$ and $g i$.
  Then we have in particular $j g i = H \face_1$ and thus
  there is an induced morphism $[H, jg] \from IA \push_A B \to \tilde{X}$
  so we can take a pushout
  \begin{ctikzpicture}
    \matrix[diagram]
    {
      |(M)|  IA \push_A B & |(tX)| \tilde{X} \\
      |(IB)| IB           & |(hX)| \hat{X} \text{.} \\
    };

    \draw[->] (M) to node[above] {$[H, j g]$} (tX);

    \draw[cof] (M) to node[left] {$\we$} (IB);

    \draw[->] (IB) to node[below] {$\tilde{H}$} (hX);

    \draw[cof] (tX) to node[right] {$\we$} node[left] {$\tilde{j}$} (hX);
  \end{ctikzpicture}
  Set $s = \tilde{j} j$ and $\tilde{g} = \tilde{H}$.
  We have $s f = \tilde{g} i$ and $\tilde{H}$ and $\id_{\hat{X}}$
  constitute a left homotopy between $\tilde{g}$ and $s g$.
\end{proof}

The second lemma says that up to equivalence all frames
are Reedy cofibrant replacements of constant diagrams.

\begin{lemma}\label{frame-pullback}
  Any object of $X \in \nf \cat{C}$ is equivalent
  to a Reedy cofibrant replacement of $p_{[0]}^* X_0$.
\end{lemma}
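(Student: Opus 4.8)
The plan is to produce an explicit Reedy cofibrant replacement $Y$ of $p_{[0]}^* X_0$ and then connect $X$ and $Y$ by an equivalence in $\nf \cat{C}$ via \Cref{nat-we-eq}, taking the indexing marked simplicial complex to be a single vertex. To construct $Y$, I would apply \Cref{relative-factorization}(1) with $J = D[0]$, the empty sieve $I = \emptyset \ito J$, and the fibration $\cat{C} \twoheadrightarrow [0]$ (every cofibration category is fibrant): the unique morphism $\emptyset \to p_{[0]}^* X_0$ in $\cat{C}^{D[0]}$ then factors as $\emptyset \cto Y \weto p_{[0]}^* X_0$, where $Y \from D[0] \to \cat{C}$ is a homotopical Reedy cofibrant diagram and $t \from Y \weto p_{[0]}^* X_0$ is a levelwise weak equivalence. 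Thus $Y$ is a Reedy cofibrant replacement of $p_{[0]}^* X_0$ in the sense used in \Cref{cone-factorization}, and in particular the component $t_{[0]} \from Y_0 \to (p_{[0]}^* X_0)_0 = X_0$ is a weak equivalence of $\cat{C}$.

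Next I would feed this into \Cref{nat-we-eq} for the inclusion $\emptyset \ito \simp{0}$ of marked simplicial complexes. Here $D\simp{0} = D[0]$, so $Y$ and $X$ are exactly homotopical Reedy cofibrant diagrams over $D\simp{0}$ (equivalently, objects of $\nf\cat{C}$, by \Cref{nf-representation}), while $\Sd\simp{0}$ is the full subcategory of $D[0]$ on the single non-degenerate simplex, so restriction to $\Sd\simp{0}$ merely records the values $Y_0$ and $X_0$. Hence $t_{[0]}$ is a natural weak equivalence from $Y|\Sd\simp{0}$ to $X|\Sd\simp{0}$, and its restriction to $\Sd\emptyset$ is vacuously an identity transformation. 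Applying \Cref{nat-we-eq} to the diagrams $Y$ and $X$ (in that order) together with the transformation $t_{[0]}$ shows that $Y$ and $X$ are $\nwiso$-homotopic as diagrams $\simp{0} \to \nf\cat{C}$, i.e.\ as objects of $\nf\cat{C}$. Since an $\nwiso$-homotopy between two vertices is precisely a map $\nwiso \to \nf\cat{C}$ joining them --- whose underlying edge is then an equivalence --- and since equivalences in a quasicategory can be composed and inverted along a zig-zag, this yields that $X$ is equivalent to $Y$, as required.

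I do not expect a serious obstacle here; the content is essentially in recognizing that \Cref{nat-we-eq} applies with the one-vertex indexing complex. The points that need a little care are purely the identifications $D\simp{0} = D[0]$, $(p_{[0]}^* X_0)_0 = X_0$, and ``$\Sd\simp{0}$ is a single object'', together with the direction of the natural transformation handed to \Cref{nat-we-eq} (it must run from $Y$ to $X$, via $t_{[0]}$, so $Y$ plays the role of the first diagram). The one genuine but immediate observation is that $Y$ is automatically homotopical --- this is already part of the output of \Cref{relative-factorization}, and in any case follows by ``2 out of 3'' since $t$ is a levelwise weak equivalence onto the homotopical diagram $p_{[0]}^* X_0$ --- so that $Y$ is a legitimate object of $\nf\cat{C}$ and the hypotheses of \Cref{nat-we-eq} are met.
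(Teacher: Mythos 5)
Your argument is correct, and the hypotheses you need do all hold: $D[0]$ is a \hore{} direct category with finite latching categories, the constant initial diagram is Reedy cofibrant so the factorization of \Cref{relative-factorization}(1) produces a genuine Reedy cofibrant replacement $Y \weto p_{[0]}^* X_0$, the diagram $Y$ is \hore{} by ``2 out of 3'', and $\Sd \simp{0}$ is a single object, so the natural weak equivalence required by \Cref{nat-we-eq} is exactly the component $t_{[0]} \from Y_0 \weto X_0$. The route is, however, genuinely different from the paper's. The paper builds the replacement and the connecting equivalence in one stroke: it evaluates at $X$ the contraction data $\id \weto s$, $f p_{[0]} \weto s$ of $D[0]$ from \Cref{Dm-m-he}, obtaining a zig-zag of natural weak equivalences between $X$ and $p_{[0]}^* X_0$ over all of $D[0]$, assembles this into a diagram over $D[0] \times \Sd\hat{[1]}$, pulls it back to $D\hat{[1]}$, and replaces it Reedy cofibrantly relative to the face carrying $X$; the resulting edge of $\nf \cat{C}$ is an equivalence by \Cref{eqs-in-nf}, and its free endpoint is the desired replacement. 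You instead manufacture the replacement abstractly by factorization and outsource the construction of the connecting edge to \Cref{nat-we-eq}, which only needs a single weak equivalence over the one-object poset $\Sd[0]$. What your version buys is economy: \Cref{nat-we-eq} is already available at this point, its proof (via \Cref{relative-D-Sd}) already encapsulates the contractibility of $D[0]$, and you never have to manipulate the zig-zag over $D[0]$ by hand. What the paper's version buys is an explicit edge $Y \from D\hat{[1]} \to \cat{C}$ with $Y\face_1 = X$ on the nose, rather than one produced by an existence lemma. Either way the underlying machinery --- \Cref{relative-factorization}, the contraction of $D[0]$, and \Cref{eqs-in-nf} --- is the same, and there is no circularity since \Cref{nat-we-eq} precedes this lemma and does not depend on it.
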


\begin{proof}
  Let $f \from [0] \to D[0]$ and $s \from D[0] \to D[0]$ be as in the proof of
  \Cref{Dm-m-he} so that $p_{[0]} f = \id_{[0]}$ and there are weak equivalences
  \begin{ctikzpicture}
    \matrix[diagram]
    {
      |(id)| \id & |(s)| s & |(fp)| f p_{[0]} \text{.} \\
    };

    \draw[->] (id) to node[above] {$\we$} (s);
    \draw[->] (fp) to node[above] {$\we$} (s);
  \end{ctikzpicture}
  These equivalences evaluated at $X$ form a diagram
  $D[0] \times \Sd\hat{[1]} \to \cat{C}$ which we can pull back along
  $D\hat{[1]} \to D[0] \times \Sd\hat{[1]}$ and then replace Reedy cofibrantly
  to obtain a \hore{} Reedy cofibrant diagram $Y \from D\hat{[1]} \to \cat{C}$
  \st{} $Y\face_1 = X$ by \Cref{relative-factorization}.
  By \Cref{eqs-in-nf} $Y$ is an equivalence and by the construction $Y\face_0$
  is a Reedy cofibrant replacement of $p_{[0]}^* X_0$.
\end{proof}

\begin{lemma}\label{dgrm-relative}
  The functor $\dgrm_\kappa$ is \hore{}.
\end{lemma}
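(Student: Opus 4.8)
The statement to prove is that $\dgrm_\kappa$ is a homotopical functor, i.e.\ that it sends weak equivalences of $\kappa$-cocomplete quasicategories (categorical equivalences) to weak equivalences of $\kappa$-cocomplete cofibration categories. By definition this means: if $F \from \qcat{C} \to \qcat{D}$ is a categorical equivalence of $\kappa$-cocomplete quasicategories, then the induced exact functor $\dgrm_\kappa F \from \dgrm_\kappa \qcat{C} \to \dgrm_\kappa \qcat{D}$ induces an equivalence $\Ho \dgrm_\kappa \qcat{C} \to \Ho \dgrm_\kappa \qcat{D}$. The cleanest route, since we have just proven that $\nf$ is a weak equivalence on the relevant fibration categories, is to exploit the $2$-natural weak equivalences $\Phi_{\cat{C}} \from \dgrm_\kappa \nf \cat{C} \weto \cat{C}$ of \Cref{Phi-equivalence} and $\Psi_{\qcat{D}} \from \qcat{D} \weto \nf \dgrm_\kappa \qcat{D}$ of \Cref{Psi-equivalence}, together with the fact that $\nf$ and every weak equivalence of cofibration categories already acts as an equivalence on homotopy categories.

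First I would note that a $\kappa$-cocontinuous functor $G$ between cofibration categories is a weak equivalence if and only if $\Ho G$ is an equivalence, and that weak equivalences are closed under $2$-out-of-$3$ and that a functor $2$-naturally isomorphic to a weak equivalence is again one. Now apply $\nf$ to $\dgrm_\kappa F$: by \Cref{nerve-continuous}, $\nf \dgrm_\kappa F \from \nf \dgrm_\kappa \qcat{C} \to \nf \dgrm_\kappa \qcat{D}$ is $\kappa$-cocontinuous, and by naturality of $\Psi$ there is a commuting square relating $F$, $\nf \dgrm_\kappa F$ and the two categorical equivalences $\Psi_{\qcat{C}}$, $\Psi_{\qcat{D}}$. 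Since $F$, $\Psi_{\qcat{C}}$, and $\Psi_{\qcat{D}}$ are all categorical equivalences, $\nf \dgrm_\kappa F$ is a categorical equivalence by $2$-out-of-$3$ in the fibration category $\QCat_\kappa$. Next, by naturality of $\Phi$ there is a commuting square relating $\dgrm_\kappa F$, $\dgrm_\kappa \nf \dgrm_\kappa F$, and the weak equivalences $\Phi_{\dgrm_\kappa \qcat{C}}$, $\Phi_{\dgrm_\kappa \qcat{D}}$; so to conclude that $\dgrm_\kappa F$ is a weak equivalence it suffices (again by $2$-out-of-$3$) to know that $\dgrm_\kappa \nf \dgrm_\kappa F$ is one, and iterating, that $\dgrm_\kappa$ of a categorical equivalence is a weak equivalence — which is exactly the statement we are proving, so this naive bootstrapping is circular and must be broken.

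The honest argument therefore requires an intermediate observation: a $\kappa$-cocontinuous functor $G \from \cat{C} \to \cat{D}$ of cofibration categories is a weak equivalence if and only if $\nf G$ is a categorical equivalence of quasicategories. The "only if" direction is immediate from \Cref{nerve-continuous} together with the fact (used throughout) that weak equivalences of fibration categories are sent to weak equivalences by exact functors. The "if" direction is the crux, and is where I expect the main obstacle to lie: one wants to deduce that $\Ho G$ is an equivalence of homotopy categories from the fact that $\nf G$ is a categorical equivalence. The key input is that $\Ho \cat{C}$ can be recovered from $\nf \cat{C}$ — concretely, $\Ho(\nf \cat{C}) \iso \Ho \cat{C}$, since a frame $D[0] \to \cat{C}$ is equivalent to a Reedy cofibrant replacement of a constant diagram on an object of $\cat{C}$ by \Cref{frame-pullback}, and morphisms in $\nf \cat{C}$ up to homotopy are exactly left fractions up to the equivalence relation of \Cref{calculus-of-fractions} (this is the content alluded to in \Cref{dgrm-relative}'s proof and the "fractions" motivation on p.~\pageref{fractions-motivation}); naturality of this identification in $\cat{C}$ then gives that $\Ho G$ is (up to natural isomorphism) $\Ho(\nf G)$, which is an equivalence since $\nf G$ is a categorical equivalence. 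Granting this, the theorem follows: given a categorical equivalence $F \from \qcat{C} \to \qcat{D}$, we showed above that $\nf \dgrm_\kappa F$ is a categorical equivalence, hence by the intermediate observation $\dgrm_\kappa F$ is a weak equivalence, as desired. The main work is thus assembling the identification $\Ho(\nf \cat{C}) \iso \Ho \cat{C}$ and checking its naturality, which is essentially a careful bookkeeping exercise built on \Cref{calculus-of-fractions,frame-pullback,eqs-in-nf}.
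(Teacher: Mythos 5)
Your plan is correct and follows essentially the same route as the paper: the paper's proof constructs a natural equivalence $\Theta_{\cat{C}} \from \Ho \nf \cat{C} \to \Ho \cat{C}$ using the calculus of fractions and \Cref{frame-pullback}, deduces that $\nf$ reflects weak equivalences, and then concludes via the naturality of $\Psi$ and ``2 out of 3'', exactly as you propose. The one piece you defer as ``bookkeeping'' --- faithfulness of the comparison of homotopy categories --- is in fact where the real work sits in the paper (it requires the homotopy extension lemma, \Cref{HEP}, in addition to \Cref{calculus-of-fractions,frame-pullback,relative-D-Sd}), but your identification of the strategy and its ingredients is accurate.
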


\begin{proof}
  We begin by constructing a natural equivalence
  $\Theta_{\cat{C}} \from \Ho \nf \cat{C} \to \Ho \cat{C}$
  for every cofibration category $\cat{C}$.
  We send an object $X \from D[0] \to \cat{C}$ to $X_0$ and a morphism
  $Y \from D[1] \to \cat{C}$ to the composite $[\upsilon_1]^{-1} [\upsilon_0]$
  where $\upsilon_0$ and $\upsilon_1$ are the structure morphisms
  \begin{ctikzpicture}
    \matrix[diagram]
    {
      |(0)| Y_0 & |(01)| Y_{01} & |(1)| Y_1 \text{.} \\
    };

    \draw[->] (0) to node[above] {$\upsilon_0$} (01);
    \draw[->] (1) to node[above] {$\upsilon_1$} node[below] {$\we$} (01);
  \end{ctikzpicture}
  This assignment is well-defined and functorial
  by \Cref{calculus-of-fractions}.

  We check that $\Theta_{\cat{C}}$ is an equivalence.
  It is surjective and full since both $\Sd[0] \ito D[0]$
  and $D\bdsimp{1} \union \Sd[1] \ito D[1]$ have the \Rllp{} \wrt{}
  all cofibration categories by \Cref{relative-D-Sd}.
  For faithfulness, consider $X, \tilde{X} \from D[1] \to \cat{C}$
  \st{} $X|D\bdsimp{1} = \tilde{X}|D\bdsimp{1}$
  and $\Theta_{\cat{C}}(X) = \Theta_{\cat{C}}(\tilde{X})$.
  Since we have already verified that $\Theta_{\cat{C}}$
  is essentially surjective \Cref{frame-pullback} allows us to assume that
  $X \face_0$ is a Reedy cofibrant replacement of $p_{[0]}^* X_1$ so that
  the structure morphisms of $X$ fit into a cylinder
  \begin{align*}
    X_1 \coprod X_1 \cto X_{11} \weto X_1 \text{.}
  \end{align*}
  By \Cref{calculus-of-fractions}\ref{fractions-eq} we have a diagram
  \begin{ctikzpicture}
    \matrix[diagram]
    {
      & |(01)| X_{01} \\
      |(0)| X_0 & |(Y)| Y & |(1)| X_1 \\
      & |(t01)| \tilde{X}_{01} \\
    };

    \draw[cof] (0) to (01);
    \draw[cof] (0) to (t01);

    \draw[cof] (1) to node[above right] {$\we$}
                      node[below left]  {$\nu$} (01);
    \draw[cof] (1) to node[below right] {$\we$}
                      node[above left]  {$\tilde{\nu}$} (t01);

    \draw[->] (01)  to node[left] {$\phi$}         node[right] {$\we$} (Y);
    \draw[->] (t01) to node[left] {$\tilde{\phi}$} node[right] {$\we$} (Y);
  \end{ctikzpicture}
  where both squares commute up to left homotopy.
  By \Cref{HEP} we can assume that the left square commutes strictly.
  Let
  \begin{ctikzpicture}
    \matrix[diagram]
    {
      |(1)|  X_1 \coprod X_1 & |(Y)|  Y \\
      |(11)| X_{11}          & |(hY)| \hat{Y} \\
    };

    \draw[->] (1) to node[above] {$[\phi \nu, \tilde{\phi} \tilde{\nu}]$} (Y);

    \draw[cof] (1) to node[left] {$[\face_0, \face_1]$} (11);

    \draw[->] (11) to node[below] {$\chi$} (hY);

    \draw[cof] (Y) to node[right] {$\we$} node[left] {$\psi$} (hY);
  \end{ctikzpicture}
  be a left homotopy.
  Then we can form a diagram
  \begin{ctikzpicture}
    \matrix [diagram]
    {
      & & |(b)| X_1 \\
      \\ \\
      |(a)| X_0 & & & & |(c)| X_1 \\
    };

    \node (ab)  at (barycentric cs:a=1,b=1,c=0) {$X_{01}$};
    \node (bc)  at (barycentric cs:a=0,b=1,c=1) {$X_{11}$};
    \node (ac)  at (barycentric cs:a=1,b=0,c=1) {$\tilde{X}_{01}$};
    \node (abc) at (barycentric cs:a=1,b=1,c=1) {$\hat{Y}$};

    \draw[->] (a) to (ab);
    \draw[->] (b) to node[above left]  {$\we$} node[below right] {$\nu$} (ab);
    \draw[->] (b) to node[above right] {$\we$} (bc);
    \draw[->] (c) to node[above right] {$\we$} (bc);
    \draw[->] (a) to (ac);
    \draw[->] (c) to node[below] {$\we$} node[above] {$\tilde{\nu}$} (ac);

    \draw[->] (ab) to node[below left] {$\we$}
                      node[above right] {$\psi \phi$} (abc);
    \draw[->] (bc) to node[above left] {$\we$} node[below right] {$\chi$} (abc);
    \draw[->] (ac) to node[right] {$\we$}
                      node[left]  {$\psi \tilde{\phi}$} (abc);
  \end{ctikzpicture}
  which is a \hore{} diagram on $\Sd[2]$
  and Reedy cofibrant over $\Sd\bdsimp{2}$.
  Thus it can be replaced Reedy cofibrantly without modifying it over
  $\Sd\bdsimp{2}$ by \Cref{relative-factorization}.
  Then $X$, $\tilde{X}$ and $X \face_0 \dgn_0$ provide an extension
  over $D\bdsimp{2}$.
  We know that the inclusion $D\bdsimp{2} \union \Sd[2] \ito D[2]$
  has the \Rllp{} \wrt{} all cofibration categories by \Cref{relative-D-Sd}
  so we can find an extension to $D[2]$ which is a homotopy between $X$
  and $\tilde{X}$ in $\nf\cat{C}$.

  Since equivalences of quasicategories induce equivalences
  of homotopy categories, it follows that $\nf$ reflects equivalences.
  Thus $\dgrm_\kappa$ is \hore{} by \Cref{Psi-equivalence}.
\end{proof}

Finally, we are ready to prove the main theorem.

\begin{theorem}\label{nf-equivalence}
  The functor $\nf \from \CofCat_\kappa \to \QCat_\kappa$ is a weak equivalence
  of fibration categories.
\end{theorem}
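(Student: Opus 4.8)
The plan is to obtain \Cref{nf-equivalence} as a formal consequence of the results already established, chiefly \Cref{nerve-continuous,Phi-equivalence,Psi-equivalence,dgrm-relative}. Recall that, dualizing the definition of a weak equivalence of cofibration categories, an exact functor between fibration categories is a weak equivalence if and only if it induces an equivalence of homotopy categories. By \Cref{fibcat-of-cofcats-complete,fibcat-of-kappa-quasicats-complete} both $\CofCat_\kappa$ and $\QCat_\kappa$ are (complete) fibration categories, and by \Cref{nerve-continuous} the functor $\nf$ is exact between them. Hence it suffices to show that $\Ho \nf \from \Ho \CofCat_\kappa \to \Ho \QCat_\kappa$ is an equivalence of categories, and I would do this by exhibiting $\Ho \dgrm_\kappa$ as a two-sided inverse.

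First I would observe that $\dgrm_\kappa$ is a homotopical functor by \Cref{dgrm-relative}, so it descends to $\Ho \dgrm_\kappa \from \Ho \QCat_\kappa \to \Ho \CofCat_\kappa$; moreover the composites $\nf \circ \dgrm_\kappa$ and $\dgrm_\kappa \circ \nf$ are honest functors, since no choices enter the definitions of $\nf$ and $\dgrm_\kappa$, and they are homotopical, so they descend as well, with $\Ho(\nf \dgrm_\kappa) = \Ho \nf \circ \Ho \dgrm_\kappa$ and $\Ho(\dgrm_\kappa \nf) = \Ho \dgrm_\kappa \circ \Ho \nf$. Now the transformation $\Psi_{\qcat{D}} \from \qcat{D} \to \nf \dgrm_\kappa \qcat{D}$ is strictly natural in $\qcat{D}$, and each component is a categorical equivalence between $\kappa$-cocomplete quasicategories, hence a $\kappa$-cocontinuous functor and a weak equivalence in $\QCat_\kappa$; so by \Cref{Psi-equivalence} the family $\{ [\Psi_{\qcat{D}}] \}$ assembles into a natural isomorphism $\id_{\Ho \QCat_\kappa} \Rightarrow \Ho \nf \circ \Ho \dgrm_\kappa$. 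Symmetrically, by \Cref{Phi-equivalence} each $\Phi_{\cat{C}} \from \dgrm_\kappa \nf \cat{C} \to \cat{C}$ is a weak equivalence of ($\kappa$-cocomplete) cofibration categories, so the $[\Phi_{\cat{C}}]$ are isomorphisms in $\Ho \CofCat_\kappa$; granting naturality (see below), this yields a natural isomorphism $\Ho \dgrm_\kappa \circ \Ho \nf \Rightarrow \id_{\Ho \CofCat_\kappa}$. Together these show $\Ho \nf$ is an equivalence with inverse $\Ho \dgrm_\kappa$, proving the theorem.

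The one step requiring genuine care, and the main obstacle, is the naturality of $\{ [\Phi_{\cat{C}}] \}$, because $\Phi$ is only $2$-natural: for an exact functor $F \from \cat{C} \to \cat{D}$ the functors $\Phi_{\cat{D}} \circ \dgrm_\kappa \nf F$ and $F \circ \Phi_{\cat{C}}$ from $\dgrm_\kappa \nf \cat{C}$ to $\cat{D}$ agree only up to a coherent natural isomorphism. To finish I would verify that naturally isomorphic exact functors between cofibration categories induce the same morphism after localization: a natural isomorphism $G \cong G'$ has components that are isomorphisms, hence acyclic cofibrations, so it defines an exact functor from the common source into the $\kappa$-cocomplete analogue of the path object $P\cat{D}$ of \Cref{path-object-cofcat}, exhibiting a homotopy between $G$ and $G'$; since homotopic morphisms become equal in the homotopy category, $[G] = [G']$. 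Applying this with $G = \Phi_{\cat{D}} \circ \dgrm_\kappa \nf F$ and $G' = F \circ \Phi_{\cat{C}}$ supplies the missing naturality squares. The only remaining loose end is to confirm, as used above, that a categorical equivalence between $\kappa$-cocomplete quasicategories is $\kappa$-cocontinuous, which is immediate from the characterization of universal cones; with these routine points dispatched the proof is complete.
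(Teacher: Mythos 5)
Your proposal is correct and follows essentially the same route as the paper's own proof: exhibit $\Ho \dgrm_\kappa$ as a two-sided inverse to $\Ho \nf$ using \Cref{Psi-equivalence} for one composite and \Cref{Phi-equivalence} for the other, repairing the mere $2$-naturality of $\Phi$ by observing that natural isomorphisms of exact functors induce homotopies via the path object of \Cref{path-object-cofcat} and hence become equalities after localization. The only difference is that you spell out in more detail the routine points the paper leaves implicit.
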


\begin{proof}
  By \Cref{nerve-continuous} $\nf$ is continuous.
  The functor $\dgrm_\kappa$ is \hore{} by \Cref{dgrm-relative}
  and thus induces a functor on the homotopy categories.
  Since $\Psi$ is a natural categorical equivalence by \Cref{Psi-equivalence}
  the induced transformation $\Ho \Psi$ is a natural isomorphism
  $\id \to (\Ho \nf) (\Ho \dgrm_\kappa)$.
  The transformation $\Phi$ is merely $2$-natural, but natural isomorphisms
  of exact functors induce right homotopies in $\CofCat_\kappa$
  (by the construction of path objects in the proof
  of \Cref{fibcat-of-cofcats}).
  Therefore $\Ho \Phi$ is a natural transformation and by \Cref{Phi-equivalence}
  it is an isomorphism $(\Ho \dgrm_\kappa) (\Ho \nf) \to \id$.
  Hence $\Ho \nf$ is an equivalence.
\end{proof}

  \subsection{Proof of the main theorem: the finite case}
\label{sec:proof-finite}

The only part of the previous subsection that does not work for $\kappa = \aleph_0$
is the construction of a natural weak equivalence
$\Phi_{\cat{C}} \from \dgrm_\kappa \nf \cat{C} \to \cat{C}$
for every cofibration category $\cat{C}$.
Indeed, $\Phi_{\cat{C}}$ was defined using colimits over categories $DK$
which are infinite even for finite simplicial sets $K$.
Instead, we will define a zig-zag of ($2$-natural) weak equivalences
connecting $\dgrm_{\aleph_0} \nf \cat{C}$ to $\cat{C}$, namely,
\begin{ctikzpicture}
  \matrix[diagram]
  {
    |(dnf)| \dgrm_{\aleph_0} \nf \cat{C} & |(CtN)| \cat{C}^{\tilde\nat}_\Reedy &
      |(ChN)| \cat{C}^{\hat\nat}_\Reedy & |(C)| \cat{C} \text{.} \\
  };

  \draw[->] (dnf) to node[above] {$\Phi^{(\uvar)}_{\cat{C}}$} (CtN);

  \draw[inj] (ChN) to (CtN);
  
  \draw[->] (ChN) to node[above] {$\ev_0$} (C);
\end{ctikzpicture}

We have already verified that $\cat{C}^{\hat\nat}_\Reedy
\ito \cat{C}^{\tilde\nat}_\Reedy$ is a weak equivalence
in \Cref{eventually-constant}.
Moreover, $\ev_0 \from \cat{C}^{\tilde\nat}_\Reedy \to \cat{C}$ is induced by
a homotopy equivalence $[0] \to \hat\nat$ hence it is a weak equivalence, too.

It remains to define $\Phi^{(\uvar)}_{\cat{C}}$ and prove that it is also
a weak equivalence.
For each $k$ and an object $X \from DK \to \nf \cat{C}$ we set
$\Phi^{(k)}_{\cat{C}} X = \colim_{\filt{k,K}} X$.
This colimit exists since $\filt{k,K}$ is finite if $K$ is finite.

\begin{lemma}\label{Phi-equivalence-finite}
  For a cofibration category $\cat{C}$ the formula above defines
  an exact functor
  $\Phi^{(\uvar)}_{\cat{C}}
    \from \dgrm_{\aleph_0} \nf \cat{C} \to \cat{C}^{\tilde\nat}_\Reedy$.
  Moreover, it is a weak equivalence.
\end{lemma}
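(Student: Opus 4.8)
The plan is to mirror the proof of \Cref{Phi-equivalence} almost verbatim, substituting for each auxiliary input used there its ``finite'' counterpart proved earlier in this subsection: \Cref{Dk-preserves-colimits} in place of \Cref{D-preserves-colimits}, \Cref{filt-simp} and \Cref{colim-simplex-join-finite} in place of \Cref{colim-simplex-join}, \Cref{nerve-cones-finite} in place of \Cref{nerve-cones}, and \Cref{cone-factorization-finite} in place of \Cref{cone-factorization}. The one genuinely new point, which has no analogue in the infinite case, is to check that $\Phi^{(\uvar)}_{\cat{C}}$ really lands in $\cat{C}^{\tilde\nat}_\Reedy$, i.e.\ that the sequence $k \mapsto \colim_{\filt{k,K}} X$ is Reedy cofibrant and eventually constant.

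First I would establish well-definedness. For a \hore{} Reedy cofibrant diagram $X \from DK \to \cat{C}$ the inclusions $\filt{k,K} \ito \filt{k+1,K}$ are sieve inclusions of finite direct categories, so the comparison maps $\colim_{\filt{k,K}} X \to \colim_{\filt{k+1,K}} X$ are cofibrations by \cite{rb}*{Theorem 9.4.1(1a)}; hence $\Phi^{(\uvar)}_{\cat{C}} X$ is a Reedy cofibrant sequence. For eventual constancy one uses that, for $k \ge \dim K$, the sieve $\filt{k,K}$ already contains every non-degenerate simplex of $K$ (a non-degenerate $n$-simplex corresponds to $\id_{[n]} \in \filt{k,[n]}$ once $k \ge n$); then \Cref{filt-simp}, together with a skeletal induction on $K$ glued by the Gluing Lemma exactly as in the proof of \Cref{colim-simplex-join-finite}, shows that $\colim_{\filt{k,K}} X \to \colim_{\filt{k+1,K}} X$ is a weak equivalence for all $k \ge \dim K$. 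Functoriality on morphisms: a cofibration $(K,X) \cto (L,Y)$ of $\dgrm_{\aleph_0}\nf\cat{C}$ has $K \ito L$ injective, so $\filt{k,K} \ito \filt{k,L}$ is again a sieve, and \cite{rb}*{Theorem 9.4.1(1a)} applied level by level in $k$ shows $\Phi^{(\uvar)}_{\cat{C}}$ carries it to a Reedy cofibration of sequences; the same argument applies to acyclic cofibrations.

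Exactness of $\Phi^{(\uvar)}_{\cat{C}}$ is then immediate: it sends the empty diagram to the constant sequence on the initial object of $\cat{C}$, which is initial in $\cat{C}^{\tilde\nat}_\Reedy$, and it preserves pushouts along cofibrations because $\filt{k,-}$ preserves pushouts by \Cref{Dk-preserves-colimits} while pushouts of Reedy cofibrant diagrams along sieve inclusions are computed levelwise and satisfy the Gluing Lemma (again \cite{rb}*{Theorem 9.4.1(1a)}), level $k$ by level $k$. For the weak-equivalence assertion I would verify the approximation properties of \Cref{App}. Property (App1): by \Cref{nerve-cones-finite} and \Cref{colim-simplex-join-finite}, a morphism $f$ of $\dgrm_{\aleph_0}\nf\cat{C}$ is a weak equivalence if and only if $\Phi^{(\uvar)}_{\cat{C}}(f)$ is an eventual weak equivalence, which simultaneously gives preservation of weak equivalences and (App1). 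Property (App2): given $g \from \Phi^{(\uvar)}_{\cat{C}}(K,X) \to Z$ in $\cat{C}^{\tilde\nat}_\Reedy$, pass to the corresponding cone $\tilde T$ over $DK \join_{|\uvar|} \nat$, take a Reedy cofibrant replacement $T$ of $p_K^* \tilde T$ relative to $DK$ as in \Cref{cone-factorization-finite}; since $T$ restricts to $X$ on $DK$ and is \hore{} (being weakly equivalent to a \hore{} diagram, just as in the infinite case), \Cref{nf-representation} turns it into a cone $S \from K^\ucone \to \nf\cat{C}$ under $X$, and the cofibration $K \ito K^\ucone$ equipped with $S$ gives a cofibration $(K,X) \cto (K^\ucone, S)$ whose image under $\Phi^{(\uvar)}_{\cat{C}}$, together with the eventual weak equivalence $\Phi^{(\uvar)} T \weto Z$ from \Cref{cone-factorization-finite}, assembles into the square required by (App2) — precisely as (App2) was deduced from \Cref{cone-factorization} in the proof of \Cref{Phi-equivalence}.

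The main obstacle I expect is not any single hard argument but the eventual-constancy clause together with the naturality it must respect: one needs to know not merely that each $\Phi^{(k)}_{\cat{C}} X$ exists but that the stabilization is uniform enough to be compatible with morphisms of diagrams and with the pushout and Gluing arguments, which forces careful combined use of \Cref{filt-simp}, \Cref{colim-simplex-join-finite}, and the sieve-functoriality of $\filt{k,-}$. A secondary point of care is the (App2) step, where one must confirm that the replacement supplied by \Cref{cone-factorization-finite} is \hore{} so that it genuinely names a cone in $\nf\cat{C}$; once the infinite-case argument for this is transcribed, the remainder is routine.
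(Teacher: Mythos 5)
Your proposal follows the paper's proof in all essentials: the same division into well-definedness, preservation of (acyclic) cofibrations and colimits, (App1) via \Cref{nerve-cones-finite} and \Cref{colim-simplex-join-finite}, and (App2) via \Cref{cone-factorization-finite}. One genuine difference of route: for eventual constancy of $\Phi^{(\uvar)}_{\cat{C}} X$ you run a cell induction over $K$ using \Cref{filt-simp} and the Gluing Lemma, whereas the paper chooses a universal cone $S \from K^\ucone \to \nf\cat{C}$ (which exists by \Cref{nerve-cones-finite}), notes that $\Phi^{(\uvar)}_{\cat{C}} S$ is eventually constant by \Cref{colim-simplex-join-finite}, that $\Phi^{(\uvar)}_{\cat{C}}(S|K) \to \Phi^{(\uvar)}_{\cat{C}} S$ is an eventual weak equivalence, and concludes by ``2 out of 6''. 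Both work; yours re-runs the induction that \Cref{colim-simplex-join-finite} already encapsulates, the paper's simply reuses that lemma.

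There is, however, one concrete gap in your preservation-of-cofibrations step. A cofibration in $\cat{C}^{\tilde\nat}_\Reedy$ is a \emph{Reedy} cofibration of sequences, i.e.\ for each $k$ the induced map $A_k \push_{A_{k-1}} B_{k-1} \to B_k$ must be a cofibration in $\cat{C}$; applying \cite{rb}*{Theorem 9.4.1(1a)} ``level by level'' to the sieves $\filt{k,K} \ito \filt{k,L}$ only produces levelwise cofibrations $\Phi^{(k)}_{\cat{C}} X \to \Phi^{(k)}_{\cat{C}} Y$, which is strictly weaker. What is needed---and what the paper invokes---is that $\filt{k,K} \union \filt{k-1,L} \ito \filt{k,L}$ is a sieve: since $\filt{k,K} \inter \filt{k-1,L} = \filt{k-1,K}$ (the filtration degree is intrinsic and $K \ito L$ is injective), the colimit over this union is precisely the pushout $\Phi^{(k)}_{\cat{C}} X \push_{\Phi^{(k-1)}_{\cat{C}} X} \Phi^{(k-1)}_{\cat{C}} Y$, and the sieve inclusion then makes the comparison map to $\Phi^{(k)}_{\cat{C}} Y$ a cofibration. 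With that substitution the argument closes; the remainder of your write-up, including the spelled-out (App2) step, matches the paper's proof.
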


\begin{proof}
  First, we need to verify that $\Phi^{(\uvar)}_{\cat{C}} X$ is
  an eventually constant sequence for all
  $(K, X) \in \dgrm_{\aleph_0} \nf \cat{C}$.
  Consider $X$ as a diagram in $\nf \cat{C}$ and choose a universal cone
  $S \from K^\ucone \to \nf \cat{C}$.
  Then \Cref{colim-simplex-join-finite} implies that
  $\Phi^{(\uvar)}_{\cat{C}} S$ is eventually constant
  and \Cref{nerve-cones-finite} implies that the induced morphism
  $\Phi^{(\uvar)}_{\cat{C}} S \to \Phi^{(\uvar)}_{\cat{C}} S$
  is an eventual weak equivalence.
  Thus $\Phi^{(\uvar)}_{\cat{C}} S$ is eventually constant.

  Preservation of cofibrations follows by \cite{rb}*{Theorem 9.4.1(1a)}
  since if $K \ito L$ is an injective map of simplicial sets,
  then the induced functors $\filt{k,K} \union \filt{k-1,L} \to \filt{k,L}$
  are sieves.

  \Cref{nerve-cones-finite,colim-simplex-join-finite} imply that a morphism $f$
  in $\dgrm_{\aleph_0} \nf \cat{C}$ is a weak equivalence
  \iff{} $\Phi^{(\uvar)}_{\cat{C}} f$ is an eventual weak equivalence.
  Therefore $\Phi^{(\uvar)}_{\cat{C}}$ preserves weak equivalences
  and satisfies (App1).

  Colimits in $\cat{C}$ are compatible with colimits of indexing categories
  and thus $\Phi^{(\uvar)}_{\cat{C}}$ is exact.

  It remains to check (App2),
  but it follows directly from \Cref{cone-factorization-finite}.
\end{proof}

This yields the proof of of \Cref{nf-equivalence} in the case of
$\kappa = \aleph_0$ since the three weak equivalences described above induce
a natural isomorphism $(\Ho \dgrm_\kappa) (\Ho \nf) \to \id$ and the rest of
the argument applies verbatim.

  \begin{bibdiv}
\begin{biblist}

\bib{ag}{article}{
  author={Andersen, K.},
  author={Grodal, J.},
  title={A Baues Fibration Category Structure on Banach and $C^*$-algebras},
  date={1997},
  eprint={http://www.math.ku.dk/~jg/papers/fibcat.pdf},
}

\bib{a}{article}{
  author={Anderson, D. W.},
  title={Fibrations and geometric realizations},
  journal={Bull. Amer. Math. Soc.},
  volume={84},
  date={1978},
  number={5},
  pages={765--788},
  issn={0002-9904},
}

\bib{akl}{article}{
  label={AKL13},
  author={Avigad, J.},
  author={Kapulkin, K.},
  author={LeFanu Lumsdaine, P.},
  title={Homotopy limits in Coq},
  date={2013},
  eprint={http://arxiv.org/abs/1304.0680v1},
}

\bib{bk1}{article}{
  author={Barwick, C.},
  author={Kan, D. M.},
  title={Relative categories: another model for the homotopy theory of homotopy
    theories},
  journal={Indag. Math. (N.S.)},
  volume={23},
  date={2012},
  number={1-2},
  pages={42--68},
  issn={0019-3577},
}

\bib{bk2}{article}{
  author={Barwick, C.},
  author={Kan, D. M.},
  title={A characterization of simplicial localization functors and a
  discussion of DK equivalences},
  journal={Indag. Math. (N.S.)},
  volume={23},
  date={2012},
  number={1-2},
  pages={69--79},
  issn={0019-3577},
}

\bib{bsp}{article}{
  author={Barwick, Clark},
  author={Schommer-Pries, Christopher},
  title={On the Unicity of the Homotopy Theory of Higher Categories},
  date={2013},
  eprint={http://arxiv.org/abs/1112.0040v4},
}

\bib{ba-ah}{book}{
  author={Baues, Hans Joachim},
  title={Algebraic homotopy},
  series={Cambridge Studies in Advanced Mathematics},
  volume={15},
  publisher={Cambridge University Press},
  place={Cambridge},
  date={1989},
  pages={xx+466},
}

\bib{ba-cf}{book}{
  author={Baues, Hans-Joachim},
  title={Combinatorial foundation of homology and homotopy},
  series={Springer Monographs in Mathematics},
  publisher={Springer-Verlag},
  place={Berlin},
  date={1999},
  pages={xvi+362},
}

\bib{bq}{book}{
  author={Baues, Hans-Joachim},
  author={Quintero, Antonio},
  title={Infinite homotopy theory},
  series={$K$-Monographs in Mathematics},
  volume={6},
  publisher={Kluwer Academic Publishers},
  place={Dordrecht},
  date={2001},
  pages={viii+296},
}

\bib{be-mcsc}{article}{
  author={Bergner, Julia E.},
  title={A model category structure on the category of simplicial
  categories},
  journal={Trans. Amer. Math. Soc.},
  volume={359},
  date={2007},
  number={5},
  pages={2043--2058},
  issn={0002-9947},
}

\bib{be-3m}{article}{
  author={Bergner, Julia E.},
  title={Three models for the homotopy theory of homotopy theories},
  journal={Topology},
  volume={46},
  date={2007},
  number={4},
  pages={397--436},
  issn={0040-9383},
}

\bib{be}{article}{
  author={Bergner, Julia E.},
  title={A survey of $(\infty,1)$-categories},
  conference={
    title={Towards higher categories}
  },
  book={
    series={IMA Vol. Math. Appl.},
    volume={152},
    publisher={Springer, New York},
  },
  date={2010},
  pages={69--83},
}

\bib{bv}{book}{
  author={Boardman, J. M.},
  author={Vogt, R. M.},
  title={Homotopy invariant algebraic structures on topological spaces},
  series={Lecture Notes in Mathematics, Vol. 347},
  publisher={Springer-Verlag, Berlin-New York},
  date={1973},
  pages={x+257},
}

\bib{bo}{book}{
  author={Borceux, Francis},
  title={Handbook of categorical algebra. 1},
  series={Encyclopedia of Mathematics and its Applications},
  volume={50},
  note={Basic category theory},
  publisher={Cambridge University Press},
  place={Cambridge},
  date={1994},
  pages={xvi+345},
}

\bib{br}{article}{
  author={Brown, Kenneth S.},
  title={Abstract homotopy theory and generalized sheaf cohomology},
  journal={Trans. Amer. Math. Soc.},
  volume={186},
  date={1973},
  pages={419--458},
}

\bib{c-cd}{article}{
  author={Cisinski, Denis-Charles},
  title={Cat\'egories d\'erivables},
  journal={Bull. Soc. Math. France},
  volume={138},
  date={2010},
  number={3},
  pages={317--393},
  issn={0037-9484},
}

\bib{c-ik}{article}{
  author={Cisinski, Denis-Charles},
  title={Invariance de la $K$-th\'eorie par \'equivalences d\'eriv\'ees},
  journal={J. K-Theory},
  volume={6},
  date={2010},
  number={3},
  pages={505--546},
  issn={1865-2433},
}

\bib{co}{article}{
  author={Cole, Michael},
  title={Mixing model structures},
  journal={Topology Appl.},
  volume={153},
  date={2006},
  number={7},
  pages={1016--1032},
  issn={0166-8641},
}

\bib{c}{article}{
  author={Cordier, Jean-Marc},
  title={Sur la notion de diagramme homotopiquement coh\'erent},
  language={French},
  note={Third Colloquium on Categories, Part VI (Amiens, 1980)},
  journal={Cahiers Topologie G\'eom. Diff\'erentielle},
  volume={23},
  date={1982},
  number={1},
  pages={93--112},
  issn={0008-0004},
}

\bib{cp86}{article}{
  author={Cordier, Jean-Marc},
  author={Porter, Timothy},
  title={Vogt's theorem on categories of homotopy coherent diagrams},
  journal={Math. Proc. Cambridge Philos. Soc.},
  volume={100},
  date={1986},
  number={1},
  pages={65--90},
  issn={0305-0041},
}

\bib{cp97}{article}{
  author={Cordier, Jean-Marc},
  author={Porter, Timothy},
  title={Homotopy coherent category theory},
  journal={Trans. Amer. Math. Soc.},
  volume={349},
  date={1997},
  number={1},
  pages={1--54},
  issn={0002-9947},
}

\bib{day}{article}{
  author={Day, Brian},
  title={On closed categories of functors},
  conference={
    title={Reports of the Midwest Category Seminar, IV},
  },
  book={
    series={Lecture Notes in Mathematics, Vol. 137},
    publisher={Springer, Berlin},
  },
  date={1970},
  pages={1--38},
}

\bib{tdkp}{book}{
  author={tom Dieck, Tammo},
  author={Kamps, Klaus Heiner},
  author={Puppe, Dieter},
  title={Homotopietheorie},
  series={Lecture Notes in Mathematics, Vol. 157},
  publisher={Springer-Verlag, Berlin-New York},
  date={1970},
  pages={vi+265},
}

\bib{d}{article}{
  author={Dold, Albrecht},
  title={Partitions of unity in the theory of fibrations},
  journal={Ann. of Math. (2)},
  volume={78},
  date={1963},
  pages={223--255},
  issn={0003-486X},
}

\bib{dhks}{book}{
  author={Dwyer, William G.},
  author={Hirschhorn, Philip S.},
  author={Kan, Daniel M.},
  author={Smith, Jeffrey H.},
  title={Homotopy limit functors on model categories and homotopical
  categories},
  series={Mathematical Surveys and Monographs},
  volume={113},
  publisher={American Mathematical Society, Providence, RI},
  date={2004},
  pages={viii+181},
  isbn={0-8218-3703-6},
}

\bib{dk1}{article}{
  author={Dwyer, W. G.},
  author={Kan, D. M.},
  title={Simplicial localizations of categories},
  journal={J. Pure Appl. Algebra},
  volume={17},
  date={1980},
  number={3},
  pages={267--284},
  issn={0022-4049},
}

\bib{dk2}{article}{
  author={Dwyer, W. G.},
  author={Kan, D. M.},
  title={Calculating simplicial localizations},
  journal={J. Pure Appl. Algebra},
  volume={18},
  date={1980},
  number={1},
  pages={17--35},
  issn={0022-4049},
}

\bib{dk3}{article}{
  author={Dwyer, W. G.},
  author={Kan, D. M.},
  title={Function complexes in homotopical algebra},
  journal={Topology},
  volume={19},
  date={1980},
  number={4},
  pages={427--440},
  issn={0040-9383},
}

\bib{dks}{article}{
  author={Dwyer, W. G.},
  author={Kan, D. M.},
  author={Smith, J. H.},
  title={Homotopy commutative diagrams and their realizations},
  journal={J. Pure Appl. Algebra},
  volume={57},
  date={1989},
  number={1},
  pages={5--24},
  issn={0022-4049},
}

\bib{ds}{article}{
  author={Dugger, Daniel},
  author={Spivak, David I.},
  title={Mapping spaces in quasi-categories},
  journal={Algebr. Geom. Topol.},
  volume={11},
  date={2011},
  number={1},
  pages={263--325},
  issn={1472-2747},
}


\bib{gz}{book}{
  author={Gabriel, P.},
  author={Zisman, M.},
  title={Calculus of fractions and homotopy theory},
  series={Ergebnisse der Mathematik und ihrer Grenzgebiete, Band 35},
  publisher={Springer-Verlag New York, Inc., New York},
  date={1967},
  pages={x+168},
}

\bib{gm}{book}{
  author={Gelfand, Sergei I.},
  author={Manin, Yuri I.},
  title={Methods of homological algebra},
  note={Translated from the 1988 Russian original},
  publisher={Springer-Verlag},
  place={Berlin},
  date={1996},
  pages={xviii+372},
}

\bib{gj}{book}{
  author={Goerss, Paul G.},
  author={Jardine, John F.},
  title={Simplicial homotopy theory},
  series={Progress in Mathematics},
  volume={174},
  publisher={Birkh\"auser Verlag},
  place={Basel},
  date={1999},
  pages={xvi+510},
  isbn={3-7643-6064-X},
}

\bib{hi}{book}{
  author={Hirschhorn, Philip S.},
  title={Model categories and their localizations},
  series={Mathematical Surveys and Monographs},
  volume={99},
  publisher={American Mathematical Society},
  place={Providence, RI},
  date={2003},
  pages={xvi+457},
}

\bib{hs}{article}{
  author={Hirschowitz, André},
  author={Simpson, Carlos},
  title={Descente pour les n-champs (Descent for n-stacks)},
  date={2001},
  eprint={http://arxiv.org/abs/math/9807049v3},
}

\bib{ho}{book}{
  author={Hovey, Mark},
  title={Model categories},
  series={Mathematical Surveys and Monographs},
  volume={63},
  publisher={American Mathematical Society},
  place={Providence, RI},
  date={1999},
  pages={xii+209},
}

\bib{ja}{article}{
  author={Jardine, J. F.},
  title={Stable homotopy theory of simplicial presheaves},
  journal={Canad. J. Math.},
  volume={39},
  date={1987},
  number={3},
  pages={733--747},
  issn={0008-414X},
}

\bib{jo}{book}{
  author={Joyal, André},
  title={The Theory of Quasi-Categories and its Applications},
  series={Quadern 45, Vol. II},
  publisher={Centre de Recerca Matem\`{a}tica Barcelona},
  date={2008},
}

\bib{jt}{article}{
  author={Joyal, Andr{\'e}},
  author={Tierney, Myles},
  title={Quasi-categories vs Segal spaces},
  conference={
    title={Categories in algebra, geometry and mathematical physics},
  },
  book={
    series={Contemp. Math.},
    volume={431},
    publisher={Amer. Math. Soc., Providence, RI},
  },
  date={2007},
  pages={277--326},
}

\bib{k}{article}{
  author={Kapulkin, Krzysztof},
  title={Locally Cartesian Closed Quasicategories From Type Theory},
  note={in preparation},
}

\bib{ks}{article}{
  author={Kapulkin, Krzysztof},
  author={Szumi\l{}o, Karol},
  title={Quasicategories of Frames in Cofibration Categories},
  note={in preparation},
}

\bib{l}{book}{
  author={Lurie, Jacob},
  title={Higher topos theory},
  series={Annals of Mathematics Studies},
  volume={170},
  publisher={Princeton University Press},
  place={Princeton, NJ},
  date={2009},
  pages={xviii+925},
}


\bib{p}{article}{
  author={Porter, Timothy},
  title={$\mathcal{S}$-categories, $\mathcal{S}$-groupoids, Segal categories
    and quasicategories},
  date={2004},
  eprint={http://arxiv.org/abs/math/0401274v1},
}

\bib{q}{book}{
  author={Quillen, Daniel G.},
  title={Homotopical algebra},
  series={Lecture Notes in Mathematics, No. 43},
  publisher={Springer-Verlag},
  place={Berlin},
  date={1967},
  pages={iv+156 pp. (not consecutively paged)},
}

\bib{rb}{article}{
  author={R\u{a}dulescu-Banu, Andrei},
  title={Cofibrations in Homotopy Theory},
  date={2006},
  eprint={http://arxiv.org/abs/math/0610009v4},
}

\bib{r-sh}{article}{
  author={Rezk, Charles},
  title={Fibrations and homotopy colimits of simplicial sheaves},
  date={1998},
  eprint={http://arxiv.org/abs/math/9811038v2},
}

\bib{r}{article}{
  author={Rezk, Charles},
  title={A model for the homotopy theory of homotopy theory},
  journal={Trans. Amer. Math. Soc.},
  volume={353},
  date={2001},
  number={3},
  pages={973--1007 (electronic)},
  issn={0002-9947},
}

\bib{rv}{article}{
  author={Riehl, Emily},
  author={Verity, Dominic},
  title={The theory and practice of Reedy categories},
  date={2013},
  eprint={http://arxiv.org/abs/1304.6871v1},
}


\bib{sc}{article}{
  author={Schochet, Claude},
  title={Topological methods for $C^{\ast} $-algebras. III. Axiomatic homology},
  journal={Pacific J. Math.},
  volume={114},
  date={1984},
  number={2},
  pages={399--445},
  issn={0030-8730},
}


\bib{s}{article}{
  author={Schwede, Stefan},
  title={The $p$-order of topological triangulated categories},
  journal={J. Topol.},
  volume={6},
  date={2013},
  number={4},
  pages={868--914},
  issn={1753-8416},
}

\bib{si}{book}{
  author={Simpson, Carlos},
  title={Homotopy theory of higher categories},
  series={New Mathematical Monographs},
  volume={19},
  publisher={Cambridge University Press, Cambridge},
  date={2012},
  pages={xviii+634},
  isbn={978-0-521-51695-2},
}

\bib{sz}{thesis}{
  author={Szumi\l{}o, Karol},
  title={Two Models for the Homotopy Theory of Cocomplete Homotopy Theories},
  type={Ph.D. Thesis},
  organization={Rheinische Friedrich-Wilhelms-Universit\"{a}t Bonn},
  date={2014},
  eprint={http://hss.ulb.uni-bonn.de/2014/3692/3692.htm},
}

\bib{th}{article}{
  author={Thomason, R. W.},
  title={Cat as a closed model category},
  journal={Cahiers Topologie G\'eom. Diff\'erentielle},
  volume={21},
  date={1980},
  number={3},
  pages={305--324},
  issn={0008-0004},
}

\bib{t}{article}{
  author={To{\"e}n, Bertrand},
  title={Vers une axiomatisation de la th\'eorie des cat\'egories
  sup\'erieures},
  language={French, with French summary},
  journal={$K$-Theory},
  volume={34},
  date={2005},
  number={3},
  pages={233--263},
  issn={0920-3036},
}

\bib{u}{article}{
  author={Uuye, O.},
  title={Homotopy Theory for $C^{*}$-algebras},
  date={2013},
  eprint={http://arxiv.org/abs/1011.2926v4},
}

\bib{v73}{article}{
  author={Vogt, Rainer M.},
  title={Homotopy limits and colimits},
  journal={Math. Z.},
  volume={134},
  date={1973},
  pages={11--52},
  issn={0025-5874},
}

\bib{v11}{article}{
  author={Vogt, R. M.},
  title={The HELP-lemma and its converse in Quillen model categories},
  journal={J. Homotopy Relat. Struct.},
  volume={6},
  date={2011},
  number={1},
  pages={115--118},
}

\bib{wa}{article}{
  author={Waldhausen, Friedhelm},
  title={Algebraic $K$-theory of spaces},
  conference={
    title={Algebraic and geometric topology},
    address={New Brunswick, N.J.},
    date={1983},
  },
  book={
    series={Lecture Notes in Math.},
    volume={1126},
    publisher={Springer},
    place={Berlin},
  },
  date={1985},
  pages={318--419},
}

\bib{we}{article}{
  author={Weibel, Charles},
  title={Homotopy Ends and Thomason Model Categories},
  date={2001},
  eprint={http://arxiv.org/abs/math/0106052v1},
}

\end{biblist}
\end{bibdiv}

\end{document}